\DeclareMathOperator{\half}{\frac{1}{2}}
\DeclareMathOperator{\SL}{SL}
\DeclareMathOperator{\Z}{\mathbb{Z}}
\DeclareMathOperator{\R}{\mathbb{R}}
\DeclareMathOperator{\C}{\mathbb{C}}
\newcommand{\lat}{L}
\newcommand{\lati}{L_{\textup{init}}}
\newcommand{\zti}{\mathbb{Z}^3_{\textup{init}}}
\newcommand{\xtin}{\mathbf{\tilde{x}}_{\textup{init}}}
\newcommand{\xin}{\mathbf{x}_{\textup{init}}}
\newcommand{\feq}{\mathbb{Z}^3_{\boxdot}}
\newcommand{\cueq}{\varkappa_{\boxdot}}
\newcommand{\zadi}{\mathbb{Z}^d_{a,\textup{init}}}
\newcommand{\fati}{F^{\mathbf{a}}_{\textup{init}}}
\newcommand{\fab}{F^{\mathbf{a}}}
\newcommand{\faeq}{F^{\mathbf{a}}_{\boxdot}}
\newcommand{\gtf}{g}
\newcommand{\hof}{h}
\newcommand{\pos}{\mathbb{R}_{>0}}
\newcommand{\zzzz}{z_{000}}
\newcommand{\zozz}{z_{100}}
\newcommand{\zzoz}{z_{010}}
\newcommand{\zzzo}{z_{001}}
\newcommand{\zzoo}{z_{011}}
\newcommand{\zozo}{z_{101}}
\newcommand{\zooz}{z_{110}}
\newcommand{\zooo}{z_{111}}
\newcommand{\zzhh}{z_{0\frac{1}{2}\frac{1}{2}}}
\newcommand{\zhzh}{z_{\frac{1}{2}0\frac{1}{2}}}
\newcommand{\zhhz}{z_{\frac{1}{2}\frac{1}{2}0}}
\newcommand{\zohh}{z_{1\frac{1}{2}\frac{1}{2}}}
\newcommand{\zhoh}{z_{\frac{1}{2}1\frac{1}{2}}}
\newcommand{\zhho}{z_{\frac{1}{2}\frac{1}{2}1}}
\numberwithin{equation}{section}
\newtheorem{Theorem}{Theorem}[section]
\newtheorem{Corollary}[Theorem]{Corollary}
\newtheorem{Lemma}[Theorem]{Lemma}
\newtheorem{Proposition}[Theorem]{Proposition}
{ \theoremstyle{definition}
\newtheorem{Definition}[Theorem]{Definition}

\newtheorem{Example}[Theorem]{Example}
\newtheorem{Remark}[Theorem]{Remark} }
\begin{document}

\allowdisplaybreaks

\newcommand{\arXivNumber}{1805.04197}

\renewcommand{\PaperNumber}{012}

\FirstPageHeading

\ShortArticleName{The Kashaev Equation and Related Recurrences}

\ArticleName{The Kashaev Equation and Related Recurrences}

\Author{Alexander LEAF}

\AuthorNameForHeading{A.~Leaf}

\Address{Department of Mathematics, University of Michigan, Ann Arbor, MI, 48109, USA}
\Email{\href{mailto:aleaf@umich.edu}{aleaf@umich.edu}}

\ArticleDates{Received May 24, 2018, in final form February 03, 2019; Published online February 21, 2019}

\Abstract{The hexahedron recurrence was introduced by R.~Kenyon and R.~Pemantle in the study of the double-dimer model in statistical mechanics. It describes a relationship among certain minors of a square matrix. This recurrence is closely related to the Kashaev equation, which has its roots in the Ising model and in the study of relations among principal minors of a symmetric matrix. Certain solutions of the hexahedron recurrence restrict to solutions of the Kashaev equation. We characterize the solutions of the Kashaev equation that can be obtained by such a restriction. This characterization leads to new results about principal minors of symmetric matrices. We describe and study other recurrences whose behavior is similar to that of the Kashaev equation and hexahedron recurrence. These include equations that appear in the study of s-holomorphicity, as well as other recurrences which, like the hexahedron recurrence, can be related to cluster algebras.}

\Keywords{Kashaev equation; hexahedron recurrence; principal minors of symmetric matrices; cubical complexes; s-holomorphicity; cluster algebras}

\Classification{05E99; 13F60; 15A15; 30G25; 52C22}

\section{Introduction} \label{intro}

The \emph{Kashaev equation} is a polynomial equation involving $8$ numbers indexed by the vertices of a~cube; this equation is invariant under the symmetries of the cube. It originally appeared in the study of the star-triangle move in the Ising model \cite{kashaev}; it also arises as a relation among principal minors of a symmetric matrix~\cite{otherhex}.

We say that a $\C$-valued array indexed by $\Z^3$ satisfies the Kashaev equation if for every unit cube $C$ in $\Z^3$, the $8$ numbers indexed by the vertices of $C$ satisfy the Kashaev equation. The Kashaev equation is quadratic in each of its variables, so we in general have two choices in solving for one value in terms of the remaining seven. If these seven values are all positive, then both solutions are real, and the larger solution is positive. This leads to a recurrence on positive-valued arrays on $\Z^3$ that we call the \emph{positive Kashaev recurrence}; it expresses the value at the ``top vertex'' of each unit cube in terms of the $7$ values underneath it.

Our first observation is that solutions of this positive recurrence satisfy an additional algebraic constraint not implied by the Kashaev equation alone. This constraint involves the values indexed by the $27$ vertices of a $2 \times 2 \times 2$ cube in~$\Z^3$. A solution of the Kashaev equation that satisfies this constraint is called \emph{coherent}.

The \emph{hexahedron recurrence} is a birational recurrence satisfied by an array indexed by the vertices and (centers of) two-dimensional faces of the standard tiling of $\R^3$ with unit cubes. This recurrence was introduced by Kenyon and Pemantle \cite{mainkp} in the context of statistical mechanics as a way to count ``taut double-dimer configurations'' of certain graphs. It also describes a~relationship among principal and ``almost principal'' minors of a square matrix \cite{otherhex}.

A key observation of Kenyon and Pemantle \cite{mainkp} was that restricting an array satisfying the hexahedron recurrence to the vertices of the standard tiling of $\R^3$ with cubes (i.e., to $\Z^3$) yields an array satisfying the Kashaev equation. However, not all solutions of the Kashaev equation can be obtained this way. Our main result (Theorem~\ref{galoisfromintro}) states that, modulo some natural technical conditions, a solution of the Kashaev equation can be extended to a solution of the hexahedron recurrence if and only if it is coherent.

We then generalize this result to a certain subclass of $3$-dimensional cubical complexes. We show that a suitable generalization of Theorem~\ref{galoisfromintro} holds for these complexes (Proposition~\ref{cc_easy_direction} and Theorem~\ref{cc_main_generalization}), but that the corresponding statement can be false for cubical complexes outside this subclass (Theorem~\ref{when_theorem_fails}).

We use this generalization to study the relations among principal minors of symmetric matrices. Given a symmetric matrix $M$, we associate principal minors of $M$ to the vertices of a~cubical complex, so that the resulting array is a coherent solution of the Kashaev equation. Conversely, for any generic coherent solution of the Kashaev equation, there exists a symmetric matrix whose principal minors appear as the entries of the given array. This leads to Theorem~\ref{main_all_minors_simple}, which provides a simple test for whether a $2^n$-tuple of complex numbers (satisfying certain genericity conditions) arises as a collection of principal minors of an $n \times n$ symmetric matrix. An alternative criterion was given by L.~Oeding \cite{oeding}.

Going in another direction, we develop an axiomatic setup for pairs of recurrences whose behavior is similar to that of the Kashaev equation and the hexahedron recurrence, respectively. Theorem~\ref{maintheoremgen} generalizes Theorem~\ref{galoisfromintro} to this class of recurrences.

Among the applications of this generalization, we study a set of equations that appear in the context of \emph{s-holomorphicity} in discrete complex analysis. We introduce an equation~\eqref{qcdef}, similar to the Kashaev equation for arrays indexed by $\Z^2$, along with equations~\eqref{2dh1}--\eqref{2dh3}, similar to the hexahedron recurrence for arrays indexed by the edges and vertices of the standard tiling of~$\R^2$ with unit squares. The equations~\eqref{2dh2}--\eqref{2dh3} for the edge values are independent of the values on the vertices, and can be used (with small modifications) to define s-holomorphic functions on the tiling of~$\R^2$ with unit squares. While the equations~\eqref{qcdef} and~\eqref{2dh1}--\eqref{2dh3} have been studied before (cf.~\cite{chelkaksmirnov}), our main novelty is the notion of coherence similar to that for the Kashaev equation.

As another application, we introduce additional recurrences exhibiting hexahe\-dron-like behavior that have their origins in the theory of cluster algebras. Whereas the connections with cluster algebras are to be discussed elsewhere, the definitions of coherence for these recurrences are provided herein.

The paper is organized as summarized in the table below:

\begin{center}
\begin{tabular}{ | l | c | c | }
\hline & \begin{tabular}{ c } Definitions\\ and results \end{tabular} & \begin{tabular}{ c } Proofs and\\ generalizations \end{tabular}\\
\hline Kashaev equation in $\Z^3$ & Section \ref{main-results} & Section \ref{mainproof} \\
\hline Kashaev equation for cubical complexes & Sections \ref{preliminaries}, \ref{principal_minors_section} & Sections \ref{cc_proof_section}, \ref{matrix_proofs} \\
\hline Other Kashaev-like recurrences & Sections \ref{s_holo_section}, \ref{gen_from_ca} & Section \ref{klikerecurrences} \\ \hline
\end{tabular}
\end{center}

We next review the content of each section of the paper. Section~\ref{main-results} introduces the basic concepts. Its main result is Theorem~\ref{galoisfromintro}, which has been discussed above. The results from Section~\ref{main-results} are proved in Section~\ref{mainproof}.

While Sections~\ref{main-results}~and~\ref{mainproof} are necessary for the rest of the paper, Sections~\ref{preliminaries}, \ref{principal_minors_section}, \ref{cc_proof_section}, \ref{matrix_proofs} are independent of Sections~\ref{s_holo_section}, \ref{gen_from_ca}, \ref{klikerecurrences}, and vice versa. In Section~\ref{preliminaries}, we discuss some combinatorial tools involving cubical complexes and zonotopal tilings that we use in Sections~\ref{principal_minors_section},~\ref{cc_proof_section}, and~\ref{matrix_proofs}. In Section~\ref{principal_minors_section}, we review the background from Kenyon and Pemantle \cite{otherhex} on the use of the hexahedron recurrence and the Kashaev equation in the study of principal and almost principal minors. In that section, we also state a version of Theorem~\ref{galoisfromintro} for certain cubical complexes, and then apply this result to the study of principal minors of symmetric matrices. In Section~\ref{cc_proof_section}, we extend Theorem~\ref{galoisfromintro} to the setting of cubical complexes, and in the process prove some results from Section~\ref{principal_minors_section}. In Section~\ref{matrix_proofs}, we prove the remaining results from Section~\ref{principal_minors_section}.

In Section~\ref{s_holo_section}, we discuss a condition similar to the Kashaev equation that arises in the context of s-holomorphicity. In Section~\ref{gen_from_ca}, we discuss some additional recurrences with behavior similar to the Kashaev equation and hexahedron recurrence, which are related to cluster algebras. Sections~\ref{s_holo_section} and~\ref{gen_from_ca} can be read independently of each other. In Section~\ref{klikerecurrences}, we describe an axiomatic setup for equations with properties similar to those of the Kashaev equation, and prove a more general version of Theorem~\ref{galoisfromintro}. In the process, we prove all of the results from Sections~\ref{s_holo_section}--\ref{gen_from_ca}.

This paper is a slightly edited version of the author's Ph.D.\ Thesis~\cite{Leaf-Thesis}.

\section[The Kashaev equation in $\mathbb{Z}^3$]{The Kashaev equation in $\boldsymbol{\mathbb{Z}^3}$} \label{main-results}

In this section, we introduce the Kashaev equation, the hexahedron recurrence, and the K-hexahedron equations. We then state our main results (Theorems~\ref{galoisfromintro}--\ref{extensions_that_agree}) about the Kashaev equation for arrays indexed by $\Z^3$.

\begin{Definition} \label{cubekashdef}Let $\zzzz, \dots, \zooo \in \C$ be $8$ numbers indexed by the vertices of a cube, as shown in Fig.~\ref{labeledcube}. We say that these $8$ numbers satisfy the \emph{Kashaev equation} if
\begin{gather} \label{kashforcube}
2\big(a^2 + b^2 + c^2 + d^2\big) - (a + b + c + d)^2 - 4 (s + t) = 0,
\end{gather}
where $a$, $b$, $c$, $d$, $s$, $t$ are the monomials defined in Fig.~\ref{labeledcube}. Notice that the equation~\eqref{kashforcube} is invariant under the symmetries of the cube. Thus, reindexing the $8$ values using an isomorphic labeling of the cube does not change the Kashaev equation.

\begin{figure}[ht]\centering
\begin{tikzpicture}
\draw (0,0)--(2.5,0)--(2.5,2.5)--(0,2.5)--(0,0);
\draw (1,1)--(3.5,1)--(3.5,3.5)--(1,3.5)--(1,1);
\draw (0,0)--(1,1);
\draw (2.5,0)--(3.5,1);
\draw (2.5, 2.5)--(3.5, 3.5);
\draw (0,2.5)--(1,3.5);

\filldraw (0,0) circle (1.5pt);
\filldraw (2.5,2.5) circle (1.5pt);
\filldraw (3.5,1) circle (1.5pt);
\filldraw (1,3.5) circle (1.5pt);

\filldraw[black] (2.5, 0) circle (1.5pt);
\filldraw[white] (2.5, 0) circle (1pt);
\filldraw[black] (0, 2.5) circle (1.5pt);
\filldraw[white] (0, 2.5) circle (1pt);
\filldraw[black] (1, 1) circle (1.5pt);
\filldraw[white] (1, 1) circle (1pt);
\filldraw[black] (3.5, 3.5) circle (1.5pt);
\filldraw[white] (3.5, 3.5) circle (1pt);

\draw (0,0) node[anchor=east]{$\zzzz$};
\draw (0,2.5) node[anchor=east]{$\zzoz$};
\draw (1,1) node[anchor=east]{$\zzzo$};
\draw (1,3.5) node[anchor=east]{$\zzoo$};
\draw (2.5,0) node[anchor=west]{$\zozz$};
\draw (2.5,2.5) node[anchor=west]{$\zooz$};
\draw (3.5,1) node[anchor=west]{$\zozo$};
\draw (3.5,3.5) node[anchor=west]{$\zooo$};

\draw (10,3.5) node[anchor=north]{\begin{tabular}{c c c}
$a = \zzzz \zooo,$ \\
\\
$b = \zozz \zzoo,$ & \qquad\qquad& $s = \zzzz \zzoo \zozo \zooz,$ \\
\\
$c = \zzoz \zozo,$ & & $t = \zooo \zozz \zzoz \zzzo.$ \\
\\
$d = \zzzo \zooz,$ &
\end{tabular}};

\end{tikzpicture}
\caption{Notation used in Definition~\ref{cubekashdef}. The quantities $a$, $b$, $c$, and $d$ are the products of the values at opposite vertices of the cube, and $s$ and $t$ are the products corresponding to the two inscribed tetrahedra.} \label{labeledcube}
\end{figure}
\end{Definition}

\begin{Definition}\label{kashaevdef}We say that a $3$-dimensional array $\mathbf{x} \in \C^{\Z^3}$ satisfies the Kashaev equation if its components labeled by the vertices of any unit cube in $\Z^3$ satisfy~\eqref{kashforcube}. More formally, given a unit cube $C$ in~$\Z^3$, define $K^C \colon \C^{\Z^3} \rightarrow \C$ by
\begin{gather*}
K^C(\mathbf{x}) = 2\big(a^2 + b^2 + c^2 + d^2\big) - (a + b + c + d)^2 - 4 (s + t),
\end{gather*}
where $a$, $b$, $c$, $d$, $s$, $t$ are the monomials in the components of $\mathbf{x}$ at the vertices of $C$, defined as in Fig.~\ref{labeledcube}. We then say that~$\mathbf{x}$ satisfies the Kashaev equation if $K^C(\mathbf{x}) = 0$ for every unit cube~$C$ in~$\Z^3$.
\end{Definition}

The Kashaev equation was originally introduced by R.~Kashaev \cite{kashaev} in the study of the star-triangle move in the Ising model. It also appears as an identity involving principal minors of a~symmetric matrix~\cite{otherhex}; this connection is discussed in Section~\ref{principal_minors_section}. Furthermore, up to changes of sign, the Kashaev equation can be interpreted as the vanishing of Cayley's hyperdeterminant of a~$2 \times 2 \times 2$ hypermatrix; this connection is also discussed in Section~\ref{principal_minors_section}. The Kashaev equation is also related to the theory of cluster algebras and to Descartes's formula for Apollonian circles, connections that we will explore in later work.

\begin{Remark} \label{twosolutionsremark}The left-hand side of equation~\eqref{kashforcube} is a quadratic polynomial in each of the variables~$z_{ijk}$. Solving for $\zooo$ in terms of the other $z_{ijk}$, we obtain
\begin{gather} \label{pmz000}
\zooo = \frac{ A \pm 2 \sqrt{D}}{\zzzz^2},
\end{gather}
where
\begin{gather}
A = 2 \zozz \zzoz \zzzo + \zzzz (\zozz \zzoo + \zzoz \zozo + \zzzo \zooz ),\nonumber\\
D = (\zzzz \zzoo + \zzoz \zzzo)(\zzzz \zozo + \zozz \zzzo)(\zzzz \zooz + \zozz \zzoz),\label{aandbs}
\end{gather}
and $\sqrt{D}$ denotes any of the two square roots of $D$. Notice that if all $7$ values $z_{ijk}$ contributing to the right-hand side of~\eqref{pmz000} are positive, then $D > 0$, so both solutions for $\zooo$ in~\eqref{pmz000} are real; moreover, the larger of these two solutions is positive. This observation suggests the following definition.
\end{Remark}

\begin{Definition} \label{poskashdef}We say that a $3$-dimensional array $\mathbf{x} \in (\pos)^{\Z^3}$ satisfies the \emph{positive Kashaev recurrence} if for every $(v_1, v_2, v_3) \in \Z^3$, we have
\begin{gather} \label{kashaevwithsr}
\zooo = \frac{A + 2 \sqrt{D}}{\zzzz^2},
\end{gather}
where $z_{ijk}$ denotes the component of $\mathbf{x}$ at $(v_1 + i, v_2 + j, v_3 + k)$, for $i, j, k \in \{0,1\}$, and we use the notation introduced in~\eqref{aandbs}, with the conventional meaning of the square root.
\end{Definition}

\begin{Remark}By Remark~\ref{twosolutionsremark}, any solution of the positive Kashaev recurrence is a positive real solution of the Kashaev equation. However, the converse is false; there exist arrays $\mathbf{x} \in (\pos)^{\Z^3}$ satisfying the Kashaev equation which do not satisfy the positive Kashaev recurrence. (There exist positive $z_{ijk}$ such that both solutions for $\zooo$ in~\eqref{pmz000} are positive.)
\end{Remark}

Any solution of the positive Kashaev recurrence must satisfy certain algebraic equations which are not implied by the Kashaev equation.

\begin{Definition} \label{kcdef}Let $\mathbf{x} \in \C^{\Z^3}$. Let $v$, $w$ be two opposite vertices in a unit cube $C$ in~$\Z^3$. We set
\begin{gather*}
K^C_v(\mathbf{x}) = \frac{1}{4} \frac{\partial K^C}{\partial x_w}(\mathbf{x})
= \frac{1}{2} \big(\zooo \zzzz^2 - \zzzz (\zozz \zzoo + \zzoz \zozo + \zzzo \zooz)\big) - \zozz \zzoz \zzzo,
\end{gather*}
where we use a labeling of the components of $\mathbf{x}$ on the vertices of $C$ as in Fig.~\ref{labeledcube}, with $\zzzz$ corresponding to the component of~$\mathbf{x}$ at~$v$.
\end{Definition}

\begin{Definition}Given $v \in \Z^3$ and $i_1, i_2, i_3 \in \{-1, 1\}$, define $C_v(i_1, i_2, i_3)$ to be the unique unit cube containing the vertices~$v$ and $v + (i_1, i_2, i_3)$.
\end{Definition}

\begin{Proposition} \label{A2B2proposition}
Suppose that $\mathbf{x} = (x_s) \in \C^{\Z^3}$ satisfies the Kashaev equation. Then for any $v \in \Z^3$,
\begin{gather} \label{precoherencecondition}
\begin{split}
\left( \prod_{C \ni v} K^C_v (\mathbf{x}) \right)^2
= \left( \prod_{S \ni v} (x_v x_{v_2} + x_{v_1} x_{v_3}) \right)^2,
\end{split}
\end{gather}
where
\begin{itemize}\itemsep=0pt
\item the first product is over the $8$ unit cubes $C$ incident to the vertex~$v$,
\item the second product is over the $12$ unit squares $S$ incident to~$v$ $($cf.\ Fig.~{\rm \ref{latticesquaresv})}, and
\item $v$, $v_1$, $v_2$, $v_3$ are the vertices of such a unit square $S$ listed in cyclic order.
\end{itemize}
Moreover, the following strengthening of~\eqref{precoherencecondition} holds:
\begin{gather} \label{altprecoherencecondition}
\left( \prod_{\substack{C = C_v(i_1, i_2, i_3)\\ i_1, i_2, i_3 \in \{-1,1\} \\ i_1 i_2 i_3 = 1}} K_v^C(\mathbf{x}) \right)^2 = \left( \prod_{\substack{C = C_v(i_1, i_2, i_3)\\ i_1, i_2, i_3 \in \{-1,1\} \\ i_1 i_2 i_3 = -1}} K_v^C(\mathbf{x}) \right)^2 = \prod_{S \ni v} (x_v x_{v_2} + x_{v_1} x_{v_3}),
\end{gather}
where the rightmost product is the same as in~\eqref{precoherencecondition}.
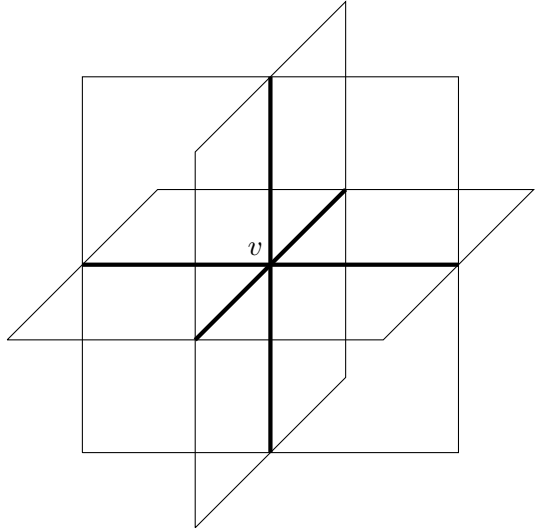
\begin{figure}[ht]\centering
\begin{tikzpicture}

\draw (-3.5,-1)--(1.5,-1)--(3.5,1)--(-1.5,1)--(-3.5,-1);
\draw (-1,-3.5)--(-1,1.5)--(1,3.5)--(1,-1.5)--(-1,-3.5);
\draw (-2.5,-2.5)--(-2.5,2.5)--(2.5,2.5)--(2.5,-2.5)--(-2.5,-2.5);
\draw[ultra thick] (-2.5,0)--(2.5,0);
\draw[ultra thick] (0,-2.5)--(0,2.5);
\draw[ultra thick] (-1,-1)--(1,1);
\draw (-.2,0) node[anchor=south]{$v$};

\end{tikzpicture}
\caption{The $12$ unit squares incident to $v \in \Z^3$.} \label{latticesquaresv}
\end{figure}
\end{Proposition}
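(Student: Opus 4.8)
The plan is to reduce the Proposition to a single identity valid on each unit cube, together with an elementary count of how the twelve unit squares through $v$ sit inside the eight unit cubes through $v$. Throughout, fix $v \in \Z^3$.

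\textbf{Step 1 (a local identity).} Fix a unit cube $C \ni v$, let $w$ be the vertex of $C$ opposite to $v$, and choose the labeling of Fig.~\ref{labeledcube} so that $\zzzz$ is the component of $\mathbf{x}$ at $v$ and $\zooo$ is the component at $w$. Comparing Definition~\ref{kcdef} with the polynomial $A$ from \eqref{aandbs} gives $K^C_v(\mathbf{x}) = \frac{1}{2}\bigl(\zzzz^{2}\zooo - A\bigr)$. Since $\mathbf{x}$ satisfies the Kashaev equation, $K^C(\mathbf{x}) = 0$, so Remark~\ref{twosolutionsremark} (i.e.\ solving \eqref{kashforcube} for $\zooo$) yields $\zzzz^{2}\zooo - A = \pm 2\sqrt{D}$, whence $K^C_v(\mathbf{x})^{2} = D$. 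Finally, read off from the factorization of $D$ in \eqref{aandbs} that its three factors are precisely the three quantities $x_v x_{v_2} + x_{v_1}x_{v_3}$ attached to the three two-dimensional faces $S$ of $C$ that contain $v$ (with $v, v_1, v_2, v_3$ the vertices of $S$ in cyclic order, so that $v, v_2$ and $v_1, v_3$ are the two pairs of opposite vertices). Thus, for every cube $C \ni v$,
\[
K^C_v(\mathbf{x})^{2} = \prod_{\substack{S \subset C\\ S \ni v}} \bigl(x_v x_{v_2} + x_{v_1}x_{v_3}\bigr),
\]
the product being over the three square faces of $C$ incident to $v$. (The residual freedom in the labeling --- the $S_3$ permuting the edges at $v$ --- is harmless, since both sides are visibly symmetric under it.)

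\textbf{Step 2 (incidence combinatorics).} Write each cube through $v$ as $C_v(i_1, i_2, i_3)$ with $(i_1, i_2, i_3) \in \{-1,1\}^3$, and record each square $S \ni v$ by its pair of coordinate directions $\{p,q\}$ together with the two signs $(\epsilon_p, \epsilon_q) \in \{-1,1\}^2$ indicating which way $S$ extends from $v$; this gives $3 \cdot 4 = 12$ squares, as it should. The three faces of $C_v(i_1, i_2, i_3)$ through $v$ are then the $\{p,q\}$-square with signs $(i_p, i_q)$, for the three pairs $\{p,q\}$. Taking the product of the Step 1 identity over all eight cubes, each of the twelve squares is produced exactly twice (for fixed $\{p,q\}$ and $(\epsilon_p, \epsilon_q)$ there are exactly two $(i_1, i_2, i_3)$ with $i_p = \epsilon_p$, $i_q = \epsilon_q$), which is \eqref{precoherencecondition}. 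For \eqref{altprecoherencecondition}, restrict the product to the four cubes with $i_1 i_2 i_3 = 1$, i.e.\ to sign vectors $(+,+,+), (+,-,-), (-,+,-), (-,-,+)$: projecting these four vectors onto any two of the coordinates is a bijection onto $\{-1,1\}^2$, so each of the twelve squares is now produced exactly once, giving $\bigl(\prod_{i_1 i_2 i_3 = 1} K^C_v(\mathbf{x})\bigr)^2 = \prod_{S \ni v}(x_v x_{v_2} + x_{v_1}x_{v_3})$. The same argument with $i_1 i_2 i_3 = -1$ gives the middle term of \eqref{altprecoherencecondition}, and multiplying the two recovers \eqref{precoherencecondition}.

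\textbf{Main obstacle.} There is no deep difficulty; the content is entirely bookkeeping. The two places where care is needed are (a) matching the three factors of $D$ in \eqref{aandbs} to the three faces of $C$ through $v$, which requires pinning down the labeling of Fig.~\ref{labeledcube} and tracking which pairs of vertices are ``opposite'' inside each face, and (b) the incidence counts ``exactly twice'' and ``exactly once'', which are the combinatorial heart of the statement and of the notion of coherence introduced afterward.
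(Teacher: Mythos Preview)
Your proof is correct and follows essentially the same approach as the paper's. The paper packages Step~1 as a standalone polynomial identity (Lemma~\ref{KCvgenxlemma}),
\[
\bigl(K_v^C(\mathbf{x})\bigr)^2 = \frac{x_v^2}{4}K^C(\mathbf{x}) + (x_v x_d + x_b x_c)(x_v x_e + x_a x_c)(x_v x_f + x_a x_b),
\]
valid without assuming $K^C(\mathbf{x})=0$, whereas you obtain the same conclusion by invoking the quadratic formula from Remark~\ref{twosolutionsremark}; the two derivations are equivalent once $K^C(\mathbf{x})=0$ is imposed. Your incidence count in Step~2 is exactly the paper's argument, stated with a bit more care (the paper's phrase ``counts each unit square \dots\ twice'' in the second display is in fact ``once'' for the four-cube product, as you correctly observe).
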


\begin{Theorem} \label{ABtheorem}
Suppose that $\mathbf{x} = (x_s) \in (\pos)^{\Z^3}$ satisfies the positive Kashaev recurrence. Then for any $v \in \Z^3$,
\begin{gather} \label{coherencecondition}
\begin{split}
\prod_{C \ni v} K^C_v (\mathbf{x})
= \prod_{S \ni v} (x_v x_{v_2} + x_{v_1} x_{v_3}),
\end{split}
\end{gather}
where the notational conventions are the same as in equation~\eqref{precoherencecondition}.
\end{Theorem}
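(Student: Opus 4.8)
By Proposition~\ref{A2B2proposition} (equation~\eqref{precoherencecondition}) the two sides of~\eqref{coherencecondition} have equal squares for any positive-valued solution of the Kashaev equation, and since each $x_vx_{v_2}+x_{v_1}x_{v_3}$ is a sum of products of positive numbers, the right-hand side of~\eqref{coherencecondition} is positive. So~\eqref{coherencecondition} is equivalent to showing that, for a solution $\mathbf{x}$ of the positive Kashaev recurrence, $\prod_{C\ni v}K^C_v(\mathbf{x})>0$; I would in fact prove the stronger fact that $K^C_v(\mathbf{x})>0$ for each of the eight unit cubes $C$ incident to $v$.

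The first step is to pin down what $K^C_v$ is on a Kashaev solution. Labelling the vertices of $C$ as in Fig.~\ref{labeledcube} with $\zzzz$ at $v$ and $\zooo$ at the opposite vertex $w$, Definition~\ref{kcdef} and~\eqref{aandbs} give $2K^C_v(\mathbf{x})=\zooo\,\zzzz^2-A$, while solving the Kashaev equation for $\zooo$ as in~\eqref{pmz000}--\eqref{aandbs} gives $\zooo\,\zzzz^2=A\pm2\sqrt{D}$. Hence $K^C_v(\mathbf{x})=\pm\sqrt{D_{C,v}}$, where $D_{C,v}$ is the product of $x_vx_{v_2}+x_{v_1}x_{v_3}$ over the three faces of $C$ containing $v$, the sign being $+$ exactly when $x_w$ is the larger of the two (real, distinct) roots of $K^C=0$ in the variable $\zooo$. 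Since every face incident to $v$ lies in exactly two of the eight cubes incident to $v$, one has $\prod_{C\ni v}\sqrt{D_{C,v}}=\prod_{S\ni v}(x_vx_{v_2}+x_{v_1}x_{v_3})$, so~\eqref{coherencecondition} amounts to the claim that all eight of these signs are $+$. For the cube $C=C_v(1,1,1)$ in which $v$ is the bottom vertex this is immediate, since the positive Kashaev recurrence~\eqref{kashaevwithsr} selects precisely the larger root: $K^C_v(\mathbf{x})=+\sqrt{D_{C,v}}$.

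It remains to establish the same sign in the seven cubes where $v$ is an intermediate or the top vertex, for which~\eqref{kashaevwithsr} says nothing directly about the pair $(v,w)$. The plan is to propagate the ``larger root'' property from the top vertex of each cube, where~\eqref{kashaevwithsr} guarantees it, to the remaining vertices. One way is an induction on the order in which the entries of $\mathbf{x}$ are produced by~\eqref{kashaevwithsr}: a newly created top value is by construction the larger root of its own cube equation, and a bookkeeping argument converts this into the ``larger root'' statement for the other seven vertices of that cube. A cleaner alternative is a connectedness argument: the ratio $\mathbf{x}\mapsto\prod_{C\ni v}K^C_v(\mathbf{x})\big/\prod_{S\ni v}(x_vx_{v_2}+x_{v_1}x_{v_3})$ is $\{+1,-1\}$-valued (by~\eqref{precoherencecondition}) and continuous on the space of positive solutions of the positive Kashaev recurrence (the denominator never vanishes), hence locally constant; one then evaluates it on a single example, such as the restriction to $\Z^3$ of a positive solution of the hexahedron recurrence, where $\prod_{C\ni v}K^C_v(\mathbf{x})$ can be exhibited as a product of positive quantities.

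The main obstacle is exactly this: promoting the squared identity of Proposition~\ref{A2B2proposition} to the signed identity~\eqref{coherencecondition} by controlling the sign of $K^C_v(\mathbf{x})$ in the cubes where $v$ is not the bottom vertex. Everything preceding it is a direct unwinding of Definitions~\ref{cubekashdef}--\ref{kcdef} together with~\eqref{pmz000}--\eqref{aandbs}, and I would devote most of the write-up to making the ``larger-root'' propagation (or the connectedness-plus-example argument) precise.
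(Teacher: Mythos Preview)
Your ``stronger fact'' that $K^C_v(\mathbf{x})>0$ for each of the eight cubes is false, and this breaks both of your proposed strategies as stated. Consider a cube $C$ in which $v$ is neither the top nor the bottom vertex, say $v$ at position $(1,0,0)$ so that $w$ sits at $(0,1,1)$. The positive Kashaev recurrence~\eqref{kashaevwithsr} tells you that $x_{(1,1,1)}$ is the larger root of $K^C=0$ solved for $z_{111}$, but it says nothing about whether $x_w=x_{(0,1,1)}$ is the larger root when you solve for $z_{011}$ instead. In fact it is the \emph{smaller} root: extending $\mathbf{x}$ to a positive array on $L$ via Proposition~\ref{easyfromkp} and applying Lemma~\ref{cornersofcubelemma}, one finds $K^C_v(\mathbf{x})=-x_{v+\frac{1}{2}(0,i_2,i_3)}x_{v+\frac{1}{2}(i_1,0,i_3)}x_{v+\frac{1}{2}(i_1,i_2,0)}<0$ whenever $\mathbf{i}\notin\{\pm(1,1,1)\}$. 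So six of the eight factors $K^C_v(\mathbf{x})$ are negative, and it is only the product $(-1)^6=1$ that gives the correct sign in~\eqref{coherencecondition}. Your induction plan (a), which aims to propagate the ``larger root'' property to every $(v,w)$ pair, is therefore attempting to prove something untrue.

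The paper's argument is close to your option~(b) but avoids the connectedness step entirely: by Proposition~\ref{easyfromkp}, \emph{every} positive Kashaev solution (not just a single example) extends to a positive solution of the K-hexahedron equations, and then Theorem~\ref{galoisfromintro}(b) (whose proof is the computation with Lemma~\ref{cornersofcubelemma} sketched above) gives coherence directly. If you want to salvage your connectedness argument, you would need to (i) drop the per-cube positivity claim and aim only for positivity of the product, and (ii) actually carry out the example computation, which amounts to reproving Lemma~\ref{cornersofcubelemma} anyway.
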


Proposition~\ref{A2B2proposition} asserts that the expressions being squared in equation~\eqref{precoherencecondition} are equal up to sign; in the case of the positive Kashaev recurrence, Theorem~\ref{ABtheorem} states that the signs must match.

\begin{Definition} \label{coherentkashaevdef}
We say that a solution $\mathbf{x}$ of the Kashaev equation is \emph{coherent} if it satisfies~\eqref{coherencecondition} for every $v \in \Z^3$. Equivalently, $\mathbf{x}$ is coherent if
\begin{gather} \label{coherenceconditionalt}
\prod_{\substack{C = C_v(i_1, i_2, i_3)\\ i_1, i_2, i_3 \in \{-1,1\} \\ i_1 i_2 i_3 = 1}} K_v^C(\mathbf{x}) =\prod_{\substack{C = C_v(i_1, i_2, i_3)\\ i_1, i_2, i_3 \in \{-1,1\} \\ i_1 i_2 i_3 = -1}} K_v^C(\mathbf{x})
\end{gather}
(cf.~\eqref{altprecoherencecondition}).
\end{Definition}

By Theorem~\ref{ABtheorem}, any solution of the positive Kashaev recurrence is a coherent solution of the Kashaev equation.

\begin{Remark} \label{rationalin26others}
If $\mathbf{x} = (x_s)_{s \in \Z^3}$ is a coherent solution of the Kashaev equation,
then for any $v \in \Z^3$, each of the formulas~\eqref{coherencecondition} and~\eqref{coherenceconditionalt} represent $x_{v + (1,1,1)}$ as a rational expression in the $26$ values $x_{v + (\beta_1, \beta_2, \beta_3)}$ for $(\beta_1, \beta_2, \beta_3) \in \{-1,0,1\}^3 \setminus \{(1,1,1)\}$.
\end{Remark}

Coherent solutions of the Kashaev equation are closely related to (a special case of) the hexahedron recurrence, introduced and studied by Kenyon and Pemantle \cite{mainkp}. We next discuss this important construction, which plays a central role in this paper.

\begin{Definition}Let $\lat$ be the subset of $\big(\frac12\Z\big)^3$ defined by
\begin{gather*}
\lat = \big\{(i,j,k) \in \R^3 \colon 2i, 2j, 2k, i + j + k \in \Z\big\}\\
\hphantom{\lat}{} = \Z^3 + \textstyle{\big\{ (0,0,0), \big( 0, \frac{1}{2}, \frac{1}{2} \big), \big( \frac{1}{2}, 0, \frac{1}{2} \big), \big( \frac{1}{2}, \frac{1}{2}, 0 \big) \big\}}.
\end{gather*}
Thus, $\lat$ contains $\Z^3$, together with the centers of unit squares with vertices in~$\Z^3$.
\end{Definition}

Kenyon and Pemantle \cite{mainkp} made the following important observation, which can be verified by direct computation.

\begin{Proposition}[{\cite[Proposition 7.6 and Corollary 7.7]{mainkp}}] \label{easyfromkp} \quad
\begin{enumerate}\itemsep=0pt
\item[$(a)$] Let $\mathbf{x} = (x_s) \in (\pos)^{\Z^3}$ satisfy the positive Kashaev recurrence. Extend $\mathbf{x}$ to an array $\mathbf{\tilde{x}} = (x_s) \in (\pos)^{\lat}$ by setting
\begin{gather} \label{khexspec}
x_{s}^2 = x_{v_1} x_{v_3} + x_{v_2} x_{v_4},
\end{gather}
for all $s \in \lat - \Z^3$, where $v_1,v_2,v_3,v_4 \in \Z^3$ appear in cyclic order along the unit square corresponding to $s$; see Fig.~{\rm \ref{facewithcenter}}. In other words, for all $v \in \Z^3$,
\begin{gather*}
x_{v + \left( 0, \half, \half \right)} = \sqrt{x_{v} x_{v + ( 0, 1, 1)} + x_{v +( 0, 1, 0 )} x_{v + ( 0, 0, 1)}},\\
x_{v + \left( \half, 0, \half \right)} = \sqrt{x_{v} x_{v + ( 1, 0, 1)} + x_{v + ( 1, 0, 0)} x_{v + ( 0, 0, 1)}},\\
x_{v + \left( \half, \half, 0 \right)} = \sqrt{x_{v} x_{v + ( 1, 1, 0 )} + x_{v + ( 1, 0, 0)} x_{v + ( 0, 1, 0 )}}.
\end{gather*}
Then for all $v \in \Z^3$, we have
\begin{gather}
\label{hexeqn1} \zohh = \frac{\zzhh \zhzh \zhhz + \zozz \zzoz \zzzo + \zzzz \zozz \zzoo}{\zzzz \zzhh},\\
\label{hexeqn2} \zhoh= \frac{\zzhh \zhzh \zhhz + \zozz \zzoz \zzzo + \zzzz \zzoz \zozo}{\zzzz \zhzh},\\
\label{hexeqn3} \zhho = \frac{\zzhh \zhzh \zhhz + \zozz \zzoz \zzzo + \zzzz \zzzo \zooz}{\zzzz \zhhz},\\
\label{hexeqnmain} \zooo = \frac{\zzhh^2 \zhzh^2 \zhhz^2 + A \zzhh \zhzh \zhhz + D}{\zzzz^2 \zzhh \zhzh \zhhz},
\end{gather}
where $z_{ijk}$ denotes the component of $\mathbf{\tilde{x}}$ at $v + (i,j,k)$, and $A$ and~$D$ are given by~\eqref{aandbs}.

\item[$(b)$] Conversely, suppose $\mathbf{\tilde{x}} = (x_s) \in (\pos)^\lat$ satisfies~\eqref{hexeqn1}--\eqref{hexeqnmain} together with~\eqref{khexspec}. Then the restriction of $\mathbf{\tilde{x}}$ to $\Z^3$ satisfies the positive Kashaev recurrence.
\end{enumerate}
\begin{figure}[ht]\centering
\begin{tikzpicture}
\draw (0,0)--(2.5,0)--(2.5,2.5)--(0,2.5)--(0,0);
\filldraw (0,0) circle (1.5pt);
\filldraw (2.5,0) circle (1.5pt);
\filldraw (0,2.5) circle (1.5pt);
\filldraw (2.5,2.5) circle (1.5pt);
\filldraw (1.25, 1.25) circle (1.5pt);
\draw (0,0) node[anchor=east]{$v_1$};
\draw (0,2.5) node[anchor=east]{$v_4$};
\draw (2.5, 0) node[anchor=west]{$v_2$};
\draw (2.5,2.5) node[anchor=west]{$v_3$};
\draw (1.25,1.25) node[anchor=north]{$s$};
\end{tikzpicture}
\caption{The points involved in equation~\eqref{khexspec}.}\label{facewithcenter}
\end{figure}
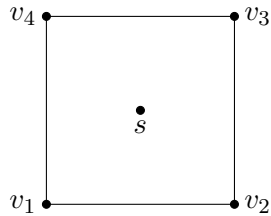
\end{Proposition}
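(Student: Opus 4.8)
\emph{Plan of proof.} The whole proposition reduces to two elementary observations about the quantity $D$ from~\eqref{aandbs} together with the recurrence formula~\eqref{kashaevwithsr}; the rest is bookkeeping.

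\emph{Step 1 (the key identity $\zzhh\zhzh\zhhz=\sqrt D$).} Unwinding~\eqref{khexspec} for the three unit squares through $\zzzz$ (with the cyclic labelings of Fig.~\ref{facewithcenter}), the adjacent face-center values satisfy
$$\zzhh^2 = \zzzz\zzoo + \zzoz\zzzo,\qquad \zhzh^2 = \zzzz\zozo + \zozz\zzzo,\qquad \zhhz^2 = \zzzz\zooz + \zozz\zzoz.$$
Multiplying these and comparing with the definition of $D$ in~\eqref{aandbs} gives $\zzhh^2\zhzh^2\zhhz^2 = D$; since all entries involved are positive, $\zzhh\zhzh\zhhz = \sqrt D$ with the conventional square root. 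I expect this to be the conceptual heart of the proof; everything else is routine algebra that forces positivity hypotheses only so that various square roots land on the same branch.

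\emph{Step 2 (the main equation and part~(b)).} Substituting $\zzhh\zhzh\zhhz = \sqrt D$ into the right-hand side of~\eqref{hexeqnmain} turns it into $\dfrac{D + A\sqrt D + D}{\zzzz^2\sqrt D} = \dfrac{2\sqrt D + A}{\zzzz^2}$, which is exactly the right-hand side of~\eqref{kashaevwithsr}. Hence, given~\eqref{khexspec}, equation~\eqref{hexeqnmain} holds if and only if the positive Kashaev recurrence holds at the corresponding vertex. This proves part~(b) outright: from~\eqref{khexspec} and~\eqref{hexeqnmain} we recover~\eqref{kashaevwithsr} for every $v$. For part~(a), this same computation (read in reverse) verifies~\eqref{hexeqnmain}, using that $\mathbf x$ satisfies~\eqref{kashaevwithsr} by hypothesis and that $\mathbf{\tilde x}$ satisfies~\eqref{khexspec} by construction.

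\emph{Step 3 (the face equations, part~(a)).} It remains to check~\eqref{hexeqn1}--\eqref{hexeqn3}; by the symmetries of the cube it is enough to treat~\eqref{hexeqn1}. Using $\zzzz\zozz\zzoo + \zozz\zzoz\zzzo = \zozz\,\zzhh^2$, the right-hand side of~\eqref{hexeqn1} collapses to $\dfrac{\zhzh\zhhz + \zozz\,\zzhh}{\zzzz}$. Since all entries are positive, $\zohh$ and this expression are nonnegative, so~\eqref{hexeqn1} is equivalent to equality of their squares. Expanding the square of the right-hand side, substituting $\zohh^2 = \zozz\zooo + \zooz\zozo$ (from~\eqref{khexspec} on the opposite face), $\zzzz^2\zooo = A + 2\sqrt D$ (from~\eqref{kashaevwithsr}), $\zhzh^2\zhhz^2 = (\zzzz\zozo+\zozz\zzzo)(\zzzz\zooz+\zozz\zzoz)$, $\zzhh^2 = \zzzz\zzoo+\zzoz\zzzo$, and $\zzhh\zhzh\zhhz = \sqrt D$ from Step~1, one sees the $\sqrt D$-terms cancel and what is left is the polynomial identity
$$(\zzzz\zozo+\zozz\zzzo)(\zzzz\zooz+\zozz\zzoz) + \zozz^2(\zzzz\zzoo+\zzoz\zzzo) = \zozz A + \zzzz^2\zooz\zozo,$$
which holds identically after expanding both sides (no further appeal to the Kashaev equation is needed). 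The main obstacle is purely organizational: matching the cyclic orders and ``opposite face'' assignments of Fig.~\ref{facewithcenter} to the monomials of Fig.~\ref{labeledcube}, and carrying out the (short but fiddly) polynomial expansions in the seven vertex variables.
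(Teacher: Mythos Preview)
Your proof is correct and is precisely the direct computation the paper alludes to (the paper does not spell out a proof, merely noting that the result ``can be verified by direct computation'' and citing \cite{mainkp}). Your Step~3 simplification of the right-hand side of~\eqref{hexeqn1} to $(\zhzh\zhhz+\zozz\zzhh)/\zzzz$ is exactly the content of Proposition~\ref{likehexprop}, so you have in fact recovered that statement along the way.
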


\begin{Definition}[\cite{mainkp}] \label{hexdef} We say that an array $\mathbf{\tilde{x}} \in (\C^*)^{\lat}$ satisfies the \emph{hexahedron recurrence} if for any $v \in \Z^3$, $\mathbf{\tilde{x}}$ satisfies equations~\eqref{hexeqn1}--\eqref{hexeqnmain}. Notice that equations~\eqref{hexeqn1}--\eqref{hexeqnmain} involve the components of~$\mathbf{\tilde{x}}$ at the 14 points in~$L$ located at the boundary of the unit cube in $\Z^3$ with the vertices $v+(i,j,k)$, for $i,j,k\in\{0,1\}$, namely the 8 vertices of the cube, and the 6 centers of its faces.
\end{Definition}

The hexahedron recurrence was introduced in \cite{mainkp} in the context of statistical mechanics as a way to count ``taut double-dimer configurations'' of certain graphs. This recurrence also describes a relationship among principal and ``almost principal'' minors of a square matrix \cite{otherhex}, a connection we will discuss in Section~\ref{principal_minors_section}.

\begin{Remark} \label{directionality_of_hex}
The equations for the hexahedron recurrence, like those for the positive Kashaev recurrence above (and unlike the original Kashaev equation~\eqref{kashforcube}), have a ``preferred direction'', \emph{viz.}, the direction of increase of all three coordinates. While replacing the direction $(1,1,1)$ by the opposite direction $(-1,-1,-1)$ does not change these equations, using any of the six remaining directions $(\pm 1, \pm 1, \pm 1)$ yields a different recurrence. See Remark~\ref{addressing_directionality_of_hex}.
\end{Remark}

We now extend Proposition~\ref{easyfromkp} to complex-valued solutions of the hexahedron recurrence.

\begin{Theorem} \label{galoisfromintro1} \quad
\begin{enumerate}\itemsep=0pt
\item[$(a)$] Let $\mathbf{x} = (x_s) \in (\C^*)^{\Z^3}$ be a coherent solution of the Kashaev equation, with
\begin{gather} \label{conditionnonzerofaces}
x_v x_{v + e_i + e_j} + x_{v + e_i} x_{v + e_j} \not= 0
\end{gather}
for all $v \in \Z^3$ and all distinct $i, j \in \{1,2,3\}$. Then $\mathbf{x}$ can be extended to an array $\mathbf{\tilde{x}} = (x_s) \in (\C^*)^{\lat}$ satisfying the hexahedron recurrence along with~\eqref{khexspec}.
\item[$(b)$] Conversely, suppose $\mathbf{\tilde{x}} = (x_s) \in (\C^*)^{\lat}$ satisfies the hexahedron recurrence along with~\eqref{khexspec}. Then the restriction of $\mathbf{\tilde{x}}$ to $\Z^3$ is a coherent solution of the Kashaev equation and satisfies condition~\eqref{conditionnonzerofaces}.
\end{enumerate}
\end{Theorem}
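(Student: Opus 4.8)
The plan is to treat the two directions separately, starting with the easier part~(b), which upgrades Proposition~\ref{easyfromkp}(b) from positive arrays to $\C^*$-valued arrays. Given $\mathbf{\tilde{x}} \in (\C^*)^{\lat}$ satisfying \eqref{hexeqn1}--\eqref{hexeqnmain} and \eqref{khexspec}, the face values are nonzero by hypothesis, so for every $v$ and distinct $i,j$ the quantity $x_v x_{v+e_i+e_j} + x_{v+e_i} x_{v+e_j}$ equals the square of a nonzero number, hence is nonzero; this gives \eqref{conditionnonzerofaces} immediately. That the restriction satisfies the Kashaev equation \eqref{kashforcube} at each unit cube is a pure polynomial identity: one eliminates $\zzhh,\zhzh,\zhhz$ from \eqref{hexeqnmain} using \eqref{khexspec} (which is legitimate since these are nonzero) and checks that \eqref{kashforcube} results; this is the same computation Kenyon--Pemantle do, valid over any field. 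For coherence, the strategy is to express each factor $K^C_v(\mathbf{\tilde{x}})$ appearing in \eqref{coherencecondition} in terms of the face-center values: comparing Definition~\ref{kcdef} with the hexahedron equations, one sees that $K^C_v$ (for $C$ a cube with ``bottom vertex'' at the copy of $v$) is, up to an explicit monomial factor in the face values, equal to the corresponding face-value monomial $\zzhh \zhzh \zhhz$-type expression; multiplying the eight such identities over the cubes $C \ni v$ and using \eqref{khexspec} to rewrite the face values, the monomial factors telescope and one lands exactly on $\prod_{S \ni v}(x_v x_{v_2} + x_{v_1} x_{v_3})$. The bookkeeping here — matching which face center sits opposite which vertex in which of the eight cubes, and tracking signs/exponents so the telescoping is exact — is the most error-prone piece, but it is finite and combinatorial.

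For direction~(a), the plan is to construct $\mathbf{\tilde{x}}$ by \emph{first} defining the face-center values via \eqref{khexspec} — each $x_s^2 = x_{v_1}x_{v_3} + x_{v_2}x_{v_4}$ is a nonzero complex number by \eqref{conditionnonzerofaces}, so we choose a square root $x_s \in \C^*$ (arbitrarily) for each $s \in \lat - \Z^3$ — and \emph{then} verifying that equations \eqref{hexeqn1}--\eqref{hexeqnmain} hold for the resulting extension. Equations \eqref{hexeqn1}--\eqref{hexeqn3} are not constraints at all: they \emph{define} nothing new here because our $x$ is already given on all of $\Z^3$, so each of them becomes an assertion that must be checked. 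The key point is that once \eqref{khexspec} is imposed, each of \eqref{hexeqn1}--\eqref{hexeqn3} squares to a polynomial identity in the $\Z^3$-values that follows purely from the definitions (these are the identities underlying Proposition~\ref{easyfromkp}(a) in the positive case, and squaring removes the ambiguity of square-root choices); so \eqref{hexeqn1}--\eqref{hexeqn3} hold up to sign regardless of which roots we picked. To kill the signs, observe that replacing $x_s \to -x_s$ for a single face center $s$ flips the sign on the right-hand sides of exactly those of \eqref{hexeqn1}--\eqref{hexeqn3} and \eqref{hexeqnmain} in which $x_s$ appears linearly, and one checks the incidence pattern is such that the signs of \eqref{hexeqn1}--\eqref{hexeqn3} can be corrected consistently: concretely, fix an order and adjust the three face-center sign choices around each cube so that \eqref{hexeqn1}, \eqref{hexeqn2}, \eqref{hexeqn3} each hold on the nose. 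Here is where coherence enters and is the crux of the argument: after \eqref{hexeqn1}--\eqref{hexeqn3} are arranged to hold exactly, equation \eqref{hexeqnmain} must be verified, and squaring \eqref{hexeqnmain} again reduces to a polynomial identity in the $\Z^3$-values (using $K^C(\mathbf{x})=0$) — but the \emph{sign} with which \eqref{hexeqnmain} holds is precisely the sign discrepancy in \eqref{coherencecondition} versus \eqref{precoherencecondition}. Coherence, i.e.\ \eqref{coherencecondition}, is exactly the statement that this sign is $+1$.

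So the skeleton is: (i) reduce every hexahedron equation, after squaring, to the Kashaev equation and the definitions \eqref{khexspec}, using the nonvanishing hypotheses to divide legitimately; (ii) analyze how the square-root sign choices for the $x_s$ propagate through \eqref{hexeqn1}--\eqref{hexeqn3} and \eqref{hexeqnmain}, and show the three choices per cube can be pinned down to make \eqref{hexeqn1}--\eqref{hexeqn3} exact; (iii) show that the remaining sign, governing whether \eqref{hexeqnmain} holds or holds with a minus, is controlled by the quantity in \eqref{coherencecondition} — so that coherence is equivalent to \eqref{hexeqnmain} holding. I expect step~(ii)--(iii), the sign propagation analysis, to be the main obstacle: the polynomial identities are mechanical, but organizing the square-root choices globally on $\lat$ (not just cube-by-cube — one must check the local choices are consistent across shared faces) and isolating the one residual sign as the coherence sign requires care. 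A clean way to handle the global consistency is to note that each face center belongs to the ``bottom'' position of cubes in a controlled way, so propagating choices in the direction $(1,1,1)$ never revisits a face center whose sign is already fixed; alternatively one appeals to the factorization in \eqref{altprecoherencecondition}, which already packages the relevant sign into the two products of $K^C_v$ over cubes with $i_1i_2i_3 = \pm 1$.
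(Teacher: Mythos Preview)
Your outline for part~(b) is essentially the paper's argument: the key step is Lemma~\ref{cornersofcubelemma}, which expresses each $K^C_v(\mathbf{x})$ as $\pm$ a product of three face-center values, and then multiplying over the eight cubes at $v$ gives exactly~\eqref{coherencecondition}. So that direction is fine.

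Part~(a), however, has a genuine gap. Your plan is to choose a square root $x_s$ for \emph{every} $s\in\lat-\Z^3$ and then claim that \eqref{hexeqn1}--\eqref{hexeqn3} ``square to polynomial identities in the $\Z^3$-values'' and therefore ``hold up to sign regardless of which roots we picked.'' This is false. Take the equivalent form \eqref{likehex1d}: $\zohh\,\zzzz = \zhzh\zhhz + \zzhh\,\zozz$. Squaring gives
\[
\zohh^2\,\zzzz^2 \;=\; \zhzh^2\zhhz^2 + \zzhh^2\,\zozz^2 + 2\,\zozz\,\zzhh\zhzh\zhhz,
\]
and the cross term $2\,\zozz\,\zzhh\zhzh\zhhz$ still carries the \emph{sign} of the product of three face values. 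So whether \eqref{likehex1d} holds even up to sign already depends on your root choices; it is not an identity you can check and then repair by flipping one face at a time. The same obstruction kills the idea that, after locally fixing \eqref{hexeqn1}--\eqref{hexeqn3}, the residual sign in \eqref{hexeqnmain} is ``the coherence sign'': coherence is a condition at a vertex involving all eight surrounding cubes simultaneously, not a per-cube sign, and your local scheme never explains how those eight constraints interact.

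The paper's route is structurally different. It chooses square roots only on an initial slab $\lati$, then \emph{defines} all remaining face values by running \eqref{likehex1d}--\eqref{likehex4d} as a recurrence (Corollary~\ref{conditionprop} shows \eqref{khexspec} propagates). This produces a solution $\mathbf{\tilde x}$ of the K-hexahedron equations whose restriction to $\Z^3$ is \emph{some} coherent solution $\mathbf{x}'$, by part~(b). The real work is then showing $\mathbf{x}'=\mathbf{x}$: one analyzes how flipping initial face-signs acts on the propagated solution (Lemma~\ref{sign_prop_khex_cube}), encodes this as a group homomorphism $\psi$ (Definition~\ref{def_of_psi_z3}), characterizes its image by the per-vertex product condition~\eqref{prod_over_cubes} (Lemma~\ref{prod_over_cubes_lemma}), and uses this to match $\mathbf{x}$ on successive levels (Lemma~\ref{levels_0_thru_5}). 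Coherence enters only at the very end, to force agreement beyond the finitely many levels one has matched by hand (Corollary~\ref{main_with_generic}), and a K\"onig/limiting argument removes genericity. Your alternative hint (``propagate in the direction $(1,1,1)$'') is pointing toward this architecture, but the substance---the $\psi$ map and Lemma~\ref{prod_over_cubes_lemma}---is missing, and without it the global sign-consistency problem is not addressed.
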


\begin{Remark}Theorem~\ref{ABtheorem} follows from Theorem~\ref{galoisfromintro1}(b), because a solution of the positive Kashaev recurrence can be extended to a solution of the hexahedron recurrence that satisfies~\eqref{khexspec} (by Proposition~\ref{easyfromkp}).
\end{Remark}

\begin{Remark}If $\mathbf{x}$ doesn't satisfy condition~\eqref{conditionnonzerofaces}, and an array $\mathbf{\tilde{x}}$ extending $\mathbf{x} \in \C^{\lat}$ satisfies~\eqref{khexspec}, then at least one of the face variables for $\mathbf{\tilde{x}}$ equals $0$, requiring us to divide by $0$ when we apply the hexahedron recurrence. On the other hand, if $\mathbf{\tilde{x}} \in \C^{\lat}$ satisfies equations~\eqref{hexeqn1}--\eqref{hexeqnmain} with the denominators multiplied out (so that the denominators can equal~$0$), then the restriction of $\mathbf{\tilde{x}}$ to $\Z^3$ doesn't necessarily satisfy the Kashaev equation.
\end{Remark}

The following statement is straightforward to check.

\begin{Proposition} \label{likehexprop}Let $\mathbf{\tilde{x}} = (x_s) \in (\C^*)^\lat$ be an array satisfying~\eqref{khexspec}, for any~$s \in \lat - \Z^3$. Then the following are equivalent:
\begin{itemize}\itemsep=0pt
\item $\mathbf{\tilde{x}}$ satisfies the hexahedron recurrence,
\item for any $v \in \Z^3$, we have
\begin{gather}
\label{likehex1d} \zohh = \frac{\zhzh \zhhz + \zzhh \zozz}{\zzzz},\\
\label{likehex2d} \zhoh = \frac{\zzhh \zhhz + \zhzh \zzoz}{\zzzz},\\
\label{likehex3d} \zhho = \frac{\zzhh \zhzh + \zhhz \zzzo}{\zzzz},\\
\label{likehex4d} \zooo = \frac{A + 2 \zzhh \zhzh \zhhz}{\zzzz^2},
\end{gather}
where, as before, $z_{ijk}$ denotes the component of $\mathbf{\tilde{x}}$ at $v + (i,j,k)$, and $A$ is given by~\eqref{aandbs}.
\end{itemize}
\end{Proposition}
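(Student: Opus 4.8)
The plan is to show that, under the assumption that $\mathbf{\tilde{x}}$ satisfies the face relation~\eqref{khexspec}, the hexahedron equations~\eqref{hexeqn1}--\eqref{hexeqnmain} are equivalent, one at a time, to the simpler equations~\eqref{likehex1d}--\eqref{likehex4d}. The key algebraic input is the identity
\begin{gather*}
\zzhh^2 = \zzzz \zzoo + \zzoz \zzzo, \qquad \zhzh^2 = \zzzz \zozo + \zozz \zzzo, \qquad \zhhz^2 = \zzzz \zooz + \zozz \zzoz,
\end{gather*}
which is exactly~\eqref{khexspec} applied to the three faces of the unit cube at $v$ that contain the vertex $v$ (equivalently, the three faces whose center lies at $v+(0,\half,\half)$, $v+(\half,0,\half)$, $v+(\half,\half,0)$). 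Note that the three factors of $D$ in~\eqref{aandbs} are precisely $\zzhh^2$, $\zhzh^2$, $\zhhz^2$, so $D = (\zzhh\zhzh\zhhz)^2$; I will use this repeatedly.

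First I would handle~\eqref{hexeqn1} versus~\eqref{likehex1d}. Multiply~\eqref{likehex1d} through by $\zzzz\zzhh$ to get $\zzzz\zzhh\cdot\zohh = \zzhh\zhzh\zhhz + \zzhh^2\,\zozz$. Now substitute $\zzhh^2 = \zzzz\zzoo + \zzoz\zzzo$ into the last term: this yields $\zzzz\zzhh\cdot\zohh = \zzhh\zhzh\zhhz + \zozz(\zzzz\zzoo+\zzoz\zzzo) = \zzhh\zhzh\zhhz + \zozz\zzoz\zzzo + \zzzz\zozz\zzoo$, which is exactly~\eqref{hexeqn1} after clearing the denominator $\zzzz\zzhh$. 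Since $\zzzz,\zzhh\neq 0$ (as $\mathbf{\tilde{x}}\in(\C^*)^\lat$), this substitution is reversible, so~\eqref{hexeqn1}~$\Leftrightarrow$~\eqref{likehex1d}. Equations~\eqref{hexeqn2} and~\eqref{hexeqn3} are handled identically using $\zhzh^2$ and $\zhhz^2$ respectively, invoking the symmetry of the cube (permuting the roles of the three coordinate directions).

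It remains to treat~\eqref{hexeqnmain} versus~\eqref{likehex4d}. Write $P = \zzhh\zhzh\zhhz$ for brevity. Multiplying~\eqref{likehex4d} by $\zzzz^2$ gives $\zzzz^2\cdot\zooo = A + 2P$. Multiplying~\eqref{hexeqnmain} by $\zzzz^2 P$ gives $\zzzz^2 P\cdot\zooo = P^2 + A P + D$; using $D = P^2$ the right side is $2P^2 + A P = P(2P + A)$, so after dividing by $P\neq 0$ this reads $\zzzz^2\cdot\zooo = 2P + A$, which is~\eqref{likehex4d}. Again every step is reversible because $\zzzz\neq 0$ and $P\neq 0$. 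This completes the equivalence of the bulleted conditions. I do not expect any real obstacle here; the only point requiring care is to record that the face relation~\eqref{khexspec} supplies exactly the three ``squared face variable'' identities used above, and that all divisions performed are by nonzero quantities, so that the equivalences genuinely go both ways rather than just in one direction.
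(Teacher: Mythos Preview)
Your proposal is correct and is exactly the verification the paper has in mind: the paper does not give a proof at all, merely stating that the proposition ``is straightforward to check,'' and your argument supplies precisely those details by using~\eqref{khexspec} to rewrite $\zzhh^2$, $\zhzh^2$, $\zhhz^2$ and hence $D=(\zzhh\zhzh\zhhz)^2$. There is nothing to add.
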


\begin{Definition} \label{khexeqndef}
Let $\mathbf{\tilde{x}} = (x_s) \in \C^{\lat}$ be an array with $x_s \not= 0$ for $s \in \Z^3$. We say that $\mathbf{\tilde{x}}$ satisfies the \emph{K-hexahedron equations} (a shorthand for ``Kashaev-hexahedron equations'') if $\mathbf{\tilde{x}}$ satisfies equation~\eqref{khexspec} for all $s \in \lat - \Z^3$, and satisfies equations~\eqref{likehex1d}--\eqref{likehex4d} for all $v \in \Z^3$.%
\end{Definition}

\begin{Remark}
By Proposition~\ref{likehexprop}, if $\mathbf{\tilde{x}} \in (\C^*)^{\lat}$, i.e., $\mathbf{\tilde{x}}$ has all nonzero components, then the following are equivalent:
\begin{itemize}\itemsep=0pt
\item $\mathbf{\tilde{x}}$ satisfies the K-hexahedron equations,
\item $\mathbf{\tilde{x}}$ satisfies the hexahedron recurrence, along with equation~\eqref{khexspec} for $s \in \lat - \Z^3$.
\end{itemize}
\end{Remark}

We next restate Theorem~\ref{galoisfromintro1} (and slightly strengthen part~(b) thereof) using the notion of the K-hexahedron equations.

\begin{Theorem} \label{galoisfromintro} \quad
\begin{enumerate}\itemsep=0pt
\item[$(a)$] Any coherent solution $\mathbf{x} = (x_s) \!\in\! (\C^*)^{\Z^3}$ of the Kashaev equation satisfying condi\-tion~\eqref{conditionnonzerofaces} can be extended to an array $\mathbf{\tilde{x}} = (x_s) \in \C^{\lat}$ satisfying the K-hexahedron equations.
\item[$(b)$] Conversely, suppose that $\mathbf{\tilde{x}} = (x_s) \in \C^\lat$ $($with $x_s \not= 0$ for all $s \in \Z^3)$ satisfies the K-hexahedron equations. Then the restriction of $\mathbf{\tilde{x}}$ to $\Z^3$ is a coherent solution of the Kashaev equation.
\end{enumerate}
\end{Theorem}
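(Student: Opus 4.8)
The plan is to prove the two directions separately; I would do part~(b) first, since the same local computations feed into part~(a).

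\textit{Part (b).} Let $\mathbf{\tilde x} = (x_s) \in \C^\lat$ satisfy the K-hexahedron equations, with $x_s \neq 0$ for $s \in \Z^3$, and set $\mathbf x = \mathbf{\tilde x}|_{\Z^3}$. I would first check that $\mathbf x$ solves the Kashaev equation: fixing a unit cube $C$ with least vertex $v$ and the labeling of Fig.~\ref{labeledcube} with $\zzzz$ at $v$, equation~\eqref{khexspec} gives $\zzhh^2\zhzh^2\zhhz^2 = D$ (with $D$ as in~\eqref{aandbs}), so~\eqref{likehex4d} reads $\zzzz^2\zooo - A = 2\zzhh\zhzh\zhhz$, and squaring gives $(\zzzz^2\zooo - A)^2 = 4D$, which by Remark~\ref{twosolutionsremark} and the cube-symmetry of~\eqref{kashforcube} is exactly $K^C(\mathbf x)=0$. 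For coherence, note that Definition~\ref{kcdef} and~\eqref{aandbs} give $K^C_v(\mathbf x) = \frac12(x_v^2 x_{v'} - A)$ (labeling with $\zzzz$ at $v$; $v'$ opposite $v$), and that $D$ equals the product of the squares of the three face-centers of $C$ at $v$; combined with $K^C(\mathbf x)=0$ this yields
\[ K^C_v(\mathbf x) = \varepsilon_{C,v}\prod_{s \in F(C,v)} x_s \qquad (\varepsilon_{C,v} \in \{1,-1\}), \]
where $F(C,v)$ denotes the set of the three face-centers of $C$ incident to $v$. The point is that $\varepsilon_{C,v}$ is \emph{universal}: substituting for the upper face-centers of $C$ the expressions from~\eqref{likehex1d}--\eqref{likehex3d} and simplifying with~\eqref{khexspec} and~\eqref{likehex4d}, one finds that $K^C_v(\mathbf x)/\prod_{s\in F(C,v)} x_s$ is a fixed rational function of the eight vertex values of $C$, and, squaring to $1$, it is constant; a short computation (with the remaining cases reduced via the point-reflection symmetry of the K-hexahedron equations, cf.~Remark~\ref{directionality_of_hex}) gives $\varepsilon_{C,v}=1$ when $v$ is the least or greatest vertex of $C$ and $\varepsilon_{C,v}=-1$ otherwise. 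Now fix $v$: the four cubes $C_v(i_1,i_2,i_3)$ with $i_1i_2i_3=1$ are $C_v(1,1,1)$ (where $v$ is least) together with three cubes where $v$ is neither least nor greatest, and similarly for $i_1i_2i_3=-1$ with $C_v(-1,-1,-1)$; moreover each of the twelve squares incident to $v$ lies in $F(C,v)$ for precisely one cube of each sign of $i_1i_2i_3$. Hence both products in~\eqref{coherenceconditionalt} equal $(1\cdot(-1)^3)\prod_{S\ni v} x_s = -\prod_{S\ni v}x_s$, so $\mathbf x$ is coherent.

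\textit{Part (a).} Let $\mathbf x = (x_s)\in(\C^*)^{\Z^3}$ be coherent with~\eqref{conditionnonzerofaces}. The task is to choose, for each face-center $s$, a value $x_s$ whose square is the (nonzero) right side of~\eqref{khexspec}, so that~\eqref{likehex1d}--\eqref{likehex4d} also hold. Two facts organize the construction. First, given~\eqref{khexspec}, equation~\eqref{likehex4d} for the cube $C$ with least vertex $w$ is equivalent to requiring that the product of the three face-centers of $C$ at $w$ equal $K^C_w(\mathbf x)$; since $\mathbf x$ solves~\eqref{kashforcube}, $K^C_w(\mathbf x)^2$ is the product of those three nonzero squares, so $K^C_w(\mathbf x)\neq 0$ and this can be met. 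Second (a direct calculation from~\eqref{khexspec} and that form of~\eqref{likehex4d}), once the three lower face-centers of $C$ are so chosen, the upper face-centers of $C$ defined by~\eqref{likehex1d}--\eqref{likehex3d} automatically satisfy~\eqref{khexspec}. Since every face-center is the lower face-center of exactly one cube and the upper face-center of exactly one cube, these facts let one propagate: from a compatible choice on a suitable initial configuration, using~\eqref{likehex1d}--\eqref{likehex3d} to define all remaining face-centers, one gets an array $\mathbf{\tilde x}$ satisfying~\eqref{khexspec} and~\eqref{likehex1d}--\eqref{likehex3d}. What remains is that~\eqref{likehex4d} persists, i.e.\ that $\prod_{s\in F(C,w)} x_s = K^C_w(\mathbf x)$ for \emph{every} cube $C$; the two sides agree after squaring, so the content is a sign, and coherence is what makes the sign come out right: the identity that must hold at each step of the propagation is, after reducing via~\eqref{likehex1d}--\eqref{likehex4d} on the previous layer, \eqref{khexspec}, and~\eqref{kashforcube}, equivalent to the coherence relation~\eqref{coherenceconditionalt} at the newly reached vertex.

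\textit{Main obstacle.} Part~(b) is essentially sign bookkeeping, the one real computation being the evaluation of the universal signs $\varepsilon_{C,v}$. The substantive difficulty is part~(a): organizing the propagation over all of $\Z^3$ and proving the ``master identity'' that~\eqref{likehex4d} is preserved --- equivalently, that the obstruction to lifting $\mathbf x$ vanishes precisely when $\mathbf x$ is coherent. This is where coherence is used in full strength, beyond the Kashaev equation and the weaker, unsigned relation~\eqref{precoherencecondition} of Proposition~\ref{A2B2proposition}.
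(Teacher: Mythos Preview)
Your part~(b) is correct and essentially the paper's argument: the sign identity $K^C_v(\mathbf x)=\varepsilon_{C,v}\prod_{s\in F(C,v)}x_s$ with $\varepsilon_{C,v}=+1$ iff $v$ is the top or bottom vertex of $C$ is precisely Lemma~\ref{cornersofcubelemma}, and your product over the eight cubes then reproduces the paper's proof verbatim. (The paper establishes the sign by an explicit ideal-membership computation in $\C[z_{ijk}]$; your ``constant rational function'' justification is a little loose, since one must also check that the sign does not depend on which square root is chosen for each lower face-center subject to~\eqref{likehex4d}, but this is routine.)

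For part~(a) your ingredients are right --- your second fact is Lemma~\ref{cubeconditionprop}, and your master identity (coherence at $w$ plus K-hexahedron on the seven other cubes at $w$ forces~\eqref{likehex4d} for $C_w(1,1,1)$) is exactly what drives everything --- but there is a genuine gap where you write ``from a compatible choice on a suitable initial configuration.'' The master identity only applies once K-hexahedron already holds on all seven other cubes at $w$, which for $w$ at height $h$ reaches down to cubes with bottom vertex at height $h-3$; a single layer of freely chosen face-centers therefore cannot start the induction, and producing a thicker compatible initial slab is essentially the whole theorem, not a detail. The paper does not attempt your direct propagation with $\mathbf x$ held fixed. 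Instead it extends \emph{arbitrary} initial data on $\lati$ by the full recurrence~\eqref{likehex1d}--\eqref{likehex4d} (so the resulting vertex values need not match~$\mathbf x$), then acts on initial data by sign arrays indexed by the face equivalence classes $\feq$ and analyzes the induced cube-sign map $\psi$ (Definition~\ref{def_of_psi_z3}, Lemmas~\ref{change_one_value_lemma}--\ref{prod_over_cubes_lemma}); this lets one adjust signs layer by layer so that the extension matches $\mathbf x$ on heights $0$ through $5$ (Lemma~\ref{levels_0_thru_5}), after which Remark~\ref{rationalin26others} forces agreement on all of $\Z^3$ in the generic case (Corollary~\ref{main_with_generic}). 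Genericity is then removed by a K\"onig's-lemma compactness argument. The sign-correction machinery through Lemma~\ref{levels_0_thru_5} is the construction of your missing ``compatible initial configuration,'' and the final limiting step has no analogue in your sketch.
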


The extension from $\mathbf{x}$ to $\mathbf{\tilde{x}}$ in Theorem~\ref{galoisfromintro}(a) is not unique. The theorem below clarifies the relationship between different extensions.

\begin{Theorem} \label{extensions_that_agree}Let $\mathbf{\tilde{x}} = (x_s) \in (\C^*)^{\lat}$ be an array satisfying the K-hexahedron equations.
\begin{enumerate}\itemsep=0pt
\item[$(a)$] For all $i \in \Z$, let $\alpha_i, \beta_i, \gamma_i \in \{-1, 1\}$. Let $\mathbf{\tilde{y}} = (y_s) \in (\C^*)^{\lat}$ be defined by
\begin{gather} \label{agree_on_vertices}
y_{( a,b,c)} = x_{(a,b,c)},\\
y_{\left( a, b + \half, c + \half \right)} = \beta_b \gamma_c x_{\left( a, b + \half, c + \half \right)},\\
y_{\left( a + \half, b, c + \half \right)} = \alpha_a \gamma_c x_{\left( a + \half, b, c + \half \right)},\\ \label{agree_on_L_3}
y_{\left( a + \half, b + \half, c \right)} = \alpha_a \beta_b x_{\left( a + \half, b + \half, c \right)},
\end{gather}
for $( a, b, c) \in \Z^3$. Then $\mathbf{\tilde{y}}$ satisfies the K-hexahedron equations.
\item[$(b)$] Conversely, suppose $\mathbf{\tilde{y}} = (y_s) \in (\C^*)^{\lat}$ is an array satisfying the K-hexahedron equations that agrees with $\mathbf{\tilde{x}}$ on $\Z^3$ $($cf.~\eqref{agree_on_vertices}$)$. Then there exist signs $\alpha_i, \beta_i, \gamma_i \in \{-1, 1\}$, $i \in \Z$, such that $\mathbf{\tilde{y}}$ is given by~\eqref{agree_on_vertices}--\eqref{agree_on_L_3} for $( a, b, c) \in \Z^3$.
\end{enumerate}
\end{Theorem}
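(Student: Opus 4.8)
The plan is to reduce both parts to a bookkeeping of signs over the three orientations of two-dimensional faces of $\lat$ (the faces spanning the $\{y,z\}$-, $\{x,z\}$-, and $\{x,y\}$-directions, which are the ones appearing with the sign patterns $\beta_b\gamma_c$, $\alpha_a\gamma_c$, $\alpha_a\beta_b$ in \eqref{agree_on_vertices}--\eqref{agree_on_L_3}).

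For part (a) I would simply substitute. Write each entry of $\mathbf{\tilde y}$ as $\varepsilon_s x_s$ with $\varepsilon_s\in\{-1,1\}$, where $\varepsilon_s=1$ on $\Z^3$ and, by \eqref{agree_on_vertices}--\eqref{agree_on_L_3}, $\varepsilon_s$ on a face is the product of the two among $\alpha_a,\beta_b,\gamma_c$ indexed by the two coordinate directions the face spans. Then \eqref{khexspec} is preserved since $\varepsilon_s^2=1$ and its right side involves only vertices. For \eqref{likehex1d}--\eqref{likehex3d}, the point is that in the equation centered at $v=(a,b,c)$ each face-monomial on the right (e.g.\ $\zhzh\zhhz$ and $\zzhh\zozz$ in \eqref{likehex1d}) carries the same sign as the left-hand face variable ($\zohh$ in \eqref{likehex1d}): in a product of two faces the sign attached to the shared spanning direction appears squared and cancels, leaving exactly the two signs of the target orientation, while a face-times-vertex monomial already has the target orientation. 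Hence the equation for $\mathbf{\tilde y}$ follows from that for $\mathbf{\tilde x}$ after dividing out this common sign. For \eqref{likehex4d}, $A$ is a polynomial in the (unchanged) vertex values and the only face-monomial $\zzhh\zhzh\zhhz$ acquires the sign $\alpha_a^2\beta_b^2\gamma_c^2=1$, so the equation holds verbatim. Since each $y_s=\pm x_s$ with $x_s\neq 0$, $\mathbf{\tilde y}\in(\C^*)^\lat$, proving (a).

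For part (b), since $\mathbf{\tilde x}$ and $\mathbf{\tilde y}$ satisfy \eqref{khexspec} with the same nonzero vertex values, $y_s^2=x_s^2$, so $y_s=\varepsilon_s x_s$ for unique signs; organize these as $\varepsilon^{(1)}_{a,b,c},\varepsilon^{(2)}_{a,b,c},\varepsilon^{(3)}_{a,b,c}$ for the faces spanning $\{y,z\},\{x,z\},\{x,y\}$. Comparing \eqref{likehex4d} for the two arrays: the left side $\zooo\zzzz^2-A$ is unchanged and nonzero (it equals $2\zzhh\zhzh\zhhz\neq 0$), which forces $\varepsilon^{(1)}_{a,b,c}\varepsilon^{(2)}_{a,b,c}\varepsilon^{(3)}_{a,b,c}=1$ for all $(a,b,c)$. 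Feeding this into the comparison of \eqref{likehex1d}--\eqref{likehex3d} for the two arrays (whose right sides are nonzero) forces $\varepsilon^{(1)}_{a,b,c}$ to be independent of $a$, $\varepsilon^{(2)}_{a,b,c}$ of $b$, and $\varepsilon^{(3)}_{a,b,c}$ of $c$; write them $f_1(b,c),f_2(a,c),f_3(a,b)$, subject to $f_1(b,c)f_2(a,c)f_3(a,b)=1$ identically. It remains to solve this functional equation, i.e.\ (passing to $\Z/2$ additively) to show every solution of $F_1(b,c)+F_2(a,c)+F_3(a,b)=0$ has the form $F_1=B_b+C_c$, $F_2=A_a+C_c$, $F_3=A_a+B_b$. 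Fixing reference indices $a_0,b_0,c_0$ and setting $C_c:=F_1(b_0,c)$, $A_a:=F_3(a,b_0)$, $B_b:=F_3(a_0,b)+F_3(a_0,b_0)$, one verifies these three identities by specializing the functional equation at $a=a_0$, at $b=b_0$, and at $c=c_0$ and back-substituting (using $2x=0$). Reading off the corresponding signs $\alpha_i,\beta_i,\gamma_i\in\{-1,1\}$ then exhibits $\mathbf{\tilde y}$ in the form \eqref{agree_on_vertices}--\eqref{agree_on_L_3}, proving (b). I expect the one genuinely delicate point to be this last cocycle-type computation, together with keeping the orientation/sign bookkeeping consistent across the four equations; everything else is a mechanical verification.
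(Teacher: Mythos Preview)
Your argument is correct and complete. For part (a) the sign bookkeeping you describe is exactly what needs to be checked, and for part (b) your chain of deductions---first $y_s^2=x_s^2$ from \eqref{khexspec}, then $\varepsilon^{(1)}\varepsilon^{(2)}\varepsilon^{(3)}=1$ from \eqref{likehex4d} (the nonvanishing you need is $x_{1\frac12\frac12}x_{000}\neq 0$ etc., guaranteed by the hypothesis $\mathbf{\tilde x}\in(\C^*)^L$), then the independence-of-one-coordinate from \eqref{likehex1d}--\eqref{likehex3d}, then the $\Z/2$ cocycle computation---is a valid and self-contained proof.

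The paper takes a rather different route. Rather than verifying the K-hexahedron equations directly, it packages the sign data into the map $\psi\colon\{-1,1\}^{\feq}\to\{-1,1\}^{\Z^3+(\frac12,\frac12,\frac12)}$ of Definition~\ref{def_of_psi_z3}, where $\feq$ is the set of equivalence classes of faces under ``translate across a cube.'' It then proves (Proposition~\ref{kernel_of_psi}) that $\ker\psi$ consists exactly of the sign patterns of the form $\alpha_a\beta_b\gamma_c$, and (Lemmas~\ref{form_when_t_is_in_kernel}--\ref{agree_when_t_is_in_kernel}) that two K-hexahedron solutions agreeing on $\Z^3$ differ by an element of $\ker\psi$; these lemmas in turn rest on the sign-propagation Lemma~\ref{sign_prop_khex_cube} and the reversibility Proposition~\ref{reversed_K_hex}. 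Both parts of the theorem are then read off from this framework. Your approach is more elementary and avoids introducing $\feq$, $\psi$, and the uplift $(\cdot)^{\uparrow L}$ altogether; the paper's approach costs more setup but that machinery is what drives the harder Theorem~\ref{galoisfromintro}(a), so the theorem at hand is essentially a free byproduct there. Your $\Z/2$ functional-equation step is the direct analogue of the paper's Proposition~\ref{kernel_of_psi}.
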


Theorems~\ref{galoisfromintro}--\ref{extensions_that_agree} are proved in Section~\ref{mainproof}.%

\section[Combinatorial preliminaries on cubical complexes and zonotopes]{Combinatorial preliminaries on cubical complexes\\ and zonotopes} \label{preliminaries}

In this section, we review some standard combinatorial background on cubical complexes and zonotopes. This section introduces some unconventional terminology which later sections will use.

\begin{Definition}\label{cc_def}A \emph{cubical complex} is a polyhedral complex whose cells are cubes of various dimensions, see \cite[Definitions~2.39 and~2.42]{kozlov}. We do not require that there exist an embedding of a cubical complex into Euclidean space such that every cell is a polyhedron. A cubical complex~$\varkappa$ is \emph{$d$-dimensional} if the dimension of the largest cube in~$\varkappa$ is~$d$; it is \emph{pure of dimension~$d$} if every cube of $\varkappa$ is either dimension $d$ or a face of some $d$-dimensional cube in~$\varkappa$. A \emph{quadrangulation} of a polygon $R$ in $\R^2$ is a realization of $R$ as a (pure, $2$-dimensional) cubical complex.
\end{Definition}

\begin{Definition} \label{interior_point_definition}Let $\varkappa$ be a cubical complex embedded (as a topological space) into a Euclidean space $\R^d$. A point $v$ in $\varkappa$ is called an \emph{interior point} of $\varkappa$ if $\varkappa$ contains a (small) open ball centered at~$v$. (This notion does not depend on the choice of embedding for a fixed $d$.)
\end{Definition}

\begin{Definition}\label{zonotope_definition}A \emph{$m$-dimensional zonotope} $\mathcal{Z}_{v_1, \dots, v_\ell}$ is the Minkowski sum of line segments $\sum\limits_{j = 1}^\ell [0, v_j]$ for $v_1, \dots, v_\ell \in \R^m$ spanning $\R^m$. A \emph{cubical tiling} of $\mathcal{Z}_{v_1, \dots, v_\ell}$ is a tiling of $\mathcal{Z}_{v_1, \dots, v_\ell}$ with the translates of the Minkowski sums $\sum\limits_{j \in I} [0, v_j]$ over $I \in \binom{[\ell]}{m}$ such that $\{v_j\colon j \in I\}$ is linearly independent. Cubical tilings of zonotopes are examples of cubical complexes.
\end{Definition}

\begin{Definition}\label{def_diamond_pn}We denote by $\mathbf{P}_n$ the regular $(2n)$-gon $\mathcal{Z}_{e_1, \dots, e_n}$ where $e_j = {\rm e}^{\pi {\rm i} (j - 1) / n} \allowbreak \in \C \cong \R^2$ for $j = 1, \dots, n$ using the standard identification between $\C$ and $\R^2$ (see Fig.~\ref{pn_zono_figure}). We denote by~$v_0$ the vertex of $\mathbf{P}_n$ corresponding to the origin. We define a \emph{$\Diamond$-tiling of $\mathbf{P}_n$} to be a cubical tiling of $\mathcal{Z}_{e_1, \dots, e_n}$, i.e., a tiling of $\mathbf{P}_n$ with the $\binom{n}{2}$ rhombi given by the translations of the Minkowski sums $[0, e_i] + [0, e_j]$ for $1 \le i < j \le n$ (see Fig.~\ref{2ngon_fig}). We label the vertices of a $\Diamond$-tiling of $\mathbf{P}_n$ by subsets of $[n]$ as follows: we label a vertex $v$ by $I \subseteq [n]$ if we can reach $v$ from $v_0$ by following the edges of the tiling corresponding to the vectors $e_j$ for $j \in I$ (see Fig.~\ref{2ngon_fig}).
\begin{figure}[ht]\centering
\begin{tabular}{ c c }
\begin{tikzpicture}[scale=1.5]
\draw[gray] (-2,0)--(2,0);
\draw[gray] (0,0)--(0,2);

\draw[ultra thick, ->] (0,0)--(1,0);
\draw[ultra thick, ->] (0,0)--({sqrt(2)/2},{sqrt(2)/2});
\draw[ultra thick, ->] (0,0)--(0,1);
\draw[ultra thick, ->] (0,0)--(-{sqrt(2)/2},{sqrt(2)/2});

\draw (1,0) node[anchor=west]{$e_1$};
\draw ({sqrt(2)/2},{sqrt(2)/2}) node[anchor=south west]{$e_2$};
\draw (0,1) node[anchor=south]{$e_3$};
\draw ({-sqrt(2)/2},{sqrt(2)/2}) node[anchor=south east]{$e_4$};
\end{tikzpicture} & \hspace{1 cm}
\begin{tikzpicture}[scale=1.5]
\draw (0,0)--(1,0)--({1+sqrt(2)/2},{sqrt(2)/2})--({1+sqrt(2)/2},{1+sqrt(2)/2})--({1},{1+sqrt(2)})--({0},{1+sqrt(2)})--({-sqrt(2)/2},{1+sqrt(2)/2})--({-sqrt(2)/2},{sqrt(2)/2})--(0,0);
\draw ({1/2},{(1+sqrt(2))/2}) node{$\mathbf{P}_4$};
\filldraw (0,0) circle (1.5pt);
\draw (0,0) node[anchor=north east]{$v_0$};
\end{tikzpicture}
\end{tabular}
\caption{On the left, the vectors $e_1$, $e_2$, $e_3$, $e_4$ from Definition~\ref{def_diamond_pn} when $n = 4$. On the right, the regular $8$-gon $\mathbf{P}_4$.} \label{pn_zono_figure}
\end{figure}
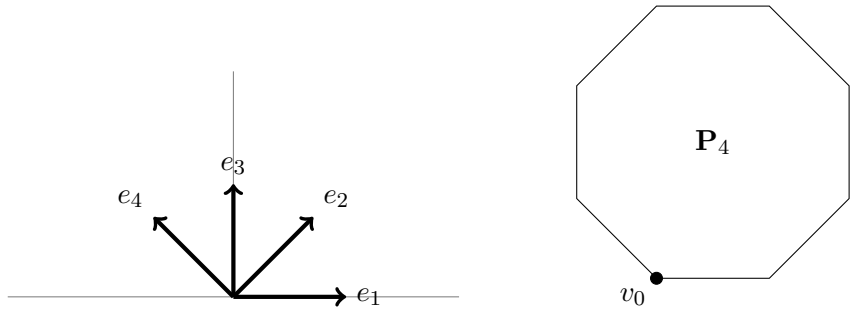
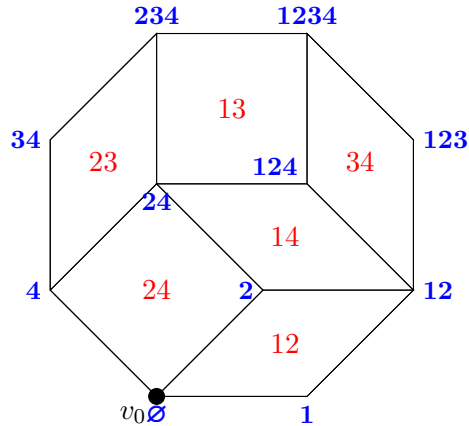
\begin{figure}[ht]\centering
\begin{tikzpicture}[scale=2]
\draw (0,0)--(1,0)--({1+sqrt(2)/2},{sqrt(2)/2})--({1+sqrt(2)/2},{1+sqrt(2)/2})--({1},{1+sqrt(2)})--({0},{1+sqrt(2)})--({-sqrt(2)/2},{1+sqrt(2)/2})--({-sqrt(2)/2},{sqrt(2)/2})--(0,0);

\draw (0,0)--(1,0)--({1+sqrt(2)/2},{sqrt(2)/2})--({sqrt(2)/2},{sqrt(2)/2})--(0,0);
\draw (0,0)--({sqrt(2)/2},{sqrt(2)/2})--(0,{sqrt(2)})--({-sqrt(2)/2},{sqrt(2)/2})--(0,0);
\draw[shift={({-sqrt(2)/2},{sqrt(2)/2})}] (0,0)--({sqrt(2)/2},{sqrt(2)/2})--({sqrt(2)/2},{1+sqrt(2)/2})--(0,1)--(0,0);
\draw[shift={({sqrt(2)/2},{sqrt(2)/2})}] (0,0)--(1,0)--({1 - sqrt(2)/2},{sqrt(2)/2})--({- sqrt(2)/2},{sqrt(2)/2})--(0,0);
\draw[shift={(0,{sqrt(2)})}] (0,0)--(1,0)--(1,1)--(0,1)--(0,0);
\draw[shift={({1+sqrt(2)/2},{sqrt(2)/2})}] (0,0)--(0,1)--({-sqrt(2)/2},{1 + sqrt(2)/2})--({-sqrt(2)/2},{sqrt(2)/2})--(0,0);
\filldraw (0,0) circle (1.5pt);
\draw (0,0) node[anchor=north east]{$v_0$};

{
\color{blue}
\small
\draw (0,0) node[anchor= north]{$\bm{\varnothing}$};
\draw (1,0) node[anchor=north]{$\mathbf{1}$};
\draw ({1+sqrt(2)/2},{sqrt(2)/2}) node[anchor=west]{$\mathbf{12}$};
\draw ({1+sqrt(2)/2},{1+sqrt(2)/2}) node[anchor=west]{$\mathbf{123}$};
\draw ({1},{1+sqrt(2)}) node[anchor=south]{$\mathbf{1234}$};
\draw ({0},{1+sqrt(2)}) node[anchor=south]{$\mathbf{234}$};
\draw ({-sqrt(2)/2},{1+sqrt(2)/2}) node[anchor=east]{$\mathbf{34}$};
\draw ({-sqrt(2)/2},{sqrt(2)/2}) node[anchor=east]{$\mathbf{4}$};

\draw ({sqrt(2)/2},{sqrt(2)/2}) node[anchor=east]{$\mathbf{2}$};
\draw (0,{sqrt(2)}) node[anchor=north]{$\mathbf{24}$};
\draw (1,{sqrt(2)}) node[anchor=south east]{$\mathbf{124}$};

}

{
\color{red}

\draw ({(1+sqrt(2)/2)/2},{sqrt(2)/4}) node{$12$};
\draw (0,{sqrt(2)/2}) node{$24$};
\draw ({sqrt(2)/4 + 1/2},{3*sqrt(2)/4}) node{$14$};
\draw ({-sqrt(2)/4},{sqrt(2)/4 + (1+sqrt(2))/2}) node{$23$};
\draw (1/2,{1/2+sqrt(2)}) node{$13$};
\draw ({1/2 + (1+sqrt(2)/2)/2},{sqrt(2)/2 + (1+sqrt(2)/2)/2}) node{$34$};

}
\end{tikzpicture}
\caption{A $\Diamond$-tiling of $\mathbf{P}_4$. The vertex $v_0$ corresponding to the origin is labeled. In red, we label each rhombus that is a translation of the Minkowski sum $[0, e_i] \times [0, e_j]$ by $ij$. In blue, we label each vertex of the tiling by its corresponding subset of $[4]$.}\label{2ngon_fig}
\end{figure}
\end{Definition}

\begin{Definition}\label{flip_def}Two quadrangulations $T_1$, $T_2$ of a polygon are connected by a \emph{flip} if they are related by a single local move of the form pictured in Fig.~\ref{rhombus_flip}. Note that we can picture a flip as placing a cube on top of the hexagon where the flip occurs.
\begin{figure}[ht]\centering
\begin{tikzpicture}[scale=1.5]
\draw (0,0)--(1,0)--({1 + 1/2},{sqrt(3)/2})--(1,{sqrt(3)})--(0,{sqrt(3)})--({- 1/2},{sqrt(3)/2})--(0,0);
\draw ({1/2},{sqrt(3)/2})--(0,0);
\draw ({1/2},{sqrt(3)/2})--({1 + 1/2},{sqrt(3)/2});
\draw ({1/2},{sqrt(3)/2})--(0,{sqrt(3)});

\draw [decoration={markings,mark=at position 1 with
 {\arrow[scale=3,>=stealth]{>}}},postaction={decorate}] (1.75,{sqrt(3)/2})--(3.25,{sqrt(3)/2});
\draw [decoration={markings,mark=at position 1 with
 {\arrow[scale=3,>=stealth]{>}}},postaction={decorate}] (3.25,{sqrt(3)/2})--(1.75,{sqrt(3)/2});

\draw[shift={(4,0)}] (0,0)--(1,0)--({1 + 1/2},{sqrt(3)/2})--(1,{sqrt(3)})--(0,{sqrt(3)})--({- 1/2},{sqrt(3)/2})--(0,0);
\draw[shift={(4,0)}] ({1/2},{sqrt(3)/2})--(1,0);
\draw[shift={(4,0)}] ({1/2},{sqrt(3)/2})--(1,{sqrt(3)});
\draw[shift={(4,0)}] ({1/2},{sqrt(3)/2})--({- 1/2},{sqrt(3)/2});
\end{tikzpicture}
\caption{A flip.} \label{rhombus_flip}
\end{figure}
\end{Definition}

It will be helpful for us to think about quadrangulations through the dual language of divides.

\begin{Definition}\label{divide_definition}A \emph{divide} $D$ in a polygon $R$ in $\R^2$ is an immersion of a finite set of closed intervals and circles, called \emph{branches}, in $R$, such that
\begin{itemize}\itemsep=0pt
\item the immersed circles do not intersect the boundary of $R$,
\item the immersed intervals have pairwise distinct endpoints on the boundary of $R$, and are otherwise disjoint from the boundary,
\item all intersections and self-intersections of the branches are transversal, and
\item no three branches intersect at a point,
\end{itemize}
all considered up to isotopy. For further details, see \cite[Definition~2.1]{divide}. Given a quadrangula\-tion~$T$ of~$R$, the divide associated to $T$ is the divide in $R$ such that for every tile $Q$ in $T$, branches connect the $2$ pairs of opposite edges in $Q$, and there is a single branch intersection in the interior of $Q$ (see Fig.~\ref{divide_figure}). (All divides considered in the remainder of this paper are associated to quadrangulations.) A \emph{braid move} is a local transformation of divides shown in Fig.~\ref{braid_figure}. A flip in a~quadrangulation corresponds to a braid move in its associated divide.
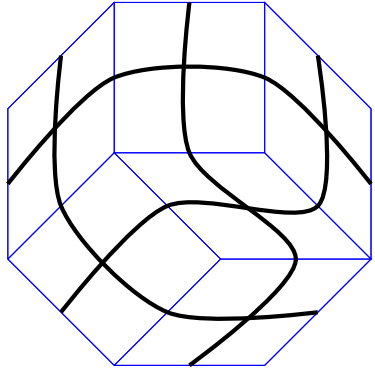
\begin{figure}[ht]\centering
\begin{tikzpicture}[scale=2]
{
\color{blue}
\draw (0,0)--(1,0)--({1+sqrt(2)/2},{sqrt(2)/2})--({1+sqrt(2)/2},{1+sqrt(2)/2})--({1},{1+sqrt(2)})--({0},{1+sqrt(2)})--({-sqrt(2)/2},{1+sqrt(2)/2})--({-sqrt(2)/2},{sqrt(2)/2})--(0,0);

\draw (0,0)--(1,0)--({1+sqrt(2)/2},{sqrt(2)/2})--({sqrt(2)/2},{sqrt(2)/2})--(0,0);
\draw (0,0)--({sqrt(2)/2},{sqrt(2)/2})--(0,{sqrt(2)})--({-sqrt(2)/2},{sqrt(2)/2})--(0,0);
\draw[shift={({-sqrt(2)/2},{sqrt(2)/2})}] (0,0)--({sqrt(2)/2},{sqrt(2)/2})--({sqrt(2)/2},{1+sqrt(2)/2})--(0,1)--(0,0);
\draw[shift={({sqrt(2)/2},{sqrt(2)/2})}] (0,0)--(1,0)--({1 - sqrt(2)/2},{sqrt(2)/2})--({- sqrt(2)/2},{sqrt(2)/2})--(0,0);
\draw[shift={(0,{sqrt(2)})}] (0,0)--(1,0)--(1,1)--(0,1)--(0,0);
\draw[shift={({1+sqrt(2)/2},{sqrt(2)/2})}] (0,0)--(0,1)--({-sqrt(2)/2},{1 + sqrt(2)/2})--({-sqrt(2)/2},{sqrt(2)/2})--(0,0);
}

\draw[ultra thick] plot [smooth] coordinates {(0.5,0) ({sqrt(2)/2 + 0.5},{sqrt(2)/2}) (0.5,{sqrt(2)}) (0.5,{sqrt(2) + 1})};
\draw[ultra thick] plot [smooth] coordinates {({-sqrt(2)/4}, {sqrt(2)/4}) ({sqrt(2)/4},{3*sqrt(2)/4}) ({sqrt(2)/4 + 1},{3*sqrt(2)/4}) ({sqrt(2)/4 + 1},{3*sqrt(2)/4 + 1})};
\draw[ultra thick] plot [smooth] coordinates {({-sqrt(2)/2},{sqrt(2)/2 + 0.5}) ({0},{sqrt(2) + 0.5}) ({1},{sqrt(2) + 0.5}) ({1 + sqrt(2)/2},{sqrt(2)/2 + 0.5})};
\draw[ultra thick] plot [smooth] coordinates {({-sqrt(2)/4},{1+3*sqrt(2)/4}) ({-sqrt(2)/4},{3*sqrt(2)/4}) ({sqrt(2)/4},{sqrt(2)/4}) ({sqrt(2)/4 + 1},{sqrt(2)/4})};

\end{tikzpicture}
\caption{The divide (in black) associated to a quadrangulation (in blue).} \label{divide_figure}
\end{figure}
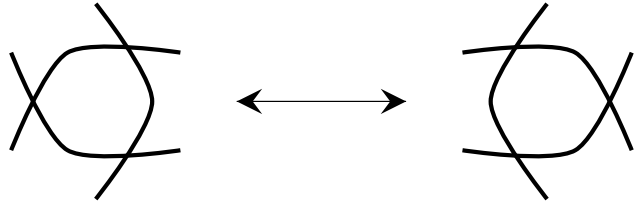
\begin{figure}[ht]\centering
\begin{tikzpicture}[scale=1.5]

\draw[ultra thick] plot [smooth] coordinates {(0.5, 0) (1, {sqrt(3)/2}) (0.5, {sqrt(3)})};
\draw[ultra thick] plot [smooth] coordinates {(-0.25, {sqrt(3)/4}) (0.25, {3*sqrt(3)/4}) (1.25, {3*sqrt(3)/4})};
\draw[ultra thick] plot [smooth] coordinates {(-0.25, {3*sqrt(3)/4}) (0.25, {sqrt(3)/4}) (1.25, {sqrt(3)/4})};

\draw [decoration={markings,mark=at position 1 with
 {\arrow[scale=3,>=stealth]{>}}},postaction={decorate}] (1.75,{sqrt(3)/2})--(3.25,{sqrt(3)/2});
\draw [decoration={markings,mark=at position 1 with
 {\arrow[scale=3,>=stealth]{>}}},postaction={decorate}] (3.25,{sqrt(3)/2})--(1.75,{sqrt(3)/2});

\draw[shift={(4,0)}][ultra thick] plot [smooth] coordinates {(0.5, 0) (0, {sqrt(3)/2}) (0.5, {sqrt(3)})};
\draw[shift={(4,0)}][ultra thick] plot [smooth] coordinates {(-0.25, {sqrt(3)/4}) (0.75, {sqrt(3)/4}) (1.25, {3*sqrt(3)/4})};
\draw[shift={(4,0)}][ultra thick] plot [smooth] coordinates {(-0.25, {3*sqrt(3)/4}) (0.75, {3*sqrt(3)/4}) (1.25, {sqrt(3)/4})};
\end{tikzpicture}
\caption{A braid move.} \label{braid_figure}
\end{figure}
\end{Definition}

\begin{Definition}\label{ps_arr_definition}A divide is called a \emph{pseudoline arrangement} if all of its branches are immersed intervals with no self-intersections, and, moreover, each pair of branches intersects at most once. Note that the class of pseudoline arrangements is closed under braid moves.
\end{Definition}

The following fact is well known \cite[Theorem~6.10]{felsner}.

\begin{Proposition} \label{pn_pseudoline}Let $D$ be a divide in a polygon in $\R^2$. Then the following are equivalent:
\begin{itemize}\itemsep=0pt
\item $D$ is a pseudoline arrangement in which every pair of branches intersects exactly once,
\item $D$ is topologically equivalent to the divide associated to a $\Diamond$-tiling of $\mathbf{P}_n$.
\end{itemize}
\end{Proposition}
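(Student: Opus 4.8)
The plan is to deduce the proposition from the classical correspondence between rhombic tilings of centrally symmetric $2n$-gons and simple arrangements of $n$ pseudolines (the planar, i.e.\ rank-$3$, case of the Bohne--Dress theorem), in the form recorded in~\cite[Theorem~6.10]{felsner}; the only real work is to translate the ``divide'' language used here into that setting and to check the easy implication directly.

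For the implication from a $\Diamond$-tiling to a pseudoline arrangement, let $T$ be a $\Diamond$-tiling of $\mathbf{P}_n = \mathcal{Z}_{e_1,\dots,e_n}$ and $D$ its associated divide. By Definition~\ref{def_diamond_pn} each rhombus of $T$ is a translate of $[0,e_i]+[0,e_j]$ for a unique pair $\{i,j\}\subseteq[n]$, and each pair occurs exactly once, so $T$ has $\binom n2$ tiles. Group the edges of $T$ into $n$ \emph{zones}, the zone $Z_i$ consisting of all edges that are translates of $e_i$; the rhombi containing an $e_i$-edge, linked across their pairs of opposite $e_i$-edges, form a strip running from one side of $\partial\mathbf{P}_n$ to the antipodal side. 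The branch of $D$ dual to $Z_i$ is an arc threading this strip; since the strip is an embedded disk crossed monotonically (each rhombus is entered and left through opposite $e_i$-edges), this arc has no self-intersection and meets $\partial\mathbf{P}_n$ only in its two endpoints. Two branches dual to $Z_i$ and $Z_j$ intersect exactly inside the rhombi common to both strips, i.e.\ in the unique rhombus of type $\{i,j\}$, hence exactly once. So $D$ is a pseudoline arrangement in which every pair of branches crosses exactly once.

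For the converse, suppose $D$ is associated to a quadrangulation $T$ of a polygon $R$ and is a pseudoline arrangement with $n$ branches, every pair crossing exactly once. By Definition~\ref{divide_definition} each tile of $T$ carries exactly one branch intersection and no three branches meet, so $T$ has $\binom n2$ tiles; each branch is an embedded arc with two endpoints on $\partial R$, and these $2n$ endpoints are distinct, so $\partial R$ is a $2n$-gon whose edges fall into $n$ parallel classes (one per branch, two edges each, in antipodal position around the boundary). Up to topological equivalence we may take $R=\mathbf{P}_n$. Now invoke~\cite[Theorem~6.10]{felsner}: any two simple arrangements of $n$ pseudolines with the same $\binom n2$ crossings are related by a sequence of triangle moves; in our language a triangle move is a braid move, which by Definitions~\ref{flip_def} and~\ref{divide_definition} corresponds to a flip of the underlying quadrangulation, and by Definition~\ref{ps_arr_definition} the class of such divides is closed under braid moves. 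Since flips never change the ambient polygon, $T$ is flip-equivalent to the standard ``staircase'' $\Diamond$-tiling of $\mathbf{P}_n$, and therefore $D$ is topologically equivalent to the divide of a $\Diamond$-tiling.

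The step needing the most care is the converse, specifically the passage from ``$D$ is a simple $n$-line arrangement with all pairs crossing'' to ``$T$ is a $\Diamond$-tiling'': its combinatorial heart is the zonotopal-tiling correspondence together with connectivity under mutations, which is exactly what~\cite[Theorem~6.10]{felsner} supplies. In the write-up I would make the dictionary explicit---branches $\leftrightarrow$ pseudolines, tiles $\leftrightarrow$ crossings, braid moves $\leftrightarrow$ triangle moves, $\Diamond$-tilings of $\mathbf{P}_n$ $\leftrightarrow$ simple $n$-line arrangements---and cite that theorem rather than reprove it.
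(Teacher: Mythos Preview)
The paper does not prove this proposition; it records it as well known and cites \cite[Theorem~6.10]{felsner} without further argument. So there is no ``paper's proof'' to compare against, and your write-up is already more than the paper offers. Your forward direction (from a $\Diamond$-tiling to a pseudoline arrangement via zones) is correct and standard.

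Your converse argument, however, has a genuine gap. You invoke Felsner to get that $D$ is related by a sequence of triangle/braid moves to the divide of some $\Diamond$-tiling $T'$, and then conclude that ``$D$ is topologically equivalent to the divide of a $\Diamond$-tiling.'' But braid moves are \emph{not} isotopies: they change the topological type of the divide (indeed, two distinct $\Diamond$-tilings related by a single flip have divides that are not topologically equivalent). So braid-equivalence of $D$ with the divide of $T'$ does not show that $D$ itself is the divide of any $\Diamond$-tiling. Equivalently, flip-connectivity of the space of such arrangements does not by itself tell you that every element of that space is (the divide of) a $\Diamond$-tiling.

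What you actually need---and what \cite[Theorem~6.10]{felsner} in fact supplies---is the direct bijection: a simple arrangement of $n$ pseudolines in which every pair crosses exactly once is, up to isotopy, \emph{the same data} as a $\Diamond$-tiling of $\mathbf{P}_n$ (chambers $\leftrightarrow$ vertices, crossings $\leftrightarrow$ rhombi, pseudolines $\leftrightarrow$ zones). With that dictionary, the quadrangulation $T$ dual to $D$ is already, combinatorially, a $\Diamond$-tiling; no flip argument is required. I would replace the last paragraph of your proposal with this direct identification and drop the triangle-move step entirely.
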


\begin{Remark}Pseudoline arrangements of $n$ branches, each pair of which intersects exactly once, are in bijection with commutation-equivalence classes of reduced words for the longest element $w_0 \in S_n$ in the symmetric group. A braid move on the pseudoline arrangement corresponds to a braid move on the reduced word.%
\end{Remark}

\begin{Definition}\label{chambers_pla}Let $T$ be a $\Diamond$-tiling of $\mathbf{P}_n$, and let $D$ be the pseudoline arrangement associated to $T$. We call the connected components of the complement of $D$ the \emph{chambers} of $D$. Note that the chambers of $D$ correspond to the vertices of $T$, and the crossings of $D$ correspond to the tiles of $T$. Label the branches $1, \dots, n$ as in Fig.~\ref{labeled_psa_branches}, by starting at $v_0$ and traveling counterclockwise along the boundary of $\mathbf{P}_n$ (so that branch $j$ intersects the boundary of $\mathbf{P}_n$ at the edges parallel to~$e_j = {\rm e}^{\pi {\rm i} (j - 1)/n}$). Note that the label $I \subseteq [n]$ for a vertex of $T$ is precisely the set of labels for the branches in between the chamber and $v_0$.
\begin{figure}\centering
\begin{tikzpicture}[scale=2]
{
\color{red}
\draw (0,0)--(1,0)--({1+sqrt(2)/2},{sqrt(2)/2})--({1+sqrt(2)/2},{1+sqrt(2)/2})--({1},{1+sqrt(2)})--({0},{1+sqrt(2)})--({-sqrt(2)/2},{1+sqrt(2)/2})--({-sqrt(2)/2},{sqrt(2)/2})--(0,0);

\draw (0,0)--(1,0)--({1+sqrt(2)/2},{sqrt(2)/2})--({sqrt(2)/2},{sqrt(2)/2})--(0,0);
\draw (0,0)--({sqrt(2)/2},{sqrt(2)/2})--(0,{sqrt(2)})--({-sqrt(2)/2},{sqrt(2)/2})--(0,0);
\draw[shift={({-sqrt(2)/2},{sqrt(2)/2})}] (0,0)--({sqrt(2)/2},{sqrt(2)/2})--({sqrt(2)/2},{1+sqrt(2)/2})--(0,1)--(0,0);
\draw[shift={({sqrt(2)/2},{sqrt(2)/2})}] (0,0)--(1,0)--({1 - sqrt(2)/2},{sqrt(2)/2})--({- sqrt(2)/2},{sqrt(2)/2})--(0,0);
\draw[shift={(0,{sqrt(2)})}] (0,0)--(1,0)--(1,1)--(0,1)--(0,0);
\draw[shift={({1+sqrt(2)/2},{sqrt(2)/2})}] (0,0)--(0,1)--({-sqrt(2)/2},{1 + sqrt(2)/2})--({-sqrt(2)/2},{sqrt(2)/2})--(0,0);

\filldraw (0,0) circle (1.5pt);
\draw (0,0) node[anchor=north east]{$v_0$};
}

\draw[ultra thick] plot [smooth] coordinates {(0.5,0) ({sqrt(2)/2 + 0.5},{sqrt(2)/2}) (0.5,{sqrt(2)}) (0.5,{sqrt(2) + 1})};
\draw[ultra thick] plot [smooth] coordinates {({-sqrt(2)/4}, {sqrt(2)/4}) ({sqrt(2)/4},{3*sqrt(2)/4}) ({sqrt(2)/4 + 1},{3*sqrt(2)/4}) ({sqrt(2)/4 + 1},{3*sqrt(2)/4 + 1})};
\draw[ultra thick] plot [smooth] coordinates {({-sqrt(2)/2},{sqrt(2)/2 + 0.5}) ({0},{sqrt(2) + 0.5}) ({1},{sqrt(2) + 0.5}) ({1 + sqrt(2)/2},{sqrt(2)/2 + 0.5})};
\draw[ultra thick] plot [smooth] coordinates {({-sqrt(2)/4},{1+3*sqrt(2)/4}) ({-sqrt(2)/4},{3*sqrt(2)/4}) ({sqrt(2)/4},{sqrt(2)/4}) ({sqrt(2)/4 + 1},{sqrt(2)/4})};

\draw ({0.5},{0}) node[anchor=north]{$1$};
\draw ({1 + sqrt(2)/4},{sqrt(2)/4}) node[anchor=west]{$2$};
\draw ({1 + sqrt(2)/2},{sqrt(2)/2 + 0.5}) node[anchor=west]{$3$};
\draw ({1 + sqrt(2)/4},{3*sqrt(2)/4 + 1}) node[anchor=west]{$4$};

{
\color{blue}
\small
\draw (0,0) node[anchor= north]{$\bm{\varnothing}$};
\draw (1,0) node[anchor=north]{$\mathbf{1}$};
\draw ({1+sqrt(2)/2},{sqrt(2)/2}) node[anchor=west]{$\mathbf{12}$};
\draw ({1+sqrt(2)/2},{1+sqrt(2)/2}) node[anchor=west]{$\mathbf{123}$};
\draw ({1},{1+sqrt(2)}) node[anchor=south]{$\mathbf{1234}$};
\draw ({0},{1+sqrt(2)}) node[anchor=south]{$\mathbf{234}$};
\draw ({-sqrt(2)/2},{1+sqrt(2)/2}) node[anchor=east]{$\mathbf{34}$};
\draw ({-sqrt(2)/2},{sqrt(2)/2}) node[anchor=east]{$\mathbf{4}$};

\draw ({sqrt(2)/2},{sqrt(2)/2}) node[anchor=east]{$\mathbf{2}$};
\draw (0,{sqrt(2)}) node[anchor=north]{$\mathbf{24}$};
\draw (1,{sqrt(2)}) node[anchor=south east]{$\mathbf{124}$};

}
\end{tikzpicture}
\caption{The pseudoline arrangement (in black) associated to a $\Diamond$-tiling of $\mathbf{P}_4$ (in red). The branches are labeled (in black) as described in Definition~\ref{chambers_pla}. Note that the label $I \subseteq [n]$ for the vertices of $T$ (in blue) is precisely the set of labels for the branches in between the chamber and $v_0$.} \label{labeled_psa_branches}
\end{figure}
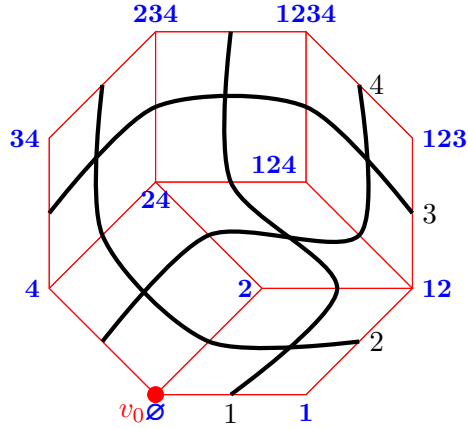
\end{Definition}

\begin{Definition}Let $T$ be a $\Diamond$-tiling of $\mathbf{P}_n$, and let $D$ be the pseudoline arrangement associated to~$T$. Label the branches as in Definition~\ref{chambers_pla}. Given three branches labeled $i < j < k$, we say that $i$, $j$, $k$ have a \emph{$\Delta$-crossing} if the point where branches~$i$ and~$k$ intersect is in a different component of the complement of branch $j$ than the point $v_0$. We say that $i$, $j$, $k$ have a \emph{$\nabla$-crossing} if the point where branches $i$ and $k$ intersect is in the same component of the complement of branch $j$ as the point $v_0$. Note that $i$, $j$, $k$ must either have a $\Delta$-crossing or a $\nabla$-crossing. See Fig.~\ref{delta_crossing} for pictures which make clear the reasoning behind this choice of terminology. Note that when a braid move is performed with $i$, $j$, $k$, the triple $i$, $j$, $k$ switches between having a $\Delta$-crossing and having a $\nabla$-crossing.
\begin{figure}\centering
\begin{tabular}{ c c }
\begin{tikzpicture}[scale=1.5]
\draw (0.5,{sqrt(3)/2}) circle ({sqrt(3)/2});
\draw[ultra thick] plot [smooth] coordinates {(0.5, 0) (1, {sqrt(3)/2}) (0.5, {sqrt(3)})};
\draw[ultra thick] plot [smooth] coordinates {(-0.25, {sqrt(3)/4}) (0.25, {3*sqrt(3)/4}) (1.25, {3*sqrt(3)/4})};
\draw[ultra thick] plot [smooth] coordinates {(-0.25, {3*sqrt(3)/4}) (0.25, {sqrt(3)/4}) (1.25, {sqrt(3)/4})};

\draw (0.5, 0) node[anchor=north]{$i$};
\draw (1.25, {sqrt(3)/4}) node[anchor=west]{$j$};
\draw (1.25, {3*sqrt(3)/4}) node[anchor=west]{$k$};

\draw (0.75, {sqrt(3)/4}) node[anchor=south east]{$ij$};
\draw (0.75, {3*sqrt(3)/4}) node[anchor=north]{$ik$};
\draw (0,{sqrt(3)/2}) node[anchor=west]{$jk$};

\filldraw ({0.5*(1 - sqrt(3)/2)},{sqrt(3)/2*(1 - sqrt(3)/2)}) circle (1.5pt);
\draw ({0.5*(1 - sqrt(3)/2)},{sqrt(3)/2*(1 - sqrt(3)/2)}) node[anchor=north east]{$v_0$};
\end{tikzpicture} & \hspace{2cm}
\begin{tikzpicture}[scale=1.5]
\draw (0.5,{sqrt(3)/2}) circle ({sqrt(3)/2});
\draw[ultra thick] plot [smooth] coordinates {(0.5, 0) (0, {sqrt(3)/2}) (0.5, {sqrt(3)})};
\draw[ultra thick] plot [smooth] coordinates {(-0.25, {sqrt(3)/4}) (0.75, {sqrt(3)/4}) (1.25, {3*sqrt(3)/4})};
\draw[ultra thick] plot [smooth] coordinates {(-0.25, {3*sqrt(3)/4}) (0.75, {3*sqrt(3)/4}) (1.25, {sqrt(3)/4})};

\draw (0.5, 0) node[anchor=north]{$i$};
\draw (1.25, {sqrt(3)/4}) node[anchor=west]{$j$};
\draw (1.25, {3*sqrt(3)/4}) node[anchor=west]{$k$};

\draw (0.25,{sqrt(3)/4}) node[anchor=south]{$ik$};
\draw (0.25,{3*sqrt(3)/4}) node[anchor=north]{$ij$};
\draw (1,{sqrt(3)/2}) node[anchor=east]{$jk$};

\filldraw ({0.5*(1 - sqrt(3)/2)},{sqrt(3)/2*(1 - sqrt(3)/2)}) circle (1.5pt);
\draw ({0.5*(1 - sqrt(3)/2)},{sqrt(3)/2*(1 - sqrt(3)/2)}) node[anchor=north east]{$v_0$};
\end{tikzpicture}\\
$\Delta$-crossing & \hspace{2cm} $\nabla$-crossing
\end{tabular}
\caption{A $\Delta$-crossing and a $\nabla$-crossing. Note that in the $\Delta$-crossing, the triangle formed by the~$3$ intersecting branches points away from $v_0$, while in the $\nabla$-crossing, the triangle formed by the $3$ intersecting branches points towards $v_0$.} \label{delta_crossing}
\end{figure}
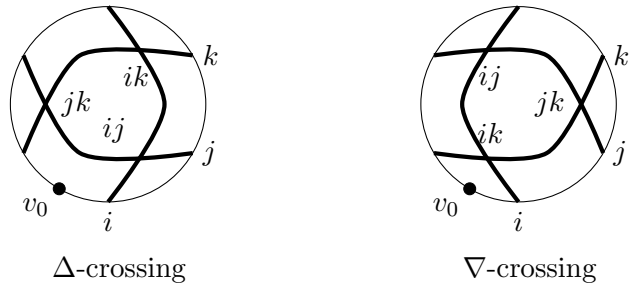
\end{Definition}

\begin{Definition} \label{min_max_tiling_def}Define $T_{\min ,n}$ to be the unique $\Diamond$-tiling of $\mathbf{P}_n$ in which every vertex is labeled by consecutive subsets $I \subseteq [n]$, and define $T_{\max ,n}$ to be the unique $\Diamond$-tiling of $\mathbf{P}_n$ in which every vertex is labeled by a subset $I \subseteq [n]$ whose complement $[n] - I$ is consecutive (see Fig.~\ref{min_max_tiling}). Equivalently, $T_{\min ,n}$ is the $\Diamond$-tiling of $\mathbf{P}_n$ in which every triple $i<j<k$ has a $\Delta$-crossing in its associated pseudoline arrangement, while $T_{\max ,n}$ is the $\Diamond$-tiling of $\mathbf{P}_n$ in which every triple $i<j<k$ has a $\nabla$-crossing in its associated pseudoline arrangement.
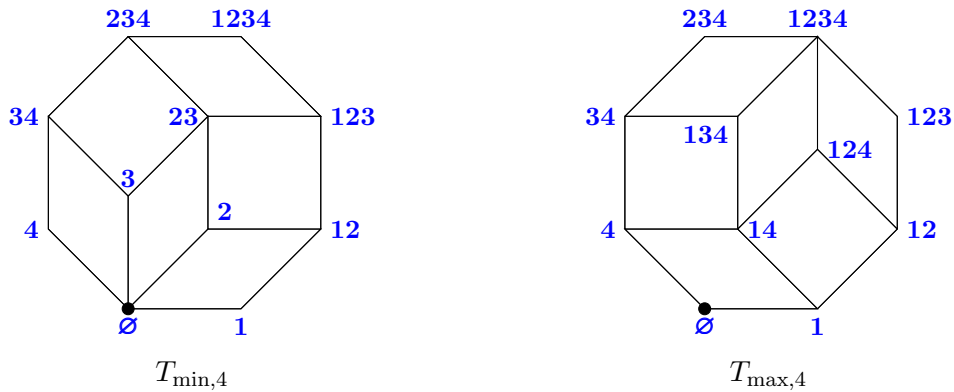
\begin{figure}\centering
\begin{tabular}{ c c }
\begin{tikzpicture}[scale=1.5]
\draw (0,0)--(1,0)--({1+sqrt(2)/2},{sqrt(2)/2})--({1+sqrt(2)/2},{1+sqrt(2)/2})--({1},{1+sqrt(2)})--({0},{1+sqrt(2)})--({-sqrt(2)/2},{1+sqrt(2)/2})--({-sqrt(2)/2},{sqrt(2)/2})--(0,0);

\draw (0,0)--(1,0)--({1+sqrt(2)/2},{sqrt(2)/2})--({sqrt(2)/2},{sqrt(2)/2})--(0,0);
\draw (0,0)--({sqrt(2)/2},{sqrt(2)/2})--({sqrt(2)/2},{1+sqrt(2)/2})--(0,1)--(0,0);
\draw (0,0)--(0,1)--({-sqrt(2)/2},{1 + sqrt(2)/2})--({-sqrt(2)/2},{sqrt(2)/2})--(0,0);
\draw[shift={({sqrt(2)/2},{sqrt(2)/2})}] (0,0)--(1,0)--(1,1)--(0,1)--(0,0);
\draw[shift={(0,1)}] (0,0)--({sqrt(2)/2},{sqrt(2)/2})--(0,{sqrt(2)})--({-sqrt(2)/2},{sqrt(2)/2})--(0,0);
\draw[shift={({sqrt(2)/2},{1+sqrt(2)/2})}] (0,0)--(1,0)--({1 - sqrt(2)/2},{sqrt(2)/2})--({- sqrt(2)/2},{sqrt(2)/2})--(0,0);

\filldraw (0,0) circle (1.5pt);
{
\color{blue}
\small
\draw (0,0) node[anchor= north]{$\bm{\varnothing}$};
\draw (1,0) node[anchor=north]{$\mathbf{1}$};
\draw ({1+sqrt(2)/2},{sqrt(2)/2}) node[anchor=west]{$\mathbf{12}$};
\draw ({1+sqrt(2)/2},{1+sqrt(2)/2}) node[anchor=west]{$\mathbf{123}$};
\draw ({1},{1+sqrt(2)}) node[anchor=south]{$\mathbf{1234}$};
\draw ({0},{1+sqrt(2)}) node[anchor=south]{$\mathbf{234}$};
\draw ({-sqrt(2)/2},{1+sqrt(2)/2}) node[anchor=east]{$\mathbf{34}$};
\draw ({-sqrt(2)/2},{sqrt(2)/2}) node[anchor=east]{$\mathbf{4}$};

\draw ({sqrt(2)/2},{sqrt(2)/2}) node[anchor=south west]{$\mathbf{2}$};
\draw (0,{1}) node[anchor=south]{$\mathbf{3}$};
\draw ({sqrt(2)/2},{1+sqrt(2)/2}) node[anchor=east]{$\mathbf{23}$};
}
\end{tikzpicture} & \hspace{2 cm}
\begin{tikzpicture}[scale=1.5]
\draw (0,0)--(1,0)--({1+sqrt(2)/2},{sqrt(2)/2})--({1+sqrt(2)/2},{1+sqrt(2)/2})--({1},{1+sqrt(2)})--({0},{1+sqrt(2)})--({-sqrt(2)/2},{1+sqrt(2)/2})--({-sqrt(2)/2},{sqrt(2)/2})--(0,0);

\draw[shift={({-sqrt(2)/2},{1+sqrt(2)/2})}] (0,0)--(1,0)--({1+sqrt(2)/2},{sqrt(2)/2})--({sqrt(2)/2},{sqrt(2)/2})--(0,0);
\draw[shift={({1 - sqrt(2)/2},{sqrt(2)/2})}] (0,0)--({sqrt(2)/2},{sqrt(2)/2})--({sqrt(2)/2},{1+sqrt(2)/2})--(0,1)--(0,0);
\draw[shift={({1+sqrt(2)/2},{sqrt(2)/2})}] (0,0)--(0,1)--({-sqrt(2)/2},{1 + sqrt(2)/2})--({-sqrt(2)/2},{sqrt(2)/2})--(0,0);
\draw[shift={({-sqrt(2)/2},{sqrt(2)/2})}] (0,0)--(1,0)--(1,1)--(0,1)--(0,0);
\draw[shift={(1,0)}] (0,0)--({sqrt(2)/2},{sqrt(2)/2})--(0,{sqrt(2)})--({-sqrt(2)/2},{sqrt(2)/2})--(0,0);
\draw (0,0)--(1,0)--({1 - sqrt(2)/2},{sqrt(2)/2})--({- sqrt(2)/2},{sqrt(2)/2})--(0,0);

\filldraw (0,0) circle (1.5pt);
{
\color{blue}
\small
\draw (0,0) node[anchor= north]{$\bm{\varnothing}$};
\draw (1,0) node[anchor=north]{$\mathbf{1}$};
\draw ({1+sqrt(2)/2},{sqrt(2)/2}) node[anchor=west]{$\mathbf{12}$};
\draw ({1+sqrt(2)/2},{1+sqrt(2)/2}) node[anchor=west]{$\mathbf{123}$};
\draw ({1},{1+sqrt(2)}) node[anchor=south]{$\mathbf{1234}$};
\draw ({0},{1+sqrt(2)}) node[anchor=south]{$\mathbf{234}$};
\draw ({-sqrt(2)/2},{1+sqrt(2)/2}) node[anchor=east]{$\mathbf{34}$};
\draw ({-sqrt(2)/2},{sqrt(2)/2}) node[anchor=east]{$\mathbf{4}$};

\draw (1,{sqrt(2)}) node[anchor=west]{$\mathbf{124}$};
\draw ({1 - sqrt(2)/2},{sqrt(2)/2}) node[anchor=west]{$\mathbf{14}$};
\draw ({1-sqrt(2)/2},{1+sqrt(2)/2}) node[anchor=north east]{$\mathbf{134}$};
}
\end{tikzpicture}\\
$T_{\min ,4}$ & \hspace{2cm} $T_{\max ,4}$
\end{tabular}
\caption{The $\Diamond$-tilings $T_{\min ,4}$ and $T_{\max ,4}$.} \label{min_max_tiling}
\end{figure}
\end{Definition}

\begin{Definition}\label{pile_def}We say that $\mathbf{T} = (T_0, \dots, T_\ell)$ is a \emph{pile} of quadrangulations of a polygon if~$T_{i - 1}$ and~$T_i$ are connected by a flip for $i = 1, \dots, \ell$.
\end{Definition}

\begin{Example} \label{octagon_seq_example}Consider the pile $\mathbf{T} = (T_0, \dots, T_8)$ of $\Diamond$-tilings of $\mathbf{P}_4$ shown in Fig.~\ref{oct_cycle_figure}. Note that $T_0 = T_8$. For each tiling $T_i$, there are two possible flips that we can perform; applying one gives us $T_{i - 1}$, and applying the other gives us $T_{i + 1}$ (with indices taken mod $8$).
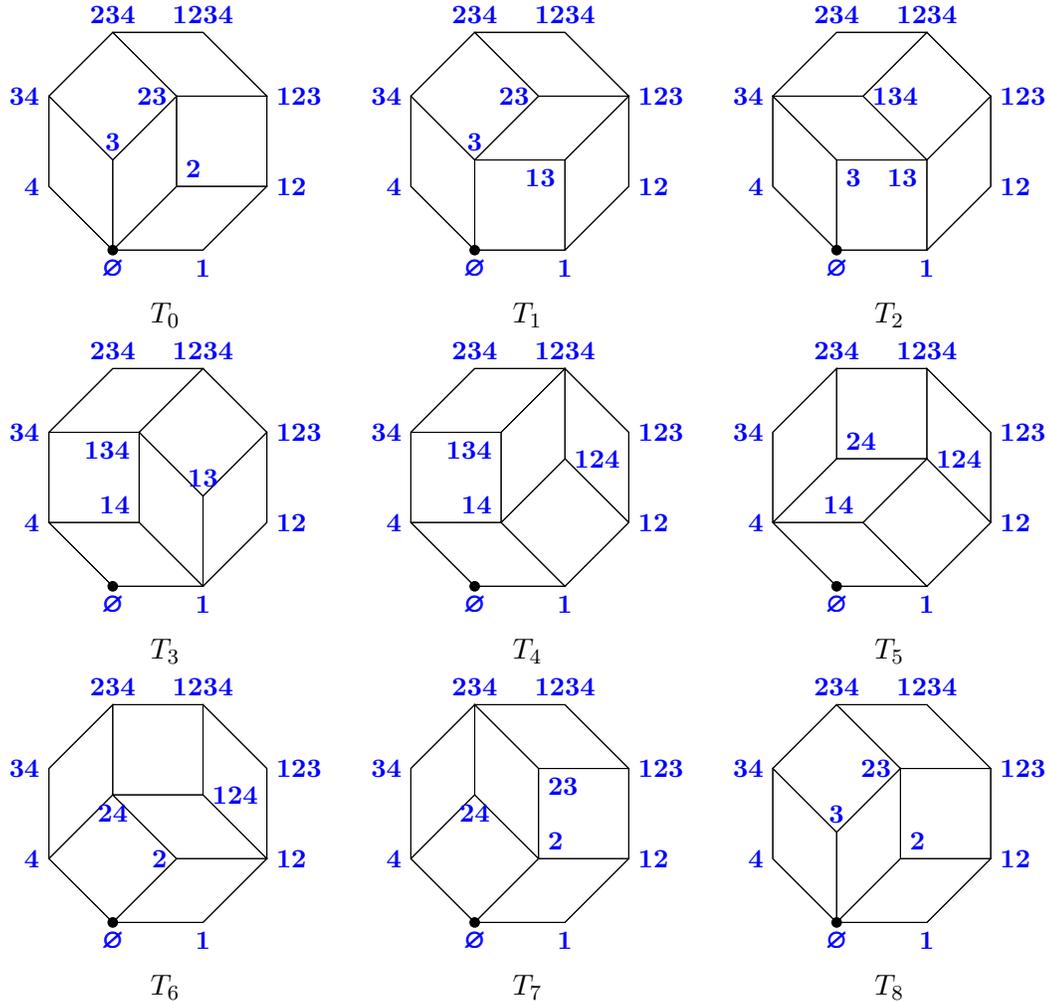
\begin{figure}[ht]\centering
\begin{tabular}{ c c c }
\begin{tikzpicture}[scale=1.2]
\draw (0,0)--(1,0)--({1+sqrt(2)/2},{sqrt(2)/2})--({1+sqrt(2)/2},{1+sqrt(2)/2})--({1},{1+sqrt(2)})--({0},{1+sqrt(2)})--({-sqrt(2)/2},{1+sqrt(2)/2})--({-sqrt(2)/2},{sqrt(2)/2})--(0,0);

\draw (0,0)--(1,0)--({1+sqrt(2)/2},{sqrt(2)/2})--({sqrt(2)/2},{sqrt(2)/2})--(0,0);
\draw (0,0)--({sqrt(2)/2},{sqrt(2)/2})--({sqrt(2)/2},{1+sqrt(2)/2})--(0,1)--(0,0);
\draw (0,0)--(0,1)--({-sqrt(2)/2},{1 + sqrt(2)/2})--({-sqrt(2)/2},{sqrt(2)/2})--(0,0);
\draw[shift={({sqrt(2)/2},{sqrt(2)/2})}] (0,0)--(1,0)--(1,1)--(0,1)--(0,0);
\draw[shift={(0,1)}] (0,0)--({sqrt(2)/2},{sqrt(2)/2})--(0,{sqrt(2)})--({-sqrt(2)/2},{sqrt(2)/2})--(0,0);
\draw[shift={({sqrt(2)/2},{1+sqrt(2)/2})}] (0,0)--(1,0)--({1 - sqrt(2)/2},{sqrt(2)/2})--({- sqrt(2)/2},{sqrt(2)/2})--(0,0);

\filldraw (0,0) circle (1.5pt);
{
\color{blue}
\small
\draw (0,0) node[anchor= north]{$\bm{\varnothing}$};
\draw (1,0) node[anchor=north]{$\mathbf{1}$};
\draw ({1+sqrt(2)/2},{sqrt(2)/2}) node[anchor=west]{$\mathbf{12}$};
\draw ({1+sqrt(2)/2},{1+sqrt(2)/2}) node[anchor=west]{$\mathbf{123}$};
\draw ({1},{1+sqrt(2)}) node[anchor=south]{$\mathbf{1234}$};
\draw ({0},{1+sqrt(2)}) node[anchor=south]{$\mathbf{234}$};
\draw ({-sqrt(2)/2},{1+sqrt(2)/2}) node[anchor=east]{$\mathbf{34}$};
\draw ({-sqrt(2)/2},{sqrt(2)/2}) node[anchor=east]{$\mathbf{4}$};

\draw ({sqrt(2)/2},{sqrt(2)/2}) node[anchor=south west]{$\mathbf{2}$};
\draw (0,{1}) node[anchor=south]{$\mathbf{3}$};
\draw ({sqrt(2)/2},{1+sqrt(2)/2}) node[anchor=east]{$\mathbf{23}$};
}
\end{tikzpicture} &
\begin{tikzpicture}[scale=1.2]
\draw (0,0)--(1,0)--({1+sqrt(2)/2},{sqrt(2)/2})--({1+sqrt(2)/2},{1+sqrt(2)/2})--({1},{1+sqrt(2)})--({0},{1+sqrt(2)})--({-sqrt(2)/2},{1+sqrt(2)/2})--({-sqrt(2)/2},{sqrt(2)/2})--(0,0);

\draw[shift={(0,1)}] (0,0)--(1,0)--({1+sqrt(2)/2},{sqrt(2)/2})--({sqrt(2)/2},{sqrt(2)/2})--(0,0);
\draw[shift={(1,0)}] (0,0)--({sqrt(2)/2},{sqrt(2)/2})--({sqrt(2)/2},{1+sqrt(2)/2})--(0,1)--(0,0);
\draw (0,0)--(0,1)--({-sqrt(2)/2},{1 + sqrt(2)/2})--({-sqrt(2)/2},{sqrt(2)/2})--(0,0);
\draw (0,0)--(1,0)--(1,1)--(0,1)--(0,0);
\draw[shift={(0,1)}] (0,0)--({sqrt(2)/2},{sqrt(2)/2})--(0,{sqrt(2)})--({-sqrt(2)/2},{sqrt(2)/2})--(0,0);
\draw[shift={({sqrt(2)/2},{1+sqrt(2)/2})}] (0,0)--(1,0)--({1 - sqrt(2)/2},{sqrt(2)/2})--({- sqrt(2)/2},{sqrt(2)/2})--(0,0);

\filldraw (0,0) circle (1.5pt);
{
\color{blue}
\small
\draw (0,0) node[anchor= north]{$\bm{\varnothing}$};
\draw (1,0) node[anchor=north]{$\mathbf{1}$};
\draw ({1+sqrt(2)/2},{sqrt(2)/2}) node[anchor=west]{$\mathbf{12}$};
\draw ({1+sqrt(2)/2},{1+sqrt(2)/2}) node[anchor=west]{$\mathbf{123}$};
\draw ({1},{1+sqrt(2)}) node[anchor=south]{$\mathbf{1234}$};
\draw ({0},{1+sqrt(2)}) node[anchor=south]{$\mathbf{234}$};
\draw ({-sqrt(2)/2},{1+sqrt(2)/2}) node[anchor=east]{$\mathbf{34}$};
\draw ({-sqrt(2)/2},{sqrt(2)/2}) node[anchor=east]{$\mathbf{4}$};

\draw (1,1) node[anchor=north east]{$\mathbf{13}$};
\draw (0,{1}) node[anchor=south]{$\mathbf{3}$};
\draw ({sqrt(2)/2},{1+sqrt(2)/2}) node[anchor=east]{$\mathbf{23}$};
}
\end{tikzpicture} &
\begin{tikzpicture}[scale=1.2]
\draw (0,0)--(1,0)--({1+sqrt(2)/2},{sqrt(2)/2})--({1+sqrt(2)/2},{1+sqrt(2)/2})--({1},{1+sqrt(2)})--({0},{1+sqrt(2)})--({-sqrt(2)/2},{1+sqrt(2)/2})--({-sqrt(2)/2},{sqrt(2)/2})--(0,0);

\draw[shift={({-sqrt(2)/2},{1+sqrt(2)/2})}] (0,0)--(1,0)--({1+sqrt(2)/2},{sqrt(2)/2})--({sqrt(2)/2},{sqrt(2)/2})--(0,0);
\draw[shift={(1,0)}] (0,0)--({sqrt(2)/2},{sqrt(2)/2})--({sqrt(2)/2},{1+sqrt(2)/2})--(0,1)--(0,0);
\draw (0,0)--(0,1)--({-sqrt(2)/2},{1 + sqrt(2)/2})--({-sqrt(2)/2},{sqrt(2)/2})--(0,0);
\draw (0,0)--(1,0)--(1,1)--(0,1)--(0,0);
\draw[shift={(1,1)}] (0,0)--({sqrt(2)/2},{sqrt(2)/2})--(0,{sqrt(2)})--({-sqrt(2)/2},{sqrt(2)/2})--(0,0);
\draw[shift={(0,1)}] (0,0)--(1,0)--({1 - sqrt(2)/2},{sqrt(2)/2})--({- sqrt(2)/2},{sqrt(2)/2})--(0,0);

\filldraw (0,0) circle (1.5pt);
{
\color{blue}
\small
\draw (0,0) node[anchor= north]{$\bm{\varnothing}$};
\draw (1,0) node[anchor=north]{$\mathbf{1}$};
\draw ({1+sqrt(2)/2},{sqrt(2)/2}) node[anchor=west]{$\mathbf{12}$};
\draw ({1+sqrt(2)/2},{1+sqrt(2)/2}) node[anchor=west]{$\mathbf{123}$};
\draw ({1},{1+sqrt(2)}) node[anchor=south]{$\mathbf{1234}$};
\draw ({0},{1+sqrt(2)}) node[anchor=south]{$\mathbf{234}$};
\draw ({-sqrt(2)/2},{1+sqrt(2)/2}) node[anchor=east]{$\mathbf{34}$};
\draw ({-sqrt(2)/2},{sqrt(2)/2}) node[anchor=east]{$\mathbf{4}$};

\draw (1,1) node[anchor=north east]{$\mathbf{13}$};
\draw (0,{1}) node[anchor=north west]{$\mathbf{3}$};
\draw ({1-sqrt(2)/2},{1+sqrt(2)/2}) node[anchor=west]{$\mathbf{134}$};
}
\end{tikzpicture} \\
$T_0$ & $T_1$ & $T_2$ \\
\begin{tikzpicture}[scale=1.2]
\draw (0,0)--(1,0)--({1+sqrt(2)/2},{sqrt(2)/2})--({1+sqrt(2)/2},{1+sqrt(2)/2})--({1},{1+sqrt(2)})--({0},{1+sqrt(2)})--({-sqrt(2)/2},{1+sqrt(2)/2})--({-sqrt(2)/2},{sqrt(2)/2})--(0,0);

\draw[shift={({-sqrt(2)/2},{1+sqrt(2)/2})}] (0,0)--(1,0)--({1+sqrt(2)/2},{sqrt(2)/2})--({sqrt(2)/2},{sqrt(2)/2})--(0,0);
\draw[shift={(1,0)}] (0,0)--({sqrt(2)/2},{sqrt(2)/2})--({sqrt(2)/2},{1+sqrt(2)/2})--(0,1)--(0,0);
\draw[shift={(1,0)}] (0,0)--(0,1)--({-sqrt(2)/2},{1 + sqrt(2)/2})--({-sqrt(2)/2},{sqrt(2)/2})--(0,0);
\draw[shift={({-sqrt(2)/2},{sqrt(2)/2})}] (0,0)--(1,0)--(1,1)--(0,1)--(0,0);
\draw[shift={(1,1)}] (0,0)--({sqrt(2)/2},{sqrt(2)/2})--(0,{sqrt(2)})--({-sqrt(2)/2},{sqrt(2)/2})--(0,0);
\draw (0,0)--(1,0)--({1 - sqrt(2)/2},{sqrt(2)/2})--({- sqrt(2)/2},{sqrt(2)/2})--(0,0);

\filldraw (0,0) circle (1.5pt);
{
\color{blue}
\small
\draw (0,0) node[anchor= north]{$\bm{\varnothing}$};
\draw (1,0) node[anchor=north]{$\mathbf{1}$};
\draw ({1+sqrt(2)/2},{sqrt(2)/2}) node[anchor=west]{$\mathbf{12}$};
\draw ({1+sqrt(2)/2},{1+sqrt(2)/2}) node[anchor=west]{$\mathbf{123}$};
\draw ({1},{1+sqrt(2)}) node[anchor=south]{$\mathbf{1234}$};
\draw ({0},{1+sqrt(2)}) node[anchor=south]{$\mathbf{234}$};
\draw ({-sqrt(2)/2},{1+sqrt(2)/2}) node[anchor=east]{$\mathbf{34}$};
\draw ({-sqrt(2)/2},{sqrt(2)/2}) node[anchor=east]{$\mathbf{4}$};

\draw (1,1) node[anchor=south]{$\mathbf{13}$};
\draw ({1 - sqrt(2)/2},{sqrt(2)/2}) node[anchor=south east]{$\mathbf{14}$};
\draw ({1-sqrt(2)/2},{1+sqrt(2)/2}) node[anchor=north east]{$\mathbf{134}$};
}
\end{tikzpicture} &
\begin{tikzpicture}[scale=1.2]
\draw (0,0)--(1,0)--({1+sqrt(2)/2},{sqrt(2)/2})--({1+sqrt(2)/2},{1+sqrt(2)/2})--({1},{1+sqrt(2)})--({0},{1+sqrt(2)})--({-sqrt(2)/2},{1+sqrt(2)/2})--({-sqrt(2)/2},{sqrt(2)/2})--(0,0);

\draw[shift={({-sqrt(2)/2},{1+sqrt(2)/2})}] (0,0)--(1,0)--({1+sqrt(2)/2},{sqrt(2)/2})--({sqrt(2)/2},{sqrt(2)/2})--(0,0);
\draw[shift={({1 - sqrt(2)/2},{sqrt(2)/2})}] (0,0)--({sqrt(2)/2},{sqrt(2)/2})--({sqrt(2)/2},{1+sqrt(2)/2})--(0,1)--(0,0);
\draw[shift={({1+sqrt(2)/2},{sqrt(2)/2})}] (0,0)--(0,1)--({-sqrt(2)/2},{1 + sqrt(2)/2})--({-sqrt(2)/2},{sqrt(2)/2})--(0,0);
\draw[shift={({-sqrt(2)/2},{sqrt(2)/2})}] (0,0)--(1,0)--(1,1)--(0,1)--(0,0);
\draw[shift={(1,0)}] (0,0)--({sqrt(2)/2},{sqrt(2)/2})--(0,{sqrt(2)})--({-sqrt(2)/2},{sqrt(2)/2})--(0,0);
\draw (0,0)--(1,0)--({1 - sqrt(2)/2},{sqrt(2)/2})--({- sqrt(2)/2},{sqrt(2)/2})--(0,0);

\filldraw (0,0) circle (1.5pt);
{
\color{blue}
\small
\draw (0,0) node[anchor= north]{$\bm{\varnothing}$};
\draw (1,0) node[anchor=north]{$\mathbf{1}$};
\draw ({1+sqrt(2)/2},{sqrt(2)/2}) node[anchor=west]{$\mathbf{12}$};
\draw ({1+sqrt(2)/2},{1+sqrt(2)/2}) node[anchor=west]{$\mathbf{123}$};
\draw ({1},{1+sqrt(2)}) node[anchor=south]{$\mathbf{1234}$};
\draw ({0},{1+sqrt(2)}) node[anchor=south]{$\mathbf{234}$};
\draw ({-sqrt(2)/2},{1+sqrt(2)/2}) node[anchor=east]{$\mathbf{34}$};
\draw ({-sqrt(2)/2},{sqrt(2)/2}) node[anchor=east]{$\mathbf{4}$};

\draw (1,{sqrt(2)}) node[anchor=west]{$\mathbf{124}$};
\draw ({1 - sqrt(2)/2},{sqrt(2)/2}) node[anchor=south east]{$\mathbf{14}$};
\draw ({1-sqrt(2)/2},{1+sqrt(2)/2}) node[anchor=north east]{$\mathbf{134}$};
}
\end{tikzpicture} &
\begin{tikzpicture}[scale=1.2]
\draw (0,0)--(1,0)--({1+sqrt(2)/2},{sqrt(2)/2})--({1+sqrt(2)/2},{1+sqrt(2)/2})--({1},{1+sqrt(2)})--({0},{1+sqrt(2)})--({-sqrt(2)/2},{1+sqrt(2)/2})--({-sqrt(2)/2},{sqrt(2)/2})--(0,0);

\draw[shift={({-sqrt(2)/2},{sqrt(2)/2})}] (0,0)--(1,0)--({1+sqrt(2)/2},{sqrt(2)/2})--({sqrt(2)/2},{sqrt(2)/2})--(0,0);
\draw[shift={({- sqrt(2)/2},{sqrt(2)/2})}] (0,0)--({sqrt(2)/2},{sqrt(2)/2})--({sqrt(2)/2},{1+sqrt(2)/2})--(0,1)--(0,0);
\draw[shift={({1+sqrt(2)/2},{sqrt(2)/2})}] (0,0)--(0,1)--({-sqrt(2)/2},{1 + sqrt(2)/2})--({-sqrt(2)/2},{sqrt(2)/2})--(0,0);
\draw[shift={({0},{sqrt(2)})}] (0,0)--(1,0)--(1,1)--(0,1)--(0,0);
\draw[shift={(1,0)}] (0,0)--({sqrt(2)/2},{sqrt(2)/2})--(0,{sqrt(2)})--({-sqrt(2)/2},{sqrt(2)/2})--(0,0);
\draw (0,0)--(1,0)--({1 - sqrt(2)/2},{sqrt(2)/2})--({- sqrt(2)/2},{sqrt(2)/2})--(0,0);

\filldraw (0,0) circle (1.5pt);
{
\color{blue}
\small
\draw (0,0) node[anchor= north]{$\bm{\varnothing}$};
\draw (1,0) node[anchor=north]{$\mathbf{1}$};
\draw ({1+sqrt(2)/2},{sqrt(2)/2}) node[anchor=west]{$\mathbf{12}$};
\draw ({1+sqrt(2)/2},{1+sqrt(2)/2}) node[anchor=west]{$\mathbf{123}$};
\draw ({1},{1+sqrt(2)}) node[anchor=south]{$\mathbf{1234}$};
\draw ({0},{1+sqrt(2)}) node[anchor=south]{$\mathbf{234}$};
\draw ({-sqrt(2)/2},{1+sqrt(2)/2}) node[anchor=east]{$\mathbf{34}$};
\draw ({-sqrt(2)/2},{sqrt(2)/2}) node[anchor=east]{$\mathbf{4}$};

\draw (1,{sqrt(2)}) node[anchor=west]{$\mathbf{124}$};
\draw ({1 - sqrt(2)/2},{sqrt(2)/2}) node[anchor=south east]{$\mathbf{14}$};
\draw ({0},{sqrt(2)}) node[anchor=south west]{$\mathbf{24}$};
}
\end{tikzpicture} \\
$T_3$ & $T_4$ & $T_5$\\
\begin{tikzpicture}[scale=1.2]
\draw (0,0)--(1,0)--({1+sqrt(2)/2},{sqrt(2)/2})--({1+sqrt(2)/2},{1+sqrt(2)/2})--({1},{1+sqrt(2)})--({0},{1+sqrt(2)})--({-sqrt(2)/2},{1+sqrt(2)/2})--({-sqrt(2)/2},{sqrt(2)/2})--(0,0);

\draw (0,0)--(1,0)--({1+sqrt(2)/2},{sqrt(2)/2})--({sqrt(2)/2},{sqrt(2)/2})--(0,0);
\draw[shift={({- sqrt(2)/2},{sqrt(2)/2})}] (0,0)--({sqrt(2)/2},{sqrt(2)/2})--({sqrt(2)/2},{1+sqrt(2)/2})--(0,1)--(0,0);
\draw[shift={({1+sqrt(2)/2},{sqrt(2)/2})}] (0,0)--(0,1)--({-sqrt(2)/2},{1 + sqrt(2)/2})--({-sqrt(2)/2},{sqrt(2)/2})--(0,0);
\draw[shift={({0},{sqrt(2)})}] (0,0)--(1,0)--(1,1)--(0,1)--(0,0);
\draw (0,0)--({sqrt(2)/2},{sqrt(2)/2})--(0,{sqrt(2)})--({-sqrt(2)/2},{sqrt(2)/2})--(0,0);
\draw[shift={({sqrt(2)/2},{sqrt(2)/2})}] (0,0)--(1,0)--({1 - sqrt(2)/2},{sqrt(2)/2})--({- sqrt(2)/2},{sqrt(2)/2})--(0,0);

\filldraw (0,0) circle (1.5pt);
{
\color{blue}
\small
\draw (0,0) node[anchor= north]{$\bm{\varnothing}$};
\draw (1,0) node[anchor=north]{$\mathbf{1}$};
\draw ({1+sqrt(2)/2},{sqrt(2)/2}) node[anchor=west]{$\mathbf{12}$};
\draw ({1+sqrt(2)/2},{1+sqrt(2)/2}) node[anchor=west]{$\mathbf{123}$};
\draw ({1},{1+sqrt(2)}) node[anchor=south]{$\mathbf{1234}$};
\draw ({0},{1+sqrt(2)}) node[anchor=south]{$\mathbf{234}$};
\draw ({-sqrt(2)/2},{1+sqrt(2)/2}) node[anchor=east]{$\mathbf{34}$};
\draw ({-sqrt(2)/2},{sqrt(2)/2}) node[anchor=east]{$\mathbf{4}$};

\draw (1,{sqrt(2)}) node[anchor=west]{$\mathbf{124}$};
\draw ({sqrt(2)/2},{sqrt(2)/2}) node[anchor=east]{$\mathbf{2}$};
\draw ({0},{sqrt(2)}) node[anchor=north]{$\mathbf{24}$};
}
\end{tikzpicture} &
\begin{tikzpicture}[scale=1.2]
\draw (0,0)--(1,0)--({1+sqrt(2)/2},{sqrt(2)/2})--({1+sqrt(2)/2},{1+sqrt(2)/2})--({1},{1+sqrt(2)})--({0},{1+sqrt(2)})--({-sqrt(2)/2},{1+sqrt(2)/2})--({-sqrt(2)/2},{sqrt(2)/2})--(0,0);

\draw (0,0)--(1,0)--({1+sqrt(2)/2},{sqrt(2)/2})--({sqrt(2)/2},{sqrt(2)/2})--(0,0);
\draw[shift={({- sqrt(2)/2},{sqrt(2)/2})}] (0,0)--({sqrt(2)/2},{sqrt(2)/2})--({sqrt(2)/2},{1+sqrt(2)/2})--(0,1)--(0,0);
\draw[shift={({sqrt(2)/2},{sqrt(2)/2})}] (0,0)--(0,1)--({-sqrt(2)/2},{1 + sqrt(2)/2})--({-sqrt(2)/2},{sqrt(2)/2})--(0,0);
\draw[shift={({sqrt(2)/2},{sqrt(2)/2})}] (0,0)--(1,0)--(1,1)--(0,1)--(0,0);
\draw (0,0)--({sqrt(2)/2},{sqrt(2)/2})--(0,{sqrt(2)})--({-sqrt(2)/2},{sqrt(2)/2})--(0,0);
\draw[shift={({sqrt(2)/2},{1+sqrt(2)/2})}] (0,0)--(1,0)--({1 - sqrt(2)/2},{sqrt(2)/2})--({- sqrt(2)/2},{sqrt(2)/2})--(0,0);

\filldraw (0,0) circle (1.5pt);
{
\color{blue}
\small
\draw (0,0) node[anchor= north]{$\bm{\varnothing}$};
\draw (1,0) node[anchor=north]{$\mathbf{1}$};
\draw ({1+sqrt(2)/2},{sqrt(2)/2}) node[anchor=west]{$\mathbf{12}$};
\draw ({1+sqrt(2)/2},{1+sqrt(2)/2}) node[anchor=west]{$\mathbf{123}$};
\draw ({1},{1+sqrt(2)}) node[anchor=south]{$\mathbf{1234}$};
\draw ({0},{1+sqrt(2)}) node[anchor=south]{$\mathbf{234}$};
\draw ({-sqrt(2)/2},{1+sqrt(2)/2}) node[anchor=east]{$\mathbf{34}$};
\draw ({-sqrt(2)/2},{sqrt(2)/2}) node[anchor=east]{$\mathbf{4}$};

\draw ({sqrt(2)/2},{1+sqrt(2)/2}) node[anchor=north west]{$\mathbf{23}$};
\draw ({sqrt(2)/2},{sqrt(2)/2}) node[anchor=south west]{$\mathbf{2}$};
\draw ({0},{sqrt(2)}) node[anchor=north]{$\mathbf{24}$};
}
\end{tikzpicture} &
\begin{tikzpicture}[scale=1.2]
\draw (0,0)--(1,0)--({1+sqrt(2)/2},{sqrt(2)/2})--({1+sqrt(2)/2},{1+sqrt(2)/2})--({1},{1+sqrt(2)})--({0},{1+sqrt(2)})--({-sqrt(2)/2},{1+sqrt(2)/2})--({-sqrt(2)/2},{sqrt(2)/2})--(0,0);

\draw (0,0)--(1,0)--({1+sqrt(2)/2},{sqrt(2)/2})--({sqrt(2)/2},{sqrt(2)/2})--(0,0);
\draw (0,0)--({sqrt(2)/2},{sqrt(2)/2})--({sqrt(2)/2},{1+sqrt(2)/2})--(0,1)--(0,0);
\draw (0,0)--(0,1)--({-sqrt(2)/2},{1 + sqrt(2)/2})--({-sqrt(2)/2},{sqrt(2)/2})--(0,0);
\draw[shift={({sqrt(2)/2},{sqrt(2)/2})}] (0,0)--(1,0)--(1,1)--(0,1)--(0,0);
\draw[shift={(0,1)}] (0,0)--({sqrt(2)/2},{sqrt(2)/2})--(0,{sqrt(2)})--({-sqrt(2)/2},{sqrt(2)/2})--(0,0);
\draw[shift={({sqrt(2)/2},{1+sqrt(2)/2})}] (0,0)--(1,0)--({1 - sqrt(2)/2},{sqrt(2)/2})--({- sqrt(2)/2},{sqrt(2)/2})--(0,0);

\filldraw (0,0) circle (1.5pt);
{
\color{blue}
\small
\draw (0,0) node[anchor= north]{$\bm{\varnothing}$};
\draw (1,0) node[anchor=north]{$\mathbf{1}$};
\draw ({1+sqrt(2)/2},{sqrt(2)/2}) node[anchor=west]{$\mathbf{12}$};
\draw ({1+sqrt(2)/2},{1+sqrt(2)/2}) node[anchor=west]{$\mathbf{123}$};
\draw ({1},{1+sqrt(2)}) node[anchor=south]{$\mathbf{1234}$};
\draw ({0},{1+sqrt(2)}) node[anchor=south]{$\mathbf{234}$};
\draw ({-sqrt(2)/2},{1+sqrt(2)/2}) node[anchor=east]{$\mathbf{34}$};
\draw ({-sqrt(2)/2},{sqrt(2)/2}) node[anchor=east]{$\mathbf{4}$};

\draw ({sqrt(2)/2},{sqrt(2)/2}) node[anchor=south west]{$\mathbf{2}$};
\draw (0,{1}) node[anchor=south]{$\mathbf{3}$};
\draw ({sqrt(2)/2},{1+sqrt(2)/2}) node[anchor=east]{$\mathbf{23}$};
}
\end{tikzpicture}\\
$T_6$ & $T_7$ & $T_8$
\end{tabular}
\caption{The pile of $\Diamond$-tilings of $\mathbf{P}_4$ from Example~\ref{octagon_seq_example}.} \label{oct_cycle_figure}
\end{figure}
\end{Example}

\begin{Definition} \label{cn_seq_def}Define $\mathcal{C}(n)$ to be the set of all piles $\mathbf{T} = (T_0, \dots, T_{\binom{n}{3}})$ with $T_0 = T_{\min ,n}$ and $T_{\binom{n}{3}} = T_{\max ,n}$. Note that the shortest length of a pile starting with $T_{\min ,n}$ and ending with $T_{\max ,n}$ is $\binom{n}{3}$, corresponding to switching from $\Delta$-crossings to $\nabla$-crossings for each of the $\binom{n}{3}$ triples. Every $\Diamond$-tiling $T$ of $\mathbf{P}_n$ appears in at least one pile in $\mathcal{C}(n)$.
\end{Definition}

\begin{Remark}One can put a poset structure on the set of $\Diamond$-tilings of $\mathbf{P}_n$, called the \emph{second higher Bruhat order} $B(n, 2)$ \cite{manin_sch}, as follows: given $\Diamond$-tilings $T_1$, $T_2$ with associated pseudoline arrangements $D_1$, $D_2$, we say that $T_1 \le T_2$ if $i$, $j$, $k$ having a $\Delta$-crossing in $T_2$ implies that $i$,~$j$,~$k$ have a~$\Delta$-crossing in $T_1$ for all $i < j < k$. Note that $T_{\min ,n}$ is the minimum element of $B(n, 2)$, and $T_{\max ,n}$ is the maximum element of $B(n, 2)$.
\end{Remark}

\begin{Example} \label{two_piles_c4}The set $\mathcal{C}(4)$ consists of two piles, namely $(T_0, T_1, T_2, T_3, T_4)$ and $(T_0, T_7, \allowbreak T_6, T_5, T_4)$, where the $T_i$ are the $\Diamond$-tilings of $\mathbf{P}_4$ from Fig.~\ref{oct_cycle_figure}.
\end{Example}

\begin{Definition}\label{dcc_def}A \emph{directed cubical complex} is a $d$-dimensional cubical complex along with a~choice of ``top'' vertex in each $d$-dimensional cube.
\end{Definition}

\begin{Definition}\label{assoc_cubical_complex_def}Given a pile $\mathbf{T} = (T_0, \dots, T_\ell)$ of quadrangulations of a polygon~$R$, we define an associated $3$-dimensional directed cubical complex $\varkappa = \varkappa(\mathbf{T})$ as follows. Start with the $2$-dimensional cubical complex corresponding to $T_0$. For $i = 1, \dots, \ell$, given $T_{i - 1}$ and $T_i$ labeled as in Fig.~\ref{cube_added_from_flip}, add a new vertex $v$ to the complex corresponding to the new vertex in $T_i$, and add the $3$-dimensional cube labeled as in Fig.~\ref{cube_added_from_flip}, with $v$ as its top vertex. Note that a flip cannot be centered at a vertex on the boundary of $R$, so each vertex on the boundary of $R$ corresponds to a single vertex in $\varkappa$. In the special case where $R = \mathbf{P}_n$ and the quadrangulations are $\Diamond$-tilings, we denote the vertex of $\varkappa$ corresponding to~$v_0$ as~$v_0$, by an abuse of notation.
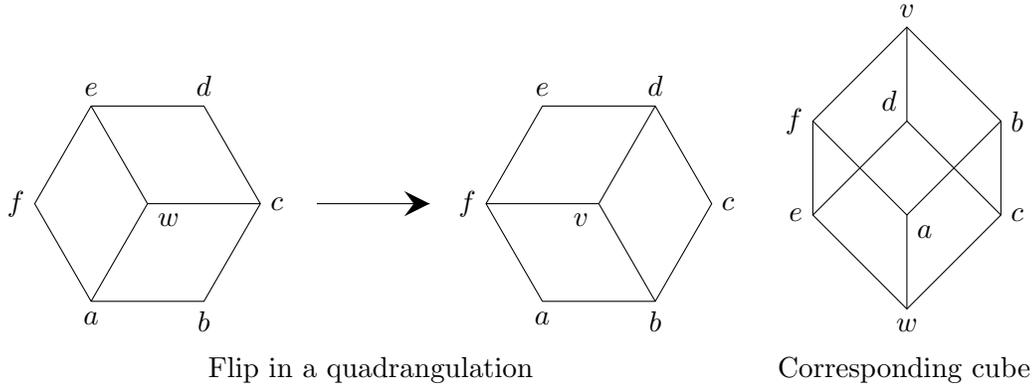
\begin{figure}[ht]\centering
\begin{tabular}{ c c }
\begin{tikzpicture}[scale=1.5]
\draw (0,0)--(1,0)--({1 + 1/2},{sqrt(3)/2})--(1,{sqrt(3)})--(0,{sqrt(3)})--({- 1/2},{sqrt(3)/2})--(0,0);
\draw ({1/2},{sqrt(3)/2})--(0,0);
\draw ({1/2},{sqrt(3)/2})--({1 + 1/2},{sqrt(3)/2});
\draw ({1/2},{sqrt(3)/2})--(0,{sqrt(3)});

{
\draw (0,0) node[anchor=north]{$a$};
\draw (1,0) node[anchor=north]{$b$};
\draw ({1 + 1/2},{sqrt(3)/2}) node[anchor=west]{$c$};
\draw (1,{sqrt(3)}) node[anchor=south]{$d$};
\draw (0,{sqrt(3)}) node[anchor=south]{$e$};
\draw ({- 1/2},{sqrt(3)/2}) node[anchor=east]{$f$};
\draw ({1/2},{sqrt(3)/2}) node[anchor=north west]{$w$};
}

\draw [decoration={markings,mark=at position 1 with
 {\arrow[scale=3,>=stealth]{>}}},postaction={decorate}] (2,{sqrt(3)/2})--(3,{sqrt(3)/2});

\draw[shift={(4,0)}] (0,0)--(1,0)--({1 + 1/2},{sqrt(3)/2})--(1,{sqrt(3)})--(0,{sqrt(3)})--({- 1/2},{sqrt(3)/2})--(0,0);
\draw[shift={(4,0)}] ({1/2},{sqrt(3)/2})--(1,0);
\draw[shift={(4,0)}] ({1/2},{sqrt(3)/2})--(1,{sqrt(3)});
\draw[shift={(4,0)}] ({1/2},{sqrt(3)/2})--({- 1/2},{sqrt(3)/2});

\draw[shift={(4,0)}] (0,0) node[anchor=north]{$a$};
\draw[shift={(4,0)}] (1,0) node[anchor=north]{$b$};
\draw[shift={(4,0)}] ({1 + 1/2},{sqrt(3)/2}) node[anchor=west]{$c$};
\draw[shift={(4,0)}] (1,{sqrt(3)}) node[anchor=south]{$d$};
\draw[shift={(4,0)}] (0,{sqrt(3)}) node[anchor=south]{$e$};
\draw[shift={(4,0)}] ({- 1/2},{sqrt(3)/2}) node[anchor=east]{$f$};
\draw[shift={(4,0)}] ({1/2},{sqrt(3)/2}) node[anchor=north east]{$v$};
\end{tikzpicture} &
\begin{tikzpicture}[scale=1.25]
\draw (0,0)--(1,1)--(0,2)--(-1,1)--(0,0);
\draw (0,1)--(1,2)--(0,3)--(-1,2)--(0,1);
\draw (0,0)--(0,1);
\draw (1,1)--(1,2);
\draw (0,2)--(0,3);
\draw (-1,1)--(-1,2);

\draw (0,0) node[anchor=north]{$w$};
\draw (0,3) node[anchor=south]{$v$};
\draw (1,2) node[anchor=west]{$b$};
\draw (1,1) node[anchor=west]{$c$};
\draw (-1,1) node[anchor=east]{$e$};
\draw (-1,2) node[anchor=east]{$f$};
\draw (0,1) node[anchor=north west]{$a$};
\draw (0,2) node[anchor = south east]{$d$};
\end{tikzpicture} \\
Flip in a quadrangulation & Corresponding cube
\end{tabular}
\caption{A flip in a quadrangulation on the left, and the corresponding cube added to the directed cubical complex.} \label{cube_added_from_flip}
\end{figure}
\end{Definition}

\begin{Definition}\label{0_02def}Given a cubical complex $\varkappa$, we denote by $\varkappa^i$ the set of $i$-dimensional faces of~$\varkappa$, and set $\varkappa^{02} = \varkappa^0 \cup \varkappa^2$. Similarly, for a pile $\mathbf{T}$ of a quadrangulations, we denote by $\varkappa^i(\mathbf{T})$ the set of $i$-dimensional faces of $\varkappa(\mathbf{T})$, and $\varkappa^{02}(\mathbf{T}) = \varkappa^0(\mathbf{T}) \cup \varkappa^2(\mathbf{T})$.
\end{Definition}

\begin{Definition}\label{three_dim_flip}Two $3$-dimensional directed cubical complexes $\varkappa_1$ and $\varkappa_2$ are related by a \emph{flip} if there exist piles $\mathbf{T}_1 = (T_{1,0}, \dots, T_{1, \ell})$ and $\mathbf{T}_2 = (T_{2,0}, \dots, T_{2, \ell})$ such that
\begin{itemize}\itemsep=0pt
\item $\varkappa_1 = \varkappa(\mathbf{T}_1)$ and $\varkappa_2 = \varkappa(\mathbf{T}_2)$,
\end{itemize}
and there exists $i$ such that
\begin{itemize}\itemsep=0pt
\item $T_{1,j} = T_{2,j}$ for $j = 1, \dots, i$ and $j = i + 4, \dots, \ell$, and
\item $T_{1, j}$ and $T_{2, j}$ for $j = i + 1, i + 2, i + 3$ are related by the local moves shown in Fig.~\ref{3flip_figure}.
\end{itemize}

\begin{figure}[ht]\centering
\begin{tabular}{ c c c c c c }
$\mathbf{T}_1$ &
\begin{tikzpicture}[scale=1]
\draw (0,0)--(1,0)--({1+sqrt(2)/2},{sqrt(2)/2})--({1+sqrt(2)/2},{1+sqrt(2)/2})--({1},{1+sqrt(2)})--({0},{1+sqrt(2)})--({-sqrt(2)/2},{1+sqrt(2)/2})--({-sqrt(2)/2},{sqrt(2)/2})--(0,0);

\draw (0,0)--(1,0)--({1+sqrt(2)/2},{sqrt(2)/2})--({sqrt(2)/2},{sqrt(2)/2})--(0,0);
\draw (0,0)--({sqrt(2)/2},{sqrt(2)/2})--({sqrt(2)/2},{1+sqrt(2)/2})--(0,1)--(0,0);
\draw (0,0)--(0,1)--({-sqrt(2)/2},{1 + sqrt(2)/2})--({-sqrt(2)/2},{sqrt(2)/2})--(0,0);
\draw[shift={({sqrt(2)/2},{sqrt(2)/2})}] (0,0)--(1,0)--(1,1)--(0,1)--(0,0);
\draw[shift={(0,1)}] (0,0)--({sqrt(2)/2},{sqrt(2)/2})--(0,{sqrt(2)})--({-sqrt(2)/2},{sqrt(2)/2})--(0,0);
\draw[shift={({sqrt(2)/2},{1+sqrt(2)/2})}] (0,0)--(1,0)--({1 - sqrt(2)/2},{sqrt(2)/2})--({- sqrt(2)/2},{sqrt(2)/2})--(0,0);
\end{tikzpicture} &
\begin{tikzpicture}[scale=1]
\draw (0,0)--(1,0)--({1+sqrt(2)/2},{sqrt(2)/2})--({1+sqrt(2)/2},{1+sqrt(2)/2})--({1},{1+sqrt(2)})--({0},{1+sqrt(2)})--({-sqrt(2)/2},{1+sqrt(2)/2})--({-sqrt(2)/2},{sqrt(2)/2})--(0,0);

\draw[shift={(0,1)}] (0,0)--(1,0)--({1+sqrt(2)/2},{sqrt(2)/2})--({sqrt(2)/2},{sqrt(2)/2})--(0,0);
\draw[shift={(1,0)}] (0,0)--({sqrt(2)/2},{sqrt(2)/2})--({sqrt(2)/2},{1+sqrt(2)/2})--(0,1)--(0,0);
\draw (0,0)--(0,1)--({-sqrt(2)/2},{1 + sqrt(2)/2})--({-sqrt(2)/2},{sqrt(2)/2})--(0,0);
\draw (0,0)--(1,0)--(1,1)--(0,1)--(0,0);
\draw[shift={(0,1)}] (0,0)--({sqrt(2)/2},{sqrt(2)/2})--(0,{sqrt(2)})--({-sqrt(2)/2},{sqrt(2)/2})--(0,0);
\draw[shift={({sqrt(2)/2},{1+sqrt(2)/2})}] (0,0)--(1,0)--({1 - sqrt(2)/2},{sqrt(2)/2})--({- sqrt(2)/2},{sqrt(2)/2})--(0,0);
\end{tikzpicture} &
\begin{tikzpicture}[scale=1]
\draw (0,0)--(1,0)--({1+sqrt(2)/2},{sqrt(2)/2})--({1+sqrt(2)/2},{1+sqrt(2)/2})--({1},{1+sqrt(2)})--({0},{1+sqrt(2)})--({-sqrt(2)/2},{1+sqrt(2)/2})--({-sqrt(2)/2},{sqrt(2)/2})--(0,0);

\draw[shift={({-sqrt(2)/2},{1+sqrt(2)/2})}] (0,0)--(1,0)--({1+sqrt(2)/2},{sqrt(2)/2})--({sqrt(2)/2},{sqrt(2)/2})--(0,0);
\draw[shift={(1,0)}] (0,0)--({sqrt(2)/2},{sqrt(2)/2})--({sqrt(2)/2},{1+sqrt(2)/2})--(0,1)--(0,0);
\draw (0,0)--(0,1)--({-sqrt(2)/2},{1 + sqrt(2)/2})--({-sqrt(2)/2},{sqrt(2)/2})--(0,0);
\draw (0,0)--(1,0)--(1,1)--(0,1)--(0,0);
\draw[shift={(1,1)}] (0,0)--({sqrt(2)/2},{sqrt(2)/2})--(0,{sqrt(2)})--({-sqrt(2)/2},{sqrt(2)/2})--(0,0);
\draw[shift={(0,1)}] (0,0)--(1,0)--({1 - sqrt(2)/2},{sqrt(2)/2})--({- sqrt(2)/2},{sqrt(2)/2})--(0,0);
\end{tikzpicture} &
\begin{tikzpicture}[scale=1]
\draw (0,0)--(1,0)--({1+sqrt(2)/2},{sqrt(2)/2})--({1+sqrt(2)/2},{1+sqrt(2)/2})--({1},{1+sqrt(2)})--({0},{1+sqrt(2)})--({-sqrt(2)/2},{1+sqrt(2)/2})--({-sqrt(2)/2},{sqrt(2)/2})--(0,0);

\draw[shift={({-sqrt(2)/2},{1+sqrt(2)/2})}] (0,0)--(1,0)--({1+sqrt(2)/2},{sqrt(2)/2})--({sqrt(2)/2},{sqrt(2)/2})--(0,0);
\draw[shift={(1,0)}] (0,0)--({sqrt(2)/2},{sqrt(2)/2})--({sqrt(2)/2},{1+sqrt(2)/2})--(0,1)--(0,0);
\draw[shift={(1,0)}] (0,0)--(0,1)--({-sqrt(2)/2},{1 + sqrt(2)/2})--({-sqrt(2)/2},{sqrt(2)/2})--(0,0);
\draw[shift={({-sqrt(2)/2},{sqrt(2)/2})}] (0,0)--(1,0)--(1,1)--(0,1)--(0,0);
\draw[shift={(1,1)}] (0,0)--({sqrt(2)/2},{sqrt(2)/2})--(0,{sqrt(2)})--({-sqrt(2)/2},{sqrt(2)/2})--(0,0);
\draw (0,0)--(1,0)--({1 - sqrt(2)/2},{sqrt(2)/2})--({- sqrt(2)/2},{sqrt(2)/2})--(0,0);
\end{tikzpicture} &
\begin{tikzpicture}[scale=1]
\draw (0,0)--(1,0)--({1+sqrt(2)/2},{sqrt(2)/2})--({1+sqrt(2)/2},{1+sqrt(2)/2})--({1},{1+sqrt(2)})--({0},{1+sqrt(2)})--({-sqrt(2)/2},{1+sqrt(2)/2})--({-sqrt(2)/2},{sqrt(2)/2})--(0,0);

\draw[shift={({-sqrt(2)/2},{1+sqrt(2)/2})}] (0,0)--(1,0)--({1+sqrt(2)/2},{sqrt(2)/2})--({sqrt(2)/2},{sqrt(2)/2})--(0,0);
\draw[shift={({1 - sqrt(2)/2},{sqrt(2)/2})}] (0,0)--({sqrt(2)/2},{sqrt(2)/2})--({sqrt(2)/2},{1+sqrt(2)/2})--(0,1)--(0,0);
\draw[shift={({1+sqrt(2)/2},{sqrt(2)/2})}] (0,0)--(0,1)--({-sqrt(2)/2},{1 + sqrt(2)/2})--({-sqrt(2)/2},{sqrt(2)/2})--(0,0);
\draw[shift={({-sqrt(2)/2},{sqrt(2)/2})}] (0,0)--(1,0)--(1,1)--(0,1)--(0,0);
\draw[shift={(1,0)}] (0,0)--({sqrt(2)/2},{sqrt(2)/2})--(0,{sqrt(2)})--({-sqrt(2)/2},{sqrt(2)/2})--(0,0);
\draw (0,0)--(1,0)--({1 - sqrt(2)/2},{sqrt(2)/2})--({- sqrt(2)/2},{sqrt(2)/2})--(0,0);
\end{tikzpicture}\\
& \vspace{1cm} $T_{1,i}$ & $T_{1,i + 1}$ & $T_{1,i + 2}$ & $T_{1,i + 3}$ & $T_{1,i + 4}$\\
$\mathbf{T}_2$ &
\begin{tikzpicture}[scale=1]
\draw (0,0)--(1,0)--({1+sqrt(2)/2},{sqrt(2)/2})--({1+sqrt(2)/2},{1+sqrt(2)/2})--({1},{1+sqrt(2)})--({0},{1+sqrt(2)})--({-sqrt(2)/2},{1+sqrt(2)/2})--({-sqrt(2)/2},{sqrt(2)/2})--(0,0);

\draw (0,0)--(1,0)--({1+sqrt(2)/2},{sqrt(2)/2})--({sqrt(2)/2},{sqrt(2)/2})--(0,0);
\draw (0,0)--({sqrt(2)/2},{sqrt(2)/2})--({sqrt(2)/2},{1+sqrt(2)/2})--(0,1)--(0,0);
\draw (0,0)--(0,1)--({-sqrt(2)/2},{1 + sqrt(2)/2})--({-sqrt(2)/2},{sqrt(2)/2})--(0,0);
\draw[shift={({sqrt(2)/2},{sqrt(2)/2})}] (0,0)--(1,0)--(1,1)--(0,1)--(0,0);
\draw[shift={(0,1)}] (0,0)--({sqrt(2)/2},{sqrt(2)/2})--(0,{sqrt(2)})--({-sqrt(2)/2},{sqrt(2)/2})--(0,0);
\draw[shift={({sqrt(2)/2},{1+sqrt(2)/2})}] (0,0)--(1,0)--({1 - sqrt(2)/2},{sqrt(2)/2})--({- sqrt(2)/2},{sqrt(2)/2})--(0,0);
\end{tikzpicture} &
\begin{tikzpicture}[scale=1]
\draw (0,0)--(1,0)--({1+sqrt(2)/2},{sqrt(2)/2})--({1+sqrt(2)/2},{1+sqrt(2)/2})--({1},{1+sqrt(2)})--({0},{1+sqrt(2)})--({-sqrt(2)/2},{1+sqrt(2)/2})--({-sqrt(2)/2},{sqrt(2)/2})--(0,0);

\draw (0,0)--(1,0)--({1+sqrt(2)/2},{sqrt(2)/2})--({sqrt(2)/2},{sqrt(2)/2})--(0,0);
\draw[shift={({- sqrt(2)/2},{sqrt(2)/2})}] (0,0)--({sqrt(2)/2},{sqrt(2)/2})--({sqrt(2)/2},{1+sqrt(2)/2})--(0,1)--(0,0);
\draw[shift={({sqrt(2)/2},{sqrt(2)/2})}] (0,0)--(0,1)--({-sqrt(2)/2},{1 + sqrt(2)/2})--({-sqrt(2)/2},{sqrt(2)/2})--(0,0);
\draw[shift={({sqrt(2)/2},{sqrt(2)/2})}] (0,0)--(1,0)--(1,1)--(0,1)--(0,0);
\draw (0,0)--({sqrt(2)/2},{sqrt(2)/2})--(0,{sqrt(2)})--({-sqrt(2)/2},{sqrt(2)/2})--(0,0);
\draw[shift={({sqrt(2)/2},{1+sqrt(2)/2})}] (0,0)--(1,0)--({1 - sqrt(2)/2},{sqrt(2)/2})--({- sqrt(2)/2},{sqrt(2)/2})--(0,0);
\end{tikzpicture} &
\begin{tikzpicture}[scale=1]
\draw (0,0)--(1,0)--({1+sqrt(2)/2},{sqrt(2)/2})--({1+sqrt(2)/2},{1+sqrt(2)/2})--({1},{1+sqrt(2)})--({0},{1+sqrt(2)})--({-sqrt(2)/2},{1+sqrt(2)/2})--({-sqrt(2)/2},{sqrt(2)/2})--(0,0);

\draw (0,0)--(1,0)--({1+sqrt(2)/2},{sqrt(2)/2})--({sqrt(2)/2},{sqrt(2)/2})--(0,0);
\draw[shift={({- sqrt(2)/2},{sqrt(2)/2})}] (0,0)--({sqrt(2)/2},{sqrt(2)/2})--({sqrt(2)/2},{1+sqrt(2)/2})--(0,1)--(0,0);
\draw[shift={({1+sqrt(2)/2},{sqrt(2)/2})}] (0,0)--(0,1)--({-sqrt(2)/2},{1 + sqrt(2)/2})--({-sqrt(2)/2},{sqrt(2)/2})--(0,0);
\draw[shift={({0},{sqrt(2)})}] (0,0)--(1,0)--(1,1)--(0,1)--(0,0);
\draw (0,0)--({sqrt(2)/2},{sqrt(2)/2})--(0,{sqrt(2)})--({-sqrt(2)/2},{sqrt(2)/2})--(0,0);
\draw[shift={({sqrt(2)/2},{sqrt(2)/2})}] (0,0)--(1,0)--({1 - sqrt(2)/2},{sqrt(2)/2})--({- sqrt(2)/2},{sqrt(2)/2})--(0,0);
\end{tikzpicture} &
\begin{tikzpicture}[scale=1]
\draw (0,0)--(1,0)--({1+sqrt(2)/2},{sqrt(2)/2})--({1+sqrt(2)/2},{1+sqrt(2)/2})--({1},{1+sqrt(2)})--({0},{1+sqrt(2)})--({-sqrt(2)/2},{1+sqrt(2)/2})--({-sqrt(2)/2},{sqrt(2)/2})--(0,0);

\draw[shift={({-sqrt(2)/2},{sqrt(2)/2})}] (0,0)--(1,0)--({1+sqrt(2)/2},{sqrt(2)/2})--({sqrt(2)/2},{sqrt(2)/2})--(0,0);
\draw[shift={({- sqrt(2)/2},{sqrt(2)/2})}] (0,0)--({sqrt(2)/2},{sqrt(2)/2})--({sqrt(2)/2},{1+sqrt(2)/2})--(0,1)--(0,0);
\draw[shift={({1+sqrt(2)/2},{sqrt(2)/2})}] (0,0)--(0,1)--({-sqrt(2)/2},{1 + sqrt(2)/2})--({-sqrt(2)/2},{sqrt(2)/2})--(0,0);
\draw[shift={({0},{sqrt(2)})}] (0,0)--(1,0)--(1,1)--(0,1)--(0,0);
\draw[shift={(1,0)}] (0,0)--({sqrt(2)/2},{sqrt(2)/2})--(0,{sqrt(2)})--({-sqrt(2)/2},{sqrt(2)/2})--(0,0);
\draw (0,0)--(1,0)--({1 - sqrt(2)/2},{sqrt(2)/2})--({- sqrt(2)/2},{sqrt(2)/2})--(0,0);
\end{tikzpicture} &
\begin{tikzpicture}[scale=1]
\draw (0,0)--(1,0)--({1+sqrt(2)/2},{sqrt(2)/2})--({1+sqrt(2)/2},{1+sqrt(2)/2})--({1},{1+sqrt(2)})--({0},{1+sqrt(2)})--({-sqrt(2)/2},{1+sqrt(2)/2})--({-sqrt(2)/2},{sqrt(2)/2})--(0,0);

\draw[shift={({-sqrt(2)/2},{1+sqrt(2)/2})}] (0,0)--(1,0)--({1+sqrt(2)/2},{sqrt(2)/2})--({sqrt(2)/2},{sqrt(2)/2})--(0,0);
\draw[shift={({1 - sqrt(2)/2},{sqrt(2)/2})}] (0,0)--({sqrt(2)/2},{sqrt(2)/2})--({sqrt(2)/2},{1+sqrt(2)/2})--(0,1)--(0,0);
\draw[shift={({1+sqrt(2)/2},{sqrt(2)/2})}] (0,0)--(0,1)--({-sqrt(2)/2},{1 + sqrt(2)/2})--({-sqrt(2)/2},{sqrt(2)/2})--(0,0);
\draw[shift={({-sqrt(2)/2},{sqrt(2)/2})}] (0,0)--(1,0)--(1,1)--(0,1)--(0,0);
\draw[shift={(1,0)}] (0,0)--({sqrt(2)/2},{sqrt(2)/2})--(0,{sqrt(2)})--({-sqrt(2)/2},{sqrt(2)/2})--(0,0);
\draw (0,0)--(1,0)--({1 - sqrt(2)/2},{sqrt(2)/2})--({- sqrt(2)/2},{sqrt(2)/2})--(0,0);
\end{tikzpicture}\\
& $T_{2,i}$ & $T_{2,i + 1}$ & $T_{2,i + 2}$ & $T_{2,i + 3}$ & $T_{2,i + 4}$\\
\end{tabular}
\caption{The two piles $\mathbf{T}_1$ and $\mathbf{T}_2$ in Definition~\ref{three_dim_flip}. The tiles outside the octagon remain in place.} \label{3flip_figure}
\end{figure}
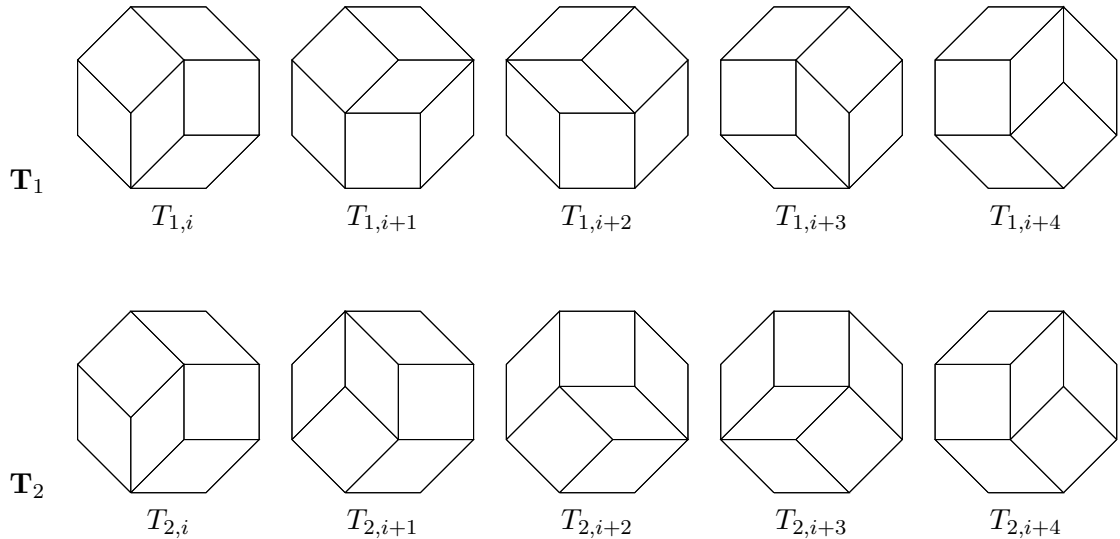
\end{Definition}

\begin{Proposition}[{\cite[Theorem~4.1]{ziegler}}] \label{bn3_connected_by_flips}For any $\mathbf{T}, \mathbf{T}' \in \mathcal{C}(n)$, there exists a sequence of piles $\mathbf{T}_0, \dots, \mathbf{T}_\ell \in \mathcal{C}(n)$ with $\mathbf{T} = \mathbf{T}_0$ and $\mathbf{T}' = \mathbf{T}_\ell$, such that the directed cubical complexes $\varkappa(\mathbf{T}_{i - 1})$ and $\varkappa(\mathbf{T}_i)$ are related by a flip, for $i = 1,\dots,\ell$.
\end{Proposition}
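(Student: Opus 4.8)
The plan is to translate the statement into the language of higher Bruhat orders and then invoke the connectivity result \cite[Theorem~4.1]{ziegler} to which the proposition is attributed. First I would set up a dictionary. A pile $\mathbf{T}=(T_0,\dots,T_{\binom{n}{3}})\in\mathcal{C}(n)$ records, at each step, the unique triple $i<j<k$ whose $\Delta$-crossing becomes a $\nabla$-crossing there; by Definitions~\ref{min_max_tiling_def} and~\ref{cn_seq_def} each of the $\binom{n}{3}$ triples occurs exactly once on the way from $T_{\min,n}$ to $T_{\max,n}$, so $\mathbf{T}$ determines a linear order $\prec_{\mathbf{T}}$ on $\binom{[n]}{3}$. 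The orders arising this way are precisely the \emph{admissible orders} of $\binom{[n]}{3}$ in the sense of Manin--Schechtman \cite{manin_sch} --- those in which the four triples contained in any $4$-subset $Q\subseteq[n]$ appear in lexicographic or in reverse-lexicographic order --- equivalently, the maximal chains of the second higher Bruhat order $B(n,2)$. Two admissible orders are \emph{commutation equivalent} if they differ by transpositions of consecutive triples $S,S'$ with $|S\cap S'|\le 1$, and the equivalence classes are by definition the elements of the third higher Bruhat order $B(n,3)$ \cite{manin_sch,ziegler}; its covering relations are the ``$4$-packet'' moves that reverse the four triples of a single $4$-subset while leaving every other triple and their relative positions unchanged.

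The two points to verify are that this dictionary is compatible with Definitions~\ref{assoc_cubical_complex_def} and~\ref{three_dim_flip}: (i)~the directed cubical complex $\varkappa(\mathbf{T})$ depends only on the commutation class of $\prec_{\mathbf{T}}$, because a commutation of $S,S'$ with $|S\cap S'|\le 1$ amounts to swapping the order in which two cubes attached along disjoint hexagons are added to $\varkappa(\mathbf{T})$; and (ii)~the flip of directed cubical complexes of Definition~\ref{three_dim_flip} (Fig.~\ref{3flip_figure}) is exactly a $4$-packet move at the level of orders. Granting (i) and (ii), the assignment $\mathbf{T}\mapsto\varkappa(\mathbf{T})$ factors through $B(n,3)$ and carries covering relations of $B(n,3)$ to flips of directed cubical complexes. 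Since $B(n,3)$ is a finite poset with a unique maximal element, its Hasse diagram is connected; so for $\mathbf{T},\mathbf{T}'\in\mathcal{C}(n)$ there is a path in that Hasse diagram from $[\prec_{\mathbf{T}}]$ to $[\prec_{\mathbf{T}'}]$, say routed through the maximum. I would lift this path to a sequence of piles in $\mathcal{C}(n)$ by realizing each edge as a Definition~\ref{three_dim_flip} flip and inserting commutation moves where necessary to bring the relevant $4$-packet into the consecutive position required by Definition~\ref{three_dim_flip} --- such commutation moves leave $\varkappa$ unchanged by (i) --- and then discard the redundant steps, obtaining $\mathbf{T}_0,\dots,\mathbf{T}_\ell\in\mathcal{C}(n)$ with $\mathbf{T}_0=\mathbf{T}$, $\mathbf{T}_\ell=\mathbf{T}'$, and $\varkappa(\mathbf{T}_{i-1})$, $\varkappa(\mathbf{T}_i)$ related by a flip for each $i$.

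The main obstacle is the combinatorial bookkeeping packaged into step (ii) and into the lifting: realizing an abstract $4$-packet move at $Q$ as the concrete local move of Definition~\ref{three_dim_flip} requires first commuting the four triples of $Q$ into consecutive positions, in the correct relative order --- possible because any other triple that interacts with a member of $\binom{Q}{3}$ can be slid past all four of them --- while not disturbing the part of the pile already aligned with $\mathbf{T}'$, and it requires identifying the $\mathbf{P}_4$ picture of Fig.~\ref{3flip_figure} with that move. These verifications, together with the existence of the maximum of $B(n,3)$ and the connectivity it yields, are precisely the content of \cite[Theorem~4.1]{ziegler}, which the dictionary of the first paragraphs renders directly applicable.
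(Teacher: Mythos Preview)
The paper does not supply its own proof of this proposition: it is stated with the citation \cite[Theorem~4.1]{ziegler} and used as a black box. Your proposal is a correct unpacking of what that citation means, and it uses exactly the dictionary the paper itself sets up in Definitions~\ref{perm_from_pile_def} and Theorem~\ref{admissible_prop} (piles in $\mathcal{C}(n)$ correspond to admissible permutations of $\binom{[n]}{3}$, commutation classes thereof are elements of $B(n,3)$, and the flip of Definition~\ref{three_dim_flip} is the $4$-packet covering move). So your approach is essentially the intended one; there is nothing to compare it against in the paper beyond the bare citation.

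One small point of care in your write-up: when you ``discard the redundant steps,'' you should make explicit that after removing commutation moves the surviving consecutive piles $\mathbf{T}_{i-1},\mathbf{T}_i$ still witness a flip in the sense of Definition~\ref{three_dim_flip}, which demands the \emph{existence} of piles realizing the local picture of Fig.~\ref{3flip_figure}. Since commutation moves do not change $\varkappa$, the piles obtained after the preparatory commutations serve as those witnesses, but this deserves a sentence rather than being left implicit.
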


\begin{Definition} \label{admissible_def}Let $\sigma$ be a permutation of $\binom{[n]}{k}$, viewed as a sequence whose elements are $k$-element subsets of $[n]$, each of them appearing exactly once. We call such a permutation $\sigma$ \emph{admissible} if for every $I = \{i_1 < \cdots < i_{k + 1}\} \in \binom{[n]}{k + 1}$, the $k + 1$ sets in $\binom{I}{k}$ appear in $\sigma$ in either
\begin{itemize}\itemsep=0pt
\item lexicographic order, i.e., in the order $\big\{i_1, i_2, \dots, i_k, \widehat{i_{k + 1}}\big\}$, $\big\{i_1, i_2, \dots, \widehat{i_k}, i_{k + 1}\}$, \dots, $\big\{\widehat{i_1}, i_2$, $\dots, i_k, i_{k + 1}\big\}$, or
\item reversed lexicographic order, i.e., in the order $\big\{\widehat{i_1}, i_2, \dots, i_k, i_{k + 1}\big\}$, $\big\{i_1, \widehat{i_2}, \dots,$ $i_k, i_{k + 1}\big\}$, $\dots$, $\big\{i_1, i_2, \dots, i_k, \widehat{i_{k + 1}}\big\}$.
\end{itemize}
Thus, for example, $(\{1,2\}, \{1,3\}, \{2,3\})$ and $(\{2,3\}, \{1,3\}, \{1,2\})$ are the only two admissible permutations of $\binom{[3]}{2}$. The \emph{inversion set} of an admissible permutation $\sigma$ of $\binom{[n]}{k}$ is the subset of $\binom{[n]}{k + 1}$ consisting of those $I \in \binom{[n]}{k + 1}$ for which the elements of $\binom{I}{k}$ appear in $\sigma$ in the reversed lexicographic order.
\end{Definition}

\begin{Definition}\label{perm_from_pile_def}Given a pile $\mathbf{T} = \big( T_0, \dots, T_{\binom{n}{3}} \big) \in \mathcal{C}(n)$, for $i = 1, \dots, \binom{n}{3}$, let $\alpha_i \in \binom{[n]}{3}$ be the set of three indices of the edges of the hexagon involved in the flip between $T_{i - 1}$ and $T_i$. Note that $\binom{[n]}{3} = \big\{ \alpha_1, \dots, \alpha_{\binom{n}{3}} \big\}$, as each $\alpha_i$ indexes a triple which is switching from a $\Delta$-crossing to a $\nabla$-crossing. We say that $\big(\alpha_1, \dots, \alpha_{\binom{n}{3}} \big)$ is the \emph{permutation of $\binom{[n]}{3}$ associated to~$\mathbf{T}$}. Note that $\mathbf{T} \in \mathcal{C}(n)$ is uniquely determined by its permutation of $\binom{[n]}{3}$.
\end{Definition}

\begin{Theorem}[\cite{manin_sch}, {\cite[Definition~2.1, Theorem~4.1]{ziegler}}] \label{admissible_prop}
Let $\sigma$ be a permutation of $\binom{[n]}{3}$. The following are equivalent:
\begin{itemize}\itemsep=0pt
\item $\sigma$ is an admissible permutation of $\binom{[n]}{3}$,
\item there exists a pile $\mathbf{T} \in \mathcal{C}(n)$ whose corresponding permutation of $\binom{[n]}{3}$ is $\sigma$.
\end{itemize}
\end{Theorem}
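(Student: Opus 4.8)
The plan is to reduce both implications to the case $n=4$, in which $\mathcal{C}(n)$ consists of exactly the two piles of Example~\ref{two_piles_c4}. The tool will be \emph{restriction to four branches}. Given a $\Diamond$-tiling $T$ of $\mathbf{P}_n$ with associated pseudoline arrangement $D$ and a four-element set of labels $J=\{j_1<j_2<j_3<j_4\}\subseteq[n]$, deleting from $D$ every branch whose label lies outside $J$ leaves a simple pseudoline arrangement $D|_J$ --- every remaining pair of branches still crosses exactly once --- which by Proposition~\ref{pn_pseudoline} is the arrangement of a $\Diamond$-tiling $T|_J$ of $\mathbf{P}_4$, after identifying $J$ with $[4]$ in the order-preserving way. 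I would first record two bookkeeping facts: (i) the $\Delta/\nabla$-type of a triple contained in $J$ is the same for $D$ and for $D|_J$, since it depends only on the three branches of the triple and the chamber $v_0$; in particular $T_{\min,n}|_J=T_{\min,4}$ and $T_{\max,n}|_J=T_{\max,4}$; and (ii) a flip of a pile at a triple $\alpha$ induces on the four-branch picture a braid move if $\alpha\subseteq J$, and otherwise an isotopy that fixes every $\Delta/\nabla$-type inside $J$ --- for in the latter case the only crossing that moves is that of the two branches of $\alpha\cap J$, which slides only past the single branch in $\alpha\setminus J$, a branch carrying no label in $J$.

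For the implication that a pile produces an admissible permutation, I would take $\mathbf{T}=(T_0,\dots,T_{\binom{n}{3}})\in\mathcal{C}(n)$ with associated permutation $\sigma$ (Definition~\ref{perm_from_pile_def}) and fix a four-element $J\subseteq[n]$. Restricting every $T_m$ to $J$ and deleting consecutive repetitions produces, by (i) and (ii), a pile of $\Diamond$-tilings of $\mathbf{P}_4$ from $T_{\min,4}$ to $T_{\max,4}$ of length $\binom{4}{3}=4$ --- that is, an element of $\mathcal{C}(4)$ --- whose associated permutation of $\binom{J}{3}$ is precisely the subsequence $\sigma|_J$ of $\sigma$ consisting of the triples lying in $J$. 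A direct inspection of Figure~\ref{oct_cycle_figure} shows that the two elements of $\mathcal{C}(4)$ have as their associated permutations the lexicographic and the reverse-lexicographic orders of $\binom{[4]}{3}$. Hence $\sigma|_J$ is in lexicographic or reverse-lexicographic order for every four-element $J\subseteq[n]$, which is exactly the statement that $\sigma$ is admissible.

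For the converse, given an admissible $\sigma=(\alpha_1,\dots,\alpha_{\binom{n}{3}})$, I would build a pile greedily: set $T_0=T_{\min,n}$, and obtain $T_m$ from $T_{m-1}$ by flipping the triple $\alpha_m$ in the $\Delta\to\nabla$ direction, so that inductively $T_{m-1}$ has $\nabla$-triples $\{\alpha_1,\dots,\alpha_{m-1}\}$; the only point to check at each step is that $\alpha_m$ is flippable in $T_{m-1}$. A triple $\tau$ is flippable in a $\Diamond$-tiling precisely when the three branches of $\tau$ bound an empty triangular cell of its arrangement, and a further branch $\ell$ meets the interior of that triangle if and only if it does so in the four-branch arrangement on $\tau\cup\{\ell\}$; hence $\alpha_m$ is flippable in $T_{m-1}$ if and only if it is flippable in $T_{m-1}|_{\alpha_m\cup\{\ell\}}$ for every $\ell\in[n]\setminus\alpha_m$. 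By (i), this four-branch tiling has $\nabla$-triples exactly those among $\alpha_1,\dots,\alpha_{m-1}$ that lie in $\alpha_m\cup\{\ell\}$; since $\sigma|_{\alpha_m\cup\{\ell\}}$ is lexicographic or reverse-lexicographic by admissibility, those triples form an initial segment of $\sigma|_{\alpha_m\cup\{\ell\}}$ and $\alpha_m$ is its next element. A $\Diamond$-tiling of $\mathbf{P}_4$ is determined by its set of $\nabla$-triples, so $T_{m-1}|_{\alpha_m\cup\{\ell\}}$ is the corresponding term of the $\mathcal{C}(4)$-pile realizing $\sigma|_{\alpha_m\cup\{\ell\}}$, in which the next flip is at $\alpha_m$; therefore $\alpha_m$ is flippable. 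Performing all $\binom{n}{3}$ flips reaches a $\Diamond$-tiling all of whose triples are $\nabla$-crossings, i.e., $T_{\binom{n}{3}}=T_{\max,n}$ (Definition~\ref{min_max_tiling_def}); thus $\mathbf{T}=(T_0,\dots,T_{\binom{n}{3}})\in\mathcal{C}(n)$ has associated permutation $\sigma$.

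The main obstacle will be the bookkeeping around restriction --- proving that deleting branches preserves simplicity of the arrangement and that a braid move at a triple meeting $J$ in at most two elements restricts to an isotopy that fixes all $\Delta/\nabla$-types in $J$ --- together with the finite verification, from Figure~\ref{oct_cycle_figure}, that the two elements of $\mathcal{C}(4)$ are associated to the lexicographic and reverse-lexicographic orders; the rest is formal. Alternatively, since $\mathcal{C}(n)$ is exactly the set of maximal chains of $\Diamond$-tilings of $\mathbf{P}_n$ under flips from $T_{\min,n}$ to $T_{\max,n}$, equivalently the maximal chains of the second higher Bruhat order $B(n,2)$, the characterization by admissible permutations is the content of the work of Manin--Schechtman~\cite{manin_sch} and Ziegler~\cite{ziegler}.
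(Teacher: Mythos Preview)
The paper does not prove this theorem; it is stated with attribution to Manin--Schechtman~\cite{manin_sch} and Ziegler~\cite{ziegler} and used as a black box. Your proposal therefore supplies an argument the paper omits, and the route you take---reducing both directions to the four-branch case via restriction of pseudoline arrangements---is essentially the standard one underlying those references.

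Your argument is correct. A couple of places where the write-up could be tightened: in fact~(ii) you treat only the case $|\alpha\cap J|=2$ explicitly; the cases $|\alpha\cap J|\le 1$ are strictly easier (no crossing among $J$-branches moves at all), but should be mentioned. More importantly, the cleanest way to see that a flip at $\alpha\not\subseteq J$ leaves $T|_J$ unchanged is the one you already use implicitly in the converse: the $\Delta/\nabla$-type of any triple $\tau\subseteq J$ depends only on the three branches of $\tau$ and the basepoint $v_0$, none of which changes under a braid move at $\alpha\neq\tau$; since a $\Diamond$-tiling of $\mathbf{P}_4$ is determined by its $\nabla$-set (the eight tilings in Fig.~\ref{oct_cycle_figure} have eight distinct $\nabla$-sets), $T|_J$ is unchanged. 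Using this observation uniformly in both directions avoids the isotopy bookkeeping altogether. The verification that the two piles in $\mathcal{C}(4)$ realize the lexicographic and reverse-lexicographic orders of $\binom{[4]}{3}$ is a direct reading of Fig.~\ref{oct_cycle_figure} (the center-vertex labels change as $2\to 13$, $23\to 134$, $3\to 14$, $13\to 124$, giving $123,124,134,234$, and symmetrically for the other pile), exactly as you say.
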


\section[The coherence condition and principal minors of symmetric matrices]{The coherence condition and principal minors\\ of symmetric matrices} \label{principal_minors_section}

We begin this section by reviewing the earlier work of Kenyon and Pemantle \cite{otherhex} concerning the occurrence of the hexahedron recurrence as a determinantal identity. We then formulate new criteria for the existence of symmetric matrices with prescribed values of certain principal minors. See in particular Corollaries~\ref{cor_with_t_min},~\ref{all_pminors_corollary}, and Theorem~\ref{main_all_minors_simple}. The proofs of these results are given later in Sections~\ref{cc_proof_section}--\ref{matrix_proofs}.

We begin by extending the definitions of the Kashaev equation, positive Kashaev recurrence, hexahedron recurrence, and K-hexahedron equations to arrays indexed on directed cubical complexes in the obvious way. For those readers who skipped Section~\ref{preliminaries}, it may be helpful to review Definitions~\ref{cc_def}, \ref{dcc_def}, and~\ref{0_02def} before processing the following definition.

\begin{Definition}\label{recurrences_on_tilings_def}Fix a directed cubical complex $\varkappa$. An array $\mathbf{x}$ indexed by $\varkappa^0$ satisfies the Kashaev equation if for all $3$-dimensional cubes $C$ of $\varkappa$, $\mathbf{x}$ satisfies~\eqref{kashforcube} with the components of $\mathbf{x}$ labeled on $C$ as in Fig.~\ref{labeledcube}. We say that $\mathbf{x}$ satisfies the positive Kashaev equation if the components of $\mathbf{x}$ are all positive and for all $3$-dimensional cubes $C$ of $\varkappa$, $\mathbf{x}$ satisfies~\eqref{kashaevwithsr} with the components of $\mathbf{x}$ labeled on $C$ as in Fig.~\ref{labeledcube} and~$z_{111}$ corresponding to the component of~$\mathbf{x}$ at the top vertex of~$C$. We say that an array $\mathbf{\tilde{x}}$ indexed by $\varkappa^{02}$ satisfies the hexahedron recurrence (resp., K-hexahedron equations) if for all $3$-dimensional cubes $C$ of $\varkappa$, $\mathbf{\tilde{x}}$ satisfies equations~\eqref{hexeqn1}--\eqref{hexeqnmain} (resp., equations~\eqref{likehex1d}--\eqref{likehex4d}, along with equation~\eqref{khexspec} for all $s \in \varkappa^2$) with the components of~$\mathbf{\tilde{x}}$ labeled on the vertices of $C$ as in Fig.~\ref{labeledcube}, labeled on the $2$-dimensional faces of $C$ by averaging the indices of the vertices on the boundary (so that, for example, $z_{0\frac12\frac12}$ is the component of $\mathbf{\tilde{x}}$ at the face where $\mathbf{\tilde{x}}$ has components $z_{000}$, $z_{010}$, $z_{001}$, and $z_{011}$ at the vertices at its boundary), and with $z_{111}$ corresponding to the component of $\mathbf{\tilde{x}}$ on the top vertex of $C$.
\end{Definition}

\begin{Remark} \label{cc_dep_on_direction}As we saw in Section~\ref{main-results}, the Kashaev equation is independent of a choice of direction on each cube, and hence can be defined for arbitrary $3$-dimensional cubical complexes. On the other hand, the positive Kashaev recurrence, hexahedron recurrence, and K-hexahedron equations depend on a choice of a pair of opposite distinguished vertices in each cube, and hence are defined on directed cubical complexes.
\end{Remark}

\begin{Definition}Given an $n \times n$ matrix $M$ and $I, J \subseteq [n]$, we denote by $M_I^J$ the submatrix of~$M$ obtained by restricting to rows~$I$ and columns~$J$. A \emph{principal minor} of~$M$ is the determinant of a submatrix $\det M_I^I$, where $I \subseteq [n]$. We follow the convention that $M_\varnothing^\varnothing = 1$. An \emph{almost-principal minor} of $M$ is the determinant of a submatrix $\det M_{I \cup \{i\}}^{I \cup \{j\}}$ for $I \subset [n]$, and distinct $i, j \not\in I$. We say that an almost-principal minor $M_{I \cup \{i\}}^{I \cup \{j\}}$ is \emph{odd} if $(i - j) (-1)^{|I|} > 0$, and \emph{even} if $(i - j) (-1)^{|I|} < 0$.
\end{Definition}

Before proceeding with the following definition, the reader may want to review Definitions~\ref{def_diamond_pn}, \ref{chambers_pla}, \ref{pile_def}, and~\ref{assoc_cubical_complex_def}.

\begin{Definition}\label{matrix_to_arrays_map}Given a $\Diamond$-tiling $T$ of $\mathbf{P}_n$ and an $n \times n$ complex-valued matrix $M$, define the array $\mathbf{x}_T(M) = (x_s)_{s \in \varkappa^0(T)}$, where if vertex $s$ of $T$ is labeled by $I \subseteq [n]$,
\begin{gather*}
x_s = (-1)^{\lfloor | I | / 2 \rfloor} M_I^I.
\end{gather*}
Similarly, define the array $\mathbf{\tilde{x}}_T(M) = (x_s)_{s \in \varkappa^{02}(T)}$, where
\begin{itemize}\itemsep=0pt
\item if $s \in \varkappa^0(T)$: given that vertex $s$ of $T$ is labeled by $I \subseteq [n]$, set
\begin{gather*}
x_s = (-1)^{\lfloor |I| / 2 \rfloor} M_I^I,
\end{gather*}
\item if $s \in \varkappa^2(T)$: given that tile $s$ of $T$ has vertices labeled by $I, I \cup \{i\}, I \cup \{j\}, I \cup \{i, j\} \subseteq [n]$, where $i$ and $j$ are chosen so that $M_{I \cup \{i\}}^{I \cup \{j\}}$ is the odd almost principal minor, set
\begin{gather*} 
x_s = (-1)^{\lfloor (|I| +1) / 2 \rfloor} M_{I \cup \{i\}}^{I \cup \{j\}}.
\end{gather*}
\end{itemize}
More generally, if $\mathbf{T} = (T_0, \dots, T_\ell)$ is a pile of $\Diamond$-tilings of $\mathbf{P}_n$, define $\mathbf{x}_{\varkappa(T)}(M)$ to be the array indexed by $\varkappa^0(\mathbf{T})$ whose restriction to $\varkappa^0(T_i)$ is $\mathbf{x}_{T_i}(M)$, and define $\mathbf{\tilde{x}}_{\varkappa(T)}(M)$ to be the array indexed by $\varkappa^{02}(\mathbf{T})$ whose restriction to $\varkappa^{02}(T_i)$ is $\mathbf{\tilde{x}}_{T_i}(M)$.
\end{Definition}

\begin{Remark}For any $\Diamond$-tiling $T$ of $\mathbf{P}_n$, the vertex $v_0$ is labeled by $\varnothing \subset [n]$. In Definition~\ref{matrix_to_arrays_map}, because of the convention that $M_\varnothing^\varnothing = 1$, $x_{v_0} = 1$ independent of the matrix~$M$.
\end{Remark}

\begin{Definition}Given a $\Diamond$-tiling $T$ of $\mathbf{P}_n$, we say that a complex-valued array $\mathbf{\tilde{x}} = (x_s)_{s \in \varkappa^{02}(T)}$ is \emph{standard} if $x_{v_0} = 1$. Furthermore, given a pile of $\Diamond$-tilings of $\mathbf{P}_n$, $\mathbf{T} = (T_0, \dots, T_\ell)$, and $\varkappa = \varkappa(\mathbf{T})$, we say that a complex-valued array $\mathbf{\tilde{x}} = (x_s)_{s \in \varkappa^{02}}$ is \emph{standard} with respect to $\mathbf{T}$ if $x_{v_0} = 1$.
\end{Definition}

\begin{Definition}\label{generic_array_def}Given a $\Diamond$-tiling $T$ of $\mathbf{P}_n$, we say that a complex-valued array $\mathbf{\tilde{x}}$ indexed by~$\varkappa^{02}(T)$ is \emph{generic} if for any sequence of flips applied to $T$ accompanied by applications of the hexahedron recurrence to $\mathbf{\tilde{x}}$, the resulting coefficients are all nonzero.
\end{Definition}

\begin{Definition}We say that a square matrix is \emph{generic} if all of its principal minors and odd almost-principal minors are non-zero. Let $M_n^*(\C)$ denote the set of $n \times n$ generic complex-valued matrices.
\end{Definition}

We can now provide some important results of Kenyon and Pemantle~\cite{otherhex}.

\begin{Theorem}[{\cite[Theorem 4.2]{otherhex}}] \label{tiles_to_matrix_bij}Given a $\Diamond$-tiling $T$ of $\mathbf{P}_n$, the map $\mathbf{\tilde{x}}_T(\cdot)$ establishes a bijective correspondence between $M_n^*(\C)$ and standard, generic, complex-valued arrays on $\varkappa^{02}(T)$.%
\end{Theorem}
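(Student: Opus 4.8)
\emph{Proof strategy.} The plan is to establish the statement for one convenient $\Diamond$-tiling and then transport it to every $\Diamond$-tiling of $\mathbf{P}_n$ via flips. Two preliminary ingredients are needed. First, every $\Diamond$-tiling of $\mathbf{P}_n$ occurs in some pile in $\mathcal{C}(n)$ (Definition~\ref{cn_seq_def}), hence is joined to $T_{\min,n}$ by a sequence of flips of $\Diamond$-tilings. Second, if $\Diamond$-tilings $T$ and $T'$ differ by a flip, the cube $C$ placed on the corresponding hexagon lets us pass from an array on $\varkappa^{02}(T)$ to an array on $\varkappa^{02}(T')$ by applying the hexahedron recurrence~\eqref{hexeqn1}--\eqref{hexeqnmain} on $C$ (or its inverse, according to the orientation of the flip). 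Since the hexahedron recurrence is \emph{birational}, on arrays with no zero components this yields a bijection $\phi_{T\to T'}$ between generic standard arrays on $\varkappa^{02}(T)$ and generic standard arrays on $\varkappa^{02}(T')$: standardness is preserved because $v_0$ lies on the boundary and is never touched by a flip, and genericity is preserved in both directions directly from its definition (the reverse flip is again a flip). The crucial compatibility is $\phi_{T\to T'}\circ\mathbf{\tilde{x}}_T=\mathbf{\tilde{x}}_{T'}$: once the entries of the array are the signed principal and odd almost-principal minors of a matrix $M$, equations~\eqref{hexeqn1}--\eqref{hexeqnmain} become instances of the Desnanot--Jacobi/Sylvester--Dodgson determinant identities for $M$, and the sign conventions $(-1)^{\lfloor|I|/2\rfloor}$, $(-1)^{\lfloor(|I|+1)/2\rfloor}$ of Definition~\ref{matrix_to_arrays_map} are exactly what makes these identities reproduce the entries prescribed by $\mathbf{\tilde{x}}_{T'}$.

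Granting this, the theorem reduces to a single base tiling. Because the entries of $\mathbf{\tilde{x}}_{T'}(M)$ are, up to sign, principal minors and odd almost-principal minors of $M$, because every subset of $[n]$ is a vertex label of some $\Diamond$-tiling, and because every set of the form $\{I, I\cup\{i\}, I\cup\{j\}, I\cup\{i,j\}\}$ is the vertex set of a tile in some $\Diamond$-tiling, the two ingredients above show that $\mathbf{\tilde{x}}_T(M)$ is generic if and only if every principal minor and every odd almost-principal minor of $M$ is nonzero, i.e.\ $M\in M_n^*(\C)$. Fixing a path of flips from $T_{\min,n}$ to $T$ and letting $\phi$ denote the composite of the corresponding maps $\phi_{\bullet\to\bullet}$, we obtain $\mathbf{\tilde{x}}_T=\phi\circ\mathbf{\tilde{x}}_{T_{\min,n}}$ with $\phi$ a bijection from generic standard arrays on $\varkappa^{02}(T_{\min,n})$ onto those on $\varkappa^{02}(T)$; so it suffices to treat $T=T_{\min,n}$.

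For $T_{\min,n}$ the vertices are the interval subsets $[a,b]\subseteq[n]$ (and $\varnothing$), and the tile indexed by a pair $i<j$ has vertex set $\{[i+1,j-1],[i,j-1],[i+1,j],[i,j]\}$. I would invert $\mathbf{\tilde{x}}_{T_{\min,n}}$ by reconstructing $M$ from a standard generic array $\mathbf{\tilde{x}}$, determining the entries $M_{k\ell}$ with $|k-\ell|=d$ at stage $d$. Set $M_{ii}=x_{\{i\}}$. At stage $d\ge1$: the odd almost-principal minor carried by the tile for $\{i,i+d\}$ is one of $\det M^{[i+1,i+d]}_{[i,i+d-1]}$ or $\det M^{[i,i+d-1]}_{[i+1,i+d]}$; expanding it shows it is affine in exactly one of the two still-unknown entries $M_{i,i+d}$, $M_{i+d,i}$, with leading coefficient $\pm\det M^{[i+1,i+d-1]}_{[i+1,i+d-1]}=\pm x_{[i+1,i+d-1]}\neq0$, so that entry is determined; then $x_{[i,i+d]}=\pm\det M^{[i,i+d]}_{[i,i+d]}$ is affine in the remaining unknown with leading coefficient $\pm$ that same odd almost-principal minor $\neq0$, determining it as well. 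Every vertex and every tile of $T_{\min,n}$ is used exactly once in this process and, by construction, is recovered by it; hence $\mathbf{\tilde{x}}_{T_{\min,n}}$ is a bijection onto the standard generic arrays, with inverse the reconstruction. (Alternatively, the reconstruction provides a regular inverse on the Zariski-open locus of generic standard arrays, and $\mathbf{\tilde{x}}_{T_{\min,n}}$ is an injective dominant morphism of irreducible varieties of dimension $n^2$.) Finally, the reconstructed matrix lies in $M_n^*(\C)$ by the genericity criterion of the previous paragraph.

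\emph{Main obstacle.} The technical heart is the compatibility $\phi_{T\to T'}\circ\mathbf{\tilde{x}}_T=\mathbf{\tilde{x}}_{T'}$: verifying that the hexahedron recurrence applied to an array of \emph{signed} minors across a flip produces precisely the signed minors attached to the new tiling. This requires pushing the sign bookkeeping $(-1)^{\lfloor|I|/2\rfloor}$, $(-1)^{\lfloor(|I|+1)/2\rfloor}$ through the Desnanot--Jacobi/Sylvester identities in every local configuration---each of the four hexahedron equations, and for each both $\Delta$/$\nabla$ orientations of the flipped triple---together with the (easier) check that the hexahedron recurrence is invertible on nowhere-zero arrays. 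Flip-connectivity, the base-case reconstruction, and the dimension count are comparatively routine.
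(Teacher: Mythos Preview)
The paper does not supply its own proof of this statement: it is quoted verbatim from Kenyon--Pemantle~\cite{otherhex} as one of several imported results, alongside Theorem~\ref{tmin_laurent} and Proposition~\ref{flip_hex}. So there is no in-paper argument to compare against directly.

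That said, your strategy is the natural one and matches how the surrounding results are organized both here and in~\cite{otherhex}: establish the bijection for $T_{\min,n}$ by an explicit triangular reconstruction of $M$ (this is exactly the content of Theorem~\ref{tmin_laurent}, which even records that the inverse is Laurent), then transport along flips using the fact that the hexahedron recurrence realizes the determinantal identities among (signed) minors (this is Proposition~\ref{flip_hex}). Your reduction of array-genericity to the condition $M\in M_n^*(\C)$ via ``every $I$ and every tile type appear in some $\Diamond$-tiling'' is correct, and your stage-$d$ reconstruction at $T_{\min,n}$ is accurate, including the observation that the cofactor picking out the second unknown entry of the principal minor is precisely the odd almost-principal minor just used.

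The one place to be careful is exactly the point you flag as the main obstacle: the sign bookkeeping in $\phi_{T\to T'}\circ\mathbf{\tilde{x}}_T=\mathbf{\tilde{x}}_{T'}$. In the paper's framework this identity is not re-derived but simply invoked as Proposition~\ref{flip_hex}; if you intend a self-contained proof you do need to carry out the case check you describe (the four hexahedron equations against the Desnanot--Jacobi identities, with the $(-1)^{\lfloor|I|/2\rfloor}$ and $(-1)^{\lfloor(|I|+1)/2\rfloor}$ twists and the odd/even almost-principal convention). This is tedious but mechanical, and once done your argument is complete.
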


Before proceeding with the following theorem, the reader may want to review Definition~\ref{min_max_tiling_def}.

\begin{Theorem}[{\cite[Theorem~4.4]{otherhex}}] \label{tmin_laurent}Let $\mathbf{\tilde{x}} = (x_s) \in (\C^*)^{\varkappa^{02}(T_{\textup{min},n})}$ be a standard array. Then there is a unique matrix $M$ such that $\mathbf{\tilde{x}} = \mathbf{\tilde{x}}_{\varkappa(T_{\textup{min},n})}(M)$. The entries of this matrix $M$ are Laurent polynomials in the components of $\mathbf{\tilde{x}}$.
\end{Theorem}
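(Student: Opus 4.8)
The plan is to argue by induction on $n$, the base cases $n\le 2$ being immediate: for $n=1$ one is forced to take $M=(x_{\{1\}})$, and for $n=2$ one has $M_{11}=x_{\{1\}}$, $M_{22}=x_{\{2\}}$, one off‑diagonal entry is (up to sign) the tile value $x_{12}$, and the other is recovered from $x_{\{1,2\}}=-\det M$ by a single division by that nonzero tile value. For the inductive step, I would first single out two subconfigurations of $\varkappa^{02}(T_{\min,n})$: the vertices labelled by consecutive subsets of $[n-1]$ together with the tiles indexed by pairs $i<j\le n-1$, and symmetrically the vertices and tiles supported on $\{2,\dots,n\}$. Each of these is an (abstract, resp.\ shifted) copy of $\varkappa^{02}(T_{\min,n-1})$, and the assignment $\mathbf{\tilde{x}}_{\varkappa(T)}(\cdot)$ is compatible with passage to these subconfigurations, because the sign conventions and the choice of the odd almost‑principal minor on a tile depend only on data local to that tile. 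Hence by the inductive hypothesis the restrictions of the standard array $\mathbf{\tilde{x}}$ to these subconfigurations arise from unique matrices, which are therefore forced to be $M_{[n-1]}^{[n-1]}$ and $M_{\{2,\dots,n\}}^{\{2,\dots,n\}}$, with all entries Laurent polynomials in the $x_s$. Applying the $(n-2)$‑case (and its uniqueness) to the common block supported on $\{2,\dots,n-1\}$ shows these two matrices agree there, so they glue to determine uniquely, and as Laurent polynomials in the $x_s$, every entry $M_{ab}$ except the two far corners $M_{1n}$ and $M_{n1}$.

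It remains to recover $M_{1n}$ and $M_{n1}$. The decisive combinatorial point is that, among all defining equations of $\mathbf{\tilde{x}}=\mathbf{\tilde{x}}_{\varkappa(T_{\min,n})}(M)$, the only two that involve either of these entries are the vertex equation at the interval $[n]$, namely $x_{[n]}=(-1)^{\lfloor n/2\rfloor}\det M$, and the tile equation at the pair $\{1,n\}$, which says that $\pm x_{1n}$ equals the odd almost‑principal minor on that tile --- either $\det M_{\{1,\dots,n-1\}}^{\{2,\dots,n\}}$ (which contains $M_{1n}$ in its corner) or its transpose $\det M_{\{2,\dots,n\}}^{\{1,\dots,n-1\}}$ (which contains $M_{n1}$), according to the parity of $n$. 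Whichever of $M_{1n},M_{n1}$ occurs in the tile equation, I would solve for it first: expanding that almost‑principal minor along the row (or column) through the corner entry, the entry appears with coefficient $\pm\det M_{\{2,\dots,n-1\}}^{\{2,\dots,n-1\}}=\pm x_{\{2,\dots,n-1\}}\ne 0$, while the remaining terms involve only already recovered entries, so this expresses the entry as a Laurent polynomial in the $x_s$. I would then substitute into $x_{[n]}=(-1)^{\lfloor n/2\rfloor}\det M$ and expand $\det M$ along the first row or the first column, whichever isolates the still‑unknown entry: it appears there with coefficient equal, up to sign, to the odd almost‑principal minor on the $\{1,n\}$‑tile, i.e.\ to $\pm x_{1n}\ne 0$, all other terms being Laurent in the $x_s$; hence this entry too is forced and Laurent.

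Finally I would check that the matrix $M$ so constructed realizes the whole array and is the only one. Every equation of $T_{\min,n}$ other than the two just used is supported on $[n-1]$ or on $\{2,\dots,n\}$, hence involves only entries $M_{ab}$ with $a,b\le n-1$ or with $a,b\ge 2$, so it involves neither $M_{1n}$ nor $M_{n1}$; such an equation holds for $M$ because it already held for the relevant inductively constructed submatrix, while the two far‑corner equations hold by construction. Uniqueness is built in at every step, since each new entry is the unique solution of a linear equation whose leading coefficient is one of the nonzero coordinates $x_s$.

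I expect the main obstacle to be bookkeeping rather than computation: one must verify carefully that the two restrictions of $T_{\min,n}$ really are copies of $T_{\min,n-1}$ carrying the correctly restricted (standard, nowhere‑zero) arrays, that the two resulting submatrices agree on their overlap, that $M_{1n}$ and $M_{n1}$ genuinely appear in no equations other than the two far‑corner ones, and --- in the last step --- that one solves for $M_{1n}$ and $M_{n1}$ in the right order so that every division is by one of the coordinates $x_s$ and never by the \emph{even} almost‑principal minor on the $\{1,n\}$‑tile, whose non‑vanishing is not part of the hypothesis.
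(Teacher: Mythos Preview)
The paper does not give its own proof of this statement: it is imported verbatim from Kenyon--Pemantle \cite[Theorem~4.4]{otherhex} and used as a black box, so there is nothing in the paper to compare your argument against.

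On its own merits, your inductive argument is sound. The structural claim that $T_{\min,n}$ contains two overlapping copies of $T_{\min,n-1}$ (supported on $[n-1]$ and on $\{2,\dots,n\}$), meeting in a copy of $T_{\min,n-2}$, is correct: the vertex labels of $T_{\min,n}$ are precisely the intervals $[i,j]\subseteq[n]$, and the tile $\{i,j\}$ has vertices $[i{+}1,j{-}1]$, $[i,j{-}1]$, $[i{+}1,j]$, $[i,j]$, so restricting to $j\le n-1$ or $i\ge 2$ does give the smaller minimal tilings. The sign conventions survive the index shift because both $|I|$ and $i-j$ are unchanged. Your identification of which equations involve $M_{1n}$ or $M_{n1}$ is also correct: among intervals only $[n]$ contains both endpoints, and among tiles only $\{1,n\}$ produces a minor with row~$1$ and column~$n$ (or the transpose). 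Finally, your order of solving is exactly right---the tile equation isolates one corner with leading coefficient $\pm\det M_{[2,n-1]}^{[2,n-1]}=\pm x_{[2,n-1]}$, and then the expansion of $\det M$ along the first row or last row isolates the other corner with leading coefficient equal to $\pm$ the odd almost-principal minor $=\pm x_{\{1,n\}}$---so every division is by a component of $\mathbf{\tilde{x}}\in(\C^*)^{\varkappa^{02}}$, never by the even minor.
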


\begin{Proposition}[{\cite[Lemma 2.1]{otherhex}}] \label{flip_hex}Suppose $M$ is an $n \times n$ matrix, and $\mathbf{T}$ is a pile of $\Diamond$-tilings of $\mathbf{P}_n$ such that the components of $\mathbf{\tilde{x}}_{\varkappa(\mathbf{T})}(M)$ are all nonzero. Then $\mathbf{\tilde{x}}_{\varkappa(\mathbf{T})}(M)$ satisfies the hexahedron recurrence.
\end{Proposition}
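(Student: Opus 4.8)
The plan is to reduce the assertion to a single determinantal identity for each three\nobreakdash-dimensional cube of $\varkappa(\mathbf{T})$, and then to verify that identity directly. By Definition~\ref{recurrences_on_tilings_def}, the array $\mathbf{\tilde{x}}_{\varkappa(\mathbf{T})}(M)$ satisfies the hexahedron recurrence if and only if, for every $3$\nobreakdash-dimensional cube $C$ of $\varkappa(\mathbf{T})$, the components of $\mathbf{\tilde{x}}_{\varkappa(\mathbf{T})}(M)$ at the $8$ vertices and $6$ two\nobreakdash-dimensional faces of $C$ satisfy~\eqref{hexeqn1}--\eqref{hexeqnmain}, with $z_{111}$ placed at the top vertex of $C$ and the face variables assigned by averaging vertex indices. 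Since $\mathbf{\tilde{x}}_{\varkappa(\mathbf{T})}(M)$ is assumed to have all nonzero components, every denominator occurring in~\eqref{hexeqn1}--\eqref{hexeqnmain} is nonzero, so these are honest equalities of scalars and the hypothesis is used only here.

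Next I would pin down the combinatorial data attached to such a cube $C$. Each $3$\nobreakdash-cube of $\varkappa(\mathbf{T})$ is created by a flip $T_{i-1}\to T_i$ in the pile $\mathbf{T}$ (Definition~\ref{assoc_cubical_complex_def}), and dually this is a braid move on the associated pseudoline arrangements localized at a hexagon whose six bounding edges are parallel to $e_p,e_q,e_r$ for a triple $p<q<r$. Hence, in the labelling of Definition~\ref{matrix_to_arrays_map}, the eight vertices of $C$ carry the entries $(-1)^{\lfloor|B\cup S|/2\rfloor}M_{B\cup S}^{B\cup S}$ for $S\subseteq\{p,q,r\}$, where $B\subseteq[n]\setminus\{p,q,r\}$ is the common ``base'' label of all eight vertices, and the six two\nobreakdash-dimensional faces carry (up to the prescribed signs) the six odd almost\nobreakdash-principal minors $M_{I\cup\{i\}}^{I\cup\{j\}}$ with $\{i,j\}\subseteq\{p,q,r\}$ and $I\in\{B,\;B\cup(\{p,q,r\}\setminus\{i,j\})\}$. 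Finally, the directional bookkeeping --- which of the two opposite centers of the hexagon is the deleted vertex $w$ and which is the newly added top vertex $v$ --- is exactly the distinction between the $\Delta$\nobreakdash-crossing tiling $T_{i-1}$ and the $\nabla$\nobreakdash-crossing tiling $T_i$, and this is what determines which vertex of $C$ plays the role of $z_{000}$ and which plays the role of $z_{111}$.

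With this dictionary in hand, substituting these minors into~\eqref{hexeqn1}--\eqref{hexeqnmain} and clearing denominators turns each equation into a polynomial identity among principal and odd almost\nobreakdash-principal minors of $M$ that is valid for an \emph{arbitrary} square matrix. Each such identity can be established by taking the Schur complement of $M$ along the rows and columns indexed by $B$: this expresses every minor $M_{B\cup S}^{B\cup S}$ as $\det(M_B^B)\cdot\det\big((M/M_B^B)_S^S\big)$ and similarly for the almost\nobreakdash-principal minors, so that, up to an overall power of $\det(M_B^B)$ and a parity shift in the sign conventions governed by $|B|$, the identity reduces to the case $B=\varnothing$, and thence to the case of a $3\times3$ matrix indexed by $\{p,q,r\}$. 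There it is a finite computation with $3\times3$ and smaller determinants --- equivalently, a consequence of multilinearity of the determinant together with the Desnanot--Jacobi/Dodgson condensation identities. (Alternatively, one may simply invoke the derivation of the hexahedron recurrence in Kenyon--Pemantle~\cite{otherhex}, where equations~\eqref{hexeqn1}--\eqref{hexeqnmain} were obtained precisely as the transformation law for these minors under a flip.)

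The principal obstacle I anticipate is not any single determinant computation but the bookkeeping surrounding it: first, verifying the combinatorial claim that every cube of $\varkappa(\mathbf{T})$ has exactly the ``base set $B$ plus triple $\{p,q,r\}$'' shape described above, with the new vertex of the flip sitting at the top; and second, checking that the signs $(-1)^{\lfloor|I|/2\rfloor}$ and $(-1)^{\lfloor(|I|+1)/2\rfloor}$, together with the odd/even dichotomy for almost\nobreakdash-principal minors, interact with the grading by $|B|$ so that the signs appearing in~\eqref{hexeqn1}--\eqref{hexeqnmain} emerge correctly regardless of $|B|$ and of which flip produced $C$. Once the reduction to $B=\varnothing$ and a $3\times3$ matrix is justified, the remaining verification is routine.
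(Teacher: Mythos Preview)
The paper does not supply its own proof of this proposition: it is stated as a citation to Kenyon--Pemantle~\cite[Lemma~2.1]{otherhex}, and is used as a black box thereafter. Your proposal correctly reconstructs the argument behind that citation: identify the eight vertex labels of a cube of $\varkappa(\mathbf{T})$ as $B\cup S$ for $S\subseteq\{p,q,r\}$, recognise the face labels as the relevant odd almost-principal minors, and reduce via the Schur complement with respect to $M_B^B$ (which is invertible by the nonvanishing hypothesis) to a $3\times 3$ determinantal identity verifiable by hand. This is exactly the strategy used in~\cite{otherhex}, so your approach and the paper's (cited) approach coincide.

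One small caution on your directional bookkeeping: in an arbitrary pile of $\Diamond$-tilings the flip $T_{i-1}\to T_i$ need not go from a $\Delta$-crossing to a $\nabla$-crossing; it can go either way. What saves you is that the hexahedron relations \eqref{hexeqn1}--\eqref{hexeqnmain} are invariant under the simultaneous swap $z_{ijk}\leftrightarrow z_{1-i,1-j,1-k}$ of all fourteen variables (cf.\ Remark~\ref{addressing_directionality_of_hex}), so the identity you need is the same in either case. Your anticipated obstacle---tracking the signs $(-1)^{\lfloor|I|/2\rfloor}$ and $(-1)^{\lfloor(|I|+1)/2\rfloor}$ through the Schur-complement reduction---is real but routine, and is handled in~\cite{otherhex} by precisely this reduction.
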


\begin{Theorem}[{\cite[Theorem 5.2]{otherhex}}] \label{sym_mat_net}Let $T$ be a $\Diamond$-tiling of $\mathbf{P}_n$. A matrix $M \in M_n^*(\C)$ is symmetric if and only if for every tile $s$ in $T$ with vertices $s_1$, $s_2$, $s_3$, $s_4$ in cyclic order,
\begin{gather} \label{tiling_cond_for_sym}
(\mathbf{\tilde{x}}_T(M)_s)^2 = \mathbf{\tilde{x}}_T(M)_{s_1} \mathbf{\tilde{x}}_T(M)_{s_3} + \mathbf{\tilde{x}}_T(M)_{s_2} \mathbf{\tilde{x}}_T(M)_{s_4}.
\end{gather}
Furthermore, if $M$ is any $n \times n$ symmetric matrix, condition~\eqref{tiling_cond_for_sym} holds for every tile of~$T$.
\end{Theorem}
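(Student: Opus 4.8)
The plan is to rewrite the tile condition \eqref{tiling_cond_for_sym} as a classical determinant identity. Fix a tile $s$ of a $\Diamond$-tiling, with vertices labeled in cyclic order by $I$, $I\cup\{p\}$, $I\cup\{p,q\}$, $I\cup\{q\}$, where I name the two extra indices so that $M_{I\cup\{p\}}^{I\cup\{q\}}$ is the odd almost-principal minor (hence $M_{I\cup\{q\}}^{I\cup\{p\}}$ is the even one). Substituting the definition of $\mathbf{\tilde{x}}_T(M)$ into \eqref{tiling_cond_for_sym} and collecting the signs $(-1)^{\lfloor|I|/2\rfloor}$, $(-1)^{\lfloor(|I|+1)/2\rfloor}$, $(-1)^{\lfloor(|I|+2)/2\rfloor}=(-1)^{\lfloor|I|/2\rfloor+1}$, one obtains that \eqref{tiling_cond_for_sym} at $s$ is the assertion
\[
\big(M_{I\cup\{p\}}^{I\cup\{q\}}\big)^2 = M_{I\cup\{p\}}^{I\cup\{p\}}\, M_{I\cup\{q\}}^{I\cup\{q\}} - M_I^I\, M_{I\cup\{p,q\}}^{I\cup\{p,q\}}.
\]
Applying the Desnanot--Jacobi (Dodgson condensation) identity to the submatrix $M_{I\cup\{p,q\}}^{I\cup\{p,q\}}$, deleting the rows and columns indexed by $p$ and $q$, rewrites the right-hand side as $M_{I\cup\{p\}}^{I\cup\{q\}}\, M_{I\cup\{q\}}^{I\cup\{p\}}$. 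Hence \eqref{tiling_cond_for_sym} at $s$ is equivalent to $M_{I\cup\{p\}}^{I\cup\{q\}}\big(M_{I\cup\{p\}}^{I\cup\{q\}} - M_{I\cup\{q\}}^{I\cup\{p\}}\big)=0$, and, since $M_{I\cup\{p\}}^{I\cup\{q\}}\neq 0$ for $M\in M_n^*(\C)$, to the coincidence $M_{I\cup\{p\}}^{I\cup\{q\}} = M_{I\cup\{q\}}^{I\cup\{p\}}$ of the odd and even almost-principal minors over the base $I$ with extra indices $p,q$. From this the ``only if'' direction and the last sentence of the theorem are immediate: if $M$ is symmetric, then $M_{I\cup\{p\}}^{I\cup\{q\}}$ and $M_{I\cup\{q\}}^{I\cup\{p\}}$ are transposes of one another and hence have equal determinant, so the displayed equivalent form holds (now with nothing divided by), and therefore \eqref{tiling_cond_for_sym} holds at every tile of every $\Diamond$-tiling.

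For the converse, suppose $\mathbf{\tilde{x}}_T(M)$ satisfies \eqref{tiling_cond_for_sym}, equivalently \eqref{khexspec}, at every $2$-face of $T$; I would propagate this to $T_{\min,n}$. Since $M\in M_n^*(\C)$, every component of $\mathbf{\tilde{x}}_T(M)$ (a principal or odd almost-principal minor) is nonzero, so by Proposition~\ref{flip_hex}, applying a flip $T\to T'$ together with the accompanying hexahedron recurrence again produces an array of the form $\mathbf{\tilde{x}}_{T'}(M)$ with all components nonzero. The crucial step is the computational lemma that the hexahedron recurrence transports \eqref{khexspec} from the three $2$-faces of a cube incident to its bottom vertex to the three incident to its top vertex: starting from \eqref{hexeqn1}--\eqref{hexeqnmain} together with \eqref{khexspec} for the bottom faces (so that $D$ in \eqref{aandbs} equals the product of the squares of the three bottom face values), a direct computation yields \eqref{khexspec} for the top faces as well; this is essentially built into \cite{mainkp} (compare Proposition~\ref{easyfromkp}). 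Consequently a flip preserves the property that \eqref{khexspec} holds at every $2$-face, and since every $\Diamond$-tiling of $\mathbf{P}_n$ can be joined to $T_{\min,n}$ by a sequence of flips (Definition~\ref{cn_seq_def}), we conclude that $\mathbf{\tilde{x}}_{T_{\min,n}}(M)$ satisfies \eqref{khexspec} at every $2$-face, hence by the first paragraph $M_{I\cup\{p\}}^{I\cup\{q\}}=M_{I\cup\{q\}}^{I\cup\{p\}}$ for every tile of $T_{\min,n}$ (with $I,p,q$ its base and extra indices). A sign check now gives $\mathbf{\tilde{x}}_{T_{\min,n}}(M^{T})=\mathbf{\tilde{x}}_{T_{\min,n}}(M)$: principal minors are transpose-invariant, while the odd almost-principal minor of $M^{T}$ at a tile is the transpose of the even almost-principal minor of $M$ there, hence equals its determinant, which in turn equals the odd one by the previous sentence; the attached signs depend only on $|I|$ and so agree. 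By the uniqueness statement in Theorem~\ref{tmin_laurent}, $M^{T}=M$, i.e.\ $M$ is symmetric.

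I expect the main obstacle to be exactly the computational lemma invoked in the converse: that \eqref{hexeqn1}--\eqref{hexeqnmain} carry \eqref{khexspec} from the bottom three faces of a cube to the top three. After clearing denominators this is a single polynomial identity in the eight lower vertex values and three lower face values, to be verified modulo the three instances of \eqref{khexspec} for the lower faces, and it is the one genuinely calculational point of the argument. The only other care required is the sign bookkeeping in the translation step and in the final $M^{T}=M$ deduction; the combinatorial ingredients (flip-connectivity of $\Diamond$-tilings, Proposition~\ref{flip_hex}, and Theorem~\ref{tmin_laurent}) are already available.
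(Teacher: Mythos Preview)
The paper does not give its own proof of this theorem: it is quoted from Kenyon--Pemantle \cite[Theorem~5.2]{otherhex}, with the remark that the cited result is stated for Hermitian matrices and a conjugated hexahedron recurrence, from which the symmetric version follows immediately. So there is no in-paper argument to compare against; you have supplied a self-contained proof where the paper outsources one.

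Your argument is correct. The sign bookkeeping in the first paragraph is right, and the Desnanot--Jacobi rewriting yields exactly the equivalence with $M_{I\cup\{p\}}^{I\cup\{q\}}=M_{I\cup\{q\}}^{I\cup\{p\}}$; this handles both the ``only if'' direction and the final sentence. For the converse, your propagation argument via flips and Proposition~\ref{flip_hex} is sound: since $M\in M_n^*(\C)$, every $\mathbf{\tilde{x}}_{T'}(M)$ along the way has all components nonzero, so the recurrence never degenerates. The ``computational lemma'' you flag as the main obstacle---that \eqref{khexspec} on the three bottom faces of a cube, together with \eqref{hexeqn1}--\eqref{hexeqnmain}, forces \eqref{khexspec} on the three top faces---is precisely Lemma~\ref{cubeconditionprop} in the paper (stated there for the equivalent K-hexahedron formulation, with the equivalence supplied by Proposition~\ref{likehexprop}). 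So the one step you were unsure about is in fact already proved later in this paper. The concluding use of Theorem~\ref{tmin_laurent} to get $M^T=M$ from $\mathbf{\tilde{x}}_{T_{\min,n}}(M^T)=\mathbf{\tilde{x}}_{T_{\min,n}}(M)$ is exactly how the paper derives Corollary~\ref{sym_for_min_tiling}.
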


\begin{Remark}Theorem~\ref{sym_mat_net} is not stated explicitly in~\cite{otherhex}, but follows immediately from the cited theorem. The original theorem concerns Hermitian matrices, and a slightly modified version of the hexahedron recurrence in which some complex conjugates are taken.
\end{Remark}

The next result follows from Propositions~\ref{likehexprop} and~\ref{flip_hex} and Theorem~\ref{sym_mat_net}:

\begin{Corollary} \label{sym_matrix_k_hex_arb}Let $\mathbf{T}$ be a pile of $\Diamond$-tilings of $\mathbf{P}_n$, with $\varkappa = \varkappa(\mathbf{T})$. Let $M$ be an $n \times n$ symmetric matrix with nonzero principal minors. Then $\mathbf{x}_{\varkappa(\mathbf{T})}(M)$ satisfies the K-hexahedron equations.
\end{Corollary}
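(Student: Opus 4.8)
The plan is to first settle the case where $M$ is generic by combining the three cited results, and then to bootstrap to an arbitrary symmetric matrix with nonzero principal minors via a Zariski-density argument. Throughout I write $\mathbf{\tilde{x}} = \mathbf{\tilde{x}}_{\varkappa(\mathbf{T})}(M)$ for the array on $\varkappa^{02}(\mathbf{T})$ whose restriction to each $\varkappa^{02}(T_i)$ is $\mathbf{\tilde{x}}_{T_i}(M)$; one first notes this is consistently defined, since the value $\mathbf{\tilde{x}}_{T_i}(M)$ assigns to a vertex or a tile depends only on the label(s) of that cell, not on the ambient tiling. Recall that, by Definition~\ref{recurrences_on_tilings_def}, the K-hexahedron equations on $\varkappa$ amount, for each $3$-cube $C$ of $\varkappa$, to equations \eqref{likehex1d}--\eqref{likehex4d} together with \eqref{khexspec} for each tile of $C$, and that the only denominators appearing are powers of vertex values. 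Since the vertex values of $\mathbf{\tilde{x}}$ are, up to sign, principal minors of $M$ — nonzero by hypothesis — these equations are well-posed here.

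For the generic case I would argue as follows. Assume in addition that all odd almost-principal minors of $M$ are nonzero, so that every component of $\mathbf{\tilde{x}}$ is nonzero. Then Proposition~\ref{flip_hex} gives that $\mathbf{\tilde{x}}$ satisfies the hexahedron recurrence; applying Theorem~\ref{sym_mat_net} to each $\Diamond$-tiling $T_i$ of the pile, the symmetry of $M$ gives that $\mathbf{\tilde{x}}$ satisfies \eqref{khexspec} (which is \eqref{tiling_cond_for_sym}) for every tile of $\varkappa$; and then Proposition~\ref{likehexprop}, applied cube by cube, converts the hexahedron recurrence into equations \eqref{likehex1d}--\eqref{likehex4d}. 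Thus $\mathbf{\tilde{x}}$ satisfies the K-hexahedron equations whenever $M$ is generic.

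To reach the general case I would use that each component of $\mathbf{\tilde{x}}$ is a minor of $M$, hence a polynomial in the entries of $M$. Clearing the (nonzero) vertex-value denominators turns each of \eqref{likehex1d}--\eqref{likehex4d}, and \eqref{khexspec}, into a polynomial identity $P(M) = 0$ in the entries of $M$. By the generic case, each such $P$ vanishes on the locus of generic symmetric matrices, which is the complement in $\Sym_n(\C) \cong \C^{\binom{n+1}{2}}$ of the zero set of a product of principal and odd almost-principal minors. Each of these minors is a nonzero polynomial on $\Sym_n(\C)$ — diagonal matrices show this for principal minors, and a diagonal matrix perturbed in a single symmetric off-diagonal entry-pair shows it for each odd almost-principal minor — so the generic locus is a nonempty Zariski-open, hence dense, subset of the irreducible variety $\Sym_n(\C)$. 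Therefore each $P$ vanishes identically on $\Sym_n(\C)$; in particular $P(M) = 0$ for our $M$, and dividing back by its nonzero principal minors recovers \eqref{likehex1d}--\eqref{likehex4d}. (For \eqref{khexspec} one may instead cite the ``furthermore'' clause of Theorem~\ref{sym_mat_net} directly, without any density argument.)

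The step I expect to be the main obstacle is precisely this passage from generic to arbitrary symmetric $M$ with nonzero principal minors: Propositions~\ref{flip_hex} and~\ref{likehexprop} genuinely require \emph{all} components — including face values — to be nonzero, yet a symmetric matrix with nonzero principal minors may have a vanishing odd almost-principal minor (already a generic diagonal matrix does). The density argument circumvents this but hinges on verifying that the generic locus is dense in $\Sym_n(\C)$; the only other point needing (minor) care is the consistency of the array $\mathbf{\tilde{x}}_{\varkappa(\mathbf{T})}(M)$ across the tilings of the pile.
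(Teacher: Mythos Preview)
Your argument is correct, and the generic-case portion is exactly what the paper has in mind: the paper's entire proof is the one-line remark that the result ``follows from Propositions~\ref{likehexprop} and~\ref{flip_hex} and Theorem~\ref{sym_mat_net}.'' You correctly noticed that those propositions literally require all components (including the face values, i.e., odd almost-principal minors) to be nonzero, so a direct invocation only handles generic~$M$; your Zariski-density argument is a clean and valid way to close that gap, and the paper does not spell this step out. The one cosmetic point: as you implicitly noted, the statement should read $\mathbf{\tilde{x}}_{\varkappa(\mathbf{T})}(M)$ rather than $\mathbf{x}_{\varkappa(\mathbf{T})}(M)$, since the K-hexahedron equations live on $\varkappa^{02}$.
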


The next corollary follows immediately from Theorem~\ref{sym_mat_net} and the fact that the entries of~$M$ are Laurent polynomials in the components of $\mathbf{\tilde{x}}_{\varkappa(T_{\textup{min},n})}(M)$:

\begin{Corollary} \label{sym_for_min_tiling}Let $M$ be an $n \times n$ matrix such that the components of $\mathbf{\tilde{x}}_{\varkappa(T_{\textup{min},n})}(M)$ are nonzero. Then $M$ is symmetric if and only if condition~\eqref{tiling_cond_for_sym} holds.
\end{Corollary}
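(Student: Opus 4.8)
Here is my plan. The two implications are handled separately. For the \emph{only if} direction there is nothing to do beyond invoking Theorem~\ref{sym_mat_net}: its last sentence says that if $M$ is symmetric then~\eqref{tiling_cond_for_sym} holds for every tile of any $\Diamond$-tiling of $\mathbf{P}_n$, in particular of $T_{\textup{min},n}$ (no genericity needed). For the \emph{if} direction I would set $\mathbf{\tilde{x}} = \mathbf{\tilde{x}}_{T_{\textup{min},n}}(M)$, which by hypothesis is a standard array on $\varkappa^{02}(T_{\textup{min},n})$ with all components nonzero and which satisfies~\eqref{tiling_cond_for_sym}, and then reduce to the generic situation covered by Kenyon--Pemantle.

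Concretely, by Theorem~\ref{tmin_laurent} the matrix $M$ is the unique matrix with $\mathbf{\tilde{x}}_{T_{\textup{min},n}}(M) = \mathbf{\tilde{x}}$, and each entry $M_{ab}$ is a Laurent polynomial in the components of $\mathbf{\tilde{x}}$; hence $M_{ab} - M_{ba} = R_{ab}(\mathbf{\tilde{x}})$ for Laurent polynomials $R_{ab}$. Let $V$ be the locally closed subvariety of all standard arrays on $\varkappa^{02}(T_{\textup{min},n})$ with nonzero components satisfying~\eqref{tiling_cond_for_sym}; then $\mathbf{\tilde{x}} \in V$, and each $R_{ab}$ is a regular function on $V$, its denominator being a monomial in the nowhere-vanishing coordinates. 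I claim $V$ is irreducible: a two-dimensional cubical complex imposes no relations among the vertex coordinates, so these range freely over a torus, while for each tile $s$ the face coordinate satisfies $x_s^2 = x_{s_1}x_{s_3} + x_{s_2}x_{s_4}$ with $x_{s_1},\dots,x_{s_4}$ distinct coordinate functions; each such binomial is irreducible, products of distinct ones are squarefree, so adjoining the square roots $x_s$ to the function field of the torus yields a field extension of degree $2^{|\varkappa^2(T_{\textup{min},n})|}$, whence $V$ is irreducible. A generic symmetric matrix lies in $M_n^*(\C)$ and maps into $V$ under $\mathbf{\tilde{x}}_{T_{\textup{min},n}}(\cdot)$ (again by the last sentence of Theorem~\ref{sym_mat_net}), so the open subset $U \subseteq V$ on which the associated matrix lies in $M_n^*(\C)$ is nonempty, hence dense. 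On $U$, Theorem~\ref{sym_mat_net} (applied with $T = T_{\textup{min},n}$) forces the associated matrix to be symmetric, so every $R_{ab}$ vanishes on $U$, hence on $V$, hence at $\mathbf{\tilde{x}}$; therefore $M$ is symmetric.

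The only step I expect to require genuine argument is the irreducibility of $V$ — equivalently, the density in $V$ of the locus where the genericity hypothesis $M \in M_n^*(\C)$ of Theorem~\ref{sym_mat_net} holds; the rest is bookkeeping with Theorems~\ref{tmin_laurent},~\ref{tiles_to_matrix_bij}, and~\ref{sym_mat_net}. A self-contained alternative for the \emph{if} direction would avoid it: using the Desnanot--Jacobi identity one checks that~\eqref{tiling_cond_for_sym} at the tile of $T_{\textup{min},n}$ indexed by a pair $a<b$ is equivalent, given that the relevant odd almost-principal minor is nonzero, to $\det M_{\{a,\dots,b-1\}}^{\{a+1,\dots,b\}} = \det M_{\{a+1,\dots,b\}}^{\{a,\dots,b-1\}}$; one then proves $M_{ab} = M_{ba}$ for all $a<b$ by induction on $b-a$, expanding these two determinants along the rows and columns indexed by $\{a,b\}$ and dividing by the nonzero interval principal minor $\det M_{\{a+1,\dots,b-1\}}^{\{a+1,\dots,b-1\}}$.
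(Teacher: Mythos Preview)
Your approach is correct and matches the paper's: the paper simply asserts that the corollary ``follows immediately from Theorem~\ref{sym_mat_net} and the fact that the entries of~$M$ are Laurent polynomials in the components of $\mathbf{\tilde{x}}_{\varkappa(T_{\textup{min},n})}(M)$,'' leaving the density/irreducibility step implicit. You have made that step explicit (and sketched a Desnanot--Jacobi alternative), which is precisely the content behind the paper's one-line justification.
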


Hence, the following is immediate from Proposition~\ref{flip_hex} and Corollary~\ref{sym_for_min_tiling}:

\begin{Corollary} \label{sym_matrix_k_hex}Let $\mathbf{T}$ be a pile of $\Diamond$-tilings of $\mathbf{P}_n$ containing $T_{\textup{min},n}$, with~$\varkappa = \varkappa(\mathbf{T})$. Let $\mathbf{x} = (x_s) \in (\C^*)^{\varkappa^0}$ be a standard array satisfying the property that
\begin{gather} \label{nonzero_face_condition}
x_{v_1} x_{v_3} + x_{v_2} x_{v_4} \not= 0
\end{gather}
for all $2$-dimensional faces of $\varkappa$ with vertices $v_1$, $v_2$, $v_3$, $v_4$ in cyclic order. Then the following are equivalent:
\begin{itemize}\itemsep=0pt\samepage
\item $\mathbf{x}$ can be extended to a standard array indexed by $\varkappa^{02}$ satisfying the K-hexahedron equations;
\item there exists a symmetric matrix $M$ such that $\mathbf{x} = \mathbf{x}_{\varkappa(\mathbf{T})}(M)$.
\end{itemize}
\end{Corollary}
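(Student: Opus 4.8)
The plan is to establish the two implications separately, in each case using the dictionary of Kenyon and Pemantle~\cite{otherhex} between minors of a matrix and the hexahedron recurrence. For the implication ``$\Leftarrow$'', suppose $\mathbf{x} = \mathbf{x}_{\varkappa(\mathbf{T})}(M)$ for a symmetric matrix $M$. I would take $\mathbf{\tilde{x}}_{\varkappa(\mathbf{T})}(M)$ as the candidate extension; by definition its restriction to $\varkappa^0$ is $\mathbf{x}$, and it is standard since $x_{v_0} = 1$. One first checks that all of its components lie in $\C^*$: the vertex components are the entries of $\mathbf{x}$, while for a $2$-face $s$ of $\varkappa$ with cyclic vertices $v_1, v_2, v_3, v_4$, Theorem~\ref{sym_mat_net} (applied in the tiling of the pile in which $s$ lies) gives $\big(\mathbf{\tilde{x}}_{\varkappa(\mathbf{T})}(M)_s\big)^2 = x_{v_1} x_{v_3} + x_{v_2} x_{v_4}$, which is nonzero by~\eqref{nonzero_face_condition}. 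Since all components are nonzero, $\mathbf{\tilde{x}}_{\varkappa(\mathbf{T})}(M)$ satisfies the hexahedron recurrence by Proposition~\ref{flip_hex}, and it satisfies~\eqref{khexspec} on every $2$-face again by Theorem~\ref{sym_mat_net}; together these say it satisfies the K-hexahedron equations. This direction is essentially Corollary~\ref{sym_matrix_k_hex_arb}, with~\eqref{nonzero_face_condition} supplying the non-vanishing of the odd almost-principal minors attached to $\varkappa^2$.

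For ``$\Rightarrow$'', let $\mathbf{\tilde{x}}$ be a standard extension of $\mathbf{x}$ to $\varkappa^{02}$ satisfying the K-hexahedron equations. Its vertex values equal those of $\mathbf{x}$, hence are nonzero, and for each $2$-face equation~\eqref{khexspec} together with~\eqref{nonzero_face_condition} forces the value to be nonzero, so $\mathbf{\tilde{x}} \in (\C^*)^{\varkappa^{02}}$. Since $T_{\textup{min},n}$ occurs in the pile $\mathbf{T}$, the restriction $\mathbf{\tilde{x}}|_{\varkappa^{02}(T_{\textup{min},n})}$ is a standard array with nonzero components, so by Theorem~\ref{tmin_laurent} there is a unique matrix $M$ with $\mathbf{\tilde{x}}|_{\varkappa^{02}(T_{\textup{min},n})} = \mathbf{\tilde{x}}_{\varkappa(T_{\textup{min},n})}(M)$. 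Because $\mathbf{\tilde{x}}$ satisfies~\eqref{khexspec}, the array $\mathbf{\tilde{x}}_{\varkappa(T_{\textup{min},n})}(M)$ satisfies condition~\eqref{tiling_cond_for_sym} for $T_{\textup{min},n}$, so $M$ is symmetric by Corollary~\ref{sym_for_min_tiling}.

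It then remains to check $\mathbf{x} = \mathbf{x}_{\varkappa(\mathbf{T})}(M)$, i.e., that $\mathbf{\tilde{x}}_{\varkappa(\mathbf{T})}(M)$ and $\mathbf{\tilde{x}}$ agree on all of $\varkappa^{02}$. Writing $\mathbf{T} = (T_0, \dots, T_\ell)$ with $T_m = T_{\textup{min},n}$, the two arrays agree on $\varkappa^{02}(T_m)$ by construction, and I would propagate the agreement outward one flip at a time. Each flip corresponds to a cube of $\varkappa(\mathbf{T})$ on which $\mathbf{\tilde{x}}$ obeys the hexahedron recurrence (Proposition~\ref{likehexprop}, since $\mathbf{\tilde{x}}$ is a nowhere-zero K-hexahedron array) and on which the minors of $M$ transform by the same recurrence (Proposition~\ref{flip_hex}); since the two arrays already agree on the ``input'' vertices and faces of the cube --- in particular on the values occurring in the denominators, which are therefore nonzero --- they must agree on its remaining vertex and faces as well, and those new values are again nonzero because they coincide with values of $\mathbf{\tilde{x}}$. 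Iterating forward from $T_m$ to $T_\ell$, and backward (via the inverse recurrence) from $T_m$ to $T_0$, covers $\varkappa^{02}$. The point requiring care --- and the main obstacle --- is precisely this bookkeeping: the non-vanishing of both arrays must be carried along with the induction so that Proposition~\ref{flip_hex} legitimately applies at every cube of the pile; by comparison, the ``$\Leftarrow$'' ingredients are routine.
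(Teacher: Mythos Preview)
Your proof is correct and follows essentially the same route as the paper, which simply records the corollary as ``immediate from Proposition~\ref{flip_hex} and Corollary~\ref{sym_for_min_tiling}''. You have unpacked exactly those ingredients: the $\Leftarrow$ direction via Theorem~\ref{sym_mat_net} and Proposition~\ref{flip_hex} (i.e., Corollary~\ref{sym_matrix_k_hex_arb} together with the non-vanishing granted by~\eqref{nonzero_face_condition}), and the $\Rightarrow$ direction by restricting to $T_{\textup{min},n}$, invoking Theorem~\ref{tmin_laurent} and Corollary~\ref{sym_for_min_tiling} to produce a symmetric $M$, then propagating agreement along the pile using the hexahedron recurrence cube by cube.
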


We want a set of equations that tell us whether an array $\mathbf{x}$ indexed by $\varkappa^0(\mathbf{T})$ can be extended to an array indexed by $\varkappa^{02}(\mathbf{T})$ satisfying the K-hexahedron equations. Below, we define a notion of coherence generalizing the notion of coherence from Section~\ref{main-results}.

\begin{Definition}\label{coherence_gen_def}Let $\varkappa$ be a $3$-dimensional cubical complex. We say that $\mathbf{x} = (x_s)_{s \in \varkappa}$ is a~\emph{coherent} solution of the Kashaev equation if $\mathbf{x}$ satisfies the Kashaev equation (i.e., $K^C(\mathbf{x}) = 0$ for every $3$-dimensional cube $C$ of $\varkappa$), and for any interior vertex $v$ of $\varkappa$:
\begin{gather} \label{coherence_condition_for_wir}
\prod_{C \ni v} K_v^C(\mathbf{x}) = \prod_{S \ni v} (x_v x_{v_2} + x_{v_1} x_{v_3}),
\end{gather}
where
\begin{itemize}\itemsep=0pt
\item the first product is over $3$-dimensional cubes $C$ incident to the vertex $v$,
\item the second product is over $2$-dimensional faces $S$ incident to the vertex $v$, and
\item $v, v_1, v_2, v_3$ are the vertices of such a face $S$ listed in cyclic order.
\end{itemize}
\end{Definition}

\begin{Remark}
The property of being coherent solution of the Kashaev equation is defined for all $3$-dimensional cubical complexes, not only for $3$-dimensional directed cubical complexes, as no choice of direction needs to be made in each cube.
\end{Remark}

\begin{Theorem} \label{main_thm_cyclic_zonotope}
Let $\mathbf{T}$ be a pile of $\Diamond$-tilings of $\mathbf{P}_n$, with $\varkappa = \varkappa(\mathbf{T})$.
\begin{enumerate}\itemsep=0pt
\item[$(a)$] Any coherent solution $\mathbf{x} = (x_s) \in (\C^*)^{\varkappa^0}$ of the Kashaev equation satisfying the property that
\begin{gather} \label{face_nonzero}
x_{v_1} x_{v_3} + x_{v_2} x_{v_4} \not= 0
\end{gather}
for all faces of $\varkappa$ with vertices $v_1$, $v_2$, $v_3$, $v_4$ in cyclic order, can be extended to $\mathbf{\tilde{x}} = (x_s)_{s \in \varkappa^{02}(\mathbf{T})}$ satisfying the K-hexahedron equations.
\item[$(b)$] Conversely, suppose that $\mathbf{\tilde{x}} = (x_s) \in (\C^*)^{\varkappa^{02}}$ $($with $x_s \not= 0$ for $s \in \varkappa^0)$ satisfies the K-hexahedron equations. Then the restriction of $\mathbf{\tilde{x}}$ to $\varkappa^0$ is a coherent solution of the Kashaev equation.
\end{enumerate}
\end{Theorem}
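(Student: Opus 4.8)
I would treat the two directions separately: (b) is essentially local together with a parity count on the combinatorics of $\varkappa(\mathbf{T})$, whereas (a) reduces to a global square-root-sign bookkeeping for which coherence is exactly the integrability condition.

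\emph{Part (b).} Suppose $\mathbf{\tilde{x}}=(x_s)\in(\C^*)^{\varkappa^{02}}$ satisfies the K-hexahedron equations. Working one $3$-dimensional cube $C$ at a time, Proposition~\ref{likehexprop} (equivalently Proposition~\ref{easyfromkp}(b)) shows that the restriction $\mathbf{x}:=\mathbf{\tilde{x}}|_{\varkappa^0}$ satisfies $K^C(\mathbf{x})=0$; since $C$ was arbitrary, $\mathbf{x}$ is a solution of the Kashaev equation. Next I would establish, for every cube $C$ of $\varkappa$ and every vertex $v$ of $C$, the local identity $K_v^C(\mathbf{x})=\sigma_C(v)\prod_{S\subset C,\ S\ni v}x_S$, the product being over the three $2$-faces of $C$ incident to $v$ and $\sigma_C(v)\in\{\pm1\}$ being $+1$ when $v$ is the top or bottom vertex of $C$ and $-1$ otherwise. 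For $v$ the bottom vertex this is immediate from~\eqref{likehex4d} and Definition~\ref{kcdef} (which give $K_v^C(\mathbf{x})=\tfrac12(x_{\text{top}}x_v^2-A)=z_{0\frac12\frac12}z_{\frac12 0\frac12}z_{\frac12\frac12 0}$); for $v$ the top vertex it follows from the direction-reversal symmetry of the equations (Remark~\ref{directionality_of_hex}); and for the six remaining vertices it is a short direct computation from~\eqref{likehex1d}--\eqref{likehex4d}. Since every $2$-face incident to an interior vertex $v$ lies in exactly two cubes of $\varkappa$ (both incident to $v$), multiplying the identity over the cubes incident to $v$ and using $x_S^2=x_vx_{v_2}+x_{v_1}x_{v_3}$ from~\eqref{khexspec} yields $\prod_{C\ni v}K_v^C(\mathbf{x})=\big(\prod_{C\ni v}\sigma_C(v)\big)\prod_{S\ni v}(x_vx_{v_2}+x_{v_1}x_{v_3})$. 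It remains to check $\prod_{C\ni v}\sigma_C(v)=1$, i.e.\ that $v$ is an equatorial vertex of an even number of incident cubes. This is a purely combinatorial fact about $\varkappa(\mathbf{T})$: an interior vertex arises as the new vertex of exactly one flip (where it is the top of the new cube) and is the centre of exactly one later flip (where it is the bottom of the corresponding cube), so it is a pole of exactly two incident cubes; and since it has degree $3$ in the quadrangulation just after its creation and just before its destruction, it lies on the hexagon boundary of an even number of intervening flips, hence is equatorial in an even number of cubes. This gives~\eqref{coherence_condition_for_wir}, so $\mathbf{x}$ is coherent.

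\emph{Part (a).} List the cubes of $\varkappa=\varkappa(\mathbf{T})$ as $C_1,\dots,C_\ell$ in pile order; $C_i$ has bottom vertex $w_i$ (the centre of the flip $T_{i-1}\to T_i$), top vertex $v_i$ (the new vertex), three ``lower'' faces (the rhombi of $T_{i-1}$ at $w_i$) and three ``upper'' faces (the rhombi of $T_i$ at $v_i$). The plan is to build $\mathbf{\tilde{x}}$ inductively: first pick arbitrary square roots $x_s:=\pm\sqrt{x_{v_1}x_{v_3}+x_{v_2}x_{v_4}}$ for the faces of $T_0$ (possible and nonzero by~\eqref{face_nonzero}); then at step $i$, with the three lower faces of $C_i$ already assigned, define its three upper faces by~\eqref{likehex1d}--\eqref{likehex3d}. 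A local computation, of the same type as Proposition~\ref{easyfromkp}(a), shows that the three values so defined are square roots of the corresponding quantities $x_{v_1}x_{v_3}+x_{v_2}x_{v_4}$ \emph{and} that~\eqref{likehex4d} holds on $C_i$, provided the three lower face variables $x_{S_1},x_{S_2},x_{S_3}$ satisfy the single sign condition $x_{S_1}x_{S_2}x_{S_3}=K^{C_i}_{w_i}(\mathbf{x})=\tfrac12\big(x_{v_i}x_{w_i}^2-A\big)$ (with $A,D$ as in~\eqref{aandbs} for $C_i$); note both sides square to $D$, the right-hand side because $\mathbf{x}$ satisfies the Kashaev equation on $C_i$ (via~\eqref{pmz000}), so only one bit of sign must be matched. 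Granting that all $\ell$ sign conditions can be arranged simultaneously, the resulting array lies in $(\C^*)^{\varkappa^{02}}$ (its faces are square roots of nonzero quantities) and satisfies the K-hexahedron equations.

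\textbf{Main obstacle.} The crux is showing that coherence forces the $\ell$ sign conditions to be simultaneously satisfiable by the initial choice on $T_0$. Writing each face variable as a fixed square root times a sign $(-1)^{b_s}$, a short computation (flipping any two of the three lower signs of a cube flips the two parallel upper signs and fixes the third) shows that the propagation rule coming from~\eqref{likehex1d}--\eqref{likehex3d} is $\Z/2$-affine, so the system of sign conditions becomes a $\Z/2$-affine-linear system in the free variables $b_s$, $s\in\varkappa^2(T_0)$. One then identifies the linear dependences among these conditions: they are organized by the interior vertices $v$ of $\varkappa$, the dependence attached to $v$ being consistent precisely when $\prod_{C\ni v}K_v^C(\mathbf{x})=\prod_{S\ni v}(x_vx_{v_2}+x_{v_1}x_{v_3})$, i.e.\ exactly~\eqref{coherence_condition_for_wir}. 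So coherence makes the system solvable, finishing part (a). I expect this global bookkeeping, especially the verification that the interior-vertex relations exhaust the dependences, to be the hard part; the per-cube identities above are routine and parallel those already used in the $\Z^3$ case (Theorem~\ref{galoisfromintro}). An alternative route to part (a), bypassing the explicit cocycle count, is to first reduce to piles containing $T_{\min,n}$ (extending $\mathbf{T}$ to a pile in $\mathcal{C}(n)$ and using Proposition~\ref{bn3_connected_by_flips}), then pick arbitrary square roots on $\varkappa^{02}(T_{\min,n})$, invoke Theorem~\ref{tmin_laurent} to produce a matrix $M$, note $M$ is symmetric by Corollary~\ref{sym_for_min_tiling}, and conclude by Corollary~\ref{sym_matrix_k_hex_arb} that $\mathbf{\tilde{x}}_{\varkappa(\mathbf{T})}(M)$ is the desired extension — coherence entering there to guarantee that this extension restricts to the given $\mathbf{x}$ on all of $\varkappa^0$.
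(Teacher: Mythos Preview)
Your approach to part~(b) coincides with the paper's (Proposition~\ref{cc_easy_direction}): both use the local identity $K_v^C(\mathbf{x})=\sigma_C(v)\prod_{S\subset C,\,S\ni v}x_S$ (this is Lemma~\ref{cornersofcubelemma}) and multiply over the cubes through an interior vertex; your degree-parity argument that $v$ is equatorial in an even number of cubes is the combinatorial fact underlying the paper's computation.

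For part~(a), your main approach is also conceptually the paper's. What you call the $\Z/2$-affine sign system is formalized there via the map $\psi_\varkappa\colon\{-1,1\}^{\cueq}\to\{-1,1\}^{\varkappa^3}$ (Definition~\ref{com_def}), and ``interior-vertex relations exhaust the dependences'' is exactly the paper's \emph{comfortable} condition. The genuine gap is the step you yourself flag as ``the hard part''. This is not automatic, and in fact the paper shows (Proposition~\ref{non_example_prop} and Theorem~\ref{when_theorem_fails}) that it \emph{fails} for general piles of quadrangulations: there exist non-comfortable $\varkappa(\mathbf{T})$ on which some coherent solution cannot be extended. Since your argument, as written, never invokes the $\Diamond$-tiling (equivalently, pseudoline-arrangement) hypothesis, it cannot be complete. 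The paper supplies the missing ingredient via a dimension count (Lemma~\ref{pn_comfortable}): labeling faces by $2$-subsets and cubes by $3$-subsets of~$[n]$, one checks directly that $|\im\psi_\varkappa|\ge 2^{\binom{n-1}{2}}$, while the set satisfying condition~(C2) has size at most $2^{\binom{n-1}{2}}$ because at most $\binom{n-1}{2}$ cubes can have their bottom vertex in $T_0$. This is the only place where the zonotopal structure is actually used. (A smaller omission in your main route is the passage from generic to arbitrary $\mathbf{x}$, which the paper handles by a limiting/subsequence argument.)

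Your alternative route via symmetric matrices does not bypass the difficulty. Extending $\mathbf{T}$ to a pile containing $T_{\min,n}$ requires simultaneously extending $\mathbf{x}$ to a coherent solution on the larger complex, and then verifying that the matrix $M$ built from $\mathbf{\tilde{x}}|_{T_{\min,n}}$ satisfies $\mathbf{x}_{\varkappa(\mathbf{T})}(M)=\mathbf{x}$ on all of $\varkappa^0$---not just on $\varkappa^0(T_{\min,n})$---is essentially the theorem itself. In the paper the matrix interpretation is deduced \emph{from} Theorem~\ref{main_thm_cyclic_zonotope} (see Lemma~\ref{same_label_values_agree} and the proof of Corollary~\ref{all_pminors_corollary}), not the other way around.
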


Theorem~\ref{main_thm_cyclic_zonotope} is proved in Section~\ref{cc_proof_section}, where we obtain results (Proposition~\ref{cc_easy_direction} and Theorem~\ref{cc_main_generalization}) generalizing both Theorems~\ref{main_thm_cyclic_zonotope} and~\ref{galoisfromintro}.

As an immediate corollary of Corollary~\ref{sym_matrix_k_hex} and Theorem~\ref{main_thm_cyclic_zonotope}, we obtain the following:

\begin{Corollary} \label{cor_with_t_min}Let $\mathbf{T}$ be a pile of $\Diamond$-tilings of $\mathbf{P}_n$ containing $T_{\textup{min},n}$, with $\varkappa = \varkappa(\mathbf{T})$. Let $\mathbf{x} = (x_s) \in (\C^*)^{\varkappa^0}$ be a standard array satisfying condition~\eqref{nonzero_face_condition}. Then the following are equivalent:
\begin{itemize}\itemsep=0pt
\item $\mathbf{x}$ is a coherent solution of the Kashaev equation,
\item there exists a symmetric matrix $M$ such that $\mathbf{x} = \mathbf{x}_{\varkappa(\mathbf{T})}(M)$.
\end{itemize}
\end{Corollary}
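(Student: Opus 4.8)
The plan is to combine Corollary~\ref{sym_matrix_k_hex} with Theorem~\ref{main_thm_cyclic_zonotope}, which is exactly the recipe indicated in the text. The two equivalences have almost-matching hypotheses: Corollary~\ref{sym_matrix_k_hex} applies to a pile $\mathbf{T}$ containing $T_{\textup{min},n}$, to a standard array $\mathbf{x} \in (\C^*)^{\varkappa^0}$ satisfying the nonzero-face condition~\eqref{nonzero_face_condition}, and asserts that $\mathbf{x}$ extends to a standard array on $\varkappa^{02}$ satisfying the K-hexahedron equations if and only if $\mathbf{x} = \mathbf{x}_{\varkappa(\mathbf{T})}(M)$ for some symmetric $M$. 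Theorem~\ref{main_thm_cyclic_zonotope} applies to any pile $\mathbf{T}$ of $\Diamond$-tilings of $\mathbf{P}_n$ and, via parts~(a) and~(b) together, asserts that $\mathbf{x} \in (\C^*)^{\varkappa^0}$ satisfying~\eqref{face_nonzero} is a coherent solution of the Kashaev equation if and only if it extends to $\mathbf{\tilde{x}} = (x_s)_{s \in \varkappa^{02}(\mathbf{T})}$ satisfying the K-hexahedron equations. Chaining these gives: $\mathbf{x}$ coherent $\iff$ $\mathbf{x}$ extends to a K-hexahedron solution on $\varkappa^{02}$ $\iff$ $\mathbf{x} = \mathbf{x}_{\varkappa(\mathbf{T})}(M)$ for some symmetric $M$, which is the claim.

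The steps in order: (1) Observe that the nonzero-face hypothesis~\eqref{nonzero_face_condition} is literally the same as~\eqref{face_nonzero}, so the hypotheses of Corollary~\ref{sym_matrix_k_hex} and of Theorem~\ref{main_thm_cyclic_zonotope} are jointly satisfied by the given $\mathbf{x}$. (2) Apply the forward direction of Theorem~\ref{main_thm_cyclic_zonotope}(a): if $\mathbf{x}$ is a coherent solution of the Kashaev equation, then it extends to some $\mathbf{\tilde{x}} = (x_s)_{s\in\varkappa^{02}(\mathbf{T})}$ satisfying the K-hexahedron equations. (3) Note this extension is automatically \emph{standard}, since $x_{v_0} = 1$ is inherited from $\mathbf{x}$ (the vertex $v_0$ lies on the boundary, so it is not touched by the extension). (4) Apply Corollary~\ref{sym_matrix_k_hex} (which uses that $\mathbf{T}$ contains $T_{\textup{min},n}$): a standard $\mathbf{x}$ extending to a standard K-hexahedron solution on $\varkappa^{02}$ is exactly one of the form $\mathbf{x}_{\varkappa(\mathbf{T})}(M)$ for a symmetric matrix $M$ with nonzero principal minors; since the components of $\mathbf{x}$ are all nonzero, the principal-minor nonvanishing is built in. (5) For the converse, start from $\mathbf{x} = \mathbf{x}_{\varkappa(\mathbf{T})}(M)$ with $M$ symmetric; by Corollary~\ref{sym_matrix_k_hex} this extends to a standard K-hexahedron solution on $\varkappa^{02}$, and then Theorem~\ref{main_thm_cyclic_zonotope}(b) forces the restriction to $\varkappa^0$, namely $\mathbf{x}$ itself, to be a coherent solution of the Kashaev equation.

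The only point requiring care—and the most likely obstacle—is the bookkeeping of the genericity/nonvanishing conditions so that each invoked result's hypotheses are genuinely met: Corollary~\ref{sym_matrix_k_hex} wants a K-hexahedron \emph{solution with all nonzero vertex values}, Theorem~\ref{main_thm_cyclic_zonotope}(b) wants $\mathbf{\tilde{x}} \in (\C^*)^{\varkappa^{02}}$ with $x_s \neq 0$ for $s \in \varkappa^0$, and Corollary~\ref{sym_matrix_k_hex} produces $M$ with nonzero principal minors rather than a fully generic $M$. One must check that these weaker nonvanishing conditions propagate correctly through the chain; in particular, that the extension produced in Theorem~\ref{main_thm_cyclic_zonotope}(a) has all nonzero components (so it lies in $(\C^*)^{\varkappa^{02}}$ and not merely in $\C^{\varkappa^{02}}$) so that Corollary~\ref{sym_matrix_k_hex} may be applied to it. This is genuinely part of the statement of Theorem~\ref{main_thm_cyclic_zonotope}(a) (the extension is $(x_s)_{s\in\varkappa^{02}(\mathbf{T})}$ with all $x_s$ nonzero by the nature of the K-hexahedron equations under~\eqref{face_nonzero}), so no new work is needed—but it is the step where the argument could go wrong if stated carelessly. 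Everything else is a direct concatenation of already-proved equivalences.
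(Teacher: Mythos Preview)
Your proposal is correct and follows exactly the approach the paper indicates: the corollary is stated as an immediate consequence of Corollary~\ref{sym_matrix_k_hex} and Theorem~\ref{main_thm_cyclic_zonotope}, and you have spelled out the chaining of those two equivalences, including the minor bookkeeping that the extension produced in Theorem~\ref{main_thm_cyclic_zonotope}(a) has nonzero face values (immediate from~\eqref{khexspec} and~\eqref{face_nonzero}) and is automatically standard.
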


Next, we consider the problem of checking whether an array of $2^n$ numbers could correspond to the principal minors of some symmetric matrix.

\begin{Definition}Given an $n \times n$ symmetric matrix $M$, let $\mathbf{\bar{x}}(M) = (x_I)_{I \in [n]}$, where
\begin{gather} \label{xi_in_terms_of_pminor}
x_I = (-1)^{\lfloor |I| / 2 \rfloor} M_I^I
\end{gather}
for $I \subseteq [n]$. Given a pile $\mathbf{T}$ of $\Diamond$-tilings of $\mathbf{P}_n$, with $\varkappa = \varkappa(\mathbf{T})$, and an array $\mathbf{\bar{x}} = (x_I)_{I \subseteq [n]}$, let $\mathbf{x}_{\varkappa(\mathbf{T})}(\mathbf{\bar{x}}) = (x_s)_{s \in \varkappa^0}$ where $x_s = x_I$ when vertex $s$ is labeled by~$I \subseteq [n]$.
\end{Definition}

\begin{Definition}Given an array $\mathbf{\bar{x}} = (x_J)_{J \subseteq [n]}$, $I \subseteq [n]$, and distinct $i, j \in [n]$, set
\begin{gather*}
L_{I, \{i,j\}}(\mathbf{\bar{x}}) = x_I x_{I \Delta \{i,j\}} + x_{I \Delta \{i\}} x_{I \Delta \{j\}},
\end{gather*}
where $\Delta$ denotes the symmetric difference.
\end{Definition}

\begin{Corollary} \label{all_pminors_corollary}Fix an array $\mathbf{\bar{x}} = (x_I)_{I \subseteq [n]}$ with nonzero entries, satisfying the conditions that $L_{I, \{i,j\}} \not= 0$ for any $I \subseteq [n]$ and distinct $i,j \in [n]$, and $x_{\varnothing} = 1$. Then the following are equivalent:
\begin{itemize}\itemsep=0pt
\item there exists a symmetric matrix $M$ such that $\mathbf{\bar{x}} = \mathbf{\bar{x}}(M)$,
\item for some pile $\mathbf{T}$ of $\Diamond$-tilings of $\mathbf{P}_n$ in which every $I \subseteq [n]$ labels at least one vertex of $\varkappa(\mathbf{T})$, $\mathbf{x}_{\varkappa(\mathbf{T})}(\mathbf{\bar{x}})$ is a coherent solution of the Kashaev equation,
\item for all piles $\mathbf{T}$ of $\Diamond$-tilings of $\mathbf{P}_n$, $\mathbf{x}_{\varkappa(\mathbf{T})}(\mathbf{\bar{x}})$ is a coherent solution of the Kashaev equation.
\end{itemize}
\end{Corollary}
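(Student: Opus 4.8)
The plan is to deduce this from Corollary~\ref{cor_with_t_min} together with Theorem~\ref{main_thm_cyclic_zonotope}, and to bridge the gap between a single pile and \emph{all} piles using the flip-connectivity of $\mathcal{C}(n)$ (Proposition~\ref{bn3_connected_by_flips}) and the fact that the K-hexahedron equations transport coherence across flips.

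First I would establish the implication ``there exists a symmetric $M$ with $\mathbf{\bar x} = \mathbf{\bar x}(M)$'' $\implies$ ``for all piles $\mathbf{T}$, $\mathbf{x}_{\varkappa(\mathbf{T})}(\mathbf{\bar x})$ is coherent.'' Given such an $M$, note that $\mathbf{\bar x}(M)$ having nonzero entries says exactly that all principal minors of $M$ are nonzero, so Corollary~\ref{sym_matrix_k_hex_arb} applies: for every pile $\mathbf{T}$, the array $\mathbf{x}_{\varkappa(\mathbf{T})}(M)$ extends to an array $\mathbf{\tilde x}_{\varkappa(\mathbf{T})}(M)$ on $\varkappa^{02}(\mathbf{T})$ satisfying the K-hexahedron equations. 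To invoke Theorem~\ref{main_thm_cyclic_zonotope}(b), I need all components of $\mathbf{\tilde x}_{\varkappa(\mathbf{T})}(M)$, including the face components, to be nonzero; the face component at a tile with vertex labels $I, I\cup\{i\}, I\cup\{j\}, I\cup\{i,j\}$ is $\pm M_{I\cup\{i\}}^{I\cup\{j\}}$, whose square equals $L_{I,\{i,j\}}(\mathbf{\bar x})$ by Theorem~\ref{sym_mat_net}, and this is nonzero by hypothesis. Hence $\mathbf{\tilde x}_{\varkappa(\mathbf{T})}(M) \in (\C^*)^{\varkappa^{02}(\mathbf{T})}$, and Theorem~\ref{main_thm_cyclic_zonotope}(b) gives that its restriction $\mathbf{x}_{\varkappa(\mathbf{T})}(M) = \mathbf{x}_{\varkappa(\mathbf{T})}(\mathbf{\bar x})$ is coherent. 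The implication ``for all piles'' $\implies$ ``for some pile (in which every $I\subseteq[n]$ labels a vertex)'' is then trivial, since $\mathbf{T}_{\min,n}$ (more precisely, any pile containing $T_{\min,n}$) has this property, as does any pile in $\mathcal{C}(n)$.

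It remains to prove ``for some pile $\mathbf{T}$ with every $I$ appearing, $\mathbf{x}_{\varkappa(\mathbf{T})}(\mathbf{\bar x})$ is coherent'' $\implies$ ``there exists a symmetric $M$.'' The clean route is to reduce to a pile containing $T_{\min,n}$, where Corollary~\ref{cor_with_t_min} directly applies: if $\mathbf{x}_{\varkappa(\mathbf{T}')}(\mathbf{\bar x})$ is coherent for some pile $\mathbf{T}'$ containing $T_{\min,n}$, then since the entries of $\mathbf{\bar x}$ are nonzero, $x_\varnothing = 1$, and the face-nonvanishing condition~\eqref{nonzero_face_condition} for $\varkappa(\mathbf{T}')$ is exactly the hypothesis $L_{I,\{i,j\}}\neq 0$, Corollary~\ref{cor_with_t_min} yields a symmetric $M$ with $\mathbf{x}_{\varkappa(\mathbf{T}')}(M) = \mathbf{x}_{\varkappa(\mathbf{T}')}(\mathbf{\bar x})$, i.e. $M_I^I$ agrees with $x_I$ up to the prescribed sign for every $I$ labeling a vertex of $\varkappa(\mathbf{T}')$; since every $I\subseteq[n]$ labels such a vertex, $\mathbf{\bar x}(M) = \mathbf{\bar x}$. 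So the main task is: \emph{coherence of $\mathbf{x}_{\varkappa(\mathbf{T})}(\mathbf{\bar x})$ for one pile $\mathbf{T}$ in which every $I$ appears implies coherence for $\mathbf{x}_{\varkappa(\mathbf{T}')}(\mathbf{\bar x})$ for a pile $\mathbf{T}'$ containing $T_{\min,n}$}. Here I would argue: by Theorem~\ref{main_thm_cyclic_zonotope}(a), coherence plus the face-nonvanishing condition lets me extend $\mathbf{x}_{\varkappa(\mathbf{T})}(\mathbf{\bar x})$ to $\mathbf{\tilde x} \in (\C^*)^{\varkappa^{02}(\mathbf{T})}$ satisfying the K-hexahedron equations; then I propagate $\mathbf{\tilde x}$ along a flip-path from $\mathbf{T}$ to $\mathbf{T}'$ — the vertex values on common tilings are fixed (the hexahedron recurrence only determines the one new top vertex per added cube, and by construction this reproduces $x_I$ for the newly appearing label $I$), and the K-hexahedron property is preserved under such propagation since each cube's equations are satisfied. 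Finally Theorem~\ref{main_thm_cyclic_zonotope}(b) applied to the propagated array on $\varkappa^{02}(\mathbf{T}')$ gives coherence of its vertex restriction, which is $\mathbf{x}_{\varkappa(\mathbf{T}')}(\mathbf{\bar x})$.

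The main obstacle I anticipate is the propagation/consistency step in the last paragraph: one must check that propagating a K-hexahedron solution across a flip of directed cubical complexes is well-defined and single-valued on the shared tilings, i.e. that the top-vertex value forced by the hexahedron recurrence on a newly added cube is consistent with the value $x_I$ dictated by $\mathbf{\bar x}$ for the corresponding label $I$ — and that one never divides by zero in the process, which is where the generic nonvanishing of all $L_{I,\{i,j\}}$ and all $x_I$ is used. This is essentially the content built into Theorem~\ref{main_thm_cyclic_zonotope} and its proof (Section~\ref{cc_proof_section}); the efficient write-up is to phrase the propagation as ``extend via Theorem~\ref{main_thm_cyclic_zonotope}(a), restrict via part~(b)'' and to note that the extension produced in part~(a), restricted to the vertices, is forced to equal $\mathbf{x}_{\varkappa(\mathbf{T}')}(\mathbf{\bar x})$ because every vertex label $I$ occurs already in $\mathbf{T}$ (or in $\mathbf{T}'$) and the vertex values are determined by $\mathbf{\bar x}$ alone. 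With connectivity of $\mathcal{C}(n)$ under flips (Proposition~\ref{bn3_connected_by_flips}) one can always route through a pile containing $T_{\min,n}$, completing the cycle of implications.
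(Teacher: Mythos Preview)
Your overall strategy matches the paper's: prove (1)$\Rightarrow$(3) via Corollary~\ref{sym_matrix_k_hex_arb} plus Theorem~\ref{main_thm_cyclic_zonotope}(b), note (3)$\Rightarrow$(2) is trivial, and for (2)$\Rightarrow$(1) pass to a pile containing $T_{\min,n}$ and invoke the matrix correspondence. However, there are two genuine gaps in your execution of the last implication.

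First, the mechanism you invoke for reaching a pile $\mathbf{T}'$ containing $T_{\min,n}$ is wrong. Proposition~\ref{bn3_connected_by_flips} concerns \emph{3-dimensional} flips between piles in $\mathcal{C}(n)$; but the given $\mathbf{T}$ need not lie in $\mathcal{C}(n)$ at all, so that proposition does not apply. (Also, your side remark that piles in $\mathcal{C}(n)$, or piles containing $T_{\min,n}$, have every $I\subseteq[n]$ as a vertex label is false: already for $n=4$, one of the two piles in $\mathcal{C}(4)$ from Example~\ref{two_piles_c4} never produces the label $\{2,4\}$.) What the paper does instead is simply \emph{extend} the pile: write $\mathbf{T}=(T_0,\dots,T_\ell)$ and append further 2-dimensional flips $T_{\ell+1},\dots,T_{\ell'}$ until some $T_j$ equals $T_{\min,n}$. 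Then $\varkappa(\mathbf{T})\subset\varkappa(\mathbf{T}')$, so every label present in $\varkappa(\mathbf{T})$ is still present in $\varkappa(\mathbf{T}')$, and the K-hexahedron solution on $\varkappa^{02}(\mathbf{T})$ propagates forward cube by cube to $\varkappa^{02}(\mathbf{T}')$.

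Second, and more importantly, you correctly flag as the ``main obstacle'' the question of why the vertex values forced by this propagation coincide with the values $x_I$ dictated by $\mathbf{\bar x}$, but you do not resolve it; saying ``the vertex values are determined by $\mathbf{\bar x}$ alone'' is exactly what needs to be proved. The paper handles this with a separate lemma (Lemma~\ref{same_label_values_agree}): in any coherent solution on $\varkappa(\mathbf{T}')$ satisfying the face-nonvanishing condition, two vertices carrying the same label $I\subseteq[n]$ must have the same value. The proof of that lemma is not formal bookkeeping---it goes through Theorem~\ref{main_thm_cyclic_zonotope}(a) to get a K-hexahedron extension, then approximates by generic standard arrays which, by Theorems~\ref{tiles_to_matrix_bij} and~\ref{sym_mat_net}, come from genuine symmetric matrices, whence equal-label vertices carry equal (signed) principal minors. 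Once this lemma is in hand, the propagated solution on $\varkappa^0(\mathbf{T}')$ (coherent by Theorem~\ref{main_thm_cyclic_zonotope}(b)) must agree with $\mathbf{x}_{\varkappa(\mathbf{T}')}(\mathbf{\bar x})$, since every label already occurs in $\varkappa(\mathbf{T})$; then Theorem~\ref{tmin_laurent}, Proposition~\ref{flip_hex}, and Corollary~\ref{sym_for_min_tiling} (or equivalently Corollary~\ref{cor_with_t_min}) produce the symmetric matrix $M$.
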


Corollary~\ref{all_pminors_corollary} can be deduced from Theorems~\ref{tiles_to_matrix_bij},~\ref{sym_mat_net}, Corollary~\ref{sym_matrix_k_hex_arb}, and Theorem~\ref{main_thm_cyclic_zonotope}; we provide such a proof in Section~\ref{matrix_proofs}.

Corollary~\ref{all_pminors_corollary} provides us a set of equations (namely, the equations for $\mathbf{x}_{\varkappa(\mathbf{T})}(\mathbf{\bar{x}})$ to be a~coherent solution of the Kashaev equation) to test whether a set of values matches the set of principal minors for some symmetric matrix. However, in general, the conditions~\eqref{coherence_condition_for_wir} for different interior vertices of $\varkappa(\mathbf{T})$ have very different forms.

\begin{Definition}Given an array $\mathbf{\bar{x}} = (x_J)_{J \subseteq [n]}$, $I \subseteq [n]$, and distinct $i,j,k \in [n]$, set
\begin{gather}
\label{minor_kc} K^{I, \{i,j,k\}}(\mathbf{\bar{x}}) = K^C(\mathbf{x}),\\
\label{minor_kcv} K_{I, \{i,j,k\}}(\mathbf{\bar{x}}) = K^C_v(\mathbf{x}),
\end{gather}
where $C$ is a $3$-dimensional cube, and $\mathbf{x}$ is the array indexed by the vertices of $C$ shown in Fig.~\ref{minor_labeled_cube}, and $v$ is the vertex of $C$ at which $\mathbf{x}$ has entry $x_I$.
\begin{figure}[ht]\centering
\begin{tikzpicture}[scale=1.5]
\draw (0,0)--(2.5,0)--(2.5,2.5)--(0,2.5)--(0,0);
\draw (1,1)--(3.5,1)--(3.5,3.5)--(1,3.5)--(1,1);
\draw (0,0)--(1,1);
\draw (2.5,0)--(3.5,1);
\draw (2.5, 2.5)--(3.5, 3.5);
\draw (0,2.5)--(1,3.5);

\filldraw (0,0) circle (1.5pt);
\filldraw (2.5,2.5) circle (1.5pt);
\filldraw (3.5,1) circle (1.5pt);
\filldraw (1,3.5) circle (1.5pt);

\filldraw[black] (2.5, 0) circle (1.5pt);
\filldraw[white] (2.5, 0) circle (1pt);
\filldraw[black] (0, 2.5) circle (1.5pt);
\filldraw[white] (0, 2.5) circle (1pt);
\filldraw[black] (1, 1) circle (1.5pt);
\filldraw[white] (1, 1) circle (1pt);
\filldraw[black] (3.5, 3.5) circle (1.5pt);
\filldraw[white] (3.5, 3.5) circle (1pt);

\draw (0,0) node[anchor=east]{$x_I$};
\draw (0,2.5) node[anchor=east]{$x_{I \Delta \{j\}}$};
\draw (1,1) node[anchor=east]{$x_{I \Delta \{k\}}$};
\draw (1,3.5) node[anchor=east]{$x_{I \Delta \{j,k\}}$};
\draw (2.5,0) node[anchor=west]{$x_{I \Delta \{i\}}$};
\draw (2.5,2.5) node[anchor=west]{$x_{I \Delta \{i,j\}}$};
\draw (3.5,1) node[anchor=west]{$x_{I \Delta \{i,k\}}$};
\draw (3.5,3.5) node[anchor=west]{$x_{I \Delta \{i,j,k\}}$};
\end{tikzpicture}
\caption{The array $\mathbf{x}$ in equations~\eqref{minor_kc}--\eqref{minor_kcv}, where $\Delta$ denotes the symmetric difference. In equation~\eqref{minor_kcv}, the vertex $v$ is the lower left vertex, with value $x_I$.} \label{minor_labeled_cube}
\end{figure}
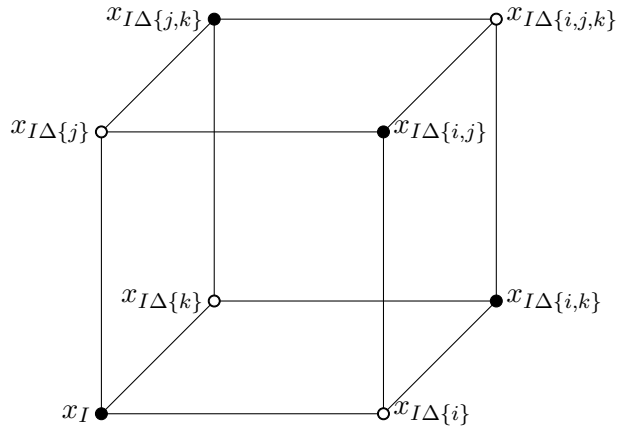
\end{Definition}

Let's focus on the $n = 4$ case. Using the $\Diamond$-tilings from Fig.~\ref{oct_cycle_figure}, consider the pile $\mathbf{T} = (T_0, T_1, \dots, T_7, T_0, T_1, T_2, T_3)$. Note that for every $I \subseteq [4]$, there is a tiling in $\mathbf{T}$ where $I$ labels a~vertex. (In fact, this property holds for the pile $(T_i, \dots, T_{i + 5})$ for any $i \in [8]$, with indices taken mod $8$.) Note that $\mathbf{x}_{\varkappa(\mathbf{T})}(\mathbf{\bar{x}})$ satisfies the Kashaev equation if and only if
\begin{gather} \label{kash_for_matrices}
K^{I, \{i,j,k\}}(\mathbf{\bar{x}}) = 0
\end{gather}
for all $I \subseteq [4]$ and distinct $i,j,k \in [4]$. Note that every interior vertex of $\varkappa(\mathbf{T})$ is incident with four $3$-dimensional cubes, each pair of which shares one $2$-dimensional face. Hence, for each $I \subseteq [4]$ labeling an interior vertex of $\varkappa(\mathbf{T})$ (namely, when $I$ is one of $\{1,3\}$, $\{1,4\}$, $\{1,2,4\}$, $\{1,3,4\}$, $\{2\}$, $\{2,3\}$, $\{2,4\}$, or $\{3\}$), the following condition holds if $\mathbf{\bar{x}} = \mathbf{\bar{x}}(M)$ for some $4 \times 4$ symmetric matrix $M$:
\begin{gather} \label{coh_in_n4_cond}
\prod_{J \in \binom{[4]}{3}} K_{I, J}(\mathbf{\bar{x}}) = \prod_{J \in \binom{[4]}{2}} L_{I, J}(\mathbf{\bar{x}}).
\end{gather}
Furthermore, it is straightforward to check that if $\mathbf{\bar{x}} = \mathbf{\bar{x}}(M)$ for some $4 \times 4$ symmetric mat\-rix~$M$, then equation~\eqref{coh_in_n4_cond} holds for all $I \subseteq [4]$. Hence, the result below follows from Corollary~\ref{all_pminors_corollary}.

\begin{Corollary} \label{n4_with_coh_cond}
Fix an array $\mathbf{\bar{x}} = (x_I)_{I \subseteq [4]}$, satisfying the conditions that $L_{I, \{i,j\}} \not= 0$ for any $I \subseteq [4]$ and distinct $i,j \in [4]$, and $x_{\varnothing} = 1$. Then the following are equivalent:
\begin{itemize}\itemsep=0pt
\item there exists a symmetric matrix $M$ such that $\mathbf{\bar{x}} = \mathbf{\bar{x}}(M)$,
\item for all piles $\mathbf{T}$ of $\Diamond$-tilings of $\mathbf{P}_4$, $\mathbf{x}_{\varkappa(\mathbf{T})}(\mathbf{\bar{x}})$ is a coherent solution of the Kashaev equation,
\item for all $I \subseteq [4]$ and distinct $i,j,k \in [4]$, equation~\eqref{kash_for_matrices} holds, and for all $I \subseteq [4]$, equation~\eqref{coh_in_n4_cond} holds.
\end{itemize}
\end{Corollary}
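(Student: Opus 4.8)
The plan is to deduce this three-way equivalence directly from Corollary~\ref{all_pminors_corollary} together with the two observations that precede the statement. First I would unwind what Corollary~\ref{all_pminors_corollary} gives in the case $n=4$: it says that, under the stated hypotheses ($x_\varnothing = 1$, all $x_I \ne 0$, and $L_{I,\{i,j\}} \ne 0$ for every $I$ and every distinct pair $i,j$), the existence of a symmetric $M$ with $\mathbf{\bar{x}} = \mathbf{\bar{x}}(M)$ is equivalent to $\mathbf{x}_{\varkappa(\mathbf{T})}(\mathbf{\bar{x}})$ being a coherent solution of the Kashaev equation \emph{for some} pile $\mathbf{T}$ in which every $I \subseteq [4]$ labels at least one vertex, and also equivalent to the same holding \emph{for all} piles $\mathbf{T}$. (One must check that the nonvanishing hypothesis ``all $x_I \ne 0$'' is implied here, or rather is part of the hypothesis of Corollary~\ref{n4_with_coh_cond}; in fact the statement of Corollary~\ref{n4_with_coh_cond} as written only assumes $L_{I,\{i,j\}}\ne 0$ and $x_\varnothing = 1$, so I would note that $L_{I,\varnothing\cdots}$-type conditions force the relevant $x_I$ to be nonzero, or else invoke the convention that the array has nonzero entries; this bookkeeping is the one place to be careful.) This immediately yields the equivalence of the first two bullets, and part of the passage to the third.

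The substance is then the equivalence of the second bullet with the third bullet, and this is exactly the content of the two paragraphs between Corollary~\ref{all_pminors_corollary} and the statement. The first direction to nail down: ``coherent for all piles $\mathbf{T}$'' unpacks as ``$K^C(\mathbf{x}) = 0$ for every cube $C$ occurring in every $\varkappa(\mathbf{T})$'' plus ``the coherence relation~\eqref{coherence_condition_for_wir} at every interior vertex of every $\varkappa(\mathbf{T})$.'' Using that for $n=4$ every cube of $\varkappa(\mathbf{T})$ is labeled as in Fig.~\ref{minor_labeled_cube} by some $(I,\{i,j,k\})$, the Kashaev condition over all piles is precisely the system~\eqref{kash_for_matrices} for all $I \subseteq [4]$ and all distinct $i,j,k$. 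Next, I would use the observation (already made in the text) that for the specific pile $\mathbf{T} = (T_0,\dots,T_7,T_0,T_1,T_2,T_3)$ every $I\subseteq[4]$ labels a vertex, that every interior vertex of $\varkappa(\mathbf{T})$ meets exactly four $3$-cubes pairwise sharing one $2$-face, and that the eight interior-vertex labels are exactly $\{1,3\},\{1,4\},\{1,2,4\},\{1,3,4\},\{2\},\{2,3\},\{2,4\},\{3\}$; for such a vertex the general coherence relation~\eqref{coherence_condition_for_wir} collapses to~\eqref{coh_in_n4_cond}. So for this pile, ``coherent'' $\iff$ \eqref{kash_for_matrices} for all triples plus \eqref{coh_in_n4_cond} for those eight $I$.

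To finish, I would show that \eqref{coh_in_n4_cond} for those eight interior labels already forces \eqref{coh_in_n4_cond} for all $I\subseteq[4]$ once a symmetric matrix exists — but in fact the cleaner route, and the one the surrounding text gestures at, is: (third bullet $\Rightarrow$ second bullet) if \eqref{kash_for_matrices} holds for all triples and \eqref{coh_in_n4_cond} holds for all $I$, then in particular it holds for the eight interior labels of the distinguished pile, hence $\mathbf{x}_{\varkappa(\mathbf{T})}(\mathbf{\bar{x}})$ is coherent for that pile, which labels every $I$; so by Corollary~\ref{all_pminors_corollary} a symmetric $M$ exists. Conversely (first bullet $\Rightarrow$ third bullet), if $\mathbf{\bar{x}} = \mathbf{\bar{x}}(M)$ for symmetric $M$, then Corollary~\ref{all_pminors_corollary} gives coherence for \emph{all} piles, so~\eqref{kash_for_matrices} holds for every triple, and it ``is straightforward to check'' (as the text states, using Corollary~\ref{sym_matrix_k_hex_arb} and Theorem~\ref{sym_mat_net}, i.e.\ that $\mathbf{x}_{\varkappa(\mathbf{T})}(M)$ satisfies the K-hexahedron equations whose components are minors of $M$) that~\eqref{coh_in_n4_cond} then holds for \emph{every} $I\subseteq[4]$, not merely the interior ones.

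\textbf{Main obstacle.} The only nonroutine step is verifying the last claim — that for a symmetric $M$, the identity~\eqref{coh_in_n4_cond} holds for all sixteen $I$, including the ``boundary'' labels $I$ which do not arise as interior vertices of any single pile. The natural argument is to pick, for each fixed $I$, a pile $\mathbf{T}_I$ (e.g.\ a cyclic shift of the octagon pile, as noted in the text: the pile $(T_i,\dots,T_{i+5})$) in which $I$ \emph{does} become an interior vertex with four cubes pairwise meeting in a face, apply Corollary~\ref{sym_matrix_k_hex_arb} plus Theorem~\ref{main_thm_cyclic_zonotope}(b) to conclude $\mathbf{x}_{\varkappa(\mathbf{T}_I)}(M)$ is coherent there, and read off~\eqref{coh_in_n4_cond}; but one must check that such a pile exists for every $I\subseteq[4]$ and that the local cube-and-face combinatorics at that interior vertex indeed reduce~\eqref{coherence_condition_for_wir} to the product form~\eqref{coh_in_n4_cond} (i.e.\ four cubes, $\binom{4}{3}=4$ of them, and the twelve incident $2$-faces reduce to the six $L_{I,J}$). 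Assembling these pile choices into a clean case-free statement is the part that takes the most care, though it is combinatorial rather than computational.
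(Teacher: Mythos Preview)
Your overall architecture matches the paper's: Corollary~\ref{all_pminors_corollary} gives the equivalence of the first two bullets, the third bullet feeds back into Corollary~\ref{all_pminors_corollary} via the explicit pile $(T_0,\dots,T_7,T_0,T_1,T_2,T_3)$, and the only nontrivial remaining step is showing that \eqref{coh_in_n4_cond} holds for \emph{all} $I\subseteq[4]$ once a symmetric $M$ exists. You have located the right obstacle.

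However, your proposed attack on that obstacle has a genuine gap. You suggest choosing, for each $I$, a pile $\mathbf{T}_I$ in which $I$ becomes an interior vertex. This is impossible for the eight ``boundary'' labels $\varnothing,\{1\},\{1,2\},\{1,2,3\},\{1,2,3,4\},\{2,3,4\},\{3,4\},\{4\}$: these label vertices on the boundary of $\mathbf{P}_4$, and a boundary vertex of the polygon is \emph{never} an interior vertex of $\varkappa(\mathbf{T})$ for any pile $\mathbf{T}$ of $\Diamond$-tilings of $\mathbf{P}_4$ (cf.\ Definition~\ref{assoc_cubical_complex_def}). The parenthetical about $(T_i,\dots,T_{i+5})$ only says every $I$ labels \emph{some} vertex, not an interior one. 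So the pile-by-pile argument cannot cover those eight cases, and the claim ``such a pile exists for every $I$'' is false.

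The paper handles this differently. The one-line justification (``it is straightforward to check'') is cashed out later as Proposition~\ref{4_cube_coherence_condition_holds}, whose proof uses Lemma~\ref{cornersofcubelemma} to write each $K_{I,\{i,j,k\}}(\mathbf{\bar{x}})$ as $\pm$ a product of three (signed) almost-principal minors $x_{I,\{i,j\}}x_{I,\{i,k\}}x_{I,\{j,k\}}$, tracks the sign combinatorially, and then observes that the product over $J\in\binom{A}{3}$ telescopes to $\prod_{J\in\binom{A}{2}} x_{I,J}^2 = \prod_{J\in\binom{A}{2}} L_{I,J}$. This is a direct algebraic identity valid for every $I$, with no need for $I$ to be interior in any pile. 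You should replace your pile-choosing argument with this computation (or cite Proposition~\ref{4_cube_coherence_condition_holds}).

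Your side remark about the missing hypothesis $x_I\ne 0$ is apt: Corollary~\ref{all_pminors_corollary} assumes nonzero entries, while Corollary~\ref{n4_with_coh_cond} as stated omits this. The paper appears to carry that hypothesis implicitly.
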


Equations~\eqref{kash_for_matrices}--\eqref{coh_in_n4_cond} are far more manageable than the coherence conditions that can arise in an arbitrary cubical complex $\varkappa(\mathbf{T})$. The good news is that the only coherence conditions that we need to check for any $n \ge 4$ are of the form of equation~\eqref{coh_in_n4_cond}!

\begin{Theorem} \label{main_all_minors_simple}
Fix an array $\mathbf{\bar{x}} = (x_I)_{I \subseteq [n]}$, satisfying the conditions that $L_{I, \{i,j\}} \not= 0$ for any $I \subseteq [n]$ and distinct $i,j \in [n]$, and $x_{\varnothing} = 1$. Then the following are equivalent:
\begin{itemize}\itemsep=0pt
\item there exists a symmetric matrix $M$ such that $\mathbf{\bar{x}} = \mathbf{\bar{x}}(M)$,
\item for all piles $\mathbf{T}$ of $\Diamond$-tilings of $\mathbf{P}_n$, $\mathbf{x}_{\varkappa(\mathbf{T})}(\mathbf{\bar{x}})$ is a coherent solution of the Kashaev equation,
\item for all $I \subseteq [n]$ and distinct $i,j,k \in [n]$, equation~\eqref{kash_for_matrices} holds, and for all $I \subseteq [n]$ and $A \in \binom{[n]}{4}$,
\begin{gather} \label{4_cube_coherence}
\prod_{J \in \binom{A}{3}} K_{I, J}(\mathbf{\bar{x}}) = \prod_{J \in \binom{A}{2}} L_{I, J}(\mathbf{\bar{x}}).
\end{gather}
\end{itemize}
\end{Theorem}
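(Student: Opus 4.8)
The plan is to take $(1)\Leftrightarrow(2)$ for free from Corollary~\ref{all_pminors_corollary} and then close the cycle by proving $(1)\Rightarrow(3)$ and $(3)\Rightarrow(2)$.

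For $(1)\Rightarrow(3)$: assume $\mathbf{\bar{x}}=\mathbf{\bar{x}}(M)$ with $M$ symmetric. The Kashaev equations \eqref{kash_for_matrices} hold because for symmetric $M$ the array $\mathbf{\bar{x}}(M)$ restricts on each $\varkappa(\mathbf{T})$ to a coherent solution of the Kashaev equation (Corollary~\ref{sym_matrix_k_hex_arb}, Theorem~\ref{main_thm_cyclic_zonotope}(b)), and every $3$-cube $(I\setminus J,J)$ occurs in some $\varkappa(\mathbf{T})$, so $K^{I,J}(\mathbf{\bar{x}})=K^{C}(\mathbf{x})=0$. For \eqref{4_cube_coherence}, fix $I\subseteq[n]$ and $A\in\binom{[n]}{4}$ and let $N$ be the Schur complement of $M_{I\setminus A}^{I\setminus A}$ in $M_{(I\setminus A)\cup A}^{(I\setminus A)\cup A}$ — a symmetric $4\times4$ matrix, well-defined since $\det M_{I\setminus A}^{I\setminus A}=\pm x_{I\setminus A}\neq0$. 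By the Schur determinant identity, $\det M_{(I\setminus A)\cup B}^{(I\setminus A)\cup B}=\det(M_{I\setminus A}^{I\setminus A})\,\det N_{B}^{B}$ for $B\subseteq A$, so each $x_{I\Delta S}$ with $S\subseteq A$ equals a fixed sign, times $(-1)^{\varepsilon|S|}$ (where $\varepsilon=|I\setminus A|\bmod 2$), times $\det(M_{I\setminus A}^{I\setminus A})$, times the corresponding entry of $\mathbf{\bar{x}}(N)$. Now $K^{C}_{v}$ (Definition~\ref{kcdef}) is homogeneous of degree $3$ in the cube entries and merely changes sign under $z_{S}\mapsto(-1)^{|S|}z_{S}$, while the $L$-type quantities are homogeneous of degree $2$ and invariant under the same substitution; since \eqref{4_cube_coherence} has $\binom{4}{3}=4$ factors on the left and $\binom{4}{2}=6$ on the right, all $\pm1$ ambiguities are raised to even powers and cancel, and both sides acquire the common factor $\det(M_{I\setminus A}^{I\setminus A})^{12}$. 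Hence \eqref{4_cube_coherence} for $\mathbf{\bar{x}}$ is equivalent to \eqref{coh_in_n4_cond} for $N$ and the subset $I\cap A$, which holds by the discussion preceding Corollary~\ref{n4_with_coh_cond}.

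For $(3)\Rightarrow(2)$ — the heart of the matter — fix a pile $\mathbf{T}$ and put $\mathbf{x}=\mathbf{x}_{\varkappa(\mathbf{T})}(\mathbf{\bar{x}})$. Every $3$-cube of $\varkappa(\mathbf{T})$ is some $(I\setminus J,J)$ with $K^{C}(\mathbf{x})=K^{I,J}(\mathbf{\bar{x}})=0$ by \eqref{kash_for_matrices}, so $\mathbf{x}$ solves the Kashaev equation, and it remains to check coherence \eqref{coherence_condition_for_wir} at each interior vertex $v$, with label $I$ say. The link of $v$ in $\varkappa(\mathbf{T})$ is a triangulated $2$-sphere $\Sigma_{v}$ whose triangles are the $3$-cubes at $v$ (each $(I\setminus J,J)$, recording $J\in\binom{[n]}{3}$) and whose edges are the $2$-faces at $v$ (each $(I\setminus\{i,j\},\{i,j\})$, recording $\{i,j\}$), so that the coherence condition at $v$ becomes $\prod_{J}K_{I,J}(\mathbf{\bar{x}})=\prod_{\{i,j\}}L_{I,\{i,j\}}(\mathbf{\bar{x}})$ over the triangles and edges of $\Sigma_{v}$. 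The local computation behind Proposition~\ref{A2B2proposition}, applied to a single cube, gives $K_{I,J}(\mathbf{\bar{x}})^{2}=\prod_{\{a,b\}\in\binom{J}{2}}L_{I,\{a,b\}}(\mathbf{\bar{x}})$ whenever $K^{I,J}(\mathbf{\bar{x}})=0$: indeed $K^{C}_{v}=\tfrac12(z_{111}z_{000}^{2}-A)$, the Kashaev equation is $(z_{111}z_{000}^{2}-A)^{2}=4D$, and $D$ factors as the product of the three $L$-type quantities on $\binom{J}{2}$, by \eqref{aandbs}. Since each edge of $\Sigma_{v}$ lies in exactly two triangles, squaring and multiplying these relations over all triangles yields $\big(\prod_{J}K_{I,J}\big)^{2}=\big(\prod_{\{i,j\}}L_{I,\{i,j\}}\big)^{2}$, so the coherence identity holds up to a sign $\delta(\Sigma_{v})\in\{\pm1\}$. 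One then verifies that $\delta$ is invariant under the Pachner $(2,2)$, $(1,3)$, and $(3,1)$ moves on triangulated $2$-spheres: each move involves a $4$-set $A$, and a short calculation (again using $K_{I,J}^{2}=\prod L$) shows that the change in $\delta$ is a $\pm1$ that is pinned to $+1$ by the $4$-cube coherence \eqref{4_cube_coherence} for $A$. As $\delta(\partial\Delta^{3})=+1$ is precisely \eqref{4_cube_coherence}, and every $\Sigma_{v}$ is connected to $\partial\Delta^{3}$ through such moves, we get $\delta(\Sigma_{v})=+1$, hence coherence at $v$, hence $\mathbf{x}$ is a coherent solution of the Kashaev equation.

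I expect the main obstacle to be the combinatorial–topological input just used: that the link of an interior vertex of $\varkappa(\mathbf{T})$ is a triangulated $2$-sphere with triangles and edges indexed as above, and — more delicately — that it can be connected to $\partial\Delta^{3}$ by Pachner moves without ever leaving the class of triangulated spheres on at most $n$ vertices. I would establish this with the cubical-complex machinery behind Theorem~\ref{main_thm_cyclic_zonotope} (Section~\ref{cc_proof_section}) together with the flip-connectivity of piles (Proposition~\ref{bn3_connected_by_flips} and its analogue for arbitrary piles), under which a $3$-dimensional flip of $\varkappa(\mathbf{T})$ induces exactly a Pachner move on the affected links; alternatively, by showing directly that these links are stacked spheres built from $\partial\Delta^{3}$ by $(1,3)$ moves. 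The leftover computations — the sign bookkeeping in the Schur reduction and the per-move evaluation of $\delta$ — are routine.
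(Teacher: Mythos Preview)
Your proof is correct but takes a genuinely different route from the paper, especially for the hard direction.

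For $(1)\Rightarrow(3)$ the paper argues via almost-principal minors (Proposition~\ref{4_cube_coherence_condition_holds}): using Lemma~\ref{cornersofcubelemma} it writes each $K_{I,J}(\mathbf{\bar{x}})$ as a signed product of three almost-principal minors and then multiplies over $J\in\binom{A}{3}$, so that each almost-principal minor occurs squared and becomes an $L_{I,\cdot}$. Your Schur-complement reduction to a $4\times4$ symmetric matrix also works, but the sign bookkeeping is slightly more delicate than you indicate: the relabeling $S\mapsto(I\cap A)\,\Delta\,S$ is a cube symmetry that does not fix the base vertex, so one must track which $K^C_w$ one lands on before invoking homogeneity and the parity substitution.

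For the hard direction the paper proves $(3)\Rightarrow(1)$ rather than $(3)\Rightarrow(2)$. It builds one explicit pile $\mathbf{T}_0\in\mathcal{C}(n)$ (lexicographic on $\binom{[n]}{3}$) together with a sequence of $3$-flips on piles (Lemma~\ref{bubbling_up_cubes}); coherence is then checked inductively cube by cube, each new interior-vertex check being transported by $3$-flips (Lemmas~\ref{replacing_z43_lemma} and~\ref{flips_work_in_larger_complex}) to a single instance of \eqref{4_cube_coherence} (Lemma~\ref{works_for_one_fcs}). Finally $M$ is read off at $T_{\min,n}$ and propagated to all of $\mathcal{C}(n)$ via Proposition~\ref{bn3_connected_by_flips}. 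Your argument instead attacks $(3)\Rightarrow(2)$ directly via the link: your computation that $\delta$ is invariant under the $(2,2)$ and $(1,3)/(3,1)$ bistellar moves is exactly right and is the algebraic content of \eqref{4_cube_coherence}. The topological input you flag is genuinely available: since $\varkappa(\mathbf{T})$ embeds in $\R^3$, the link $\Sigma_v$ of an interior vertex is a simplicial $2$-sphere, and its vertices (the edges at $v$) are injectively labeled by elements of $[n]$ because distinct edges at $v$ point in distinct directions $e_i$; then Wagner's theorem gives flip-connectivity of triangulations of $S^2$ on a fixed vertex set, allowing you to reach a stacked sphere by $(2,2)$ moves and then descend to $\partial\Delta^3$ by $(3,1)$ moves, so no new labels are ever needed. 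The paper's route has the advantage of staying entirely inside the pile/flip machinery already developed; yours gives a cleaner conceptual picture in which \eqref{4_cube_coherence} is literally the Pachner-move obstruction, and it proves the stronger statement $(3)\Rightarrow(2)$ without passing through a matrix $M$.
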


We prove Theorem~\ref{main_all_minors_simple} in Section~\ref{matrix_proofs}.

\begin{Remark}
We compare Theorem~\ref{main_all_minors_simple} to a similar result of Oeding {\cite[Corollary~1.4]{oeding}}. He considers the natural action of $\big(\SL_2(\C)^{\times n}\big) \ltimes S_n$ (where $S_n$ is the symmetric group on $n$ elements) on $\C^{2^{[n]}}$, and proves that for $\mathbf{\bar{x}} = (x_I)_{I \subseteq [n]}$ the following are equivalent:
\begin{itemize}\itemsep=0pt
\item there exists a symmetric matrix $M$ such that $x_I = M_I^I$ for all $I \subseteq [n]$,
\item all images of $\mathbf{\bar{x}}$ under $(\SL_2(\C)^{\times n}) \ltimes S_n$ satisfy
\begin{gather} \label{hyperdet_id_oeding_version}
2\big(a^2 + b^2 + c^2 + d^2\big) - (a + b + c + d)^2 + 4 (s + t) = 0,
\end{gather}
where
\begin{gather*}
a = x_{\varnothing} x_{\{1,2,3\}}, \qquad b = x_{\{1\}} x_{\{2,3\}},\qquad c = x_{\{2\}} x_{\{1,3\}}, \qquad d = x_{\{3\}} x_{\{1,2\}},\\
s = x_{\varnothing} x_{\{1,2\}} x_{\{1,3\}} x_{\{2,3\}},\qquad t = x_{\{1\}} x_{\{2\}} x_{\{3\}} x_{\{1,2,3\}}.
\end{gather*}
\end{itemize}
The left-hand side of equation~\eqref{hyperdet_id_oeding_version} can be identified as Cayley's $2 \times 2 \times 2$ \emph{hyperdeterminant}. Equivalently, equation~\eqref{hyperdet_id_oeding_version} is just equation~\eqref{kash_for_matrices} for $I = \varnothing$ and $\{i,j,k\} = \{1,2,3\}$ with the appropriate changes of sign (because we don't put additional signs on the principal minors in this setting). For the above equivalence, Oeding does not impose an assumption of genericity on $\mathbf{\bar{x}}$. Consider the subgroup $H \subseteq \SL_2(\C)$ defined by
\begin{gather*}
H = \left\{ \left( \begin{matrix} 1 & 0 \\ 0 & 1 \end{matrix} \right), \left( \begin{matrix} 0 & 1 \\ -1 & 0 \end{matrix} \right), \left( \begin{matrix} -1 & 0 \\ 0 & -1 \end{matrix} \right), \left( \begin{matrix} 0 & -1 \\ 1 & 0 \end{matrix} \right) \right\} \cong \Z_4.
\end{gather*}
In this language of~\cite{oeding}, our condition that a (signed) array $\mathbf{\bar{x}}$ satisfies equation~\eqref{kash_for_matrices} for all $I \subseteq [n]$ and distinct $i,j,k \in [n]$ can be restated as the condition that all images of the ``unsigned version'' of $\mathbf{\bar{x}}$, under the action of the group $(H^{\times n}) \ltimes S_n$, satisfy equation~\eqref{hyperdet_id_oeding_version}. Thus, Theorem~\ref{main_all_minors_simple} requires an additional assumption of genericity and an additional equation (equation~\eqref{4_cube_coherence}) compared to Oeding's criterion, but uses a weaker version of the requirement in the second bullet point above.
\end{Remark}

\section[S-Holomorphicity in ${\mathbb Z}^2$]{S-Holomorphicity in $\boldsymbol{{\mathbb Z}^2}$} \label{s_holo_section}

In this section, we discuss a certain equation (see~\eqref{qc_eq0}) which shares many properties with the Kashaev equation. We also study a related system of equations (see~\eqref{2dh1}--\eqref{2dhs2}) which plays the role analogous to the K-hexahedron equations. The equations studied herein arise in discrete complex analysis and in the study of the Ising model (see~\cite{chelkaksmirnov} and Remark~\ref{sholo_remark}). The presentation of results in this section follows a plan similar to that of Section~\ref{main-results}. The results in this section are proved in Section~\ref{klikerecurrences} as special cases of a general axiomatic framework.

\begin{Definition} \label{2dkashdef}Given a unit square $C$ with vertices in $\Z^2$ and an array $\mathbf{x} \in \C^{\Z^2}$, define
\begin{gather}
Q^C(\mathbf{x}) = z_{00}^2 + z_{10}^2 + z_{01}^2 + z_{11}^2 - 2(z_{00} z_{10} + z_{10} z_{11} + z_{11} z_{01} + z_{01} z_{00})\nonumber\\
\hphantom{Q^C(\mathbf{x}) =}{} - 6 (z_{00} z_{11} + z_{10} z_{01}), \label{qcdef}
\end{gather}
where $z_{00}$, $z_{10}$, $z_{01}$, $z_{11}$ denote the components of $\mathbf{x}$ at the vertices of $C$, as shown in Fig.~\ref{labeledsquare}. Notice that the right-hand side of~\eqref{qcdef} is invariant under the symmetries of the square. In other words, reindexing the $4$ values using an isomorphic labeling of the square does not change the definition of~$Q^C(\mathbf{x})$.
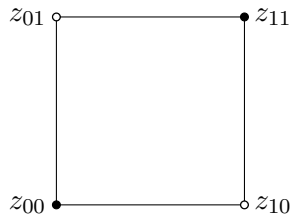
\begin{figure}[ht]\centering
\begin{tikzpicture}
\draw (0,0)--(2.5,0)--(2.5,2.5)--(0,2.5)--(0,0);
\filldraw (0,0) circle (1.5pt);
\filldraw (2.5,2.5) circle (1.5pt);
\filldraw[black] (2.5, 0) circle (1.5pt);
\filldraw[white] (2.5, 0) circle (1pt);
\filldraw[black] (0, 2.5) circle (1.5pt);
\filldraw[white] (0, 2.5) circle (1pt);
\draw (0,0) node[anchor=east]{$z_{00}$};
\draw (0,2.5) node[anchor=east]{$z_{01}$};
\draw (2.5,0) node[anchor=west]{$z_{10}$};
\draw (2.5,2.5) node[anchor=west]{$z_{11}$};
\end{tikzpicture}
\caption{Notation used in Definition~\ref{2dkashdef}.} \label{labeledsquare}
\end{figure}
\end{Definition}

\begin{Definition}Let $\mathbf{x} \in \C^{\Z^2}$. Let $v$ and $w$ be two opposite vertices in a unit square $C$ in $\Z^2$. We set
\begin{gather*}
Q^C_v(\mathbf{x}) = \frac{1}{4 \sqrt{2}} \frac{\partial Q^C}{\partial x_w}(\mathbf{x}) = \frac{1}{2 \sqrt{2}}(z_{11} - z_{10} - z_{01} - 3 z_{00}),
\end{gather*}
where we use a labeling of the components of $\mathbf{x}$ on the vertices of $C$ as in Fig.~\ref{labeledsquare}, with $z_{00}$ corresponding to the component of $\mathbf{x}$ at $v$.
\end{Definition}

\begin{Definition}
Given $v \in \Z^2$ and $i_1, i_2 \in \{-1, 1\}$, define $C_v(i_1, i_2)$ to be the unique unit square containing the vertices $v$ and $v + (i_1, i_2)$.
\end{Definition}

\begin{Proposition} \label{A2B2forQC}
Suppose that $\mathbf{x} = (x_s)_{s \in \Z^2} \in \C^{\Z^2}$ satisfies
\begin{gather} \label{qc_eq0}
Q^C(\mathbf{x}) = 0
\end{gather}
for every unit square $C$ in $\Z^2$ $($see~\eqref{qcdef}$)$. Then for any $v \in \Z^2$,
\begin{gather} \label{newprecond}
\left( \prod_{C \ni v} Q^C_v(\mathbf{x}) \right)^2 = \left( \prod_{S \ni v} (x_v + x_{v_1}) \right)^2,
\end{gather}
where
\begin{itemize}\itemsep=0pt
\item the first product is over the $4$ unit squares $C$ incident to the vertex $v$,
\item the second product is over the $4$ edges $S$ incident to $v$, and
\item $v$ and $v_1$ are the vertices of such an edge $S$.
\end{itemize}
Moreover, the following strengthening of~\eqref{newprecond} holds:
\begin{gather*}
\big( Q_v^{C_v(1,1)}(\mathbf{x}) Q_v^{C_v(-1,-1)}(\mathbf{x}) \big)^2 = \big( Q_v^{C_v(1,-1)}(\mathbf{x}) Q_v^{C_v(-1,1)}(\mathbf{x}) \big)^2 = \prod_{S \ni v} (x_v + x_{v_1}),
\end{gather*}
where the rightmost product is the same as in~\eqref{newprecond}.
\end{Proposition}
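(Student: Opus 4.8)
The plan is to reduce the whole statement to a single polynomial identity attached to one unit square. Fix a unit square $C$, a vertex $v$ of $C$, the opposite vertex $w$, and the two vertices $a$, $b$ of $C$ adjacent to $v$. With the labeling of Fig.~\ref{labeledsquare} chosen so that $z_{00}$ sits at $v$ and $z_{11}$ at $w$ (so $z_{10}$, $z_{01}$ sit at $a$, $b$), one has $2\sqrt{2}\,Q^C_v(\mathbf{x}) = z_{11} - z_{10} - z_{01} - 3 z_{00}$, and a direct expansion establishes the polynomial identity
\begin{gather*}
\big(z_{11} - z_{10} - z_{01} - 3 z_{00}\big)^2 = Q^C(\mathbf{x}) + 8\,(z_{00} + z_{10})(z_{00} + z_{01}).
\end{gather*}
Equivalently, $8\,\big(Q^C_v(\mathbf{x})\big)^2 = Q^C(\mathbf{x}) + 8\,(x_v + x_a)(x_v + x_b)$; this is the $Q$-analogue of the discriminant factorization behind Remark~\ref{twosolutionsremark} and Proposition~\ref{A2B2proposition} (indeed, $f^2$ evaluated at a root of a monic quadratic is its discriminant, where $f$ is half the derivative). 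Hence, as soon as $Q^C(\mathbf{x}) = 0$,
\begin{gather*}
\big(Q^C_v(\mathbf{x})\big)^2 = (x_v + x_a)(x_v + x_b),
\end{gather*}
the product being over the two vertices of $C$ adjacent to $v$.

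The remainder is combinatorial bookkeeping at a fixed vertex $v \in \Z^2$. The four unit squares through $v$ split into two parity classes, $\{C_v(1,1),\,C_v(-1,-1)\}$ and $\{C_v(1,-1),\,C_v(-1,1)\}$, according to the sign of $i_1 i_2$. The two vertices of $C_v(i_1,i_2)$ adjacent to $v$ are $v + i_1 e_1$ and $v + i_2 e_2$, so within each parity class the "adjacent" edges that occur are precisely the four edges $S$ through $v$, each exactly once (since $\{+e_1,-e_1\}$ and $\{+e_2,-e_2\}$ get split disjointly across the pair). Applying the square identity to each of the two squares in a class and multiplying therefore gives, for either class,
\begin{gather*}
\big(Q_v^{C_v(i_1,i_2)}(\mathbf{x})\,Q_v^{C_v(-i_1,-i_2)}(\mathbf{x})\big)^2 = \prod_{S \ni v}(x_v + x_{v_1}),
\end{gather*}
which is exactly the asserted strengthening of~\eqref{newprecond} (take $(i_1,i_2)=(1,1)$ for one class, $(i_1,i_2)=(1,-1)$ for the other). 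Multiplying the two instances, one per parity class, yields $\big(\prod_{C \ni v} Q^C_v(\mathbf{x})\big)^2 = \big(\prod_{S \ni v}(x_v + x_{v_1})\big)^2$, i.e.~\eqref{newprecond}.

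Essentially all the content is in the opening polynomial identity, and I do not expect a real obstacle: the only care needed is clerical, namely keeping the labeling conventions of Fig.~\ref{labeledsquare} and of $C_v(i_1,i_2)$ consistent with $Q^C_v = \frac{1}{4\sqrt{2}}\,\partial Q^C/\partial x_w$, and checking that within a parity class the two adjacent-vertex pairs really do exhaust the four edges at $v$ without repetition. Note there is no square-root ambiguity to track, since only squared quantities appear; this is precisely why the proposition is weaker than, and serves as the input to, the "coherence" sharpening in the s-holomorphic setting (compare how Proposition~\ref{A2B2proposition} feeds Theorem~\ref{ABtheorem}).
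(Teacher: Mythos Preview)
Your proof is correct and follows essentially the same approach as the paper. The paper obtains Proposition~\ref{A2B2forQC} as the special case $\mathbf{a}=(1,1)$ of the general Proposition~\ref{A2B2propgen}, whose proof is precisely your argument in abstract form: the discriminant identity $\big(\partial Q^C/\partial z_{11}\big)^2 = 4Q^C + 32(z_{00}+z_{10})(z_{00}+z_{01})$ (your opening polynomial identity, up to the normalization $\partial Q^C/\partial z_{11}=4\sqrt{2}\,Q^C_v$) followed by the same parity-class bookkeeping over the squares through~$v$; compare also the parallel direct proof of Proposition~\ref{A2B2proposition} via Lemma~\ref{KCvgenxlemma}.
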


\begin{Remark} \label{get_minus_when_reverse}
The right-hand side of equation~\eqref{qcdef} is a quadratic polynomial in each of the variables $z_{ij}$. Setting this expression equal to zero and solving for $z_{11}$ in terms of $z_{00}$,~$z_{10}$,~$z_{10}$, we obtain
\begin{gather} \label{solvingforz11}
z_{11} = 3 z_{00} + z_{10} + z_{01} \pm 2 \sqrt{2} \sqrt{(z_{00} + z_{10})(z_{00} + z_{01})},
\end{gather}
where $\sqrt{(z_{00} + z_{10})(z_{00} + z_{01})}$ denotes either of the two square roots. Notice that if $z_{00}, z_{10}, z_{01} > 0$, then both solutions for $z_{11}$ in~\eqref{solvingforz11} are real; moreover, the larger of these two solutions is positive. However, solving the equation
\begin{gather} \label{solvingforz11big}
z_{11} = 3 z_{00} + z_{10} + z_{01} + 2 \sqrt{2} \sqrt{(z_{00} + z_{10})(z_{00} + z_{01})}
\end{gather}
for $z_{00}$, with $z_{10}, z_{01}, z_{11} > 0$, may result in a unique negative solution. (For example, take $z_{10} = z_{01} = z_{11} = 1$.) Hence, unlike the positive Kashaev recurrence, the equation~\eqref{solvingforz11big} only defines a recurrence on $\pos$-valued arrays in one direction.
\end{Remark}

\begin{Definition}
For $U \subseteq \Z$, let $\Z^2_U$ denote the set
\begin{gather*}
\Z^2_U = \big\{(i,j) \in \Z^2\colon i + j \in U\big\}.
\end{gather*}
\end{Definition}

\begin{Theorem} \label{s_holo_pos_thm}Suppose that $\mathbf{x} = (x_s) \in (\pos)^{\Z^2_{\{0,1,2,\dots\}}}$ satisfies~\eqref{solvingforz11big} for all $v \in \Z^2_{\{0,1,2,\dots\}}$, where we use the notation $z_{ij} = x_{v + (i,j)}$. Then for any $v \in \Z^2_{\{2,3,4,\dots\}}$, we have
\begin{gather} \label{cohforqs}
\prod_{C \ni v} Q^C_v(\mathbf{x}) = - \prod_{S \ni v} (x_v + x_{v_1}),
\end{gather}
where the notational conventions are the same as in equation~\eqref{newprecond}.
\end{Theorem}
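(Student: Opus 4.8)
The plan is to reduce the assertion to a single sign computation. By hypothesis $\mathbf{x}$ satisfies~\eqref{solvingforz11big} on $\Z^2_{\{0,1,2,\dots\}}$, hence $Q^C(\mathbf{x}) = 0$ for every unit square $C$ all of whose vertices lie in $\Z^2_{\{0,1,2,\dots\}}$; in particular, for $v \in \Z^2_{\{2,3,4,\dots\}}$ the nine values $x_{v+w}$ with $w \in \{-1,0,1\}^2$ are defined and each of the four unit squares containing $v$ satisfies $Q^C(\mathbf{x}) = 0$. Since the conclusion of Proposition~\ref{A2B2forQC} at $v$ depends only on this local data (equivalently, first extend $\mathbf{x}$ to a solution of $Q^C = 0$ on all of $\Z^2$ by solving the quadratic $Q^C = 0$ layer by layer in the direction of decreasing coordinate sum, then apply Proposition~\ref{A2B2forQC}), we obtain $\big(\prod_{C \ni v} Q^C_v(\mathbf{x})\big)^2 = \big(\prod_{S \ni v}(x_v + x_{v_1})\big)^2$. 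Because all $x_s$ are positive, $\prod_{S \ni v}(x_v + x_{v_1}) > 0$, so~\eqref{cohforqs} is equivalent to the single statement $\prod_{C \ni v} Q^C_v(\mathbf{x}) < 0$.

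Next I would determine the sign of each of the four factors, using the formula $Q^C_v(\mathbf{x}) = \frac{1}{2\sqrt{2}}(z_{11} - z_{10} - z_{01} - 3 z_{00})$ with $z_{00}$ at $v$ (well defined, as $Q^C_v$ is symmetric in the two vertices adjacent to $v$) together with~\eqref{solvingforz11big}. Write $e_1 = (1,0)$, $e_2 = (0,1)$. For $C = C_v(1,1)$, applying~\eqref{solvingforz11big} directly at $v$ gives $Q^{C_v(1,1)}_v(\mathbf{x}) = \sqrt{(x_v + x_{v+e_1})(x_v + x_{v+e_2})} > 0$. For $C = C_v(-1,1)$, applying~\eqref{solvingforz11big} at $v - e_1$ to solve for $x_{v+e_2}$ and substituting gives $2\sqrt{2}\,Q^{C_v(-1,1)}_v(\mathbf{x}) = -4x_v - 4x_{v-e_1} - 2\sqrt{2}\,\sqrt{(x_{v-e_1}+x_v)(x_{v-e_1}+x_{v-e_1+e_2})} < 0$, and $C = C_v(1,-1)$ is handled by the analogous computation at $v - e_2$. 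For $C = C_v(-1,-1)$, applying~\eqref{solvingforz11big} at $v - e_1 - e_2$ expresses $x_v$ in terms of the other three corners $x_{v-e_1-e_2}, x_{v-e_1}, x_{v-e_2}$; substituting this relation (first to eliminate $x_{v-e_1}+x_{v-e_2}$, then the leftover $x_v$) gives $2\sqrt{2}\,Q^{C_v(-1,-1)}_v(\mathbf{x}) = -8x_{v-e_1-e_2} - 4x_{v-e_1} - 4x_{v-e_2} - 6\sqrt{2}\,\sqrt{(x_{v-e_1-e_2}+x_{v-e_1})(x_{v-e_1-e_2}+x_{v-e_2})} < 0$. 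Thus one factor is positive and three are negative, so $\prod_{C \ni v} Q^C_v(\mathbf{x}) < 0$, which combined with the first paragraph yields~\eqref{cohforqs}.

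The only step requiring care is this fourth factor: a single use of the recurrence only shows $2\sqrt{2}\,Q^{C_v(-1,-1)}_v(\mathbf{x}) = 4x_{v-e_1-e_2} - 4x_v + 2\sqrt{2}\,\sqrt{(x_{v-e_1-e_2}+x_{v-e_1})(x_{v-e_1-e_2}+x_{v-e_2})}$, whose sign is not evident, and one must substitute the recurrence a second time to eliminate $x_v$ before negativity becomes manifest. Some bookkeeping is also needed to track which vertex of each square plays the role of $z_{11}$ when relabeling, but the symmetries of $Q^C$ make this routine. As an alternative to invoking Proposition~\ref{A2B2forQC}, one could multiply out the four explicit expressions above and check directly that their product equals $-\prod_{S \ni v}(x_v + x_{v_1})$; but Proposition~\ref{A2B2forQC} already carries out that algebra, leaving only the sign to be settled.
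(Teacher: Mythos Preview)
Your argument is correct. You reduce to a sign question via Proposition~\ref{A2B2forQC} (which, as you note, only uses the nine values $x_{v+w}$, $w\in\{-1,0,1\}^2$, so it applies here), and then pin down the sign by computing each of the four factors $Q^{C_v(\pm1,\pm1)}_v(\mathbf{x})$ explicitly from the recurrence~\eqref{solvingforz11big}. I checked your four computations and they are right, including the two-step substitution needed for $C_v(-1,-1)$.

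The paper proceeds differently. Rather than computing the four signs by hand, it embeds this result in the general framework of Section~\ref{klikerecurrences}: one extends $\mathbf{x}$ to an array on $\Z^2\cup E$ by setting the edge values to the positive square roots $x_{v+(1/2,0)}=\sqrt{x_v+x_{v+e_1}}$, $x_{v+(0,1/2)}=\sqrt{x_v+x_{v+e_2}}$, verifies that this extension satisfies~\eqref{2dh1}--\eqref{2dhs2}, and then invokes Theorem~\ref{maintheoremgen}(b). The sign in~\eqref{cohforqs} then comes out as $\prod_{\bm{\alpha}\in\{-1,1\}^2}\gamma_{\bm{\alpha}}$, where the propagation signs $\gamma_{\bm{\alpha}}$ are computed in Example~\ref{sholo_sign_ex} to be $(\gamma_{(1,1)},\gamma_{(1,-1)},\gamma_{(-1,1)},\gamma_{(-1,-1)})=(1,-1,-1,-1)$, giving the product $-1$. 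Your approach is more elementary and self-contained; the paper's approach explains \emph{why} the sign is $-1$ in structural terms (one positive and three negative propagation signs, in contrast to the Kashaev case where $\gamma_{-\mathbf{1}}=+1$) and lets this theorem and its analogues (Theorems~\ref{ABtheorem}, \ref{pos_thm_1d_example}, \ref{pos_thm_2d_example}) all fall out of the same template.
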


\begin{Definition} \label{edge_e_def}Let
\begin{gather*}
E = \Z^2 + \left\{ \left(\frac{1}{2},0\right), \left(0,\frac{1}{2}\right) \right\}.
\end{gather*}
In order words, $E$ is the set of centers of edges in the tiling of $\R^2$ with unit squares.
\end{Definition}

We next state the analogue of Theorem~\ref{galoisfromintro}.

\begin{Theorem} \label{coh_for_QC_theorem} \quad
\begin{enumerate}\itemsep=0pt
\item[$(a)$] Assume that an array $\mathbf{x} = (x_s) \in \C^{\Z^2}$
\begin{itemize}\itemsep=0pt
\item satisfies the equation $Q^C(\mathbf{x}) = 0$ for all unit squares $C$,
\item satisfies equation~\eqref{cohforqs} for all $v \in \Z^2$, and
\item satisfies
\begin{gather*}
x_v + x_{v + e_i} \not= 0
\end{gather*}
for all $v \in \Z^2$ and $i \in \{1,2\}$.
\end{itemize}
Then $\mathbf{x}$ can be extended to an array $\mathbf{\tilde{x}} = (x_s) \in \C^{\Z^2 \cup E}$ satisfying the recurrence
\begin{gather}
\label{2dh1} z_{11} = 3 z_{00} + z_{10} + z_{01} + 2 \sqrt{2} z_{\frac{1}{2} 0} z_{0 \frac{1}{2}},\\
\label{2dh2} z_{\frac{1}{2} 1} = z_{\frac{1}{2} 0} + \sqrt{2} z_{0 \frac{1}{2}},\\
\label{2dh3} z_{1 \frac{1}{2}} = z_{0 \frac{1}{2}} + \sqrt{2} z_{\frac{1}{2} 0},
\end{gather}
together with the conditions
\begin{gather}
\label{2dhs1} z_{\frac{1}{2} 0}^2 = z_{00} + z_{10},\\
\label{2dhs2} z_{0 \frac{1}{2}}^2= z_{00} + z_{01},
\end{gather}
where we use the notation $z_{ij} = x_{v + (i, j)}$, for all $v \in \Z^2$.

\item[$(b)$] Conversely, suppose $\mathbf{\tilde{x}} = (x_s) \in \C^{\Z^2 \cup E}$ satisfies~\eqref{2dh1}--\eqref{2dhs2}. Then the restriction $\mathbf{x}$ of $\mathbf{\tilde{x}}$ to~$\Z^2$ satisfies $Q^C(\mathbf{x}) = 0$ for all unit squares $C$, and satisfies~\eqref{cohforqs} for all~$v \in \Z^2$.
\end{enumerate}
\end{Theorem}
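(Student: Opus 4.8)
\emph{Proof plan.} I would prove part~(b) first by a direct computation, and then deduce part~(a) by a sign-propagation argument in which the coherence condition \eqref{cohforqs} is precisely the obstruction that must vanish. For part~(b), fix a unit square $C$ and let $v$ be its ``source'' vertex, i.e.\ the vertex playing the role of $z_{00}$ in \eqref{2dh1}. Squaring \eqref{2dh1} and substituting \eqref{2dhs1}--\eqref{2dhs2} yields $(z_{11}-z_{10}-z_{01}-3z_{00})^2 = 8(z_{00}+z_{10})(z_{00}+z_{01})$, which by the computation behind Remark~\ref{get_minus_when_reverse} is equivalent to $Q^C(\mathbf{x})=0$; hence the restriction of $\mathbf{\tilde{x}}$ to $\Z^2$ solves $Q^C=0$ for every unit square. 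A short calculation with \eqref{2dh1}--\eqref{2dh3} then rewrites $Q^C_u(\mathbf{x})$, for each of the four vertices $u$ of $C$, as $\pm\,x_{s_1}x_{s_2}$, where $s_1,s_2\in E$ are the centers of the two edges of $C$ meeting $u$ and the sign is $+$ exactly when $u$ is the source vertex of $C$. Now an interior vertex $v$ is incident to four unit squares — being the source vertex of exactly one of them — and to four edges, each of which lies in two of these squares; so $\prod_{C\ni v}Q^C_v(\mathbf{x}) = (-1)^3\prod_{\epsilon\ni v}x_\epsilon^2 = -\prod_{S\ni v}(x_v+x_{v_1})$ by \eqref{2dhs1}--\eqref{2dhs2}, which is \eqref{cohforqs}. (This also pins down the sign ambiguity in the squared statement of Proposition~\ref{A2B2forQC}.)

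For part~(a), note first that by \eqref{2dhs1}--\eqref{2dhs2} every edge value is forced up to sign by $\mathbf{x}$; since $x_v+x_{v+e_i}\ne 0$ we may fix a reference square root $r_\epsilon$ of $x_v+x_w$ for each edge $\epsilon$ and write $x_\epsilon=\sigma_\epsilon r_\epsilon$ with $\sigma_\epsilon\in\{-1,1\}$ to be chosen. For a unit square $C$ with source vertex $a$, bottom edge $e$, left edge $f$, right edge $g$ and top edge $h$, the equations \eqref{2dh1}--\eqref{2dh3} translate into sign constraints: $Q^C=0$ turns \eqref{2dh1} into $\sigma_e\sigma_f=\epsilon_C$ for an explicit $\epsilon_C\in\{-1,1\}$, and, granted this, \eqref{2dh2} and \eqref{2dh3} become $\sigma_h=\nu_C\sigma_e$ and $\sigma_g=\nu'_C\sigma_e$ for explicit signs $\nu_C,\nu'_C$ (the equation $Q^C=0$ together with the nonvanishing hypothesis guaranteeing that $\epsilon_C,\nu_C,\nu'_C$ are well-defined elements of $\{-1,1\}$ depending only on $\mathbf{x}$ and the reference choices). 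Since the top edge of a square is the bottom edge of the square above it, eliminating the $\sigma_f$'s via $\sigma_f=\epsilon_C\sigma_e$ leaves the signs $\tau_{(i,j)}$ of the horizontal edges governed by two propagation rules, ``upward'' $\tau_{(i,j+1)}=\nu_{(i,j)}\tau_{(i,j)}$ and ``rightward'' $\tau_{(i+1,j)}=\lambda_{(i,j)}\tau_{(i,j)}$, with $\lambda_{(i,j)}$ an explicit product of $\epsilon$'s and $\nu'$'s. A function $\tau\colon\Z^2\to\{-1,1\}$ obeying both rules exists, uniquely up to a global sign, if and only if the $\{\pm1\}$-cocycle $(\lambda,\nu)$ is closed, i.e.\ $\nu_{(i,j)}\lambda_{(i,j+1)}\nu_{(i+1,j)}\lambda_{(i,j)}=1$ for all $(i,j)$; the four squares appearing in this product are exactly those incident to the interior vertex $v=(i+1,j+1)$.

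Finally I would identify this closedness with coherence. Writing $Q^C_u(\mathbf{x})=(\text{sign})\cdot r_{s_1}r_{s_2}$ at each vertex $u$ of $C$ — with signs $\epsilon_C$ at $a$, then $p_C$ and $q_C$ at the two remaining (``side'') vertices, and $s_C$ at the vertex opposite $a$ — produces (i) the single-square identity $\epsilon_C\,p_C\,q_C\,s_C=-1$, which is a polynomial identity modulo $Q^C=0$ and checks out by a direct computation, and (ii) formulas for $\nu_C$ and $\nu'_C$ in terms of $\epsilon_C,p_C,q_C,s_C$. Substituting (ii) into the cocycle-closedness product and simplifying using (i) applied to the bottom-left of the four squares reduces it to $s\cdot q\cdot p\cdot\epsilon=-1$ taken over the four squares at $v$; reinstating the $r_\epsilon$'s, this last equality is exactly $\prod_{C\ni v}Q^C_v(\mathbf{x}) = -\prod_{S\ni v}(x_v+x_{v_1})$, i.e.\ the coherence condition \eqref{cohforqs} at $v$. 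Hence, if $\mathbf{x}$ is coherent the cocycle is closed; any realizing $\tau$ (together with the induced $\sigma_f=\epsilon_C\sigma_e$, $\sigma_g=\nu'_C\sigma_e$, $\sigma_h=\nu_C\sigma_e$) defines an extension $\mathbf{\tilde{x}}\in\C^{\Z^2\cup E}$ which by construction satisfies \eqref{2dhs1}--\eqref{2dhs2} and, by (i)--(ii), all of \eqref{2dh1}--\eqref{2dh3}.

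The step I expect to be the main obstacle is this last identification: normalizing the reference square roots, computing the signs $\epsilon_C,\nu_C,\nu'_C$, verifying the single-square identity, and matching the cocycle-closedness product with the coherence product at $v$ — all correct but delicate bookkeeping of $\pm1$'s. An alternative, and the route the paper actually takes, is to check that \eqref{qcdef} together with \eqref{2dh1}--\eqref{2dhs2} are an instance of the axiomatic framework of Section~\ref{klikerecurrences} and then invoke Theorem~\ref{maintheoremgen}.
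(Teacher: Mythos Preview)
Your proposal is correct and takes a genuinely different route from the paper. The paper does not prove Theorem~\ref{coh_for_QC_theorem} directly; instead it verifies (Examples~\ref{sholo_sat_ex}, \ref{sholo_ada_ex}, \ref{sholo_sign_ex}) that the polynomial $Q^C$ and the system \eqref{2dh1}--\eqref{2dhs2} fit the axiomatic framework of Section~\ref{klikerecurrences}, and then invokes the general Theorem~\ref{maintheoremgen}. You, by contrast, give a self-contained argument specialized to $d=2$: part~(b) by the same local computation that underlies Lemma~\ref{runinarbdirection} (your sign pattern $+,-,-,-$ at the four vertices of a square is exactly the content of Example~\ref{sholo_sign_ex}), and part~(a) by recasting the choice of square roots as a $\{\pm 1\}$-cochain on edges whose closedness on each plaquette is the coherence condition~\eqref{cohforqs}.

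Conceptually your sign-propagation is the $d=2$ instance of the paper's map $\psi$ and Lemmas~\ref{sign_prop_gen}--\ref{gen_image_of_psi_lemma}, but your packaging buys something concrete: because the nonvanishing hypothesis $x_v+x_{v+e_i}\neq 0$ makes $\epsilon_C,\nu_C,\nu'_C$ well-defined signs from the outset, you can solve the cocycle equation globally on $\Z^2$ in one step and avoid the genericity-then-limit argument (Corollary~\ref{main_theorem_gen_generic} plus K\"onig's lemma) that the general proof of Theorem~\ref{maintheoremgen}(a) requires. The paper's route, on the other hand, proves the result once for all $d$ and all $f$ satisfying conditions~(1)--(2), so the Kashaev, s-holomorphic, and Section~\ref{gen_from_ca} examples all fall out uniformly. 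Your closing remark correctly identifies this trade-off; the only caution is that the ``delicate bookkeeping'' you flag is real---in particular, matching the cocycle product $\nu_{(i,j)}\lambda_{(i,j+1)}\nu_{(i+1,j)}^{-1}\lambda_{(i,j)}^{-1}$ to the coherence product at $v=(i+1,j+1)$ requires tracking which of $p_C,q_C,s_C$ sits at which vertex for each of the four squares, and the single-square identity $\epsilon_C p_C q_C s_C=-1$ must be verified (it is the $d=2$ shadow of the product $\prod_{\bm{\alpha}}\gamma_{\bm{\alpha}}=-1$ in Example~\ref{sholo_sign_ex}).
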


\begin{Remark}Comparing Theorem~\ref{galoisfromintro} to Theorem~\ref{coh_for_QC_theorem}, we see that equations~\eqref{2dh1}--\eqref{2dhs2} play a role analogous to that of the K-hexahedron equations.
\end{Remark}

\begin{Remark} \label{sholo_remark}The equations \eqref{2dh1}--\eqref{2dhs2} appear in discrete complex analysis in the context of s-holomorphicity \cite{chelkaksmirnov}. Consider the labeling $\ell \colon E \rightarrow \C$ described in Fig.~\ref{labelingsquares}. Given $a, b \in \Z$, define the \emph{height} of $a + b {\rm i} \in \C$ to be $a + b$. If we orient the edge with midpoint $s \in E$ from the even height vertex to the odd height vertex, then $\ell(s)$ is a square root of the complex number associated with the directed edge. An \emph{s-holomorphic function} on the tiling of $\R^2$ with unit squares is a~complex-valued function $F$ on the faces of the tiling such that for any two faces~$f_1$,~$f_2$ sharing an edge with midpoint~$s$, we have
\begin{gather*}
\operatorname{Re}[\ell(s) F(f_1)] = \operatorname{Re}[\ell(s) F(f_2)].
\end{gather*}
Hence, given an $s$-holomorphic function $F$, we can define $\mathbf{x}' = (x_s) \in \R^E$ by setting
\begin{gather} \label{edgefromsholo}
x_s = \operatorname{Re}[\ell(s) F(f)]
\end{gather}
for either of the faces $f$ using the edge corresponding to $s$. It is straightforward to check that $\mathbf{x}' = (x_s) \in \R^E$ corresponds to an s-holomorphic function $F$ by~\eqref{edgefromsholo} if and only if $\mathbf{x}'$ satisfies~\eqref{2dh2}--\eqref{2dh3}. If we extend $\mathbf{x}'$ to $\mathbf{\tilde{x}} = (x_s) \in \R^{\Z^2 \cup E}$ satisfying~\eqref{2dh1}--\eqref{2dhs2}, the function $H\colon \Z^2 \rightarrow \R$ defined by
\begin{gather*}
H(i, j) = \begin{cases} x_{(i, j)} & \text{if $i + j$ is even},\\ -x_{(i, j)} & \text{if $i + j$ is odd}, \end{cases}
\end{gather*}
corresponds to a certain discrete integral. For more on this recurrence and its connections to discrete complex analysis and the Ising model, see \cite{chelkaksmirnov}.
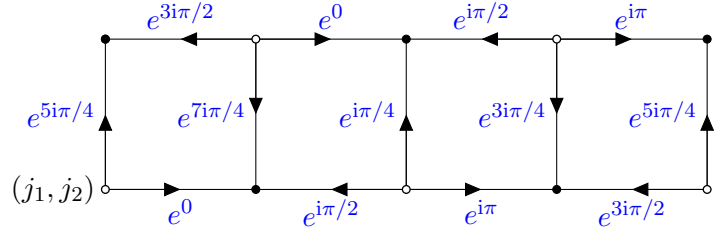
\begin{figure}[ht]\centering
\begin{tikzpicture}[>=triangle 45]
\draw (0,0)--(8,0)--(8,2)--(0,2)--(0,0);
\draw (2,0)--(2,2);
\draw (4,0)--(4,2);
\draw (6,0)--(6,2);

\draw[->] (0,0)--(1,0);
\draw[->] (4,0)--(5,0);
\draw[->] (2,2)--(3,2);
\draw[->] (6,2)--(7,2);
\draw[->] (4,0)--(3,0);
\draw[->] (8,0)--(7,0);
\draw[->] (2,2)--(1,2);
\draw[->] (6,2)--(5,2);
\draw[->] (0,0)--(0,1);
\draw[->] (4,0)--(4,1);
\draw[->] (8,0)--(8,1);
\draw[->] (2,2)--(2,1);
\draw[->] (6,2)--(6,1);

\filldraw[black] (0, 2) circle (1.5pt);
\filldraw[black] (0, 0) circle (1.5pt);
\filldraw[white] (0, 0) circle (1pt);
\filldraw[black] (2, 0) circle (1.5pt);
\filldraw[black] (2, 2) circle (1.5pt);
\filldraw[white] (2, 2) circle (1pt);
\filldraw[black] (0, 2) circle (1.5pt);
\filldraw[black] (4, 0) circle (1.5pt);
\filldraw[white] (4, 0) circle (1pt);
\filldraw[black] (4, 2) circle (1.5pt);
\filldraw[black] (6, 2) circle (1.5pt);
\filldraw[white] (6, 2) circle (1pt);
\filldraw[black] (6, 0) circle (1.5pt);
\filldraw[black] (8, 0) circle (1.5pt);
\filldraw[white] (8, 0) circle (1pt);
\filldraw[black] (8, 2) circle (1.5pt);

{ \color{blue}
\draw (1,0) node[anchor=north]{$e^0$};
\draw (3,0) node[anchor=north]{$e^{{\rm i} \pi/2}$};
\draw (5,0) node[anchor=north]{$e^{{\rm i} \pi}$};
\draw (7,0) node[anchor=north]{$e^{3 {\rm i} \pi / 2}$};
\draw (1,2) node[anchor=south]{$e^{3 {\rm i} \pi / 2}$};
\draw (3,2) node[anchor=south]{$e^{0}$};
\draw (5,2) node[anchor=south]{$e^{{\rm i} \pi/2}$};
\draw (7,2) node[anchor=south]{$e^{{\rm i} \pi}$};
\draw (0,1) node[anchor=east]{$e^{5 {\rm i} \pi / 4}$};
\draw (2,1) node[anchor=east]{$e^{7 {\rm i} \pi / 4}$};
\draw (4,1) node[anchor=east]{$e^{{\rm i} \pi / 4}$};
\draw (6,1) node[anchor=east]{$e^{3 {\rm i} \pi / 4}$};
\draw (8,1) node[anchor=east]{$e^{5 {\rm i} \pi / 4}$};
}
\draw (0,0) node[anchor=east]{$(j_1,j_2)$};
\end{tikzpicture}
\caption{Define the labeling $\ell\colon E \rightarrow \C$ invariant under translations by the vectors $(1,1)$ and $(4, 0)$ as follows. In the vicinity of a point $(j_1, j_2) \in \Z^2$ satisfying $j_1 - j_2 \equiv 0$ (mod $4$), the labeling is given by the values shown in blue in the figure.}\label{labelingsquares}
\end{figure}
\end{Remark}

\section{Further generalizations of the Kashaev equation} \label{gen_from_ca}

In this section, we provide two additional examples of equations with behavior similar to the Kashaev equation and its analogue~\eqref{qcdef}. In Section~\ref{klikerecurrences}, we shall develop a general framework which will allow us to prove all of the results in this section (as well as the results in Section~\ref{s_holo_section}).

Both recurrences considered in this section come with complex parameters that one can choose arbitrarily. For certain values of these parameters, the corresponding recurrences have cluster algebra-like behavior. We will explore the cluster algebra nature of these recurrences in future work.

We begin with the following, relatively simple, one-dimensional example:

\begin{Proposition} \label{A2B2propgen1d}Let $\alpha_1, \alpha_2, \alpha_3 \in \C$. Let $\mathbf{x} = (x_s) \in \C^{\Z}$ be an array such that
\begin{gather} \label{1dexamplekexplicit}
z_0^2 z_3^2 + \alpha_1 z_1^2 z_2^2 + \alpha_2 z_0 z_1 z_2 z_3 + \alpha_3 \big(z_0 z_2^3 + z_1^3 z_3\big) = 0
\end{gather}
for all $v \in \Z$, where $z_i = x_{v + i}$. Then
\begin{gather} \label{1d_example_A2_B2}
\big(2 z_0^2 z_3 + \alpha_2 z_0 z_1 z_2 + \alpha_3 z_1^3\big)^2 = \big(2 z_{-1} z_2^2 + \alpha_2 z_0 z_1 z_2 + \alpha_3 z_1^3\big)^2 = D,
\end{gather}
where
\begin{gather} \label{D_for_1dexamplekexplicit}
D = \alpha_3^2 z_1^6 + 2 \alpha_2 \alpha_3 z_0 z_1^4 z_2 + \big(\alpha_2^2 - 4 \alpha_1\big) z_0^2 z_1^2 z_2^2 - 4 \alpha_3 z_0^3 z_2^3,
\end{gather}
for all $v \in \Z$, where again $z_i = x_{v + i}$.
\end{Proposition}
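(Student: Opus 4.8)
The plan is to verify \eqref{1d_example_A2_B2} by a direct algebraic manipulation, treating \eqref{1dexamplekexplicit} as the single constraint that ties the five consecutive values $z_{-1}, z_0, z_1, z_2, z_3$ together. Write $F_v(\mathbf{x}) = z_0^2 z_3^2 + \alpha_1 z_1^2 z_2^2 + \alpha_2 z_0 z_1 z_2 z_3 + \alpha_3(z_0 z_2^3 + z_1^3 z_3)$ for the left-hand side of \eqref{1dexamplekexplicit} based at $v$, so that $F_v(\mathbf{x}) = 0$ and $F_{v-1}(\mathbf{x}) = 0$. Observe that $F_v$, viewed as a polynomial in $z_3$, is quadratic with leading coefficient $z_0^2$; completing the square gives
\begin{gather*}
z_0^2 F_v = \big(z_0^2 z_3 + \tfrac12(\alpha_2 z_0 z_1 z_2 + \alpha_3 z_1^3)\big)^2 - \tfrac14\big(2 z_0^2 z_3 + \alpha_2 z_0 z_1 z_2 + \alpha_3 z_1^3\big)^2 + z_0^2\big(\alpha_1 z_1^2 z_2^2 + \alpha_3 z_0 z_2^3\big) + \text{(cross terms)},
\end{gather*}
and after collecting everything one finds that the condition $F_v = 0$ is exactly equivalent to
\begin{gather*}
\big(2 z_0^2 z_3 + \alpha_2 z_0 z_1 z_2 + \alpha_3 z_1^3\big)^2 = \big(\alpha_2 z_0 z_1 z_2 + \alpha_3 z_1^3\big)^2 - 4 z_0^2\big(\alpha_1 z_1^2 z_2^2 + \alpha_3 z_0 z_2^3\big).
\end{gather*}
Expanding the right-hand side and comparing with \eqref{D_for_1dexamplekexplicit} should confirm that this right-hand side equals precisely $D$: the $\alpha_3^2 z_1^6$ term comes from squaring $\alpha_3 z_1^3$, the $2\alpha_2\alpha_3 z_0 z_1^4 z_2$ term from the cross term, the $\alpha_2^2 z_0^2 z_1^2 z_2^2$ term from squaring $\alpha_2 z_0 z_1 z_2$, the $-4\alpha_1 z_0^2 z_1^2 z_2^2$ term from the $-4z_0^2 \cdot \alpha_1 z_1^2 z_2^2$ contribution, and the $-4\alpha_3 z_0^3 z_2^3$ term from $-4 z_0^2 \cdot \alpha_3 z_0 z_2^3$. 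This handles the first equality with $D$ in \eqref{1d_example_A2_B2}.

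For the second equality, I would carry out the symmetric computation: $F_{v-1}$ is quadratic in $z_{-1}$ with leading coefficient $z_2^2$ (this is the ``opposite-variable'' reading of the same polynomial relation, shifted by one), and completing the square in $z_{-1}$ gives, by the identical manipulation,
\begin{gather*}
\big(2 z_{-1} z_2^2 + \alpha_2 z_0 z_1 z_2 + \alpha_3 z_1^3\big)^2 = \big(\alpha_2 z_0 z_1 z_2 + \alpha_3 z_1^3\big)^2 - 4 z_2^2\big(\alpha_1 z_0^2 z_1^2 + \alpha_3 z_1^3 z_0\big) \cdot \text{(with the roles of the ends swapped appropriately)}.
\end{gather*}
The point to check here is that the polynomial $F$ is invariant under the order-reversing substitution $z_i \mapsto z_{3-i}$ together with $\alpha_1 \mapsto \alpha_1$, $\alpha_2 \mapsto \alpha_2$, $\alpha_3 \mapsto \alpha_3$ — indeed $z_0^2 z_3^2$, $z_1^2 z_2^2$, $z_0 z_1 z_2 z_3$ are all palindromic, and $z_0 z_2^3 + z_1^3 z_3$ maps to $z_3 z_1^3 + z_2^3 z_0$, which is the same sum. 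Hence the completed-square form for $F_{v-1}$ in $z_{-1}$ is obtained from that for $F_v$ in $z_3$ by this symmetry, and the resulting right-hand side is again $D$ with $z_i$ relabeled as the corresponding shifted values, which by a short check is the same $D$ in \eqref{D_for_1dexamplekexplicit} (the expression $D$ is itself palindromic under $z_0 \leftrightarrow z_2$, which is the residual symmetry after fixing $z_1$).

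\textbf{Main obstacle.} There is no conceptual difficulty; the entire proof is a bookkeeping exercise in polynomial identities. The one place to be careful is matching the signs and the coefficients of $D$ exactly after expansion, and making sure the symmetry argument for the second equality is applied to the correctly shifted copy of the relation (based at $v-1$ rather than $v$). I would organize the write-up so that the first equality is established by explicit expansion, and then invoke the palindromic symmetry of $F$ and of $D$ to deduce the second equality with essentially no further computation. An alternative, if one prefers to avoid invoking symmetry, is to simply expand both squared expressions on the left of \eqref{1d_example_A2_B2} directly, subtract, and show the difference is a multiple of $F_v$ and of $F_{v-1}$ respectively — but the symmetry route is cleaner.
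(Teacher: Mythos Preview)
Your approach is correct and is essentially the same argument the paper uses. The paper derives this proposition as the $d=1$ case of a general result (Proposition~\ref{A2B2propgen}), whose proof rests on the single identity $(\partial f/\partial z_{\mathbf a})^2 = 4gf + D$ (the discriminant of a quadratic), together with the palindromic invariance of $f$ and of $D$; your ``complete the square'' step is exactly this identity, and your symmetry argument for the second equality is exactly the invariance the paper exploits. One small slip: in your displayed expression for the $F_{v-1}$ computation, the constant term (in $z_{-1}$) should be $\alpha_1 z_0^2 z_1^2 + \alpha_3 z_0^3 z_2$, not $\alpha_1 z_0^2 z_1^2 + \alpha_3 z_1^3 z_0$; with that correction the right-hand side expands to $D$ on the nose, as you anticipated.
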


\begin{Remark} \label{1d_example_a1_expl}The equation~\eqref{1dexamplekexplicit} involves $4$ entries of the array $\mathbf{x}$ indexed by $4$ consecutive integers $v$, $v + 1$, $v + 2$, $v + 3$. This equation is invariant under the central symmetry of the line segment $[v, v + 3]$. In other words, equation~\eqref{1dexamplekexplicit} is invariant under interchanging $z_0$ with~$z_3$, and~$z_1$ with~$z_2$. The value $D$ in~\eqref{D_for_1dexamplekexplicit} is the discriminant of equation~\eqref{1dexamplekexplicit} viewed as a quadratic equation in~$z_3$. The term squared on the left-hand side of~\eqref{1d_example_A2_B2} is the partial derivative of the left-hand side of~\eqref{1dexamplekexplicit} with respect to $z_3$. This expression plays the role of~$K^C_v$ in Proposition~\ref{A2B2proposition}.
\end{Remark}

\begin{Theorem} \label{pos_thm_1d_example}
Let $\alpha_2, \alpha_3 \le 0$, and $\alpha_1 \le \alpha_2^2 / 4$. Let $\mathbf{x} = (x_s) \in (\pos)^{\Z}$ satisfy the equation~\eqref{1dexamplekexplicit} for all $v \in \Z$, where, as before, we denote $z_i = x_{v + i}$. Moreover, assume that for all $v \in \Z$, the number $z_3 = x_{v + 3}$ is the larger of the two real solutions of~\eqref{1dexamplekexplicit}:
\begin{gather*}
z_3 = \frac{- \alpha_3 z_1^3 - \alpha_2 z_0 z_1 z_2 + \sqrt{D}}{2 z_0^2},
\end{gather*}
where $D$ is given by~\eqref{D_for_1dexamplekexplicit}. Then
\begin{gather*}
2 z_0^2 z_3 + \alpha_2 z_0 z_1 z_2 + \alpha_3 z_1^3 = 2 z_{-1} z_2^2 + \alpha_2 z_0 z_1 z_2 + \alpha_3 z_1^3,
\end{gather*}
or equivalently,
\begin{gather} \label{1d_example_coherence}
z_0^2 z_3 = z_{-1} z_2^2
\end{gather}
for all $v \in \Z$.
\end{Theorem}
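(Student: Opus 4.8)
The plan is to follow the template behind Theorem~\ref{ABtheorem}: reduce the coherence identity \eqref{1d_example_coherence} to a sign statement via Proposition~\ref{A2B2propgen1d}, and then fix that sign using the directionality built into the ``larger root'' hypothesis.

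Write $z_i = x_{v+i}$ and $b = \alpha_2 z_0 z_1 z_2 + \alpha_3 z_1^3$; since $\alpha_2,\alpha_3 \le 0$ and all $z_i > 0$ we have $b \le 0$. By Proposition~\ref{A2B2propgen1d} the two quantities $P := 2z_0^2 z_3 + b$ and $Q := 2z_{-1}z_2^2 + b$ satisfy $P^2 = Q^2 = D$, and \eqref{1d_example_coherence} is equivalent to $P = Q$ (as $P - Q = 2(z_0^2 z_3 - z_{-1}z_2^2)$). The hypothesis that $z_3$ is the larger root of \eqref{1dexamplekexplicit}, viewed (cf.\ Remark~\ref{1d_example_a1_expl}) as a quadratic in $z_3$ with positive leading coefficient $z_0^2$, gives $P = \sqrt{D} \ge 0$; so it remains to prove $Q \ge 0$. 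Reading \eqref{1dexamplekexplicit} at the vertex $v-1$ as a quadratic in $z_{-1}$, namely $z_2^2 z_{-1}^2 + b\,z_{-1} + z_0^2(\alpha_1 z_1^2 + \alpha_3 z_0 z_2) = 0$ (leading coefficient $z_2^2 > 0$, discriminant $D$), the inequality $Q \ge 0$ says precisely that $z_{-1}$ is the larger of its two roots.

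I would first dispose of the case $\alpha_1 z_1^2 + \alpha_3 z_0 z_2 \le 0$: then the product of the two roots is $\le 0$ while their sum $-b/z_2^2$ is $\ge 0$, so exactly one root is positive, and since $z_{-1} > 0$ it must be that larger root (this also covers the degenerate case $b = 0$, i.e.\ $\alpha_2 = \alpha_3 = 0$). The remaining case, $\alpha_1 z_1^2 + \alpha_3 z_0 z_2 > 0$, forces $\alpha_1 > 0$, so $0 < \alpha_1 \le \alpha_2^2/4$; here both roots of the $z_{-1}$-quadratic are positive, and likewise the companion quadratic $z_0^2 t^2 + b t + z_2^2(\alpha_1 z_1^2 + \alpha_3 z_0 z_2) = 0$, whose larger root is $z_3$, has two positive roots (its roots being $z_2^2/z_0^2$ times those of the $z_{-1}$-quadratic). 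In this case the equation at $v-1$ alone does not decide which root $z_{-1}$ is, and one must bring in the ``larger root'' conditions at the other vertices. This is the main obstacle.

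To get past it I would imitate Proposition~\ref{easyfromkp} and Theorem~\ref{galoisfromintro1} and build a ``hexahedron-like'' companion system for \eqref{1dexamplekexplicit}: an array on $\frac12\Z$ carrying an auxiliary variable at the midpoint of each segment $[v, v+1]$, subject to a square-root relation tying its square to a suitable factor of $D$ together with a birational recurrence for the pair that restricts to \eqref{1dexamplekexplicit} on $\Z$. One then shows (i) a forward ``larger root'' solution of \eqref{1dexamplekexplicit} extends to a solution of this companion system, the choice $P = \sqrt{D} \ge 0$ at every site forcing the sign of the new variables so that the extension is consistent and propagates; and (ii) any solution of the companion system restricts, by a direct computation with the birational recurrence, to one satisfying \eqref{1d_example_coherence}. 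Combining (i) and (ii) finishes the hard case. This is exactly the mechanism developed in full generality in Section~\ref{klikerecurrences}, of which Theorem~\ref{pos_thm_1d_example} is a special case; the only genuine work is constructing the companion recurrence and verifying (ii), which is routine, and checking the sign bookkeeping in (i), which is the crux.
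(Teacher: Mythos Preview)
Your outline matches the paper's: it obtains Theorem~\ref{pos_thm_1d_example} as a special case of Theorem~\ref{maintheoremgen}(b), by extending $\mathbf{x}$ to a positive companion array $\mathbf{\tilde{x}}$ (conditions~\eqref{newvertexgen}--\eqref{squarecond}) and reading off coherence. Two small corrections to your description of the companion system: the auxiliary variable sits at the midpoint of each length-$2$ interval $[v,v+2]$ (since $\mathbf{a}-\mathbf{1}_1=(2)$), not $[v,v+1]$; and in this $d=1$ setting $D=f_1$ does not factor further, so there is a single auxiliary variable $w$ per interval with $w^2=D$ (see Theorem~\ref{main_thm_1d_example} and Example~\ref{1d_ada_ex}).

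More importantly, your case split is unnecessary and your ``hard case'' is not actually hard: the sign bookkeeping you identify as the crux is a uniform three-line check that handles all cases at once. With $w=\sqrt{D}$ and using only the larger-root hypothesis \emph{at $v$}, substitute $z_3=\tfrac{-b+w}{2z_0^2}$ (where $b=\alpha_2 z_0 z_1 z_2+\alpha_3 z_1^3$) into $Q_{v+1}=2z_0 z_3^2+\alpha_2 z_1 z_2 z_3+\alpha_3 z_2^3$ to get
\[
2z_0^3\,Q_{v+1}=w^2+(-2\alpha_3 z_1^3-\alpha_2 z_0 z_1 z_2)\,w+\alpha_3^2 z_1^6+\alpha_2\alpha_3 z_0 z_1^4 z_2+2\alpha_3 z_0^3 z_2^3,
\]
which is exactly the numerator of $r_1$ in~\eqref{1d_f_hex_prop}. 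Replacing $w^2$ by $D$ from~\eqref{D_for_1dexamplekexplicit} yields
\[
2z_0^3\,Q_{v+1}=2\alpha_3^2 z_1^6+3\alpha_2\alpha_3 z_0 z_1^4 z_2+(\alpha_2^2-4\alpha_1)z_0^2 z_1^2 z_2^2-2\alpha_3 z_0^3 z_2^3+(-2\alpha_3 z_1^3-\alpha_2 z_0 z_1 z_2)\,w,
\]
every term of which is $\ge 0$ under $\alpha_2,\alpha_3\le 0$ and $\alpha_1\le\alpha_2^2/4$. Thus $Q_{v+1}\ge 0$ for every $v$, hence $Q_v\ge 0$ for every $v$; combined with $P_v=\sqrt{D}\ge 0$ and $P_v^2=Q_v^2$, this gives $P_v=Q_v$, which is~\eqref{1d_example_coherence}. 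In the paper's language this computation is precisely the verification that the positive extension satisfies~\eqref{newfacegen}, after which Theorem~\ref{maintheoremgen}(b) applies; your step~(ii) is that theorem, and your step~(i) reduces to this single inequality.
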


We next show that an array $\mathbf{x}$ satisfying~\eqref{1dexamplekexplicit} satisfies condition~\eqref{1d_example_coherence} if and only if it can be extended to an array on a larger index set satisfying conditions resembling the K-hexahedron equations.

\begin{Theorem} \label{main_thm_1d_example}Let $\alpha_1, \alpha_2, \alpha_3 \in \C$.
\begin{enumerate}\itemsep=0pt
\item[$(a)$] For any array $\mathbf{x} = (x_s) \in (\C^*)^{\Z}$ satisfying~\eqref{1dexamplekexplicit} and~\eqref{1d_example_coherence}, there exists an array $\mathbf{y} = ( y_s)_{s \in \Z}$ such that $\mathbf{x}$ and $\mathbf{y}$ together satisfy the recurrence
\begin{gather} \label{1d_v_hex_prop}
z_3 = \frac{- \alpha_3 z_1^3 - \alpha_2 z_0 z_1 z_2 + w_1}{2 z_0^2},\\ \label{1d_f_hex_prop}
w_2 = \frac{\alpha_3^2 z_1^6 + \alpha_2 \alpha_3 z_0 z_1^4 z_2 + 2 \alpha_3 z_0^3 z_2^3 + w_1^2 + \big({-}2 \alpha_3 z_1^3 - \alpha_2 z_0 z_1 z_2\big) w_1}{2 z_0^3}
\end{gather}
together with the condition
\begin{gather} \label{1d_interval_spec}
w_1^2 = D,
\end{gather}
where $D$ is given by~\eqref{D_for_1dexamplekexplicit}, and we use the notation $z_i = x_{v + i}$ and $w_i = y_{v + i}$.
\item[$(b)$] Conversely, suppose $\mathbf{x} \in (\C^*)^{\Z}$ and $\mathbf{y} \in \C^{\Z}$ satisfy~\eqref{1d_v_hex_prop}--\eqref{1d_interval_spec}. Then $\mathbf{x}$ satisfies~\eqref{1dexamplekexplicit} and~\eqref{1d_example_coherence}.
\end{enumerate}
\end{Theorem}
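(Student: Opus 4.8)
The plan is to prove both implications by direct computation, using the explicit shape of the recurrence~\eqref{1d_v_hex_prop}--\eqref{1d_interval_spec} to guess how $\mathbf{y}$ must be defined in part~(a). Throughout, keep the notation $z_i = x_{v+i}$, $w_i = y_{v+i}$ from the statement, and set $P_v := 2x_v^2 x_{v+3} + \alpha_2 x_v x_{v+1} x_{v+2} + \alpha_3 x_{v+1}^3$. Proposition~\ref{A2B2propgen1d} tells us $P_v^2 = D$ (with $D$ as in~\eqref{D_for_1dexamplekexplicit} evaluated on the window $[v,v+3]$), and the coherence condition~\eqref{1d_example_coherence} is exactly the statement that $P_v$ equals the ``other square root'' $2x_{v-1}x_{v+2}^2 + \alpha_2 x_v x_{v+1} x_{v+2} + \alpha_3 x_{v+1}^3$. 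The computations rest on two short polynomial identities. Writing the left side of~\eqref{1dexamplekexplicit} as $a z_3^2 + b z_3 + c$ in the variable $z_3$, so that $a = z_0^2$, $b = \alpha_2 z_0 z_1 z_2 + \alpha_3 z_1^3$, $c = \alpha_1 z_1^2 z_2^2 + \alpha_3 z_0 z_2^3$, the first identity is the elementary $(2az_3+b)^2 - (b^2 - 4ac) = 4a\,(az_3^2 + bz_3 + c)$, together with the fact (already noted in Remark~\ref{1d_example_a1_expl}) that $b^2 - 4ac = D$. The second identity is that, after substituting $w_1 = 2z_0^2 z_3 + b$ and using $w_1 - 2\alpha_3 z_1^3 - \alpha_2 z_0 z_1 z_2 = 2z_0^2 z_3 - \alpha_3 z_1^3$, the numerator on the right of~\eqref{1d_f_hex_prop} collapses, after all cancellations, to $2z_0^3\bigl(2 z_0 z_3^2 + \alpha_2 z_1 z_2 z_3 + \alpha_3 z_2^3\bigr)$; this is checked by expanding $(A+B+C)(A-C)$ with $A = 2z_0^2 z_3$, $B = \alpha_2 z_0 z_1 z_2$, $C = \alpha_3 z_1^3$.

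For part~(a), the recipe is to set $y_w := 2x_{w-1}^2 x_{w+2} + \alpha_2 x_{w-1} x_w x_{w+1} + \alpha_3 x_w^3$ for every $w \in \Z$, so that $w_1 = y_{v+1} = P_v$. Then~\eqref{1d_v_hex_prop} holds by construction: its right side is $(w_1 - b)/(2z_0^2) = (2z_0^2 z_3)/(2z_0^2) = z_3$. Equation~\eqref{1d_interval_spec} is $w_1^2 = P_v^2 = D$, which is Proposition~\ref{A2B2propgen1d}. Equation~\eqref{1d_f_hex_prop} follows from the second identity: its right side equals $2z_0 z_3^2 + \alpha_2 z_1 z_2 z_3 + \alpha_3 z_2^3 = 2 x_v x_{v+3}^2 + \alpha_2 x_{v+1} x_{v+2} x_{v+3} + \alpha_3 x_{v+2}^3$, and this coincides with $y_{v+2} = 2x_{v+1}^2 x_{v+4} + \alpha_2 x_{v+1} x_{v+2} x_{v+3} + \alpha_3 x_{v+2}^3$ once one rewrites $x_{v+1}^2 x_{v+4} = x_v x_{v+3}^2$ using~\eqref{1d_example_coherence} at the index $v+1$. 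Every division occurring is by a power of some $x_w \ne 0$.

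For part~(b), assume $\mathbf{x} \in (\C^*)^{\Z}$ and $\mathbf{y} \in \C^{\Z}$ satisfy~\eqref{1d_v_hex_prop}--\eqref{1d_interval_spec}. Equation~\eqref{1d_v_hex_prop} at $v$ rearranges to $w_1 = 2z_0^2 z_3 + b$, so by~\eqref{1d_interval_spec} and the first identity $0 = (2z_0^2 z_3 + b)^2 - D = 4z_0^2\,(az_3^2 + bz_3 + c)$; since $z_0 \ne 0$, $\mathbf{x}$ satisfies~\eqref{1dexamplekexplicit}. For coherence, compute $y_{v+1}$ in a second way: \eqref{1d_v_hex_prop} at $v-1$ gives $y_v = 2x_{v-1}^2 x_{v+2} + \alpha_2 x_{v-1} x_v x_{v+1} + \alpha_3 x_v^3$, and substituting this into~\eqref{1d_f_hex_prop} at $v-1$ --- which is precisely the second identity applied on the window $[v-1,v+2]$ --- yields $y_{v+1} = 2x_{v-1} x_{v+2}^2 + \alpha_2 x_v x_{v+1} x_{v+2} + \alpha_3 x_{v+1}^3$. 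But~\eqref{1d_v_hex_prop} at $v$ also gives $y_{v+1} = 2x_v^2 x_{v+3} + \alpha_2 x_v x_{v+1} x_{v+2} + \alpha_3 x_{v+1}^3$; equating the two and cancelling the common terms gives $x_v^2 x_{v+3} = x_{v-1} x_{v+2}^2$, which is~\eqref{1d_example_coherence}.

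There is no essential obstacle here: the entire argument reduces to the two polynomial identities above, which are short enough to verify by hand. The only thing that requires care is bookkeeping --- tracking which four-term window each application of~\eqref{1dexamplekexplicit}, \eqref{1d_f_hex_prop}, or~\eqref{1d_example_coherence} lives on, and checking that all denominators are nonzero powers of entries of $\mathbf{x}$. A cleaner route, which is presumably the one taken in Section~\ref{klikerecurrences}, is to observe that~\eqref{1dexamplekexplicit} and~\eqref{1d_v_hex_prop}--\eqref{1d_interval_spec} fit the axiomatic template developed there, so that Theorem~\ref{main_thm_1d_example} becomes an instance of the general Theorem~\ref{maintheoremgen} once the axioms are verified.
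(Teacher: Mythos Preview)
Your proof is correct. Both identities check out, the construction $y_w := 2x_{w-1}^2 x_{w+2} + \alpha_2 x_{w-1} x_w x_{w+1} + \alpha_3 x_w^3$ is exactly the right one, and the bookkeeping on the windows is accurate. In part~(a) the only nontrivial step is matching the value of $w_2$ produced by the second identity with the definition of $y_{v+2}$, and you correctly invoke~\eqref{1d_example_coherence} at the shifted index $v+1$ to close that gap. In part~(b) the two computations of $y_{v+1}$ (one from~\eqref{1d_v_hex_prop} at $v$, one from~\eqref{1d_f_hex_prop} at $v-1$) are what one should do, and their equality is precisely coherence.

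The paper does \emph{not} give this direct argument. As you anticipated in your final paragraph, it proves Theorem~\ref{main_thm_1d_example} as a specialization of the general Theorem~\ref{maintheoremgen}: Examples~\ref{1d_sat_ex}, \ref{1d_ada_ex}, and~\ref{1d_sign_ex} verify that the polynomial $f$ of~\eqref{1dexamplekexplicit} satisfies the axioms (conditions~(1)--(2) of Proposition~\ref{A2B2propgen}), that $(r_1)$ is adapted to $(f;f_1)$, and that the propagation signs are $\gamma_{(1)}=\gamma_{(-1)}=1$, so that~\eqref{coherencegen} collapses to~\eqref{1d_example_coherence}. Your route is more elementary and entirely self-contained for the $d=1$ case; in fact, your two identities are concrete instances of the general machinery (the first is~\eqref{prod_fi_identity}, and your formula $y_{v+1} = 2x_{v-1}x_{v+2}^2 + \alpha_2 x_v x_{v+1} x_{v+2} + \alpha_3 x_{v+1}^3$ obtained from~\eqref{1d_f_hex_prop} at $v-1$ is exactly the $\bm{\alpha}=(-1)$ case of~\eqref{arbdirectionderivative}). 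The paper's route costs more up-front setup but simultaneously yields Theorems~\ref{galoisfromintro}, \ref{coh_for_QC_theorem}, and~\ref{main_thm_2d_example} from the same template.
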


\begin{Remark}The components of $\mathbf{y}$ are most naturally indexed by intervals of length~$2$ in $\Z$. Here, we index the components of $\mathbf{y}$ by the midpoints of those intervals.
\end{Remark}

\begin{Remark}If one sets $(\alpha_1, \alpha_2, \alpha_3) = (0,0,-4)$ or $(\alpha_1, \alpha_2, \alpha_3) = (-3,-6,-4)$, then the pairs of arrays $\mathbf{x}$, $\mathbf{y}$ satisfying~\eqref{1d_v_hex_prop}--\eqref{1d_interval_spec} are special cases of recurrences that arise from cluster algebras. The $(\alpha_1, \alpha_2, \alpha_3) = (-3,-6,-4)$ case is a special case of the K-hexahedron equations; namely, such pairs $\mathbf{x}, \mathbf{y}$ correspond to isotropic solutions $\mathbf{\tilde{z}} = (z_s) \in \C^{\lat}$ of the K-hexahedron equations where
\begin{gather*}
z_{(i,j,k)}= x_{i + j + k},\\
z_{(i,j,k) + \left( 0, \half, \half \right)} = z_{(i,j,k) + \left( \half, 0, \half \right)} = z_{(i,j,k) + \left( \half, \half, 0 \right)},\\
4 z_{(i,j,k) + \left( 0, \half, \half \right)}^3 = y_{i + j + k + 1},
\end{gather*}
for $(i,j,k) \in \Z^3$. However, the $(\alpha_1, \alpha_2, \alpha_3) = (0,0,-4)$ case is not a special case of the K-hexahedron equations. We will discuss the related cluster algebra recurrences in later work.
\end{Remark}

Next, we consider the following two-dimensional example:

\begin{Proposition} \label{A2B2_2d_example}Let $\alpha_1, \alpha_2 \in \C$. Let $\mathbf{x} = (x_s) \in (\C)^{\Z^2}$ be an array such that
\begin{gather}
0 = z_{00}^2 z_{12}^2 + z_{10}^2 z_{02}^2 + \frac{\alpha_2^2 - \alpha_1^2}{4} z_{01}^2 z_{11}^2 - \alpha_1 \big(z_{00} z_{02} z_{11}^2 + z_{10} z_{12} z_{01}^2\big)\nonumber\\
\hphantom{0=}{} - 2 z_{00} z_{10} z_{02} z_{12} - \alpha_2 (z_{00} z_{12} z_{01} z_{11} + z_{10} z_{02} z_{01} z_{11})\label{2d_example_eqn}
\end{gather}
for all $v \in \Z^2$, where $z_{ij} = x_{v + (i, j)}$. Given a $1 \times 2$ rectangle $B$ with vertices in~$\Z^2$, a vertex $w \in \Z^2$ at a corner of $B$, and the components of~$\mathbf{x}$ at the $6$ points of $\Z^2$ in $B$ labeled as in Fig.~{\rm \ref{1x2_labeled_figure}}, define $R^{B, 0}_{w}(\mathbf{x}) \in \C$ by
\begin{gather} \label{rb0_def}
R^{B,0}_{w}(\mathbf{x}) = z_{00}^2 z_{12} - \alpha_1 z_{10} z_{01}^2 - 2 z_{00} z_{10} z_{02} - \alpha_2 z_{00} z_{01} z_{11}.
\end{gather}
Given a $0 \times 2$ rectangle $($line segment$)$ $S$ with vertices in $\Z^2$, and the components of~$\mathbf{x}$ at the $3$ points of $\Z^2$ in $S$ labeled as in Fig.~{\rm \ref{1x2_labeled_figure}}, define $R^{S, 1}(\mathbf{x}) \in \C$ by
\begin{gather} \label{rs1_def}
R^{S, 1}(\mathbf{x}) = \alpha_1 z_{01}^2 + 4 z_{00} z_{02}.
\end{gather}
Given a $1 \times 1$ square $C$ with vertices in $\Z^2$, and the components of $\mathbf{x}$ at the $4$ points of $\Z^2$ in $C$ labeled as in Fig.~{\rm \ref{1x2_labeled_figure}}, define $R^{C, 2}(\mathbf{x}) \in \C$ by
\begin{gather} \label{rc2_def}
R^{C, 2}(\mathbf{x}) = \alpha_1\big(z_{00}^2 z_{11}^2 + z_{01}^2 z_{10}^2\big) + 2 \alpha_2 z_{00} z_{01} z_{10} z_{11}.
\end{gather}
Then for any $v \in \Z^2$,
\begin{gather} \label{2d_example_precoherence}
\left(\prod_{i = 1}^4 R^{B_i, 0}_{w_i}(\mathbf{x}) \right)^2 = \big( R^{S_1, 1}(\mathbf{x}) R^{S_2, 1}(\mathbf{x}) R^{C_1, 2}(\mathbf{x}) R^{C_2, 2}(\mathbf{x}) \big)^2,
\end{gather}
where
\begin{itemize}\itemsep=0pt
\item $(B_1, w_1)$, $(B_2, w_2)$, $(B_3, w_3)$, $(B_4, w_4)$ are the four $1 \times 2$ rectangle/corner pairs shown in Fig.~{\rm \ref{rectangles_in_coherence}},
\item $S_1$, $S_2$ are the two $0 \times 2$ rectangles $($line segments$)$ shown in Fig.~{\rm \ref{rectangles_in_coherence}}, and
\item $C_1$, $C_2$ are the two $1 \times 1$ squares shown in Fig.~{\rm \ref{rectangles_in_coherence}}.
\end{itemize}
Moreover, the following strengthening of~\eqref{2d_example_precoherence} holds:
\begin{gather*}
\big( R^{B_1, 0}_{w_1}(\mathbf{x}) R^{B_3, 0}_{w_3}(\mathbf{x}) \big)^2 = \big( R^{B_2, 0}_{w_2}(\mathbf{x}) R^{B_4, 0}_{w_4}(\mathbf{x}) \big)^2.
\end{gather*}

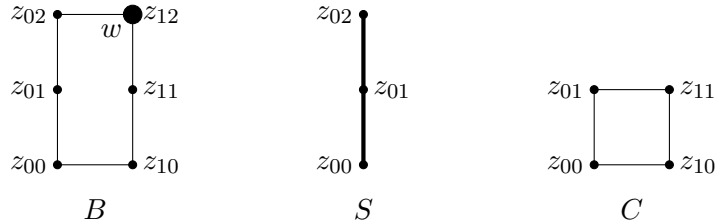
\begin{figure}[ht]\centering
\begin{tabular}{ c c c }
\begin{tikzpicture}
\draw (0,0)--(0,2)--(1,2)--(1,0)--(0,0);
\filldraw[black] (0, 0) circle (1.5pt);
\filldraw[black] (0,1) circle (1.5pt);
\filldraw[black] (0,2) circle (1.5pt);
\filldraw[black] (1, 0) circle (1.5pt);
\filldraw[black] (1, 1) circle (1.5pt);
\filldraw[black] (1, 2) circle (3.5pt);
\draw (0,0) node[anchor=east]{$z_{00}$};
\draw (0,1) node[anchor=east]{$z_{01}$};
\draw (0,2) node[anchor=east]{$z_{02}$};
\draw (1,0) node[anchor=west]{$z_{10}$};
\draw (1,1) node[anchor=west]{$z_{11}$};
\draw (1,2) node[anchor=west]{$z_{12}$};
\draw (1,2) node[anchor=north east]{$w$};
\end{tikzpicture} & \hspace{1cm}
\begin{tikzpicture}
\draw[ultra thick] (0,0)--(0,2);
\filldraw[black] (0, 0) circle (1.5pt);
\filldraw[black] (0,1) circle (1.5pt);
\filldraw[black] (0,2) circle (1.5pt);
\draw (0,0) node[anchor=east]{$z_{00}$};
\draw (0,1) node[anchor=west]{$z_{01}$};
\draw (0,2) node[anchor=east]{$z_{02}$};
\end{tikzpicture} & \hspace{1cm}
\begin{tikzpicture}
\draw (0,0)--(1,0)--(1,1)--(0,1)--(0,0);
\filldraw[black] (0, 0) circle (1.5pt);
\filldraw[black] (0,1) circle (1.5pt);
\filldraw[black] (1, 0) circle (1.5pt);
\filldraw[black] (1, 1) circle (1.5pt);
\draw (0,0) node[anchor=east]{$z_{00}$};
\draw (0,1) node[anchor=east]{$z_{01}$};
\draw (1,0) node[anchor=west]{$z_{10}$};
\draw (1,1) node[anchor=west]{$z_{11}$};
\end{tikzpicture} \\ $B$ & \hspace{1cm} $S$ & \hspace{1cm} $C$
\end{tabular}
\caption{The components of $\mathbf{x}$ at a $1 \times 2$ rectangle $B$ with distinguished vertex $w$ in~\eqref{rb0_def}, at a $0 \times 2$ rectangle (line segment) $S$ in~\eqref{rs1_def}, and at a $1 \times 1$ square $C$ in~\eqref{rc2_def}.} \label{1x2_labeled_figure}
\end{figure}
\begin{figure}[ht]\centering
\begin{tabular}{ c c c c }
\begin{tikzpicture}[scale=0.8]

\draw (1,1)--(2,1)--(2,3)--(1,3)--(1,1);
\filldraw[black] (0, 0) circle (1.5pt);
\filldraw[black] (0,1) circle (1.5pt);
\filldraw[black] (0,2) circle (1.5pt);
\filldraw[black] (0,3) circle (1.5pt);
\filldraw[black] (1, 0) circle (1.5pt);
\filldraw[black] (1, 1) circle (3.5pt);
\filldraw[black] (1, 2) circle (1.5pt);
\filldraw[black] (1, 3) circle (1.5pt);
\filldraw[black] (2, 0) circle (1.5pt);
\filldraw[black] (2, 1) circle (1.5pt);
\filldraw[black] (2, 2) circle (1.5pt);
\filldraw[black] (2, 3) circle (1.5pt);

\filldraw[white] (2, 3) circle (1pt);

\draw (1.5,2) node{$B_1$};
\draw (1,1) node[anchor=north east]{$v$};
\draw (2,3) node[anchor=south west]{$w_1$};

\filldraw[white] (1,-1.5) circle (1pt);
\end{tikzpicture} & \hspace{1cm}
\begin{tikzpicture}[scale=0.8]
\draw (1,1)--(0,1)--(0,3)--(1,3)--(1,1);
\filldraw[black] (0, 0) circle (1.5pt);
\filldraw[black] (0,1) circle (1.5pt);
\filldraw[black] (0,2) circle (1.5pt);
\filldraw[black] (0,3) circle (1.5pt);
\filldraw[black] (1, 0) circle (1.5pt);
\filldraw[black] (1, 1) circle (3.5pt);
\filldraw[black] (1, 2) circle (1.5pt);
\filldraw[black] (1, 3) circle (1.5pt);
\filldraw[black] (2, 0) circle (1.5pt);
\filldraw[black] (2, 1) circle (1.5pt);
\filldraw[black] (2, 2) circle (1.5pt);
\filldraw[black] (2, 3) circle (1.5pt);

\filldraw[white] (0,3) circle (1pt);

\draw (0.5,2) node{$B_2$};
\draw (1,1) node[anchor=north west]{$v$};
\draw (0,3) node[anchor=south east]{$w_2$};

\filldraw[white] (1,-1.5) circle (1pt);
\end{tikzpicture} & \hspace{1cm}
\begin{tikzpicture}[scale=0.8]
\draw (0,0)--(1,0)--(1,2)--(0,2)--(0,0);
\filldraw[black] (0, 0) circle (1.5pt);
\filldraw[black] (0,1) circle (1.5pt);
\filldraw[black] (0,2) circle (1.5pt);
\filldraw[black] (0,3) circle (1.5pt);
\filldraw[black] (1, 0) circle (1.5pt);
\filldraw[black] (1, 1) circle (3.5pt);
\filldraw[black] (1, 2) circle (1.5pt);
\filldraw[black] (1, 3) circle (1.5pt);
\filldraw[black] (2, 0) circle (1.5pt);
\filldraw[black] (2, 1) circle (1.5pt);
\filldraw[black] (2, 2) circle (1.5pt);
\filldraw[black] (2, 3) circle (1.5pt);

\filldraw[white] (0,0) circle (1pt);

\draw (0.5,1) node{$B_3$};
\draw (1,1) node[anchor=west]{$v$};
\draw (0,0) node[anchor= north east]{$w_3$};

\filldraw[white] (1,-1.5) circle (1pt);
\end{tikzpicture} & \hspace{1cm}
\begin{tikzpicture}[scale=0.8]
\draw (1,0)--(2,0)--(2,2)--(1,2)--(1,0);
\filldraw[black] (0, 0) circle (1.5pt);
\filldraw[black] (0,1) circle (1.5pt);
\filldraw[black] (0,2) circle (1.5pt);
\filldraw[black] (0,3) circle (1.5pt);
\filldraw[black] (1, 0) circle (1.5pt);
\filldraw[black] (1, 1) circle (3.5pt);
\filldraw[black] (1, 2) circle (1.5pt);
\filldraw[black] (1, 3) circle (1.5pt);
\filldraw[black] (2, 0) circle (1.5pt);
\filldraw[black] (2, 1) circle (1.5pt);
\filldraw[black] (2, 2) circle (1.5pt);
\filldraw[black] (2, 3) circle (1.5pt);

\filldraw[white] (2,0) circle (1pt);

\draw (1.5,1) node{$B_4$};
\draw (1,1) node[anchor=east]{$v$};
\draw (2,0) node[anchor = north west]{$w_4$};

\filldraw[white] (1,-1.5) circle (1pt);
\end{tikzpicture} \\
\begin{tikzpicture}[scale=0.8]
\draw[ultra thick] (1,1)--(1,3);
\filldraw[black] (0, 0) circle (1.5pt);
\filldraw[black] (0,1) circle (1.5pt);
\filldraw[black] (0,2) circle (1.5pt);
\filldraw[black] (0,3) circle (1.5pt);
\filldraw[black] (1, 0) circle (1.5pt);
\filldraw[black] (1, 1) circle (3.5pt);
\filldraw[black] (1, 2) circle (1.5pt);
\filldraw[black] (1, 3) circle (1.5pt);
\filldraw[black] (2, 0) circle (1.5pt);
\filldraw[black] (2, 1) circle (1.5pt);
\filldraw[black] (2, 2) circle (1.5pt);
\filldraw[black] (2, 3) circle (1.5pt);

\draw (1,2) node[anchor=west]{$S_1$};
\draw (1,1) node[anchor=east]{$v$};

\end{tikzpicture} & \hspace{1cm}
\begin{tikzpicture}[scale=0.8]
\draw[ultra thick] (1,0)--(1,2);
\filldraw[black] (0, 0) circle (1.5pt);
\filldraw[black] (0,1) circle (1.5pt);
\filldraw[black] (0,2) circle (1.5pt);
\filldraw[black] (0,3) circle (1.5pt);
\filldraw[black] (1, 0) circle (1.5pt);
\filldraw[black] (1, 1) circle (3.5pt);
\filldraw[black] (1, 2) circle (1.5pt);
\filldraw[black] (1, 3) circle (1.5pt);
\filldraw[black] (2, 0) circle (1.5pt);
\filldraw[black] (2, 1) circle (1.5pt);
\filldraw[black] (2, 2) circle (1.5pt);
\filldraw[black] (2, 3) circle (1.5pt);

\draw (1,1) node[anchor=west]{$S_2$};
\draw (1,1) node[anchor=east]{$v$};

\end{tikzpicture} & \hspace{1cm}
\begin{tikzpicture}[scale=0.8]
\draw (1,1)--(2,1)--(2,2)--(1,2)--(1,1);
\filldraw[black] (0, 0) circle (1.5pt);
\filldraw[black] (0,1) circle (1.5pt);
\filldraw[black] (0,2) circle (1.5pt);
\filldraw[black] (0,3) circle (1.5pt);
\filldraw[black] (1, 0) circle (1.5pt);
\filldraw[black] (1, 1) circle (3.5pt);
\filldraw[black] (1, 2) circle (1.5pt);
\filldraw[black] (1, 3) circle (1.5pt);
\filldraw[black] (2, 0) circle (1.5pt);
\filldraw[black] (2, 1) circle (1.5pt);
\filldraw[black] (2, 2) circle (1.5pt);
\filldraw[black] (2, 3) circle (1.5pt);

\draw (1.5,1.5) node{$C_1$};
\draw (1,1) node[anchor=east]{$v$};

\end{tikzpicture} & \hspace{1cm}
\begin{tikzpicture}[scale=0.8]
\draw (1,1)--(0,1)--(0,2)--(1,2)--(1,1);
\filldraw[black] (0, 0) circle (1.5pt);
\filldraw[black] (0,1) circle (1.5pt);
\filldraw[black] (0,2) circle (1.5pt);
\filldraw[black] (0,3) circle (1.5pt);
\filldraw[black] (1, 0) circle (1.5pt);
\filldraw[black] (1, 1) circle (3.5pt);
\filldraw[black] (1, 2) circle (1.5pt);
\filldraw[black] (1, 3) circle (1.5pt);
\filldraw[black] (2, 0) circle (1.5pt);
\filldraw[black] (2, 1) circle (1.5pt);
\filldraw[black] (2, 2) circle (1.5pt);
\filldraw[black] (2, 3) circle (1.5pt);

\draw (0.5,1.5) node{$C_2$};
\draw (1,1) node[anchor=west]{$v$};

\end{tikzpicture}
\end{tabular}
\caption{On the top row, the rectangle/corner pairs $(B_i, w_i)$ for $i = 1, \dots, 4$, and on the bottom row, the line segments $S_1$, $S_2$ and the squares $C_1$, $C_2$ that appear in~\eqref{2d_example_precoherence}.} \label{rectangles_in_coherence}
\end{figure}
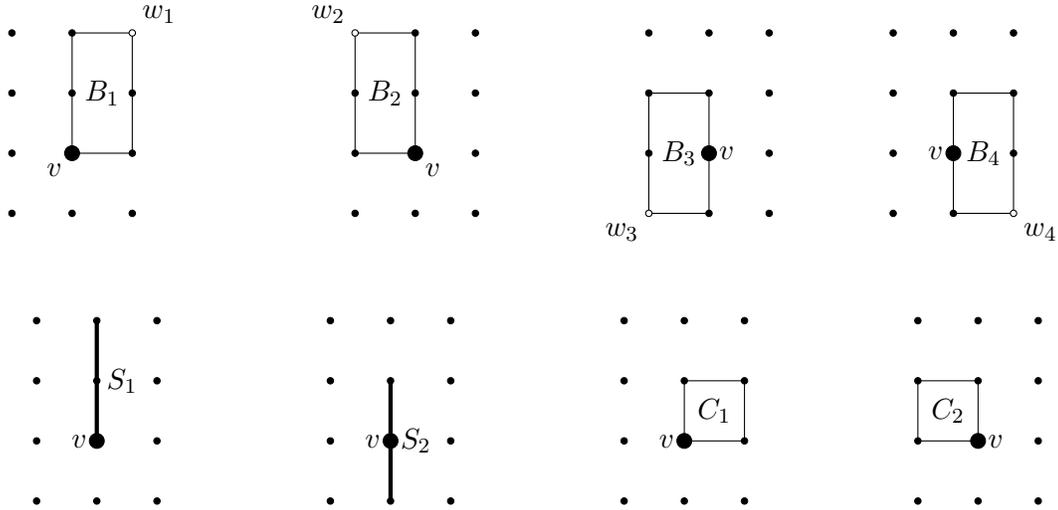
\end{Proposition}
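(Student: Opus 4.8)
The plan is to follow the template used for Propositions~\ref{A2B2proposition}, \ref{A2B2forQC}, and~\ref{A2B2propgen1d}. Fix a $1\times2$ rectangle $B$ with distinguished corner $w$, and label the six entries of $\mathbf{x}$ on $B$ as in Fig.~\ref{1x2_labeled_figure}. Read at the base vertex of $B$, equation~\eqref{2d_example_eqn} is a quadratic in $z_{12}=x_w$,
\begin{gather*}
z_{00}^2\, z_{12}^2 + b_B\, z_{12} + c_B = 0, \qquad b_B = -\alpha_1 z_{10} z_{01}^2 - 2 z_{00} z_{10} z_{02} - \alpha_2 z_{00} z_{01} z_{11},
\end{gather*}
and the polynomial $R^{B,0}_w$ of~\eqref{rb0_def} is normalized (just as $K^C_v$ is in Definition~\ref{kcdef}) so that, on solutions of~\eqref{2d_example_eqn}, its square is the discriminant of this quadratic in $z_{12}$. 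The heart of the matter is then the identity
\begin{gather*}
\bigl(R^{B,0}_w(\mathbf{x})\bigr)^2 = b_B^2 - 4 z_{00}^2 c_B = R^{S,1}(\mathbf{x})\, R^{C,2}(\mathbf{x}),
\end{gather*}
where $S$ is the $0\times2$ segment through $z_{00},z_{01},z_{02}$ and $C$ is the $1\times1$ square through $z_{00},z_{01},z_{11},z_{10}$ --- i.e.\ the segment and square that the pair $(B,w)$ singles out. This is the two-dimensional analogue of $(K^C_v(\mathbf{x}))^2 = D$ together with the factorization $D=(z_{000}z_{011}+z_{010}z_{001})(z_{000}z_{101}+z_{100}z_{001})(z_{000}z_{110}+z_{100}z_{010})$ from~\eqref{aandbs} that underlies Proposition~\ref{A2B2proposition}. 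I expect verifying the factorization $b_B^2 - 4 z_{00}^2 c_B = R^{S,1} R^{C,2}$ --- a direct but not a priori obvious polynomial expansion using~\eqref{rs1_def} and~\eqref{rc2_def} --- to be the main obstacle; everything else is bookkeeping.

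That bookkeeping runs around a fixed $v\in\Z^2$. Reading off Fig.~\ref{rectangles_in_coherence}, the segments singled out by the four pairs $(B_1,w_1),\dots,(B_4,w_4)$ are $S_1,S_1,S_2,S_2$ and the squares are $C_1,C_2,C_2,C_1$; in particular each of $S_1,S_2,C_1,C_2$ is singled out by exactly two of the four pairs. (This is the counterpart of the fact, used for~\eqref{precoherencecondition}, that each of the twelve unit squares incident to $v$ lies in exactly two of the eight unit cubes incident to $v$.) Hence
\begin{gather*}
\Bigl(\prod_{i=1}^4 R^{B_i,0}_{w_i}(\mathbf{x})\Bigr)^2 = \prod_{i=1}^4 \bigl(R^{B_i,0}_{w_i}(\mathbf{x})\bigr)^2 = \bigl(R^{S_1,1}(\mathbf{x})\, R^{S_2,1}(\mathbf{x})\, R^{C_1,2}(\mathbf{x})\, R^{C_2,2}(\mathbf{x})\bigr)^2,
\end{gather*}
which is~\eqref{2d_example_precoherence}. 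For the strengthening, the same list of pairings gives $\bigl(R^{B_1,0}_{w_1}\bigr)^2 = R^{S_1,1} R^{C_1,2}$, $\bigl(R^{B_3,0}_{w_3}\bigr)^2 = R^{S_2,1} R^{C_2,2}$, $\bigl(R^{B_2,0}_{w_2}\bigr)^2 = R^{S_1,1} R^{C_2,2}$, and $\bigl(R^{B_4,0}_{w_4}\bigr)^2 = R^{S_2,1} R^{C_1,2}$, so the squares of both diagonal products $R^{B_1,0}_{w_1}R^{B_3,0}_{w_3}$ and $R^{B_2,0}_{w_2}R^{B_4,0}_{w_4}$ equal $R^{S_1,1} R^{S_2,1} R^{C_1,2} R^{C_2,2}$.

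Finally, I would point out that this is an instance of the general ``Kashaev-like recurrence'' framework of Section~\ref{klikerecurrences}: the proposition should follow once one checks that~\eqref{2d_example_eqn}, together with the polynomials $R^{B,0}_w$, $R^{S,1}$, $R^{C,2}$, satisfies the axioms set up there --- a cleaner organization than the case-by-case verification above, and presumably the route by which the companion extension and existence statements for this recurrence are also obtained.
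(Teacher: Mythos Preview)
Your proposal is correct and is essentially the paper's own argument: the paper proves Proposition~\ref{A2B2_2d_example} by observing (Example~\ref{2d_sat_ex}) that the polynomial~\eqref{2d_example_eqn} satisfies the hypotheses of the general Proposition~\ref{A2B2propgen}, whose proof is exactly the discriminant identity $\bigl(\partial f/\partial z_{\mathbf{a}}\bigr)^2 = f_1\cdots f_d$ on solutions followed by the product-over-boxes bookkeeping you describe. Your concrete identification $(R^{B,0}_w)^2 = R^{S,1} R^{C,2}$ and the pairing count are the $d=2$, $\mathbf{a}=(1,2)$ specialization of that general argument.
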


\begin{Remark}Let $A = \{v, v + (0,1)\}$. In equation~\eqref{2d_example_precoherence}, $B_1$, $B_2$, $B_3$, $B_4$ are the four $1 \times 2$ rectangles with vertices in $\Z^2$ containing~$A$, $S_1$, $S_2$ are the two $0 \times 2$ rectangles (line segments) with vertices in $\Z^2$ containing~$A$, and $C_1$, $C_2$ are the two $1 \times 1$ unit squares with vertices in $\Z^2$ containing~$A$. Each $w_i$ is the farthest vertex from $A$ in~$B_i$.
\end{Remark}

\begin{Theorem} \label{pos_thm_2d_example}Let $\alpha_1, \alpha_2 \ge 0$. Define $R^{B, 0}_w$, $R^{S, 1}$, and $R^{C, 2}$ as in~\eqref{rb0_def}--\eqref{rc2_def}. Let $\mathbf{x} = (x_s) \in (\pos)^{\Z^2}$ satisfy the equation~\eqref{2d_example_eqn}, where, as before, we denote $z_{ij} = x_{v + (i,j)}$. Moreover, assume that for all $v \in \Z^2$, the number $z_{12} = x_{v + (1,2)}$ is the larger of the two real solutions of~\eqref{2d_example_eqn}:
\begin{gather*}
z_{12} = \frac{\alpha_1 z_{10} z_{01}^2 + 2 z_{00} z_{10} z_{02} + \alpha_2 z_{00} z_{01} z_{11} + \sqrt{D}}{2 z_{00}^2},
\end{gather*}
where
\begin{gather*}
D = \big(\alpha_1 z_{01}^2 + 4 z_{00} z_{02}\big)\big(\alpha_1\big(z_{00}^2 z_{11}^2 + z_{01}^2 z_{10}^2\big) + 2 \alpha_2 z_{00} z_{01} z_{10} z_{11}\big) = R^{S, 1}(\mathbf{x}) R^{C, 2}(\mathbf{x}),
\end{gather*}
and $S = v + (\{0\} \times \{0,1,2\})$, $C = v + \{0, 1\}^2$. Then
\begin{gather} \label{2d_example_coherence}
\prod_{i = 1}^4 R^{B_i, 0}_{w_i}(\mathbf{x}) = R^{S_1, 1}(\mathbf{x}) R^{S_2, 1}(\mathbf{x}) R^{C_1, 2}(\mathbf{x}) R^{C_2, 2}(\mathbf{x}),
\end{gather}
and
\begin{gather*}
R^{B_1, 0}_{w_1}(\mathbf{x}) R^{B_3, 0}_{w_3}(\mathbf{x}) = R^{B_2, 0}_{w_2}(\mathbf{x}) R^{B_4, 0}_{w_4}(\mathbf{x}),
\end{gather*}
for all $v \in \Z^2$, using the same notation as in equation~\eqref{2d_example_precoherence}.
\end{Theorem}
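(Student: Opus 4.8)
The plan is to follow the template already used for the Kashaev equation — Proposition~\ref{A2B2proposition} together with Theorem~\ref{ABtheorem} — where an identity that a priori holds only up to sign is promoted to a genuine equality for solutions of the positive recurrence. Here Proposition~\ref{A2B2_2d_example} already gives, for any array satisfying~\eqref{2d_example_eqn}, that $\prod_{i=1}^4 R^{B_i,0}_{w_i}(\mathbf{x}) = \epsilon \cdot R^{S_1,1}(\mathbf{x}) R^{S_2,1}(\mathbf{x}) R^{C_1,2}(\mathbf{x}) R^{C_2,2}(\mathbf{x})$ and $R^{B_1,0}_{w_1}(\mathbf{x}) R^{B_3,0}_{w_3}(\mathbf{x}) = \epsilon' \cdot R^{B_2,0}_{w_2}(\mathbf{x}) R^{B_4,0}_{w_4}(\mathbf{x})$ for signs $\epsilon, \epsilon' \in \{+1,-1\}$ depending a priori on $\mathbf{x}$ and on the edge $A = \{v, v+(0,1)\}$; the whole task is to show $\epsilon = \epsilon' = +1$ under the hypotheses of the theorem.

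First I would record positivity. For $\alpha_1, \alpha_2 \ge 0$ and $\mathbf{x} \in (\pos)^{\Z^2}$ one has $R^{S,1}(\mathbf{x}) = \alpha_1 z_{01}^2 + 4 z_{00} z_{02} > 0$, $R^{C,2}(\mathbf{x}) = \alpha_1(z_{00}^2 z_{11}^2 + z_{01}^2 z_{10}^2) + 2\alpha_2 z_{00} z_{01} z_{10} z_{11} > 0$, hence $D = R^{S,1}(\mathbf{x}) R^{C,2}(\mathbf{x}) > 0$ and the recurrence value $z_{12} = (\alpha_1 z_{10} z_{01}^2 + 2 z_{00} z_{10} z_{02} + \alpha_2 z_{00} z_{01} z_{11} + \sqrt{D})/(2 z_{00}^2)$ is real and positive, so the positive recurrence is well posed and the right-hand side of~\eqref{2d_example_coherence} is a product of positive numbers. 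Next I would argue the two signs are independent of $\mathbf{x}$. With respect to the height $\phi(i,j) = i+j$, equation~\eqref{2d_example_eqn} determines the value at the top corner $p + (1,2)$ of a $1 \times 2$ rectangle based at $p$ from the five values at heights $\phi(p), \phi(p)+1, \phi(p)+1, \phi(p)+2, \phi(p)+2$; therefore, starting from arbitrary positive data on three consecutive height-levels and applying the positive recurrence upward produces a positive solution, each new value being the output of a unique rectangle, with no compatibility constraint. All values of $\mathbf{x}$ entering the two identities at $A$ lie in $B_1 \cup \cdots \cup B_4$ and in heights $\phi(v)-2,\dots,\phi(v)+3$, so they are real-analytic functions of the restriction of $\mathbf{x}$ to a fixed finite set $\Xi \subseteq \Z^2_{\{\phi(v)-2,\,\phi(v)-1,\,\phi(v)\}}$; since the factors $R^{B_i,0}_{w_i}, R^{S_j,1}, R^{C_k,2}$ are polynomials in these values with the denominators bounded away from $0$, the functions $\mathbf{x} \mapsto \epsilon$ and $\mathbf{x} \mapsto \epsilon'$ descend to continuous $\{+1,-1\}$-valued functions on the connected set $(\pos)^{\Xi}$ and are thus constant. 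It then remains to evaluate them at one explicit positive solution — for instance the one obtained by propagating upward from constant initial data, for which the relevant $R$'s become explicit expressions in $\alpha_1, \alpha_2$ — to conclude $\epsilon = \epsilon' = +1$. Equivalently, in the spirit of Proposition~\ref{easyfromkp}(a), one may introduce the extension variables $+\sqrt{R^{S,1}(\mathbf{x})}$ on $0 \times 2$ segments and $+\sqrt{R^{C,2}(\mathbf{x})}$ on unit squares — available precisely because these quantities are positive — and verify by direct computation the associated hexahedron-like equations, from which~\eqref{2d_example_coherence} follows with the correct sign; this is the viewpoint of Section~\ref{klikerecurrences}, where Theorem~\ref{pos_thm_2d_example} is obtained as a special case of the general axiomatic framework.

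The main obstacle is pinning down the sign, because $R^{B,0}_w(\mathbf{x}) = z_{00}^2 z_{12} - \alpha_1 z_{10} z_{01}^2 - 2 z_{00} z_{10} z_{02} - \alpha_2 z_{00} z_{01} z_{11}$ has no definite sign on its own: already for the rectangle whose distinguished corner is the recurrence output it equals $\frac{1}{2}(\sqrt{D} - N)$ with $N = \alpha_1 z_{10} z_{01}^2 + 2 z_{00} z_{10} z_{02} + \alpha_2 z_{00} z_{01} z_{11} > 0$, and $\sqrt{D} - N$ may be of either sign. Thus the overall cancellation of signs is a global feature of the configuration of the four rectangles, two segments and two squares around $A$, and must be extracted either from a careful bookkeeping of which corners are ``upstream'' versus ``downstream'' for the recurrence or, more cleanly, from the connectedness argument above together with a single evaluation. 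A secondary point needing care is checking that upward propagation from positive data on $\Xi$ really does cover all of $B_1 \cup \cdots \cup B_4$ and creates no hidden relations, so that $(\pos)^{\Xi}$ is genuinely the local parameter space.
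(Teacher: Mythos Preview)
Your proposal is essentially correct and in fact already contains the paper's argument as the second option you sketch: the paper deduces Theorem~\ref{pos_thm_2d_example} directly from Theorem~\ref{maintheoremgen}(b) by extending $\mathbf{x}$ to $\mathbf{\tilde{x}}$ via positive square roots $w_{01}=+\sqrt{R^{S,1}}$, $w_{\half\half}=+\sqrt{R^{C,2}}$, observing that the positive recurrence for $z_{12}$ becomes exactly~\eqref{2d_v_hex_prop} since $\sqrt{D}=w_{01}w_{\half\half}$, and checking that the face recursions~\eqref{2d_1f_hex_prop}--\eqref{2d_2f_hex_prop} preserve positivity when $\alpha_1,\alpha_2\ge 0$. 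The propagation signs computed in Example~\ref{2d_sign_ex} give $\prod_{\bm{\alpha}}\gamma_{\bm{\alpha}}=1\cdot(-1)\cdot(-1)\cdot 1=1$, which is the sign you are after.

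Your primary route---the connectedness argument---is a legitimate alternative, but note two points. First, to say that $\epsilon$ is a well-defined continuous $\{\pm 1\}$-valued function you need each $R^{B_i,0}_{w_i}(\mathbf{x})\ne 0$; this is not automatic from positivity (as you yourself observe), but it does follow from the strengthened identity $(R^{B_1}R^{B_3})^2=(R^{B_2}R^{B_4})^2=R^{S_1,1}R^{S_2,1}R^{C_1,2}R^{C_2,2}>0$ in Proposition~\ref{A2B2_2d_example} (cf.\ equation~\eqref{A2B2propgeneqnalt})---you should invoke this explicitly. Second, the ``single evaluation'' you defer is where all the work hides: computing the four $R^{B_i,0}_{w_i}$ for, say, constant initial data still requires propagating three levels and tracking signs through expressions that mix $\alpha_1,\alpha_2$, and this calculation is no shorter than verifying directly that $r_1,r_2>0$ on positive inputs, which is the paper's one-line check. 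So the connectedness argument buys conceptual clarity about why the sign is rigid, but not economy; the extension approach is what actually closes the proof.
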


\begin{Theorem} \label{main_thm_2d_example}Let $\alpha_1, \alpha_2 \in \C$.
\begin{enumerate}[$(a)$]\itemsep=0pt
\item Suppose $\mathbf{x} = (x_s) \in (\C^*)^{\Z^2}$ satisfies~\eqref{2d_example_eqn} and~\eqref{2d_example_coherence}, and $R^{S, 1}(\mathbf{x}) \not= 0$ and $R^{C,2}(\mathbf{x}) \not= 0$ for any $0 \times 2$ rectangle $S$ with vertices in $\Z^2$ and $1 \times 1$ square $C$ with vertices in $\Z^2$. Then there exist arrays $\mathbf{y}_1 = (y_s)_{s \in \Z^2}$ and $\mathbf{y}_2 = \left( y_s \right)_{s \in \left( \Z^2 + \left( \half, \half \right) \right)}$ such that $\mathbf{x}$, $\mathbf{y}_1$, and $\mathbf{y}_2$ together satisfy the recurrence
\begin{gather} \label{2d_v_hex_prop}
z_{12} = \frac{\alpha_1 z_{10} z_{01}^2 + 2 z_{00} z_{10} z_{02} + \alpha_2 z_{00} z_{01} z_{11} + w_{01} w_{\half \half}}{2 z_{00}^2},\\ \label{2d_1f_hex_prop}
w_{11} = \frac{z_{10} w_{01} + w_{\half \half}}{z_{00}},\\ \label{2d_2f_hex_prop}
w_{\frac{3}{2} \half} = \frac{z_{01}(\alpha_1 z_{01} z_{10} + \alpha_2 z_{00} z_{11}) w_{01} + (\alpha_1 z_{01}^2 + 2 z_{00} z_{02}) w_{\half \half}}{2 z_{00}^2},
\end{gather}
together with the conditions
\begin{gather} \label{2d_strip_face_spec}
w_{01}^2 = \alpha_1 z_{01}^2 + 4 z_{00} z_{02},\\
\label{2d_square_face_spec} w_{\half \half}^2 = \alpha_1\big(z_{00}^2 z_{11}^2 + z_{01}^2 z_{10}^2\big) + 2 \alpha_2 z_{00} z_{01} z_{10} z_{11},
\end{gather}
where $z_{ij} = x_{v + (i,j)}$ and $w_{ij} = y_{v + (i,j)}$.
\item Conversely, suppose $\mathbf{x} \in (\C^*)^{\Z^2}$, $\mathbf{y}_1 \in \C^{\Z^2}$, and $\mathbf{y}_2 \in \C^{\Z^2 + \left( \half, \half \right)}$ satisfy~\eqref{2d_v_hex_prop}--\eqref{2d_square_face_spec}. Then~$\mathbf{x}$ satisfies~\eqref{2d_example_eqn} and~\eqref{2d_example_coherence}.
\end{enumerate}
\end{Theorem}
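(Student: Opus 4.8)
The plan is to obtain Theorem~\ref{main_thm_2d_example} as a special case of the general axiomatic framework developed in Section~\ref{klikerecurrences}, in the same way that Theorem~\ref{galoisfromintro} and Theorem~\ref{main_thm_1d_example} are instances of it; thus the real work is to exhibit the present data as an instance of that framework, to verify the accompanying algebraic identities, and then to quote the general theorem. Under the relevant dictionary, equation~\eqref{2d_example_eqn} plays the role of the Kashaev equation, the system~\eqref{2d_v_hex_prop}--\eqref{2d_2f_hex_prop} plays the role of the hexahedron recurrence, the quadratic relations~\eqref{2d_strip_face_spec}--\eqref{2d_square_face_spec} play the role of~\eqref{khexspec}, and~\eqref{2d_example_coherence} is the coherence condition; here there are two families of ``face'' variables, namely the values of $\mathbf{y}_1$ attached to the vertical length-$2$ segments (indexed by their midpoints in $\Z^2$) and the values of $\mathbf{y}_2$ attached to the unit squares (indexed by their centers in $\Z^2+(\half,\half)$). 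The algebraic identities needed to make this a genuine instance of the framework are all finite polynomial computations: (i) the discriminant of~\eqref{2d_example_eqn}, regarded as a quadratic in $z_{12}$, equals $R^{S,1}(\mathbf{x})R^{C,2}(\mathbf{x})$ for $S=v+(\{0\}\times\{0,1,2\})$ and $C=v+\{0,1\}^2$ (this is already recorded in Theorem~\ref{pos_thm_2d_example}); (ii) squaring the right-hand sides of~\eqref{2d_1f_hex_prop} and~\eqref{2d_2f_hex_prop} and reducing modulo~\eqref{2d_v_hex_prop}, \eqref{2d_strip_face_spec} and~\eqref{2d_square_face_spec} yields exactly $R^{S',1}(\mathbf{x})$ and $R^{C',2}(\mathbf{x})$ for the segment $S'$ and square $C'$ carrying $w_{11}$ and $w_{\frac{3}{2} \half}$, so that~\eqref{2d_1f_hex_prop}--\eqref{2d_2f_hex_prop} propagate the quadratic relations from one cell to its neighbors; and (iii) the ``$A^2=B^2$'' identities of Proposition~\ref{A2B2_2d_example}.

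For the easy direction, part~(b): assuming $\mathbf{x},\mathbf{y}_1,\mathbf{y}_2$ satisfy~\eqref{2d_v_hex_prop}--\eqref{2d_square_face_spec}, I would rewrite~\eqref{2d_v_hex_prop} as $2 z_{00}^2 z_{12}-\bigl(\alpha_1 z_{10}z_{01}^2+2 z_{00}z_{10}z_{02}+\alpha_2 z_{00}z_{01}z_{11}\bigr)=w_{01}w_{\half \half}$, square both sides, and substitute~\eqref{2d_strip_face_spec}--\eqref{2d_square_face_spec}; the right-hand side becomes $R^{S,1}(\mathbf{x})R^{C,2}(\mathbf{x})$, and by identity (i) the resulting equation is, after dividing by $4 z_{00}^2\neq0$, precisely~\eqref{2d_example_eqn}. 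Thus $\mathbf{x}$ satisfies~\eqref{2d_example_eqn}, so Proposition~\ref{A2B2_2d_example} applies and gives, for each vertical edge $A=\{v,v+(0,1)\}$, that $\prod_{i=1}^4 R^{B_i,0}_{w_i}(\mathbf{x})=\pm R^{S_1,1}(\mathbf{x})R^{S_2,1}(\mathbf{x})R^{C_1,2}(\mathbf{x})R^{C_2,2}(\mathbf{x})$ and $R^{B_1,0}_{w_1}R^{B_3,0}_{w_3}=\pm R^{B_2,0}_{w_2}R^{B_4,0}_{w_4}$. To pin these signs to $+$, I would use the extension: the corresponding instance of~\eqref{2d_v_hex_prop} lets one rewrite each $R^{B_i,0}_{w_i}(\mathbf{x})$ as a polynomial in $\mathbf{x}$ and the two $w$-values attached to the segment and the square of $B_i$ incident to $A$; multiplying the four such expressions and simplifying using (i)--(iii) then shows the product equals $+R^{S_1,1}R^{S_2,1}R^{C_1,2}R^{C_2,2}$, which is~\eqref{2d_example_coherence}.

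For the hard direction, part~(a): suppose $\mathbf{x}\in(\C^*)^{\Z^2}$ satisfies~\eqref{2d_example_eqn}, \eqref{2d_example_coherence}, and the nonvanishing of every $R^{S,1}(\mathbf{x})$ and $R^{C,2}(\mathbf{x})$. By nonvanishing, \eqref{2d_strip_face_spec} determines each value of $\mathbf{y}_1$ up to a sign, and~\eqref{2d_square_face_spec} determines each value of $\mathbf{y}_2$ up to a sign. Once these square roots are fixed, \eqref{2d_v_hex_prop} holds up to the sign of $w_{01}w_{\half \half}$ (because $\mathbf{x}$ already solves~\eqref{2d_example_eqn}), and by identity (ii) the recurrences~\eqref{2d_1f_hex_prop}--\eqref{2d_2f_hex_prop} hold up to sign as well; so the task is to choose the signs compatibly. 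Following the proof of Theorem~\ref{galoisfromintro1}(a), I would fix the signs freely along an initial ``staircase'' of segments and squares and then propagate them using~\eqref{2d_1f_hex_prop}--\eqref{2d_2f_hex_prop} together with the sign constraint coming from~\eqref{2d_v_hex_prop}; the only obstruction to a globally consistent choice is a $\pm1$ monodromy around the star of each edge $A$, and that monodromy is precisely $\prod_{i=1}^4 R^{B_i,0}_{w_i}(\mathbf{x})$ divided by $R^{S_1,1}(\mathbf{x})R^{S_2,1}(\mathbf{x})R^{C_1,2}(\mathbf{x})R^{C_2,2}(\mathbf{x})$, which equals $+1$ by~\eqref{2d_example_coherence}. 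Since these edge-stars generate all the loops that matter, a globally consistent choice of signs exists, and the resulting $\mathbf{y}_1,\mathbf{y}_2$ satisfy all the required equations.

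The step I expect to be the main obstacle is the global sign-consistency in part~(a): passing from the local coherence identities (vanishing of the monodromy around each edge) to the existence of a global solution. This is exactly what the framework of Section~\ref{klikerecurrences} is built to handle, so in the actual write-up I would verify the short list of axioms there for this example --- the shape of equation~\eqref{2d_example_eqn}, the factorization of its discriminant as $R^{S,1}R^{C,2}$, and the compatibility of the recurrences~\eqref{2d_v_hex_prop}--\eqref{2d_2f_hex_prop} with the relations~\eqref{2d_strip_face_spec}--\eqref{2d_square_face_spec} --- and then cite the general theorem rather than redo the monodromy bookkeeping by hand. The other mildly delicate point is the sign determination in part~(b): it is elementary, but must be organized with some care because the four boxes $B_i$ and the two segments $S_1,S_2$ and two squares $C_1,C_2$ incident to an edge overlap in an intricate pattern.
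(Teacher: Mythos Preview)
Your proposal is correct and follows essentially the same route as the paper. The paper's proof is even terser than your sketch: it records (in Examples~\ref{2d_sat_ex}, \ref{2d_ada_ex}, and~\ref{2d_sign_ex}) that the polynomial $f$ underlying~\eqref{2d_example_eqn} satisfies conditions~(1)--(2) of Proposition~\ref{A2B2propgen} with the discriminant factoring as $R^{S,1}R^{C,2}$, that the pair $(r_1,r_2)$ given by the right-hand sides of~\eqref{2d_1f_hex_prop}--\eqref{2d_2f_hex_prop} is adapted to $(f;f_1,f_2)$, and that the propagation signs are $\gamma_{\bm{\alpha}}=1$ for $\bm{\alpha}=\pm\mathbf{1}$ and $-1$ otherwise (so $\prod_{\bm{\alpha}}\gamma_{\bm{\alpha}}=1$, matching the $+$ sign in~\eqref{2d_example_coherence}); it then simply invokes Theorem~\ref{maintheoremgen}. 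Your hand sketch of parts~(a) and~(b) is an accurate informal description of what that general theorem does, and your final paragraph correctly identifies that the global sign-consistency step is exactly what the framework absorbs, so there is no need to redo the monodromy bookkeeping.
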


\begin{Remark}In Theorem~\ref{main_thm_2d_example}, the components of $\mathbf{y}_1$ are most naturally associated to $0 \times 2$ rectangles (line segments) with vertices in $\Z^2$ (although we index it by the center of the line segment in the theorem), and the components of $\mathbf{y}_2$ are most naturally associated to $1 \times 1$ unit squares with vertices in $\Z^2$ (although we index it by the center of the square in the theorem). If we think about the recurrence~\eqref{2d_v_hex_prop}--\eqref{2d_2f_hex_prop} in this way, each step of the recurrence uses the six vertices, two $0 \times 2$ rectangles, and two $1 \times 1$ unit squares contained in the $1 \times 2$ rectangle $v + \{0, 1\} \times \{0,1,2\}$, as is pictured in Fig.~\ref{expl_of_2d_example_rec}.
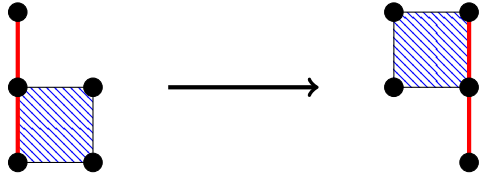
\begin{figure}[ht]\centering
\begin{tikzpicture}
\draw[pattern=north west lines, pattern color=blue] (0,0) rectangle (1,1);

\draw[red, ultra thick] (0,0)--(0,2);

\filldraw[black] (0, 0) circle (3.5pt);
\filldraw[black] (0,1) circle (3.5pt);
\filldraw[black] (0,2) circle (3.5pt);
\filldraw[black] (1, 0) circle (3.5pt);
\filldraw[black] (1, 1) circle (3.5pt);

\draw[ultra thick, ->] (2,1)--(4,1);

\draw[pattern=north west lines, pattern color=blue] (5,1) rectangle (6,2);

\draw[red, ultra thick] (6,0)--(6,2);

\filldraw[black] (5,1) circle (3.5pt);
\filldraw[black] (5,2) circle (3.5pt);
\filldraw[black] (6, 0) circle (3.5pt);
\filldraw[black] (6, 1) circle (3.5pt);
\filldraw[black] (6, 2) circle (3.5pt);
\end{tikzpicture}
\caption{The vertices/line segments/squares indexing the values in a step of the recurrence~\eqref{2d_v_hex_prop}--\eqref{2d_2f_hex_prop}. The black vertices index the values of $\mathbf{x}$, the red line segments index the values of $\mathbf{y}_1$, and the blue squares index the values of~$\mathbf{y}_2$.} \label{expl_of_2d_example_rec}
\end{figure}
\end{Remark}

\begin{Remark}If one sets $(\alpha_1, \alpha_2) = (4, 4)$ or $(\alpha_1, \alpha_2) = (4,0)$, then the arrays $\mathbf{x}$, $\mathbf{y}_1$, $\mathbf{y}_2$ satisfying~\eqref{2d_v_hex_prop}--\eqref{2d_square_face_spec} are special cases of recurrences that arise from cluster algebras. The $(\alpha_1, \alpha_2) = (4, 4)$ case is a special case of the K-hexahedron equations; namely, such $\mathbf{x}$, $\mathbf{y}_1$, $\mathbf{y}_2$ correspond to isotropic solutions $\mathbf{\tilde{z}} = (z_s) \in \C^{\lat}$ of the K-hexahedron equations where
\begin{gather*}
z_{(i,j,k)} = x_{(i, j + k)},\\
2 z_{(i,j,k)} = y_{(i, j + k + 1)},\\
z_{(i,j,k) + \left( \half, 0, \half \right)} = z_{(i,j,k) + \left( \half, \half, 0 \right)},\\
2 z_{(i,j,k) + \left( \half, 0, \half \right)}^2 = y_{\left( i + \half, j + k + \half \right)}
\end{gather*}
for $(i,j,k) \in \Z^3$. However, the $(\alpha_1, \alpha_2) = (4,0)$ case is not a special case of the K-hexahedron equations. We will discuss the related cluster algebra recurrences in future work. Using machinery from cluster algebras, we can prove Laurentness results for these recurrences similar to the one given by Kenyon and Pemantle in~{\cite[Theorem~7.8]{mainkp}}.
\end{Remark}

\section{Proofs of results from Section~\ref{main-results}} \label{mainproof}

In this section, we prove Proposition~\ref{A2B2proposition} and Theorems~\ref{galoisfromintro}--\ref{extensions_that_agree} (of which all other results in Section~\ref{main-results} are corollaries).

\begin{Lemma} \label{KCvgenxlemma}Let $C$ be a cube with vertices $V(C)$ labeled as in Fig.~{\rm \ref{labeledcubeforprop}}, and let $\mathbf{x} = (x_s) \in \C^{V(C)}$. Then
\begin{gather*}
\big(K_v^C(\mathbf{x})\big)^2 = \frac{x_v^2}{4} K^C(\mathbf{x}) + (x_v x_d + x_b x_c)(x_v x_e + x_a x_c)(x_v x_f + x_a x_b).
\end{gather*}
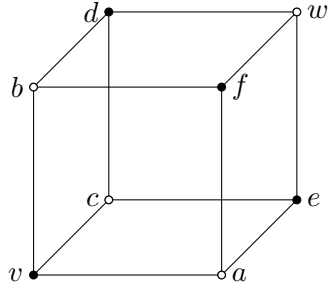
\begin{figure}[ht]\centering
\begin{tikzpicture}
\draw (0,0)--(2.5,0)--(2.5,2.5)--(0,2.5)--(0,0);
\draw (1,1)--(3.5,1)--(3.5,3.5)--(1,3.5)--(1,1);
\draw (0,0)--(1,1);
\draw (2.5,0)--(3.5,1);
\draw (2.5, 2.5)--(3.5, 3.5);
\draw (0,2.5)--(1,3.5);

\filldraw (0,0) circle (1.5pt);
\filldraw (2.5,2.5) circle (1.5pt);
\filldraw (3.5,1) circle (1.5pt);
\filldraw (1,3.5) circle (1.5pt);

\filldraw[black] (2.5, 0) circle (1.5pt);
\filldraw[white] (2.5, 0) circle (1pt);
\filldraw[black] (0, 2.5) circle (1.5pt);
\filldraw[white] (0, 2.5) circle (1pt);
\filldraw[black] (1, 1) circle (1.5pt);
\filldraw[white] (1, 1) circle (1pt);
\filldraw[black] (3.5, 3.5) circle (1.5pt);
\filldraw[white] (3.5, 3.5) circle (1pt);

\draw (0,0) node[anchor=east]{$v$};
\draw (0,2.5) node[anchor=east]{$b$};
\draw (1,1) node[anchor=east]{$c$};
\draw (1,3.5) node[anchor=east]{$d$};
\draw (2.5,0) node[anchor=west]{$a$};
\draw (2.5,2.5) node[anchor=west]{$f$};
\draw (3.5,1) node[anchor=west]{$e$};
\draw (3.5,3.5) node[anchor=west]{$w$};
\end{tikzpicture}
\caption{Labels for the vertices of a cube $C$.} \label{labeledcubeforprop}
\end{figure}
\end{Lemma}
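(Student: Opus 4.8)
The plan is to regard the asserted equality as an identity of polynomials in the eight indeterminates $x_v,x_a,x_b,x_c,x_d,x_e,x_f,x_w$, and to organize the verification around the fact that $K^C(\mathbf{x})$, viewed as a polynomial in the single variable $x_w$ (the value at the vertex opposite to $v$), is a quadratic whose leading coefficient is the explicit monomial $x_v^2$.

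First I would translate the definitions into the labeling of Fig.~\ref{labeledcubeforprop} by comparing it with Fig.~\ref{labeledcube}; this gives the dictionary $\zzzz = x_v$, $\zzoz = x_b$, $\zzzo = x_c$, $\zzoo = x_d$, $\zozz = x_a$, $\zozo = x_e$, $\zooz = x_f$, $\zooo = x_w$. Under this dictionary Definition~\ref{kcdef} reads
\[
K_v^C(\mathbf{x}) = \frac{1}{2}\left(x_v^2 x_w - A\right),\qquad A := 2x_ax_bx_c + x_v(x_ax_d + x_bx_e + x_cx_f),
\]
where $A$ is precisely the quantity from \eqref{aandbs}, and the triple product appearing in the lemma is precisely the quantity $D := (x_vx_d + x_bx_c)(x_vx_e + x_ax_c)(x_vx_f + x_ax_b)$ from \eqref{aandbs}. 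Squaring, the lemma becomes equivalent to the polynomial identity $(x_v^2 x_w - A)^2 = x_v^2\,K^C(\mathbf{x}) + 4D$.

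Next I would expand $K^C(\mathbf{x})$ in powers of $x_w$. Among the monomials of Fig.~\ref{labeledcube} only $\zzzz\zooo$ and $\zooo\zozz\zzoz\zzzo$ involve $x_w$, and a short computation shows that the coefficient of $x_w^2$ in $K^C$ is $2x_v^2 - x_v^2 = x_v^2$, the coefficient of $x_w^1$ is $-2A$, and the $x_w$-free part is
\[
(K^C)_0 := 2\left(x_a^2x_d^2 + x_b^2x_e^2 + x_c^2x_f^2\right) - (x_ax_d + x_bx_e + x_cx_f)^2 - 4x_vx_dx_ex_f.
\]
Hence $x_v^2 K^C + 4D$ and $(x_v^2 x_w - A)^2 = x_v^4 x_w^2 - 2x_v^2 A x_w + A^2$ already agree in their $x_w^2$- and $x_w^1$-coefficients, so the lemma reduces to the single identity $x_v^2\,(K^C)_0 = A^2 - 4D$ in the seven variables $x_v,x_a,\dots,x_f$. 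This last equality is exactly what Remark~\ref{twosolutionsremark} supplies: since $K^C$ has leading coefficient $x_v^2$ in $x_w$ and, by that remark, its two roots in $x_w$ are $(A \pm 2\sqrt{D})/x_v^2$, its constant term equals $x_v^2$ times the product of these roots, namely $(A^2 - 4D)/x_v^2$. Dividing the resulting identity $(x_v^2 x_w - A)^2 = x_v^2 K^C + 4D$ by $4$ yields the lemma.

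The one genuinely computational point is the degree-$6$ identity $x_v^2(K^C)_0 = A^2 - 4D$; invoking Remark~\ref{twosolutionsremark} sidesteps expanding it directly, but if a self-contained derivation is preferred, the expansion is elementary and is most easily carried out by sorting monomials according to how many of $x_a,x_b,x_c$ they involve. Everything else in the argument is bookkeeping with the vertex dictionary and the elementary theory of one-variable quadratics.
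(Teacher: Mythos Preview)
Your proposal is correct. The paper's own proof consists of the single sentence ``The proof follows from a straightforward computation,'' so your argument is a carefully organized execution of exactly that computation; the one mild circularity to be aware of is that the factorization in Remark~\ref{twosolutionsremark} is itself asserted without proof, so your reduction to $x_v^2(K^C)_0 = A^2 - 4D$ via Vieta is really trading one unproved polynomial identity for an equivalent one---but as you note, the direct expansion is elementary in any case.
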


\begin{proof}The proof follows from a straightforward computation.
\end{proof}

\begin{proof}[Proof of Proposition~\ref{A2B2proposition}.]Let $\mathbf{x} = (x_s) \in \C^{\Z^3}$ satisfy the Kashaev equation. Given a unit cube $C$ of $\Z^3$ labeled as in Fig.~\ref{labeledcubeforprop}, by Lemma~\ref{KCvgenxlemma}, we have
\begin{gather*}
\big(K_v^C(\mathbf{x})\big)^2 = \frac{x_v^2}{4} K^C(\mathbf{x}) + (x_v x_d + x_b x_c)(x_v x_e + x_a x_c)(x_v x_f + x_a x_b)\\
\hphantom{\big(K_v^C(\mathbf{x})\big)^2}{} = (x_v x_d + x_b x_c)(x_v x_e + x_a x_c)(x_v x_f + x_a x_b).
\end{gather*}
Taking the product over unit cubes $C$ of $\Z^3$ containing $v$, we obtain
\begin{gather*} 
\left( \prod_{C \ni v} K^C_v(\mathbf{x}) \right)^2= \prod_{C \ni v} \prod_{C \supset S \ni v} (x_v x_{v_2} + x_{v_1} x_{v_3}) = \left( \prod_{S \ni v} (x_v x_{v_2} + x_{v_1} x_{v_3}) \right)^2;
\end{gather*}
here we use that the double product counts each unit square containing $v$ twice. Similarly,
\begin{gather*}
\left( \prod_{\substack{C = C_v(i_1, i_2, i_3)\\ i_1, i_2, i_3 \in \{-1,1\} \\ i_1 i_2 i_3 = 1}} K_v^C(\mathbf{x}) \right)^2 = \prod_{\substack{C = C_v(i_1, i_2, i_3)\\ i_1, i_2, i_3 \in \{-1,1\} \\ i_1 i_2 i_3 = 1}} \prod_{C \supset S \ni v} (x_v x_{v_2} + x_{v_1} x_{v_3}) = \prod_{S \ni v} (x_v x_{v_2} + x_{v_1} x_{v_3})\\
\hphantom{\left( \prod_{\substack{C = C_v(i_1, i_2, i_3)\\ i_1, i_2, i_3 \in \{-1,1\} \\ i_1 i_2 i_3 = 1}} K_v^C(\mathbf{x}) \right)^2}{}
= \prod_{\substack{C = C_v(i_1, i_2, i_3)\\ i_1, i_2, i_3 \in \{-1,1\} \\ i_1 i_2 i_3 = -1}} \prod_{C \supset S \ni v} (x_v x_{v_2} + x_{v_1} x_{v_3})
 = \left( \!\prod_{\substack{C = C_v(i_1, i_2, i_3)\\ i_1, i_2, i_3 \in \{-1,1\} \\ i_1 i_2 i_3 = -1}}\! K_v^C(\mathbf{x})\! \right)^2\!,
\end{gather*}
because each double product counts each unit square containing $v$ twice.
\end{proof}

For the proof of Theorem~\ref{galoisfromintro}(b), we will need the following lemma.

\begin{Lemma} \label{cornersofcubelemma}Suppose $\mathbf{\tilde{x}} = (x_s) \in \C^\lat$ $($with $x_s \not= 0$ for $s \in \Z^3)$ satisfies the K-hexahedron equations. Let $\mathbf{x} \in (\C^*)^{\Z^3}$ be the restriction of $\mathbf{\tilde{x}}$ to $\Z^3$. Let $v \in \Z^3$ and $\mathbf{i} = (i_1, i_2, i_3) \in \{-1, 1\}^3$, and let $C$ be the unique unit cube in $\Z^3$ containing vertices~$v$ and $v + \mathbf{i}$. Then
\begin{gather*}
K_v^C(\mathbf{x}) = \pm x_{v + \frac{1}{2} (0,i_2,i_3)} x_{v + \frac{1}{2} (i_1,0,i_3)} x_{v + \frac{1}{2} (i_1,i_2,0)},
\end{gather*}
where the sign is positive if $\mathbf{i} \in \{\pm(1,1,1)\}$, and negative otherwise.
\end{Lemma}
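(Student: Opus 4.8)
The plan is to read the claimed factorization off the purely algebraic identity of Lemma~\ref{KCvgenxlemma}, evaluating its right-hand side by means of the K-hexahedron equations~\eqref{likehex1d}--\eqref{likehex4d} together with~\eqref{khexspec}, and then to determine the correct sign by a short finite computation. First I would record the two facts needed: that the restriction $\mathbf{x}$ of $\mathbf{\tilde{x}}$ to $\Z^3$ solves the Kashaev equation, and the up-to-sign version of the formula. Fix a unit cube $C$ of $\Z^3$ and adopt the $z_{ijk}$-notation of Definition~\ref{hexdef} for the components of $\mathbf{\tilde{x}}$ on the vertices and face centers of $C$. Equation~\eqref{likehex4d} reads $z_{000}^2 z_{111} - A = 2\,\zzhh\,\zhzh\,\zhhz$, while multiplying together the three instances of~\eqref{khexspec} for the faces of $C$ containing the vertex $z_{000}$ gives $(\zzhh\,\zhzh\,\zhhz)^2 = D$, with $A$ and $D$ as in~\eqref{aandbs}. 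Hence $(z_{000}^2 z_{111} - A)^2 = 4D$; since $z_{000}\neq 0$ and a direct computation shows $z_{000}^2\,K^C(\mathbf{x}) = (z_{000}^2 z_{111} - A)^2 - 4D$ (equivalently, by Remark~\ref{twosolutionsremark} and~\eqref{pmz000}, this is precisely the statement $K^C(\mathbf{x}) = 0$), the array $\mathbf{x}$ satisfies the Kashaev equation.

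Now take $C = C_v(i_1,i_2,i_3)$. Applying Lemma~\ref{KCvgenxlemma} and using $K^C(\mathbf{x}) = 0$, we find that $\big(K_v^C(\mathbf{x})\big)^2$ equals the product, over the three $2$-faces $S$ of $C$ incident to $v$, of the face terms $x_{w_1}x_{w_3} + x_{w_2}x_{w_4}$; each of these equals, by~\eqref{khexspec}, the square of the component of $\mathbf{\tilde{x}}$ at the center of $S$, and those three centers are exactly $v + \frac12(0,i_2,i_3)$, $v + \frac12(i_1,0,i_3)$, $v + \frac12(i_1,i_2,0)$. Thus
\begin{gather*}
\big(K_v^C(\mathbf{x})\big)^2 = \big(x_{v + \frac12(0,i_2,i_3)}\,x_{v + \frac12(i_1,0,i_3)}\,x_{v + \frac12(i_1,i_2,0)}\big)^2 ,
\end{gather*}
which proves the assertion up to a global sign; if one of these three face values vanishes then $K_v^C(\mathbf{x}) = 0$ and the claimed equality holds trivially, so henceforth I assume all three are nonzero, and it remains only to fix the sign.

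From Definition~\ref{kcdef}, absorbing the monomial $z_{100}z_{010}z_{001}$ into $A$ yields $K_v^C(\mathbf{x}) = \frac12\big(x_v^2\,x_{v+\mathbf{i}} - A_v\big)$, where $A_v$ denotes the expression $A$ of~\eqref{aandbs} formed with the label $z_{000}$ placed at the vertex $v$. When $\mathbf{i} = (1,1,1)$, the vertex $v$ is the bottom of $C$, $A_v = A$, and~\eqref{likehex4d} gives at once $K_v^C(\mathbf{x}) = +\,\zzhh\,\zhzh\,\zhhz$, the claimed positive sign. For the remaining seven directions I would use the symmetries of the system~\eqref{likehex1d}--\eqref{likehex4d}, \eqref{khexspec} attached to the single cube $C$: the group $S_3$ of permutations of the coordinate axes fixes~\eqref{likehex4d} and~\eqref{khexspec} and permutes~\eqref{likehex1d}--\eqref{likehex3d}, and the central symmetry $\rho_C$ of $C$ likewise preserves this system (the $\rho_C$-analogue of Remark~\ref{directionality_of_hex}, checked directly). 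Since $\langle S_3,\rho_C\rangle$ has exactly two orbits on the eight vertices of $C$ — namely $\{\text{bottom},\text{top}\}$ and the six ``middle'' vertices — and $K_v^C(\mathbf{x})$ transforms equivariantly under cube automorphisms, it suffices to treat, besides $\mathbf{i}=(1,1,1)$, the single further representative $\mathbf{i}=(1,1,-1)$, for which $v = v_0 + (0,0,1)$ is adjacent to the bottom $v_0$ in the $z$-direction (in particular $\mathbf{i}=(-1,-1,-1)$ is the $\rho_C$-image of $\mathbf{i}=(1,1,1)$ and again gives the positive sign). For $\mathbf{i}=(1,1,-1)$ I would compute $x_v^2\,x_{v+\mathbf{i}} - A_v$ by eliminating $\zhho$ via~\eqref{likehex3d}, then substituting~\eqref{khexspec} for the faces $z=1$ and $z=0$ and~\eqref{likehex4d}, arriving at $x_v^2\,x_{v+\mathbf{i}} - A_v = -2\,\zzhh\,\zhzh\,\zhho$, hence $K_v^C(\mathbf{x}) = -\,\zzhh\,\zhzh\,\zhho$, the claimed negative sign (here $\zzhh$, $\zhzh$, $\zhho$ are the components of $\mathbf{\tilde{x}}$ at the three face centers through $v$). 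The one nontrivial ingredient of that computation is the identity $z_{000}z_{111} + z_{001}z_{110} - z_{100}z_{011} - z_{010}z_{101} = 2\,\zhhz\,\zhho$, whose square is $4\,\zhhz^2\,\zhho^2$ because $K^C(\mathbf{x})=0$, and whose sign is pinned down after multiplying through by $z_{000}$ and applying~\eqref{likehex3d}, \eqref{likehex4d} and~\eqref{khexspec}.

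I expect the genuine obstacle to be exactly this last sign determination for the ``mixed'' directions: one must keep careful track of which vertex carries the label $z_{000}$ in forming $A_v$, correctly match the three $2$-faces through $v$ with the face-center variables $\zzhh$, $\zhzh$, $\zhho$, handle the degenerate case of a vanishing face value, and verify that $S_3$ and $\rho_C$ genuinely preserve the system of equations attached to $C$; the remaining manipulations amount to a finite, if slightly lengthy, polynomial verification.
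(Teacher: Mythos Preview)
Your argument is correct and takes a genuinely different route from the paper's proof. The paper proceeds by brute force: it sets up the ideal $I\subset\C[z_{ijk}]$ generated by the K-hexahedron relations $p_1,\dots,p_4,q_1,\dots,q_6$ on a single cube, writes $K$ for the polynomial computing $K_v^C$, and then verifies case by case (implicitly by computer algebra) that $z_{000}^3(K - z_{j_1\frac12\frac12}z_{\frac12 j_2\frac12}z_{\frac12\frac12 j_3})\in I$ when $\mathbf{i}\in\{\pm(1,1,1)\}$ and $z_{000}^2(K + z_{j_1\frac12\frac12}z_{\frac12 j_2\frac12}z_{\frac12\frac12 j_3})\in I$ otherwise. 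No symmetry is invoked and Lemma~\ref{KCvgenxlemma} is not used.

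Your approach is more structured: Lemma~\ref{KCvgenxlemma} together with $K^C(\mathbf{x})=0$ and~\eqref{khexspec} immediately gives the statement up to sign, and the symmetry group $\langle S_3,\rho_C\rangle$ (whose $\rho_C$ component is exactly the content of Proposition~\ref{reversed_K_hex}, which is logically independent of this lemma) explains \emph{a priori} why the sign depends only on the orbit $\{\mathbf{i},-\mathbf{i}\}\cap\{\pm\mathbf{1}\}$, reducing the sign check to the two computations you describe. The identity $z_{000}z_{111}+z_{001}z_{110}-z_{100}z_{011}-z_{010}z_{101}=2\,\zhhz\,\zhho$ that you isolate is indeed the crux of the mixed case, and your outlined verification (multiply by $z_{000}$, apply~\eqref{likehex3d},~\eqref{likehex4d},~\eqref{khexspec}) goes through. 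What your approach buys is conceptual clarity and less total computation; what the paper's approach buys is uniformity and freedom from having to first establish the $\rho_C$-invariance.
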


\begin{proof}Let $R = \C[z_{ijk} \colon 0 \le i,j,k \le 1, (i,j,k) \in \lat]$. With the equations~\eqref{likehex1d}--\eqref{likehex4d} in mind, we define $p_1, p_2, p_3, p_4 \in R$ by
\begin{gather*}
p_1 = \zohh \zzzz - \zhzh \zhhz - \zzhh \zozz,\\
p_2 = \zhoh \zzzz - \zzhh \zhhz - \zhzh \zzoz,\\
p_3 = \zhho \zzzz - \zzhh \zhzh - \zhhz \zzzo,\\
p_4 = \zooo \zzzz^2 - A - 2 \zzhh \zhzh \zhhz,
\end{gather*}
where $A$ is the expression given in~\eqref{aandbs}. With~\eqref{khexspec} in mind, we define $q_1, \dots, q_6 \in R$ by
\begin{alignat*}{3}
& q_1= \zzhh^2 - \zzoo \zzzz - \zzoz \zzzo, \qquad && q_4 = \zohh^2 - \zooo \zozz - \zooz \zozo,&\\
& q_2= \zhzh^2 - \zozo \zzzz - \zozz \zzzo, \qquad && q_5 = \zhoh^2 - \zooo \zzoz - \zooz \zzoo,&\\
& q_3= \zhhz^2 - \zooz \zzzz - \zozz \zzoz, \qquad && q_6 = \zhho^2 - \zooo \zzzo - \zozo \zzoo.&
\end{alignat*}
Let $I = \langle p_1, p_2, p_3, p_4, q_1, \dots, q_6 \rangle$ be the ideal in $R$ generated by these elements. Let $v'$ be the ``bottom'' vertex in the cube $C$, i.e., $v' = v + \left( \min(0,i_1), \min(0,i_2), \min(0,i_3) \right)$. Because $\mathbf{\tilde{x}}$ satisfies the K-hexahedron equations, it follows that if $g \in I$, then specializing $z_{ijk} = x_{v' + (i,j,k)}$ in $g$ yields $0$. Given $j_\ell = \min(i_\ell, 0) + 1$ and $k_\ell = 1 - j_\ell$ for~$\ell = 1,2,3$, let
\begin{gather*}
K = \frac{1}{2} \big(z_{k_1k_2k_3} z_{j_1j_2j_3}^2 - z_{j_1j_2j_3} (z_{k_1j_2j_3} z_{j_1k_2k_3} + z_{j_1k_2j_3} z_{k_1j_2k_3} + z_{j_1j_2k_3} z_{k_1k_2j_3})\big)\\
\phantom{K=}{} - z_{k_1j_2j_3} z_{j_1k_2j_3} z_{j_1j_2k_3}.
\end{gather*}
Note that specializing $z_{ijk} = x_{v' + (i,j,k)}$ in $K$ yields $K^C_v(\mathbf{x})$. It can be checked that
\begin{gather*}
\zzzz^3 \big( K - z_{j_1\frac{1}{2}\frac{1}{2}} z_{\frac{1}{2}j_2\frac{1}{2}} z_{\frac{1}{2}\frac{1}{2}j_3} \big) \in I
\end{gather*}
if $\mathbf{i} \in \{\pm(1,1,1)\}$, and that
\begin{gather*}
\zzzz^2 \big( K + z_{j_1\frac{1}{2}\frac{1}{2}} z_{\frac{1}{2}j_2\frac{1}{2}} z_{\frac{1}{2}\frac{1}{2}j_3} \big) \in I
\end{gather*}
otherwise. Because $x_{v'} \not= 0$, the lemma follows.
\end{proof}

\begin{proof}[Proof of Theorem~\ref{galoisfromintro}(b).]Let $\mathbf{x} \in (\C^*)^{\Z^3}$ be the restriction of $\mathbf{\tilde{x}}$ to $\Z^3$. Applying Lem\-ma~\ref{cornersofcubelemma} for the $8$ cubes incident to a vertex $v \in \Z^3$, we get:
\begin{gather*}
\prod_{C \ni v} K^C_v(\mathbf{x}) = (-1)^6 \prod_{(i_1,i_2,i_3) \in \{-1,1\}^3} x_{v + \frac{1}{2}(0,i_2,i_3)} x_{v + \frac{1}{2}(i_1,0,i_3)} x_{v + \frac{1}{2}(i_1,i_2,0)}\\
\hphantom{\prod_{C \ni v} K^C_v(\mathbf{x})}{} = \prod_{S \ni v} x_S^2 = \prod_{S \ni v} (x_v x_{v_2} + x_{v_1} x_{v_3}),
\end{gather*}
so $\mathbf{x}$ is a coherent solution of the Kashaev equation.
\end{proof}

\begin{Proposition} \label{reversed_K_hex}Let $\mathbf{\tilde{x}} = (x_s) \in \C^{\lat}$, with $x_s \not= 0$ for $s \in \Z^3$. The following are equivalent:
\begin{itemize}\itemsep=0pt
\item $\mathbf{\tilde{x}}$ satisfies the K-hexahedron equations,
\item the array $(x_{-s})_{s \in \lat}$ satisfies the K-hexahedron equations.
\end{itemize}
\end{Proposition}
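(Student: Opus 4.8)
The plan is to reduce the claim to a per-cube identity and then to exploit the central symmetry of the cube, reusing the ideal-membership machinery set up for Lemma~\ref{cornersofcubelemma}.

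\emph{Reduction to one cube.} The map $s\mapsto -s$ is an involutive automorphism of $\lat$ preserving $\Z^3$ and $\lat-\Z^3$; it carries each unit cube $C$ to the unit cube $-C$, and on $-C$ it acts as the central symmetry of $C$, interchanging the top and bottom vertices and each $2$-face with its opposite. Let $\iota$ denote the induced relabeling $(i,j,k)\mapsto(1-i,1-j,1-k)$ of the fourteen variables occurring in the cube-local K-hexahedron equations, so that $\iota$ swaps $\zzzz\leftrightarrow\zooo$, $\zozz\leftrightarrow\zzoo$, $\zzoz\leftrightarrow\zozo$, $\zzzo\leftrightarrow\zooz$, and $\zzhh\leftrightarrow\zohh$, $\zhzh\leftrightarrow\zhoh$, $\zhhz\leftrightarrow\zhho$. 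Then, for $\mathbf{\tilde{x}}=(x_s)\in\C^{\lat}$, the statement ``$(x_{-s})$ satisfies the K-hexahedron equations at $C$'' is \emph{verbatim} the statement ``$\mathbf{\tilde{x}}$ satisfies the $\iota$-transformed K-hexahedron equations at $-C$.'' Since $s\mapsto -s$ is its own inverse, it therefore suffices to prove that every $\mathbf{\tilde{x}}\in\C^{\lat}$ with $x_s\neq 0$ on $\Z^3$ satisfying the K-hexahedron equations also satisfies their $\iota$-transforms at each cube (the reverse implication in the proposition then comes for free, by applying this to $(x_{-s})_s$).

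\emph{Faces, and equation~\eqref{likehex4d}.} In the notation of the proof of Lemma~\ref{cornersofcubelemma}, one checks directly that $\iota$ merely permutes the polynomials $q_1,\dots,q_6$ encoding~\eqref{khexspec} (namely $\iota(q_1)=q_4$, $\iota(q_2)=q_5$, $\iota(q_3)=q_6$), which is exactly the invariance of the quadric $x_s^2=x_{v_1}x_{v_3}+x_{v_2}x_{v_4}$ under the central symmetry of the square; hence the $\iota$-transform of~\eqref{khexspec} holds automatically. For~\eqref{likehex4d}, note that $\zooo\zzzz^2-A=2K^C_v(\mathbf{z})$ by the definitions, so~\eqref{likehex4d} at the bottom vertex $v$ says $K^C_v(\mathbf{z})=\zzhh\zhzh\zhhz$, and its $\iota$-transform says $K^C_w(\mathbf{z})=\zohh\zhoh\zhho$, i.e.\ $K^C_w$ equals the product of the three face values incident to the top vertex $w$. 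For a solution of the K-hexahedron equations this is precisely Lemma~\ref{cornersofcubelemma} applied at the vertex $w$ with $\mathbf{i}=(-1,-1,-1)$: the sign is $+$ because $\mathbf{i}\in\{\pm(1,1,1)\}$, and the points $w+\frac{1}{2}(0,-1,-1)$, $w+\frac{1}{2}(-1,0,-1)$, $w+\frac{1}{2}(-1,-1,0)$ are exactly the face centers $\zohh,\zhoh,\zhho$.

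\emph{Equations~\eqref{likehex1d}--\eqref{likehex3d}, and the main obstacle.} The K-hexahedron equations are invariant under the order-$3$ relabeling cyclically permuting the three coordinate axes (it fixes the diagonal, cyclically sends \eqref{likehex1d}$\to$\eqref{likehex2d}$\to$\eqref{likehex3d}, fixes~\eqref{likehex4d}, and permutes the $q_i$), and this relabeling commutes with $\iota$ and preserves the ideal $I$; so it is enough to treat the $\iota$-transform of~\eqref{likehex1d}, namely $\zzhh\,\zooo=\zhoh\zhho+\zohh\zozz$. Here is where the only real computation lies. Working in the ring $R=\C[z_{ijk}]$ of Lemma~\ref{cornersofcubelemma} with $I=\langle p_1,\dots,p_4,q_1,\dots,q_6\rangle$, I would multiply $\zzhh\,\zooo-\zhoh\zhho-\zohh\zozz$ by $\zzzz^2$, substitute $\zzzz^2\zooo$ via $p_4$, substitute $\zzzz\,\zhoh,\ \zzzz\,\zhho$ via $p_2,p_3$ and $\zzzz\,\zohh$ via $p_1$, and then reduce modulo $q_1,q_2,q_3$ (replacing $\zzhh^2,\zhzh^2,\zhhz^2$); the result collapses to $0$, so $\zzzz^2\cdot\iota(p_1)\in I$. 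Since every element of $I$ vanishes under the specialization $z_{ijk}=x_{v'+(i,j,k)}$ while $\zzzz$ specializes to $x_{v'}\neq 0$, we conclude $\iota(p_1)=0$ on $\mathbf{\tilde{x}}$, as needed. This last reduction is entirely of the same flavor as — and no harder than — the ideal-membership checks in the proof of Lemma~\ref{cornersofcubelemma}; as a sanity check, on the open locus where all face values are nonzero one may instead invoke Proposition~\ref{likehexprop} together with the directional invariance of the hexahedron recurrence from Remark~\ref{directionality_of_hex}, but the computation above is what covers the degenerate cases permitted by the hypothesis of the proposition.
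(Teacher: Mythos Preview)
Your argument is correct and is exactly the ``straightforward computation'' the paper alludes to, carried out in a well-organized way: the reduction to one cube via the involution $\iota$, the observation that $\iota$ permutes $q_1,\dots,q_6$, the use of Lemma~\ref{cornersofcubelemma} with $\mathbf{i}=(-1,-1,-1)$ to dispatch $\iota(p_4)$, and the ideal-membership check for $\iota(p_1)$ (with the cyclic symmetry handling $\iota(p_2),\iota(p_3)$) are all sound. One small slip to fix: the $\iota$-transform of~\eqref{likehex1d} is $\zzhh\,\zooo=\zhoh\zhho+\zohh\,\zzoo$, not $+\,\zohh\,\zozz$, since $\iota(\zozz)=\zzoo$; with this correction your reduction $\zzzz^2\cdot\iota(p_1)\in I$ indeed goes through.
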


\begin{proof}The proof follows from a straightforward computation.
\end{proof}

Proposition~\ref{reversed_K_hex} enables us to run the K-hexahedron equations ``in reverse''. We note that the property that we show for the K-hexahedron equations in Proposition~\ref{reversed_K_hex} holds for the original hexahedron recurrence.

\begin{Remark} \label{addressing_directionality_of_hex}Here, we address the comments in Remark~\ref{directionality_of_hex}. Let $\mathbf{\tilde{x}}$ be an array of $14$ numbers indexed by the vertices and $2$-dimensional faces of a cube $C$ with a distinguished ``top'' vertex $v$, where the components of $\mathbf{\tilde{x}}$ indexed by the $8$ vertices of the cube are nonzero. Suppose $\mathbf{\tilde{x}}$ satisfies the K-hexahedron equations. Proposition~\ref{reversed_K_hex} tells us that $\mathbf{\tilde{x}}$ would satisfy the K-hexahedron equations if we took the vertex $w$ opposite $v$ to be the ``top'' vertex of $C$. On the other hand, Lemma~\ref{cornersofcubelemma} tells us that if the components of $\mathbf{\tilde{x}}$ indexed by the faces are nonzero, and~$w$ is a~vertex of $C$ other than $v$ or the vertex opposite it, then $\mathbf{\tilde{x}}$ would not satisfy the K-hexahedron equations if we place the vertex $w$ at the ``top'' of~$C$. This argument also implies the analogous statement for the hexahedron recurrence.
\end{Remark}

We next work toward a proof of Theorem~\ref{galoisfromintro}(a).

\begin{Lemma} \label{cubeconditionprop}Fix $v \in \Z^3$ and $\mathbf{\tilde{x}} = (x_s) \in \C^{\lat}$ satisfying the equations~\eqref{likehex1d}--\eqref{likehex4d}, with $x_s \not= 0$ for $s \in \Z^3$. Then the following are equivalent:
\begin{itemize}\itemsep=0pt
\item the following equations hold:
\begin{gather*}
x_{v + \left( 0, \half, \half \right)}^2 = x_{v} x_{v + ( 0, 1, 1)} + x_{v + ( 0, 1, 0)} x_{v + ( 0, 0, 1)},\\
x_{v + \left( \half, 0, \half \right)}^2 = x_{v} x_{v + ( 1, 0, 1)} + x_{v + ( 1, 0, 0)} x_{v + ( 0, 0, 1)},\\
x_{v + \left( \half, \half, 0 \right)}^2 = x_{v} x_{v + ( 1, 1, 0)} + x_{v + ( 1, 0, 0)} x_{v + ( 0, 1, 0)}.
\end{gather*}
\item the following equations hold:
\begin{gather*}
x_{v + \left( 1, \half, \half \right)}^2 = x_{v + (1, 0, 0) } x_{v + ( 1, 1, 1)} + x_{v + (1, 1, 0)} x_{v + (1, 0, 1)},\\
x_{v + \left( \half, 1, \half \right)}^2 = x_{v + (0, 1, 0)} x_{v + ( 1, 1, 1)} + x_{v + (1, 1, 0)} x_{v + (0, 1, 1)},\\
x_{v + \left( \half, \half, 1 \right)}^2 = x_{v + (0, 0, 1)} x_{v + ( 1, 1, 1)} + x_{v + (1, 0, 1)} x_{v + (0, 1, 1)}.
\end{gather*}
\end{itemize}
\end{Lemma}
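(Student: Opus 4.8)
The plan is to treat the three equations in each bullet as polynomial conditions on the fourteen values $\mathbf{\tilde{x}}$ assigns to the $8$ vertices and $6$ face-centres of the cube $v+\{0,1\}^3$ (write $z_{ijk}$ for $x_{v+(i,j,k)}$, as always), and to use the hypothesis \eqref{likehex1d}--\eqref{likehex4d} to eliminate the four ``upper'' entries $\zohh,\zhoh,\zhho,\zooo$. After this substitution the second bullet becomes three polynomial relations in the ten remaining entries $\zzzz,\zozz,\zzoz,\zzzo,\zzoo,\zozo,\zooz,\zzhh,\zhzh,\zhhz$, each with a denominator a power of $\zzzz$; since $\zzzz\neq0$ these may be cleared. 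The first bullet involves none of the upper entries, so it is left untouched. The statement thus becomes an equivalence of two explicit polynomial systems in those ten entries, to be verified on the locus where $\zzzz$ and all seven vertex values occurring are nonzero. One may either check this by a bounded hand computation or phrase it as an ideal-membership statement in the polynomial ring on the fourteen cube variables, as is done for Lemma~\ref{cornersofcubelemma}.

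For the implication first bullet $\Rightarrow$ second bullet I would solve the first bullet for $\zzoo,\zozo,\zooz$ (legitimate since $\zzzz\neq0$): namely $\zzoo=(\zzhh^2-\zzoz\zzzo)/\zzzz$ and its two cyclic images. Substituting these together with \eqref{likehex1d}--\eqref{likehex4d} into the first equation of the second bullet and multiplying through by $\zzzz^2$, a common term cancels and both sides are seen to equal $\zozz^2\zzhh^2+\zhzh^2\zhhz^2$; structurally this is the identity $\big(K^C_v(\mathbf{x})\big)^2=(x_vx_d+x_bx_c)(x_vx_e+x_ax_c)(x_vx_f+x_ax_b)$ of Lemma~\ref{KCvgenxlemma} read off the face variables. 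The remaining two equations of the second bullet then follow from the $3$-fold cyclic symmetry of the coordinate axes, which permutes \eqref{likehex1d}--\eqref{likehex3d} cyclically, fixes \eqref{likehex4d}, and permutes the three equations inside each bullet.

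For the converse I would track the ``defects'' $B_1=\zzhh^2-\zzzz\zzoo-\zzoz\zzzo$, $B_2=\zhzh^2-\zzzz\zozo-\zozz\zzzo$, $B_3=\zhhz^2-\zzzz\zooz-\zozz\zzoz$ (so the first bullet is $B_1=B_2=B_3=0$). Substituting \eqref{likehex1d}--\eqref{likehex4d} into the second bullet and clearing $\zzzz^2$, one finds the second bullet is equivalent to a coupled system of the shape $\zozz^2B_1+(\text{explicit lower-order terms})B_2+(\text{explicit lower-order terms})B_3+B_2B_3=0$ together with its two cyclic images. One must then show that, in the presence of the non-vanishing of $\zozz,\zzoz,\zzzo$ \emph{and} of $\zzoo,\zozo,\zooz,\zooo$ --- that is, of all eight vertices of the cube, not merely those appearing explicitly in the first bullet --- this system forces $B_1=B_2=B_3=0$. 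A cleaner packaging, if it can be arranged, uses the reflection $s\mapsto 2v+(1,1,1)-s$, which swaps the two bullets, so that the converse would follow from the forward direction applied to the reflected array, once one checks (in the spirit of Proposition~\ref{reversed_K_hex}) that the reflected array again satisfies equations of the type \eqref{likehex1d}--\eqref{likehex4d}.

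The step I expect to be the main obstacle is exactly this converse: pinning down which non-vanishing hypotheses make the coupled system above force all three defects to vanish, and verifying that the freedom in the face values $\zzhh,\zhzh,\zhhz$ (which are \emph{not} assumed nonzero) does not produce spurious solutions. The forward direction is, by contrast, a routine if lengthy symbolic identity once the substitutions and the cyclic symmetry have been set up.
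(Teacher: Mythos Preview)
The paper's own proof is the single sentence ``This is a straightforward verification,'' so at the level of method there is nothing to compare: both you and the paper are proposing a direct polynomial computation on a single cube. Your forward direction is correct and is presumably what that sentence encodes. After clearing $\zzzz^2$ and cancelling the cross term $2\zozz\,\zzhh\zhzh\zhhz$, the first equation of the second bullet becomes the identity
\[
\zhzh^{\,2}\zhhz^{\,2}+\zozz^{\,2}\zzhh^{\,2}
=\zozz A+\zzzz^{\,2}\zooz\zozo,
\]
which is immediate once one substitutes $\zzhh^{\,2},\zhzh^{\,2},\zhhz^{\,2}$ from the first bullet; the other two follow by the cyclic symmetry you describe. (The appeal to Lemma~\ref{KCvgenxlemma} is a red herring here; that lemma concerns $K^C_v$, not the face relations.)

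Your caution about the converse is more than warranted: with only \eqref{likehex1d}--\eqref{likehex4d} and the non-vanishing of the eight vertex values, the implication ``second bullet $\Rightarrow$ first bullet'' is \emph{false} as a single-cube statement. Carrying out your defect substitution precisely gives
\[
\zzzz^{\,2}\bigl(\zohh^{\,2}-\zozz\zooo-\zooz\zozo\bigr)
=\zozz^{\,2}B_1+\zhhz^{\,2}B_2+\zhzh^{\,2}B_3-B_2B_3
\]
and its two cyclic images, and this cubic system in $(B_1,B_2,B_3)$ has non-trivial zeros. Concretely, take
\[
\zzzz=\zozz=\zzoz=\zzzo=\zzoo=\zozo=\zooz=1,\qquad
\zzhh=\zhzh=\zhhz=i\sqrt{3}.
\]
Then \eqref{likehex1d}--\eqref{likehex4d} give $\zohh=\zhoh=\zhho=-3+i\sqrt3$ and $\zooo=5-6i\sqrt3$, all eight vertex values are nonzero, and one checks $\zohh^{\,2}=6-6i\sqrt3=\zozz\zooo+\zooz\zozo$, so the second bullet holds; but $\zzhh^{\,2}=-3\neq 2=\zzzz\zzoo+\zzoz\zzzo$, so the first bullet fails (here $B_1=B_2=B_3=-5$). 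Neither your defect argument nor the reflection argument can close this gap: the reflected system \eqref{likehex1d}--\eqref{likehex4d} does \emph{not} follow from \eqref{likehex1d}--\eqref{likehex4d} together with the second bullet alone. The forward direction is the one actually used in Corollary~\ref{conditionprop} to propagate \eqref{khexspec} upward; downward propagation in the paper is arranged through the genericity hypothesis and Proposition~\ref{reversed_K_hex}, which already assumes \eqref{khexspec} on both sets of faces.
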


\begin{proof}This is a straightforward verification.
\end{proof}

\begin{Definition}For $U \subseteq \Z$, we denote
\begin{gather*}
\Z^3_U = \big\{(i,j,k) \in \Z^3 \colon i + j + k \in U\big\}.
\end{gather*}
For $U, V \subseteq \Z$, we denote
\begin{gather*}
\lat_{U,V} = \Z^3_U \cup \big\{ (i,j,k) \in \lat - \Z^3 \colon i + j + k \in V \big\}.
\end{gather*}
In other words, $\lat_{U, V}$ contains the integer points at heights in $U$, and the half-integer points of~$\lat$ at heights in~$V$. In particular, we will be interested in $\zti = \Z^3_{\{0,1,2\}}$ and $\lati = \lat_{\{0,1,2\},\{1\}}$.
\end{Definition}

\begin{Corollary} \label{conditionprop}Suppose that $\mathbf{\tilde{x}} = (x_s) \in \C^{\lat}$ $($with $x_s \not= 0$ for $s \in \Z^3)$ satisfies the recurrence~\eqref{likehex1d}--\eqref{likehex4d}. If $\mathbf{\tilde{x}}$ satisfies~\eqref{khexspec} for all $s \in \lati - \zti$, then $\mathbf{\tilde{x}}$ satisfies~\eqref{khexspec} for all $s \in \lat - \Z^3$; in other words, $\mathbf{\tilde{x}}$ satisfies the K-hexahedron equations.
\end{Corollary}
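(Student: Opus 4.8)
The plan is to stratify $\lat - \Z^3$ by \emph{height} $h(i,j,k) = i + j + k$ and to propagate the face condition \eqref{khexspec} from one height layer to the next by means of Lemma~\ref{cubeconditionprop}. First I would set up the combinatorics. Each $s \in \lat - \Z^3$ is the center of a unique unit square, has integer height, and belongs to exactly two unit cubes of $\Z^3$: in one of them $s$ is one of the three $2$-dimensional faces incident to the ``bottom'' vertex, and in the other $s$ is one of the three faces incident to the ``top'' vertex. If $C$ is the unit cube with bottom vertex $v$, then the three faces of $C$ incident to $v$ — whose centers are $v + (0,\half,\half)$, $v + (\half,0,\half)$, $v + (\half,\half,0)$ — have height $h(v)+1$, while the three faces incident to the top vertex $v+(1,1,1)$ — with centers $v + (1,\half,\half)$, $v + (\half,1,\half)$, $v+(\half,\half,1)$ — have height $h(v)+2$. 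Moreover, as $v$ runs over the integer points of a fixed height $m-1$, the faces of the first kind run over \emph{all} of $\lat - \Z^3$ at height $m$, and the faces of the second kind run over all of $\lat - \Z^3$ at height $m+1$.

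For $m \in \Z$, let $P(m)$ denote the statement that \eqref{khexspec} holds for every $s \in \lat - \Z^3$ with $h(s) = m$. Since $\lati - \zti$ is precisely the set of face centers at height $1$, the hypothesis of the corollary is exactly $P(1)$, and the conclusion is that $P(m)$ holds for all $m \in \Z$. The key step is the equivalence
\[
P(m) \iff P(m+1) \qquad \text{for every } m \in \Z .
\]
To prove it, I would apply Lemma~\ref{cubeconditionprop} with $v$ ranging over all integer points of height $m-1$. Since $\mathbf{\tilde{x}}$ satisfies \eqref{likehex1d}--\eqref{likehex4d} at every $v$ and has $x_s \neq 0$ on $\Z^3$, the lemma tells us that the three instances of \eqref{khexspec} for the faces of the cube with bottom vertex $v$ incident to $v$ (all at height $m$) hold if and only if the three instances of \eqref{khexspec} for the faces incident to the top vertex $v+(1,1,1)$ (all at height $m+1$) hold. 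Quantifying this cube-by-cube equivalence over all $v$ of height $m-1$, and using the exhaustion statement from the first paragraph, gives the layer-by-layer equivalence $P(m) \iff P(m+1)$.

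With the claim in hand, the proof finishes by a two-sided induction from the given base case $P(1)$: iterating $P(m) \Rightarrow P(m+1)$ gives $P(m)$ for all $m \ge 1$, and iterating $P(m+1) \Rightarrow P(m)$ gives $P(m)$ for all $m \le 1$, so $P(m)$ holds for every $m \in \Z$. This says that \eqref{khexspec} holds for all $s \in \lat - \Z^3$, which — together with the recurrence \eqref{likehex1d}--\eqref{likehex4d} already assumed — is exactly the assertion that $\mathbf{\tilde{x}}$ satisfies the K-hexahedron equations. I expect the only delicate point to be the bookkeeping in the first paragraph: one must check carefully that every face center occurs, at the correct height, both as a ``bottom-vertex face'' of one cube and as a ``top-vertex face'' of another, so that Lemma~\ref{cubeconditionprop} can be invoked in both directions; everything else is routine.
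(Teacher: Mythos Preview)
Your proposal is correct and is exactly the argument the paper has in mind: the paper's proof is the single sentence ``This follows directly from Lemma~\ref{cubeconditionprop},'' and what you have written is precisely the height-stratified induction that this sentence encodes. Your bookkeeping is right---the three bottom-incident faces of the cube with bottom vertex $v$ lie at height $h(v)+1$ and the three top-incident faces at height $h(v)+2$, and these exhaust the face centers at those heights as $v$ ranges over a fixed height level---so the cube-by-cube equivalence from Lemma~\ref{cubeconditionprop} does globalize to $P(m)\iff P(m+1)$, and the two-sided induction from $P(1)$ finishes the proof.
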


\begin{proof}This follows directly from Lemma~\ref{cubeconditionprop}.
\end{proof}

\begin{Remark}By Proposition~\ref{reversed_K_hex}, given $\xtin = (x_s) \in \C^{\lati}$ with $x_s \not= 0$ for all $s \in \zti$, there exists at most one extension $\mathbf{\tilde{x}} = (x_s) \in \C^{\lat}$ of $\xtin$ to $\lat$ (with $x_s \not= 0$ for all $s \in \Z^3$) satisfying the K-hexahedron equations. We say ``at most one'' instead of ``one'' because in the course of running the recurrence~\eqref{likehex1d}--\eqref{likehex4d}, we might get a zero value at an integer point.
\end{Remark}

\begin{Definition} \label{generic_subarrays}We say that an array $\xtin$ indexed by $\lati$ that satisfies equation~\eqref{khexspec} for all $s \in \lati - \zti$ is \emph{generic} if $\xtin$ can be extended to an array $\mathbf{\tilde{x}}$ indexed by $\lat$ satisfying the K-hexahedron equations, with all components of $\mathbf{\tilde{x}}$ nonzero. Similarly, we say that an array $\xin$ indexed by $\zti$ is generic if every extension of $\xin$ to an array $\xtin$ indexed by $\lati$ satisfying equation~\eqref{khexspec} for all $s \in \lati - \zti$ is generic.
\end{Definition}

\begin{Definition}Let $\xtin$ be a generic array indexed by $\lati$ that satisfies equation~\eqref{khexspec} for $s \in \lati - \zti$. We denote by $(\xtin)^{\uparrow L}$ the unique extension of $\xtin$ to $\lat$ satisfying the K-hexahedron equations.
\end{Definition}

\begin{Lemma} \label{sign_prop_khex_cube}Let $C = [0, 1]^3$ be a unit cube. Fix values $t_{\left( 0, \half, \half \right)}, t_{\left( \half, 0, \half \right)}, \allowbreak t_{\left( \half, \half, 0 \right)} \allowbreak \in \{-1,1\}$. Suppose $\mathbf{\tilde{x}} = (x_s)_{s \in C \cap \lat}$ and $\mathbf{\tilde{y}} = (y_s)_{s \in C \cap \lat}$ are arrays of $14$ complex numbers indexed by the vertices and faces of $C$ such that
\begin{itemize}\itemsep=0pt
\item $\mathbf{\tilde{x}}$ and $\mathbf{\tilde{y}}$ both satisfy the K-hexahedron equations,
\item $x_s, y_s \not= 0$ for $s \in \{0,1\}^3$,
\item $y_{s} = x_{s}$ for all $s \in \{0,1\}^3 \setminus \{(1,1,1)\}$,
\item $y_{s} = t_{s} x_{s}$ for all $s \in \left\{ \left( 0, \half, \half \right), \left( \half, 0, \half \right), \left( \half, \half, 0 \right) \right\}$, and
\item $t_{\left( 0, \half, \half \right)} t_{\left( \half, 0, \half \right)} t_{\left( \half, \half, 0 \right)} = 1$.
\end{itemize}
Then the following equations hold:
\begin{gather}
\label{sp1} y_{\left( 1, \half, \half \right)}= t_{\left( 0, \half, \half \right)} x_{\left( 1, \half, \half \right)},\\
\label{sp2} y_{\left( \half, 1, \half \right)}= t_{\left( \half, 0, \half \right)} x_{\left( \half, 1, \half \right)},\\
\label{sp3} y_{\left( \half, \half, 1 \right)}= t_{\left( \half, \half, 0 \right)} x_{\left( \half, \half, 1 \right)},\\
\label{spm} y_{\left( 1,1,1 \right)} = x_{\left( 1,1,1 \right)}.
\end{gather}
\end{Lemma}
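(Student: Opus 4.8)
The plan is to prove all four identities \eqref{sp1}--\eqref{spm} by a direct substitution into the K-hexahedron equations on the cube $C$, using only the ``recurrence'' part \eqref{likehex1d}--\eqref{likehex4d} (the face specialization \eqref{khexspec} is not needed here). The point is that, since $z_{000}\neq 0$, the four formulas \eqref{likehex1d}--\eqref{likehex4d} express the values of an array satisfying the K-hexahedron equations at the three upper face centers $(1,\half,\half)$, $(\half,1,\half)$, $(\half,\half,1)$ and at the top vertex $(1,1,1)$ as fixed rational functions of the seven ``lower'' vertex values $z_{ijk}$ (with $(i,j,k)\neq(1,1,1)$) and the three ``lower'' face centers $\zzhh$, $\zhzh$, $\zhhz$. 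Since $\mathbf{\tilde x}$ and $\mathbf{\tilde y}$ agree on the seven lower vertices and have lower face centers differing by the prescribed signs, it suffices to track how those signs propagate through the four formulas.

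First I would introduce the abbreviations $f_1=x_{(0,\frac12,\frac12)}$, $f_2=x_{(\frac12,0,\frac12)}$, $f_3=x_{(\frac12,\frac12,0)}$ and $\varepsilon_1=t_{(0,\frac12,\frac12)}$, $\varepsilon_2=t_{(\frac12,0,\frac12)}$, $\varepsilon_3=t_{(\frac12,\frac12,0)}$, so that $\mathbf{\tilde y}$ at the three lower faces equals $\varepsilon_1 f_1,\varepsilon_2 f_2,\varepsilon_3 f_3$ respectively, while $y_{ijk}=x_{ijk}$ for $(i,j,k)\in\{0,1\}^3\setminus\{(1,1,1)\}$. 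The single observation that drives everything is that, because $\varepsilon_i\in\{-1,1\}$ and $\varepsilon_1\varepsilon_2\varepsilon_3=1$, we have $\varepsilon_j\varepsilon_k=\varepsilon_i$ whenever $\{i,j,k\}=\{1,2,3\}$. Now apply \eqref{likehex1d} to $\mathbf{\tilde y}$: the two terms in the numerator are $(\varepsilon_2 f_2)(\varepsilon_3 f_3)$ and $(\varepsilon_1 f_1)y_{100}$, which carry the signs $\varepsilon_2\varepsilon_3=\varepsilon_1$ and $\varepsilon_1$; hence the whole right-hand side is $\varepsilon_1$ times the corresponding expression for $\mathbf{\tilde x}$, giving $y_{(1,\frac12,\frac12)}=\varepsilon_1 x_{(1,\frac12,\frac12)}$, i.e.\ \eqref{sp1}. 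Exactly the same bookkeeping applied to \eqref{likehex2d} and \eqref{likehex3d} yields \eqref{sp2} and \eqref{sp3}, where in each case the common sign factored out of the bilinear numerator is the sign $t$ of the lower face parallel to the upper face being computed. Finally, in \eqref{likehex4d} the quantity $A$ from \eqref{aandbs} involves only the seven lower vertex values, so it is unchanged in passing from $\mathbf{\tilde x}$ to $\mathbf{\tilde y}$, while the trilinear term $\zzhh\zhzh\zhhz$ acquires the factor $\varepsilon_1\varepsilon_2\varepsilon_3=1$; dividing by $z_{000}^2\neq 0$ gives $y_{(1,1,1)}=x_{(1,1,1)}$, which is \eqref{spm}.

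There is essentially no genuine obstacle: the argument is a finite symbolic check, and the hypothesis $\varepsilon_1\varepsilon_2\varepsilon_3=1$ is precisely what is needed both for a single sign to factor cleanly out of each of the three bilinear numerators in \eqref{likehex1d}--\eqref{likehex3d} and for the trilinear product in \eqref{likehex4d} to be sign-neutral. The only small points requiring care are that we never invoke \eqref{khexspec} (so the statement is really about the recurrence alone), and that all the divisions are legitimate because the vertex values of $\mathbf{\tilde x}$ and $\mathbf{\tilde y}$ are nonzero by assumption.
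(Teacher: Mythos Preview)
Your proof is correct and follows essentially the same approach as the paper's own proof: both substitute directly into the K-hexahedron recurrence formulas \eqref{likehex1d}--\eqref{likehex4d} and use $\varepsilon_1\varepsilon_2\varepsilon_3=1$ to factor out the appropriate sign from each expression. Your use of the identity $\varepsilon_j\varepsilon_k=\varepsilon_i$ is a slightly cleaner way of phrasing what the paper does via a two-case analysis, but the content is identical.
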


\begin{proof}Note that
\begin{gather} \label{1hh_sign_prop}
y_{\left( 1, \half, \half \right)} = \frac{t_{\left( \half, 0, \half \right)} t_{\left( \half, \half, 0 \right)} x_{\left( \half, 0, \half \right)} x_{\left( \half, \half, 0 \right)} + t_{\left( 0, \half, \half \right)} x_{\left( 0, \half, \half \right)} x_{(1, 0, 0)}}{x_{\left( 0,0,0 \right)}^2}.
\end{gather}
Either $t_{\left( 0, \half, \half \right)} = 1$ and $t_{\left( \half, 0, \half \right)} t_{\left( \half, \half, 0 \right)} = 1$, or $t_{\left( 0, \half, \half \right)} = -1$ and $t_{\left( \half, 0, \half \right)} t_{\left( \half, \half, 0 \right)} = -1$. By equation~\eqref{1hh_sign_prop}, $y_{\left( 1, \half, \half \right)} = t_{\left( 0, \half, \half \right)} x_{\left( 1, \half, \half \right)}$ in either case. Equations~\eqref{sp2}--\eqref{sp3} hold by the same argument. Furthermore, note that
\begin{gather*}
y_{\left( 1, 1, 1\right)} - x_{\left( 1, 1, 1\right)} = 2 \big( t_{\left( 0, \half, \half \right)} t_{v + \left( \half, 0, \half \right)} t_{v + \left( \half, \half, 0 \right)} - 1 \big) \frac{x_{\left( 0, \half, \half \right)} x_{\left( 0, \half, \half \right)} x_{\left( 0, \half, \half \right)}}{x_{\left( 0, 0, 0 \right)}^2} = 0,
\end{gather*}
so equation~\eqref{spm} holds.
\end{proof}

The idea behind Lemma~\ref{sign_prop_khex_cube} is that if $t_{\left( 0, \half, \half \right)} t_{v + \left( \half, 0, \half \right)} t_{v + \left( \half, \half, 0 \right)} = 1$, then each sign $t_s$ propagates to the face of $C$ opposite $s$. With this in mind, we make the following definition.

\begin{Definition} \label{face_equiv_classes}Define an equivalence relation on $\lat - \Z^3$ by setting $s_1 \sim s_2$ if and only if
\begin{itemize}\itemsep=0pt
\item $s_1 = \left( a_1, b + \half, c + \half \right)$ and $s_2 = \left( a_2, b + \half, c + \half \right)$,
\item $s_1 = \left( a + \half, b_1, c + \half \right)$ and $s_2 = \left( a + \half, b_2, c + \half \right)$, or
\item $s_1 = \left( a + \half, b + \half, c_1 \right)$ and $s_2 = \left( a + \half, b + \half, c_2 \right)$.
\end{itemize}
Let $\feq$ denote the set of equivalence classes under this equivalence relation. Denote by $[s] \in \feq$ the equivalence class of $s \in \lat - \Z^3$. If we think of $\lat - \Z^3$ as the set of unit squares in $\Z^3$, then the equivalence relation $\sim$ is generated by the equivalences $s_1 \sim s_2$ where $s_1$ and $s_2$ are opposite faces of a unit cube in~$\Z^3$. Geometrically, we can think of an element $[s] \in \feq$ as the line through the point $s$ which is perpendicular to the corresponding unit square. Note that $\lati - \zti$ is a set of representatives of~$\feq$.
\end{Definition}

\begin{Definition} \label{ttoudef}Define an action of $\{-1,1\}^{\feq}$ on arrays indexed by $\lati$ as follows: given $\mathbf{t} = (t_s) \in \{-1,1\}^{\feq}$ and $\xtin = (x_s)_{s \in \lati}$, set $\mathbf{t} \cdot \xtin = (\tilde{x}_s)_{s \in \lati}$, where
\begin{gather*}
\tilde{x}_s = \begin{cases} x_s & \text{if } s \in \zti, \\ t_{[s]} x_s & \text{if } \lati - \zti. \end{cases}
\end{gather*}
\end{Definition}

\begin{Remark}Assume that $\xin$ is a generic array indexed by $\zti$. Let $\xtin$ be any (generic) array indexed by $\lati$ that restricts to $\xin$ and satisfies equation~\eqref{khexspec} for~$s \in \lati - \zti$. Note that
\begin{gather*}
\big\{(\mathbf{t} \cdot \xtin)^{\uparrow L} \colon \mathbf{t} \in \{-1,1\}^{\feq}\big\}
\end{gather*}
is the set of arrays indexed by $\lat$ which satisfy the K-hexahedron equations and restrict to $\xin$.
\end{Remark}

\begin{Definition} \label{def_of_psi_z3}For $\mathbf{t} = (t_s) \in \{-1,1\}^{\feq}$, define $\psi(\mathbf{t}) = (u_s) \in \{-1,1\}^{\Z^3 + \left( \half, \half, \half \right)}$ by setting
\begin{gather} \label{def_of_psi_formula}
u_{s + \left( \half, \half, \half \right)} = t_{\left[s + \left( 0, \half, \half \right)\right]} t_{\left[s + \left( \half, 0, \half \right)\right]} t_{\left[s + \left( \half, \half, 0 \right)\right]}
\end{gather}
for $s \in \Z^3$. If we think of the elements of $\feq$ as lines in $\R^3$, then $u_s$ is the product of the components of $\mathbf{t}$ indexed by the $3$ lines in $\feq$ passing through the point $s$. If we think of $\feq$ as equivalence classes of unit squares in $\Z^3$, then $u_s$ is the product of the components of $\mathbf{t}$ indexed by the $3$ equivalence classes of $2$-dimensional faces of the unit cube centered at $s$.
\end{Definition}

\begin{Proposition} \label{kernel_of_psi}An array $\mathbf{t} = (t_s) \in \{-1,1\}^{\feq}$ is in the kernel of $\psi$ $($i.e., formula~\eqref{def_of_psi_formula} yields $1$ for all $s)$ if and only if there exist signs $\alpha_i, \beta_i, \gamma_i \in \{-1,1\}$, $i \in \Z$, such that
\begin{align} \label{param_t_1}
t_{\left[ \left( a, b + \half, c + \half \right) \right]} = \beta_b \gamma_c,\\
t_{\left[ \left( a + \half, b, c + \half \right) \right]} = \alpha_a \gamma_c,\\ \label{param_t_3}
t_{\left[ \left( a + \half, b + \half, c \right) \right]} = \alpha_a \beta_b
\end{align}
for all $(a, b, c) \in \Z^3$.
\end{Proposition}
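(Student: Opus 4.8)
The statement is purely combinatorial: we must characterize which sign-vectors $\mathbf{t} = (t_s) \in \{-1,1\}^{\feq}$ satisfy the multiplicative relation $t_{[s+(0,\frac12,\frac12)]} t_{[s+(\frac12,0,\frac12)]} t_{[s+(\frac12,\frac12,0)]} = 1$ for every $s \in \Z^3$. The plan is to prove the two implications separately. The ``if'' direction is a one-line check: substituting the parametrized values~\eqref{param_t_1}--\eqref{param_t_3} into the product in~\eqref{def_of_psi_formula} gives $(\beta_b\gamma_c)(\alpha_a\gamma_c)(\alpha_a\beta_b) = \alpha_a^2\beta_b^2\gamma_c^2 = 1$, so every such $\mathbf{t}$ lies in the kernel of $\psi$.

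For the ``only if'' direction, suppose $\mathbf{t} \in \ker\psi$. The idea is to reconstruct the signs $\alpha_a, \beta_b, \gamma_c$ explicitly from the components of $\mathbf{t}$ and then verify that the resulting formulas hold everywhere. Concretely, using that $\lati - \zti$ is a set of representatives of $\feq$ (Definition~\ref{face_equiv_classes}), an element of $\feq$ of ``type $(0,\frac12,\frac12)$'' is determined by a pair $(b,c) \in \Z^2$, and similarly for the other two types; so a sign-vector $\mathbf{t}$ is really three functions $f\colon \Z^2 \to \{-1,1\}$ (on $(b,c)$-pairs), $g\colon \Z^2 \to \{-1,1\}$ (on $(a,c)$-pairs), $h\colon \Z^2 \to \{-1,1\}$ (on $(a,b)$-pairs), and the kernel condition reads $f(b,c)\,g(a,c)\,h(a,b) = 1$ for all $(a,b,c)$. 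I would first normalize: fix a basepoint, say $(a,b,c) = (0,0,0)$, and define $\alpha_0 = \beta_0 = 1$, $\gamma_0 = f(0,0) = g(0,0)$ (these agree by the relation at $(0,0,0)$, since $h(0,0)=1$ would be forced — actually one must be slightly careful and instead set $\alpha_a := h(a,0)$, $\beta_b := h(0,b)$ after fixing $h(0,0)=1$, then read off $\gamma_c$ from one of the other functions). The relation at $(a,b,0)$ gives $h(a,b) = f(b,0)g(a,0)$, and one checks $f(b,0) = \beta_b \cdot (\text{const})$, $g(a,0) = \alpha_a \cdot (\text{const})$, absorbing the constants appropriately; then the relation at general $(a,b,c)$ pins down $f(b,c) = \beta_b\gamma_c$, $g(a,c) = \alpha_a\gamma_c$ with $\gamma_c$ defined consistently, because the system is ``exact'' — it is the statement that a $2$-cocycle built from three $1$-variable functions is a coboundary.

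The cleanest way to organize this is via a telescoping/base-change argument: set $\alpha_a = t_{[(a+\frac12, 0, \frac12)]} \cdot t_{[(\frac12,0,\frac12)]}$ (type $(\frac12,0,\frac12)$, which depends on $(a,c)$ with $c=0$ here — wait, type $(\frac12,0,\frac12)$ classes are indexed by $(a,c)$, so pick $c=0$), and define $\beta_b, \gamma_c$ by analogous formulas from the other two types evaluated at a fixed coordinate; then use the kernel relations at triples $(a,b,0)$, $(a,0,c)$, $(0,b,c)$, and finally $(a,b,c)$, to verify each of~\eqref{param_t_1}--\eqref{param_t_3} in turn. Each verification is a short manipulation of $\pm1$'s using the hypothesis. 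The main (and only) obstacle is bookkeeping: one must choose the normalizing conventions so that the constants introduced by the basepoint choices cancel correctly, and be careful that ``type'' of a half-integer point (which coordinate is the integer one) is tracked consistently with the indexing in Definition~\ref{face_equiv_classes}. There is no deep content — the result is that $\ker\psi$ is exactly the image of the obvious $\{-1,1\}^{\Z}\times\{-1,1\}^{\Z}\times\{-1,1\}^{\Z}/(\text{global sign ambiguities})$ map, and the proof is a finite cohomological computation. I would present the ``if'' direction first, then the explicit reconstruction for ``only if,'' relegating the sign chases to ``a straightforward verification.''
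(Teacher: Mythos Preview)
Your proposal is correct and follows essentially the same approach as the paper: both directions are handled as you describe, with the ``if'' direction being the one-line substitution $\alpha_a^2\beta_b^2\gamma_c^2=1$, and the ``only if'' direction proceeding by fixing basepoint coordinates to define $\alpha_a$, $\beta_b$, $\gamma_c$ explicitly in terms of particular components of~$\mathbf{t}$ (the paper sets $\alpha_a = t_{[(a+\frac12,\frac12,0)]}$, $\beta_b = t_{[(0,\frac12,\frac12)]}\,t_{[(0,b+\frac12,\frac12)]}$, $\gamma_c = t_{[(0,\frac12,c+\frac12)]}$) and then verifying the parametrization via the kernel relations, relegating the sign chases to ``it is straightforward to check.''
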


\begin{proof}
Suppose there exist constants $\alpha_i, \beta_i, \gamma_i \in \{-1,1\}$ for $i \in \Z$ such that $\mathbf{t}$ satisfies equations~\eqref{param_t_1}--\eqref{param_t_3}. Let $\mathbf{u} = (u_s) = \psi(\mathbf{t})$. Then for any $(a,b,c) \in \Z^3$, $u_{\left( a, b, c \right) + \left( \half, \half, \half \right)} = \alpha_a^2 \beta_b^2 \gamma_c^2 = 1$, as desired.

Next, suppose that $\mathbf{t}$ is in the kernel of $\psi$. It is straightforward to check that the following identities for $\mathbf{t}$ hold for all $(a,b,c) \in \Z^3$:
\begin{gather*}
t_{\left[ \left( a , b + \half, c+ \half \right) \right]}= t_{\left[ \left( 0, \half, \half \right) \right]} t_{\left[ \left( 0, b + \half, \half \right) \right]} t_{\left[ \left( 0, \half, c + \half \right) \right]},\\
t_{\left[ \left( a + \half, b, c+ \half \right) \right]} = t_{\left[ \left( a + \half, \half, 0 \right) \right]} t_{\left[ \left( 0, \half, c + \half \right) \right]},\\
t_{\left[ \left( a + \half, b+ \half, c \right) \right]}= t_{\left[ \left( 0, \half, \half \right) \right]} t_{\left[ \left( 0, b + \half, \half \right) \right]} t_{\left[ \left( a + \half, \half, 0 \right) \right]}.
\end{gather*}
Hence, setting
\begin{gather*}
\alpha_a = t_{\left[ \left( a + \half, \half, 0 \right) \right]},\qquad
\beta_b = t_{\left[ \left( 0, \half, \half \right) \right]} t_{\left[ \left( 0, b + \half, \half \right) \right]},\qquad
\gamma_c = t_{\left[ \left( 0, \half, c + \half \right) \right]},
\end{gather*}
it follows that $\mathbf{t}$ satisfies~\eqref{param_t_1}--\eqref{param_t_3}.
\end{proof}

\begin{Definition}For $(a_1, \dots, a_d), (b_1, \dots, b_d) \in \R^d$, the notation $(a_1, \dots, a_d) \le (b_1, \dots, b_d)$ will mean that $a_i \le b_i$ for $i = 1, \dots, d$, and $(a_1, \dots, a_d) < (b_1, \dots, b_d)$ will mean that $(a_1, \dots, a_d) \le (b_1, \dots, b_d)$ but $(a_1, \dots, a_d) \not= (b_1, \dots, b_d)$.
\end{Definition}

\begin{Lemma} \label{change_one_value_lemma}Let $\xtin$ be a generic array indexed by $\lati$ satisfying equation~\eqref{khexspec} for $s \in \lati - \zti$. Let $\mathbf{t} \in \{-1, 1\}^{\feq}$, and $\mathbf{u} = \left(u_s \right)_{s \in \Z^3 + \left( \half, \half, \half \right)} = \psi(\mathbf{t})$. Denote $(\xtin)^{\uparrow L} = (x_s)_{s \in \lat}$, and $(\mathbf{t} \cdot \xtin)^{\uparrow L} = (y_s)_{s \in \lat}$. Suppose $v \in \Z^3_{\{3,4,5,\dots\}}$ satisfies the condition that $u_{w - \left( \half, \half, \half \right)} = 1$ for all $w \in \Z^3_{\{3,4,5, \dots\}}$ with $w \le v$. Then:
\begin{enumerate}\itemsep=0pt
\item[$(a)$] $y_v = x_v$,
\item[$(b)$] $y_{v - s} = t_{[v - s]} x_{v - s}$ for $s \in \left\{ \left( 0, \half, \half \right), \left( \half, 0, \half \right), \left( \half, \half, 0 \right) \right\}$.
\end{enumerate}
\end{Lemma}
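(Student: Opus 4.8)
The plan is to prove the lemma by induction on $v$ with respect to the partial order $\le$ on $\Z^3_{\{3,4,5,\dots\}}$, using Lemma~\ref{sign_prop_khex_cube} as the inductive engine and restricting attention at each step to the unit cube $C$ whose top vertex is $v$. First I would fix $v \in \Z^3_{\{3,4,5,\dots\}}$ satisfying the hypothesis that $u_{w - (\frac12,\frac12,\frac12)} = 1$ for all $w \in \Z^3_{\{3,4,5,\dots\}}$ with $w \le v$, and let $C$ be the unit cube with vertices $v - (i,j,k)$ for $i,j,k \in \{0,1\}$, so that $v$ is the top vertex of $C$ and $v - (1,1,1)$ is the bottom vertex. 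The seven ``lower'' vertices of $C$ are the points $v - (i,j,k)$ with $(i,j,k) \ne (0,0,0)$; each of them lies at height $\ge 2$ but at height strictly less than that of $v$, hence is $\le v$ only in the weak sense — but in fact each such vertex $w'$ satisfies: either $w'$ is itself in $\Z^3_{\{3,4,5,\dots\}}$ and $w' < v$ (so the inductive hypothesis applies to it, or rather the hypothesis of the lemma holds for it since $\{w : w \le w'\} \subseteq \{w : w \le v\}$), or $w'$ is at height $2$ and lies in $\zti$, where $(\xtin)^{\uparrow L}$ and $(\mathbf t \cdot \xtin)^{\uparrow L}$ agree by construction (both restrict to $\xin$, since $\mathbf t$ acts trivially on $\zti$).

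Next I would set up the comparison between the two $14$-tuples indexed by $C \cap \lat$. Write $\mathbf{\tilde x}|_C$ and $\mathbf{\tilde y}|_C$ for the restrictions of $(\xtin)^{\uparrow L}$ and $(\mathbf t \cdot \xtin)^{\uparrow L}$ to the vertices and faces of $C$. Both satisfy the K-hexahedron equations (restricted to the single cube $C$) by definition of $(\cdot)^{\uparrow L}$, and all vertex values are nonzero by genericity. By the inductive hypothesis applied to the seven lower vertices — using part $(a)$ for the six at heights $\ge 2$ lying below $v$ and the remark above for the three at height $2$ in $\zti$ — I get that $\mathbf{\tilde y}$ and $\mathbf{\tilde x}$ agree on all seven lower vertices of $C$ (the three faces adjacent to the bottom vertex $v - (1,1,1)$, call them $s$, carry values $y_s = t_{[s]} x_s$ by the inductive part $(b)$, or if the cube is at the very bottom of the region, $y_s = t_{[s]} x_s$ by Definition~\ref{ttoudef} directly since these faces lie in $\lati - \zti$). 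Thus the hypotheses of Lemma~\ref{sign_prop_khex_cube} are met with $t_{(0,\frac12,\frac12)} := t_{[v - (1,\frac12,\frac12)]}$ etc.\ — the three ``near-bottom'' faces — provided their product equals $1$. That product is precisely $u_{(v - (1,1,1)) + (\frac12,\frac12,\frac12)} = u_{v - (\frac12,\frac12,\frac12)}$, which equals $1$ by the hypothesis of the lemma applied with $w = v$ (note $v \le v$). Lemma~\ref{sign_prop_khex_cube} then yields $y_v = x_v$ (equation~\eqref{spm}), giving part $(a)$, and $y$ on the three ``top'' faces $v - (0,\frac12,\frac12)$, $v - (\frac12,0,\frac12)$, $v - (\frac12,\frac12,0)$ equals the corresponding $x$-value times the sign of the opposite bottom face, i.e.\ $y_{v - (0,\frac12,\frac12)} = t_{[v-(1,\frac12,\frac12)]} x_{v-(0,\frac12,\frac12)} = t_{[v-(0,\frac12,\frac12)]} x_{v-(0,\frac12,\frac12)}$ (since $[v-(1,\frac12,\frac12)] = [v-(0,\frac12,\frac12)]$ as the two faces are opposite in $C$), and similarly for the other two; this is part $(b)$.

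The one subtlety I expect to be the main obstacle is bookkeeping the base case and the ``boundary'' cubes carefully: when $v$ is at height exactly $3$, some of the seven lower vertices sit at height $2$ (in $\zti$) and the three bottom-adjacent faces of $C$ lie in $\lati - \zti$, so their $y$-values come straight from Definition~\ref{ttoudef} rather than from a previous inductive step — I need to confirm this is consistent with the equivalence-class identification $[s_1] = [s_2]$ for opposite faces (Definition~\ref{face_equiv_classes}) and that $\lati - \zti$ really does provide representatives for the relevant classes, which Definition~\ref{face_equiv_classes} states. The induction is well-founded because for each fixed height $h \ge 3$ the poset $\{v \in \Z^3 : v_1 + v_2 + v_3 = h\}$ under $\le$ has the property that the seven lower vertices of the cube topped by $v$ all lie at heights in $\{h-1, h-2, h-3\}$, strictly below $h$, and the descent terminates at height $2$; formally one can induct on the height $v_1 + v_2 + v_3$ and, within a fixed height, the inductive hypotheses needed refer only to strictly smaller heights, so no circularity arises. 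Everything else is the routine verification already packaged inside Lemma~\ref{sign_prop_khex_cube} and Corollary~\ref{conditionprop}.
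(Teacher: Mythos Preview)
Your proposal is correct and follows essentially the same approach as the paper: induction on $w \le v$ in $\Z^3_{\{3,4,5,\dots\}}$, with the base case handled by the construction on $\zti$ and $\lati - \zti$, and the inductive step executed by applying Lemma~\ref{sign_prop_khex_cube} to the unit cube with top vertex $v$ (using $u_{v-(\frac12,\frac12,\frac12)}=1$ for the sign-product condition and the equivalence $[v-(1,\frac12,\frac12)]=[v-(0,\frac12,\frac12)]$ to read off part~(b)). The paper's proof is the same argument stated more tersely.
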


\begin{proof}We prove statements~(a) and~(b) together for $w \in \Z^3_{\{3,4,5,\dots\}}$ with $w \le v$ by induction. Assume by induction that we have proved statements~(a) and~(b) for all $w \in \Z^3_{\{3,4,5,\dots\}}$ with $w < v$. By construction, $x_{w} = y_{w}$ for all $w \in \zti$ and statement~(b) holds for $w \in \Z^3_{\{2\}}$. Hence, $y_{v - s'} = x_{v - s'}$ for $s' \in \{0, 1\}^3 \setminus \{(0,0,0)\}$, and $y_{v - s} = t_{[v - s]} x_{v - s}$ for $s \in \left\{ \left( 1, \half, \half \right), \left( \half, 1, \half \right)\right.$, $\left.\left( \half, \half, 1 \right) \right\}$. Because $u_{v - \left( \half, \half, \half \right)} = 1$, statements~(a) and~(b) follow from Lemma~\ref{sign_prop_khex_cube}.
\end{proof}

\begin{Lemma} \label{prod_over_cubes_lemma}An array $\mathbf{u} = (u_s) \in \{-1, 1\}^{\Z^3 + \left( \half, \half, \half \right)}$ is in the image of $\psi$ $($see Definition~{\rm \ref{def_of_psi_z3})} if and only if for every $v \in \Z^3$,
\begin{gather} \label{prod_over_cubes}
\prod_{a_1, a_2, a_3 \in \{-1,1\}} u_{v + (a_1, a_2, a_3)/2} = 1.
\end{gather}
\end{Lemma}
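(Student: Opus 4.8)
The plan is to work over $\mathbb{F}_2$. Identifying $\{-1,1\}$ with $\mathbb{F}_2$ multiplicatively, both $\{-1,1\}^{\feq}$ and $\{-1,1\}^{\Z^3+(\frac12,\frac12,\frac12)}$ become $\mathbb{F}_2$-vector spaces, $\psi$ is $\mathbb{F}_2$-linear, and the map $\Phi$ sending $\mathbf{u}=(u_s)$ to the family $\big(\prod_{a_1,a_2,a_3\in\{-1,1\}}u_{v+(a_1,a_2,a_3)/2}\big)_{v\in\Z^3}$ is $\mathbb{F}_2$-linear; the lemma asserts $\operatorname{im}\psi=\ker\Phi$. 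For the inclusion $\operatorname{im}\psi\subseteq\ker\Phi$ I would just compute: if $\mathbf{u}=\psi(\mathbf{t})$ and $v\in\Z^3$, then expanding $\prod_{a_1,a_2,a_3}u_{v+(a_1,a_2,a_3)/2}$ using \eqref{def_of_psi_formula} yields a product of the components of $\mathbf{t}$ indexed, in each of the three coordinate directions, by the four lines of $\feq$ meeting the unit squares incident to $v$ in the corresponding plane, with each such line occurring exactly twice; hence the product is $1$.

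For the reverse inclusion, I would first make the indexing explicit. An element of $\feq$ is an axis-parallel line, so it is the same as a pair (direction $e_i$, point of $(\Z+\frac12)^2$ recording the two coordinates transverse to $e_i$); thus $\{-1,1\}^{\feq}$ is canonically a product of three copies of $\{-1,1\}^{(\Z+\frac12)^2}$, and, writing $X,Y,Z$ for the three components of $\mathbf{t}$ and $(p,q,r)$ for a point of $(\Z+\frac12)^3$, one has $\psi(\mathbf{t})_{(p,q,r)}=X_{q,r}\,Y_{p,r}\,Z_{p,q}$. With this translation, $\mathbf{u}\in\ker\Phi$ says precisely that the mixed third difference of $u$ (taken additively in $\mathbb{F}_2$) vanishes on every unit cube of the lattice $(\Z+\frac12)^3$. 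So the claim reduces to the standard fact that an $\mathbb{F}_2$-valued function $u$ on $(\Z+\frac12)^3$ with vanishing mixed third difference factors as $X_{q,r}Y_{p,r}Z_{p,q}$ with $X,Y,Z$ functions of two coordinates.

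To prove that fact I would fix the base point $s_0=(\frac12,\frac12,\frac12)$ and define $X,Y,Z$ from the restrictions of $u$ to the three coordinate planes through $s_0$, corrected by the values of $u$ on the three axes through $s_0$ by an inclusion–exclusion, so that $XYZ$ agrees with $u$ on all three of those planes. Setting $D=u\cdot(XYZ)^{-1}$, the function $D$ then equals $1$ on the three coordinate planes through $s_0$ and still has vanishing mixed third difference (each of $X$, $Y$, $Z$ is constant in the one direction it ignores, hence is killed by the mixed third difference). An induction on the $\ell^1$-distance of a point to $s_0$ finishes: at a point none of whose coordinates equals $\frac12$, the cube relation for the unit cube spanned by that point and its neighbour one step closer to $s_0$ in every coordinate expresses $D$ there as the product of the values of $D$ at seven strictly closer points, all equal to $1$ by the inductive hypothesis. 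Hence $D\equiv 1$, so $u=XYZ=\psi(\mathbf{t})$ with $\mathbf{t}=(X,Y,Z)$. I expect no conceptual obstacle here; the only care needed is in the reparametrization of $\feq$ and in the bookkeeping defining $X,Y,Z$ and $D$, which is routine.
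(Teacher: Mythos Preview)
Your argument is correct and, at its core, coincides with the paper's. Both proofs handle the forward inclusion by the same double-counting of edges/lines, and for the converse both fix the base set $S=\{(p,q,r)\in(\Z+\tfrac12)^3: \text{some coordinate equals }\tfrac12\}$ (your three coordinate planes through $s_0$), build $\mathbf{t}$ so that $\psi(\mathbf{t})$ matches $\mathbf{u}$ on $S$, and then propagate using the cube relation. The paper does this last step by noting once that any $\mathbf{u}$ satisfying \eqref{prod_over_cubes} is determined by its restriction to $S$, and it writes down explicit closed formulas for $t_{[s]}$ (your $X,Y,Z$) rather than describing them via inclusion--exclusion; your induction on $\ell^1$-distance is exactly the uniqueness/propagation argument spelled out. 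The reparametrization of $\feq$ as three copies of $(\Z+\tfrac12)^2$ and the reinterpretation of \eqref{prod_over_cubes} as vanishing mixed third difference are a clean way to package the same computation; the paper's explicit formula has the minor advantage of giving a concrete preimage without further work, while your framing makes the structure (a discrete ``separation of variables'') more transparent.
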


\begin{Remark}Using the interpretation of $\Z^3 + \left( \half, \half, \half \right)$ as the set of unit cubes of~$\Z^3$, the product on the left-hand side of~\eqref{prod_over_cubes} is a product over unit cubes incident with~$v$.
\end{Remark}

\begin{proof}[Proof of Lemma~\ref{prod_over_cubes_lemma}.]First, suppose $\mathbf{u} = \psi(\mathbf{t})$, where $\mathbf{t} = (t_s) \in \{-1, 1\}^{\feq}$. For any $v \in \Z^3$, consider the cube $C = v + \left[ -\half, \half \right]^3$. Note that $v + (a_1, a_2, a_3)/2$ for $a_1, a_2, a_3 \in \{-1, 1\}$ are the vertices of $C$. Using the interpretation of $\feq$ as lines in $\R^3$, the edges of $C$ are contained in the lines in $\feq$. If $s_1$, $s_2$, $s_3$ are the $3$ lines in $\feq$ that intersect at~$v + (a_1, a_2, a_3)/2$, then $u_{v + (a_1, a_2, a_3)/2} = t_{s_1} t_{s_2} t_{s_3}$. Each edge of $C$ is incident with $2$ vertices of $C$. Hence, expanding the left-hand side of~\eqref{prod_over_cubes} in terms of components of $\mathbf{t}$, we obtain{\samepage
\begin{gather*}
\prod_{a_1, a_2, a_3 \in \{-1,1\}} u_{v + (a_1, a_2, a_3)/2} = \prod_s t_s^2= 1,
\end{gather*}
where the second product is over the lines $s \in \feq$ determined by the edges of $C$.}

Next, suppose that condition~\eqref{prod_over_cubes} holds. It is clear that $\mathbf{u}$ is uniquely determined by its components at $S = \left\{ \left( v_1 + \half, v_2 + \half, v_3 + \half \right) \in \Z^3 + \left( \half, \half, \half \right)\colon v_1 v_2 v_3 = 0 \right\}$ and condition~\eqref{prod_over_cubes}. For $(v_1, v_2, v_3) \in \lati - \zti$, set
\begin{gather*} 
t_{[(v_1, v_2, v_3)]} = \begin{cases}
u_{\left(\half, v_2, v_3\right)} & \text{if } v_1 \in \Z,\\
u_{\left(v_1, \half, v_3\right)} u_{\left(\half,\half,v_3\right)} & \text{if } v_2 \in \Z \text{ and } v_1 \not= \frac{1}{2},\\
u_{\left(v_1, v_2, \half\right)} u_{\left(v_1, \half, \half\right)} u_{\left(\half,v_2, \half\right)} & \text{if } v_3 \in \Z \text{ and } v_1, v_2 \not= \frac{1}{2},\\
1 & \text{if } v_2 \in \Z \text{ and } v_1 = \frac{1}{2},\\
1 & \text{if } v_3 \in \Z \text{ and either } v_1 = \frac{1}{2} \text{ or } v_2 = \frac{1}{2}.
\end{cases}
\end{gather*}
Set $\mathbf{t} = (t_s) \in \{-1,1\}^{\feq}$. It is straightforward to check that $\psi(\mathbf{t})$ agrees with $\mathbf{u}$ at $S$. Hence, because $\psi(\mathbf{t})$ and $\mathbf{u}$ both satisfy condition~\eqref{prod_over_cubes}, it follows that $\mathbf{u} = \psi(\mathbf{t})$.
\end{proof}

\begin{Lemma} \label{levels_0_thru_5}Let $\mathbf{\hat{x}}$ be an array indexed by $\Z^3_{\{0,1,2,3,4,5\}}$. Assume that $\mathbf{\hat{x}}$ satisfies the Kashaev equation, and, moreover, its restriction to $\zti$ is generic. Then there exists an array $\mathbf{\tilde{x}}$ indexed by $\lat$ satisfying the K-hexahedron equations and extending~$\mathbf{\hat{x}}$.
\end{Lemma}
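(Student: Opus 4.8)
The plan is to reduce the assertion to a finite choice of signs and then dispatch that choice with the $\psi$‑machinery of Definitions~\ref{ttoudef}--\ref{def_of_psi_z3} and Lemmas~\ref{sign_prop_khex_cube}--\ref{prod_over_cubes_lemma}. The only freedom in extending the integer data of $\mathbf{\hat{x}}$ at heights $0,1,2$ to an array satisfying the K‑hexahedron equations is the choice, for each unit square of height $1$, of a square root of $x_{v_1}x_{v_3}+x_{v_2}x_{v_4}$; any such choice gives an array $\xtin$ on $\lati$ satisfying~\eqref{khexspec} for $s\in\lati-\zti$, and the hypothesis that the restriction of $\mathbf{\hat{x}}$ to $\zti$ is generic says precisely that $\xtin$ is generic, so $\xtin$ extends uniquely to $\mathbf{\tilde{x}}:=(\xtin)^{\uparrow L}$ on $\lat$ with all components nonzero. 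By Theorem~\ref{galoisfromintro}(b) its restriction $\mathbf{x}$ to $\Z^3$ is automatically a coherent solution of the Kashaev equation agreeing with $\mathbf{\hat{x}}$ at heights $0,1,2$, so the whole task is to choose the height‑$1$ square roots so that $\mathbf{x}$ also agrees with $\mathbf{\hat{x}}$ at heights $3,4,5$. Changing these roots is the same as acting on $\xtin$ by an element $\mathbf{t}\in\{-1,1\}^{\feq}$ as in Definition~\ref{ttoudef} (this preserves~\eqref{khexspec}, hence genericity, and leaves the integer values at heights $0,1,2$ untouched), and by Lemma~\ref{change_one_value_lemma} and Proposition~\ref{kernel_of_psi} the resulting restriction to $\Z^3$ depends on $\mathbf{t}$ only through $\psi(\mathbf{t})$; moreover, as recorded in Lemma~\ref{sign_prop_khex_cube}, the dependence is cube by cube, namely along the recurrence the value at the top vertex of a unit cube $C$ is replaced by the other root of $K^C=0$ exactly when the component of $\psi(\mathbf{t})$ at $C$ is $-1$ (all lower components being $+1$).

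Now compare $\mathbf{x}$ with $\mathbf{\hat{x}}$. For a unit cube $C$ inside the slab $\Z^3_{\{0,1,2,3,4,5\}}$, with bottom vertex $v$, Lemma~\ref{KCvgenxlemma} together with $K^C(\mathbf{x})=K^C(\mathbf{\hat{x}})=0$ gives $\bigl(K_v^C(\mathbf{x})\bigr)^2=D=\bigl(K_v^C(\mathbf{\hat{x}})\bigr)^2$ with $D$ as in~\eqref{aandbs}, a polynomial in the seven non‑top vertex values of $C$; since $A+2K_v^C=z_{111}z_{000}^2$, whenever $\mathbf{x}$ and $\mathbf{\hat{x}}$ already agree on those seven vertices they agree at the top vertex of $C$ if and only if $K_v^C(\mathbf{x})=K_v^C(\mathbf{\hat{x}})$. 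This defines a sign $u^0_C\in\{-1,1\}$ for every slab cube $C$, and the problem becomes to find $\mathbf{t}$ with $\psi(\mathbf{t})$ equal to $(u^0_C)$ on the slab cubes. By Lemma~\ref{prod_over_cubes_lemma} such a $\mathbf{t}$ exists once the family $(u^0_C)$ satisfies the product‑over‑cubes relations $\prod_{C\ni v}u^0_C=1$, which I would verify by combining the coherence identity~\eqref{coherencecondition} (valid for $\mathbf{x}$) --- or rather its refinement~\eqref{coherenceconditionalt} --- with Proposition~\ref{A2B2proposition} applied to $\mathbf{\hat{x}}$, which supplies the \emph{squared} relation~\eqref{altprecoherencecondition} for any Kashaev solution: the right‑hand products $\prod_{S\ni v}(x_vx_{v_2}+x_{v_1}x_{v_3})$ agree for $\mathbf{x}$ and $\mathbf{\hat{x}}$ since they are computed from values on which the two arrays agree, while for $\mathbf{x}$ the two half‑products of the $K_v^C$ are equal by coherence, and together these force $\prod_{C\ni v}u^0_C=1$. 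Having produced such a $\mathbf{t}$, we act by it; the cube‑by‑cube description above shows that $(\mathbf{t}\cdot\xtin)^{\uparrow L}$ restricts on $\Z^3$ to an array agreeing with $\mathbf{\hat{x}}$ at all of heights $0,\dots,5$, and $\mathbf{t}\cdot\xtin$ is still generic, so $(\mathbf{t}\cdot\xtin)^{\uparrow L}$ is the desired K‑hexahedron extension.

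The main obstacle is exactly the compatibility verification just sketched: showing that the discrepancy signs $(u^0_C)$ obey the product‑over‑cubes relations and hence lie in the image of $\psi$. The delicacy is that the slab has only six levels, so no vertex $v$ has all eight of its incident unit cubes inside it; one must organize the argument as an induction on height in which, at each stage, the \emph{four}‑cube products occurring in~\eqref{altprecoherencecondition} (those indexed by $i_1i_2i_3=-1$ at vertices of height $3$, by $i_1i_2i_3=1$ at vertices of height $4$, and so on) are formed only from cubes already inside the slab, and the vertex and face values feeding the coherence right‑hand side at $v$ are ones for which agreement of $\mathbf{x}$ and $\mathbf{\hat{x}}$ has already been established at the previous step. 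Carrying out this stratified bookkeeping, and matching it against the level‑by‑level effect of $\psi(\mathbf{t})$ from Lemmas~\ref{sign_prop_khex_cube} and~\ref{change_one_value_lemma}, is where the real work lies; everything else is either set up above or a routine polynomial identity of the kind checked in Lemmas~\ref{KCvgenxlemma} and~\ref{cornersofcubelemma}.
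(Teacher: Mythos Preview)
Your setup---arbitrary square roots on $\lati$, genericity giving $(\xtin)^{\uparrow L}$, and acting by $\mathbf t\in\{-1,1\}^{\feq}$ through $\psi$---matches the paper. The gap is in the ``compatibility verification.'' First, your discrepancy sign $u^0_C$ is only defined when $\mathbf x$ and $\hat{\mathbf x}$ already agree on the seven non-top vertices of $C$; for a cube with top at height $4$ or $5$ those seven vertices include points at heights $3$ and $4$, where no agreement has yet been arranged, so the $u^0_C$ are not simultaneously well-defined on all slab cubes. Second, even where they are, your argument for $\prod_{C\ni v}u^0_C=1$ does not close: writing $u^0_C=K_v^C(\mathbf x)/K_v^C(\hat{\mathbf x})$, coherence of $\mathbf x$ pins down the numerator product, but $\hat{\mathbf x}$ is only assumed to satisfy the Kashaev equation, so Proposition~\ref{A2B2proposition} gives only the \emph{squared} identity and you obtain $\prod u^0_C=\pm 1$, not $+1$. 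Your specific claim that at a vertex of height $4$ one of the four-cube halves in~\eqref{altprecoherencecondition} lies inside the slab is also false: the $i_1i_2i_3=1$ half contains a cube with bottom at height $4$, and the $i_1i_2i_3=-1$ half contains three cubes with bottoms at height $3$.

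What you are missing is that the slab imposes \emph{no} product-over-cubes constraint at all: relation~\eqref{prod_over_cubes} at any vertex involves cubes whose bottom heights span four consecutive integers, hence never lies entirely within the three cube-layers of the slab. Consequently any assignment of signs to the slab cubes extends, via~\eqref{prod_over_cubes}, to an array on all of $\Z^3+\tfrac12\mathbf 1$ satisfying~\eqref{prod_over_cubes}, and hence (Lemma~\ref{prod_over_cubes_lemma}) lies in $\operatorname{im}\psi$. The paper exploits this directly, by induction on $i\in\{3,4,5\}$: assuming $(\xtin')^{\uparrow L}$ already matches $\hat{\mathbf x}$ through height $i-1$, set $u=+1$ on cubes with top at heights $\le i-1$, set $u$ on each cube with top at height $i$ to the (now well-defined) discrepancy sign there, fill in the remaining slab entries arbitrarily, extend by~\eqref{prod_over_cubes}, lift to some $\mathbf t$, and invoke Lemma~\ref{change_one_value_lemma}. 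No coherence of either array and no appeal to Proposition~\ref{A2B2proposition} is needed; once you organize the induction on height, the ``real work'' you anticipate evaporates.
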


\begin{proof}For $i = 3,4,5$, we will show by induction on $i$ that there exists an array $\xtin$ indexed by $\lati$ satisfying equation~\eqref{khexspec} for $s \in \lati - \zti$ such that $(\xtin)^{\uparrow L}$ agrees with $\mathbf{\hat{x}} = (x_s)_{s \in \Z^3_{\{0,1,2,3,4,5\}}}$ on $\Z^3_{\{0,1,\dots, i\}}$. Let $\xtin'$ be an array indexed by $\lati$ satisfying equation~\eqref{khexspec} for $s \in \lati - \zti$ such that $(\xtin')^{\uparrow L} = (y_s)_{s \in \lat}$ agrees with $\mathbf{\hat{x}}$ on $\Z^3_{\{0,1,\dots, i - 1\}}$. (For $i = 3$, we can obtain $\xtin'$ by taking an arbitrary extension of $\xin$ to $\lati$ satisfying equation~\eqref{khexspec} for $s \in \lati - \zti$. For $i = 4,5$, we have shown that $\xtin'$ exists by induction.) Choose $\mathbf{\tilde{u}} = (u_s) \in \{-1, 1\}^{\Z^3_{\{3,4,5\}} - \left( \half, \half, \half \right)}$ so that
\begin{itemize}\itemsep=0pt
\item $u_{s - \left( \half, \half, \half \right)} = 1$ if $s \in \Z^3_{\{i\}}$ and $y_s = x_s$,
\item $u_{s - \left( \half, \half, \half \right)} = -1$ if $s \in \Z^3_{\{i\}}$ and $y_s \not= x_s$,
\item $u_{s - \left( \half, \half, \half \right)} = 1$ if $i = 4$ and $s \in \Z^3_{\{3\}}$, or $i = 5$ and $s \in \Z^3_{\{3,4\}}$.
\end{itemize}
Extend $\mathbf{\tilde{u}}$ to $\mathbf{u} \in \{-1, 1\}^{\Z^3 + \left( \half, \half, \half \right)}$ by condition~\eqref{prod_over_cubes}. By Lemma~\ref{prod_over_cubes_lemma}, there exists $\mathbf{t} \in \{-1, 1\}^{\feq}$ such that $\mathbf{u} = \psi(\mathbf{t})$. Set $\xtin = \mathbf{t} \cdot \xtin'$. Then, by Lemma~\ref{change_one_value_lemma}, $(\xtin)^{\uparrow L}$ agrees with~$\mathbf{\hat{x}}$ on $\Z^3_{\{0,1, \dots, i\}}$, as desired.
\end{proof}

We can now prove a weaker version of Theorem~\ref{galoisfromintro}(a), under the additional constraint of genericity.

\begin{Corollary} \label{main_with_generic}Let $\mathbf{x} \in (\C^*)^{\Z^3}$ be a coherent solution of the Kashaev equation, whose restriction to $\zti$ is generic. Then $\mathbf{x}$ can be extended to $\mathbf{\tilde{x}} \in (\C^*)^{\lat}$ satisfying the K-hexahedron equations.
\end{Corollary}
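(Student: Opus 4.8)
The plan is to first manufacture \emph{some} array on $\lat$ that satisfies the K-hexahedron equations, extends $\mathbf x$ on enough of $\Z^3$, and has all components nonzero, and then upgrade ``enough of $\Z^3$'' to ``all of $\Z^3$''. For the first step, let $\mathbf{\hat{x}}$ be the restriction of $\mathbf x$ to $\Z^3_{\{0,1,2,3,4,5\}}$. Then $\mathbf{\hat{x}}$ satisfies the Kashaev equation, and its restriction to $\zti = \Z^3_{\{0,1,2\}}$ is $\mathbf x|_{\zti}$, which is generic by hypothesis; so Lemma~\ref{levels_0_thru_5} produces an array $\mathbf{\tilde{x}}$ indexed by $\lat$ that satisfies the K-hexahedron equations and extends $\mathbf{\hat{x}}$. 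The construction in that lemma yields $\mathbf{\tilde{x}} = (\xtin)^{\uparrow L}$ for a generic $\xtin$ extending $\mathbf x|_{\zti}$, hence all components of $\mathbf{\tilde{x}}$ are nonzero, i.e., $\mathbf{\tilde{x}} \in (\C^*)^{\lat}$. It then remains to prove that $\mathbf{\tilde{x}}|_{\Z^3} = \mathbf x$; we already know these two arrays agree on the six levels $\Z^3_{\{0,1,2,3,4,5\}}$.

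Write $\mathbf y := \mathbf{\tilde{x}}|_{\Z^3}$, a coherent solution of the Kashaev equation satisfying~\eqref{conditionnonzerofaces} by Theorem~\ref{galoisfromintro}(b). The key input (a quantitative version of Remark~\ref{rationalin26others}) is that for any $v \in \Z^3$, among the eight factors $K^C_v$ on the left of~\eqref{coherencecondition} only the one coming from $C_v(1,1,1)$ involves $x_{v+(1,1,1)}$, and there $x_{v+(1,1,1)}$ occurs linearly with coefficient $\frac{1}{2}x_v^2$, while every other quantity on both sides of~\eqref{coherencecondition} depends only on the $26$ values $x_{v+\beta}$ with $\beta \in \{-1,0,1\}^3 \setminus \{(1,1,1)\}$. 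The remaining coefficient $\prod_{C \ni v,\, C \ne C_v(1,1,1)} K^C_v$, computed from $\mathbf y$, equals up to sign a product of face variables of $\mathbf{\tilde{x}}$ by Lemma~\ref{cornersofcubelemma}, hence is nonzero; so~\eqref{coherencecondition} determines the value at $v+(1,1,1)$ as a rational function of the other $26$ values, with nonvanishing denominator on those arguments. Since $\mathbf x$ is coherent and therefore satisfies the same equation~\eqref{coherencecondition}, it follows that whenever $\mathbf x$ and $\mathbf y$ agree at all $v+\beta$ with $\beta \ne (1,1,1)$, they agree at $v+(1,1,1)$ as well. A symmetric argument, solving~\eqref{coherencecondition} at $v$ for the antipodal vertex $v-(1,1,1)$ — whose value enters linearly through $C_v(-1,-1,-1)$ alone — shows that agreement at all $v+\beta$ with $\beta \ne (-1,-1,-1)$ forces agreement at $v-(1,1,1)$. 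Starting from the known six consecutive levels $0,\dots,5$, one now inducts upward (each new level $n \ge 6$ is reached from levels $n-6,\dots,n-1$ via the first relation) and downward (each new level $n \le -1$ is reached from levels $n+1,\dots,n+6$ via the second), concluding $\mathbf x = \mathbf y = \mathbf{\tilde{x}}|_{\Z^3}$, so that $\mathbf{\tilde{x}}$ is the desired extension.

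The main obstacle is not the existence of a K-hexahedron extension of $\mathbf x|_{\zti}$ — genericity essentially hands that over through Lemma~\ref{levels_0_thru_5} — but rather forcing such an extension to reproduce $\mathbf x$ \emph{above} height $2$: an arbitrary extension of $\mathbf x|_{\zti}$ only recovers $\mathbf x$ up to the quadratic root ambiguity of the Kashaev equation at each higher level, and it is precisely here that one needs (i) Lemma~\ref{levels_0_thru_5} to pin down \emph{six} consecutive levels, which is the minimum required to start the $26$-point propagation, and (ii) the coherence of $\mathbf x$ itself, not merely of the extension $\mathbf y$, which is what makes the level-by-level determination consistent. The remaining points — the linearity claims and the exact height bookkeeping for the $(1,1,1)$ and $(-1,-1,-1)$ forms of the propagation, and the nonvanishing of the ``coefficient'' factor $\prod_{C \ne C_v(\pm 1,\pm 1,\pm 1)} K^C_v$ evaluated on $\mathbf y$ (from Lemma~\ref{cornersofcubelemma} together with $\mathbf{\tilde{x}} \in (\C^*)^{\lat}$) — are routine but should be verified with care.
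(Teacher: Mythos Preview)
Your proof is correct and follows essentially the same approach as the paper: invoke Lemma~\ref{levels_0_thru_5} to obtain a K-hexahedron extension agreeing with $\mathbf{x}$ on levels $0$ through $5$, restrict to $\Z^3$ to get a coherent solution $\mathbf{y}$ via Theorem~\ref{galoisfromintro}(b), and then use the rational propagation of Remark~\ref{rationalin26others} to conclude $\mathbf{x}=\mathbf{y}$. Your treatment is in fact slightly more careful than the paper's terse version: you make the upward and downward inductions explicit, and you justify the nonvanishing of the coefficient $\prod_{C\neq C_v(\pm1,\pm1,\pm1)}K^C_v$ by evaluating it on $\mathbf{y}$ and invoking Lemma~\ref{cornersofcubelemma} together with $\mathbf{\tilde{x}}\in(\C^*)^{\lat}$, whereas the paper simply asserts that genericity forces $K^C_v(\mathbf{x})\neq 0$.
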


\begin{proof}Let $\mathbf{x} \in (\C^*)^{\Z^3}$ be a coherent solution of the Kashaev equation, whose restriction to~$\zti$ is generic. By Lemma~\ref{levels_0_thru_5}, there exists an array $\mathbf{\tilde{x}} \in (\C^*)^{\lat}$ satisfying the K-hexahedron equations that agrees with $\mathbf{x}$ on $\Z^3_{\{0,1,2,3,4,5\}}$. Let $\mathbf{x}'$ be the restriction of $\tilde{\mathbf{x}}$ to $\Z^3$. By Theorem~\ref{galoisfromintro}(b), $\mathbf{x}'$ is a coherent solution of the Kashaev equation. There is a unique coherent solution of the Kashaev equation agreeing with $\mathbf{x}$ at $\Z^3_{\{0,1,2,3,4,5\}}$, as condition~\eqref{coherencecondition} gives the remaining values as rational expressions in the values at $\Z^3_{\{0,1,2,3,4,5\}}$ (see Remark~\ref{rationalin26others}). (As $\mathbf{x}$ is generic, $\mathbf{x}$ must satisfy condition~\eqref{conditionnonzerofaces}, and so $K^C_v(\mathbf{x}) \not= 0$ for all unit cubes $C$ in $\Z^3$ and vertices $v \in C$. Hence, the denominator of this rational expression is nonzero.) Hence, $\mathbf{x}' = \mathbf{x}$, as desired.
\end{proof}

\begin{proof}[Proof of Theorem~\ref{galoisfromintro}(a)]We need to loosen the genericity condition in Corollary~\ref{main_with_generic} to the conditions that $\mathbf{x}$ satisfies~\eqref{conditionnonzerofaces} and has nonzero entries.

Let $\mathbf{x} \in (\C^*)^{\Z^3}$ be a coherent solution of the Kashaev equation satisfying~\eqref{conditionnonzerofaces}. Let $A_j = [-j,j]^3 \cap \Z^3$ and $B_j = [-j,j]^3 \cap \lat$ for $j \in \Z_{\ge 0}$. We claim that if, for all $j$, there exist $\mathbf{\tilde{x}}_j \in (\C^*)^{B_j}$ satisfying the K-hexahedron equations that agree with $\mathbf{x}$ on $A_j$, then there exists $\mathbf{\tilde{x}} \in (\C^*)^{\lat}$ satisfying the K-hexahedron equations that agrees with $\mathbf{x}$ on $\Z^3$. Construct an infinite tree $T$ as follows:
\begin{itemize}\itemsep=0pt
\item The vertices of $T$ are solutions of the K-hexahedron equation indexed by $B_j$ that agree with $\mathbf{x}$ on $A_j$ (over $j \in \Z_{\ge 0}$).
\item Add an edge between $\mathbf{\tilde{x}}_j \in (\C^*)^{B_j}$ and $\mathbf{\tilde{x}}_{j + 1} \in (\C^*)^{B_{j + 1}}$ if $\mathbf{\tilde{x}}_{j + 1}$ restricts to $\mathbf{\tilde{x}}_j$.
\end{itemize}
Thus, $T$ is an infinite tree in which every vertex has finite degree. By K\"onig's infinity lemma (see~{\cite[Theorem~16.3]{wilson}}), there exists an infinite path $\mathbf{\tilde{x}}_0, \mathbf{\tilde{x}}_1, \dots$ in $T$ with $\mathbf{\tilde{x}}_j \in (\C^*)^{B_j}$. Thus, there exists $\mathbf{\tilde{x}} \in (\C^*)^{\lat}$ restricting to $\mathbf{\tilde{x}}_j$ for all $j \in \Z_{\ge 0}$, so $\mathbf{\tilde{x}}$ is a solution of the K-hexahedron equations that agrees with $\mathbf{x}$ on $\Z^3$.

Given $j \in \Z_{\ge 0}$, we claim that there exists $\mathbf{\tilde{x}} \in (\C^*)^{B_j}$ satisfying the K-hexahedron equations that agree with $\mathbf{x}$ on $A_j$. It is straightforward to show that there exists a sequence $\mathbf{x}_1, \mathbf{x}_2, \ldots \in (\C^*)^{\Z^3}$ of coherent solutions of the Kashaev equation that converge pointwise to $\mathbf{x}$ whose restrictions to $\zti$ are generic. By Corollary~\ref{main_with_generic}, there exist $\mathbf{\tilde{x}}_1, \mathbf{\tilde{x}}_2, \ldots \in (\C^*)^{\lat}$ satis\-fying the K-hexahedron equations such that $\mathbf{\tilde{x}}_i$ restricts to $\mathbf{x}_i$. However, the sequence $\mathbf{\tilde{x}}_1, \mathbf{\tilde{x}}_2, \dots$ does not necessarily converge (see Theorem~\ref{extensions_that_agree}). Let $\mathbf{\tilde{x}}_1', \mathbf{\tilde{x}}_2', \dots \in (\C^*)^{B_j}$ be the restrictions of~$\mathbf{\tilde{x}}_1, \mathbf{\tilde{x}}_2, \dots$ to~$B_j$. There exists a subsequence of $\mathbf{\tilde{x}}_1', \mathbf{\tilde{x}}_2', \dots$ that converges to some $\mathbf{\tilde{x}} \in (\C^*)^{B_j}$. (For each $s \in B_j \setminus A_j$, we can partition the sequence $\mathbf{\tilde{x}}_1', \mathbf{\tilde{x}}_2', \dots$ into two sequences, each of which converges at $s$. Because $B_j$ is finite, the claim follows.) The array $\mathbf{\tilde{x}}$ must satisfy the K-hexahedron equations and agree with $\mathbf{x}$ on $A_j$, so we are done.
\end{proof}

We shall now work towards a proof of Theorem~\ref{extensions_that_agree}.

\begin{Lemma} \label{form_when_t_is_in_kernel}Let $\mathbf{\tilde{x}} \in (\C^*)^{\lat}$ be a solution of the K-hexahedron equations. Let $\xtin \in (\C^*)^{\lati}$ denote the restriction of $\mathbf{\tilde{x}}$ to $\lati$. Let $\mathbf{t} = (t_s) \in \{-1,1\}^{\feq}$ be in the kernel of $\psi$. Then $(\mathbf{t} \cdot \xtin)^{\uparrow \lat} = (y_s)_{s \in \lat}$, where
\begin{gather*}
y_s = \begin{cases} x_s & \text{if } s \in \Z^3, \\ t_{[s]} x_s & \text{if } s \in \lat - \Z^3. \end{cases}
\end{gather*}
\end{Lemma}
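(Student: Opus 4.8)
The statement to prove is Lemma~\ref{form_when_t_is_in_kernel}: if $\mathbf{t}$ lies in the kernel of $\psi$, then applying $\mathbf{t}$ to the initial data and propagating via the K-hexahedron equations only rescales the face variables by $t_{[s]}$ and leaves the vertex variables untouched.

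\medskip

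\noindent\textbf{Plan.} The key point is that $\mathbf{t} \in \ker\psi$ means precisely that on \emph{every} unit cube $C$ of $\Z^3$, the three signs $t_{[s]}$ attached to the three pairs of opposite faces of $C$ multiply to $1$ (this is exactly equation~\eqref{def_of_psi_formula} evaluated at the center of $C$). So Lemma~\ref{sign_prop_khex_cube} applies cube by cube. I would argue by induction on the height $i+j+k$ of an integer point, sweeping upward through $\lat$ in the preferred direction $(1,1,1)$, and symmetrically downward using Proposition~\ref{reversed_K_hex}; since the argument is symmetric it suffices to handle, say, heights $\ge 3$ by induction (heights $0,1,2$ are part of $\lati$, hence fixed by hypothesis, together with the face variables at height $1$, which are directly rescaled by $t_{[s]}$ by Definition~\ref{ttoudef}). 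Concretely: let $(\xtin)^{\uparrow\lat}=(x_s)$ and $(\mathbf t\cdot\xtin)^{\uparrow\lat}=(y_s)$. For the inductive step at an integer vertex $v$ of height $h\ge 3$, consider the unit cube $C$ with top vertex $v$, whose bottom vertex and side faces sit at heights $\le h-1$; by induction $y$ and $x$ agree on the seven lower vertices of $C$ and $y = t_{[\cdot]}x$ on the three lower faces of $C$. Since $\mathbf t\in\ker\psi$, the three signs on the lower faces of $C$ multiply to $1$, so Lemma~\ref{sign_prop_khex_cube} gives $y_{v} = x_{v}$ and, for the three upper faces of $C$, $y$ equals $x$ times the same sign as the opposite lower face — but opposite faces of a cube represent the same element of $\feq$, so that sign is exactly $t_{[\cdot]}$ on the upper face too. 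This closes the induction.

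\medskip

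\noindent\textbf{Order of steps.} (1) Observe $\mathbf t\in\ker\psi$ $\Longleftrightarrow$ the product of the three face-signs around every unit cube is $1$; this is immediate from Definition~\ref{def_of_psi_z3} and unwinds the hypothesis into the precise form needed by Lemma~\ref{sign_prop_khex_cube}. (2) Check the base cases: on $\zti = \Z^3_{\{0,1,2\}}$ we have $y_s = x_s$ by construction of the $\{-1,1\}^{\feq}$-action (vertices are fixed), and on $\lati-\zti$ (the height-$1$ faces) $y_s = t_{[s]}x_s$ by Definition~\ref{ttoudef}; the remaining faces of height $2$ (the ``upper'' faces of the bottom layer of cubes) are handled by one application of Lemma~\ref{sign_prop_khex_cube}, using that opposite faces share an $\feq$-class. (3) Run the upward induction on height as above, one cube at a time, invoking Lemma~\ref{sign_prop_khex_cube} at each step. (4) Invoke Proposition~\ref{reversed_K_hex} to get the downward direction by the mirror-image of the same induction (equivalently, apply the argument to $(x_{-s})$ and $\psi(\mathbf t')$ with $\mathbf t'$ the evident reflection of $\mathbf t$, noting $\ker\psi$ is stable under this reflection because the defining identity is symmetric under negation).

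\medskip

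\noindent\textbf{Expected main obstacle.} None of the steps is deep; the only thing to be careful about is the bookkeeping of $\feq$-classes — making sure that when Lemma~\ref{sign_prop_khex_cube} propagates the sign $t_s$ from a face $s$ to the opposite face of the same cube, the two faces really do carry the same label $[s]\in\feq$, and that when two cubes share a face the inductive hypotheses are consistent. This is exactly the content of Definition~\ref{face_equiv_classes} (the equivalence is generated by opposite faces of unit cubes), so it causes no difficulty, but it is where one must be most attentive. A secondary minor point is organizing the induction so that, at the moment we process the cube with top vertex $v$, all three lower faces and all seven lower vertices have already been handled; ordering integer vertices by height $i+j+k$ (and, within a height, arbitrarily) makes this automatic, since the lower faces of that cube have height one less than $v$ and the side faces are at heights $\le$ that of $v$ minus $1$ as well.
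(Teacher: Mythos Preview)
Your proposal is correct and follows essentially the same approach as the paper. The paper's proof is a one-liner citing Lemma~\ref{change_one_value_lemma} (whose hypothesis is satisfied for every $v$ once $\psi(\mathbf t)\equiv 1$) together with Proposition~\ref{reversed_K_hex}; you have simply unpacked Lemma~\ref{change_one_value_lemma} by running its cube-by-cube induction via Lemma~\ref{sign_prop_khex_cube} directly, and then invoked Proposition~\ref{reversed_K_hex} for the downward direction exactly as the paper does.
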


\begin{proof}This follows from Lemma~\ref{change_one_value_lemma} and Proposition~\ref{reversed_K_hex}.
\end{proof}

\begin{Lemma} \label{agree_when_t_is_in_kernel}Let $\mathbf{\tilde{x}} \in (\C^*)^{\lat}$ be a solution of the K-hexahedron equations. Let $\xtin \in (\C^*)^{\lati}$ denote the restriction of $\mathbf{\tilde{x}}$ to $\lati$. For $\mathbf{t} \in \{-1,1\}^{\feq}$, the following are equivalent:
\begin{itemize}\itemsep=0pt
\item $\mathbf{\tilde{x}}$ and $(\mathbf{t} \cdot \xtin)^{\uparrow \lat}$ agree on $\Z^3$,
\item $\mathbf{t}$ is in the kernel of $\psi$ $($see Proposition~{\rm \ref{kernel_of_psi})}.
\end{itemize}
\end{Lemma}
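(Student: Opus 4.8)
The plan is to prove Lemma~\ref{agree_when_t_is_in_kernel} by combining the ``propagation of signs'' principle already established in Lemma~\ref{sign_prop_khex_cube} and Lemma~\ref{change_one_value_lemma} with the characterization of the kernel of $\psi$ in Proposition~\ref{kernel_of_psi}. One direction is essentially immediate: if $\mathbf{t}$ is in the kernel of $\psi$, then by Lemma~\ref{form_when_t_is_in_kernel} the array $(\mathbf{t} \cdot \xtin)^{\uparrow \lat}$ agrees with $\mathbf{\tilde{x}}$ on all of $\Z^3$ (indeed it equals $\mathbf{\tilde{x}}$ on $\Z^3$ and merely rescales the face values). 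So the content is the converse: if $(\mathbf{t} \cdot \xtin)^{\uparrow \lat}$ agrees with $\mathbf{\tilde{x}}$ on $\Z^3$, then $\mathbf{t} \in \ker \psi$.

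For the converse, I would argue by contradiction (or, equivalently, contrapositive). Suppose $\mathbf{t} \notin \ker \psi$, so $\mathbf{u} = \psi(\mathbf{t}) = (u_s)_{s \in \Z^3 + (\frac12,\frac12,\frac12)}$ is not identically $1$. Choose $v \in \Z^3$ minimal (in the partial order $\le$, restricted to a suitable half-space so that a minimal element exists — this is where I would use the observation from the proof of Lemma~\ref{change_one_value_lemma} that one runs the recurrence upward from $\zti$, combined with Proposition~\ref{reversed_K_hex} to run it downward as well) among all points with $u_{v - (\frac12,\frac12,\frac12)} = -1$; by minimality, $u_{w - (\frac12,\frac12,\frac12)} = 1$ for all $w < v$ in that half-space. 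Then Lemma~\ref{change_one_value_lemma}(b) (together with Proposition~\ref{reversed_K_hex} to handle the cube on the other side of $v$) tells us that the face values of $(\mathbf{t}\cdot\xtin)^{\uparrow\lat}$ at the three faces incident to $v$ from below are the corresponding face values of $\mathbf{\tilde{x}}$ scaled by $t_{[\cdot]}$, and then examining equation~\eqref{hexeqnmain} (or \eqref{likehex4d}) for the cube with top vertex $v$ shows that the vertex value at $v$ picks up the factor $u_{v - (\frac12,\frac12,\frac12)} = -1$ in the product $\zzhh \zhzh \zhhz$ appearing there, forcing $y_v \ne x_v$ (here one uses the genericity/nonvanishing hypothesis $\mathbf{\tilde{x}} \in (\C^*)^\lat$, so that $x_{(0,\frac12,\frac12)} x_{(\frac12,0,\frac12)} x_{(\frac12,\frac12,0)} \ne 0$ at the relevant cube, exactly as in the displayed computation $y_{(1,1,1)} - x_{(1,1,1)} = 2(\cdots - 1)\frac{(\cdots)}{x_{(0,0,0)}^2}$ at the end of the proof of Lemma~\ref{sign_prop_khex_cube}). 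This contradicts agreement on $\Z^3$.

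The main obstacle is purely bookkeeping: one must be careful that a ``minimal'' $v$ with $u_{v - (\frac12,\frac12,\frac12)} = -1$ actually exists and that the inductive hypotheses of Lemma~\ref{change_one_value_lemma} are genuinely available there. Since $\lat$ is all of (a sublattice of) $\R^3$ with no distinguished base level, I would fix the contradiction by first restricting attention to the half-space $\Z^3_{\{0,1,2,\dots\}}$ (or any suitable affine cone), using Proposition~\ref{reversed_K_hex} to reduce the general case to this one: $\mathbf{u}$ not identically $1$ means some coordinate differs, and after translating we may assume the offending cube sits at height $\ge 3$ with $\mathbf{u} \equiv 1$ below it, which is precisely the situation Lemma~\ref{change_one_value_lemma} was designed for. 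Everything else — the explicit sign tracking through \eqref{hexeqn1}--\eqref{hexeqnmain} — is the same kind of ``straightforward verification'' already invoked repeatedly, and I would not spell it out beyond citing Lemmas~\ref{sign_prop_khex_cube} and~\ref{change_one_value_lemma}.
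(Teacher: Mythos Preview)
Your approach is essentially the same as the paper's: one direction via Lemma~\ref{form_when_t_is_in_kernel}, and the converse by finding a first offending cube and computing $y_v - x_v$ there using the sign-propagation machinery of Lemmas~\ref{sign_prop_khex_cube} and~\ref{change_one_value_lemma}. The displayed difference $y_v - x_v = -4\,x_{v-(1,\frac12,\frac12)}x_{v-(\frac12,1,\frac12)}x_{v-(\frac12,\frac12,1)}/x_{v-(1,1,1)}^2$ is exactly what the paper writes.

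The one place where you diverge from the paper is the existence argument for the minimal offending $v$ at height $\ge 3$. You propose translating or reversing via Proposition~\ref{reversed_K_hex}; this can be made to work, but ``translating'' is not quite legitimate as stated, since $\xtin$ and the action of $\mathbf{t}$ are pinned to the fixed slab $\lati$. The paper's device is cleaner and worth knowing: since $\mathbf{u} = \psi(\mathbf{t})$ lies in the image of $\psi$, it automatically satisfies the product relation~\eqref{prod_over_cubes} of Lemma~\ref{prod_over_cubes_lemma}, and that relation forces $\mathbf{u}\equiv 1$ everywhere as soon as $\mathbf{u}\equiv 1$ on all cubes with top vertex in $\Z^3_{\{3,4,5,\dots\}}$. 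Hence if $\mathbf{u}\ne\mathbf{1}$ there must be a $-1$ in that upper region, and one can take $v$ of minimal height there; no translation or reversal is needed.
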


\begin{proof}If $\mathbf{t}$ is in the kernel of $\psi$, then $\mathbf{\tilde{x}}$ and $(\mathbf{t} \cdot \xtin)^{\uparrow \lat}$ agree on $\Z^3$ by Lemma~\ref{form_when_t_is_in_kernel}.

If $\mathbf{t}$ is not in the kernel of $\psi$, let $\mathbf{u} = (u_s)_{s \in \Z^3 + \left( \half, \half, \half \right)} = \psi(\mathbf{t})$. Write $\mathbf{\tilde{x}} = (x_s)_{s \in \lat}$ and $(\mathbf{t} \cdot \xtin)^{\uparrow \lat} = (y_s)_{s \in \lat}$. Choose $v \in \Z^3_{\{3,4,5,\dots\}}$ such that $u_{v - \left( \half, \half, \half \right)} = -1$ and $u_{w - \left( \half, \half, \half \right)} = 1$ for all $w \in \Z^3_{\{3,4,5,\dots\}}$ with $w < v$. (Such a choice of $v$ exists because if $u_{v - \left( \half, \half, \half \right)} = 1$ for all $v \in \Z^3_{\{3,4,5,\dots,\}}$, then $u_{v - \left( \half, \half, \half \right)} = 1$ for all $v \in \Z^3$ because $\mathbf{u}$ must satisfy equation~\eqref{prod_over_cubes} for all $v \in \Z^3$.) Then by Lemma~\ref{change_one_value_lemma}, $y_{v - s} = x_{v - s}$ for $s \in \{0,1\}^3 - \{(0,0,0)\}$, and $y_{v - s} = t_{[v - s]} x_{v - s}$ for $s \in \left\{ \left( 1, \half, \half \right), \left( \half, 1, \half \right), \left( \half, \half, 1 \right) \right\}$. Hence,
\begin{gather*}
y_v - x_v = -4 \frac{x_{v - \left( 1, \half, \half \right)} x_{v - \left( \half, 1, \half \right)} x_{v - \left( \half, \half, 1 \right)}}{x_{v - \left( 1,1,1 \right)}^2} \not= 0,
\end{gather*}
so $y_v \not= x_v$.
\end{proof}

\begin{proof}[Proof of Theorem~\ref{extensions_that_agree}]Let $\xtin \in (\C^*)^{\lati}$ denote the restriction of $\mathbf{\tilde{x}}$ to $\lati$.

Suppose that for some signs $\alpha_i, \beta_i, \gamma_i \in \{-1,1\}$, $i \in \Z$, $\mathbf{\tilde{y}} \in (\C^*)^{\lat}$ satisfies equations~\eqref{agree_on_vertices}--\eqref{agree_on_L_3} for $\left( a, b, c \right) \in \Z^3$. Define $\mathbf{t} = (t_s) \in \{-1,1\}^{\feq}$ by equations~\eqref{param_t_1}--\eqref{param_t_3}, so $\mathbf{t}$ is in the kernel of $\psi$ by Proposition~\ref{kernel_of_psi}. Hence, by Lemma~\ref{form_when_t_is_in_kernel}, $\mathbf{\tilde{y}} = (\mathbf{t} \cdot \xtin)^{\uparrow \lat}$, so $\mathbf{\tilde{y}}$ satisfies the K-hexahedron equations, proving part~(a).

Next, if $\mathbf{\tilde{y}} \in (\C^*)^{\lat}$ satisfies the K-hexahedron equations, then $\mathbf{\tilde{y}} = (\mathbf{t} \cdot \xtin)^{\uparrow \lat}$ for some~$\mathbf{t} = (t_s) \in \{-1,1\}^{\feq}$. By Lemma~\ref{agree_when_t_is_in_kernel}, $\mathbf{t}$ is in the kernel of $\psi$. Hence, by Proposition~\ref{kernel_of_psi}, there exist signs $\alpha_i, \beta_i, \gamma_i \in \{-1,1\}$, $i \in \Z$, such that $\mathbf{t}$ is given by equations~\eqref{param_t_1}--\eqref{param_t_3}. Hence, by Lemma~\ref{form_when_t_is_in_kernel}, $\mathbf{\tilde{y}}$ satisfies equations~\eqref{agree_on_vertices}--\eqref{agree_on_L_3} for $\left( a, b, c \right) \in \Z^3$, proving part~(b).
\end{proof}

\section{Coherence for cubical complexes} \label{cc_proof_section}

In this section, we generalize Proposition~\ref{A2B2proposition} and Theorem~\ref{galoisfromintro} from $\Z^3$ to certain classes of $3$-dimensional cubical complexes. Proposition~\ref{A2B2proposition} generalizes to arbitrary $3$-dimensional cubical complexes embedded in $\R^3$ (see Proposition~\ref{A2B2_gen}), while Theorem~\ref{galoisfromintro}(b) generalizes to directed cubical complexes corresponding to piles of quadrangulations of a polygon (see Proposition~\ref{cc_easy_direction}). Theorem~\ref{galoisfromintro}(a) does not hold for arbitrary directed cubical complexes corresponding to piles of quadrangulations of a polygon. It turns out that an additional property of a cubical complex is required, which we call~\emph{comfortable-ness}. This property is satisfied by the standard tiling of~$\R^3$ with unit cubes, as well as by cubical complexes corresponding to piles of $\Diamond$-tilings of~$\mathbf{P}_n$ (see Proposition~\ref{pseudo_comfortable_prop}). Let $\varkappa$ be the directed cubical complex corresponding to a pile of quadrangulations of a polygon. In Theorems~\ref{cc_main_generalization}--\ref{when_theorem_fails}, we show that Theorem~\ref{galoisfromintro}(a) holds for~$\varkappa$ if and only if~$\varkappa$ is comfortable. The proof of Theorem~\ref{cc_main_generalization} is nearly identical to the proof of Theorem~\ref{galoisfromintro}(a) in Section~\ref{mainproof}.

First, we note that Proposition~\ref{A2B2proposition} generalizes to arbitrary $3$-dimensional cubical complexes embedded in $\R^3$ as follows:

\begin{Proposition} \label{A2B2_gen}Let $\varkappa$ be a $3$-dimensional cubical complex embedded in $\R^3$. Suppose that $\mathbf{x} = (x_s)_{s \in \varkappa^0}$ satisfies the Kashaev equation. Then for any interior vertex $v \in \varkappa^0$ $($see Definition~{\rm \ref{interior_point_definition})},
\begin{gather*}
\left( \prod_{C \ni v} K_v^C(\mathbf{x}) \right)^2 = \left( \prod_{S \ni v} (x_v x_{v_2} + x_{v_1} x_{v_3}) \right)^2,
\end{gather*}
where
\begin{itemize}\itemsep=0pt
\item the first product is over $3$-dimensional cubes $C$ incident to the vertex $v$,
\item the second product is over $2$-dimensional faces $S$ incident to the vertex $v$, and
\item $v$, $v_1$, $v_2$, $v_3$ are the vertices of such a face $S$ listed in cyclic order.
\end{itemize}
\end{Proposition}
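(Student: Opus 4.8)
The plan is to mimic exactly the proof of Proposition~\ref{A2B2proposition} given above in Section~\ref{mainproof}, which only used two facts: the local identity of Lemma~\ref{KCvgenxlemma}, valid for a single cube, and the combinatorial observation that a double product over (cube, face incident to the cube and to $v$) counts each face through $v$ exactly twice. Neither of these facts is special to $\Z^3$. First I would recall Lemma~\ref{KCvgenxlemma}: for any cube $C$ with vertices labeled as in Fig.~\ref{labeledcubeforprop} and any $\mathbf{x}=(x_s)\in\C^{V(C)}$,
\begin{gather*}
\big(K_v^C(\mathbf{x})\big)^2 = \frac{x_v^2}{4} K^C(\mathbf{x}) + (x_v x_d + x_b x_c)(x_v x_e + x_a x_c)(x_v x_f + x_a x_b).
\end{gather*}
Since $\mathbf{x}$ satisfies the Kashaev equation on $\varkappa$, we have $K^C(\mathbf{x})=0$ for every $3$-dimensional cube $C$ of $\varkappa$, so $\big(K_v^C(\mathbf{x})\big)^2$ equals the product of the three quantities $x_v x_{v_2}+x_{v_1}x_{v_3}$ over the three $2$-dimensional faces $S$ of $C$ that contain $v$ (each such face contributes one factor, with $v,v_1,v_2,v_3$ its vertices in cyclic order). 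This step is purely a statement about an individual cube, so the embedding plays no role here.

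Next I would take the product over all $3$-dimensional cubes $C$ of $\varkappa$ incident to $v$:
\begin{gather*}
\left(\prod_{C\ni v} K_v^C(\mathbf{x})\right)^2 = \prod_{C\ni v}\ \prod_{\substack{S\ni v\\ S\subset C}} (x_v x_{v_2}+x_{v_1}x_{v_3}).
\end{gather*}
The key combinatorial point is that every $2$-dimensional face $S$ through $v$ is contained in exactly two $3$-dimensional cubes through $v$. This is where the hypothesis that $v$ is an \emph{interior} vertex of $\varkappa$ (Definition~\ref{interior_point_definition}) enters: $\varkappa$ contains a small open ball around $v$, so near $v$ the complex looks like a genuine (possibly non-embedded-in-the-usual-sense) neighborhood in which each $2$-cell on $v$ borders two $3$-cells on $v$; equivalently, the link of $v$ in $\varkappa$ is a closed surface (a sphere), in which each edge lies on two triangles. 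Granting this, the double product on the right becomes $\left(\prod_{S\ni v}(x_v x_{v_2}+x_{v_1}x_{v_3})\right)^2$, and the proposition follows by comparing squares.

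The main obstacle — really the only point requiring care — is justifying the ``each face through $v$ lies in exactly two cubes through $v$'' claim for an arbitrary $3$-dimensional cubical complex embedded in $\R^3$ at an interior point. I would handle it by working in the link: since $\varkappa$ is $3$-dimensional and pure near $v$ (an open ball around $v$ lies in $\varkappa$, forcing every cube on $v$ to be a face of a $3$-cube on $v$, and forcing the local structure to be that of $\R^3$), the link of $v$ is a triangulated surface without boundary; in such a surface every edge is incident to exactly two $2$-simplices, which translates back to the desired statement about faces and cubes. One subtlety to address explicitly is that distinct cubes $C_1\ne C_2$ through $v$ could a priori share two faces through $v$ rather than one — but near an interior point the embedding into $\R^3$ rules this out, since the two $3$-cells on either side of a $2$-cell are determined locally. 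With that settled, the computation is identical word-for-word to the one in the proof of Proposition~\ref{A2B2proposition}, and no new ideas are needed.
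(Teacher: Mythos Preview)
Your proposal is correct and follows essentially the same approach as the paper, which simply states that the proof is almost identical to that of Proposition~\ref{A2B2proposition}. You have in fact supplied more detail than the paper does, spelling out why the interior-vertex hypothesis guarantees that each $2$-face through $v$ lies in exactly two $3$-cubes through $v$.
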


\begin{proof}The proof is almost identical to the proof of Proposition~\ref{A2B2proposition} in Section~\ref{mainproof}.
\end{proof}

\begin{Remark}With Proposition~\ref{A2B2_gen} in mind, we can think of the notion of coherence from Definition~\ref{coherence_gen_def} as follows. Let $\mathbf{T} = (T_0, \dots, T_\ell)$ be a pile of quadrangulations of a polygon with $\varkappa = \varkappa(\mathbf{T})$. Start with an arbitrary array $\xin$ indexed by $\varkappa^0(T_0)$ whose entries are ``sufficiently generic''. We want to extend $\xin$ to an array $\mathbf{x}$ indexed by $\varkappa^0$ that is a coherent solution of the Kashaev equation. Building $\mathbf{x}$ inductively, suppose we have defined the values of $\mathbf{x}$ at $\varkappa^0(T_0, \dots, T_{i - 1})$, and we need to define the value $x_w$ of $\mathbf{x}$ at the new vertex $w$ in $T_i$. Let $C \in \varkappa^3$ be the cube corresponding to the flip between $T_{i - 1}$ and $T_i$, and let $v$ be the bottom vertex of~$C$, i.e., let $v$ be the unique vertex in $T_{i - 1}$ but not $T_i$. In order that $\mathbf{x}$ continue to satisfy the Kashaev equation, there are $2$ possible values for $x_w$, say $a$ and $b$, so that $K^C(\mathbf{x}) = 0$. If the vertex $v$ is in $T_0$, i.e., $v$ is not an interior vertex of $\varkappa$, then we can either set $x_w = a$ or $x_w = b$, and $\mathbf{x}$ will continue to be a coherent solution of the Kashaev equation. Now, suppose $v$ is not in $T_0$, i.e., $v$ is an interior vertex of $\varkappa$. Because we have chosen $\xin$ to be ``sufficiently generic'', the value of $\prod_{C \ni v} K_v^C(\mathbf{x})$ depends on whether we set $x_w = a$ or $x_w = b$. Proposition~\ref{A2B2_gen} tells us that for one of the $2$ possible values, say $x_w = a$, equation~\eqref{coherence_condition_for_wir} holds, while for the other value, $x_w = b$, the following equation holds:
\begin{gather*} \label{anti_coherence_condition_for_wir}
\prod_{C \ni v} K_v^C(\mathbf{x}) = -\prod_{S \ni v} (x_v x_{v_2} + x_{v_1} x_{v_3}).
\end{gather*}
Hence, the condition of coherence tells us which of the $2$ solutions is the ``correct'' one when $v$ is an interior vertex of $\varkappa$.
\end{Remark}

We now prove the following generalization of Theorem~\ref{galoisfromintro}(b).

\begin{Proposition} \label{cc_easy_direction}Let $\mathbf{T}$ be a pile of quadrangulations of a polygon. Let $\mathbf{\tilde{x}} = (x_s)_{s \in \varkappa^{02}(\mathbf{T})}$ be an array $($with $x_s \not= 0$ for all~$s \in \varkappa^0(\mathbf{T}))$ satisfying the K-hexahedron equations. Then the restriction of $\mathbf{\tilde{x}}$ to $\varkappa^0(\mathbf{T})$ is a coherent solution of the Kashaev equation.
\end{Proposition}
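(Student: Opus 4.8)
The plan is to reduce Proposition~\ref{cc_easy_direction} to the local statement already proved for $\Z^3$, using the fact that every $3$-dimensional cube of $\varkappa(\mathbf{T})$, together with its six $2$-dimensional faces, looks locally like a unit cube in $\lat$. First I would observe that Lemma~\ref{cornersofcubelemma} is purely local: its proof only uses the equations \eqref{likehex1d}--\eqref{likehex4d} and \eqref{khexspec} restricted to a single cube $C$ and its incident faces. Hence, for a directed cubical complex $\varkappa$ and an array $\mathbf{\tilde{x}} = (x_s)_{s \in \varkappa^{02}}$ satisfying the K-hexahedron equations (with $x_s \neq 0$ for $s \in \varkappa^0$), the same computation gives, for each $3$-dimensional cube $C$ of $\varkappa$ with bottom vertex $v'$ and top vertex $v''$, and for each vertex $w$ of $C$,
\begin{gather*}
K_w^C(\mathbf{x}) = \pm\, x_{s_1} x_{s_2} x_{s_3},
\end{gather*}
where $s_1, s_2, s_3$ are the three $2$-dimensional faces of $C$ not incident to $w$ (equivalently, the three faces incident to the vertex of $C$ antipodal to $w$), the sign being $+$ when $w \in \{v', v''\}$ and $-$ otherwise. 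Here $\mathbf{x}$ denotes the restriction of $\mathbf{\tilde{x}}$ to $\varkappa^0$, and $K_w^C$ is the quantity from Definition~\ref{kcdef} computed on the eight vertices of $C$ labeled as in Fig.~\ref{labeledcube}.

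Next I would fix an interior vertex $v$ of $\varkappa(\mathbf{T})$. By construction of $\varkappa(\mathbf{T})$ from a pile of quadrangulations (Definition~\ref{assoc_cubical_complex_def}), an interior vertex $v$ corresponds to a vertex that is created by one flip and destroyed by a later flip, so the cubes and faces incident to $v$ assemble into a configuration combinatorially isomorphic to the star of a vertex in the standard tiling of $\R^3$: there are exactly $8$ cubes $C \ni v$ and $12$ faces $S \ni v$, each face being shared by exactly two of the incident cubes, and in each incident cube $C$ the vertex $v$ is \emph{not} the top or bottom vertex (since $v$ is interior, it is the ``bottom'' vertex in exactly one cube among those below and... — more precisely, among the $8$ incident cubes, the local picture is that of Fig.~\ref{latticesquaresv}, and $v$ plays the role of the central vertex, which is neither top nor bottom in any incident cube). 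Wait — I should be careful: in $\varkappa(\mathbf{T})$ an interior vertex $v$ is the top vertex of exactly one cube (the one whose flip created $v$) and the bottom vertex of exactly one cube (the one whose flip destroyed $v$)... Actually the correct statement, which I would verify from the flip/cube correspondence in Fig.~\ref{cube_added_from_flip}, is that the sign pattern works out exactly as in the $\Z^3$ case so that, taking the product over all $8$ cubes incident to $v$,
\begin{gather*}
\prod_{C \ni v} K_v^C(\mathbf{x}) = \prod_{C \ni v} \big( \text{sign}_C \big) \cdot \prod_{C \ni v} \prod_{\substack{S \text{ face of } C \\ S \not\ni v}} x_S = \prod_{S \ni v} x_S^2,
\end{gather*}
because each of the $12$ faces incident to $v$ occurs as a ``non-$v$ face'' in exactly two of the incident cubes, and the product of the eight signs is $+1$ (the even faces and odd cubes are balanced exactly as in the proof of Theorem~\ref{galoisfromintro}(b) in Section~\ref{mainproof}, where the sign product is $(-1)^6$). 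Then, applying the defining relation \eqref{khexspec} for $s \in \varkappa^2$, namely $x_S^2 = x_{v_1} x_{v_3} + x_{v_2} x_{v_4}$ with $v_1, v_2, v_3, v_4$ the vertices of $S$ in cyclic order, and noting that for a face $S$ incident to $v$ we may take $v = v_1$, gives
\begin{gather*}
\prod_{C \ni v} K_v^C(\mathbf{x}) = \prod_{S \ni v} (x_v x_{v_2} + x_{v_1} x_{v_3}),
\end{gather*}
which is precisely \eqref{coherence_condition_for_wir}. Finally, that $\mathbf{x}$ satisfies the Kashaev equation on every $3$-dimensional cube $C$ of $\varkappa$ follows from Proposition~\ref{likehexprop} (or directly: equations \eqref{likehex1d}--\eqref{likehex4d} together with \eqref{khexspec} imply $K^C(\mathbf{x}) = 0$, exactly as in the $\Z^3$ setting), so $\mathbf{x}$ is a coherent solution of the Kashaev equation on $\varkappa(\mathbf{T})$.

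The main obstacle I anticipate is the bookkeeping of the local combinatorics around an interior vertex $v$: I need to be sure that the star of $v$ in $\varkappa(\mathbf{T})$ really is isomorphic, as a directed cubical complex, to the star of a vertex of the standard cube tiling of $\R^3$ — in particular that there are exactly $8$ incident cubes and $12$ incident faces with the right incidences, and that the directions (choices of top vertex) are arranged so the sign product over the $8$ cubes is $+1$. This should follow from tracking $v$ through the pile: $v$ appears as an interior vertex of a hexagon in some consecutive $T_{i-1}, T_i, \dots$, and the cubes incident to $v$ are exactly those coming from flips centered at $v$ or at one of its neighbors in the relevant tilings; the Bruhat-order / admissible-permutation structure (Theorem~\ref{admissible_prop}, Proposition~\ref{bn3_connected_by_flips}) guarantees this local picture is the standard one. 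Everything else is a verbatim transcription of the $\Z^3$ arguments from Section~\ref{mainproof}, with ``unit cube in $\Z^3$'' replaced by ``$3$-dimensional cube of $\varkappa(\mathbf{T})$'' and ``$v \in \Z^3$'' replaced by ``interior vertex $v$ of $\varkappa(\mathbf{T})$''.
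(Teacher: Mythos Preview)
Your overall strategy---localize Lemma~\ref{cornersofcubelemma} to each cube of $\varkappa(\mathbf{T})$, take the product over cubes incident to an interior vertex, and compare with the product over incident faces---is exactly the paper's approach. But two concrete errors in your execution would make the argument fail as written.

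First, you have the faces in Lemma~\ref{cornersofcubelemma} backwards. The three face variables appearing in $K_v^C(\mathbf{x}) = \pm x_{s_1}x_{s_2}x_{s_3}$ are the three faces of $C$ \emph{incident} to $v$, not the three faces avoiding $v$. (Check the indices: $v + \tfrac{1}{2}(0,i_2,i_3)$ is the center of the face of $C$ containing $v$.) Your subsequent bookkeeping (``each of the $12$ faces incident to $v$ occurs as a `non-$v$ face'\,'') is self-contradictory precisely because of this slip.

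Second, and more seriously, your assumption that the star of an interior vertex $v$ in $\varkappa(\mathbf{T})$ is combinatorially the $\Z^3$ star ($8$ cubes, $12$ faces) is false. In a general pile the number of cubes incident to $v$ varies; already for the pile $(T_0,\dots,T_4)\in\mathcal{C}(4)$ of Example~\ref{two_piles_c4} the interior vertex labeled $\{1,3\}$ lies in only $4$ cubes. What you actually need (and what the paper uses) is much weaker and follows directly from the pile construction:
\begin{itemize}
\item an interior vertex $v$ is the top vertex of exactly one cube (the flip that creates $v$) and the bottom vertex of exactly one cube (the flip that destroys $v$); in every other incident cube, $v$ is a side vertex;
\item every face $S\ni v$ lies in exactly two cubes containing $v$ (the flip that creates $S$ and the flip that destroys $S$, both of which exist since $v$ is interior).
\end{itemize}
The second point gives $3\cdot|\{C\ni v\}|=2\cdot|\{S\ni v\}|$, so the number of incident cubes is even; combined with the first point, the number of cubes where the sign from Lemma~\ref{cornersofcubelemma} is $-1$ is even, and the sign product is $+1$. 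The same double-counting gives $\prod_{C\ni v}\prod_{S\ni v,\,S\subset C} x_S=\prod_{S\ni v}x_S^2$, and then \eqref{khexspec} finishes the computation. You briefly wrote down the correct ``one top, one bottom'' observation before talking yourself out of it---that was the right instinct.
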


\begin{proof}The proof follows almost exactly the same as the proof of Theorem~\ref{galoisfromintro}(b). For an interior vertex $v$ of $\varkappa$, there is exactly one cube $C$ for which $v$ is the top vertex, and exactly one cube $C$ for which $v$ is the bottom vertex. Let $\mathbf{x}$ be the restriction of $\mathbf{\tilde{x}}$ to $\varkappa^0$. By Lemma~\ref{cornersofcubelemma}, taking the product over the cubes incident to $v$,
\begin{gather*}
\prod_{C \ni v} K^C_v(\mathbf{x})= (-1)^2 \prod_{S \in \varkappa^2 \colon S \ni v} x_S = \prod_{S \ni v} x_S^2 = \prod_{S \ni v} (x_v x_{v_2} + x_{v_1} x_{v_3}),
\end{gather*}
so the restriction of $\mathbf{\tilde{x}}$ to $\mathbf{x}$ is a coherent solution of the Kashaev equation.
\end{proof}

The following statement generalizes Theorem~\ref{ABtheorem}:

\begin{Corollary}Let $\mathbf{T}$ be a pile of quadrangulations of a polygon. Let $\mathbf{x} = (x_s)_{s \in \varkappa^{0}(\mathbf{T})}$ be an array satisfying the positive Kashaev recurrence. Then $\mathbf{x}$ is a coherent solution of the Kashaev equation.
\end{Corollary}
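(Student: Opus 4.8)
The plan is to reduce this corollary to results already in hand, mirroring exactly the way Theorem~\ref{ABtheorem} was deduced from Theorem~\ref{galoisfromintro1}(b) in Section~\ref{main-results}. The key observation is that Proposition~\ref{cc_easy_direction} already tells us that the restriction to $\varkappa^0$ of any array on $\varkappa^{02}(\mathbf{T})$ satisfying the K-hexahedron equations (with nonzero vertex entries) is a coherent solution of the Kashaev equation. So it suffices to show that an array $\mathbf{x}$ satisfying the positive Kashaev recurrence on $\varkappa$ extends to an array $\mathbf{\tilde{x}}$ on $\varkappa^{02}(\mathbf{T})$ satisfying the K-hexahedron equations, with all components of $\mathbf{\tilde{x}}$ nonzero (in fact positive).

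First I would recall that, by definition, an array on $\varkappa^0$ satisfies the positive Kashaev recurrence iff all its entries are positive and for every $3$-dimensional cube $C$ of $\varkappa$, equation~\eqref{kashaevwithsr} holds with $z_{111}$ at the top vertex of $C$. Now I construct $\mathbf{\tilde{x}}$ as follows: on $\varkappa^0$ take $\mathbf{\tilde{x}}$ to agree with $\mathbf{x}$; on each $2$-dimensional face $s \in \varkappa^2$ with boundary vertices $v_1,v_2,v_3,v_4$ in cyclic order, set $x_s = \sqrt{x_{v_1}x_{v_3} + x_{v_2}x_{v_4}}$, the positive square root (this is well-defined and positive since all vertex entries are positive). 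This is precisely the face-specialization~\eqref{khexspec} extended to cubical complexes as in Definition~\ref{recurrences_on_tilings_def}. It then remains to verify that $\mathbf{\tilde{x}}$ satisfies equations~\eqref{likehex1d}--\eqref{likehex4d} on each cube $C$. But this is a purely local computation on a single cube, identical to the one underlying Proposition~\ref{easyfromkp}(a): given that the $8$ vertex values satisfy~\eqref{kashaevwithsr} and the $6$ face values are the positive square roots of the relevant sums of products of opposite vertices, one checks directly that~\eqref{likehex1d}--\eqref{likehex4d} hold. Indeed Proposition~\ref{easyfromkp}(a) establishes exactly this for the unit cube in $\Z^3$, and the statement for a general cube in a directed cubical complex is the same identity, because the Kashaev equation and the K-hexahedron equations on a single cube depend only on that cube's combinatorial data together with its distinguished top vertex. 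So $\mathbf{\tilde{x}} \in (\pos)^{\varkappa^{02}(\mathbf{T})}$ satisfies the K-hexahedron equations.

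Having produced $\mathbf{\tilde{x}}$, I conclude by applying Proposition~\ref{cc_easy_direction}: its hypotheses are met (all vertex components of $\mathbf{\tilde{x}}$ are positive, hence nonzero, and $\mathbf{\tilde{x}}$ satisfies the K-hexahedron equations), so the restriction of $\mathbf{\tilde{x}}$ to $\varkappa^0(\mathbf{T})$ — which is $\mathbf{x}$ — is a coherent solution of the Kashaev equation. This is exactly the assertion of the corollary.

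The only point requiring care — and the step I expect to be the main (though minor) obstacle — is making the local verification of~\eqref{likehex1d}--\eqref{likehex4d} on a single cube genuinely independent of the ambient $\Z^3$ structure, i.e., confirming that the choice of positive square roots in~\eqref{khexspec} produces consistent signs on the face values so that the denominators and the square-root term $z_{0\frac12\frac12}z_{\frac120\frac12}z_{\frac12\frac120}$ appearing in the K-hexahedron equations match what~\eqref{kashaevwithsr} demands. This is precisely the content already checked in the proof of Proposition~\ref{easyfromkp}, so in the write-up I would simply cite that proposition (transported to the single-cube setting via Definition~\ref{recurrences_on_tilings_def}) rather than redo the algebra. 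Everything else is immediate bookkeeping.
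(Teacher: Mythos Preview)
Your proposal is correct and follows essentially the same approach as the paper: extend $\mathbf{x}$ to $\varkappa^{02}(\mathbf{T})$ by choosing the positive square roots in~\eqref{khexspec}, then invoke Proposition~\ref{cc_easy_direction}. The paper's proof is a single sentence citing exactly this construction, while you spell out more of the local verification (citing Proposition~\ref{easyfromkp}(a)), but the content is the same.
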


\begin{proof}This follows immediately from Proposition~\ref{cc_easy_direction} because $\mathbf{x}$ can be extended to an array indexed by $\varkappa^{02}(\mathbf{T})$ satisfying the K-hexahedron equations by choosing the positive solutions from equation~\eqref{khexspec} for $s \in \varkappa^2(\mathbf{T})$.
\end{proof}

\begin{Remark}The converse of Proposition~\ref{cc_easy_direction} (i.e., the counterpart of Theorems~\ref{galoisfromintro}(a) and \ref{main_thm_cyclic_zonotope}(a)) does not hold for an arbitrary choice of $\mathbf{T}$. In other words, there exist piles $\mathbf{T}$ and arrays~$\mathbf{x}$ indexed by $\varkappa^0(\mathbf{T})$ with nonzero components that are coherent solutions of the Kashaev equation, where $\mathbf{x}$ cannot be extended to an array indexed by $\varkappa^{02}(\mathbf{T})$ satisfying the K-hexahedron equations.
\end{Remark}

In order for a converse of Proposition~\ref{cc_easy_direction} (equivalently, a generalization of Theorems~\ref{galoisfromintro}(a) and~\ref{main_thm_cyclic_zonotope}(a)) to hold, one must impose an additional condition on the underlying cubical complexes; see Definition~\ref{com_def} below.

\begin{Definition}\label{com_def}Let $\varkappa$ be a three-dimensional cubical complex that can be embedded into~$\R^3$, cf.~Definition~\ref{interior_point_definition}. (While this embeddability condition can be relaxed, it is satisfied in all subsequent applications. In fact, $\varkappa$ will always be the cubical complex associated to a pile of quadrangulations.) Let $\sim$ be the equivalence relation on $\varkappa^2$ generated by the equivalences $s_1 \sim s_2$ for all pairs $(s_1, s_2)$ involving opposite faces of some $3$-dimensional cube in~$\varkappa^3$. Let $\cueq$ denote the set of equivalence classes under this equivalence relation. Denote by $[s] \in \cueq$ the equivalence class of~$s \in \varkappa^2$. By analogy with Definition~\ref{ttoudef}, denote by $\psi_\varkappa\colon \{-1, 1\}^{\cueq} \rightarrow \{-1, 1\}^{\varkappa^3}$ the map sending an array $\mathbf{t} = (t_{[s]})_{[s] \in \cueq}$ to the array $\psi_\varkappa(\mathbf{t}) = (u_C)_{C \in \varkappa^3}$ defined by $u_C = t_{[a]} t_{[b]} t_{[c]}$, where $a$, $b$, $c$ are representatives of the three pairs of opposite $2$-dimensional faces of $C$. We say that the cubical complex $\varkappa$ is \emph{comfortable} if the following statements are equivalent for every $\mathbf{u} = (u_C) \in \{-1,1\}^{\varkappa^3}$:
\begin{itemize}\itemsep=0pt
\item[(C1)] $\mathbf{u}$ is in the image of $\psi_\varkappa$,
\item[(C2)] for every interior vertex $v \in \varkappa^0$ (cf.\ Definition~\ref{interior_point_definition}), we have
\begin{gather*}
\prod_{C \ni v} u_C = 1,
\end{gather*}
the product over $3$-dimensional cubes $C \in \varkappa^3$ containing $v$.
\end{itemize}
By Lemma~\ref{prod_over_cubes_lemma}, the standard tiling of $\R^3$ by unit cubes is comfortable.
\end{Definition}

\begin{Remark} \label{comf_cond_remark}In Definition~\ref{com_def}, the statement (C1) always implies (C2). Indeed, if $\mathbf{u} = \psi_\varkappa(\mathbf{t})$ with $\mathbf{t} = \left( t_{[s]} \right)_{[s] \in \cueq} \in \{-1, 1\}^{\cueq}$, then for any interior vertex $v \in \varkappa^0$,
\begin{gather*}
\prod_{v \in C \in \varkappa^3} u_C = \prod_{v \in s \in \varkappa^2} t_{[s]}^2 = 1.
\end{gather*}
Thus, in checking comfortableness, we simply must check that~(C2) implies~(C1).
\end{Remark}

We next state four results (Propositions~\ref{pseudo_comfortable_prop}--\ref{non_example_prop} and Theorems~\ref{cc_main_generalization}--\ref{when_theorem_fails}) which the rest of this section is dedicated to proving. The reader may want to review Definitions~\ref{divide_definition}--\ref{ps_arr_definition} before proceeding with the following proposition.

\begin{Proposition} \label{pseudo_comfortable_prop}
Let $\mathbf{T}$ be a pile of quadrangulations of a polygon. Suppose that the divide associated to each quadrangulation in $\mathbf{T}$ is a pseudoline arrangement. Then $\varkappa = \varkappa(\mathbf{T})$ is comfortable. In particular, if $\mathbf{T}$ is a pile of $\Diamond$-tilings of the polygon~$\mathbf{P}_n$, then $\varkappa = \varkappa(\mathbf{T})$ is comfortable.
\end{Proposition}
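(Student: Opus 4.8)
The plan is to verify the combinatorial implication (C2) $\Rightarrow$ (C1) for $\varkappa = \varkappa(\mathbf{T})$ when every divide in $\mathbf{T}$ is a pseudoline arrangement, using Remark~\ref{comf_cond_remark} to avoid having to check the reverse (which always holds). First I would set up dictionaries: by Definition~\ref{assoc_cubical_complex_def}, the $3$-cubes of $\varkappa$ correspond to the flips of $\mathbf{T}$, hence (after passing to divides) to braid moves, i.e., to triples of branches $\{i,j,k\}$ crossing pairwise in a little triangle at some stage; the $2$-faces of $\varkappa$ correspond to crossings of two branches $\{i,j\}$; and the equivalence relation $\sim$ on $\varkappa^2$ identifies a $\{i,j\}$-crossing before a braid move with the $\{i,j\}$-crossing after it, so $\cueq$ is naturally identified with the set of \emph{unordered pairs of branches} $\{i,j\}$ that actually cross somewhere in $\mathbf{T}$ (using that in a pseudoline arrangement each pair crosses at most once, so ``the $\{i,j\}$-crossing'' is well defined throughout the pile). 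Under this identification $\psi_\varkappa$ sends $\mathbf{t} = (t_{\{i,j\}})$ to the array on triples given by $u_{\{i,j,k\}} = t_{\{i,j\}} t_{\{i,k\}} t_{\{j,k\}}$.

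Next I would analyze the interior vertices. An interior vertex $v$ of $\varkappa$ is a vertex created by some flip and later destroyed by another flip; equivalently it corresponds to a chamber (labelled by some subset $I$) of the pseudoline arrangement that appears at stage $i_0$ and disappears at stage $i_1$. I would show that the cubes incident to $v$ are exactly the flips in $\mathbf{T}$ that move one of the branches bounding that chamber, and that the triples of such flips, as $\{i,j,k\}$ ranges over the incident cubes, pair up the branch-crossings around $v$ in such a way that $\prod_{C \ni v} u_C = \prod_{\{i,j\}} t_{\{i,j\}}^2 = 1$ automatically on the image — but more importantly, that condition (C2) at $v$ is precisely the statement that a certain product of $t$'s over the triples of crossings that ``bound $v$'' equals $1$. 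The cleanest route is to exploit the known fact (the higher Bruhat / admissibility structure, Theorem~\ref{admissible_prop}) that piles in $\mathcal{C}(n)$ correspond to admissible permutations of $\binom{[n]}{3}$; using Proposition~\ref{bn3_connected_by_flips} one reduces the general comfortable-ness statement to a local check, because comfortable-ness is preserved under $3$-dimensional flips of the complex (a flip changes the cube-set $\varkappa^3$ only inside a small region and correspondingly permutes the role of one triple, and one checks the image of $\psi_\varkappa$ and the vertex-products transform compatibly). Thus it suffices to establish comfortable-ness for one convenient pile in each $\mathcal{C}(n)$ — e.g.\ the pile associated to the lexicographic admissible permutation — and for that pile I would compute $\ker \psi_\varkappa$ and the span of the relations (C2) directly over $\mathbb{F}_2$ and match dimensions.

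The concrete linear-algebra step is this: view $\{-1,1\}^{\cueq}$ as $\mathbb{F}_2^{E}$ where $E$ is the set of crossing-pairs, and $\{-1,1\}^{\varkappa^3}$ as $\mathbb{F}_2^{T}$ where $T$ is the set of crossing-triples; $\psi_\varkappa$ becomes the $\mathbb{F}_2$-linear ``coboundary-type'' map $\partial$ with $(\partial t)_{\{i,j,k\}} = t_{\{i,j\}} + t_{\{i,k\}} + t_{\{j,k\}}$, and condition (C2) defines a subspace $W \subseteq \mathbb{F}_2^{T}$ of ``vertex cocycles''. I must show $\operatorname{im}\partial = W$. Since $\operatorname{im}\partial \subseteq W$ by Remark~\ref{comf_cond_remark}, it remains to show the dimensions agree, i.e.\ $\dim W = |E| - \dim\ker\partial$. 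For the model pile (all triples consecutive, associated to $T_{\min,n}$–type arrangements where the combinatorics is transparent) both $\ker\partial$ and $W$ have explicit descriptions: $\ker\partial$ is the set of $t$ that vanish on all triples, which by an inclusion–exclusion / telescoping argument is parametrized by ``potentials'' $t_{\{i,j\}} = \alpha_i + \alpha_j$ — exactly the analogue of Proposition~\ref{kernel_of_psi} — and $W$ is cut out by one independent linear relation per interior vertex. A careful count of interior vertices against $|E| - n$ (the number of free parameters modulo potentials) closes the argument; the final sentence of the proposition, the case $R = \mathbf{P}_n$, then follows since by Proposition~\ref{pn_pseudoline} the divide of every $\Diamond$-tiling of $\mathbf{P}_n$ is a pseudoline arrangement in which every pair crosses exactly once.

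I expect the main obstacle to be the bookkeeping in the reduction-by-flips step: when $\varkappa_1$ and $\varkappa_2$ are related by a $3$-dimensional flip (Definition~\ref{three_dim_flip}), the sets $\varkappa^3$, $\cueq$, the map $\psi_\varkappa$, and the family of vertex-relations (C2) all change, and one must check that ``comfortable'' transfers — essentially that the flip replaces $\partial$ by a conjugate/elementary modification that preserves the equality $\operatorname{im}\partial = W$. Handling this cleanly, rather than by brute-force case analysis over the eight intermediate tilings in Figure~\ref{3flip_figure}, is the delicate part; the hypothesis that all divides are pseudoline arrangements is what guarantees that each pair of branches crosses \emph{at most once} so that $\cueq$ is identified with a fixed set $E$ of branch-pairs independent of where in the pile we are, which is exactly what makes the transfer argument go through and is why the statement can fail without it.
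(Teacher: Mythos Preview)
Your overall dictionary is right and matches the paper: $\cueq$ is identified with pairs of branches, $\psi_\varkappa$ becomes the $\mathbb{F}_2$-coboundary map on pairs-to-triples, and the task is a dimension count. But the reduction step you propose has a scope problem that the paper avoids.

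You plan to reduce, via Proposition~\ref{bn3_connected_by_flips} and a (yet-to-be-proved) invariance of comfortable-ness under $3$-dimensional flips, to a single convenient pile. However, Proposition~\ref{bn3_connected_by_flips} only connects piles \emph{inside} $\mathcal{C}(n)$, i.e.\ piles that start at $T_{\min,n}$ and end at $T_{\max,n}$. An arbitrary pile of $\Diamond$-tilings of $\mathbf{P}_n$ is not in $\mathcal{C}(n)$, and a general pile of quadrangulations whose divides are pseudoline arrangements is certainly not. So even granting flip-invariance, your reduction establishes the result only for $\varkappa(\mathbf{T})$ with $\mathbf{T}\in\mathcal{C}(n)$, which is neither the general statement nor the ``in particular'' clause. (Your last paragraph has the implication reversed: the $\mathbf{P}_n$ case is the special case, and you need the general pseudoline case to deduce it, not the other way around.)

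The paper's route is both simpler and broader. It first proves a one-line lemma (Lemma~\ref{subseq_comf}): if $\varkappa(\mathbf{T})$ is comfortable then so is $\varkappa(\mathbf{T}')$ for any sub-pile $\mathbf{T}'\subseteq\mathbf{T}$, because any $\mathbf{u}$ on $\varkappa^3(\mathbf{T}')$ satisfying (C2) extends to one on $\varkappa^3(\mathbf{T})$ and $\cueq(\mathbf{T})=\cueq(\mathbf{T}')$. This lets one \emph{enlarge} an arbitrary pile of $\Diamond$-tilings to one in which every triple in $\binom{[n]}{3}$ appears, and then do the dimension count once (Lemma~\ref{pn_comfortable}): $|\operatorname{im}\psi_\varkappa|\ge 2^{\binom{n-1}{2}}$ by ranking the map $t_{\{i,j\}}\mapsto t_{\{i,j\}}t_{\{i,k\}}t_{\{j,k\}}$, while the number of $\mathbf{u}$ satisfying (C2) is at most $2^{\binom{n-1}{2}}$ since only cubes whose bottom vertex lies in $T_0$ (at most $\binom{n-1}{2}$ of them) are unconstrained. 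No flip-invariance is needed. Finally, for the general pseudoline statement, the paper does not reduce to $\mathcal{C}(n)$ at all: it extends each pseudoline arrangement $D_i$ to one $\tilde D_i$ in which every pair of branches crosses exactly once (possible since pseudoline arrangements are closed under braid moves), invokes Proposition~\ref{pn_pseudoline} to realize the extended pile as a pile of $\Diamond$-tilings of $\mathbf{P}_n$, and observes that $\varkappa(\mathbf{T})$ differs from $\varkappa(\tilde{\mathbf{T}})$ only by $2$-faces lying in no $3$-cube, which are irrelevant to comfortable-ness. Replacing your flip-reduction by the sub-pile lemma and this embedding step closes both gaps.
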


\begin{Proposition} \label{non_example_prop}
There exists a pile $\mathbf{T}$ of quadrangulations of some polygon such that the cubical complex $\varkappa = \varkappa(\mathbf{T})$ is not comfortable.
\end{Proposition}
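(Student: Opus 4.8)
\textbf{Proof proposal for Proposition~\ref{non_example_prop}.}
The plan is to exhibit an explicit pile $\mathbf{T}$ of quadrangulations of a small polygon whose associated cubical complex $\varkappa=\varkappa(\mathbf{T})$ contains enough combinatorial redundancy that condition (C2) can be satisfied by an array $\mathbf{u}\in\{-1,1\}^{\varkappa^3}$ that is not in the image of $\psi_\varkappa$. By Remark~\ref{comf_cond_remark}, it suffices to produce such a $\mathbf{u}$. The key structural feature to exploit is that a pile is a \emph{sequence} of quadrangulations, so the same geometric triple of pseudoline-like branches may be flipped more than once (first $\Delta\to\nabla$, then $\nabla\to\Delta$, etc.), which means $\varkappa$ can have two distinct $3$-cubes whose three pairs of opposite faces lie in the \emph{same} three classes of $\cueq$; equivalently, the map $\psi_\varkappa$ can fail to be surjective because its image is cut out by relations beyond the ``product around each interior vertex'' relations of (C2).

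Concretely, I would take a polygon large enough to admit a pile in which some triple of ``directions'' undergoes a flip, then gets flipped back, and is flipped forward again — a pattern like the one in Example~\ref{octagon_seq_example}/Example~\ref{two_piles_c4}, but chosen so that the resulting $\varkappa$ has a cube $C_1$ and a cube $C_2$ with $[a_1]=[a_2]$, $[b_1]=[b_2]$, $[c_1]=[c_2]$ (same three classes in $\cueq$), forcing $u_{C_1}=u_{C_2}$ for any $\mathbf{u}$ in the image of $\psi_\varkappa$. Then I would write down the (finite, small) incidence data — the set $\varkappa^3$ of cubes, the set $\cueq$ of face-classes with the three-classes-per-cube assignment, the interior vertices and which cubes surround each — and verify two things by direct inspection: (i) there is an assignment $\mathbf{u}\in\{-1,1\}^{\varkappa^3}$ with $u_{C_1}\ne u_{C_2}$ for which $\prod_{C\ni v}u_C=1$ at every interior vertex $v$; and (ii) hence $\mathbf{u}\notin\operatorname{im}\psi_\varkappa$, since membership in the image would force $u_{C_1}=u_{C_2}$. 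This gives a violation of (C2)$\Rightarrow$(C1), so $\varkappa$ is not comfortable.

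I expect the main obstacle to be the bookkeeping: one must choose the pile carefully so that the failure of surjectivity of $\psi_\varkappa$ is \emph{not} already forced to match the interior-vertex relations — i.e., the cube repetition must create a relation in $\operatorname{im}\psi_\varkappa$ that is genuinely independent of the relations $\prod_{C\ni v}u_C=1$. The natural way to control this is a rank/dimension count over $\mathbb{F}_2$: view $\psi_\varkappa$ and the ``coboundary'' map $\mathbf{u}\mapsto(\prod_{C\ni v}u_C)_v$ as $\mathbb{F}_2$-linear maps, and arrange that $\dim_{\mathbb{F}_2}\operatorname{im}\psi_\varkappa$ is strictly smaller than the dimension of the kernel of the coboundary map. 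A minimal such example should be findable with a polygon on the order of an octagon or a ten-gon and a pile of length around a dozen; once the example is fixed, statements (i) and (ii) are finite checks. I would present the example via a labeled figure of the pile (in the style of Figs.~\ref{oct_cycle_figure} and~\ref{3flip_figure}) together with the explicit list of cubes, face-classes, and interior vertices, and then exhibit the offending $\mathbf{u}$ and check the interior-vertex products, concluding that $\varkappa(\mathbf{T})$ is not comfortable.
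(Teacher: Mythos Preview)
Your proposal has a genuine gap. The mechanism you propose---repeating a flip on the same triple of directions to produce two cubes $C_1,C_2$ with the same three face-classes---does create a relation $u_{C_1}=u_{C_2}$ in $\operatorname{im}\psi_\varkappa$, but it simultaneously creates an interior vertex that forces exactly the same relation via (C2). Concretely, if you flip a hexagon (producing a cube with top vertex $v$) and then unflip it (producing a second cube with bottom vertex $v$), then $v$ is an interior vertex of $\varkappa$ contained in precisely those two cubes, so the (C2) relation at $v$ reads $u_{C_1}u_{C_2}=1$. Thus the redundancy you introduce in $\operatorname{im}\psi_\varkappa$ is matched one-for-one by a new (C2) constraint, and you gain nothing. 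More broadly, your suggested search space of $\Diamond$-tilings of $\mathbf{P}_n$ (or any pile whose associated divides are pseudoline arrangements) is ruled out entirely by Proposition~\ref{pseudo_comfortable_prop}, which says those complexes are always comfortable.

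The paper's construction is quite different in flavor. It uses a quadrangulation of a square whose associated divide contains \emph{closed} branches (immersed circles), so the divides are not pseudoline arrangements and Proposition~\ref{pseudo_comfortable_prop} does not apply. The pile $\mathbf{T}=(T_0,\dots,T_8)$ is engineered so that $\varkappa(\mathbf{T})$ has \emph{no interior vertices at all}; hence (C2) is vacuous and every $\mathbf{u}\in\{-1,1\}^{\varkappa^3}$ satisfies it. One then checks directly that $\operatorname{im}\psi_\varkappa$ is a proper subgroup (index $2$) of $\{-1,1\}^{\varkappa^3}$, so (C2)$\Rightarrow$(C1) fails. The key idea you are missing is that to decouple (C1) from (C2) you must leave the pseudoline setting; the closed-curve divides give a clean way to build cubes that share face-classes without creating the compensating interior vertices.
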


{
We next state a generalization of Theorems~\ref{galoisfromintro} and~\ref{main_thm_cyclic_zonotope}.

\begin{Theorem} \label{cc_main_generalization}
Let $\mathbf{T}$ be a pile of quadrangulations of a polygon such that $\varkappa = \varkappa(\mathbf{T})$ is comfortable. Any coherent solution of the Kashaev equation $\mathbf{x} = (x_s)_{s \in \varkappa^0}$ with nonzero components satisfying condition~\eqref{face_nonzero} can be extended to an array $\mathbf{\tilde{x}} = (x_s)_{s \in \varkappa^{02}}$ satisfying the K-hexahedron equations.
\end{Theorem}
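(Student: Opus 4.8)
\textbf{Proof plan for Theorem~\ref{cc_main_generalization}.}
The strategy is to adapt the argument used for Theorem~\ref{galoisfromintro}(a) in Section~\ref{mainproof}, replacing $\Z^3$ by the cubical complex $\varkappa = \varkappa(\mathbf{T})$ and replacing the slab-by-slab induction on the height coordinate by an induction along the pile $\mathbf{T} = (T_0, \dots, T_\ell)$. Concretely, a coherent solution $\mathbf{x} = (x_s)_{s \in \varkappa^0}$ is built up by processing the cubes of $\varkappa$ in the order in which their top vertices are added by successive flips; at stage $i$, the cube $C_i$ corresponding to the flip $T_{i-1} \to T_i$ has its seven ``lower'' vertices already assigned, and the Kashaev equation for $C_i$ determines $x$ at the top vertex up to a two-fold choice. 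I would first establish the local building blocks already available: Lemma~\ref{cornersofcubelemma} (which expresses $K_v^C(\mathbf{x})$ as $\pm$ a product of three face-variables once the K-hexahedron equations hold on $C$), the sign-propagation Lemma~\ref{sign_prop_khex_cube}, and Proposition~\ref{reversed_K_hex} (run the recurrence in reverse). These transfer verbatim to a single cube of $\varkappa$.

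The next step is to set up the analogue of Definitions~\ref{face_equiv_classes}--\ref{def_of_psi_z3}: the equivalence relation $\sim$ on $\varkappa^2$ generated by opposite faces of cubes, the set $\cueq$ of classes, the action of $\{-1,1\}^{\cueq}$ on face-signed arrays, and the map $\psi_\varkappa$ from Definition~\ref{com_def}. Given any initial choice of an array $\xtin$ on $\varkappa^{02}(T_0)$ compatible with~\eqref{khexspec} on the faces of $T_0$, I would apply the K-hexahedron recurrence along the pile to extend it to all of $\varkappa^{02}$, and observe (using Lemma~\ref{cubeconditionprop} as in Corollary~\ref{conditionprop}) that it automatically satisfies~\eqref{khexspec} on every face of $\varkappa$, hence satisfies the full K-hexahedron equations; call the result $(\xtin)^{\uparrow \varkappa}$. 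By Proposition~\ref{cc_easy_direction}, its restriction to $\varkappa^0$ is a coherent solution of the Kashaev equation. The set of all coherent extensions-to-$\varkappa^{02}$ realizable this way is exactly $\{(\mathbf{t}\cdot\xtin)^{\uparrow\varkappa} : \mathbf{t} \in \{-1,1\}^{\cueq}\}$, and the sign $u_C = \psi_\varkappa(\mathbf{t})_C$ records, for each cube $C$, whether the ``top'' value of $(\mathbf{t}\cdot\xtin)^{\uparrow\varkappa}$ on $C$ agrees with or is flipped from that of $(\xtin)^{\uparrow\varkappa}$ (the analogue of Lemma~\ref{change_one_value_lemma}). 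The point of comfortableness is precisely that (C2) $\Rightarrow$ (C1) lets us prescribe arbitrary agreement/disagreement signs on a set of cubes as long as they satisfy the product-one condition around every interior vertex.

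With this machinery in place, the main argument runs as follows. Given a coherent $\mathbf{x}$ satisfying~\eqref{face_nonzero} and with nonzero components, pick any $\xtin$ as above with $\xtin|_{\varkappa^0(T_0)} = \mathbf{x}|_{\varkappa^0(T_0)}$; let $\mathbf{x}' = (\xtin)^{\uparrow\varkappa}|_{\varkappa^0}$, another coherent solution agreeing with $\mathbf{x}$ on $\varkappa^0(T_0)$. Process the cubes of $\varkappa$ in pile order; at cube $C_i$, since both $\mathbf{x}$ and $\mathbf{x}'$ are coherent and~\eqref{face_nonzero} forces $K_v^{C}(\mathbf{x}) \neq 0$ for the cubes around the bottom vertex $v$ of $C_i$ (via Proposition~\ref{A2B2_gen} / the identity in Lemma~\ref{KCvgenxlemma}), the coherence relation~\eqref{coherence_condition_for_wir} at $v$ pins down the value at the top vertex of $C_i$ uniquely once the values at all earlier vertices are fixed --- so $\mathbf{x}' = \mathbf{x}$ provided we can choose $\mathbf{t}$ to make the agreement signs come out right. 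Define $u_{C_i} = +1$ exactly when the two Kashaev-solutions for the top of $C_i$ (relative to the already-matched lower vertices) coincide and $-1$ otherwise; the coherence of $\mathbf{x}$, combined with Proposition~\ref{A2B2_gen}, shows that this array $\mathbf{u} = (u_C)$ satisfies (C2) around every interior vertex. Comfortableness of $\varkappa$ yields $\mathbf{t} \in \{-1,1\}^{\cueq}$ with $\psi_\varkappa(\mathbf{t}) = \mathbf{u}$; then $\mathbf{\tilde{x}} := (\mathbf{t}\cdot\xtin)^{\uparrow\varkappa}$ satisfies the K-hexahedron equations and, by the sign-propagation analogue of Lemma~\ref{change_one_value_lemma} together with the uniqueness just noted, restricts to $\mathbf{x}$ on $\varkappa^0$.

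\textbf{Expected main obstacle.} The routine parts (transferring Lemmas~\ref{cornersofcubelemma}, \ref{sign_prop_khex_cube}, \ref{cubeconditionprop} and Proposition~\ref{reversed_K_hex} to a single cube) are local and essentially identical to Section~\ref{mainproof}. The delicate point is the induction along the pile rather than along height slabs: one must verify that when a new cube $C_i$ is glued on, all seven of its non-top vertices have \emph{already} been assigned consistently (this is automatic from the structure of $\varkappa(\mathbf{T})$ in Definition~\ref{assoc_cubical_complex_def}, since a flip's cube has its top vertex new and the other seven lying in $T_{i-1}$), and, more importantly, that the ``agreement sign'' array $\mathbf{u}$ is well defined independently of the order in which cubes sharing an interior vertex are processed --- i.e., that the local two-valued Kashaev choice is compatible around each interior vertex. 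This compatibility is exactly what Proposition~\ref{A2B2_gen} delivers, so the crux is to phrase the bookkeeping so that Proposition~\ref{A2B2_gen} applies cube-by-cube and then invoke comfortableness to globalize. A secondary technical nuisance, as in the proof of Theorem~\ref{galoisfromintro}(a), is that intermediate values produced by the recurrence could vanish; here one handles it, if $\varkappa$ is finite, by a density/limiting argument over generic coherent solutions as in Corollary~\ref{main_with_generic}, and for infinite $\varkappa$ by a K\"onig's-lemma compactness argument over finite subcomplexes exactly as in Section~\ref{mainproof}.
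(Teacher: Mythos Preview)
Your overall strategy matches the paper's: set up the equivalence classes $\cueq$ and the map $\psi_\varkappa$, induct along the pile, use coherence at interior vertices to control the choices, then invoke comfortableness and finally pass to the non-generic case by a limiting argument. You also correctly identify the key local lemmas that transfer verbatim.

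There is, however, a genuine organizational gap in the central step. You propose to define a global array $\mathbf{u}=(u_{C_i})$ by recording, cube by cube, whether $\mathbf{x}$ agrees with the K-hexahedron reference ``relative to the already-matched lower vertices,'' and then to check that $\mathbf{u}$ satisfies (C2) and apply comfortableness once. This is circular: ``already-matched'' means you have already applied sign flips at earlier cubes, which requires having $\mathbf{t}$ in hand, which is what you are trying to produce from $\mathbf{u}$. Moreover, your justification that ``coherence of $\mathbf{x}$ together with Proposition~\ref{A2B2_gen} shows $\mathbf{u}$ satisfies (C2)'' is not the right formulation: condition (C2) at an interior vertex $v$ is a product over \emph{all} cubes containing $v$ (including those where $v$ is a side vertex), and coherence at $v$ does not directly control those factors.

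The paper sidesteps this by iterating (Corollary~\ref{main_cc_with_gen_cond}): maintain a cumulative $\mathbf{t}$ so that $(\mathbf{t}\cdot\xtin)^{\uparrow\varkappa^0}$ agrees with $\mathbf{x}$ on $T_0,\dots,T_{i-1}$; if they disagree at $T_i$, then both are coherent and generic, so Lemma~\ref{min_vert_on_bottom} (which is exactly your observation that ``coherence pins down the top value uniquely when the bottom vertex is interior'') forces the bottom vertex of $C_i$ to lie in $T_0$. That in turn guarantees the existence of some $\mathbf{u}$ satisfying (C2) with $u_{C_1}=\cdots=u_{C_{i-1}}=1$ and $u_{C_i}=-1$, and comfortableness produces a correcting $\mathbf{t}_i$. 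Comfortableness is thus invoked at each disagreement, not once at the end. This is the step you should reorganize around; your insight about coherence is Lemma~\ref{min_vert_on_bottom}, but it feeds the induction rather than a global (C2) check.

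Two minor points: since $\varkappa(\mathbf{T})$ is always finite, no K\"onig's-lemma argument is needed---the paper handles the non-generic case purely by the density/subsequence argument you describe for the finite case. And Lemma~\ref{cubeconditionprop} is not needed here: propagation of~\eqref{khexspec} along the pile is guaranteed because each flip adds a full cube, and the K-hexahedron equations on that cube already force~\eqref{khexspec} on its three new faces.
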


However, Theorems~\ref{galoisfromintro}~and~\ref{main_thm_cyclic_zonotope} don't generalize to cubical complexes that are not comfortable.
}

\begin{Theorem} \label{when_theorem_fails}
Let $\mathbf{T}$ be a pile of quadrangulations of a polygon such that $\varkappa = \varkappa(\mathbf{T})$ is not comfortable. Then there exists a coherent solution of the Kashaev equation $\mathbf{x}$ indexed by $\varkappa^0$ which cannot be extended to an array indexed by $\varkappa^{02}$ satisfying the K-hexahedron equations.
\end{Theorem}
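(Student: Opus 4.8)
The plan is to reverse-engineer the obstruction directly from the failure of comfortable-ness, mimicking the structure of the proof of Theorem~\ref{galoisfromintro}(a) but now exhibiting an array that \emph{cannot} be lifted. Since $\varkappa = \varkappa(\mathbf{T})$ is not comfortable, Remark~\ref{comf_cond_remark} tells us that condition (C1) is strictly weaker than (C2): there exists an array $\mathbf{u} = (u_C)_{C \in \varkappa^3} \in \{-1,1\}^{\varkappa^3}$ satisfying (C2) (i.e.\ $\prod_{C \ni v} u_C = 1$ for every interior vertex $v$) but \emph{not} in the image of $\psi_\varkappa$. First I would fix such a $\mathbf{u}$.

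Next I would build a coherent solution $\mathbf{x}$ of the Kashaev equation, indexed by $\varkappa^0$, by running the Kashaev recurrence along the pile $\mathbf{T} = (T_0, \dots, T_\ell)$ exactly as in the Remark following Proposition~\ref{A2B2_gen}: start from a sufficiently generic array $\mathbf{x}_{\mathrm{init}}$ on $\varkappa^0(T_0)$, and at the $i$-th flip, with cube $C_i \in \varkappa^3$ and bottom vertex $v_i$, choose between the two solutions $a, b$ for the new top-vertex value. If $v_i$ is a non-interior vertex, either choice keeps $\mathbf{x}$ coherent; if $v_i$ is interior, Proposition~\ref{A2B2_gen} says exactly one choice makes $\prod_{C \ni v_i} K^{C}_{v_i}(\mathbf{x})$ equal to $+\prod_{S \ni v_i}(x_{v_i} x_{v_2} + x_{v_1} x_{v_3})$ and the other makes it equal to the negative. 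I want $\mathbf{x}$ to be coherent, so at interior $v_i$ I must always make the ``$+$'' choice; genericity guarantees the two choices are genuinely different and all the relevant $K^C_v$ are nonzero. This produces a coherent $\mathbf{x}$; I should remark that the non-interior choices can be made arbitrarily and, picking the initial data generically, condition~\eqref{face_nonzero} holds throughout.

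Now suppose for contradiction that $\mathbf{x}$ extends to $\mathbf{\tilde{x}} = (x_s)_{s \in \varkappa^{02}}$ satisfying the K-hexahedron equations. The face values $x_s$ for $s \in \varkappa^2$ are then, by~\eqref{khexspec}, square roots of $x_{v_1} x_{v_3} + x_{v_2} x_{v_4}$, so any two choices of such a lift differ by signs $\mathbf{t} = (t_{[s]})_{[s] \in \cueq} \in \{-1,1\}^{\cueq}$ (this is where the cube-coherence relation~\eqref{tiling_cond_for_sym} / the local Lemma~\ref{cubeconditionprop}-type propagation forces the signs to be constant on equivalence classes in $\cueq$). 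Applying Lemma~\ref{cornersofcubelemma} (the cubical-complex analogue used in the proof of Proposition~\ref{cc_easy_direction}) to each cube $C_i$: for the bottom vertex $v_i$ of $C_i$, $K^{C_i}_{v_i}(\mathbf{x}) = + x_a x_b x_c$ where $a,b,c$ are the three faces of $C_i$ through $v_i$, while for the top vertex the sign is also $+$, and for the other six vertices it is $-$. Tracing through, the sign with which each cube $C_i$ enters the coherence identity at an interior vertex $v$, compared with the ``correct'' $+$ sign dictated by our construction of $\mathbf{x}$, is precisely $u_{C_i} := \prod(\text{signs } t_{[s]} \text{ of the three face-classes of } C_i)$ — i.e.\ the candidate lift exists if and only if the array $(u_{C})_{C \in \varkappa^3}$ it defines lies in the image of $\psi_\varkappa$. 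But we can arrange, by a genericity/sign-tracking argument, that the \emph{only} array $\mathbf{u}$ compatible with coherence of our chosen $\mathbf{x}$ is the fixed $\mathbf{u}$ above, which by construction satisfies (C2) but is not in the image of $\psi_\varkappa$ — contradiction. Hence $\mathbf{x}$ admits no such extension.

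The main obstacle is the bookkeeping in the last paragraph: making precise the claim that the sign-discrepancies of the forced ``$+$'' choices in building $\mathbf{x}$, read off cube-by-cube, assemble into exactly the prescribed non-image array $\mathbf{u}$, and that this is the unique obstruction. Concretely, I expect to need: (i) a lemma, parallel to Lemma~\ref{sign_prop_khex_cube} and Lemma~\ref{change_one_value_lemma} but on the pile $\mathbf{T}$, showing how a face-sign assignment $\mathbf{t} \in \{-1,1\}^{\cueq}$ propagates through a single cube $C_i$ to flip the top vertex value precisely by the factor $u_{C_i} = (\psi_\varkappa \mathbf{t})_{C_i}$ and otherwise agree; and (ii) the observation that the condition ``$\mathbf{x}$ is coherent'' forces, at each interior vertex, the product $\prod_{C \ni v} u_C = 1$, so (C2) is automatic for the obstruction array, and the K-hexahedron lift exists iff that array is $\psi_\varkappa$ of something. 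Once these two lemmas are set up (both are essentially the cubical-complex versions of lemmas already proved in Section~\ref{mainproof}, applied along the linear order given by the pile rather than by height in $\Z^3$), the contradiction with the chosen non-comfortable $\mathbf{u}$ is immediate, and the theorem follows.
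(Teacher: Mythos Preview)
Your overall strategy---use a (C2)-but-not-(C1) array $\mathbf{u}$ to manufacture a coherent $\mathbf{x}$ whose obstruction to lifting is exactly $\mathbf{u}$---is the right intuition, but there is a genuine gap that is not merely bookkeeping. You fix $\mathbf{u}$ at the outset, then build $\mathbf{x}$ by making binary choices along the pile (forced at interior bottom vertices, free otherwise), and finally assert that ``the only array $\mathbf{u}$ compatible with coherence of our chosen $\mathbf{x}$ is the fixed $\mathbf{u}$.'' But your construction of $\mathbf{x}$ never actually uses $\mathbf{u}$: you do not say \emph{which} free choices to make at the non-interior cubes, nor do you provide a reference against which ``the choice corresponding to $u_{C_i}=-1$'' is defined. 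Once $\mathbf{x}$ diverges from any reference solution, the inputs to subsequent cubes differ, so there is no canonical way to read off a sign $u_{C_i}$ at later cubes. Your proposed lemma~(i), the analogue of Lemma~\ref{change_one_cube_lemma}, only compares $(\xtin)^{\uparrow}$ with $(\mathbf{t}\cdot\xtin)^{\uparrow}$ up to the \emph{first} cube where $\psi_\varkappa(\mathbf{t})$ is $-1$; it does not give a global dictionary between coherent solutions and (C2)-arrays. So as written, the contradiction in the last paragraph does not close.

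The paper closes this gap by a reduction you are missing. It first passes (by taking the shortest non-comfortable initial segment of $\mathbf{T}$) to the case where $\varkappa(T_0,\dots,T_{\ell-1})$ \emph{is} comfortable but $\varkappa(\mathbf{T})$ is not. A counting argument (Lemma~\ref{not_comf_cube_lemma}) then shows two things: (a) the bottom vertex of the last cube $C_\ell$ lies in $T_0$, hence is not interior; and (b) the specific array $\mathbf{w}$ with $w_{C_\ell}=-1$ and $w_C=1$ otherwise is \emph{not} in the image of $\psi_\varkappa$. Now the construction of $\mathbf{x}$ is completely explicit: run the positive Kashaev recurrence from generic positive initial data, and at the single last cube $C_\ell$ take the \emph{other} root. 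Coherence holds because the only deviation from a liftable solution occurs at a cube whose bottom vertex is non-interior (by (a)); non-liftability holds because, comparing to the positive Kashaev lift $\tilde{\mathbf{x}}_{\mathrm{pK}}$, any lift of $\mathbf{x}$ would require $\psi_\varkappa(\mathbf{t})=\mathbf{w}$ (via Lemma~\ref{change_one_cube_lemma}), contradicting (b). The point is that by reducing to a \emph{single-cube} obstruction, the paper never needs the global correspondence you are gesturing at---it only needs to compare two solutions that agree everywhere except at one top vertex, where Lemma~\ref{change_one_cube_lemma} applies directly.
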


Note that Theorem~\ref{main_thm_cyclic_zonotope}(a) follows directly from Proposition~\ref{pseudo_comfortable_prop} and Theorem~\ref{cc_main_generalization}. In Remark~\ref{cc_main_gen_implies_original} below, we explain that Theorem~\ref{galoisfromintro}(a) follows from Proposition~\ref{pseudo_comfortable_prop} and Theorem~\ref{cc_main_generalization} as well.

\begin{Remark} \label{cc_main_gen_implies_original}
Together, Theorem~\ref{cc_main_generalization} and Proposition~\ref{pseudo_comfortable_prop} imply Theorem~\ref{galoisfromintro}(a). For each cube $[-j,j]^3 \in \R^3$, project the ``bottom'' faces (i.e., $\{-j\} \times [-j,j] \times [-j,j]$, $[-j,j] \times \{-j\} \times [-j,j]$, $[-j,j] \times [-j,j] \times \{-j\}$) onto $\R^2$ to obtain a quadrangulation $T_j$ of a region $R_j$, as shown in Fig.~\ref{hexagon_build}. The divide associated to each quadrangulation $T_i$ is a pseudoline arrangement. Hence, by Proposition~\ref{pseudo_comfortable_prop}, for any pile $\mathbf{T}_i$ including $T_i$, $\varkappa(\mathbf{T}_i)$ is comfortable. Choose $\mathbf{T}_i$, so that we can associate the vertices of $\varkappa(\mathbf{T}_i)$ with $\{-j, \dots, j\}^3$, so that $\bigcup_{j = 1}^\infty \varkappa^0(\mathbf{T}_i) = \Z^3$. Repeating the K\"onig's infinity lemma argument from the end of the proof of Theorem~\ref{galoisfromintro}(a), Theorem~\ref{cc_main_generalization} implies Theorem~\ref{galoisfromintro}.
\begin{figure}[ht]\centering
\begin{tabular}{ c c }
\begin{tikzpicture}
\draw (0,0)--(0.866,0.5)--(0.866,1.5)--(0, 2)--(-0.866,1.5)--(-0.866,0.5)--(0,0);
\draw (0,1)--(0,2);
\draw (0,1)--(0.866,0.5);
\draw (0,1)--(-0.866,0.5);
\draw ({0.866/2},{0.5/2})--({-0.866/2},{(0.5 + 1)/2})--({-0.866/2},{(1.5 + 2)/2});
\draw ({-0.866/2},{0.5/2})--({0.866/2},{(0.5 + 1)/2})--({0.866/2},{(1.5 + 2)/2});
\draw (-0.866,{(0.5 + 1.5)/2})--(0,{(1 + 2)/2})--(0.866,{(0.5 + 1.5)/2});
\end{tikzpicture} &
\begin{tikzpicture}[scale=2]
\draw (0,0)--(0.866,0.5)--(0.866,1.5)--(0, 2)--(-0.866,1.5)--(-0.866,0.5)--(0,0);
\draw (0,1)--(0,2);
\draw (0,1)--(0.866,0.5);
\draw (0,1)--(-0.866,0.5);
\draw ({0.866/4},{0.5/4})--({-3*0.866/4},{(3*0.5 + 1)/4})--({-3*0.866/4},{(3*1.5 + 2)/4});
\draw ({0.866/2},{0.5/2})--({-0.866/2},{(0.5 + 1)/2})--({-0.866/2},{(1.5 + 2)/2});
\draw ({3*0.866/4},{3*0.5/4})--({-0.866/4},{(0.5 + 3*1)/4})--({-0.866/4},{(1.5 + 3*2)/4});

\draw ({-0.866/4},{0.5/4})--({3*0.866/4},{(3*0.5 + 1)/4})--({3*0.866/4},{(3*1.5 + 2)/4});
\draw ({-0.866/2},{0.5/2})--({0.866/2},{(0.5 + 1)/2})--({0.866/2},{(1.5 + 2)/2});
\draw ({-3*0.866/4},{3*0.5/4})--({0.866/4},{(0.5 + 3*1)/4})--({0.866/4},{(1.5 + 3*2)/4});

\draw (-0.866,{(3*0.5 + 1.5)/4})--(0,{(3*1 + 2)/4})--(0.866,{(3*0.5 + 1.5)/4});
\draw (-0.866,{(0.5 + 1.5)/2})--(0,{(1 + 2)/2})--(0.866,{(0.5 + 1.5)/2});
\draw (-0.866,{(0.5 + 3*1.5)/4})--(0,{(1 + 3*2)/4})--(0.866,{(0.5 + 3*1.5)/4});
\end{tikzpicture}\\
quadrangulation $T_1$ of $R_1$ & quadrangulation $T_2$ of $R_2$
\end{tabular}
\caption{The quadrangulations $T_j$ of regions $R_j$ described in Remark~\ref{cc_main_gen_implies_original}.} \label{hexagon_build}
\end{figure}
\end{Remark}

The rest of this section is dedicated to proving Propositions~\ref{pseudo_comfortable_prop}--\ref{non_example_prop} and Theorems~\ref{cc_main_generalization}--\ref{when_theorem_fails}.

We begin by proving Proposition~\ref{pseudo_comfortable_prop}.

\begin{Lemma} \label{subseq_comf}Let $\mathbf{T} = (T_0, \dots, T_\ell)$ be a pile of quadrangulations of a polygon such that $\varkappa(\mathbf{T})$ is comfortable. Given $0 \le i \le j \le \ell$, let $\mathbf{T}' = (T_i, \dots, T_j)$. Then $\varkappa(\mathbf{T}')$ is comfortable.
\end{Lemma}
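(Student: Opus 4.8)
The plan is to transfer the defining property of comfortableness from $\varkappa:=\varkappa(\mathbf{T})$ to $\varkappa':=\varkappa(\mathbf{T}')$ by lifting solutions of condition~(C2). I begin by recording three structural facts. First, every tile of every quadrangulation $T_m$ appears as a $2$-face of $\varkappa$ (it is a face of the cube attached at that flip, or survives from $T_0$), so $\varkappa'$ is a subcomplex of $\varkappa$: its $3$-cubes are precisely the cubes coming from the flips $T_{m-1}\to T_m$ with $i<m\le j$, while the remaining cubes of $\varkappa$ --- which I call ``lower'' (those with $m\le i$) and ``upper'' (those with $m>j$) --- are the ones to be filled in. Second, if $v$ is an interior vertex of $\varkappa'$, then a small ball about $v$ lies in $\varkappa'$, and since $\varkappa'$ is a polyhedral complex any $3$-cube of $\varkappa$ containing $v$ must already lie in $\varkappa'$; hence $v$ is also interior in $\varkappa$, with the \emph{same} collection of incident $3$-cubes. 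Third, since every $3$-cube of $\varkappa'$ is also a $3$-cube of $\varkappa$, the equivalence relation~$\sim$ on $2$-faces used in Definition~\ref{com_def} for $\varkappa'$ refines the one used for $\varkappa$; consequently any array $\mathbf{t}=(t_{[s]})\in\{-1,1\}^{\cueq}$ restricts to a well-defined array $\mathbf{t}'$ on the $\sim$-classes of $2$-faces of $\varkappa'$ by $t'_{[s]}:=t_{[s]}$, and $\psi_{\varkappa'}(\mathbf{t}')$ agrees with $\psi_{\varkappa}(\mathbf{t})$ on the $3$-cubes of $\varkappa'$.

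By Remark~\ref{comf_cond_remark} it suffices to prove that (C2) implies (C1) for $\varkappa'$. So let $\mathbf{u}'=(u'_C)$ be a $\{-1,1\}$-valued array on the $3$-cubes of $\varkappa'$ with $\prod_{C\ni v}u'_C=1$ for every interior vertex $v$ of $\varkappa'$. If $\mathbf{u}'$ can be extended to a $\{-1,1\}$-valued array $\mathbf{u}$ on the $3$-cubes of $\varkappa$ satisfying (C2) for $\varkappa$, then comfortableness of $\varkappa$ gives $\mathbf{u}=\psi_{\varkappa}(\mathbf{t})$ for some $\mathbf{t}$, and the third fact above then yields $\mathbf{u}'=\psi_{\varkappa'}(\mathbf{t}')$, which is exactly (C1) for $\varkappa'$. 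Thus the whole problem reduces to producing this extension.

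To construct $\mathbf{u}$ I use the flip structure of Definition~\ref{assoc_cubical_complex_def}: each flip $T_{m-1}\to T_m$ destroys a single vertex $w_m$ and creates a single vertex $v_m$; a vertex of $\varkappa$ is interior if and only if it is created by some flip and destroyed by a later one, and for such a vertex the smallest- (resp.\ largest-) indexed $3$-cube containing it is the cube of its creating (resp.\ destroying) flip, so the indices of the $3$-cubes incident to it form an interval $[m_0,m_1]$. Now extend in two sweeps. \emph{Sweep~1} passes from $\varkappa'=\varkappa(T_i,\dots,T_j)$ to $\varkappa(T_i,\dots,T_\ell)$ by treating the upper flips $m=j+1,\dots,\ell$ in increasing order: when the cube $C_m$ is adjoined, set $u_{C_m}:=\prod_{C\ni w_m,\;C\ne C_m}u_C$ if $w_m$ is an interior vertex of $\varkappa(T_i,\dots,T_\ell)$ --- every cube in that product has index smaller than $m$ and has therefore already been assigned --- and set $u_{C_m}:=1$ otherwise. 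After Sweep~1, (C2) holds at every interior vertex $v$ of $\varkappa(T_i,\dots,T_\ell)$: such $v$ is created after step $i$, so the indices of its incident cubes lie in $(i,m_1]$; if $m_1\le j$ all these cubes lie in $\varkappa'$ and $v$ is interior in $\varkappa'$, so (C2) holds by hypothesis on $\mathbf{u}'$, while if $m_1>j$ then $v=w_{m_1}$ and (C2) holds by construction. \emph{Sweep~2} applies the same procedure to the reversed pile $(T_\ell,\dots,T_0)$ --- which interchanges ``created'' and ``destroyed'' and leaves comfortableness unchanged --- to pass from $\varkappa(T_i,\dots,T_\ell)$ to $\varkappa(T_0,\dots,T_\ell)=\varkappa$, now sweeping the lower flips $m=i,i-1,\dots,1$ in decreasing order and using the equation at $v_m$. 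An interior vertex of $\varkappa$ created after step $i$ already had its equation verified in Sweep~1 and its incident cubes are not modified in Sweep~2, while one created at step $\le i$ has its equation used exactly once, when its creating cube is adjoined. Hence $\mathbf{u}$ satisfies (C2) for $\varkappa$.

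The main obstacle is precisely this extension, and the point requiring care is the bookkeeping that makes the two sweeps mutually consistent. One must verify that the incident $3$-cubes of an interior vertex really do occupy an index interval whose endpoints are its creating and destroying flips; that a ``free'' cube (one at whose $w_m$, resp.\ $v_m$, we assigned the value $1$ because that vertex is not interior in the relevant intermediate complex) is never the last-filled cube of any interior vertex, so that no (C2) equation attempts to constrain its value; and --- crucially for the straddling vertices, those created at a step $\le i$ and destroyed at a step $>j$ --- that such a vertex fails to be interior in $\varkappa(T_i,\dots,T_\ell)$, so that Sweep~1 leaves its equation untouched and Sweep~2 imposes it exactly once. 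It also remains to justify the two background claims of the first paragraph, namely that $\varkappa'$ is a genuine subcomplex of $\varkappa$ and that an interior vertex of $\varkappa'$ keeps all its incident $3$-cubes inside $\varkappa'$; both follow directly from the incremental description of $\varkappa(\mathbf{T})$ in Definition~\ref{assoc_cubical_complex_def}.
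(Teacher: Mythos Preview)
Your proof is correct and follows the same three-step strategy as the paper: extend $\mathbf{u}'$ to an array $\mathbf{u}$ on $\varkappa^3(\mathbf{T})$ satisfying (C2), apply comfortableness of $\varkappa(\mathbf{T})$ to get $\mathbf{t}$, then restrict $\mathbf{t}$ to obtain a preimage for $\mathbf{u}'$ under $\psi_{\varkappa'}$. The paper simply asserts that the extension exists and writes ``identifying $\cueq(\mathbf{T})$ and $\cueq(\mathbf{T}')$'' without further comment, whereas you supply the two-sweep construction explicitly and replace the identification by the cleaner observation that the $\sim$-relation on $\varkappa'^2$ refines the one on $\varkappa^2$; your version is thus a fleshed-out form of the same argument (one minor quibble: the indices of the $3$-cubes incident to an interior vertex form a \emph{subset} of $[m_0,m_1]$, not necessarily the full interval, but your proof only uses the containment).
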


\begin{proof}
It suffices to check that (C2) implies (C1) for $\varkappa(\mathbf{T}')$ (see Remark~\ref{comf_cond_remark}). Note that any $\mathbf{u} = (u_C)_{C \in \varkappa^3(\mathbf{T}')}$ satisfying (C2) can be extended to $\mathbf{\tilde{u}} = (u_C)_{C \in \varkappa^3(\mathbf{T})}$ satisfying (C2). Identifying $\cueq(\mathbf{T})$ and $\cueq(\mathbf{T}')$, the fact that there exists $\mathbf{t}$ such that $\psi_{\varkappa(\mathbf{T})}(\mathbf{t}) = \mathbf{\tilde{u}}$ implies that $\psi_{\varkappa(\mathbf{T}')}(\mathbf{t}) = \mathbf{u}$, as desired.
\end{proof}

We can now prove Proposition~\ref{pseudo_comfortable_prop} in the special case where $\mathbf{T}$ be a pile of $\Diamond$-tilings of $\mathbf{P}_n$.

\begin{Lemma} \label{pn_comfortable}
Let $\mathbf{T}$ be a pile of $\Diamond$-tilings of $\mathbf{P}_n$. Then $\varkappa = \varkappa(\mathbf{T})$ is comfortable.
\end{Lemma}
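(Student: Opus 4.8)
\textbf{Proof proposal for Lemma~\ref{pn_comfortable}.}

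The plan is to verify condition (C2)$\Rightarrow$(C1) for $\varkappa = \varkappa(\mathbf{T})$ when $\mathbf{T}$ is a pile of $\Diamond$-tilings of $\mathbf{P}_n$, using the combinatorial dictionary between $\Diamond$-tilings, pseudoline arrangements, and the second higher Bruhat order. By Remark~\ref{comf_cond_remark}, only this implication needs checking. The key observation is that the $3$-dimensional cubes of $\varkappa(\mathbf{T})$ are indexed by the triples $\binom{[n]}{3}$ (each cube corresponds to a flip switching one triple $i<j<k$ from a $\Delta$-crossing to a $\nabla$-crossing, by Definition~\ref{perm_from_pile_def}), and the equivalence classes in $\cueq$ of $2$-dimensional faces of $\varkappa(\mathbf{T})$ are indexed by the pairs $\binom{[n]}{2}$: indeed, a $2$-dimensional face corresponds to a tile in some $T_m$, hence to a crossing of two pseudolines $i<j$, and opposite faces of a cube come from the ``before'' and ``after'' positions of the same crossing within a braid move, so the $\sim$-equivalence classes are exactly the pairs. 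Under these identifications, $\psi_\varkappa$ sends $\mathbf{t} = (t_{\{i,j\}})_{\{i,j\}\in\binom{[n]}{2}}$ to $\mathbf{u} = (u_{\{i,j,k\}})$ with $u_{\{i,j,k\}} = t_{\{i,j\}} t_{\{i,k\}} t_{\{j,k\}}$.

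Next I would reduce to the case where $\mathbf{T}\in\mathcal{C}(n)$, i.e. $T_0 = T_{\min,n}$ and $T_\ell = T_{\max,n}$ with $\ell=\binom{n}{3}$, so that \emph{every} triple flips exactly once and hence every cube of $\binom{[n]}{3}$ appears in $\varkappa$: any pile of $\Diamond$-tilings of $\mathbf{P}_n$ embeds as a subpile of one in $\mathcal{C}(n)$ (cf.\ Definition~\ref{cn_seq_def}, where it is noted that every $\Diamond$-tiling appears in some pile of $\mathcal{C}(n)$, and more generally one can pad any pile to run from $T_{\min,n}$ to $T_{\max,n}$), and then Lemma~\ref{subseq_comf} lets me pass back to the original $\mathbf{T}$. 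So assume $\mathbf{T}\in\mathcal{C}(n)$. Now I must show: if $\mathbf{u}=(u_{\{i,j,k\}})\in\{-1,1\}^{\binom{[n]}{3}}$ satisfies $\prod_{C\ni v} u_C = 1$ for every interior vertex $v$ of $\varkappa$, then there is $\mathbf{t}\in\{-1,1\}^{\binom{[n]}{2}}$ with $t_{\{i,j\}}t_{\{i,k\}}t_{\{j,k\}} = u_{\{i,j,k\}}$ for all $i<j<k$.

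The heart of the argument is to identify the interior-vertex relations as exactly the relations needed for $\mathbf{u}$ to lie in the image of the coboundary-type map $\mathbf{t}\mapsto(\partial\mathbf{t})_{ijk}=t_{ij}t_{ik}t_{jk}$. This map is (additively, over $\mathbb{F}_2$) the simplicial coboundary $\delta\colon C^1(\Delta^{n-1};\mathbb{F}_2)\to C^2(\Delta^{n-1};\mathbb{F}_2)$ of the full simplex on vertex set $[n]$; since $\Delta^{n-1}$ is acyclic, its image is precisely the kernel of $\delta\colon C^2\to C^3$, i.e.\ the set of $\mathbf{u}$ with $u_{ijk}u_{ijl}u_{ikl}u_{jkl}=1$ for every $4$-subset $\{i,j,k,l\}$. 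So it suffices to prove two things: (i) for each $4$-element subset $A=\{i,j,k,l\}\subseteq[n]$ there is an interior vertex $v$ of $\varkappa(\mathbf{T})$ whose four incident cubes are precisely those indexed by the four triples in $\binom{A}{3}$ --- this is essentially the content of the $n=4$ analysis in Section~\ref{principal_minors_section} (the local picture around such a $v$ is the octagon/pile from Example~\ref{octagon_seq_example} and Figure~\ref{oct_cycle_figure}, where an interior vertex meets four cubes sharing faces in pairs), realized inside $\mathbf{T}\in\mathcal{C}(n)$ because the four triples of any $A$ flip consecutively-enough (by admissibility, Theorem~\ref{admissible_prop}, within the sub-octagon on the edges of $A$); and (ii) every interior vertex of $\varkappa(\mathbf{T})$ is of this form, so that (C2) yields \emph{no} relations beyond the $4$-subset ones --- again because around an interior vertex $v$ the link is combinatorially the octagon configuration, so $v$ is incident to exactly four cubes whose triples share a common $4$-set. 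Given (i) and (ii), (C2) says exactly $\prod_{J\in\binom{A}{3}}u_J=1$ for all $A\in\binom{[n]}{4}$, which by the acyclicity of $\Delta^{n-1}$ is equivalent to $\mathbf{u}\in\operatorname{im}\psi_\varkappa$.

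I expect the main obstacle to be the careful bookkeeping in step (i)/(ii): pinning down that the link of an interior vertex of $\varkappa(\mathbf{T})$ for $\mathbf{T}\in\mathcal{C}(n)$ is always the four-cube ``octahedral'' configuration associated to a single $4$-subset, and conversely that each $4$-subset is witnessed by such a vertex. This requires tracing through Definition~\ref{assoc_cubical_complex_def} (how cubes and their shared faces are glued as flips are performed) together with the admissible-permutation description (Definition~\ref{admissible_def}, Theorem~\ref{admissible_prop}): within a pile in $\mathcal{C}(n)$, the restriction to the three (then four) edges indexed by a triple (then $4$-set) behaves like a pile of $\Diamond$-tilings of $\mathbf{P}_3$ (then $\mathbf{P}_4$), and the interior vertex we want is the center of the corresponding $\mathbf{P}_4$ sub-octagon at the moment its four triples have flipped. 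Once this local structure is established, the cohomological conclusion is immediate and the reduction via Lemma~\ref{subseq_comf} closes the general case.
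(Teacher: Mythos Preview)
Your identification of $\cueq$ with $\binom{[n]}{2}$ and of $\operatorname{im}\psi_\varkappa$ with $\ker\delta^2$ via the acyclicity of the simplex is correct and elegant, and it could replace the paper's lower bound on $|A_1|$. However, the core of your argument---claims (i) and (ii)---does not hold. A straightforward count shows that for $\mathbf{T}\in\mathcal{C}(n)$ the complex $\varkappa(\mathbf{T})$ has exactly $\binom{n}{3}-\binom{n-1}{2}=\binom{n-1}{3}$ interior vertices (each flip creates one new vertex, and $\binom{n-1}{2}$ of the created vertices end up in $T_{\max,n}$, hence on the boundary). Since $\binom{n-1}{3}<\binom{n}{4}$ for all $n\ge 5$, it is impossible that every $4$-subset $A$ is witnessed by a distinct interior vertex whose four incident cubes are exactly $\binom{A}{3}$. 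So claim (i) fails, and with it your direct mechanism for deducing the $4$-subset cocycle relations from~(C2). (Your reduction step also needs a small repair: an arbitrary pile of $\Diamond$-tilings need not embed in a pile of $\mathcal{C}(n)$, since some triple may flip more than once; the paper instead just extends $\mathbf{T}$ to a pile in which every triple labels at least one cube, which is all that is needed for Lemma~\ref{subseq_comf}.)

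The paper sidesteps the problem of identifying the interior-vertex relations explicitly by a dimension count. After reducing to a pile in which every triple appears, it bounds $|A_1|$ from below by observing that $\psi_\varkappa$ factors through $\{-1,1\}^{\binom{[n]}{3}}$ via the triple-labeling of cubes, and that the map $\mathbf{t}\mapsto(t_{ij}t_{ik}t_{jk})_{ijk}$ has rank at least $\binom{n-1}{2}$. It then bounds $|A_2|$ from above by noting that at most $\binom{n-1}{2}$ cubes have their bottom vertex in $T_0$, and the sign on every other cube is determined by~(C2) (since its bottom vertex is interior). This gives $|A_1|\ge 2^{\binom{n-1}{2}}\ge |A_2|$, hence $A_1=A_2$. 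No description of the link of an interior vertex is needed.
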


\begin{proof}
Labeling the vertices of $\varkappa$ by subsets of $[n]$ (as in Section~\ref{principal_minors_section}), we can label the cubes in~$\varkappa^3$ by $3$-element subsets of $[n]$ by taking the symmetric difference of the labels of any opposite vertices in the cube. Note that we can extend $\mathbf{T}$ to a longer pile $\mathbf{T}'$ so that for every $A \in \binom{[n]}{3}$, at least one cube of $\varkappa(\mathbf{T}')$ is labeled by $A$. Hence, by Proposition~\ref{subseq_comf}, it suffices to prove the theorem under the additional assumption that each set in $\binom{[n]}{3}$ labels at least one cube in $\varkappa^3$.

Let $A_1$ be the set of $\mathbf{u} \in \{-1,1\}^{\varkappa^3}$ satisfying~(C1), and $A_2$ be the set of $\mathbf{u}$ satisfying~(C2). Because $A_1 \subseteq A_2$, it suffices to show that $\left| A_1 \right| \ge \left| A_2 \right|$ in order to prove that $A_1 = A_2$. We claim that both $A_1$ and $A_2$ have size $2^{\binom{n - 1}{2}}$.

First, we claim that $\left| A_1 \right| \ge 2^{\binom{n - 1}{2}}$. Identify each element $S \in \cueq$ with a $2$-element subset of~$[n]$ by taking the symmetric difference of the labels of any pair of opposite vertices of any tile in~$S$. Note that if $\mathbf{u} = \psi_\varkappa(\mathbf{t})$, and a cube $C$ labeled by~$\{i,j,k\}$, then~$u_C = t_{\{i,j\}} t_{\{i,k\}} t_{\{j,k\}}$. Define a map of vector spaces $f \colon \{-1,1\}^{\binom{[n]}{2}} \rightarrow \{-1, 1\}^{\binom{[n]}{3}}$ where $f \big( ( t_S)_{S \in \binom{[n]}{2}} \big) = ( u_C )_{C \in \binom{[n]}{3}}$ with
\begin{gather*}
u_{\{i,j,k\}} = t_{\{i,j\}} t_{\{i,k\}} t_{\{j,k\}}.
\end{gather*}
If we fix $t_{\{1, 2\}} = \cdots = t_{\{1,n\}} = 1$, then $u_{\{1,j,k\}} = t_{\{j,k\}}$, so the rank of $f$ is at least the number of $2$-element subsets of $\{2,\dots, n\}$, i.e., $\binom{n - 1}{2}$. Hence, it follows that~$\left| A_1 \right| \ge 2^{\binom{n - 1}{2}}$.

Thus, in order to prove the proposition, we must show that $\left| A_2 \right| \le 2^{\binom{n - 1}{2}}$. Note that there are $\binom{n - 1}{2}$ vertices in the interior of any $\Diamond$-tiling of $\mathbf{P}_n$. In choosing $\mathbf{u}$ satisfying~(C2), we can make an arbitrary choice of sign for any cube that shares its bottom vertex with $T_0$, but the signs of the remaining cubes is determined by condition~(C2). Hence, because at most $\binom{n - 1}{2}$ cubes can share their bottom vertices with $T_0$ (the bottom of a cube cannot be on the boundary of $T_0$), there are at most $2^{\binom{n - 1}{2}}$ such $\mathbf{u}$ satisfying condition~(C2), proving our claim.
\end{proof}

We can now prove Proposition~\ref{pseudo_comfortable_prop} in its full generality.

\begin{proof}[Proof of Proposition~\ref{pseudo_comfortable_prop}]
Let $\mathbf{T} = (T_0, \dots, T_\ell)$. We claim that we can ``embed'' the quadrangulations $T_0, \dots, T_\ell$ in $\Diamond$-tilings of $\mathbf{P}_n$. Let $D_0, \dots, D_\ell$ be the divides associated to $T_0, \dots, T_\ell$. Because $D_0, \dots, D_\ell$ are pseudoline arrangements connected by braid moves, we can extend $D_0,\dots,D_\ell$ to pseudoline arrangements $\tilde{D}_0, \dots, \tilde{D}_\ell$, still connected by braid moves, in which every pair of branches intersects exactly once. By Proposition~\ref{pn_pseudoline}, there exists a pile $\mathbf{\tilde{T}} = (\tilde{T}_0, \dots, \tilde{T}_\ell)$ of $\Diamond$-tilings of $\mathbf{P}_n$, for which the divides associated to $\tilde{T}_0, \dots, \tilde{T}_\ell$ are $\tilde{D}_0, \dots, \tilde{D}_\ell$.

By Lemma~\ref{pn_comfortable}, $\varkappa(\mathbf{\tilde{T}})$ is comfortable. The cubical complex $\varkappa(\mathbf{\tilde{T}})$ consists of~$\varkappa = \varkappa(\mathbf{T})$, unioned with $2$-dimensional faces that are not part of any $3$-dimensional cube. Hence, it follows that $\varkappa$ is comfortable as well.
\end{proof}

\begin{proof}[Proof of Proposition~\ref{non_example_prop}]We describe a pile $\mathbf{T} = (T_0, \dots, T_8)$ of quadrangulations of a~squa\-re such that $\varkappa = \varkappa(\mathbf{T})$ is not comfortable. Let $T_0$ be as in Fig.~\ref{non-example_figure}. It is easier to understand this example by looking at the divides associated to $T_0, \dots, T_8$, displayed in Fig.~\ref{non-example_figure2}. Note that the divides associated to these quadrangulations are not pseudoline arrangements. Note that $\varkappa$ has no interior vertices. Hence, every $\mathbf{u} \in \{-1, 1\}^{\varkappa^3}$ satisfies~(C2). However, it is not difficult to check that if $\mathbf{u}$ satisfies~(C1), then the sign on a given cube is determined by the sign on the other~$7$. Hence, $\varkappa$ is not comfortable.
\end{proof}

\begin{figure}[ht]\centering
\begin{tikzpicture}
\draw (0,0)--(5,0)--(5,5)--(0,5)--(0,0);
\draw (2,2)--(2,3)--(3,3)--(3,2)--(2,2);
\draw (1,2)--(1,3)--(2,3)--(2,2)--(1,2);
\draw (3,2)--(3,3)--(4,3)--(4,2)--(3,2);
\draw (2,1)--(2,2)--(3,2)--(3,1)--(2,1);
\draw (2,3)--(2,4)--(3,4)--(3,3)--(2,3);

\draw (0,0)--(2,1);
\draw (0,0)--(1,2);

\draw (5,0)--(3,1);
\draw (5,0)--(4,2);

\draw (5,5)--(3,4);
\draw (5,5)--(4,3);

\draw (0,5)--(1,3);
\draw (0,5)--(2,4);

\draw[blue, ultra thick] (2.5,0)--(2.5,5);
\draw[blue, ultra thick] (0,2.5)--(5,2.5);

\draw[blue, ultra thick] (2.5,2.5) ellipse (1 and 2);
\draw[blue, ultra thick] (2.5,2.5) ellipse (2 and 1);
\end{tikzpicture}
\caption{The quadrangulation $T_0$ from the proof of Proposition~\ref{non_example_prop}, with the associated divide drawn on top in blue.} \label{non-example_figure}
\end{figure}
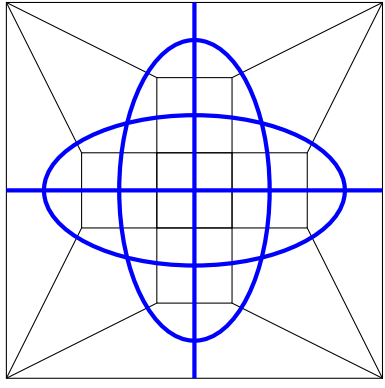
\begin{figure}[ht]\centering
\begin{tabular}{ c c c }
\begin{tikzpicture}[scale=2]
\draw (1,0)--(1,2);
\draw (0,1)--(2,1);
\draw (1,1) ellipse (0.7 and 0.25);
\draw (1,1) ellipse (0.25 and 0.7);
\end{tikzpicture} &
\begin{tikzpicture}[scale=2]
\draw plot [smooth] coordinates {(1,0) (1.8,0.9) (1,1) (1,2)};
\draw (0,1)--(2,1);
\draw (1,1) ellipse (0.7 and 0.25);
\draw (1,1) ellipse (0.25 and 0.7);
\end{tikzpicture} &
\begin{tikzpicture}[scale=2]
\draw plot [smooth] coordinates {(1,0) (1.8,0.9) (1,1) (1,2)};
\draw plot [smooth] coordinates {(2,1) (1.1,1.8) (1,1) (0,1)};
\draw (1,1) ellipse (0.7 and 0.25);
\draw (1,1) ellipse (0.25 and 0.7);
\end{tikzpicture} \\
$T_0$ & $T_1$ & $T_2$ \\
\begin{tikzpicture}[scale=2]
\draw plot [smooth] coordinates {(1,0) (1.8,0.9) (1,1) (0.2,1.1) (1,2)};
\draw plot [smooth] coordinates {(2,1) (1.1,1.8) (1,1) (0,1)};
\draw (1,1) ellipse (0.7 and 0.25);
\draw (1,1) ellipse (0.25 and 0.7);
\end{tikzpicture} &
\begin{tikzpicture}[scale=2]
\draw plot [smooth] coordinates {(1,0) (1.8,0.9) (1,1) (0.2,1.1) (1,2)};
\draw plot [smooth] coordinates {(2,1) (1.1,1.8) (1,1) (0.9,0.2) (0,1)};
\draw (1,1) ellipse (0.7 and 0.25);
\draw (1,1) ellipse (0.25 and 0.7);
\end{tikzpicture} &
\begin{tikzpicture}[scale=2]
\draw plot [smooth] coordinates {(1,0) (1.8,0.9) (1.1,1.6) (1,1) (0.2,1.1) (1,2)};
\draw plot [smooth] coordinates {(2,1) (1.1,1.8) (1,1) (0.9,0.2) (0,1)};
\draw (1,1) ellipse (0.7 and 0.25);
\draw (1,1) ellipse (0.25 and 0.7);
\end{tikzpicture} \\
$T_3$ & $T_4$ & $T_5$ \\
\begin{tikzpicture}[scale=2]
\draw plot [smooth] coordinates {(1,0) (1.8,0.9) (1.1,1.6) (1,1) (0.2,1.1) (1,2)};
\draw plot [smooth] coordinates {(2,1) (1.1,1.8) (0.4,1.1) (1,1) (0.9,0.2) (0,1)};
\draw (1,1) ellipse (0.7 and 0.25);
\draw (1,1) ellipse (0.25 and 0.7);
\end{tikzpicture} &
\begin{tikzpicture}[scale=2]
\draw plot [smooth] coordinates {(1,0) (1.8,0.9) (1.1,1.6) (1,1) (0.9,0.4) (0.2,1.1) (1,2)};
\draw plot [smooth] coordinates {(2,1) (1.1,1.8) (0.4,1.1) (1,1) (0.9,0.2) (0,1)};
\draw (1,1) ellipse (0.7 and 0.25);
\draw (1,1) ellipse (0.25 and 0.7);
\end{tikzpicture} &
\begin{tikzpicture}[scale=2]
\draw plot [smooth] coordinates {(1,0) (1.8,0.9) (1.1,1.6) (1,1) (0.9,0.4) (0.2,1.1) (1,2)};
\draw plot [smooth] coordinates {(2,1) (1.1,1.8) (0.4,1.1) (1,1) (1.6,0.9) (0.9,0.2) (0,1)};
\draw (1,1) ellipse (0.7 and 0.25);
\draw (1,1) ellipse (0.25 and 0.7);
\end{tikzpicture}\\
$T_6$ & $T_7$ & $T_8$
\end{tabular}
\caption{The divides associated to the quadrangulations $T_0, \dots, T_8$ from the proof of Proposition~\ref{non_example_prop}.} \label{non-example_figure2}
\end{figure}
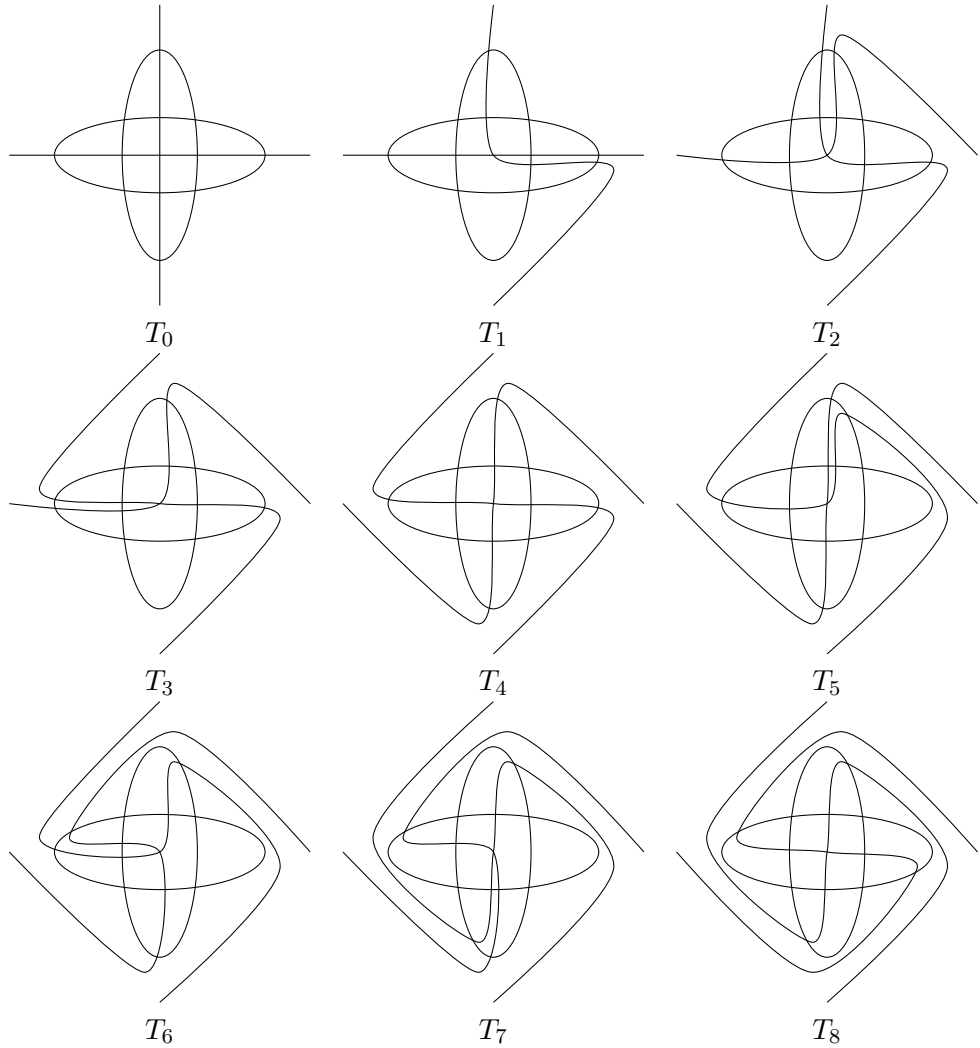

The rest of this section is dedicated to the proofs of Theorems~\ref{cc_main_generalization}--\ref{when_theorem_fails}.

\begin{Definition}Let $\mathbf{T} = (T_0, \dots, T_\ell)$ be a pile of quadrangulations of a polygon, with $\varkappa = \varkappa(\mathbf{T})$, and $\mathbf{x} = (x_s)_{s \in \varkappa^0}$. We say that $\mathbf{x}$ is \emph{generic} if for all extensions of $\xin$ (the restriction of $\mathbf{x}$ to $\varkappa^0(T_0)$) to an array $\mathbf{\tilde{x}}$ indexed by $\varkappa^{02}(\mathbf{T})$ satisfying the K-hexahedron equations, the entries of $\mathbf{\tilde{x}}$ are all nonzero.
\end{Definition}

\begin{Definition}Let $\mathbf{T} = (T_0, \dots, T_\ell)$ be a pile of quadrangulations of a polygon, with $\varkappa = \varkappa(\mathbf{T})$. Given an array $\xtin = (x_s)_{s \in \varkappa^{02}(T_0)}$ and $\mathbf{t} = \left( t_{[s]} \right)_{[s] \in \cueq}$, set $\mathbf{t} \cdot \xtin = (y_s)_{s \in \varkappa^{02}(T_0)}$, where
\begin{gather*}
y_s = \begin{cases} x_s & \text{if } s \in \varkappa^0(T_0), \\ t_{[s]} x_s & \text{if } s \in \varkappa^2(T_0). \end{cases}
\end{gather*}
Given a generic array $\xtin$ indexed by $\varkappa^{02}(T_0)$, define $(\xtin)^{\uparrow \varkappa^{02}}$ to be the unique extension of~$\xtin$ to an array indexed by $\varkappa^{02}$ satisfying the K-hexahedron equations. Define $(\xtin)^{\uparrow \varkappa^0}$ to be the restriction of $(\xtin)^{\uparrow \varkappa^{02}}$ to $\varkappa^0$.
\end{Definition}

\begin{Lemma} \label{change_one_cube_lemma}Let $\mathbf{T} = (T_0, \dots, T_\ell)$ be a pile of quadrangulations of a polygon, with $\varkappa = \varkappa(\mathbf{T})$. Fix a generic array $\xtin$ indexed by $\varkappa^{02}(T_0)$ satisfying equation~\eqref{khexspec} for $s \in \varkappa^2(T_0)$, and $\mathbf{t} \in \{-1, 1\}^{\cueq}$. Then the following are equivalent:
\begin{itemize}\itemsep=0pt
\item $\psi_\varkappa(\mathbf{t})$ has value $1$ on $C_1, \dots, C_{i - 1}$, and value $-1$ on $C_i$,
\item $(\xtin)^{\uparrow \varkappa^0}$ and $(\mathbf{t} \cdot \xtin)^{\uparrow \varkappa^0}$ agree at $\varkappa^0(T_0), \dots, \varkappa^0(T_{i - 1})$ but not at $\varkappa^0(T_i)$.
\end{itemize}
\end{Lemma}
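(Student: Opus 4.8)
\textbf{Proof plan for Lemma~\ref{change_one_cube_lemma}.}

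The plan is to mimic the proof of Lemma~\ref{change_one_value_lemma} and its consequences (Lemmas~\ref{form_when_t_is_in_kernel}, \ref{agree_when_t_is_in_kernel}) in the $\Z^3$ setting, transporting the argument to the linearly ordered pile $\mathbf{T} = (T_0, \dots, T_\ell)$. The key local input is Lemma~\ref{sign_prop_khex_cube}, which says that if the three face-signs on the ``bottom'' faces of a cube have product $1$, then each sign propagates to the opposite (``top'') face and the top vertex is unchanged; whereas if the product is $-1$, the top vertex changes by a nonzero multiplicative factor (the computation at the end of the proof of Lemma~\ref{agree_when_t_is_in_kernel} makes this precise). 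I will first record, by induction on $k$, the following claim: if $\psi_\varkappa(\mathbf{t})$ takes value $1$ on $C_1, \dots, C_{k}$, then $(\xtin)^{\uparrow\varkappa^0}$ and $(\mathbf{t}\cdot\xtin)^{\uparrow\varkappa^0}$ agree on $\varkappa^0(T_0), \dots, \varkappa^0(T_{k})$, and moreover for every cube $C_m$ with $m\le k$, writing $a,b,c$ for the bottom faces of $C_m$ and $a',b',c'$ for the respective opposite faces, the two arrays differ on $a',b',c'$ by exactly the factors $t_{[a]},t_{[b]},t_{[c]}$ (note $[a']=[a]$, etc.). The base case $k=0$ is immediate since $\xin$ and hence the bottom layer $\varkappa^0(T_0)$ is fixed and the starting face values on $\varkappa^2(T_0)$ differ by the prescribed $t_{[s]}$. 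For the inductive step at cube $C_{k+1}$: the new vertex $w$ of $T_{k+1}$ is the top vertex of $C_{k+1}$; its five ``old'' vertices and its three bottom faces lie in $\varkappa^{02}(T_0,\dots,T_k)$, so by induction the two extended arrays agree on those vertices, and on the three bottom faces they differ by $t$-factors whose product is $(\psi_\varkappa(\mathbf{t}))_{C_{k+1}}=1$; Lemma~\ref{sign_prop_khex_cube} then gives agreement at $w$ and the claimed $t$-factor relation on the three top faces of $C_{k+1}$, completing the induction.

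Granting the claim, the forward direction of the Lemma follows: if $\psi_\varkappa(\mathbf{t})$ is $1$ on $C_1,\dots,C_{i-1}$, the claim with $k=i-1$ gives agreement on $\varkappa^0(T_0),\dots,\varkappa^0(T_{i-1})$, and in particular the three bottom faces of $C_i$ differ between the two arrays by $t$-factors with product $(\psi_\varkappa(\mathbf{t}))_{C_i}=-1$; the explicit top-vertex computation in Lemma~\ref{sign_prop_khex_cube}'s analogue (the $y_v-x_v$ formula in the proof of Lemma~\ref{agree_when_t_is_in_kernel}, adapted to this cube) shows the top vertex of $C_i$ changes by a nonzero factor, using genericity to guarantee the relevant numerator $x_a x_b x_c$ is nonzero and the denominator $x_{v}^2$ is nonzero. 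Hence the two arrays disagree at $\varkappa^0(T_i)$. Conversely, suppose $(\xtin)^{\uparrow\varkappa^0}$ and $(\mathbf{t}\cdot\xtin)^{\uparrow\varkappa^0}$ agree on $\varkappa^0(T_0),\dots,\varkappa^0(T_{i-1})$ but not on $\varkappa^0(T_i)$. Let $j$ be minimal with $(\psi_\varkappa(\mathbf{t}))_{C_j}=-1$ (if no such $j$ exists, the claim shows the arrays agree everywhere, contradicting disagreement at $T_i$). By the claim applied with $k=j-1$, the arrays agree through $\varkappa^0(T_{j-1})$, and by the forward direction just proved they disagree at $\varkappa^0(T_j)$; minimality of $j$ among indices of sign $-1$ together with the assumed agreement pattern forces $j=i$, which is exactly what we must show.

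The main obstacle is bookkeeping rather than conceptual: I must be careful that the cube $C_m$ appearing at step $m$ of the pile really does have all five non-top vertices and all three bottom faces already present in $\varkappa^{02}(T_0,\dots,T_{m-1})$ — this is precisely the content of Definition~\ref{assoc_cubical_complex_def} and Fig.~\ref{cube_added_from_flip}, and it is what makes the one-way induction along the linearly ordered pile work, exactly as the condition ``$w\le v$'' functioned in Lemma~\ref{change_one_value_lemma}. A secondary subtlety is matching the labeling conventions: one must check that the three bottom faces of $C_m$, viewed as elements of $\varkappa^2$, have $\sim$-classes whose product of $t$-values is the $C_m$-coordinate of $\psi_\varkappa(\mathbf{t})$, which is immediate from the definition of $\psi_\varkappa$ in Definition~\ref{com_def}. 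With these identifications in place, the inductive argument via Lemma~\ref{sign_prop_khex_cube} is essentially a transcription of the $\Z^3$ proof.
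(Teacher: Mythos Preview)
Your proposal is correct and takes essentially the same approach as the paper, which simply states that the lemma ``follows directly from Lemma~\ref{sign_prop_khex_cube}''; you have spelled out the induction along the pile and the converse via minimality of $j$ that the paper leaves implicit. One small slip: in the inductive step you refer to ``five `old' vertices'' of $C_{k+1}$, but there are seven (all vertices of the cube except the new top vertex $v$; cf.\ Fig.~\ref{cube_added_from_flip})---this does not affect the argument.
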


\begin{proof}The proof follows directly from Lemma~\ref{sign_prop_khex_cube}.
\end{proof}

\begin{Lemma} \label{min_vert_on_bottom}
Let $\mathbf{T} = (T_0, \dots, T_\ell)$ be a pile of quadrangulations of a polygon, with $\varkappa = \varkappa(\mathbf{T})$. Let $\mathbf{x}$ and $\mathbf{x}'$ be generic and distinct coherent solutions of the Kashaev equation, both indexed by~$\varkappa^0$, such that $\mathbf{x}$ and $\mathbf{x}'$ agree at $\varkappa^0(T_0)$. Let $i$ be the minimum value such that $\mathbf{x}$ and $\mathbf{x}'$ do not agree at $\varkappa^0(T_i)$. Then the cube $C_i$ shares its bottom vertex with $T_0$.
\end{Lemma}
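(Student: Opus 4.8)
\textbf{Proof proposal for Lemma~\ref{min_vert_on_bottom}.}

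The plan is to argue by contradiction: suppose the cube $C_i$ is an \emph{interior} cube of $\varkappa$, meaning its bottom vertex $v$ is an interior vertex of $\varkappa$ (not in $T_0$). Since $\mathbf{x}$ and $\mathbf{x}'$ agree on $\varkappa^0(T_0),\dots,\varkappa^0(T_{i-1})$ but disagree at the new vertex $w$ of $T_i$, and both are solutions of the Kashaev equation $K^{C_i}(\mathbf{x}) = K^{C_i}(\mathbf{x}') = 0$, the values $x_w$ and $x'_w$ are the two roots of the quadratic $K^{C_i}(\cdot)=0$ in the top variable (distinct by genericity). I would then track how the two solutions relate on the \emph{remaining} cubes incident to $v$. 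Because $\mathbf{x}$ and $\mathbf{x}'$ agree on all of $\varkappa^0(T_0,\dots,T_{i-1})$, and $C_i$ is the first cube where they diverge, the vertex $v$ is the bottom vertex of exactly one cube among $C_i,\dots,C_\ell$, namely $C_i$ itself; for every other cube $C \ni v$, the vertex $v$ is not the bottom vertex, so $v$ appears among the seven "lower" vertices of $C$, and the top vertex of $C$ is determined (via the Kashaev recurrence direction) by values at vertices that are reached later in the pile.

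The key computational input is Lemma~\ref{KCvgenxlemma}, which gives $\bigl(K_v^C(\mathbf{x})\bigr)^2 = (x_v x_d + x_b x_c)(x_v x_e + x_a x_c)(x_v x_f + x_a x_b)$ whenever $K^C(\mathbf{x})=0$, together with Lemma~\ref{cornersofcubelemma} / Lemma~\ref{change_one_cube_lemma}, which says precisely how changing the top value of one cube propagates a sign across the opposite faces. First I would observe that $x_w = x'_w$ differ exactly by the sign of $2\sqrt{D}$ in the analogue of~\eqref{pmz000} for the cube $C_i$, equivalently $K^{C_i}_v(\mathbf{x}) = -K^{C_i}_v(\mathbf{x}')$ while $K^{C_i}_{v'}(\mathbf{x}) = K^{C_i}_{v'}(\mathbf{x}')$ for the other bottom-adjacent corners $v'$ of $C_i$ whose values lie in earlier tilings. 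Then, using that $\mathbf{x}$ is coherent, the product $\prod_{C \ni v} K_v^C(\mathbf{x})$ equals $\prod_{S \ni v}(x_v x_{v_2} + x_{v_1} x_{v_3})$, which depends only on values at faces/vertices incident to $v$, and these are all contained in tilings $T_j$ with $j \le i$ (an interior vertex $v$ of $\varkappa$ together with all faces and cubes incident to it is "local" and is filled in by the time the flips creating those cubes have been performed — this is where I must be careful about the order of flips in the pile, but the point is that the edges and $2$-faces through $v$ appear no later than $T_i$). Hence the right-hand side is the \emph{same} for $\mathbf{x}$ and $\mathbf{x}'$.

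On the other hand, the left-hand side $\prod_{C\ni v}K_v^C(\cdot)$ differs between $\mathbf{x}$ and $\mathbf{x}'$: exactly one factor, $K_v^{C_i}$, flips sign (by the first observation), and I claim all the other factors $K_v^{C}$ for $C \ni v$, $C \ne C_i$, are unchanged — this is precisely because for those cubes $v$ is not the bottom vertex, so $K_v^C$ is, by Lemma~\ref{KCvgenxlemma}, a product of three "face" expressions $x_v x_{v_j} + x_{v_k}x_{v_\ell}$ built only from values incident to $v$, hence lies in $T_j$, $j \le i$, hence is common to both arrays. (If instead the change at $w$ did propagate into some later cube's $K_v^C$, that would contradict the coherence product identity, so this is really the crux.) Therefore $\prod_{C\ni v}K_v^C(\mathbf{x}) = -\prod_{C\ni v}K_v^C(\mathbf{x}')$, i.e.\ one of $\mathbf{x},\mathbf{x}'$ fails the coherence equation~\eqref{coherence_condition_for_wir} at $v$ while the other satisfies it — contradicting the hypothesis that both are coherent. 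This forces $v$ not to be an interior vertex, i.e.\ $C_i$ shares its bottom vertex with $T_0$.

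\textbf{Main obstacle.} The delicate step is the bookkeeping of \emph{when} in the pile the various cells incident to the interior vertex $v$ get created: I need that every $2$-face and every cube incident to $v$, other than $C_i$ itself which straddles $T_{i-1}$ and $T_i$, is already present in $\varkappa(T_0,\dots,T_{i-1})$ or created no later than $T_i$, so that the factors $x_v x_{v_2}+x_{v_1}x_{v_3}$ and the expressions $K_v^C$ for $C\ne C_i$ are evaluated identically on $\mathbf{x}$ and $\mathbf{x}'$. This uses the structure of $\varkappa(\mathbf{T})$ from Definition~\ref{assoc_cubical_complex_def}: a vertex $v$ becomes interior only after the flip that "lifts it off" has occurred, and the cubes around $v$ correspond to flips local to $v$. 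Once this locality is pinned down, the rest is the sign-propagation computation via Lemmas~\ref{KCvgenxlemma} and~\ref{change_one_cube_lemma} together with the definition of coherence, all of which are already available.
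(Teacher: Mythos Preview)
Your approach is essentially the paper's: argue by contradiction that if the bottom vertex $v$ of $C_i$ is interior, then the coherence condition~\eqref{coherence_condition_for_wir} at $v$ pins down $x_w$ uniquely from values in $\varkappa^0(T_0,\dots,T_{i-1})$, forcing $x_w = x'_w$. The paper states this in two lines; you unpack it via the sign flip $K_v^{C_i}(\mathbf{x}) = -K_v^{C_i}(\mathbf{x}')$, which is correct but more work than needed.

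One correction: your justification that $K_v^C(\mathbf{x}) = K_v^C(\mathbf{x}')$ for cubes $C \ni v$ with $C \ne C_i$ misapplies Lemma~\ref{KCvgenxlemma}, which expresses $(K_v^C)^2$ (not $K_v^C$) as a product of face terms, and in any case $K_v^C$ genuinely depends on the vertex opposite $v$. The correct and simpler reason is that any such cube equals $C_j$ for some $j<i$ (since $v$ is removed from the tiling at step $i$, no later cube can contain it), so all eight of its vertices lie in $\varkappa^0(T_0,\dots,T_{i-1})$, where $\mathbf{x}$ and $\mathbf{x}'$ agree. The same observation dissolves your ``main obstacle'': every cube and every $2$-face through $v$ is already present by step $i$, and the only vertex among them not in $\varkappa^0(T_0,\dots,T_{i-1})$ is $w$, which, being opposite $v$ in $C_i$, lies on no face through $v$ and appears in $K_v^C$ only for $C=C_i$.
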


\begin{proof}Assume (for contradiction) that $C_i$ doesn't share its bottom vertex with $T_0$. Then the bottom vertex of $C_i$ must be an interior vertex of $\varkappa$. Hence, by the coherence and genericity of~$\mathbf{x}$ and $\mathbf{x}'$, the values of $\mathbf{x}$ and $\mathbf{x}'$ are uniquely determined by their values at $\varkappa^0(T_0), \dots, \varkappa^0(T_{i - 1})$, which are the same for $\mathbf{x}$ and $\mathbf{x}'$. Hence, $\mathbf{x}$ and $\mathbf{x}'$ agree at the top vertex of $C_i$, and thus agree at $\varkappa^0(T_i)$, a contradiction.
\end{proof}

We can now prove a weaker version of Theorem~\ref{cc_main_generalization}, under the additional constraint of genericity.

\begin{Corollary} \label{main_cc_with_gen_cond}Let $\mathbf{T}$ be a pile of quadrangulations of a polygon such that $\varkappa = \varkappa(\mathbf{T})$ is comfortable. Any generic, coherent solution of the Kashaev equation $\mathbf{x} = (x_s)_{s \in \varkappa^0}$ can be extended to $\mathbf{\tilde{x}} = (x_s)_{s \in \varkappa^{02}}$ satisfying the K-hexahedron equations.
\end{Corollary}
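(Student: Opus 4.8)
The plan is to mimic the proof of Theorem~\ref{galoisfromintro}(a) given in Section~\ref{mainproof}, replacing the role of $\Z^3$ by the comfortable cubical complex $\varkappa = \varkappa(\mathbf{T})$, of $\feq$ by $\cueq$, and of $\psi$ by $\psi_\varkappa$. Write $\mathbf{T} = (T_0, \dots, T_\ell)$, enumerate the cubes $C_1, \dots, C_\ell$ of $\varkappa^3$ in the order in which they are added (so $C_i$ corresponds to the flip between $T_{i-1}$ and $T_i$), and let $\mathbf{x} = (x_s)_{s \in \varkappa^0}$ be a generic, coherent solution of the Kashaev equation. Let $\xin$ be the restriction of $\mathbf{x}$ to $\varkappa^0(T_0)$, and pick any array $\xtin = (x_s)_{s \in \varkappa^{02}(T_0)}$ extending $\xin$ and satisfying equation~\eqref{khexspec} for $s \in \varkappa^2(T_0)$; by genericity of $\mathbf{x}$, every such $\xtin$ is generic in the sense of the preceding definition, so $(\mathbf{t} \cdot \xtin)^{\uparrow \varkappa^0}$ is defined and has nonzero components for every $\mathbf{t} \in \{-1,1\}^{\cueq}$.

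First I would observe that, by Proposition~\ref{cc_easy_direction}, each array $(\mathbf{t} \cdot \xtin)^{\uparrow \varkappa^0}$ is a coherent solution of the Kashaev equation agreeing with $\mathbf{x}$ on $\varkappa^0(T_0)$; conversely, by genericity plus coherence, every coherent solution of the Kashaev equation agreeing with $\mathbf{x}$ on $\varkappa^0(T_0)$ is obtained this way (run the recurrence forward: at each flip at an interior vertex the value at the new vertex is forced by the coherence condition~\eqref{coherence_condition_for_wir}, whose denominator $\prod_{C \ni v} K^C_v$ is nonzero by genericity, while at each flip whose bottom vertex lies in $T_0$ the two Kashaev roots for the new vertex correspond precisely to the two sign choices produced by flipping the relevant component of $\mathbf{t}$, via Lemma~\ref{change_one_cube_lemma}). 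The task is therefore to find $\mathbf{t}$ with $(\mathbf{t} \cdot \xtin)^{\uparrow \varkappa^0} = \mathbf{x}$. By Lemma~\ref{change_one_cube_lemma}, $(\mathbf{t} \cdot \xtin)^{\uparrow \varkappa^0}$ and $(\xtin)^{\uparrow \varkappa^0}$ agree on all of $\varkappa^0$ if and only if $\psi_\varkappa(\mathbf{t})$ is identically $1$; and more generally the first cube $C_i$ on which the two extensions differ is the first cube on which $\psi_\varkappa(\mathbf{t})$ takes the value $-1$.

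The key step is then to produce the correct $\mathbf{t}$. Let $\mathbf{y} = (\xtin)^{\uparrow \varkappa^0} = (y_s)_{s \in \varkappa^0}$; by Proposition~\ref{cc_easy_direction} this is a coherent solution of the Kashaev equation agreeing with $\mathbf{x}$ on $\varkappa^0(T_0)$. Define $\mathbf{u} = (u_{C_i})_{C_i \in \varkappa^3} \in \{-1,1\}^{\varkappa^3}$ by setting $u_{C_i} = 1$ if the top vertices of $C_i$ receive equal values in $\mathbf{x}$ and $\mathbf{y}$ given that $\mathbf{x}$ and $\mathbf{y}$ agree on $\varkappa^0(C_i) \setminus \{\text{top of } C_i\}$, and $u_{C_i} = -1$ otherwise — more carefully, proceed by induction on $i$: having arranged (by a partial choice of $\mathbf{t}$) that $\mathbf{x}$ and the current extension agree on $\varkappa^0(T_0), \dots, \varkappa^0(T_{i-1})$, compare the top vertex of $C_i$ and set $u_{C_i}$ accordingly. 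The crucial point is that $\mathbf{u}$ satisfies condition~(C2): for any interior vertex $v$ of $\varkappa$, Lemma~\ref{min_vert_on_bottom} forces that $\mathbf{x}$ and $\mathbf{y}$ (which agree on $\varkappa^0(T_0)$) can only first disagree at a cube whose bottom vertex is in $T_0$, hence the sign $u_C$ over cubes $C$ with bottom vertex $v$ is determined by coherence of both $\mathbf{x}$ and $\mathbf{y}$ together with genericity, exactly as in the proof of Proposition~\ref{A2B2proposition}/Lemma~\ref{cornersofcubelemma} — the product $\prod_{C \ni v} K^C_v$ appears for both arrays, and the sign mismatches around $v$ multiply to $1$. (This is the analogue of the identity $\prod_{C \ni v} u_C = 1$ used in Lemma~\ref{prod_over_cubes_lemma}, and it is where comfortableness will be invoked.) Since $\varkappa$ is comfortable, (C2) implies (C1), so there exists $\mathbf{t} \in \{-1,1\}^{\cueq}$ with $\psi_\varkappa(\mathbf{t}) = \mathbf{u}$. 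By Lemma~\ref{change_one_cube_lemma} (applied inductively over $i = 1, \dots, \ell$), $(\mathbf{t} \cdot \xtin)^{\uparrow \varkappa^0}$ agrees with $\mathbf{x}$ on every $\varkappa^0(T_i)$, hence on all of $\varkappa^0$. Then $\mathbf{\tilde{x}} := (\mathbf{t} \cdot \xtin)^{\uparrow \varkappa^{02}}$ is the desired extension of $\mathbf{x}$ satisfying the K-hexahedron equations.

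I expect the main obstacle to be the verification that the array $\mathbf{u}$ constructed above genuinely satisfies (C2) at every interior vertex $v$ — i.e., that the sign discrepancies between $\mathbf{x}$ and $\mathbf{y}$ around an interior vertex always multiply to $+1$. This requires combining Lemma~\ref{min_vert_on_bottom} (to know the discrepancies propagate in a controlled way, never originating at an interior vertex) with the coherence equation~\eqref{coherence_condition_for_wir} for both $\mathbf{x}$ and $\mathbf{y}$ and the genericity hypothesis (so that $\prod_{C \ni v} K^C_v \neq 0$ and the sign is well-defined); the bookkeeping is essentially that of Lemma~\ref{change_one_value_lemma} and Lemma~\ref{levels_0_thru_5} in the $\Z^3$ case, transported verbatim. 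Everything else — the forward-running argument, the use of Lemma~\ref{change_one_cube_lemma}, and the final appeal to comfortableness — is routine adaptation of Section~\ref{mainproof}.
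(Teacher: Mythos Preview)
Your proposal assembles the right ingredients (Lemma~\ref{change_one_cube_lemma}, Lemma~\ref{min_vert_on_bottom}, comfortableness) and matches the paper's strategy in spirit, but the logical structure of your ``global $\mathbf{u}$'' argument is circular. Your definition of $u_{C_i}$ presupposes ``having arranged (by a partial choice of $\mathbf{t}$) that $\mathbf{x}$ and the current extension agree on $\varkappa^0(T_0),\dots,\varkappa^0(T_{i-1})$''---but finding such a partial $\mathbf{t}$ is exactly what the whole proof is trying to establish, and already requires comfortableness applied at each earlier step. So either $\mathbf{u}$ is not well-defined without the induction you are trying to set up, or you are tacitly running the paper's inductive argument while still claiming that comfortableness is only invoked once at the end. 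Relatedly, your verification that $\mathbf{u}$ satisfies (C2) is hand-wavy: the claim ``the sign mismatches around $v$ multiply to $1$'' compares $K^C_v(\mathbf{x})$ and $K^C_v(\mathbf{y})$ at cubes where $\mathbf{x}$ and $\mathbf{y}$ need not agree on the seven lower vertices, so the two quantities are not square roots of the same expression and the intended sign comparison does not make sense as stated.

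The paper sidesteps this entirely by \emph{not} attempting to define a global $\mathbf{u}$ from $\mathbf{x}$. Instead it runs a clean induction on $i$: assuming some $\mathbf{t}$ already makes $(\mathbf{t}\cdot\xtin)^{\uparrow\varkappa^{02}}$ agree with $\mathbf{x}$ through level $i-1$, if there is a mismatch at $C_i$ then Lemma~\ref{min_vert_on_bottom} forces $C_i$ to have its bottom vertex in $T_0$; one then \emph{constructs} (rather than derives) an array $\mathbf{u}$ satisfying (C2) with $u_{C_1}=\cdots=u_{C_{i-1}}=1$ and $u_{C_i}=-1$, simply by setting $u_{C_i}=-1$, setting $u_C=1$ on every other cube whose bottom vertex lies in $T_0$, and letting condition (C2) determine the remaining values. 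Comfortableness then yields $\mathbf{t}_i$ with $\psi_\varkappa(\mathbf{t}_i)=\mathbf{u}$, and Lemma~\ref{change_one_cube_lemma} gives agreement through level $i$ for $\mathbf{t}_i\mathbf{t}$. The point is that (C2) holds \emph{by construction} for each step's $\mathbf{u}$; you never need to check it as a property of some globally defined object. Restructuring your argument this way removes the circularity and makes the role of Lemma~\ref{min_vert_on_bottom} precise: it is exactly what guarantees that at each mismatch you are free to prescribe the sign at $C_i$ independently of the earlier cubes.
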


\begin{proof}Choose an arbitrary extension of $\xin$, the restriction of $\mathbf{x}$ to $\varkappa^0(T_0)$ to an array $\xtin'$ indexed by $\varkappa^{02}(T_0)$ satisfying equation~\eqref{khexspec} for $s \in \varkappa^2(T_0)$. The result follows once we can show that there exists $\mathbf{t} \in \{-1, 1\}^{\cueq}$ such that $(\mathbf{t} \cdot \xtin')^{\uparrow \varkappa^{02}}$ agrees with $\mathbf{x}$ on $\varkappa^0$.

We proceed by induction, and assume that there exists $\mathbf{t}$ such that $(\mathbf{t} \cdot \xtin')^{\uparrow \varkappa^{02}}$ agrees with~$\mathbf{x}$ on $\varkappa^0(T_j)$ for $j = 0, \dots, i - 1$. If $(\mathbf{t} \cdot \xtin')^{\uparrow \varkappa^{02}}$ agrees with $\mathbf{x}$ on $\varkappa^0(T_j)$ for $j = 0, \dots, i$, we are done. Suppose that $(\mathbf{t} \cdot \xtin')^{\uparrow \varkappa^{02}}$ does not agree with $\mathbf{x}$ on~$\varkappa^0(T_i)$. We need to find $\mathbf{t}_i$ such that $(\mathbf{t}_i \mathbf{t} \cdot \xtin')^{\uparrow \varkappa^{02}}$ agrees with $\mathbf{x}$ on $\varkappa^0(T_j)$ for $j = 0, \dots, i$. By Lemma~\ref{change_one_cube_lemma}, this is equivalent to finding $\mathbf{t}_i$ such that $\psi_\varkappa(\mathbf{t}_i)$ is $1$ on $C_1, \dots, C_{i - 1}$, and $-1$ on $C_i$. By Lemma~\ref{min_vert_on_bottom}, the cube~$C_i$ shares its bottom vertex with $T_0$. Hence, there exists $\mathbf{u} = (u_s) \in \{-1,1\}^{\varkappa^3}$ satisfying (C2) such that $u_{C_1} = \cdots = u_{C_{i - 1}} = 1$ and $u_{C_i} = -1$. (For example, choose $\mathbf{u}$ so that $u_{C_i} = -1$, and $u_C = 1$ for all other cubes~$C$ that share a bottom vertex with~$T_0$. Then the remaining values are determined by condition~(C2).) Because $\varkappa$ is comfortable, there exists $\mathbf{t}_i$ such that $\psi_\varkappa(\mathbf{t}_i) = \mathbf{u}$, as desired.
\end{proof}

\begin{proof}[Proof of Theorem~\ref{cc_main_generalization}.]We need to loosen the condition that $\mathbf{x}$ is generic from Corollary~\ref{main_cc_with_gen_cond} to the conditions that $\mathbf{x}$ has nonzero components and satisfies condition~\eqref{face_nonzero}.

Let $\mathbf{x} \in (\C^*)^{\varkappa^0}$ be a coherent solution of the Kashaev equation with nonzero components that satisfies condition~\eqref{face_nonzero}. It is straightforward to show that there exists a sequence $\mathbf{x}_1, \mathbf{x}_2, \ldots \in (\C^*)^{\varkappa^0}$ of generic, coherent solutions of the Kashaev equation that converge pointwise to $\mathbf{x}$. By Corollary~\ref{main_cc_with_gen_cond}, there exist $\mathbf{\tilde{x}}_1, \mathbf{\tilde{x}}_2, \ldots \in (\C^*)^{\varkappa^{02}}$ satisfying the K-hexahedron equations such that $\mathbf{\tilde{x}}_i$ restricts to $\mathbf{x}_i$. There exists a subsequence of $\mathbf{\tilde{x}}_1, \mathbf{\tilde{x}}_2, \dots$ that converges to an array~$\mathbf{\tilde{x}}$. (For each $s \in \varkappa^2(\mathbf{T})$, we can partition the sequence $\mathbf{\tilde{x}}_1, \mathbf{\tilde{x}}_2, \dots$ into two sequences, each of which converges at $s$. Because $\varkappa^2(\mathbf{T})$ is finite, the claim follows.) The array $\mathbf{\tilde{x}}$ must satisfy the K-hexahedron equations and restrict to $\mathbf{x}$, so we are done.
\end{proof}

In order to complete the proof of Theorem~\ref{when_theorem_fails}, we will need the following technical lemma.

\begin{Lemma} \label{not_comf_cube_lemma}
Let $\mathbf{T} = (T_0, \dots, T_\ell)$ be a pile of quadrangulations of a polygon such that $\varkappa(\mathbf{T})$ is not comfortable, but $\varkappa(T_0, \dots, T_{\ell - 1})$ is comfortable. Let $C_\ell$ be the cube of $\varkappa$ corresponding to the flip from $T_{\ell - 1}$ to $T_\ell$.
\begin{enumerate}\itemsep=0pt
\item[$(a)$] Let $v$ be the bottom vertex of the cube $C_\ell$, i.e., let $v$ be the vertex of $T_{\ell - 1}$ not in~$T_\ell$. Then $v$ is in $T_0$.
\item[$(b)$] Let $\mathbf{w} = (w_{C})_{C \in \varkappa^3}$ where $w_{C_\ell} = -1$, and $w_{C} = 1$ for $C \not= C_\ell$. Then $\mathbf{w}$ is not in the image of $\psi_\varkappa$.
\end{enumerate}
\end{Lemma}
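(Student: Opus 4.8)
\textbf{Plan for the proof of Lemma~\ref{not_comf_cube_lemma}.}

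The plan is to exploit the fact that $\varkappa(T_0,\dots,T_{\ell-1})$ is comfortable, while $\varkappa(\mathbf{T})$ is not, and to see exactly what obstruction the final cube $C_\ell$ creates. First I would prove part~(a) by contradiction. Suppose the bottom vertex $v$ of $C_\ell$ is \emph{not} in $T_0$, i.e., $v$ is an interior vertex of $\varkappa(\mathbf{T})$. The cubes of $\varkappa(\mathbf{T})$ incident to $v$ are precisely the cubes incident to $v$ in $\varkappa(T_0,\dots,T_{\ell-1})$ together with $C_\ell$ itself (since $v$ is the top vertex of no cube other than one earlier cube, and the bottom vertex only of $C_\ell$). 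I claim that this forces comfortableness of $\varkappa(\mathbf{T})$, contradicting the hypothesis. Indeed, condition~(C2) for $\varkappa(\mathbf{T})$ differs from (C2) for $\varkappa(T_0,\dots,T_{\ell-1})$ only at the interior vertices whose incident-cube set changed, namely $v$ and possibly the top vertex $u$ of $C_\ell$ if $u$ is interior; in each such case the new product constraint simply multiplies in the new factor $u_{C_\ell}$. So given $\mathbf{u}=(u_C)_{C\in\varkappa^3(\mathbf{T})}$ satisfying (C2), its restriction to $\varkappa^3(T_0,\dots,T_{\ell-1})$ satisfies (C2) there (one uses the vertex relation at $v$, and possibly at $u$, to rewrite the missing factor), hence by comfortableness of $\varkappa(T_0,\dots,T_{\ell-1})$ lies in the image of $\psi_{\varkappa(T_0,\dots,T_{\ell-1})}$; lifting the same sign array $\mathbf{t}$ (after identifying $\cueq$ of the two complexes, adding $C_\ell$'s class if it is new) and checking the value on $C_\ell$ using (C2) at $v$ shows $\mathbf{u}$ is in the image of $\psi_{\varkappa(\mathbf{T})}$. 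This contradicts non-comfortableness, proving~(a).

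For part~(b), the plan is to show that $\mathbf{w}$ (all signs $+1$ except $-1$ on $C_\ell$) is not in the image of $\psi_\varkappa$. Suppose for contradiction that $\mathbf{w}=\psi_\varkappa(\mathbf{t})$ for some $\mathbf{t}\in\{-1,1\}^{\cueq}$. By Remark~\ref{comf_cond_remark}, $\mathbf{w}$ automatically satisfies (C2): for every interior vertex the product of the $u_C$ over incident cubes is $1$. But now I observe that $\mathbf{w}$ restricted to $\varkappa^3(T_0,\dots,T_{\ell-1})$ is the all-$+1$ array, which is trivially $\psi_{\varkappa(T_0,\dots,T_{\ell-1})}$ of the all-$+1$ sign array, hence satisfies (C2) there; and since $C_\ell$ carries $-1$, the pair $(\mathbf{t},\,\text{all-}+1)$ would exhibit a discrepancy. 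More directly: because $\varkappa(\mathbf{T})$ is not comfortable, and (C1)$\Rightarrow$(C2) always holds, there must exist $\mathbf{u}$ satisfying (C2) but not (C1); using part~(a), the bottom vertex of $C_\ell$ is in $T_0$, so among the cubes sharing a bottom vertex with $T_0$ we can freely assign signs, and condition~(C2) then propagates these to all remaining cubes — so the set of $\mathbf{u}$ satisfying (C2) has size $2^{(\#\text{such cubes})}$. If $\mathbf{w}$ were in the image of $\psi_\varkappa$, then (by the argument in part~(a), run in reverse) \emph{every} $\mathbf{u}$ satisfying (C2) would be in the image, contradicting non-comfortableness. Hence $\mathbf{w}\notin\im\psi_\varkappa$.

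The main obstacle I anticipate is part~(a): making precise the claim that the only change in condition~(C2) when passing from $\varkappa(T_0,\dots,T_{\ell-1})$ to $\varkappa(\mathbf{T})$ occurs at the bottom and top vertices of $C_\ell$, and that these changes are exactly ``multiply in the factor $u_{C_\ell}$.'' This requires carefully tracking which vertices of the cube $C_\ell$ are interior in each complex and which cubes are incident to them — in particular the subtlety that the top vertex $u$ of $C_\ell$ is a \emph{new} vertex of $\varkappa(\mathbf{T})$ and is interior if and only if a later flip (which does not exist here, since $C_\ell$ is the last cube) is centered there, so $u$ is a boundary vertex and imposes no constraint. Once that bookkeeping is pinned down, both parts follow from the comfortableness of the shorter pile together with a dimension count of the solution sets of (C1) and (C2), exactly in the spirit of the proof of Lemma~\ref{pn_comfortable}.
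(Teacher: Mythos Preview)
Your argument for part~(a) is correct and is essentially the paper's argument reorganized: the paper counts $a_1=|\{(\text{C1}) \text{ in }\varkappa'\}|$, $a_2=|\{(\text{C2}) \text{ in }\varkappa'\}|$, $b_1,b_2$ likewise for $\varkappa$, observes that these are powers of $2$ with $a_1=a_2$, $b_1<b_2$, $b_i\le 2a_i$, hence $a_1=a_2=b_1=b_2/2$, and then notes that if $v$ were interior the (C2) relation at $v$ would force $b_2=a_2$. Your direct lift-and-match argument encodes the same mechanism.

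Part~(b), however, has a genuine gap. Your justification ``by the argument in part~(a), run in reverse'' cannot work as stated: part~(a) hinged on the (C2) relation at the interior vertex $v$, but once~(a) is established, $v\in T_0$ is \emph{not} interior in $\varkappa(\mathbf{T})$, so there is no (C2) constraint at $v$ to exploit. The intermediate claim that ``(C2) then propagates these to all remaining cubes, so $|\{(\text{C2})\}|=2^{(\#\text{cubes with bottom in }T_0)}$'' is also not justified and not needed. What you actually need for the final step is the subgroup structure: any $\mathbf{u}$ satisfying (C2) in $\varkappa$ restricts to an array satisfying (C2) in $\varkappa'$ (the interior vertices and their incident cubes are literally the same, since by~(a) no interior vertex meets $C_\ell$), hence equals $\psi_{\varkappa'}(\mathbf{t})$ for some $\mathbf{t}$; then $\mathbf{u}$ and $\psi_{\varkappa}(\mathbf{t})$ agree on $(\varkappa')^3$ and can differ only on $C_\ell$, i.e.\ $\mathbf{u}\in\{\psi_\varkappa(\mathbf{t}),\ \mathbf{w}\cdot\psi_\varkappa(\mathbf{t})\}$. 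If $\mathbf{w}\in\im\psi_\varkappa$, both lie in $\im\psi_\varkappa$, contradicting non-comfortableness. The paper reaches the same conclusion more economically via the counting identity $a_1=b_1$: each element of $\im\psi_{\varkappa'}$ has a \emph{unique} lift to $\im\psi_\varkappa$, and since the all-$+1$ array lifts to the all-$+1$ array, $\mathbf{w}$ cannot be a lift.
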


\begin{proof}Let $\varkappa' = \varkappa(T_0, \dots, T_{\ell - 1})$. Let
\begin{itemize}\itemsep=0pt
\item $a_1$ be the number of $\mathbf{u} \in \{-1,1\}^{(\varkappa')^3}$ satisfying~(C1),
\item $a_2$ be the number of $\mathbf{u} \in \{-1,1\}^{(\varkappa')^3}$ satisfying~(C2),
\item $b_1$ be the number of $\mathbf{u} \in \{-1,1\}^{\varkappa^3}$ satisfying~(C1), and
\item $b_2$ be the number of $\mathbf{u} \in \{-1,1\}^{\varkappa^3}$ satisfying~(C2).
\end{itemize}
Because $a_1$, $a_2$, $b_1$, $b_2$ enumerate the elements of vector fields over $\mathbb{F}_2$, all four quantities must be powers of $2$. Because $\varkappa'$ is comfortable, $a_1 = a_2$. Because $\varkappa$ is not comfortable, $b_1 < b_2$. It is clear that $b_1 \le 2 a_1$ and $b_2 \le 2 a_2$. Hence, it follows that~$a_1 = a_2 = b_1 = b_2 / 2$.

Assume (for contradiction) that $v$ is not in $T_0$, so $v$ is in the interior of $\varkappa$. But then if $\mathbf{u} = (u_{C})_{C \in \varkappa^3}$ satisfies~(C2),
\begin{gather*}
u_{C_\ell} = \prod_{C \in \varkappa^3\colon w \in C \not= C_\ell} u_{C},
\end{gather*}
so $a_2 = b_2$, a contradiction. Hence, we have proved~(a).

Because $a_1 = b_1$, it follows that for each $\mathbf{u}' \in \{-1,1\}^{(\varkappa')^3}$ satisfying~(C1), there exists exactly one $\mathbf{u} \in \{-1,1\}^{\varkappa^3}$ satisfying~(C1) that restricts to $\mathbf{u}'$. Because $\mathbf{u} = (u_C)_{C \in \varkappa^3}$ and $\mathbf{u}' = (u_C)_{C \in (\varkappa')^3}$ where $u_C = 1$ for all $C$ satisfy~(C1), $\mathbf{w}$ cannot satisfy~(C1), proving~(b).
\end{proof}

\begin{proof}[Proof of Theorem~\ref{when_theorem_fails}] Without loss of generality, we assume that $\varkappa(T_0, \dots, T_{\ell - 1})$ is comfortable. (If not, let $m$ be minimum so that $\varkappa(T_0, {\dots}, T_m)$ is not comfortable, but $\varkappa(T_0, {\dots}, T_{m{-}1})$ is comfortable. If we can prove the theorem for $\varkappa(T_0, \dots, T_m)$, it follows that it holds for $\varkappa(\mathbf{T})$.)

We now construct an array $\mathbf{x}$ satisfying the desired conditions. Let $C$ be the cube of $\varkappa$ corresponding to the flip from $T_{\ell - 1}$ to $T_\ell$, and let $v$ be the top vertex of $C$ (i.e., $v$ is the new vertex in $T_i$). Choose arbitrary positive values for $\xin$. Extend $\xin$ to $\mathbf{x}$ by the positive Kashaev recurrence until we reach $v$, where we choose the other value such that $K^C(\mathbf{x}) = 0$.

By construction, $\mathbf{x}$ restricted to $\varkappa(T_0, \dots, T_{\ell - 1})$ satisfies the positive Kashaev recurrence, and hence is a coherent solution of the Kashaev equation. By Lemma~\ref{not_comf_cube_lemma}(a), no vertices of $C$ are in the interior of $\varkappa$. Hence, $\mathbf{x}$ is a coherent solution of the Kashaev equation.

Next, we show that $\mathbf{x}$ cannot be extended to an array indexed by $\varkappa^{02}$ satisfying the K-hexahedron equations. Let $\mathbf{x}_{\text{pK}}$ be the array satisfying the positive Kashaev recurrence that restricts to $\xin$ at $T_0$ (so $\mathbf{x}_{\text{pK}}$ agrees with $\mathbf{x}$ everywhere except~$v$). Let $\mathbf{\tilde{x}}_{\text{pK}}$ be an extension of~$\mathbf{x}_{\text{pK}}$ to $\varkappa^{02}$ satisfying the K-hexahedron equations. Assume (for contradiction) that there exists $\mathbf{t} \in \{-1,1\}^{\tilde{\varkappa}_2}$ such that $\mathbf{\tilde{x}}(\mathbf{t} \cdot (\mathbf{\tilde{x}}_{\text{pK}})_0)$ restricts to~$\mathbf{x}$. Hence, by Lemma~\ref{change_one_cube_lemma}, $\psi_{\varkappa}(\mathbf{t})$ has value $-1$ at $C$, and value $1$ everywhere else. But Lemma~\ref{not_comf_cube_lemma}(b) says that array is not in the image of~$\psi_\varkappa$, a contradiction. Hence, no such~$\mathbf{t}$ exists, so~$\mathbf{x}$ cannot be extended to an array indexed by~$\varkappa^{02}$ satisfying the K-hexahedron equations.
\end{proof}

\section{Proofs of Corollary~\ref{all_pminors_corollary} and Theorem~\ref{main_all_minors_simple}} \label{matrix_proofs}

This section contains the proofs of Corollary~\ref{all_pminors_corollary} and Theorem~\ref{main_all_minors_simple}.

We use the following lemma in proving Corollary~\ref{all_pminors_corollary}.

\begin{Lemma} \label{same_label_values_agree}
Let $\mathbf{T}$ be a pile of $\Diamond$-tilings of $\mathbf{P}_n$, with $\varkappa = \varkappa(\mathbf{T})$. Let $\mathbf{x} = (x_s) \in (\C^*)^{\varkappa^0}$ be a coherent solution of the Kashaev equation satisfying condition~\eqref{nonzero_face_condition}. Suppose $s_1, s_2 \in \varkappa^0$ are labeled by the same subset of $[n]$. Then $x_{s_1} = x_{s_2}$.
\end{Lemma}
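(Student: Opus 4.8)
The plan is to argue by induction on the length of the pile $\mathbf{T}$, using the fact that two vertices of $\varkappa^0$ carry the same label exactly when there is a sequence of flips (through $\Diamond$-tilings in which they persist) connecting the two tilings of $\mathbf{P}_n$ in which they appear. First I would reduce to the local situation: it suffices to show that whenever $T_{i-1}$ and $T_i$ are consecutive tilings in $\mathbf{T}$ connected by a flip, and a vertex $s$ labeled $I \subseteq [n]$ appears in both $T_{i-1}$ and $T_i$ (i.e., $s$ is not the deleted vertex and not the new vertex of the flip), then the component of $\mathbf{x}$ at the copy of $s$ in $\varkappa^0(T_{i-1})$ equals the component at the copy in $\varkappa^0(T_i)$. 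Since any two vertices of $\varkappa^0$ bearing the same label $I$ are joined by a chain of such local coincidences together with the identifications already built into $\varkappa(\mathbf{T})$ along the boundary, this local claim gives the general statement.

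For the local claim, I would invoke the combinatorial structure of the cube $C$ added by the flip between $T_{i-1}$ and $T_i$ (Definition~\ref{assoc_cubical_complex_def}, Fig.~\ref{cube_added_from_flip}): the eight vertices of $C$ correspond to the eight subsets of $[n]$ obtained from the label $I_0$ of the flip's center by toggling membership of the three indices $\{p,q,r\} \in \binom{[n]}{3}$ involved in the flip. A vertex labeled $I$ that survives the flip is one of the six vertices of $C$ other than the bottom vertex $w$ (center of $T_{i-1}$) and the top vertex $v$ (center of $T_i$); but such a vertex of $C$ is glued to the corresponding vertex of $\varkappa^0(T_{i-1})$ and to the corresponding vertex of $\varkappa^0(T_i)$ as a single vertex of $\varkappa^0$ — the cubical complex $\varkappa(\mathbf{T})$ is built precisely so that the quadrangulations $T_{i-1}$ and $T_i$ share all vertices except $w$ and $v$, and these shared vertices are literally the same points of $\varkappa^0$. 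So there is genuinely nothing to prove for those six: they are one vertex, carrying one value. The only subtlety is whether a label $I$ could reappear at a vertex of $\varkappa^0$ which is \emph{not} one of these six (for instance, the new vertex $v$ of $T_i$ carries the label $I_0 \Delta \{p,q,r\}$, which is the label of the top vertex of $C$, and this is a label that also occurs elsewhere in $\varkappa$).

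The main obstacle, therefore, is the genuinely nonlocal part: two vertices $s_1, s_2 \in \varkappa^0$ with the same label $I$ may sit in far-apart tilings $T_a$ and $T_b$ of $\mathbf{T}$ with no single flip relating them. Here I would use that, in a $\Diamond$-tiling of $\mathbf{P}_n$, the label of a vertex determines its position (Definition~\ref{def_diamond_pn}, Definition~\ref{chambers_pla}: the label $I$ of a vertex is the set of branches separating it from $v_0$, so within a fixed tiling each $I$ occurs at most once), and that a label $I$ can be ``carried along'' the pile: if $I$ labels a vertex in $T_a$ and in $T_b$ with $a < b$, then by the structure of piles (each flip changes exactly one vertex label, Definition~\ref{flip_def} and Fig.~\ref{rhombus_flip}) one can follow the vertex labeled $I$ through the intermediate tilings. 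At each flip it either persists (same value, by the local claim above) or is the vertex being created/destroyed; but if it is created at step $j$ it is the top vertex of the cube $C_j$, whose value is forced by $K^{C_j}(\mathbf{x}) = 0$ together with the coherence condition~\eqref{coherence_condition_for_wir} at the bottom vertex of $C_j$ — and condition~\eqref{nonzero_face_condition}, i.e.~\eqref{face_nonzero}, guarantees the relevant denominators $K^{C}_v(\mathbf{x})$ are nonzero (via Lemma~\ref{KCvgenxlemma}), so this forced value is well-defined and independent of the route. Chasing this through, any two occurrences of the label $I$ along $\mathbf{T}$ must carry equal values of $\mathbf{x}$, which combined with the within-a-tiling uniqueness of $I$ yields $x_{s_1} = x_{s_2}$ for all same-labeled $s_1, s_2 \in \varkappa^0$. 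I expect the bookkeeping of "following a label through the pile" — keeping track of which flips create or destroy it and verifying the value is preserved or correctly forced at each step — to be the delicate part of the argument, but it is entirely combinatorial once the coherence-plus-nonvanishing input is in hand.
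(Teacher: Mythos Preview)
Your local claim is correct but vacuous: in the construction of $\varkappa(\mathbf{T})$ (Definition~\ref{assoc_cubical_complex_def}), a vertex that survives a flip is literally the same element of $\varkappa^0$ before and after, so no equation is needed. The substance of the lemma lies entirely in the case you defer to ``bookkeeping'': a label $I$ present at $s_1$, destroyed at step $j$ (so $s_1$ is the bottom of $C_j$), and recreated at step $k$ as the top vertex $s_2$ of $C_k$. You assert that coherence at the bottom of $C_k$ forces the value of $x_{s_2}$ and that this forced value equals $x_{s_1}$, ``independent of the route''. But the coherence condition~\eqref{coherence_condition_for_wir} at an interior vertex $w$ is a relation among the values on the cubes and $2$-faces incident to $w$ only; it carries no memory of a same-labeled vertex several flips away. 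Your argument shows at most that $x_{s_2}$ is \emph{determined} by the surrounding data, not that it is determined to be $x_{s_1}$---and that equality is precisely the content of the lemma. Moreover, if the bottom of $C_k$ happens to lie in $T_0$ (which nothing in the combinatorics forbids), it is not interior and no coherence condition is available there at all, so even the ``forced'' half of your claim breaks down.

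The paper does not attempt a direct combinatorial induction. Instead, it rescales $\mathbf{x}$ to be standard, extends it to a K-hexahedron solution $\tilde{\mathbf{x}}$ on $\varkappa^{02}$ via Theorem~\ref{main_thm_cyclic_zonotope}(a), and then perturbs $\tilde{\mathbf{x}}$ to a sequence of generic standard K-hexahedron arrays. By Theorems~\ref{tiles_to_matrix_bij} and~\ref{sym_mat_net} each such generic array is $\tilde{\mathbf{x}}_{\varkappa(\mathbf{T})}(M_i)$ for a symmetric matrix $M_i$, whence same-labeled vertices automatically carry the same signed principal minor; taking the limit gives the result. The global consistency you are trying to extract from local coherence is thus obtained by embedding the problem into a setting where it holds for free, and this passage through matrices (or something equivalent) appears to be unavoidable.
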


\begin{proof}
Note that due to the homogeneity of the Kashaev equation and the coherence equations (equation~\eqref{coherence_gen_def}), we can rescale the components of $\mathbf{x}$ to obtain a standard array. Hence, we can assume that $\mathbf{x}$ is standard.

By Theorem~\ref{main_thm_cyclic_zonotope}(a), we can extend $\mathbf{x}$ to an array $\mathbf{\tilde{x}}$ indexed by $\varkappa^{02}$ satisfying the K-hexahedron equations. Note that we can choose a sequence $\mathbf{\tilde{x}}_1, \mathbf{\tilde{x}}_2, \dots$ of standard arrays indexed by $\varkappa^{02}$ satisfying the K-hexahedron equations converging to $\mathbf{\tilde{x}}$ such that the restriction of~$\mathbf{\tilde{x}}_i$ to~$\varkappa^{02}(T)$ for any tiling $T$ in $\mathbf{T}$ is generic. By Theorems~\ref{tiles_to_matrix_bij} and~\ref{sym_mat_net}, there exist symmetric $n \times n$ matrices such that $\mathbf{\tilde{x}}_i = \mathbf{\tilde{x}}_{\varkappa(\mathbf{T})}(M_i)$. Hence, the components of $\mathbf{\tilde{x}}_i$ at $s_1$ and $s_2$ must agree, so the components of $\mathbf{\tilde{x}}$ at $s_1$ and $s_2$ must agree.
\end{proof}

\begin{proof}[Proof of Corollary~\ref{all_pminors_corollary}.]
The first bullet point implies the second two by Corollary~\ref{sym_matrix_k_hex_arb}, and it is obvious that the third implies the second. Thus, we need to show that the second bullet point implies the first.

Next, suppose $\mathbf{T} = (T_0, \dots, T_\ell)$ is a pile of $\Diamond$-tilings of $\mathbf{P}_n$ in which every $I \subseteq [n]$ labels at least one vertex of $\varkappa(\mathbf{T})$, and $\mathbf{x} = \mathbf{x}_{\varkappa(\mathbf{T})}(\mathbf{\bar{x}})$ is a coherent solution of the Kashaev equation. Let $\mathbf{T}' = (T_0, \dots, T_\ell, \dots, T_{\ell'})$ be an extension of $\mathbf{T}$ where $\mathbf{T}'$ contains the tiling $T_{\textup{min},n}$. By Lemma~\ref{same_label_values_agree}, $\mathbf{x}_{\varkappa(\mathbf{T}')}(\mathbf{\bar{x}})$ is the unique extension of $\mathbf{x}$ to $\varkappa^0(\mathbf{T}')$. By Theorem~\ref{main_thm_cyclic_zonotope}(a), there exists an array $\mathbf{\tilde{x}}$ indexed by $\varkappa^{02}(\mathbf{T}')$ extending $\mathbf{x}_{\varkappa(\mathbf{T}')}(\mathbf{\bar{x}})$ that satisfies the K-hexahedron equations. By Theorem~\ref{tmin_laurent}, Proposition~\ref{flip_hex}, and Corollary~\ref{sym_for_min_tiling} (all of which are due to Kenyon and Pemantle \cite{otherhex}), there exists a unique symmetric matrix $M$ such that $\mathbf{\tilde{x}} = \mathbf{\tilde{x}}_{\varkappa(\mathbf{T}')}(M)$, so $M$ satisfies condition~\eqref{xi_in_terms_of_pminor}.
\end{proof}

Next, we shall work towards a proof of Theorem~\ref{main_all_minors_simple}.

\begin{Proposition} \label{4_cube_coherence_condition_holds}
Let $M$ be an $n \times n$ symmetric matrix, and let $\mathbf{\bar{x}} = \mathbf{\bar{x}}(M)$. Then for all $I \subseteq [n]$ and $A \in \binom{[n]}{4}$, equation~\eqref{4_cube_coherence} holds.
\end{Proposition}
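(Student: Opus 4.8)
The goal is to prove Proposition~\ref{4_cube_coherence_condition_holds}: for a symmetric matrix $M$ with $\mathbf{\bar{x}} = \mathbf{\bar{x}}(M)$, the coherence identity~\eqref{4_cube_coherence} holds for all $I \subseteq [n]$ and $A \in \binom{[n]}{4}$. The plan is to reduce the general statement to the case $I = \varnothing$, $A = \{1,2,3,4\}$, and then to recognize the resulting identity as an instance of the coherence condition~\eqref{coherence_condition_for_wir} for a suitable cubical complex, which is known to hold by Corollary~\ref{sym_matrix_k_hex_arb} together with Proposition~\ref{cc_easy_direction} (or equivalently by Corollary~\ref{cor_with_t_min} / Theorem~\ref{main_thm_cyclic_zonotope}).

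First I would set up the reduction. The quantities $K_{I,J}(\mathbf{\bar{x}})$ and $L_{I,J}(\mathbf{\bar{x}})$ depend only on the entries $x_{I \Delta S}$ for $S \subseteq A$; by relabeling the index set $[n]$ we may assume $A = \{1,2,3,4\}$, and by passing from $M$ to a matrix whose principal minors are the $M_{I \Delta T}^{I \Delta T}$ appropriately we may try to reduce to $I = \varnothing$. More cleanly: the array $(x_{I \Delta S})_{S \subseteq \{1,2,3,4\}}$ attached to any symmetric $M$ is, up to the sign conventions in~\eqref{xi_in_terms_of_pminor}, the array $\mathbf{x}_T(M')$ for $M'$ a $4\times 4$ symmetric matrix obtained from $M$ by a Schur-complement-type construction on the rows/columns in $I$ (this is exactly the mechanism by which principal minors $M_{I\cup J}^{I\cup J}$ for fixed $I$ behave like principal minors of a smaller matrix). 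So it suffices to prove~\eqref{4_cube_coherence} when $n = 4$, $I = \varnothing$, $A = [4]$.

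Next I would realize the $n=4$, $I = \varnothing$ identity inside a cubical complex. Take a pile $\mathbf{T}$ of $\Diamond$-tilings of $\mathbf{P}_4$ with $\varkappa = \varkappa(\mathbf{T})$ in which the vertex labeled $\varnothing$ is an interior vertex of $\varkappa$ — concretely, a pile in which $\varnothing$ labels a vertex of some intermediate tiling $T_i$ with $0 < i < \ell$, and around which all eight cubes at $\varnothing$ occur (as discussed in the $n=4$ paragraph of Section~\ref{principal_minors_section}, the pile $(T_0,\dots,T_7,T_0,T_1,T_2,T_3)$ from Fig.~\ref{oct_cycle_figure} has every $I \subseteq [4]$ labeling an interior vertex somewhere, and one checks $\varnothing$ can be made interior by a suitable shift/extension). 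By Corollary~\ref{sym_matrix_k_hex_arb}, $\mathbf{x}_{\varkappa(\mathbf{T})}(M)$ satisfies the K-hexahedron equations; by Proposition~\ref{cc_easy_direction} its restriction to $\varkappa^0$ is a coherent solution of the Kashaev equation, so~\eqref{coherence_condition_for_wir} holds at the interior vertex labeled $\varnothing$. Unwinding the definitions~\eqref{minor_kc}--\eqref{minor_kcv} of $K^{I,\{i,j,k\}}$, $K_{I,\{i,j,k\}}$ (via Fig.~\ref{minor_labeled_cube}, where the symmetric differences $I \Delta S$ with $S \subseteq A$ label the eight vertices of each cube incident to $\varnothing$) and of $L_{I,\{i,j\}}$, the eight cubes at $\varnothing$ give the eight factors $\prod_{J \in \binom{[4]}{3}} K_{\varnothing,J}(\mathbf{\bar x})$ on the left and the twelve $2$-faces at $\varnothing$ give $\prod_{J \in \binom{[4]}{2}} L_{\varnothing,J}(\mathbf{\bar x})$ on the right. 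This is precisely~\eqref{4_cube_coherence} for $I = \varnothing$, $A = [4]$.

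The main obstacle, and the only place requiring genuine care, is the bookkeeping in the reduction step: verifying that the sign conventions $(-1)^{\lfloor |I \Delta S|/2\rfloor}$ in~\eqref{xi_in_terms_of_pminor} combine correctly so that the $4 \times 4$ array indexed by $S \subseteq A$ attached to $M$ at "base point $I$" genuinely equals $\mathbf{x}_{T}(M')$ for an honest symmetric $M'$ (and not merely up to global signs that could corrupt the products), and that an interior vertex labeled $I$ in a pile of $\Diamond$-tilings of $\mathbf{P}_n$ really has exactly the eight cubes and twelve $2$-faces whose vertex labels are the $I \Delta S$. Both points are combinatorial rather than deep: the cube at an interior vertex $v$ labeled $I$ arising from a flip on the triple $\{i,j,k\}$ has vertices labeled $I, I\Delta\{i\}, I\Delta\{j\}, I\Delta\{k\}, I\Delta\{i,j\}, \dots, I\Delta\{i,j,k\}$ by the flip description in Fig.~\ref{cube_added_from_flip}, and for $I$ to be interior all $\binom{4}{3} = 4$ relevant triples must appear once above $v$ and once below, giving the eight cubes; the $2$-faces at $v$ are indexed by pairs $\{i,j\}$, two per pair among $\binom{4}{2}=6$ pairs, i.e.\ twelve. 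Once these identifications are in place, the proposition follows immediately; I would also remark (as the surrounding text does) that since~\eqref{4_cube_coherence} for all $I$ and all $A \in \binom{[n]}{4}$ is exactly the list of coherence conditions needed, this proposition is the content feeding into the forward direction of Theorem~\ref{main_all_minors_simple}.
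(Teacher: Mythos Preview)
Your approach has a genuine gap at the step you yourself flag as the ``main obstacle.'' The vertex labeled $\varnothing$ is the boundary vertex $v_0$ of $\mathbf{P}_n$, and by Definition~\ref{assoc_cubical_complex_def} a flip can never be centered at a boundary vertex; hence $\varnothing$ corresponds to a single vertex of $\varkappa(\mathbf{T})$ for \emph{every} pile $\mathbf{T}$ and is never an interior vertex. (The paragraph preceding Corollary~\ref{n4_with_coh_cond} even lists the interior labels for the pile you cite: $\{1,3\},\{1,4\},\{1,2,4\},\{1,3,4\},\{2\},\{2,3\},\{2,4\},\{3\}$---not $\varnothing$.) So the coherence condition~\eqref{coherence_condition_for_wir} is simply unavailable at $\varnothing$, and your Schur-complement reduction points in exactly the wrong direction: it reduces to the one case the method cannot handle. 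The reduction itself is also incomplete, since when $I\cap A\neq\varnothing$ the symmetric differences $I\Delta S$ can be proper subsets of $I$ and the Schur-complement reading breaks down. There is a further counting slip that signals conflation with the $\Z^3$ picture: the left side of~\eqref{4_cube_coherence} has $\binom{4}{3}=4$ factors and the right side $\binom{4}{2}=6$, matching the \emph{four} cubes and \emph{six} shared $2$-faces at an interior vertex of a $\mathbf{P}_4$ pile, not ``eight cubes and twelve $2$-faces.''

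The paper's proof bypasses all of this and does not go through~\eqref{coherence_condition_for_wir} at all. After reducing to generic $M$ by a limit argument, it applies Lemma~\ref{cornersofcubelemma} directly: with the signed almost-principal minors $x_{I,\{i,j\}}$, that lemma gives
\[
K_{I,\{i,j,k\}}(\mathbf{\bar{x}})=\pm\,x_{I,\{i,j\}}\,x_{I,\{i,k\}}\,x_{I,\{j,k\}},
\]
the sign being $+$ exactly when the vertex labeled $I$ is a top/bottom vertex of the cube, i.e.\ when $I\cap\{i<j<k\}$ equals $\{j\}$ or $\{i,k\}$. A short parity check then shows that, for any fixed $A\in\binom{[n]}{4}$, an even number of the four triples $\{i<j<k\}\subset A$ carry the minus sign, so the product over $J\in\binom{A}{3}$ collapses to $\prod_{J\in\binom{A}{2}} x_{I,J}^2=\prod_{J\in\binom{A}{2}} L_{I,J}(\mathbf{\bar{x}})$. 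This handles every $I$ and every $A$ uniformly, with no reduction step and no need to realize $I$ as an interior vertex.
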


\begin{proof}
Note that it suffices to prove Proposition~\ref{4_cube_coherence_condition_holds} for generic, symmetric $M$, because any symmetric matrix can be written as a limit of generic, symmetric matrices. Fix a generic, symmetric $n \times n$ matrix $M$ for the rest of the proof.

For $I \subset [n]$ and distinct $i,j \in [n]$, let
\begin{gather*}
x_{I, \{i,j\}} = (-1)^{\lfloor (\left| I' \right| +1) / 2 \rfloor} M_{I' \cup \{i\}}^{I' \cup \{j\}},
\end{gather*}
where $I' = I \setminus \{i,j\}$. Note that if $\mathbf{T}$ is a pile of $\Diamond$-tilings of $\mathbf{P}_n$, a cube of $\varkappa(\mathbf{T})$ containing vertices labeled by $I$ and $I \cup \{i,j,k\}$ for $i,j,k \not\in I$ has top/bottom vertices labeled by $I \cup \{j\}$ and $I \cup \{i,k\}$. Hence, by Lemma~\ref{cornersofcubelemma} and the fact that $\mathbf{\tilde{x}}_{\varkappa(\mathbf{T})}(M)$ satisfies the K-hexahedron equations, where $\mathbf{T}$ is any pile of $\Diamond$-tilings of $\mathbf{P}_n$, it follows that
\begin{gather} \label{kij_as_prod_ap_minors}
K_{I, \{i,j,k\}}(\mathbf{\bar{x}}) = \pm x_{I, \{i,j\}} x_{I, \{\{i,k\}} x_{I, \{j,k\}},
\end{gather}
where the plus sign appears on the right-hand side of equation~\eqref{kij_as_prod_ap_minors} if either
\begin{itemize}\itemsep=0pt
\item $i, k \in I$ and $j \not\in I$, or
\item $j \in I$ and $i,k \not\in I$,
\end{itemize}
and the minus sign appears otherwise.

Let $I \subseteq [n]$ and $A \in \binom{[n]}{4}$. It is straightforward to check that an even number of $\{i < j < k\} \in \binom{A}{3}$ satisfy neither of the bullet points above. Hence, by equation~\eqref{kij_as_prod_ap_minors}, it follows that
\begin{gather*}
\prod_{J \in \binom{A}{3}} K_{I, J}(\mathbf{\bar{x}}) = \prod_{J \in \binom{A}{2}} x_{I,J}^2 = \prod_{J \in \binom{A}{2}} L_{I,J}(\mathbf{\bar{x}}),
\end{gather*}
as desired.
\end{proof}

The reader may want to review Example~\ref{two_piles_c4} before proceeding with the following lemma.

\begin{Lemma} \label{replacing_z43_lemma}Fix an array $\mathbf{\bar{x}} = (x_I )_{I \subseteq [4]}$, satisfying the conditions that
\begin{itemize}\itemsep=0pt
\item $x_I \not= 0$ for all $I \subseteq [4]$,
\item for any $I \subseteq [4]$ and distinct $i,j \in [4]$, $L_{I, \{i,j\}} \not= 0$,
\item for all $I \subseteq [4]$ and distinct $i,j,k \in [4]$, equation~\eqref{kash_for_matrices} holds,
\item for all $I \subseteq [4]$, equation~\eqref{coh_in_n4_cond} holds.
\end{itemize}
Let $\mathbf{T}_1 = (T_{1,0}, \dots, T_{1,4}), \mathbf{T}_2 = (T_{2,0}, \dots, T_{2,4}) \in \mathcal{C}(4)$ be the two distinct piles in~$\mathcal{C}(4)$. Let $\mathbf{\tilde{x}}_1 \in (\C^*)^{\varkappa^{02}(\mathbf{T}_1)}$ be an extension of $\mathbf{x}_{\varkappa(\mathbf{T}_1)}(\mathbf{\bar{x}})$ satisfying the K-hexahedron equations. Then there exists an extension $\mathbf{\tilde{x}}_2 \in (\C^*)^{\varkappa^{02}(\mathbf{T}_2)}$ of $\mathbf{x}_{\varkappa(\mathbf{T}_2)}(\mathbf{\bar{x}})$ satisfying the K-hexahedron equations that agrees with $\mathbf{\tilde{x}}_1$ on $\varkappa^{02}(T_{1,0}) = \varkappa^{02}(T_{2,0})$.
\end{Lemma}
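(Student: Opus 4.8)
The plan is to place both $\varkappa(\mathbf{T}_1)$ and a direction-reversal of $\varkappa(\mathbf{T}_2)$ inside one larger cubical complex coming from a pile, extend $\mathbf{\tilde{x}}_1$ over all of it, and then read off $\mathbf{\tilde{x}}_2$ from the other half. By Example~\ref{two_piles_c4} we may take $\mathbf{T}_1 = (T_0, T_1, T_2, T_3, T_4)$ and $\mathbf{T}_2 = (T_0, T_7, T_6, T_5, T_4)$, with the $T_i$ the $\Diamond$-tilings of $\mathbf{P}_4$ in Fig.~\ref{oct_cycle_figure}; in particular $T_{1,0} = T_{2,0} = T_0$. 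Let $\mathbf{T}^+ = (T_0, T_1, \dots, T_7, T_0, T_1, T_2, T_3)$ be the pile considered in the discussion preceding Corollary~\ref{n4_with_coh_cond}, and write $\varkappa^+ = \varkappa(\mathbf{T}^+)$. The cubes of $\varkappa^+$ coming from its first four flips form $\varkappa(\mathbf{T}_1)$ with matching top and bottom vertices, while the cubes coming from the flips $T_4 \to T_5 \to T_6 \to T_7 \to T_0$ form $\varkappa(\mathbf{T}_2)$ with every cube direction reversed (this is $\mathbf{T}_2$ read backwards).

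The first step is to check that $\mathbf{x}^+ := \mathbf{x}_{\varkappa(\mathbf{T}^+)}(\mathbf{\bar{x}})$ is a coherent solution of the Kashaev equation with nonzero components satisfying~\eqref{face_nonzero}. Nonvanishing of the components is the hypothesis $x_I \neq 0$, and~\eqref{face_nonzero} holds because the quantity attached to a $2$-face of $\varkappa^+$ with vertices labeled $I, I\Delta\{i\}, I\Delta\{i,j\}, I\Delta\{j\}$ is exactly $L_{I,\{i,j\}}(\mathbf{\bar{x}}) \neq 0$. The Kashaev equation on each cube of $\varkappa^+$ is equation~\eqref{kash_for_matrices} for the corresponding pair $(I,\{i,j,k\})$, which is assumed. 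For coherence, recall from the discussion before Corollary~\ref{n4_with_coh_cond} that every interior vertex of $\varkappa^+$ is incident to exactly four cubes, one per triple in $\binom{[4]}{3}$, and that each pair of these shares one $2$-face; consequently the coherence equation~\eqref{coherence_condition_for_wir} at an interior vertex labeled $I$ unwinds, using~\eqref{minor_kc}--\eqref{minor_kcv}, into precisely~\eqref{coh_in_n4_cond} for that $I$, which is a hypothesis of the lemma. This identification (and the attendant description of which $I$'s label interior vertices of $\varkappa^+$) is the only genuinely combinatorial point in the proof, and it is where the coherence hypothesis is used. Granting it, Theorem~\ref{main_thm_cyclic_zonotope}(a) applies to the pile $\mathbf{T}^+$ and yields an array $\mathbf{\hat{x}} \in (\C^*)^{\varkappa^{02}(\mathbf{T}^+)}$ extending $\mathbf{x}^+$ and satisfying the K-hexahedron equations.

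The second step is to twist $\mathbf{\hat{x}}$ by signs so that it restricts to $\mathbf{\tilde{x}}_1$ on $\varkappa^{02}(\mathbf{T}_1)$. On $\varkappa^0(\mathbf{T}_1)$, both $\mathbf{\hat{x}}$ and $\mathbf{\tilde{x}}_1$ equal $\mathbf{x}_{\varkappa(\mathbf{T}_1)}(\mathbf{\bar{x}})$, and on $\varkappa^2(\mathbf{T}_1)$ both satisfy~\eqref{khexspec}, so on each $2$-face of $\varkappa(\mathbf{T}_1)$ they differ only by a sign. Applying Lemma~\ref{sign_prop_khex_cube} to each cube of $\varkappa(\mathbf{T}_1)$ (its top vertex is unchanged, which forces the product of the three bottom-face sign ratios to be $1$ and makes opposite-face ratios agree) shows that this sign function descends to an array $\mathbf{t} = (t_{[s]})$ on $\cueq(\mathbf{T}_1)$ lying in the kernel of $\psi_{\varkappa(\mathbf{T}_1)}$. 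Identifying $\cueq(\mathbf{T}_1)$ with $\cueq(\mathbf{T}^+)$ (both are $\binom{[4]}{2}$, with a class carrying the symmetric difference of the labels of any pair of opposite vertices of any tile in it), and using that the eight octagon flips already realize all four triples, the condition that $\psi_{\varkappa(\mathbf{T}_1)}(\mathbf{t})$ be identically $1$ reads $\prod_{\{i,j\}\subset A} t_{\{i,j\}} = 1$ for every $A \in \binom{[4]}{3}$, i.e.\ $\psi_{\varkappa^+}(\mathbf{t})$ is identically $1$. Let $\mathbf{\hat{x}}'$ be obtained from $\mathbf{\hat{x}}$ by multiplying each face value by $t_{[s]}$ and keeping all vertex values. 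A cube-by-cube check using~\eqref{likehex1d}--\eqref{likehex4d} (the cubical analogue of Lemma~\ref{form_when_t_is_in_kernel}, valid because $\psi_{\varkappa^+}(\mathbf{t})$ is identically $1$) shows $\mathbf{\hat{x}}'$ still satisfies the K-hexahedron equations and still extends $\mathbf{x}^+$; by construction it restricts to $\mathbf{\tilde{x}}_1$ on $\varkappa^{02}(\mathbf{T}_1)$.

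Finally, let $\mathbf{\tilde{x}}_2$ be the restriction of $\mathbf{\hat{x}}'$ to the faces and vertices of the cubes of $\varkappa^+$ coming from $T_4 \to T_5 \to T_6 \to T_7 \to T_0$, with the cube directions turned back into those of $\mathbf{T}_2$. Reversing the direction of a cube preserves the K-hexahedron equations on it (Proposition~\ref{reversed_K_hex}, applied one cube at a time), so $\mathbf{\tilde{x}}_2 \in (\C^*)^{\varkappa^{02}(\mathbf{T}_2)}$ satisfies the K-hexahedron equations and extends $\mathbf{x}_{\varkappa(\mathbf{T}_2)}(\mathbf{\bar{x}})$. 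Since $T_0$ is a member of both $\mathbf{T}_1$ and $\mathbf{T}_2$, the arrays $\mathbf{\tilde{x}}_1$ and $\mathbf{\tilde{x}}_2$ both restrict on $\varkappa^{02}(T_0)$ to $\mathbf{\hat{x}}'|_{\varkappa^{02}(T_0)}$, so they agree on $\varkappa^{02}(T_{1,0}) = \varkappa^{02}(T_{2,0})$, as required. The genericity needed to manipulate the extensions is dealt with exactly as in the proof of Theorem~\ref{cc_main_generalization}, by a pointwise limiting argument; I expect the interior-vertex incidence bookkeeping for $\varkappa^+$ in the first step to be the only real work.
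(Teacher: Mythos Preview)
Your argument has a genuine gap in the final step. In $\varkappa^+ = \varkappa(\mathbf{T}^+)$ the tiling $T_0$ appears twice, at position~$0$ and at position~$8$, and these are \emph{distinct} sub-complexes of $\varkappa^+$: every tile of $T_0$ participates in some flip along $T_0\to T_1\to\cdots\to T_7\to T_0$, so the six $2$-faces at position~$8$ are all different elements of $\varkappa^2(\mathbf{T}^+)$ from those at position~$0$. Thus the expression ``$\mathbf{\hat{x}}'|_{\varkappa^{02}(T_0)}$'' is ambiguous. Your $\mathbf{\tilde{x}}_1$ restricts to $\mathbf{\hat{x}}'$ on the position-$0$ copy, while your $\mathbf{\tilde{x}}_2$ restricts to $\mathbf{\hat{x}}'$ on the position-$8$ copy. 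Vertex values agree (both are read from $\mathbf{\bar{x}}$), but you have not shown that the face values do: that amounts to a ``trivial monodromy'' claim for the K-hexahedron propagation around the full octagon cycle, which does not follow from Theorem~\ref{main_thm_cyclic_zonotope}(a) alone.

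The paper's proof avoids this by invoking Corollary~\ref{n4_with_coh_cond} directly. After rescaling so $x_\varnothing=1$, the hypotheses give a symmetric matrix $M$ with $\mathbf{\bar{x}}=\mathbf{\bar{x}}(M)$, and then $\mathbf{\tilde{x}}_1'=\mathbf{\tilde{x}}_{\varkappa(\mathbf{T}_1)}(M)$ and $\mathbf{\tilde{x}}_2'=\mathbf{\tilde{x}}_{\varkappa(\mathbf{T}_2)}(M)$ are K-hexahedron extensions that tautologically agree on $\varkappa^{02}(T_0)$ (both restrict to $\mathbf{\tilde{x}}_{T_0}(M)$). The sign-twisting step, done separately on each pile via Lemma~\ref{change_one_cube_lemma}, then carries $\mathbf{\tilde{x}}_1'$ to the given $\mathbf{\tilde{x}}_1$ and $\mathbf{\tilde{x}}_2'$ to the desired $\mathbf{\tilde{x}}_2$; the point that $\psi_{\varkappa(\mathbf{T}_1)}(\mathbf{t})\equiv 1$ forces $\psi_{\varkappa(\mathbf{T}_2)}(\mathbf{t})\equiv 1$ is exactly your observation that both piles realize all four triples in $\binom{[4]}{3}$. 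Your gap can in fact be filled by the same input: the matrix extension exhibits one $\mathbf{\hat{x}}$ whose position-$0$ and position-$8$ face values coincide, and since any two extensions of $\mathbf{x}^+$ differ by a $\ker\psi_{\varkappa^+}$-twist (which multiplies both copies by the same $t_{\{i,j\}}$), all of them do. But once Corollary~\ref{n4_with_coh_cond} is on the table, the detour through $\varkappa^+$ and Proposition~\ref{reversed_K_hex} is no longer needed.
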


\begin{proof}
By the homogeneity of equations~\eqref{kash_for_matrices}--\eqref{coh_in_n4_cond}, we can rescale the components of $\mathbf{\bar{x}}$ so that $x_\varnothing = 1$. Hence, it follows from Corollary~\ref{n4_with_coh_cond} that there exists an extension $\mathbf{\tilde{x}}_1' \in (\C^*)^{\varkappa^{02}(\mathbf{T}_1)}$ of $\mathbf{x}_{\varkappa(\mathbf{T}_1)}(\mathbf{\bar{x}})$ and an extension $\mathbf{\tilde{x}}_2' \in (\C^*)^{\varkappa^{02}(\mathbf{T}_2)}$ of $\mathbf{x}_{\varkappa(\mathbf{T}_2)}(\mathbf{\bar{x}})$ that agree on $\varkappa^{02}(T_{1,0}) = \varkappa^{02}(T_{2,0})$.

Let $\xtin'$ be the restriction of $\mathbf{\tilde{x}}_1'$ to $\varkappa^{02}(T_{1,0})$. Let $\mathbf{t} \in \{-1,1\}^{\varkappa^2(T_{1,0})}$ (where we associate~$\varkappa^2(T_{1,0})$ with $\tilde{\varkappa}_2(\mathbf{T}_1)$) so that $\mathbf{\tilde{x}}_1 = (\mathbf{t} \cdot \xtin')^{\uparrow \varkappa^{02}(\mathbf{T}_1)}$. By Lemma~\ref{change_one_cube_lemma}, because $\mathbf{\tilde{x}}_1'$ and $\mathbf{\tilde{x}}_1$ agree on $\varkappa^0(\mathbf{T}_1)$, $\psi_{\varkappa(\mathbf{T}_1)}(\mathbf{t})$ has value $1$ at every cube of $\varkappa(\mathbf{T}_1)$. Because $\psi_{\varkappa(\mathbf{T}_1)}(\mathbf{t})$ has value $1$ at every cube of $\varkappa(\mathbf{T}_1)$, $\psi_{\varkappa(\mathbf{T}_2)}(\mathbf{t})$ has value $1$ at every cube of $\varkappa(\mathbf{T}_2)$. Hence, $(\mathbf{t} \cdot \xtin')^{\uparrow \varkappa^2(\mathbf{T}_2)}$ agrees with~$\mathbf{\tilde{x}}_1$ on $\varkappa^{02}(T_{1,0}) = \varkappa^{02}(T_{2,0})$ and restricts to $\mathbf{x}_{\varkappa(\mathbf{T}_2)}(\mathbf{\bar{x}})$.
\end{proof}

The reader may want to review Definition~\ref{three_dim_flip} before proceeding with the following definition.

\begin{Definition}
Let $\mathbf{T}_1 = (T_{1,0}, \dots, T_{1,\ell})$ and $\mathbf{T}_2 = (T_{2,0}, \dots, T_{2,\ell})$ be two piles, such that the directed cubical complexes $\varkappa(\mathbf{T}_1)$ and $\varkappa(\mathbf{T}_2)$ are related by a flip. Label the vertices of $\varkappa(\mathbf{T}_1)$ and $\varkappa(\mathbf{T}_2)$ involved in the $3$-flip by subsets of $[4]$, as in Fig.~\ref{labeled_3_flip_fig}. Let $\mathbf{x}_1 \in (\C^*)^{\varkappa^0(\mathbf{T}_1)}$ and $\mathbf{x}_2 \in (\C^*)^{\varkappa^0(\mathbf{T}_2)}$ be arrays satisfying condition~\eqref{nonzero_face_condition}. We say that the pair $(\mathbf{x}_1, \mathbf{x}_2)$ is \emph{K-flipped} when
\begin{itemize}\itemsep=0pt
\item $\mathbf{x}_1$ and $\mathbf{x}_2$ agree everywhere, except at the vertex at which $\varkappa(\mathbf{T}_1)$ and $\varkappa(\mathbf{T}_2)$ differ,
\item writing $\mathbf{\bar{x}} = (x_I)_{I \subseteq [4]}$, where $x_I$ is the component of $\mathbf{x}_1$ and/or $\mathbf{x}_2$ at the vertex labeled by~$I$, $\mathbf{\bar{x}}$ satisfies equation~\eqref{kash_for_matrices} for all $I \subseteq [4]$ and distinct $i,j,k \in [4]$ and equation~\eqref{coh_in_n4_cond} for all $I \subseteq [4]$.
\end{itemize}
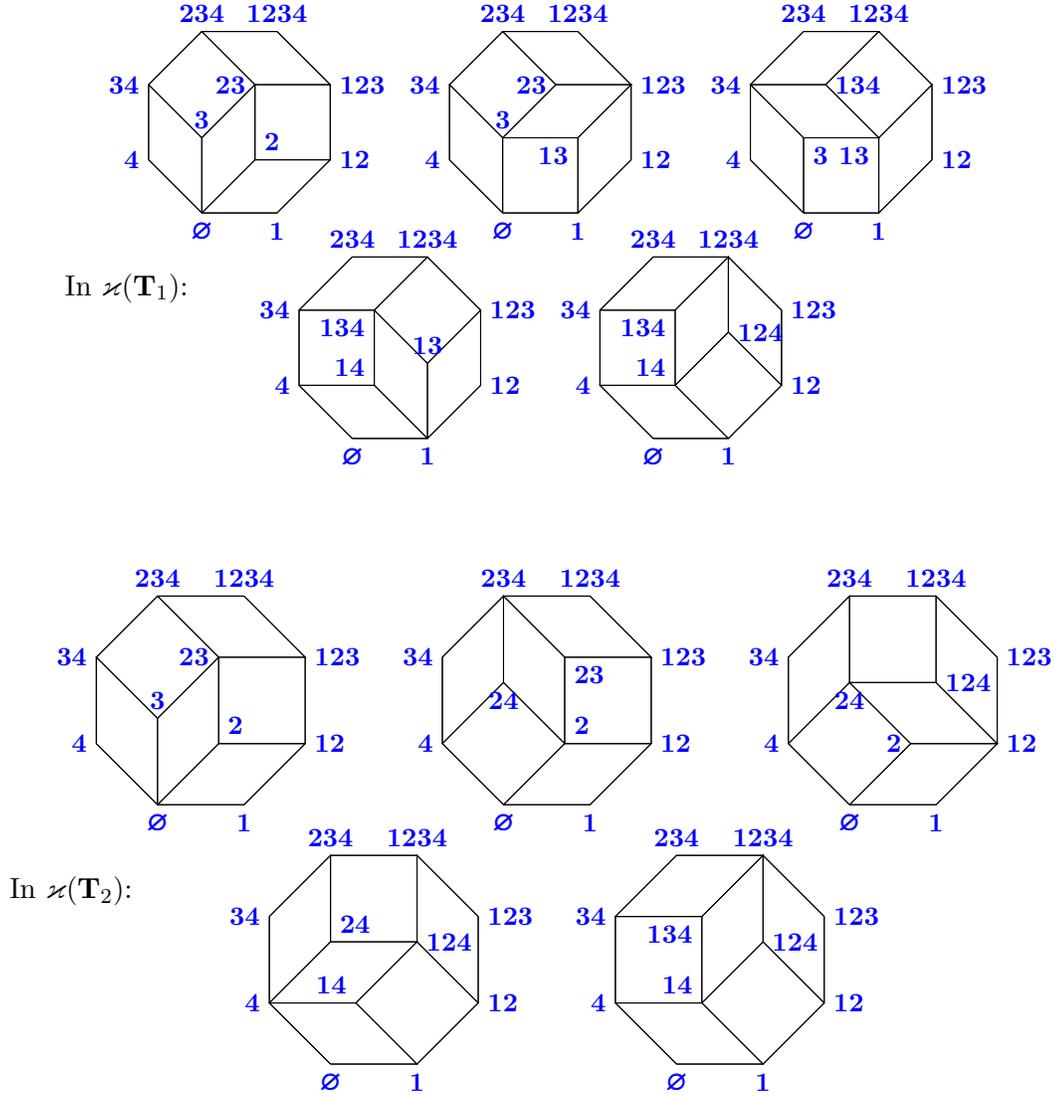
\begin{figure}[ht]\centering

\begin{tabular}{ c }
\begin{tikzpicture}[scale=1.0]

\draw (-1,-1) node{In $\varkappa(\mathbf{T}_1)$:};

\draw (0,0)--(1,0)--({1+sqrt(2)/2},{sqrt(2)/2})--({1+sqrt(2)/2},{1+sqrt(2)/2})--({1},{1+sqrt(2)})--({0},{1+sqrt(2)})--({-sqrt(2)/2},{1+sqrt(2)/2})--({-sqrt(2)/2},{sqrt(2)/2})--(0,0);

\draw (0,0)--(1,0)--({1+sqrt(2)/2},{sqrt(2)/2})--({sqrt(2)/2},{sqrt(2)/2})--(0,0);
\draw (0,0)--({sqrt(2)/2},{sqrt(2)/2})--({sqrt(2)/2},{1+sqrt(2)/2})--(0,1)--(0,0);
\draw (0,0)--(0,1)--({-sqrt(2)/2},{1 + sqrt(2)/2})--({-sqrt(2)/2},{sqrt(2)/2})--(0,0);
\draw[shift={({sqrt(2)/2},{sqrt(2)/2})}] (0,0)--(1,0)--(1,1)--(0,1)--(0,0);
\draw[shift={(0,1)}] (0,0)--({sqrt(2)/2},{sqrt(2)/2})--(0,{sqrt(2)})--({-sqrt(2)/2},{sqrt(2)/2})--(0,0);
\draw[shift={({sqrt(2)/2},{1+sqrt(2)/2})}] (0,0)--(1,0)--({1 - sqrt(2)/2},{sqrt(2)/2})--({- sqrt(2)/2},{sqrt(2)/2})--(0,0);

{
\color{blue}
\small
\draw (0,0) node[anchor= north]{$\bm{\varnothing}$};
\draw (1,0) node[anchor=north]{$\mathbf{1}$};
\draw ({1+sqrt(2)/2},{sqrt(2)/2}) node[anchor=west]{$\mathbf{12}$};
\draw ({1+sqrt(2)/2},{1+sqrt(2)/2}) node[anchor=west]{$\mathbf{123}$};
\draw ({1},{1+sqrt(2)}) node[anchor=south]{$\mathbf{1234}$};
\draw ({0},{1+sqrt(2)}) node[anchor=south]{$\mathbf{234}$};
\draw ({-sqrt(2)/2},{1+sqrt(2)/2}) node[anchor=east]{$\mathbf{34}$};
\draw ({-sqrt(2)/2},{sqrt(2)/2}) node[anchor=east]{$\mathbf{4}$};

\draw ({sqrt(2)/2},{sqrt(2)/2}) node[anchor=south west]{$\mathbf{2}$};
\draw (0,{1}) node[anchor=south]{$\mathbf{3}$};
\draw ({sqrt(2)/2},{1+sqrt(2)/2}) node[anchor=east]{$\mathbf{23}$};
}

\begin{scope}[shift={(4,0)}]
\draw (0,0)--(1,0)--({1+sqrt(2)/2},{sqrt(2)/2})--({1+sqrt(2)/2},{1+sqrt(2)/2})--({1},{1+sqrt(2)})--({0},{1+sqrt(2)})--({-sqrt(2)/2},{1+sqrt(2)/2})--({-sqrt(2)/2},{sqrt(2)/2})--(0,0);

\draw[shift={(0,1)}] (0,0)--(1,0)--({1+sqrt(2)/2},{sqrt(2)/2})--({sqrt(2)/2},{sqrt(2)/2})--(0,0);
\draw[shift={(1,0)}] (0,0)--({sqrt(2)/2},{sqrt(2)/2})--({sqrt(2)/2},{1+sqrt(2)/2})--(0,1)--(0,0);
\draw (0,0)--(0,1)--({-sqrt(2)/2},{1 + sqrt(2)/2})--({-sqrt(2)/2},{sqrt(2)/2})--(0,0);
\draw (0,0)--(1,0)--(1,1)--(0,1)--(0,0);
\draw[shift={(0,1)}] (0,0)--({sqrt(2)/2},{sqrt(2)/2})--(0,{sqrt(2)})--({-sqrt(2)/2},{sqrt(2)/2})--(0,0);
\draw[shift={({sqrt(2)/2},{1+sqrt(2)/2})}] (0,0)--(1,0)--({1 - sqrt(2)/2},{sqrt(2)/2})--({- sqrt(2)/2},{sqrt(2)/2})--(0,0);

{
\color{blue}
\small
\draw (0,0) node[anchor= north]{$\bm{\varnothing}$};
\draw (1,0) node[anchor=north]{$\mathbf{1}$};
\draw ({1+sqrt(2)/2},{sqrt(2)/2}) node[anchor=west]{$\mathbf{12}$};
\draw ({1+sqrt(2)/2},{1+sqrt(2)/2}) node[anchor=west]{$\mathbf{123}$};
\draw ({1},{1+sqrt(2)}) node[anchor=south]{$\mathbf{1234}$};
\draw ({0},{1+sqrt(2)}) node[anchor=south]{$\mathbf{234}$};
\draw ({-sqrt(2)/2},{1+sqrt(2)/2}) node[anchor=east]{$\mathbf{34}$};
\draw ({-sqrt(2)/2},{sqrt(2)/2}) node[anchor=east]{$\mathbf{4}$};

\draw (1,1) node[anchor=north east]{$\mathbf{13}$};
\draw (0,{1}) node[anchor=south]{$\mathbf{3}$};
\draw ({sqrt(2)/2},{1+sqrt(2)/2}) node[anchor=east]{$\mathbf{23}$};
}
\end{scope}

\begin{scope}[shift={(8,0)}]

\draw (0,0)--(1,0)--({1+sqrt(2)/2},{sqrt(2)/2})--({1+sqrt(2)/2},{1+sqrt(2)/2})--({1},{1+sqrt(2)})--({0},{1+sqrt(2)})--({-sqrt(2)/2},{1+sqrt(2)/2})--({-sqrt(2)/2},{sqrt(2)/2})--(0,0);

\draw[shift={({-sqrt(2)/2},{1+sqrt(2)/2})}] (0,0)--(1,0)--({1+sqrt(2)/2},{sqrt(2)/2})--({sqrt(2)/2},{sqrt(2)/2})--(0,0);
\draw[shift={(1,0)}] (0,0)--({sqrt(2)/2},{sqrt(2)/2})--({sqrt(2)/2},{1+sqrt(2)/2})--(0,1)--(0,0);
\draw (0,0)--(0,1)--({-sqrt(2)/2},{1 + sqrt(2)/2})--({-sqrt(2)/2},{sqrt(2)/2})--(0,0);
\draw (0,0)--(1,0)--(1,1)--(0,1)--(0,0);
\draw[shift={(1,1)}] (0,0)--({sqrt(2)/2},{sqrt(2)/2})--(0,{sqrt(2)})--({-sqrt(2)/2},{sqrt(2)/2})--(0,0);
\draw[shift={(0,1)}] (0,0)--(1,0)--({1 - sqrt(2)/2},{sqrt(2)/2})--({- sqrt(2)/2},{sqrt(2)/2})--(0,0);

{
\color{blue}
\small
\draw (0,0) node[anchor= north]{$\bm{\varnothing}$};
\draw (1,0) node[anchor=north]{$\mathbf{1}$};
\draw ({1+sqrt(2)/2},{sqrt(2)/2}) node[anchor=west]{$\mathbf{12}$};
\draw ({1+sqrt(2)/2},{1+sqrt(2)/2}) node[anchor=west]{$\mathbf{123}$};
\draw ({1},{1+sqrt(2)}) node[anchor=south]{$\mathbf{1234}$};
\draw ({0},{1+sqrt(2)}) node[anchor=south]{$\mathbf{234}$};
\draw ({-sqrt(2)/2},{1+sqrt(2)/2}) node[anchor=east]{$\mathbf{34}$};
\draw ({-sqrt(2)/2},{sqrt(2)/2}) node[anchor=east]{$\mathbf{4}$};

\draw (1,1) node[anchor=north east]{$\mathbf{13}$};
\draw (0,{1}) node[anchor=north west]{$\mathbf{3}$};
\draw ({1-sqrt(2)/2},{1+sqrt(2)/2}) node[anchor=west]{$\mathbf{134}$};
}

\end{scope}

\begin{scope}[shift={(2,-3)}]

\draw (0,0)--(1,0)--({1+sqrt(2)/2},{sqrt(2)/2})--({1+sqrt(2)/2},{1+sqrt(2)/2})--({1},{1+sqrt(2)})--({0},{1+sqrt(2)})--({-sqrt(2)/2},{1+sqrt(2)/2})--({-sqrt(2)/2},{sqrt(2)/2})--(0,0);

\draw[shift={({-sqrt(2)/2},{1+sqrt(2)/2})}] (0,0)--(1,0)--({1+sqrt(2)/2},{sqrt(2)/2})--({sqrt(2)/2},{sqrt(2)/2})--(0,0);
\draw[shift={(1,0)}] (0,0)--({sqrt(2)/2},{sqrt(2)/2})--({sqrt(2)/2},{1+sqrt(2)/2})--(0,1)--(0,0);
\draw[shift={(1,0)}] (0,0)--(0,1)--({-sqrt(2)/2},{1 + sqrt(2)/2})--({-sqrt(2)/2},{sqrt(2)/2})--(0,0);
\draw[shift={({-sqrt(2)/2},{sqrt(2)/2})}] (0,0)--(1,0)--(1,1)--(0,1)--(0,0);
\draw[shift={(1,1)}] (0,0)--({sqrt(2)/2},{sqrt(2)/2})--(0,{sqrt(2)})--({-sqrt(2)/2},{sqrt(2)/2})--(0,0);
\draw (0,0)--(1,0)--({1 - sqrt(2)/2},{sqrt(2)/2})--({- sqrt(2)/2},{sqrt(2)/2})--(0,0);

{
\color{blue}
\small
\draw (0,0) node[anchor= north]{$\bm{\varnothing}$};
\draw (1,0) node[anchor=north]{$\mathbf{1}$};
\draw ({1+sqrt(2)/2},{sqrt(2)/2}) node[anchor=west]{$\mathbf{12}$};
\draw ({1+sqrt(2)/2},{1+sqrt(2)/2}) node[anchor=west]{$\mathbf{123}$};
\draw ({1},{1+sqrt(2)}) node[anchor=south]{$\mathbf{1234}$};
\draw ({0},{1+sqrt(2)}) node[anchor=south]{$\mathbf{234}$};
\draw ({-sqrt(2)/2},{1+sqrt(2)/2}) node[anchor=east]{$\mathbf{34}$};
\draw ({-sqrt(2)/2},{sqrt(2)/2}) node[anchor=east]{$\mathbf{4}$};

\draw (1,1) node[anchor=south]{$\mathbf{13}$};
\draw ({1 - sqrt(2)/2},{sqrt(2)/2}) node[anchor=south east]{$\mathbf{14}$};
\draw ({1-sqrt(2)/2},{1+sqrt(2)/2}) node[anchor=north east]{$\mathbf{134}$};
}

\end{scope}

\begin{scope}[shift={(6,-3)}]
\draw (0,0)--(1,0)--({1+sqrt(2)/2},{sqrt(2)/2})--({1+sqrt(2)/2},{1+sqrt(2)/2})--({1},{1+sqrt(2)})--({0},{1+sqrt(2)})--({-sqrt(2)/2},{1+sqrt(2)/2})--({-sqrt(2)/2},{sqrt(2)/2})--(0,0);

\draw[shift={({-sqrt(2)/2},{1+sqrt(2)/2})}] (0,0)--(1,0)--({1+sqrt(2)/2},{sqrt(2)/2})--({sqrt(2)/2},{sqrt(2)/2})--(0,0);
\draw[shift={({1 - sqrt(2)/2},{sqrt(2)/2})}] (0,0)--({sqrt(2)/2},{sqrt(2)/2})--({sqrt(2)/2},{1+sqrt(2)/2})--(0,1)--(0,0);
\draw[shift={({1+sqrt(2)/2},{sqrt(2)/2})}] (0,0)--(0,1)--({-sqrt(2)/2},{1 + sqrt(2)/2})--({-sqrt(2)/2},{sqrt(2)/2})--(0,0);
\draw[shift={({-sqrt(2)/2},{sqrt(2)/2})}] (0,0)--(1,0)--(1,1)--(0,1)--(0,0);
\draw[shift={(1,0)}] (0,0)--({sqrt(2)/2},{sqrt(2)/2})--(0,{sqrt(2)})--({-sqrt(2)/2},{sqrt(2)/2})--(0,0);
\draw (0,0)--(1,0)--({1 - sqrt(2)/2},{sqrt(2)/2})--({- sqrt(2)/2},{sqrt(2)/2})--(0,0);

{
\color{blue}
\small
\draw (0,0) node[anchor= north]{$\bm{\varnothing}$};
\draw (1,0) node[anchor=north]{$\mathbf{1}$};
\draw ({1+sqrt(2)/2},{sqrt(2)/2}) node[anchor=west]{$\mathbf{12}$};
\draw ({1+sqrt(2)/2},{1+sqrt(2)/2}) node[anchor=west]{$\mathbf{123}$};
\draw ({1},{1+sqrt(2)}) node[anchor=south]{$\mathbf{1234}$};
\draw ({0},{1+sqrt(2)}) node[anchor=south]{$\mathbf{234}$};
\draw ({-sqrt(2)/2},{1+sqrt(2)/2}) node[anchor=east]{$\mathbf{34}$};
\draw ({-sqrt(2)/2},{sqrt(2)/2}) node[anchor=east]{$\mathbf{4}$};

\draw (1,{sqrt(2)}) node[anchor=west]{$\mathbf{124}$};
\draw ({1 - sqrt(2)/2},{sqrt(2)/2}) node[anchor=south east]{$\mathbf{14}$};
\draw ({1-sqrt(2)/2},{1+sqrt(2)/2}) node[anchor=north east]{$\mathbf{134}$};
}
\end{scope}
\end{tikzpicture} \vspace{1cm}\\
\begin{tikzpicture}[scale=1.15]

\draw (-1,-1) node{In $\varkappa(\mathbf{T}_2)$:};

\draw (0,0)--(1,0)--({1+sqrt(2)/2},{sqrt(2)/2})--({1+sqrt(2)/2},{1+sqrt(2)/2})--({1},{1+sqrt(2)})--({0},{1+sqrt(2)})--({-sqrt(2)/2},{1+sqrt(2)/2})--({-sqrt(2)/2},{sqrt(2)/2})--(0,0);

\draw (0,0)--(1,0)--({1+sqrt(2)/2},{sqrt(2)/2})--({sqrt(2)/2},{sqrt(2)/2})--(0,0);
\draw (0,0)--({sqrt(2)/2},{sqrt(2)/2})--({sqrt(2)/2},{1+sqrt(2)/2})--(0,1)--(0,0);
\draw (0,0)--(0,1)--({-sqrt(2)/2},{1 + sqrt(2)/2})--({-sqrt(2)/2},{sqrt(2)/2})--(0,0);
\draw[shift={({sqrt(2)/2},{sqrt(2)/2})}] (0,0)--(1,0)--(1,1)--(0,1)--(0,0);
\draw[shift={(0,1)}] (0,0)--({sqrt(2)/2},{sqrt(2)/2})--(0,{sqrt(2)})--({-sqrt(2)/2},{sqrt(2)/2})--(0,0);
\draw[shift={({sqrt(2)/2},{1+sqrt(2)/2})}] (0,0)--(1,0)--({1 - sqrt(2)/2},{sqrt(2)/2})--({- sqrt(2)/2},{sqrt(2)/2})--(0,0);

{
\color{blue}
\small
\draw (0,0) node[anchor= north]{$\bm{\varnothing}$};
\draw (1,0) node[anchor=north]{$\mathbf{1}$};
\draw ({1+sqrt(2)/2},{sqrt(2)/2}) node[anchor=west]{$\mathbf{12}$};
\draw ({1+sqrt(2)/2},{1+sqrt(2)/2}) node[anchor=west]{$\mathbf{123}$};
\draw ({1},{1+sqrt(2)}) node[anchor=south]{$\mathbf{1234}$};
\draw ({0},{1+sqrt(2)}) node[anchor=south]{$\mathbf{234}$};
\draw ({-sqrt(2)/2},{1+sqrt(2)/2}) node[anchor=east]{$\mathbf{34}$};
\draw ({-sqrt(2)/2},{sqrt(2)/2}) node[anchor=east]{$\mathbf{4}$};

\draw ({sqrt(2)/2},{sqrt(2)/2}) node[anchor=south west]{$\mathbf{2}$};
\draw (0,{1}) node[anchor=south]{$\mathbf{3}$};
\draw ({sqrt(2)/2},{1+sqrt(2)/2}) node[anchor=east]{$\mathbf{23}$};
}

\begin{scope}[shift={(4,0)}]
\draw (0,0)--(1,0)--({1+sqrt(2)/2},{sqrt(2)/2})--({1+sqrt(2)/2},{1+sqrt(2)/2})--({1},{1+sqrt(2)})--({0},{1+sqrt(2)})--({-sqrt(2)/2},{1+sqrt(2)/2})--({-sqrt(2)/2},{sqrt(2)/2})--(0,0);

\draw (0,0)--(1,0)--({1+sqrt(2)/2},{sqrt(2)/2})--({sqrt(2)/2},{sqrt(2)/2})--(0,0);
\draw[shift={({- sqrt(2)/2},{sqrt(2)/2})}] (0,0)--({sqrt(2)/2},{sqrt(2)/2})--({sqrt(2)/2},{1+sqrt(2)/2})--(0,1)--(0,0);
\draw[shift={({sqrt(2)/2},{sqrt(2)/2})}] (0,0)--(0,1)--({-sqrt(2)/2},{1 + sqrt(2)/2})--({-sqrt(2)/2},{sqrt(2)/2})--(0,0);
\draw[shift={({sqrt(2)/2},{sqrt(2)/2})}] (0,0)--(1,0)--(1,1)--(0,1)--(0,0);
\draw (0,0)--({sqrt(2)/2},{sqrt(2)/2})--(0,{sqrt(2)})--({-sqrt(2)/2},{sqrt(2)/2})--(0,0);
\draw[shift={({sqrt(2)/2},{1+sqrt(2)/2})}] (0,0)--(1,0)--({1 - sqrt(2)/2},{sqrt(2)/2})--({- sqrt(2)/2},{sqrt(2)/2})--(0,0);

{
\color{blue}
\small
\draw (0,0) node[anchor= north]{$\bm{\varnothing}$};
\draw (1,0) node[anchor=north]{$\mathbf{1}$};
\draw ({1+sqrt(2)/2},{sqrt(2)/2}) node[anchor=west]{$\mathbf{12}$};
\draw ({1+sqrt(2)/2},{1+sqrt(2)/2}) node[anchor=west]{$\mathbf{123}$};
\draw ({1},{1+sqrt(2)}) node[anchor=south]{$\mathbf{1234}$};
\draw ({0},{1+sqrt(2)}) node[anchor=south]{$\mathbf{234}$};
\draw ({-sqrt(2)/2},{1+sqrt(2)/2}) node[anchor=east]{$\mathbf{34}$};
\draw ({-sqrt(2)/2},{sqrt(2)/2}) node[anchor=east]{$\mathbf{4}$};

\draw ({sqrt(2)/2},{1+sqrt(2)/2}) node[anchor=north west]{$\mathbf{23}$};
\draw ({sqrt(2)/2},{sqrt(2)/2}) node[anchor=south west]{$\mathbf{2}$};
\draw ({0},{sqrt(2)}) node[anchor=north]{$\mathbf{24}$};
}
\end{scope}

\begin{scope}[shift={(8,0)}]

\draw (0,0)--(1,0)--({1+sqrt(2)/2},{sqrt(2)/2})--({1+sqrt(2)/2},{1+sqrt(2)/2})--({1},{1+sqrt(2)})--({0},{1+sqrt(2)})--({-sqrt(2)/2},{1+sqrt(2)/2})--({-sqrt(2)/2},{sqrt(2)/2})--(0,0);

\draw (0,0)--(1,0)--({1+sqrt(2)/2},{sqrt(2)/2})--({sqrt(2)/2},{sqrt(2)/2})--(0,0);
\draw[shift={({- sqrt(2)/2},{sqrt(2)/2})}] (0,0)--({sqrt(2)/2},{sqrt(2)/2})--({sqrt(2)/2},{1+sqrt(2)/2})--(0,1)--(0,0);
\draw[shift={({1+sqrt(2)/2},{sqrt(2)/2})}] (0,0)--(0,1)--({-sqrt(2)/2},{1 + sqrt(2)/2})--({-sqrt(2)/2},{sqrt(2)/2})--(0,0);
\draw[shift={({0},{sqrt(2)})}] (0,0)--(1,0)--(1,1)--(0,1)--(0,0);
\draw (0,0)--({sqrt(2)/2},{sqrt(2)/2})--(0,{sqrt(2)})--({-sqrt(2)/2},{sqrt(2)/2})--(0,0);
\draw[shift={({sqrt(2)/2},{sqrt(2)/2})}] (0,0)--(1,0)--({1 - sqrt(2)/2},{sqrt(2)/2})--({- sqrt(2)/2},{sqrt(2)/2})--(0,0);

{
\color{blue}
\small
\draw (0,0) node[anchor= north]{$\bm{\varnothing}$};
\draw (1,0) node[anchor=north]{$\mathbf{1}$};
\draw ({1+sqrt(2)/2},{sqrt(2)/2}) node[anchor=west]{$\mathbf{12}$};
\draw ({1+sqrt(2)/2},{1+sqrt(2)/2}) node[anchor=west]{$\mathbf{123}$};
\draw ({1},{1+sqrt(2)}) node[anchor=south]{$\mathbf{1234}$};
\draw ({0},{1+sqrt(2)}) node[anchor=south]{$\mathbf{234}$};
\draw ({-sqrt(2)/2},{1+sqrt(2)/2}) node[anchor=east]{$\mathbf{34}$};
\draw ({-sqrt(2)/2},{sqrt(2)/2}) node[anchor=east]{$\mathbf{4}$};

\draw (1,{sqrt(2)}) node[anchor=west]{$\mathbf{124}$};
\draw ({sqrt(2)/2},{sqrt(2)/2}) node[anchor=east]{$\mathbf{2}$};
\draw ({0},{sqrt(2)}) node[anchor=north]{$\mathbf{24}$};
}

\end{scope}

\begin{scope}[shift={(2,-3)}]

\draw (0,0)--(1,0)--({1+sqrt(2)/2},{sqrt(2)/2})--({1+sqrt(2)/2},{1+sqrt(2)/2})--({1},{1+sqrt(2)})--({0},{1+sqrt(2)})--({-sqrt(2)/2},{1+sqrt(2)/2})--({-sqrt(2)/2},{sqrt(2)/2})--(0,0);

\draw[shift={({-sqrt(2)/2},{sqrt(2)/2})}] (0,0)--(1,0)--({1+sqrt(2)/2},{sqrt(2)/2})--({sqrt(2)/2},{sqrt(2)/2})--(0,0);
\draw[shift={({- sqrt(2)/2},{sqrt(2)/2})}] (0,0)--({sqrt(2)/2},{sqrt(2)/2})--({sqrt(2)/2},{1+sqrt(2)/2})--(0,1)--(0,0);
\draw[shift={({1+sqrt(2)/2},{sqrt(2)/2})}] (0,0)--(0,1)--({-sqrt(2)/2},{1 + sqrt(2)/2})--({-sqrt(2)/2},{sqrt(2)/2})--(0,0);
\draw[shift={({0},{sqrt(2)})}] (0,0)--(1,0)--(1,1)--(0,1)--(0,0);
\draw[shift={(1,0)}] (0,0)--({sqrt(2)/2},{sqrt(2)/2})--(0,{sqrt(2)})--({-sqrt(2)/2},{sqrt(2)/2})--(0,0);
\draw (0,0)--(1,0)--({1 - sqrt(2)/2},{sqrt(2)/2})--({- sqrt(2)/2},{sqrt(2)/2})--(0,0);

{
\color{blue}
\small
\draw (0,0) node[anchor= north]{$\bm{\varnothing}$};
\draw (1,0) node[anchor=north]{$\mathbf{1}$};
\draw ({1+sqrt(2)/2},{sqrt(2)/2}) node[anchor=west]{$\mathbf{12}$};
\draw ({1+sqrt(2)/2},{1+sqrt(2)/2}) node[anchor=west]{$\mathbf{123}$};
\draw ({1},{1+sqrt(2)}) node[anchor=south]{$\mathbf{1234}$};
\draw ({0},{1+sqrt(2)}) node[anchor=south]{$\mathbf{234}$};
\draw ({-sqrt(2)/2},{1+sqrt(2)/2}) node[anchor=east]{$\mathbf{34}$};
\draw ({-sqrt(2)/2},{sqrt(2)/2}) node[anchor=east]{$\mathbf{4}$};

\draw (1,{sqrt(2)}) node[anchor=west]{$\mathbf{124}$};
\draw ({1 - sqrt(2)/2},{sqrt(2)/2}) node[anchor=south east]{$\mathbf{14}$};
\draw ({0},{sqrt(2)}) node[anchor=south west]{$\mathbf{24}$};
}

\end{scope}

\begin{scope}[shift={(6,-3)}]
\draw (0,0)--(1,0)--({1+sqrt(2)/2},{sqrt(2)/2})--({1+sqrt(2)/2},{1+sqrt(2)/2})--({1},{1+sqrt(2)})--({0},{1+sqrt(2)})--({-sqrt(2)/2},{1+sqrt(2)/2})--({-sqrt(2)/2},{sqrt(2)/2})--(0,0);

\draw[shift={({-sqrt(2)/2},{1+sqrt(2)/2})}] (0,0)--(1,0)--({1+sqrt(2)/2},{sqrt(2)/2})--({sqrt(2)/2},{sqrt(2)/2})--(0,0);
\draw[shift={({1 - sqrt(2)/2},{sqrt(2)/2})}] (0,0)--({sqrt(2)/2},{sqrt(2)/2})--({sqrt(2)/2},{1+sqrt(2)/2})--(0,1)--(0,0);
\draw[shift={({1+sqrt(2)/2},{sqrt(2)/2})}] (0,0)--(0,1)--({-sqrt(2)/2},{1 + sqrt(2)/2})--({-sqrt(2)/2},{sqrt(2)/2})--(0,0);
\draw[shift={({-sqrt(2)/2},{sqrt(2)/2})}] (0,0)--(1,0)--(1,1)--(0,1)--(0,0);
\draw[shift={(1,0)}] (0,0)--({sqrt(2)/2},{sqrt(2)/2})--(0,{sqrt(2)})--({-sqrt(2)/2},{sqrt(2)/2})--(0,0);
\draw (0,0)--(1,0)--({1 - sqrt(2)/2},{sqrt(2)/2})--({- sqrt(2)/2},{sqrt(2)/2})--(0,0);

{
\color{blue}
\small
\draw (0,0) node[anchor= north]{$\bm{\varnothing}$};
\draw (1,0) node[anchor=north]{$\mathbf{1}$};
\draw ({1+sqrt(2)/2},{sqrt(2)/2}) node[anchor=west]{$\mathbf{12}$};
\draw ({1+sqrt(2)/2},{1+sqrt(2)/2}) node[anchor=west]{$\mathbf{123}$};
\draw ({1},{1+sqrt(2)}) node[anchor=south]{$\mathbf{1234}$};
\draw ({0},{1+sqrt(2)}) node[anchor=south]{$\mathbf{234}$};
\draw ({-sqrt(2)/2},{1+sqrt(2)/2}) node[anchor=east]{$\mathbf{34}$};
\draw ({-sqrt(2)/2},{sqrt(2)/2}) node[anchor=east]{$\mathbf{4}$};

\draw (1,{sqrt(2)}) node[anchor=west]{$\mathbf{124}$};
\draw ({1 - sqrt(2)/2},{sqrt(2)/2}) node[anchor=south east]{$\mathbf{14}$};
\draw ({1-sqrt(2)/2},{1+sqrt(2)/2}) node[anchor=north east]{$\mathbf{134}$};
}
\end{scope}
\end{tikzpicture}
\end{tabular}
\caption{Labeling the vertices involved in a flip between $\varkappa(\mathbf{T}_1)$ and $\varkappa(\mathbf{T}_2)$ in Lemma~\ref{flips_work_in_larger_complex} with subsets of $[4]$ (in blue).} \label{labeled_3_flip_fig}
\end{figure}
\end{Definition}

\begin{Lemma} \label{flips_work_in_larger_complex}Let $\mathbf{T}_1 = (T_{1,0}, \dots, T_{1, \ell})$ and $\mathbf{T}_2 = (T_{2, 0}, \dots, T_{2, \ell})$ be two piles, such that the directed cubical complexes $\varkappa(\mathbf{T}_1)$ and $\varkappa(\mathbf{T}_2)$ are related by a flip. Let $\mathbf{x}_1 \in (\C^*)^{\varkappa^0(\mathbf{T}_1)}$ and $\mathbf{x}_2 \in (\C^*)^{\varkappa^0(\mathbf{T}_2)}$ be arrays satisfying condition~\eqref{nonzero_face_condition}, such that the pair $(\mathbf{x}_1, \mathbf{x}_2)$ is K-flipped.
\begin{enumerate}\itemsep=0pt
\item[$(a)$] Then $\mathbf{x}_1$ is a coherent solution of the Kashaev equation if and only if $\mathbf{x}_2$ is a coherent solution of the Kashaev equation.
\item[$(b)$] Suppose $\mathbf{x}_1$ and $\mathbf{x}_2$ are both coherent solutions of the Kashaev equation. Let $\mathbf{\tilde{x}}_1 \!\in\! (\C^*)^{\varkappa^{02}(\mathbf{T}_1)}$ be an extension of $\mathbf{x}_1$ to $\varkappa^{02}(\mathbf{T}_1)$, and let $\xtin$ be the restriction of $\mathbf{\tilde{x}}_1$ to $\varkappa^{02}(T_{1,0})$. Then, identifying $\varkappa^{02}(T_{1,0})$ and $\varkappa^{02}(T_{2,0})$, $(\xtin)^{\uparrow \varkappa^{02}(\mathbf{T}_2)}$ restricts to $\mathbf{x}_2$.
\end{enumerate}
\end{Lemma}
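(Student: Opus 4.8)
Lemma~\ref{flips_work_in_larger_complex} concerns a 3-flip relating $\varkappa(\mathbf{T}_1)$ and $\varkappa(\mathbf{T}_2)$, which changes three cubes at once. Within the octagonal region where the flip happens, the vertices involved carry the $16$ labels from $\binom{[4]}{\ast}$, so the local combinatorics is exactly that of $\mathcal{C}(4)$ (Example~\ref{two_piles_c4}): $\mathbf{T}_1$ restricted to the octagon is $(T_0,T_1,T_2,T_3,T_4)$ and $\mathbf{T}_2$ restricted to the octagon is $(T_0,T_7,T_6,T_5,T_4)$ up to relabeling. The plan is to reduce everything to the already-established $n=4$ machinery (Corollary~\ref{n4_with_coh_cond}, Lemma~\ref{replacing_z43_lemma}) plus the sign-propagation bookkeeping ($\psi_\varkappa$, Lemma~\ref{change_one_cube_lemma}) that is now available.

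For part~(a): I would first observe that the only vertex distinguishing $\varkappa(\mathbf{T}_1)$ from $\varkappa(\mathbf{T}_2)$ is the single vertex at which the octagonal tilings differ (labeled $\{1,3\}$ on one side, $\{2,4\}$ on the other, in the notation of Fig.~\ref{labeled_3_flip_fig}); all other vertices, and their incident-cube/incident-face structure \emph{outside} the octagon, are identical. The coherence condition~\eqref{coherence_condition_for_wir} at any interior vertex not involved in the flip reads the same for $\mathbf{x}_1$ and $\mathbf{x}_2$, so it suffices to check coherence at the interior vertices that lie in the octagon. Since $(\mathbf{x}_1,\mathbf{x}_2)$ is K-flipped, the associated array $\mathbf{\bar{x}}=(x_I)_{I\subseteq[4]}$ satisfies~\eqref{kash_for_matrices} for all $I$ and~\eqref{coh_in_n4_cond} for all $I$; these are precisely the Kashaev and coherence equations for every cube and every interior vertex in \emph{both} octagonal complexes $\varkappa(\mathbf{T}_1)|_{\text{oct}}$ and $\varkappa(\mathbf{T}_2)|_{\text{oct}}$ (using that equations~\eqref{kash_for_matrices}--\eqref{coh_in_n4_cond} are symmetric in $T_{\min,4}$ versus $T_{\max,4}$, i.e., hold for both piles in $\mathcal{C}(4)$, exactly as noted in the discussion preceding Corollary~\ref{n4_with_coh_cond}). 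Hence $\mathbf{x}_1$ is coherent iff the non-octagon coherence conditions hold iff $\mathbf{x}_2$ is coherent.

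For part~(b): assume both are coherent. Let $\xtin$ be the restriction of $\mathbf{\tilde{x}}_1$ to $\varkappa^{02}(T_{1,0})$. I would run $(\xtin)^{\uparrow\varkappa^{02}(\mathbf{T}_2)}$ and compare it with $\mathbf{x}_2$ by splitting $\mathbf{T}_2$ at the flip location. Outside the octagon the two piles agree step-by-step, so the extensions agree there by the uniqueness built into $(\cdot)^{\uparrow\varkappa^{02}}$. The content is to show the extensions agree on the octagonal part, i.e. that $(\xtin)^{\uparrow\varkappa^{02}(\mathbf{T}_2)}$ restricts to $\mathbf{x}_2$ on the octagon's vertices. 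This is exactly Lemma~\ref{replacing_z43_lemma} applied to the pair of piles $\mathbf{T}_1|_{\text{oct}}$, $\mathbf{T}_2|_{\text{oct}}$ (the two elements of $\mathcal{C}(4)$, whose common first tiling has vertex set $\varkappa^{02}(T_{1,0})$ restricted to the octagon): the hypotheses of that lemma are precisely the K-flipped conditions, so there is an extension $\mathbf{\tilde{x}}_2'$ of $\mathbf{x}_{\varkappa(\mathbf{T}_2|_{\text{oct}})}(\mathbf{\bar{x}})$ satisfying the K-hexahedron equations agreeing with $\mathbf{\tilde{x}}_1|_{\text{oct}}$ on the shared initial tiling. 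Then glue: the value of $(\xtin)^{\uparrow\varkappa^{02}(\mathbf{T}_2)}$ at each vertex of $\varkappa^0(\mathbf{T}_2)$ is determined by propagating the K-hexahedron recurrence through cubes of $\mathbf{T}_2$ in order, and this propagation factors through the octagon (for steps inside it, where it matches $\mathbf{\tilde{x}}_2'$, hence $\mathbf{x}_2$) and outside it (where it matches $\mathbf{x}_1=\mathbf{x}_2$ by the first observation). Formally I would use Lemma~\ref{change_one_cube_lemma}: pick $\mathbf{t}\in\{-1,1\}^{\cueq(\mathbf{T}_2)}$ with $(\mathbf{t}\cdot\xtin)^{\uparrow\varkappa^{02}(\mathbf{T}_2)}=(\xtin)^{\uparrow\varkappa^{02}(\mathbf{T}_2)}$ up to sign on faces, and check $\psi_{\varkappa(\mathbf{T}_2)}(\mathbf{t})$ is identically $1$, which forces the restriction to $\varkappa^0(\mathbf{T}_2)$ to equal $\mathbf{x}_2$.

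The main obstacle I anticipate is the careful identification of $\cueq(\mathbf{T}_1)$ with $\cueq(\mathbf{T}_2)$ and the verification that a sign array trivial on $\varkappa(\mathbf{T}_1)$-cubes is trivial on $\varkappa(\mathbf{T}_2)$-cubes — this is the analogue of the last paragraph of the proof of Lemma~\ref{replacing_z43_lemma}, but now one must check it for the full complexes, not just the octagonal piece. The point is that the three cubes of $\mathbf{T}_1$ inside the octagon and the three cubes of $\mathbf{T}_2$ inside the octagon involve the same three equivalence classes of $2$-faces (indexed by the three pairs $\{i,j\}\subset[4]$ that can be obtained as symmetric differences of octagon vertex labels beyond those already appearing outside), so $\psi$ being $1$ on one set of three cubes is equivalent to it being $1$ on the other; outside the octagon the cube sets literally coincide. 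Once this identification is pinned down, parts~(a) and~(b) follow by assembling the cited results with no new computation.
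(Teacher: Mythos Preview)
Your overall strategy---localize to the octagonal $n=4$ region and invoke Lemma~\ref{replacing_z43_lemma}---is exactly what the paper's one-line proof is pointing at, and for part~(b) your argument is essentially correct (the agreement of face values on $T_{1,i+4}=T_{2,i+4}$, needed to continue the recurrence past the flip, follows from the same $\psi_{\varkappa}(\mathbf t)\equiv 1$ bookkeeping that appears in the proof of Lemma~\ref{replacing_z43_lemma}).

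Your argument for part~(a), however, has a real gap. You assert that coherence at interior vertices lying in the octagon is ``precisely'' an instance of~\eqref{coh_in_n4_cond}. That is only true for the single middle vertex (labelled $\{1,3\}$ in $\varkappa(\mathbf T_1)$, resp.\ $\{2,4\}$ in $\varkappa(\mathbf T_2)$), because that is the unique interior vertex of the \emph{restricted} octagonal complex. But an interior vertex of the \emph{full} complex whose octagonal label is some other $I$---say $I=\{2\}$, or a boundary-of-octagon label like $I=\varnothing$---is incident to cubes both inside and outside the flip region, and the inside cubes genuinely differ between the two piles (for example, $\{2\}$ lies in one flip-region cube of $\varkappa(\mathbf T_1)$ but in three flip-region cubes of $\varkappa(\mathbf T_2)$). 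Hence the full-complex coherence equation~\eqref{coherence_condition_for_wir} at such a vertex is \emph{not} the same on the two sides and is \emph{not} literally~\eqref{coh_in_n4_cond}. What actually needs to be checked is that the ratio of the two coherence conditions at $v$ equals~$1$; this does follow from~\eqref{coh_in_n4_cond} at that particular $I$ together with Lemma~\ref{KCvgenxlemma} (which yields $(K_{I,J})^2=\prod_{J'\in\binom{J}{2}}L_{I,J'}$ under Kashaev), but that step is absent from your write-up. An alternative route---derive (a) from the argument for (b) plus Proposition~\ref{cc_easy_direction} and Theorem~\ref{cc_main_generalization}---would require the pile to be comfortable, which the lemma does not assume.
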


\begin{proof}This result follows from Lemma~\ref{replacing_z43_lemma}.
\end{proof}

\begin{Lemma} \label{bubbling_up_cubes}There exists a sequence $\mathbf{T}_0, \dots, \mathbf{T}_{\binom{n}{3} - \binom{n - 1}{2}} \in \mathcal{C}(n)$ of piles, where we write $\mathbf{T}_i = (T_{i, 0}, \dots, T_{i, \binom{n}{3}})$ for $i = 0, \dots, \binom{n}{3} - \binom{n - 1}{2}$, such that
\begin{itemize}\itemsep=0pt
\item the directed cubical complexes $\varkappa(\mathbf{T}_{i - 1})$ and $\varkappa(\mathbf{T}_i)$ are related by a flip for $i = 1, \dots, \ell$,
\item the directed cubes of $\varkappa(\mathbf{T}_0)$ corresponding to the flips between $T_{0, i - 1}$ and $T_{0, i}$ for $i = 1, \dots, \binom{n - 1}{2}$ share their bottom vertex with $T_{0,0}$,
\item for $i = 1, \dots, \binom{n}{3} - \binom{n - 1}{2}$, $T_{0, j} = \cdots = T_{i - 1, j}$ for $j = i + \binom{n - 1}{2}, \dots, \binom{n}{3}$,
\item for $i = 1, \dots, \binom{n}{3} - \binom{n - 1}{2}$, the directed cube of $\varkappa(\mathbf{T}_{i - 1})$ corresponding to the flip between $T_{i - 1, i - 1 + \binom{n}{3} - \binom{n - 1}{2}}$ and $T_{i - 1, i - 1 + \binom{n}{3} - \binom{n - 1}{2}}$ is the top of the four cubes of $\varkappa(\mathbf{T}_{i - 1})$ involved in the flip between $\varkappa(\mathbf{T}_{i - 1})$ and $\varkappa(\mathbf{T}_{i})$.
\end{itemize}
\end{Lemma}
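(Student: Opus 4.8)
The plan is to recast everything in terms of admissible permutations. By Theorem~\ref{admissible_prop} and Definition~\ref{perm_from_pile_def}, a pile $\mathbf{T}\in\mathcal{C}(n)$ is the same data as an admissible permutation $\sigma(\mathbf{T})=\big(\alpha_1,\dots,\alpha_{\binom{n}{3}}\big)$ of $\binom{[n]}{3}$; and, reading off Fig.~\ref{3flip_figure} together with Definitions~\ref{three_dim_flip} and~\ref{assoc_cubical_complex_def}, $\varkappa(\mathbf{T})$ and $\varkappa(\mathbf{T}')$ are related by a flip exactly when there is a $4$-set $A\subseteq[n]$ whose four triples $\binom{A}{3}$ occupy four consecutive positions of $\sigma(\mathbf{T})$ and $\sigma(\mathbf{T}')$ is obtained by reversing that block (equivalently, by toggling $A$ in the inversion set of Definition~\ref{admissible_def}). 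Under this dictionary the four cubes of the flip are the cubes of $\varkappa(\mathbf{T})$ indexed by the triples in $\binom{A}{3}$, glued into a column whose \emph{top} cube is the one indexed by the triple appearing latest in the block of $\sigma(\mathbf{T})$ (the new vertex of that cube being the one created highest up the column); and the cube indexed by $\alpha_i$ shares its bottom vertex with $T_{0,0}=T_{\min,n}$ precisely when that vertex has not been removed by an earlier flip $\alpha_{i'}$, $i'<i$. Thus, after labelling vertices of $\varkappa$ by subsets of $[n]$ as in Section~\ref{principal_minors_section}, all four bullets of the lemma become purely combinatorial conditions on the sequence $\sigma(\mathbf{T}_0),\dots,\sigma(\mathbf{T}_{\binom{n}{3}-\binom{n-1}{2}})$ of admissible permutations, where I will freely use the identity $\binom{n}{3}-\binom{n-1}{2}=\binom{n-1}{3}$.

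First I would construct $\mathbf{T}_0$. The interior vertices of $T_{\min,n}$ are exactly the $\binom{n-1}{2}$ consecutive subsets $\{a,\dots,b\}$ with $2\le a\le b\le n-1$; an elementary analysis of $T_{\min,n}$ (with an induction on $n$) shows these can be ``peeled off'' one at a time, i.e.\ there is a sequence of $\binom{n-1}{2}$ flips starting at $T_{\min,n}$, each of which is legal and removes one of these vertices, never touching the boundary or a previously created vertex. Taking $\sigma(\mathbf{T}_0)$ to begin with the triples of these flips, in the order just produced, and to continue with the remaining $\binom{n-1}{3}$ triples in any admissible way — such a completion exists because the initial segment is itself realised by a partial pile, hence lies in an admissible permutation by Theorem~\ref{admissible_prop} — gives a pile whose first $\binom{n-1}{2}$ cubes share their bottom vertex with $T_{\min,n}$, which is the second bullet.

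The heart of the proof is then the inductive construction of $\mathbf{T}_1,\dots,\mathbf{T}_{\binom{n-1}{3}}$, which I would carry out by ``bubbling'' the column of four cubes of a suitable $4$-set upward one position at a time. Suppose $\mathbf{T}_{i-1}$ has been built so that the first $i-1$ moves have disturbed only positions below $i+\binom{n-1}{2}$ (this is the tail-stabilisation bullet) and so that an explicit invariant holds — for instance that the restriction of $\sigma(\mathbf{T}_{i-1})$ to the triples not containing $n$ has already been brought into a fixed reference order above position $i+\binom{n-1}{2}$. I would then locate the $4$-set $A_i$ whose four triples sit in four consecutive positions ending at $i+\binom{n-1}{2}$, with the triple prescribed in the fourth bullet (read with the evident typographical correction) occurring last so that its cube is the top cube of the column, reverse that block to obtain $\sigma(\mathbf{T}_i)$ — a legal flip by the dictionary above — and check that the invariant and all four bullets propagate. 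The delicate point, and the step I expect to absorb most of the work, is precisely the availability of $A_i$ at every stage: one must verify that the admissible permutations produced so far always leave the required four triples consecutive in $\sigma(\mathbf{T}_{i-1})$ with the correct one last, which I would establish by tracking the invariant carefully and, as a cross-check, verifying the whole construction by hand for $n=4$, where $\binom{n-1}{2}=3$, $\mathcal{C}(4)$ consists of the two piles of Example~\ref{two_piles_c4}, and there is a single bubbling step $\mathbf{T}_0\to\mathbf{T}_1$.
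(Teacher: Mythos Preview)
Your framework is right---piles in $\mathcal{C}(n)$ correspond to admissible permutations of $\binom{[n]}{3}$, and flips of cubical complexes correspond to changing the inversion set by a single $4$-set---but the construction as stated has a real gap.

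The problem is that you leave $\sigma(\mathbf{T}_0)$ underdetermined (``continue with the remaining $\binom{n-1}{3}$ triples in any admissible way''), and then in the inductive step you need to \emph{locate} a specific $4$-set $A_i$ whose four triples occupy four consecutive positions of $\sigma(\mathbf{T}_{i-1})$ ending exactly at position $i+\binom{n-1}{2}$. With an arbitrary completion you have no control over what sits at position $i+\binom{n-1}{2}$, and in fact even with the ``right'' completion the other three triples of $A_i$ will \emph{not} generally be in the immediately preceding positions of $\sigma(\mathbf{T}_{i-1})$---they were placed much earlier (among the first $\binom{n-1}{2}$ entries) and subsequent steps may have shuffled them. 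So the literal ``reverse a consecutive block'' move is unavailable, and your proposed invariant (``restriction to triples not containing $n$ in a fixed reference order'') is not the relevant one; the distinguished element throughout should be $1$, not $n$.

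The paper sidesteps both issues by making everything explicit. It takes $\sigma_0$ to be the lexicographic order on $\binom{[n]}{3}$; then the first $\binom{n-1}{2}$ entries are precisely the triples containing $1$, which immediately gives the second bullet. For the inductive step it does not try to keep the four triples consecutive in $\sigma_{i-1}$; instead, writing $\alpha_{i+\binom{n-1}{2}}=\{i_1<i_2<i_3\}$ (the entry at that position, which by construction does not contain~$1$), it takes $A_i=\{1,i_1,i_2,i_3\}$ and \emph{defines} $\sigma_i$ outright: remove $\{1,i_1,i_2\},\{1,i_1,i_3\},\{1,i_2,i_3\}$ from the first $i+\binom{n-1}{2}-1$ positions of $\sigma_{i-1}$, place $\{i_1,i_2,i_3\},\{1,i_2,i_3\},\{1,i_1,i_3\},\{1,i_1,i_2\}$ in positions $i+\binom{n-1}{2}-3$ through $i+\binom{n-1}{2}$, and keep the tail fixed. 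One then checks directly that $\sigma_i$ is admissible with inversion set $\{\{1\}\cup\alpha_{\binom{n-1}{2}+1},\dots,\{1\}\cup\alpha_{\binom{n-1}{2}+i}\}$, so the inversion sets of $\sigma_{i-1}$ and $\sigma_i$ differ by exactly the one $4$-set $A_i$; this is what makes $\varkappa(\mathbf{T}_{i-1})$ and $\varkappa(\mathbf{T}_i)$ flip-related, and the explicit tail gives the third and fourth bullets. The ``delicate point'' you flag simply disappears once you work with inversion sets and the lex order rather than trying to maintain literal consecutivity.
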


\begin{Remark}
The idea behind Lemma~\ref{bubbling_up_cubes} is as follows. Let $C_i$ be the cube of $\varkappa(\mathbf{T}_0)$ corresponding to the flip between $T_{0, i - 1}$ and $T_{0, i}$. The second bullet point states that the cubes $C_1, \dots, C_{\binom{n - 1}{2}}$ share their bottom vertex with $T_{0,0}$, and hence do not have their bottom vertices in the interior of $\varkappa(\mathbf{T}_0)$. As a consequence of the remaining bullet points, there is a sequence of $\binom{n}{3} - \binom{n - 1}{2}$ flips on the directed cubical complex $\varkappa(\mathbf{T}_0)$ in which $C_{\binom{n - 1}{2} + 1}, \dots, C_{\binom{n}{3}}$ (in that order) are the top cubes involved in the flips.
\end{Remark}

\begin{proof}[Proof of Lemma~\ref{bubbling_up_cubes}]
Define a total order $<_{\textup{lex}}$ on $\binom{[n]}{k}$, where $\{i_1 < \cdots < i_k \} <_{\textup{lex}} \{ i_1' < \cdots < i_k' \}$ when there exists $j$ such that $i_\ell = i_\ell'$ for $\ell < j$, and $i_j < i_j'$. Set $\{\alpha_1 <_{\textup{lex}} \cdots <_{\textup{lex}} \alpha_{\binom{n}{3}}\} = \binom{[n]}{3}$. Note that the permutation $\sigma_0 = (\alpha_1, \dots, \alpha_{\binom{n}{3}})$ of $\binom{[n]}{3}$ is admissible (see Definition~\ref{admissible_def}). Let $\mathbf{T}_0$ be the pile corresponding to $(\alpha_1, \dots, \alpha_{\binom{n}{3}})$ (see Theorem~\ref{admissible_prop}). Note that $1 \in \alpha_i$ for $i = 1, \dots, \binom{n - 1}{2}$, so the second bullet point holds.

We now construct the piles $\mathbf{T}_1, \dots, \mathbf{T}_{\binom{n}{3} - \binom{n - 1}{2}}$ inductively as follows. For $i = 1,\dots, \binom{n}{3} - \binom{n - 1}{2}$, the admissible permutation $\sigma_i$ corresponding to $\mathbf{T}_i$ should have the following properties:
\begin{itemize}\itemsep=0pt
\item The inversion set of $\sigma_i$ is $\{\{1\} \cup \alpha_{\binom{n - 1}{2} + 1}, \dots, \{1\} \cup \alpha_{\binom{n - 1}{2} + i}\}$. Hence, the inversion sets of $\sigma_{i - 1}$ and $\sigma_i$ differ by the element $\{1\} \cup \alpha_{\binom{n - 1}{2} + i}$, so $\varkappa(\mathbf{T}_{i - 1})$ and $\varkappa(\mathbf{T}_i)$ are related by a flip. Hence, the first bullet point holds.
\item Writing $\sigma_{i - 1} = (\beta_1, \dots, \beta_{\binom{n}{3}})$, $\beta_j = \alpha_j$ for $j = i + \binom{n - 1}{2}, \dots, \binom{n}{3}$. Hence, the third bullet point holds. Because $\beta_{i + \binom{n - 1}{2}} = \alpha_{i + \binom{n - 1}{2}}$ and the flip between $\varkappa(\mathbf{T}_{i - 1})$ and $\varkappa(\mathbf{T}_i)$ consists of the inclusion of $\{1\} \cup \alpha_{i + \binom{n - 1}{2}}$ to the inversion set, the fourth bullet point follows.
\end{itemize}
For $i = 1, \dots, \binom{n}{3} - \binom{n - 1}{2}$, write $\sigma_{i - 1} = (\beta_1, \dots, \beta_{\binom{n}{3}})$. We want to obtain $\sigma_i$. Write $\alpha_{i + \binom{n - 1}{2}} = \beta_{i + \binom{n - 1}{2}} = \{ i_1 < i_2 < i_3 \}$. Let $(\gamma_1, \dots, \gamma_{i + \binom{n - 1}{2} - 4})$ be the subsequence of $(\beta_1, \dots, \beta_{i + \binom{n - 1}{2} - 4})$ excluding $\{1, i_1, i_2\}$, $\{1, i_1, i_3\}$, and $\{1, i_2, i_3\}$. Setting
\begin{gather*}
\sigma_i = \big( \gamma_1, \dots, \gamma_{i + \binom{n - 1}{2} - 4}, \{ i_1, i_2, i_3 \}, \{1, i_2, i_3\}, \{1, i_1, i_3\}, \{1, i_1, i_2\}, \beta_{i + \binom{n - 1}{2}}, \dots, \beta_{\binom{n}{3}} \big),
\end{gather*}
it is straightforward to check that $\sigma_i$ is an admissible permutation with the desired proper\-ties.
\end{proof}

\begin{Remark}
The pile $\mathbf{T}_0$ constructed in the proof of Lemma~\ref{bubbling_up_cubes} is a representative for the smallest element of the third higher Bruhat order. The sequence $\big(\varkappa(\mathbf{T}_0), \dots, \varkappa\big(\mathbf{T}_{\binom{n}{3} - \binom{n - 1}{2}}\big)\big)$, where $\mathbf{T}_0, \dots, \mathbf{T}_{\binom{n}{3} - \binom{n - 1}{2}}$ are the piles constructed in the proof of Lemma~\ref{bubbling_up_cubes}, are the first $\binom{n}{3} - \binom{n - 1}{2} + 1$ elements for a representative for the smallest element of the fourth higher Bruhat order. See~\cite{manin_sch} or~\cite{ziegler} for further discussion of higher Bruhat orders.
\end{Remark}

\begin{Lemma} \label{works_for_one_fcs}
Let $\mathbf{\bar{x}} = (x_I)_{I \subseteq [n]}$ be an array satisfying the conditions that $L_{I, \{i,j\}} \not= 0$ for any $I \subseteq [n]$ and distinct $i,j \in [n]$, and $x_{\varnothing} = 1$. Suppose that for all $I \subseteq [n]$ and distinct $i,j,k \in [n]$, equation~\eqref{kash_for_matrices} holds, and for all $I \subseteq [n]$ and $A \in \binom{[n]}{4}$, equation~\eqref{4_cube_coherence} holds. Then there exists $\mathbf{T} \in \mathcal{C}(n)$ such that $\mathbf{x}_{\varkappa(\mathbf{T})}(\mathbf{\bar{x}})$ is a coherent solution of the Kashaev equation.
\end{Lemma}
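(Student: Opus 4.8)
The plan is to run the sequence of piles furnished by Lemma~\ref{bubbling_up_cubes}. Write $m = \binom{n}{3} - \binom{n-1}{2}$, let $\mathbf{T}_0,\dots,\mathbf{T}_m \in \mathcal{C}(n)$ be as in that lemma, and set $\mathbf{x}_i = \mathbf{x}_{\varkappa(\mathbf{T}_i)}(\mathbf{\bar{x}})$. Each $\mathbf{x}_i$ solves the Kashaev equation: a $3$-cube $C$ of $\varkappa(\mathbf{T}_i)$ has vertex set $\{I\,\Delta\,S : S \subseteq \{i,j,k\}\}$ for suitable $I$ and $i,j,k$, and then $K^C(\mathbf{x}_i) = K^{I,\{i,j,k\}}(\mathbf{\bar{x}}) = 0$ by the first bulleted hypothesis together with~\eqref{minor_kc}. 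Next I would check that each consecutive pair $(\mathbf{x}_{i-1},\mathbf{x}_i)$ is K-flipped: the two arrays agree except at the single vertex at which $\varkappa(\mathbf{T}_{i-1})$ and $\varkappa(\mathbf{T}_i)$ differ (both assign $x_I$ to a vertex labelled $I$), condition~\eqref{nonzero_face_condition} holds because every face value is some $L_{I,\{i,j\}}(\mathbf{\bar{x}}) \ne 0$, and the two remaining requirements in the definition of K-flipped -- that the local $16$-vertex array (indexed by $S \subseteq A_i$ over the $4$-set $A_i$ governing the flip, with a fixed background) satisfy~\eqref{kash_for_matrices} and \eqref{coh_in_n4_cond} -- are exactly the instances of the hypotheses~\eqref{kash_for_matrices} and~\eqref{4_cube_coherence} for that $4$-set $A_i$. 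Hence Lemma~\ref{flips_work_in_larger_complex}(a) applies at every step, so all of $\mathbf{x}_0,\dots,\mathbf{x}_m$ are simultaneously coherent or not; it therefore suffices to prove that some one of them is coherent.

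It remains to verify coherence, and here Lemma~\ref{bubbling_up_cubes} is used a second time. By that lemma the interior vertices of $\varkappa(\mathbf{T}_0)$ are precisely the bottom vertices $v_1,\dots,v_m$ of the cubes $C_{\binom{n-1}{2}+1},\dots,C_{\binom{n}{3}}$, and at stage $i$ the cube $C_{\binom{n-1}{2}+i}$ is the top cube of the $3$-flip $\varkappa(\mathbf{T}_{i-1}) \to \varkappa(\mathbf{T}_i)$; consequently, in $\varkappa(\mathbf{T}_{i-1})$ the vertex $v_i$ is incident to exactly the four cubes (indexed by $\binom{A_i}{3}$) and six $2$-faces (indexed by $\binom{A_i}{2}$) of that configuration, and it plays the role of the ``$x_I$''-vertex for an appropriate background. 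The coherence equation of $\mathbf{x}_{i-1}$ at $v_i$ is then literally an instance of~\eqref{4_cube_coherence}, which holds by hypothesis -- Proposition~\ref{A2B2_gen} gives the corresponding squared identity, and this instance of~\eqref{4_cube_coherence} fixes the sign. Using the symmetry of the K-flipped relation and Lemma~\ref{flips_work_in_larger_complex}(a), one reduces to the pile in the list whose complex has \emph{all} interior vertices incident to exactly four cubes -- which is what the bubbling is engineered to produce, as one checks from the combinatorics of the higher Bruhat orders behind Lemma~\ref{bubbling_up_cubes} -- and for such a pile coherence is nothing but the conjunction over its interior vertices of instances of~\eqref{4_cube_coherence}. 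Hence that $\mathbf{x}_i$ is coherent, and therefore so is $\mathbf{x}_0$, so we may take $\mathbf{T} = \mathbf{T}_0$.

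The hard part will be the combinatorial bookkeeping in the middle step: one must show, from the precise output of Lemma~\ref{bubbling_up_cubes} (which cube is the top cube at each stage, and that its third bullet keeps $C_{\binom{n-1}{2}+i}$ in place until stage $i$), that after the appropriate bubbling each interior vertex really is incident to exactly the four cubes of a single $3$-flip configuration, so that its coherence equation collapses to one instance of~\eqref{4_cube_coherence} rather than to a more elaborate product, and one must track signs and backgrounds so that this instance is genuinely an instance of the hypothesis (this is the same sort of sign computation via Lemma~\ref{cornersofcubelemma} that appears in the proof of Proposition~\ref{4_cube_coherence_condition_holds}). A minor additional point is that the hypotheses do not a priori force every $x_I$ to be nonzero, while Lemma~\ref{flips_work_in_larger_complex} is phrased for arrays with nonzero components; this is dealt with by a routine density argument, approximating $\mathbf{\bar{x}}$ by arrays with the same combinatorial features but all entries nonzero and passing to the limit in the (closed) coherence conditions.
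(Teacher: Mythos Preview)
Your overall strategy---use the piles from Lemma~\ref{bubbling_up_cubes} and transfer coherence via Lemma~\ref{flips_work_in_larger_complex}(a)---is the right one, and the first paragraph (Kashaev equation holds, consecutive pairs are K-flipped) is fine. The gap is in the second paragraph. You assert that in $\varkappa(\mathbf{T}_{i-1})$ the vertex $v_i$ is incident to \emph{exactly} the four cubes indexed by $\binom{A_i}{3}$, so that the coherence equation at $v_i$ collapses to an instance of~\eqref{4_cube_coherence}. This is not what Lemma~\ref{bubbling_up_cubes} gives you, and it is generally false: $\varkappa(\mathbf{T}_{i-1})$ is a \emph{full} pile in $\mathcal{C}(n)$ with $\binom{n}{3}$ cubes, and beyond the four cubes of the flip configuration there are typically further cubes, occurring later in the pile, that also contain $v_i$ as a side vertex. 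The coherence product at $v_i$ then has extra factors and is no longer a single instance of~\eqref{4_cube_coherence}. Your later claim that ``the bubbling is engineered to produce'' a pile with \emph{all} interior vertices incident to exactly four cubes is likewise not supported by Lemma~\ref{bubbling_up_cubes}.

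The paper avoids this by inducting on the length $j$ of \emph{partial} piles $(T_{0,0},\dots,T_{0,j})$ rather than moving among full piles. For $j\le\binom{n-1}{2}$ there are no interior vertices. For larger $j$, assume coherence for $(T_{0,0},\dots,T_{0,j-1})$; apply the K-flips to pass to the partial pile $(T_{j-\binom{n-1}{2}-1,0},\dots,T_{j-\binom{n-1}{2}-1,j-1})$, which is still coherent by Lemma~\ref{flips_work_in_larger_complex}(a); now append the $j$-th cube. By the last bullet of Lemma~\ref{bubbling_up_cubes} this cube is the top of a four-cube configuration, and because we are in a \emph{partial} pile that stops right here, there are no later cubes to meet the new interior vertex---so that vertex is incident to exactly those four cubes, and its coherence condition is literally an instance of~\eqref{4_cube_coherence}. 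Then flip back to $(T_{0,0},\dots,T_{0,j})$. The essential point you are missing is that truncating the pile at height $j$ is what guarantees the ``exactly four cubes'' claim; no single full pile in the list has this property at every interior vertex.
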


\begin{proof}
Let $\mathbf{T}_0, \dots, \mathbf{T}_{\binom{n}{3} - \binom{n - 1}{2}} \in \mathcal{C}(n)$ be a sequence of piles satisfying the conditions of Lem\-ma~\ref{bubbling_up_cubes}, where we write $\mathbf{T}_i = (T_{i, 0}, \dots, T_{i, \binom{n}{3}})$ for $i = 0, \dots, \binom{n}{3} - \binom{n - 1}{2}$. We will show that $\mathbf{x}_{\varkappa(\mathbf{T}_0)}(\mathbf{\bar{x}})$ is a coherent solution of the Kashaev equation.

We claim that $\mathbf{x}_{\varkappa(T_{0,0}, \dots, T_{0,j})}(\mathbf{\bar{x}})$ is a coherent solution of the Kashaev equation for $j = 1, \dots, \binom{n}{3}$ and proceed by induction. Because equation~\eqref{kash_for_matrices} holds for all $I \subseteq [n]$ and distinct $i, j, k \in [n]$, $\mathbf{x}_{\varkappa(T_{0,0}, \dots, T_{0,j})}(\mathbf{\bar{x}})$ satisfies the Kashaev equation. Hence, we only have to check coherence, i.e., we need to check that equation~\eqref{coherence_condition_for_wir} holds for every interior vertex of $\varkappa(T_{0,0}, \dots, T_{0,j})$. %
For $j = 1, \dots, \binom{n - 1}{2}$, none of the vertices $\varkappa^0(T_{0,0}, \dots, T_{0,j})$ are interior vertices of $\varkappa(T_{0,0}, \dots, T_{0,j})$, so $\mathbf{x}_{\varkappa(T_{0,0}, \dots, T_{0,j})}(\mathbf{\bar{x}})$ is a coherent solution of the Kashaev equation. By our inductive hypothesis, $\mathbf{x}_{\varkappa(T_{0,0}, \dots, T_{0,j - 1})}(\mathbf{\bar{x}})$ is a coherent solution of the Kashaev equation. By construction, $\varkappa(T_{i - 1, 0}, \dots, T_{i - 1, j - 1})$ and $\varkappa(T_{i, 0}, \dots, T_{i, j - 1})$ are related by a flip for $i = 1, \dots, j - \binom{n - 1}{2} - 1$. Hence, the pairs $\big( \mathbf{x}_{\varkappa(T_{i - 1, 0}, \dots, T_{i - 1, j - 1})}(\mathbf{\bar{x}}), \mathbf{x}_{\varkappa(T_{i, 0}, \dots, T_{i, j - 1})}(\mathbf{\bar{x}}) \big)$ are K-flipped for $i = 1, \dots, j - \binom{n - 1}{2} - 1$ by the conditions of the lemma. By repeated applications of Lemma~\ref{flips_work_in_larger_complex}(a), it follows that $\mathbf{x}_{\varkappa \big(T_{j - \binom{n - 1}{2} - 1, 0}, \dots, T_{j - \binom{n - 1}{2} - 1, j - 1} \big) }(\mathbf{\bar{x}})$ is a coherent solution of the Kashaev equation. By construction, the cube of $\varkappa \big(T_{j - \binom{n - 1}{2} - 1, 0}, \dots, T_{j - \binom{n - 1}{2} - 1, j} \big)$ corresponding to the flip between $T_{j - \binom{n - 1}{2} - 1, j - 1}$ and $T_{j - \binom{n - 1}{2} - 1, j}$ is the top of four cubes where a flip can take place. Hence, because equation~\eqref{4_cube_coherence} holds for all $I \subseteq [n]$ and $A \in \binom{[n]}{4}$, $\mathbf{x}_{\varkappa \big(T_{j - \binom{n - 1}{2} - 1, 0}, \dots, T_{j - \binom{n - 1}{2} - 1, j} \big) }(\mathbf{\bar{x}})$ is a coherent solution of the Kashaev equation. By construction, $\varkappa(T_{i - 1, 0}, \dots, T_{i - 1, j })$ and $\varkappa(T_{i - 1, 0}, \dots, T_{i - 1, j})$ are related by a flip for $i = 1, \dots, j - \binom{n - 1}{2} - 1$, so the pairs $\big( \mathbf{x}_{\varkappa(T_{i - 1, 0}, \dots, T_{i - 1, j })}(\mathbf{\bar{x}}), \mathbf{x}_{\varkappa(T_{i, 0}, \dots, T_{i, j})}(\mathbf{\bar{x}}) \big)$ are K-flipped for $i = 1, \dots, j - \binom{n - 1}{2} - 1$. Thus, by repeated applications of Lemma~\ref{flips_work_in_larger_complex}(a), it follows that $\mathbf{x}_{\varkappa (T_{0, 0}, \dots, T_{0, j} ) }(\mathbf{\bar{x}})$ is a coherent solution of the Kashaev equation.
\end{proof}

\begin{proof}[Proof of Theorem~\ref{main_all_minors_simple}.]
By Corollary~\ref{all_pminors_corollary}, the first two bullet points are equivalent, and by Corollary~\ref{all_pminors_corollary} and Proposition~\ref{4_cube_coherence_condition_holds}, the first bullet point implies the third bullet point. Hence, we just need to show that the third bullet point implies the first.

Suppose that the third bullet point holds. By Lemma~\ref{works_for_one_fcs}, there exists $\mathbf{T} \in \mathcal{C}(n)$ such that $\mathbf{x}_{\varkappa(\mathbf{T})}(\mathbf{\bar{x}})$ is a coherent solution of the Kashaev equation. Hence, by Theorem~\ref{main_thm_cyclic_zonotope}, there exists an extension $\mathbf{\tilde{x}} \in (\C^*)^{\varkappa^{02}(\mathbf{T})}$ of $\mathbf{x}_{\varkappa(\mathbf{T})}(\mathbf{\bar{x}})$ to $\varkappa^{02}(\mathbf{T})$ that satisfies the K-hexahedron equations. Hence, there exists a symmetric matrix $M$ such that $\mathbf{\tilde{x}} = \mathbf{\tilde{x}}_{\varkappa(\mathbf{T})}(M)$. Let $\xtin$ be the restriction of $\mathbf{\tilde{x}}$ to $T_{\min ,n}$.

Given any $\mathbf{T}' \in \mathcal{C}(n)$, by Proposition~\ref{bn3_connected_by_flips}, there exists a sequence of piles $\mathbf{T}_0, \dots, \mathbf{T}_\ell$ with $\mathbf{T} = \mathbf{T}_0$ and $\mathbf{T}' = \mathbf{T}_\ell$ such that $\varkappa(\mathbf{T}_{i - 1})$ and $\varkappa(\mathbf{T}_i)$ are related by a flip for $i = 1, \dots, \ell$. Hence, $(\mathbf{x}_{\varkappa(\mathbf{T}_{i - 1})}(\mathbf{\bar{x}}), \mathbf{x}_{\varkappa(\mathbf{T}_{i})}(\mathbf{\bar{x}}))$ is K-flipped, so by repeated applications of Lemma~\ref{flips_work_in_larger_complex}(a) and~(b), $(\xtin)^{\uparrow \varkappa^{02}(\mathbf{T}')}$ restricts to $\mathbf{x}_{\varkappa(\mathbf{T}')}(\mathbf{\bar{x}})$. Because every $I \subseteq [n]$ labels a vertex in $\varkappa(\mathbf{T}')$ for some $\mathbf{T}' \in \mathcal{C}(n)$, $\mathbf{\bar{x}} = \mathbf{\bar{x}}(M)$, as desired.
\end{proof}

\section{Generalizations of the Kashaev equation} \label{klikerecurrences}

In this section, we describe an axiomatic setup for equations similar to the Kashaev equation and the examples from Sections~\ref{s_holo_section}--\ref{gen_from_ca}. This allows us to prove all of the results from Sections~\ref{s_holo_section}--\ref{gen_from_ca}. This section is organized as follows:
\begin{itemize}\itemsep=0pt
\item Proposition~\ref{A2B2propgen} and Lemma~\ref{equivalent_coherence_statement_lemma} generalize Propositions~\ref{A2B2proposition}, \ref{A2B2forQC}, \ref{A2B2propgen1d}, and~\ref{A2B2_2d_example}.
\item In Definition~\ref{def_of_adapted}, given a polynomial equation resembling the Kashaev equation, we describe how to obtain a set of equations with the same properties as the K-hexahedron equations.
\item In Definition~\ref{definition_of_prop_signs}, we define certain signs that appear in our generalized ``coherence'' equation~\eqref{coherencegen}.
\item Theorem~\ref{maintheoremgen}, the main result in this section, generalizes Theorems~\ref{galoisfromintro}, \ref{coh_for_QC_theorem}, \ref{main_thm_1d_example}, \linebreak and~\ref{main_thm_2d_example}. The proof of Theorem~\ref{maintheoremgen} is nearly identical to the proof of Theorem~\ref{galoisfromintro} from Section~\ref{mainproof}.
\end{itemize}

\begin{Definition} \label{gensectionmaindefs}
For $d \ge 1$ and $\mathbf{a} = (a_1, \dots, a_d) \in \Z^d$, we denote by
\begin{gather*}
[\mathbf{a}] = \big\{ ( b_1, \dots, b_d ) \in \Z^d \colon 0 \le | b_i | \le | a_i | \text{ and } a_i b_i \ge 0 \text{ for all $i$} \big\}
\end{gather*}
the set of integer points in the $\left| a_1 \right| \times \cdots \times \left| a_d \right|$ box with opposite vertices $(0, \dots, 0)$ and $(a_1, \dots, a_d)$. For $\mathbf{b} = (b_1, \dots, b_d), \mathbf{c} = (c_1, \dots, c_d) \in \Z^d$, we write $\mathbf{b} \odot \mathbf{c} = (b_1 c_1, \dots, b_d c_d) \in \Z^d$. Denote by $\mathbf{1} = (1, \dots, 1) \in \Z^d$ the all $1$'s vector, and set $\mathbf{1}_i = (0, \dots, 0, 1, 0, \dots, 0) \in \Z^d$, with $1$ in the $i$th place, for $i = 1, \dots, d$. Let
\begin{gather*}
\mathbf{z}_{[\mathbf{a}]} = \{z_{\mathbf{i}} \colon \mathbf{i} \in [\mathbf{a}]\}
\end{gather*}
be a set of indeterminates. For $i = 1, \dots, d$, let $\pi_{\mathbf{a}, i} \colon \mathbf{z}_{[\mathbf{a}]} \rightarrow \mathbf{z}_{[\mathbf{a}]}$ be the involution defined by
\begin{gather*}
z_{(j_1, \dots, j_d)} \mapsto z_{(j_1, \dots, a_i - j_i, \dots, j_d)},
\end{gather*}
i.e., we ``flip'' the index of each variable in its $i$th coordinate. The action of $\pi_{\mathbf{a}, i}$ extends from~$\mathbf{z}_{[\mathbf{a}]}$ to the polynomial ring $\C[\mathbf{z}_{[\mathbf{a}]}]$. Given an array $\mathbf{x} = (x_s) \in \C^{\Z^d}$ and integer vectors $v \in \Z^d$, $\mathbf{a} \in \Z_{\ge 0}^d$, and $\bm{\alpha} \in \{-1, 1\}^d$, we denote by $\mathbf{x}_{v + [\mathbf{a} \odot \bm{\alpha}]} \in \C^{[\mathbf{a}]}$ the array whose entries are
\begin{gather*}
(\mathbf{x}_{v + [\mathbf{a} \odot \bm{\alpha}]})_{\mathbf{i}} = x_{v + \mathbf{i} \odot \bm{\alpha}}, \qquad \text{for} \quad \mathbf{i} \in [\mathbf{a}].
\end{gather*}
In particular,
\begin{gather*}
(\mathbf{x}_{v + [\mathbf{a}]})_{\mathbf{i}} = x_{v + \mathbf{i}}, \qquad \text{for} \quad \mathbf{i} \in [\mathbf{a}].
\end{gather*}
Thus, given a polynomial $f \in \C[\mathbf{z}_{[\mathbf{a}]}]$, the number $f(\mathbf{x}_{v + [\bm{\alpha} \odot \mathbf{a}]}) \in \C$ is obtained by setting $z_{\mathbf{i}} = x_{v + \bm{\alpha} \odot \mathbf{i}}$ for each variable $z_{\mathbf{i}}$ for $\mathbf{i} \in [\mathbf{a}]$. We say that $\mathbf{x} \in \C^{\Z^d}$ \emph{satisfies $f$} if $f(\mathbf{x}_{v + [\mathbf{a}]}) = 0$ for all $v \in \Z^d$.
\end{Definition}

\begin{Proposition} \label{A2B2propgen}Let $\mathbf{a} = (a_1, \dots, a_d) \in \Z^d_{\ge 1}$, and a polynomial $f \in \C[\mathbf{z}_{[\mathbf{a}]}]$ satisfy the following conditions:
\begin{enumerate}\itemsep=0pt
\item[$(1)$] $f$ is invariant under the action of $\pi_{\mathbf{a}, i}$ for $i = 1, \dots, d$,
\item[$(2)$] $f$ has degree $2$ with respect to the variable $z_{\mathbf{a}}$; as a quadratic polynomial in $z_{\mathbf{a}}$, $f$ has discriminant $D$ which factors as a product $D = f_1 \cdots f_d$, where each polynomial $f_i \in \C[\mathbf{z}_{[\mathbf{a} - \mathbf{1}_i]}]$ is invariant under the action of $\pi_{\mathbf{a} - \mathbf{1}_i, j}$ for $j = 1, \dots, d$.
\end{enumerate}
Then for any $\mathbf{x} = (x_s) \in \C^{\Z^d}$ satisfying $f$, we have, for all $v \in \Z^d$:
\begin{align} \label{A2B2propgeneqn}
\left( \prod_{\bm{\alpha} \in \{-1,0\}^d}\! \frac{\partial f}{\partial z_{\mathbf{a}}} (\mathbf{x}_{v - (\mathbf{a} - \mathbf{1}) \odot \bm{\alpha} + [(\mathbf{1} + 2 \bm{\alpha}) \odot \mathbf{a}]}) \right)^2 = \left( \prod_{i = 1}^d \prod_{\substack{\bm{\beta} = (\beta_1, \dots, \beta_d) \in \{-1,0\}^d\\ \beta_i = 0}}\!\! f_i(\mathbf{x}_{v + \bm{\beta} + [\mathbf{a}]}) \right)^2\!\!.\!\!\!
\end{align}
Moreover, for all $v \in Z^d$, we have
\begin{gather}
\left( \prod_{\substack{\bm{\alpha} = (\alpha_1, \dots, \alpha_d) \\ \alpha_1, \dots, \alpha_d \in \{-1, 0\} \\ \alpha_1 + \cdots + \alpha_d \text{ even}}} \frac{\partial f}{\partial z_{\mathbf{a}}} (\mathbf{x}_{v - (\mathbf{a} - \mathbf{1}) \odot \bm{\alpha} + [(\mathbf{1} + 2 \bm{\alpha}) \odot \mathbf{a}]}) \right)^2\nonumber\\
\qquad{} = \left( \prod_{\substack{\bm{\alpha} = (\alpha_1, \dots, \alpha_d) \\ \alpha_1, \dots, \alpha_d \in \{-1, 0\} \\ \alpha_1 + \cdots + \alpha_d \text{ odd}}} \frac{\partial f}{\partial z_{\mathbf{a}}} (\mathbf{x}_{v - (\mathbf{a} - \mathbf{1}) \odot \bm{\alpha} + [(\mathbf{1} + 2 \bm{\alpha}) \odot \mathbf{a}]}) \right)^2\nonumber\\
\qquad{} = \prod_{i = 1}^d \prod_{\substack{\bm{\beta} = (\beta_1, \dots, \beta_d) \in \{-1,0\}^d\\ \beta_i = 0}} f_i(\mathbf{x}_{v + \bm{\beta} + [\mathbf{a}]}).\label{A2B2propgeneqnalt}
\end{gather}
\end{Proposition}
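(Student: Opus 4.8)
The plan is to mimic the proof of Proposition~\ref{A2B2proposition} in Section~\ref{mainproof}, which proceeds via a local algebraic identity (Lemma~\ref{KCvgenxlemma}) followed by a multiplication over all cubes/boxes incident to a vertex. First I would establish the analogue of Lemma~\ref{KCvgenxlemma}: for the single polynomial $f \in \C[\mathbf{z}_{[\mathbf{a}]}]$ satisfying conditions $(1)$ and $(2)$, one has the purely algebraic identity
\begin{gather*}
\left( \tfrac{\partial f}{\partial z_{\mathbf{a}}} \right)^2 = 4 z_{\mathbf{a}}\, \tfrac{\partial f}{\partial z_{\mathbf{a}}}\cdot(\text{leading coeff in }z_{\mathbf{a}}) - 4(\text{leading coeff})\, f + D,
\end{gather*}
or more simply: writing $f = A z_{\mathbf{a}}^2 - B z_{\mathbf{a}} + C$ with $A,B,C \in \C[\mathbf{z}_{[\mathbf{a}]}\setminus\{z_{\mathbf{a}}\}]$, the discriminant is $D = B^2 - 4AC = f_1\cdots f_d$ and $\partial f/\partial z_{\mathbf{a}} = 2Az_{\mathbf{a}} - B$, so $(\partial f/\partial z_{\mathbf{a}})^2 = 4A(Az_{\mathbf{a}}^2 - Bz_{\mathbf{a}} + C) + B^2 - 4AC = 4Af + D$. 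Thus if $\mathbf{x}$ satisfies $f$, specializing along the box $v + [\mathbf{a}]$ gives $\big(\tfrac{\partial f}{\partial z_{\mathbf{a}}}(\mathbf{x}_{v+[\mathbf{a}]})\big)^2 = f_1(\mathbf{x}_{v+[\mathbf{a}]})\cdots f_d(\mathbf{x}_{v+[\mathbf{a}]})$, the clean analogue of the displayed identity in Lemma~\ref{KCvgenxlemma} after the Kashaev equation is imposed. The $\pi$-invariance hypotheses on $f$ and the $f_i$ are what let us evaluate $\tfrac{\partial f}{\partial z_{\mathbf{a}}}$ at the ``$z_{\mathbf{a}}$-corner'' of each of the $2^d$ translated/reflected boxes incident to $v$ consistently.

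Next I would carry out the combinatorial bookkeeping. The $2^d$ boxes in $\Z^d$ with a vertex at $v$ are indexed by sign vectors $\bm{\alpha} \in \{-1,0\}^d$ (the box occupying the orthant determined by $\bm{\alpha}$ is $v - (\mathbf{a}-\mathbf{1})\odot\bm{\alpha} + [(\mathbf{1}+2\bm{\alpha})\odot\mathbf{a}]$, which is exactly the expression appearing in the statement); one checks that $v$ sits at the ``$z_{\mathbf{a}}$-corner'' of this reflected box, so that $\tfrac{\partial f}{\partial z_{\mathbf{a}}}$ evaluated there is the natural local quantity at $v$. Multiplying the local identity over all $\bm{\alpha}\in\{-1,0\}^d$ yields
\begin{gather*}
\left( \prod_{\bm{\alpha} \in \{-1,0\}^d} \tfrac{\partial f}{\partial z_{\mathbf{a}}}(\mathbf{x}_{v - (\mathbf{a}-\mathbf{1})\odot\bm{\alpha} + [(\mathbf{1}+2\bm{\alpha})\odot\mathbf{a}]}) \right)^2 = \prod_{\bm{\alpha}\in\{-1,0\}^d} \prod_{i=1}^d f_i(\mathbf{x}_{v - (\mathbf{a}-\mathbf{1})\odot\bm{\alpha} + [(\mathbf{1}+2\bm{\alpha})\odot\mathbf{a}]}).
\end{gather*}
The key observation — exactly parallel to ``the double product counts each unit square containing $v$ twice'' in the $\Z^3$ proof — is that for fixed $i$, the factor $f_i$ depends on $\mathbf{a} - \mathbf{1}_i$-boxes, i.e.\ boxes of one lower extent in the $i$-th direction; for each $\bm{\beta}\in\{-1,0\}^d$ with $\beta_i = 0$, the two values $\alpha_i\in\{-1,0\}$ (with $\alpha_j=\beta_j$ for $j\neq i$) give the \emph{same} argument to $f_i$ after a $\pi_{\mathbf{a}-\mathbf{1}_i,i}$-reflection, using invariance of $f_i$. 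Hence the right-hand product is $\big(\prod_{i=1}^d \prod_{\bm{\beta}:\beta_i=0} f_i(\mathbf{x}_{v+\bm{\beta}+[\mathbf{a}]})\big)^2$ after renormalizing the box labels by translations (the argument $v+\bm{\beta}+[\mathbf{a}]$ in the statement is the canonical representative), which is~\eqref{A2B2propgeneqn}. For the strengthening~\eqref{A2B2propgeneqnalt}, I split the product over $\bm{\alpha}$ according to the parity of $\alpha_1+\cdots+\alpha_d$: flipping any single coordinate $\alpha_i$ (from $0$ to $-1$ or vice versa) changes the parity and, by the same $f_i$-invariance, pairs up the factors so that the even-parity product and the odd-parity product of $f_i$'s coincide and each equals $\prod_{\bm{\beta}:\beta_i=0} f_i(\mathbf{x}_{v+\bm{\beta}+[\mathbf{a}]})$; multiplying over $i$ gives both equalities in~\eqref{A2B2propgeneqnalt}.

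I expect the main obstacle to be the index/sign bookkeeping in the double-product-to-square step: one must verify carefully that the $\pi$-reflections line up the arguments correctly, that each lower-extent box incident to $v$ is hit exactly twice (once for each choice of the now-free coordinate), and that the translation normalization $v - (\mathbf{a}-\mathbf{1})\odot\bm{\alpha} + [(\mathbf{1}+2\bm{\alpha})\odot\mathbf{a}] \mapsto v + \bm{\beta} + [\mathbf{a}]$ is consistent with the reflection symmetries of $f_i$. This is routine but notation-heavy; the conceptual content is entirely contained in the elementary discriminant identity $(\partial f/\partial z_{\mathbf{a}})^2 = 4Af + D$ together with the hypothesis that $D$ factors into $\pi$-invariant pieces, exactly as in Lemma~\ref{KCvgenxlemma}. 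Everything else is a direct translation of the $\Z^3$ argument of Proposition~\ref{A2B2proposition} to the box $[\mathbf{a}]$ in $\Z^d$.
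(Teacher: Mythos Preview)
Your proposal is correct and follows essentially the same approach as the paper's own proof: the discriminant identity $(\partial f/\partial z_{\mathbf{a}})^2 = 4gf + D$ (the paper's equation~\eqref{prod_fi_identity}), specialization using $f(\mathbf{x})=0$, then the $\pi$-invariance of each $f_i$ to normalize the reflected-box arguments to the form $f_i(\mathbf{x}_{v+\bm{\alpha}\odot(\mathbf{1}-\mathbf{1}_i)+[\mathbf{a}]})$, followed by the two-to-one counting over the free coordinate $\alpha_i$ split by parity. One small inaccuracy in your write-up: the $2^d$ boxes indexed by $\bm{\alpha}\in\{-1,0\}^d$ are those containing the sub-box $v+[\mathbf{a}-\mathbf{1}]$ (see the Remark following the Proposition), not boxes with $v$ at the $z_{\mathbf{a}}$-corner; this does not affect the argument, only the verbal description.
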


\begin{Remark}The subscripts $v - (\mathbf{a} - \mathbf{1}) \odot \bm{\alpha} + [(\mathbf{1} + 2 \bm{\alpha}) \odot \mathbf{a}]$ appearing on the left-hand side of~\eqref{A2B2propgeneqn} run over all boxes of size $a_1 \times \cdots \times a_d$ containing~$v + [\mathbf{a} - \mathbf{1}]$. The subscripts $v - (\mathbf{a} - \mathbf{1}) \odot \bm{\alpha} + [(\mathbf{1} + 2 \bm{\alpha}) \odot \mathbf{a}]$ appearing on the right-hand side of~\eqref{A2B2propgeneqn} run over $i = 1, \dots, d$ and boxes of size $a_1 \times \cdots \times a_{i - 1} \times (a_i - 1) \times a_{i + 1} \times \cdots \times a_d$ containing~$v + [\mathbf{a} - \mathbf{1}]$. In particular, when $a_1 = \cdots = a_d = 1$, all of these products are over boxes of a certain size containing the vertex~$v$. For example, in the case $\mathbf{a} = (1,2)$ (like in Proposition~\ref{A2B2_2d_example}), the boxes we are considering on the left-hand side of~\eqref{A2B2propgeneqn} are given in the top row of Fig.~\ref{rectangles_in_coherence}, while the boxes we are considering on the right-hand side of~\eqref{A2B2propgeneqn} are given in the bottom row of Fig.~\ref{rectangles_in_coherence}.
\end{Remark}

Before proving Proposition~\ref{A2B2propgen}, we give several examples of polynomials discussed in previous sections that satisfy conditions~(1)--(2) from Proposition~\ref{A2B2propgen}. In the examples below, we write $z_{i_1 \cdots i_d} = z_{(i_1, \dots, i_d)}$ for $(i_1, \dots, i_d) \in \Z^d$.

\begin{Example} \label{kash_sat_ex}
Our first example is the Kashaev equation. Let $\mathbf{a} = (1,1,1) \in \Z^3$, and let
\begin{gather*}
f = 2\big(a^2 + b^2 + c^2 + d^2\big) - (a + b + c + d)^2 - 4(s + t) \in \C[\mathbf{z}_{[\mathbf{a}]}],
\end{gather*}
where
\begin{gather*}
a = \zzzz \zooo, \qquad b = \zozz \zzoo, \qquad c = \zzoz \zozo, \qquad d = \zzzo \zooz,\\
s = \zzzz \zzoo \zozo \zooz,\qquad t = \zozz \zzoz \zzzo \zooo.
\end{gather*}
The polynomial $f$ is invariant not only under the action of the $\pi_{\mathbf{a}, i}$, but under all symmetries of the cube. Its discriminant (as a polynomial in $\zooo$) $D$ factors as a product $D = f_1 f_2 f_3$, with
\begin{gather*}
f_1 = 16 (\zzzz \zzoo + \zzoz \zzzo),\qquad
f_2 = \zzzz \zozo + \zozz \zzzo,\qquad
f_3 = \zzzz \zooz + \zozz \zzoz.\
\end{gather*}
Hence, $f$ satisfies conditions~(1)--(2) from Proposition~\ref{A2B2propgen}. Therefore, Proposition~\ref{A2B2proposition} is a~special case of Proposition~\ref{A2B2propgen}.
\end{Example}

\begin{Example} \label{sholo_sat_ex}Let $\mathbf{a} = (1,1) \in \Z^2$, and let
\begin{gather*}
f = z_{00}^2 + z_{10}^2 + z_{01}^2 + z_{11}^2 - 2(z_{00} z_{10} + z_{10} z_{11} + z_{11} z_{01} + z_{01} z_{00}) \\
\hphantom{f=}{} - 6 (z_{00} z_{11} + z_{10} z_{01}) \in \C[\mathbf{z}_{[\mathbf{a}]}].
\end{gather*}
The polynomial $f$ is invariant not only under the action of the $\pi_{\mathbf{a}, i}$, but under all symmetries of the square. Its discriminant (as a polynomial in $z_{11}$) $D$ factors as a product $D = f_1 f_2$, with
\begin{gather*}
f_1= 32 (z_{00} + z_{01}), \qquad f_2 = z_{00} + z_{10}.
\end{gather*}
Hence, $f$ satisfies conditions~(1)--(2) from Proposition~\ref{A2B2propgen}. Therefore, Proposition~\ref{A2B2forQC} is a~special case of Proposition~\ref{A2B2propgen}.
\end{Example}

\begin{Example} \label{1d_sat_ex}
Fix $\alpha_1, \alpha_2, \alpha_3 \in \C$. Let $\mathbf{a} = (3) \in \Z^1$, and let
\begin{gather*}
f = z_0^2 z_3^2 + \alpha_1 z_1^2 z_2^2 + \alpha_2 z_0 z_1 z_2 z_3 + \alpha_3 \big(z_0 z_2^3 + z_1^3 z_3\big) \in \C[\mathbf{z}_{[\mathbf{a}]}].
\end{gather*}
The polynomial $f$ is invariant under the action of $\pi_{\mathbf{a}, 1}$, i.e., replacing $z_i$ by $z_{3 - i}$. Its discriminant (as a polynomial in $z_{11}$) $D = f_1$, with
\begin{gather*}
f_1 = \alpha_3^2 z_1^6 + 2 \alpha_2 \alpha_3 z_0 z_1^4 z_2 + \big(\alpha_2^2 - 4 \alpha_1\big) z_0^2 z_1^2 z_2^2 - 4 \alpha_3 z_0^3 z_2^3.
\end{gather*}
Hence, $f$ satisfies conditions~(1)--(2) from Proposition~\ref{A2B2propgen}. Therefore, Proposition~\ref{A2B2propgen1d} is a~special case of Proposition~\ref{A2B2propgen}.
\end{Example}

\begin{Example} \label{2d_sat_ex}Fix $\alpha_1, \alpha_2 \in \C$. Let $\mathbf{a} = (1,2) \in \Z^2$, and let
\begin{gather*}
f = z_{00}^2 z_{12}^2 + z_{10}^2 z_{02}^2 + \frac{\alpha_2^2 - \alpha_1^2}{4} z_{01}^2 z_{11}^2 - \alpha_1 \big(z_{00} z_{02} z_{11}^2 + z_{10} z_{12} z_{01}^2\big)\\
\hphantom{f=}{} - 2 z_{00} z_{10} z_{02} z_{12} - \alpha_2 (z_{00} z_{12} z_{01} z_{11} + z_{10} z_{02} z_{01} z_{11}) \in \C[\mathbf{z}_{[\mathbf{a}]}].
\end{gather*}
The polynomial $f$ is invariant under the action of $\pi_{\mathbf{a}, 1}$ and $\pi_{\mathbf{a}, 2}$. Its discriminant (as a~polynomial in $z_{12}$) factors as a product $D = f_1 f_2$, with
\begin{gather*}
f_1 = \alpha_1 z_{01}^2 + 4 z_{00} z_{02},\qquad f_2 = \alpha_1\big(z_{00}^2 z_{11}^2 + z_{01}^2 z_{10}^2\big) + 2 \alpha_2 z_{00} z_{01} z_{10} z_{11}.
\end{gather*}
Hence, $f$ satisfies conditions~(1)--(2) from Proposition~\ref{A2B2propgen}. Therefore, Proposition~\ref{A2B2_2d_example} is a~special case of Proposition~\ref{A2B2propgen}.
\end{Example}

\begin{proof}[Proof of Proposition~\ref{A2B2propgen}.]Let $g$ denote the coefficient of $z_{\mathbf{a}}^2$ in $f$ (viewed as a polynomial in~$z_{\mathbf{a}}$). It is easy to check that
\begin{gather} \label{prod_fi_identity}
D = f_1 \cdots f_d = \left( \frac{\partial f}{\partial z_{\mathbf{a}}} \right)^2 - 4 f g.
\end{gather}
Because $\mathbf{x}$ satisfies $f$, we have
\begin{gather*}
\left(\frac{\partial f}{\partial z_{\mathbf{a}}}\right)^2 (\mathbf{x}_{v - (\mathbf{a} - \mathbf{1}) \odot \bm{\alpha} + [(1 + 2 \bm{\alpha})\odot \mathbf{a}]}) = (f_1 \cdots f_d) (\mathbf{x}_{v - (\mathbf{a} - \mathbf{1}) \odot \bm{\alpha} + [(1 + 2 \bm{\alpha}) \odot\mathbf{a}]}).
\end{gather*}
Because each $f_i$ is invariant under the action of $\pi_{\mathbf{a} - \mathbf{1}_i, j}$ for $j = 1, \dots, d$, we have
\begin{gather} \label{indivderivsquared}
\left(\frac{\partial f}{\partial z_{\mathbf{a}}}\right)^2 (\mathbf{x}_{v - (\mathbf{a} - \mathbf{1}) \odot \bm{\alpha} + [(1 + 2 \bm{\alpha}) \odot \mathbf{a}]}) = \prod_{i = 1}^d f_i(\mathbf{x}_{v + \bm{\alpha} \odot (\mathbf{1} - \mathbf{1}_i) + [\mathbf{a}]}).
\end{gather}
Given $i \in \{1, \dots, d\}$ and $\bm{\beta} = (\beta_1, \dots, \beta_d) \in \{-1, 0\}^d$ with $\beta_i = 0$, there exist exactly two $\bm{\alpha} = (\alpha_1, \dots, \alpha_d) \in \{-1, 0\}^d$ with $(\mathbf{1} - \mathbf{1}_i) \odot \bm{\alpha} = \bm{\beta}$: one with $\alpha_i = 0$ and the other with $\alpha_i = 1$. Hence, $\alpha_1 + \cdots + \alpha_d$ is even for one such choice of $\bm{\alpha}$, and odd for the other. Taking the product over $\bm{\alpha} = (\alpha_1, \dots, \alpha_d) \in \{-1, 0\}^d$ with $\alpha_1 + \cdots + \alpha_d$ odd (or even) in~\eqref{indivderivsquared}, we obtain~\eqref{A2B2propgeneqnalt}. Equation~\eqref{A2B2propgeneqn} follows.
\end{proof}

\begin{Lemma} \label{equivalent_coherence_statement_lemma}
Let $\mathbf{a} = (a_1, \dots, a_d) \in \Z^d_{\ge 1}$, and let $f \in \C[\mathbf{z}_{[\mathbf{a}]}]$ be a polynomial satisfying conditions~$(1)$--$(2)$ from Proposition~{\rm \ref{A2B2propgen}}. Let $\mathbf{x} = (x_s) \in \C^{\Z^d}$ be an array satisfying $f$. Fix $v \in \Z^d$ and $\gamma \in \{-1,1\}$. Then
\begin{gather*}
\prod_{\bm{\alpha} \in \{-1,0\}^d} \frac{\partial f}{\partial z_{\mathbf{a}}} (\mathbf{x}_{v - (\mathbf{a} - \mathbf{1}) \odot \bm{\alpha} + [(\mathbf{1} + 2 \bm{\alpha}) \odot \mathbf{a}]}) = \gamma \prod_{i = 1}^d \prod_{\substack{\bm{\beta} = (\beta_1, \dots, \beta_d) \in \{-1,0\}^d\\ \beta_i = 0}} f_i(\mathbf{x}_{v + \bm{\beta} + [\mathbf{a}]})
\end{gather*}
if and only if
\begin{gather*}
\prod_{\substack{\bm{\alpha} = (\alpha_1, \dots, \alpha_d) \\ \alpha_1, \dots, \alpha_d \in \{-1, 0\} \\ \alpha_1 + \cdots + \alpha_d \text{ even}}} \frac{\partial f}{\partial z_{\mathbf{a}}} (\mathbf{x}_{v - (\mathbf{a} - \mathbf{1}) \odot \bm{\alpha} + [(\mathbf{1} + 2 \bm{\alpha}) \odot \mathbf{a}]}) = \gamma \prod_{\substack{\bm{\alpha} = (\alpha_1, \dots, \alpha_d) \\ \alpha_1, \dots, \alpha_d \in \{-1, 0\} \\ \alpha_1 + \cdots + \alpha_d \text{ odd}}} \frac{\partial f}{\partial z_{\mathbf{a}}} (\mathbf{x}_{v - (\mathbf{a} - \mathbf{1}) \odot \bm{\alpha} + [(\mathbf{1} + 2 \bm{\alpha}) \odot \mathbf{a}]}).
\end{gather*}
\end{Lemma}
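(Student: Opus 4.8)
The plan is to reduce the ``iff'' to a purely sign-theoretic statement about two finite products and then invoke Proposition~\ref{A2B2propgen}. Write $P_{\mathrm{ev}}$ and $P_{\mathrm{odd}}$ for the two products over $\bm\alpha\in\{-1,0\}^d$ with $\alpha_1+\cdots+\alpha_d$ even, respectively odd, of the terms $\frac{\partial f}{\partial z_{\mathbf a}}(\mathbf{x}_{v-(\mathbf a-\mathbf 1)\odot\bm\alpha+[(\mathbf 1+2\bm\alpha)\odot\mathbf a]})$, and write $R$ for the product $\prod_{i=1}^d\prod_{\bm\beta}f_i(\mathbf{x}_{v+\bm\beta+[\mathbf a]})$ on the right-hand side. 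Then the first displayed equation is $P_{\mathrm{ev}}P_{\mathrm{odd}}=\gamma R$, and the second is $P_{\mathrm{ev}}=\gamma P_{\mathrm{odd}}$. Proposition~\ref{A2B2propgen} (equations~\eqref{A2B2propgeneqn} and~\eqref{A2B2propgeneqnalt}) gives us for free that $(P_{\mathrm{ev}}P_{\mathrm{odd}})^2=R^2$ and $P_{\mathrm{ev}}^2=P_{\mathrm{odd}}^2=R$. So the task is: given $P_{\mathrm{ev}}^2=P_{\mathrm{odd}}^2=R$ and $\gamma\in\{-1,1\}$, show $P_{\mathrm{ev}}P_{\mathrm{odd}}=\gamma R \iff P_{\mathrm{ev}}=\gamma P_{\mathrm{odd}}$.

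First I would dispose of the degenerate case $R=0$: then $P_{\mathrm{ev}}^2=P_{\mathrm{odd}}^2=0$, so $P_{\mathrm{ev}}=P_{\mathrm{odd}}=0$ and both equalities hold trivially, for either value of $\gamma$. So assume $R\neq 0$; then $P_{\mathrm{ev}},P_{\mathrm{odd}}\neq 0$ as well. Now from $P_{\mathrm{ev}}^2=P_{\mathrm{odd}}^2$ we get $P_{\mathrm{ev}}=\varepsilon P_{\mathrm{odd}}$ for a unique $\varepsilon\in\{-1,1\}$. Multiplying by $P_{\mathrm{odd}}$ gives $P_{\mathrm{ev}}P_{\mathrm{odd}}=\varepsilon P_{\mathrm{odd}}^2=\varepsilon R$. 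Hence $P_{\mathrm{ev}}P_{\mathrm{odd}}=\gamma R$ holds precisely when $\varepsilon R=\gamma R$, i.e.\ (since $R\neq 0$) precisely when $\varepsilon=\gamma$, i.e.\ precisely when $P_{\mathrm{ev}}=\gamma P_{\mathrm{odd}}$. This is exactly the asserted equivalence. The only input beyond elementary algebra is the identity $P_{\mathrm{ev}}^2=P_{\mathrm{odd}}^2=R$, which is~\eqref{A2B2propgeneqnalt}, and the fact that $P_{\mathrm{ev}}P_{\mathrm{odd}}$ is literally the left product in the statement — this requires noting that partitioning $\{-1,0\}^d$ by the parity of the coordinate sum recovers the full product, which is immediate.

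There is essentially no obstacle here; the lemma is a formal consequence of Proposition~\ref{A2B2propgen}, and the main thing to be careful about is bookkeeping: making sure the indexing sets in $P_{\mathrm{ev}}$, $P_{\mathrm{odd}}$, and the left-hand product of the statement match those in~\eqref{A2B2propgeneqn}--\eqref{A2B2propgeneqnalt} verbatim (same $\bm\alpha$-ranges, same shifted boxes $v-(\mathbf a-\mathbf 1)\odot\bm\alpha+[(\mathbf 1+2\bm\alpha)\odot\mathbf a]$), so that one may quote Proposition~\ref{A2B2propgen} directly rather than re-deriving anything. I would also remark, for the reader's orientation, that this is the abstract analogue of the passage from Proposition~\ref{A2B2proposition} (equations squared, equal up to sign) to Definition~\ref{coherentkashaevdef} (the two equivalent forms of coherence), and indeed the proof is the same two-line argument.
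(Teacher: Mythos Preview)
Your proposal is correct and follows exactly the approach the paper takes: the paper's proof is the single line ``This follows immediately from Proposition~\ref{A2B2propgen},'' and your argument simply spells out that deduction (introducing $P_{\mathrm{ev}}$, $P_{\mathrm{odd}}$, $R$, using $P_{\mathrm{ev}}^2=P_{\mathrm{odd}}^2=R$ from~\eqref{A2B2propgeneqnalt}, and handling the degenerate case $R=0$).
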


\begin{proof}This follows immediately from Proposition~\ref{A2B2propgen}.
\end{proof}

\begin{Definition} \label{fa_set_def}Given $\mathbf{a} = (a_1, \dots, a_d) \in \Z^d_{\ge 1}$ and $1 \le i \le d$, let
\begin{gather*}
F^{\mathbf{a}}_i = \big\{v + [\mathbf{a} - \mathbf{1}_i] \colon v \in \Z^d\big\}
\end{gather*}
denote the set of boxes of size $a_1 \times \cdots \times a_{i - 1} \times (a_i - 1) \times a_{i + 1} \times \cdots \times a_d$ in $\Z^d$. Set
\begin{gather*}
\fab = \bigcup_{i = 1}^d F^{\mathbf{a}}_i.
\end{gather*}
Given an array $\mathbf{\tilde{x}} = (x_s)_{s \in \Z^d \cup \fab}$ with $\mathbf{x} = (x_s)_{s \in \Z^d}$, $i \in \{1, \dots, d\}$, and $v \in \Z^d$, we remark that $\mathbf{x}_{v + [\mathbf{a} - \mathbf{1}_i]}$ (with $\mathbf{x}$ bold) refers to the array defined in Definition~\ref{gensectionmaindefs}, whereas $x_{v + [\mathbf{a} - \mathbf{1}_i]}$ (with $x$ not bold) refers to the component of $\mathbf{\tilde{x}}$ indexed by $v + [\mathbf{a} - \mathbf{1}_i] \in \fab$.
\end{Definition}

\begin{Definition}
For $\mathbf{a} \in \Z_{\ge 1}^d$, define $[\mathbf{a}^*]$ by
\begin{gather*}
[\mathbf{a}^*] = \left( [\mathbf{a}] \setminus \{\mathbf{a}\} \right) \cup \bigcup_{i = 1}^d \{ [\mathbf{a} - \mathbf{1}_i]\}.
\end{gather*}
In other words, the set $[\mathbf{a}^*]$ consists of $[\mathbf{a}] \setminus \{\mathbf{a}\} \subset \Z^d$, along with the sets $[\mathbf{a} - \mathbf{1}_i] \in \fab$ for $i = 1, \dots, d$.%
\end{Definition}

We want to develop a generalization of the K-hexahedron equations for arrays indexed by $\Z^d \cup \fab$. Suppose that $f \in \C[\mathbf{z}_{[\mathbf{a}]}]$ is a polynomial satisfying conditions~(1)--(2) from Proposition~\ref{A2B2propgen}, with the polynomials $f_1, \dots, f_d$ from condition~(2) fixed. Let $\gtf$ and $\hof$ be the coefficients of $z_{\mathbf{a}}^2$ and $z_{\mathbf{a}}$ in $f$, viewed as a polynomial in $z_{\mathbf{a}}$. We consider arrays $\mathbf{\tilde{x}} = (x_s) \in \C^{\Z^d \cup \fab}$ such that
\begin{gather}
\label{newvertexgen} x_{v + \mathbf{a}} = \frac{-\hof(\mathbf{x}_{v + [\mathbf{a}]}) + \prod\limits_{i = 1}^d x_{v + [\mathbf{a} - \mathbf{1}_i]}}{2\gtf(\mathbf{x}_{v + [\mathbf{a}]})} \qquad \text{for all $v \in \Z^d$},\\
\label{newfacegen} x_{v + \mathbf{1}_i + [\mathbf{a} - \mathbf{1}_i]} = r_i(x_{v + s} \colon s \in [\mathbf{a}^*]) \qquad \text{for $i = 1, \dots, d$} \text{ and for all $v \in \Z^d$},\\
\label{squarecond} x_{v + [\mathbf{a} - \mathbf{1}_i]}^2 = f_i(\mathbf{x}_{v + [\mathbf{a} - \mathbf{1}_i]}) \qquad \text{for $i = 1, \dots, d$} \text{ and for all $v \in \Z^d$},
\end{gather}
where $r_1,\dots,r_d$ are some rational functions in the variables $z_s$ for $s \in [\mathbf{a}^*]$. Note that if $\mathbf{\tilde{x}}$ satisfies conditions~\eqref{newvertexgen} and~\eqref{squarecond}, then by the quadratic formula, its restriction $\mathbf{x} = (x_s)_{s \in \Z^d}$ satisfies~$f$. In the following definition, we formulate the properties that our tuple of rational functions $(r_1,\dots,r_d)$ should have in order for the subsequent developments to follow. %

\begin{Definition} \label{def_of_adapted}Let $\mathbf{a} = (a_1, \dots, a_d) \in \Z^d_{\ge 1}$, and let $f \in \C[\mathbf{z}_{[\mathbf{a}]}]$ be a polynomial satisfying conditions (1)--(2) from Proposition~\ref{A2B2propgen}. Fix the polynomials $f_1, \dots, f_d$ from condition~(2) of Proposition~\ref{A2B2propgen}. Let $g$ be the coefficient of $z_{\mathbf{a}}^2$ in~$f$, viewed as a polynomial in $z_{\mathbf{a}}$. For $i = 1, \dots, d$, let $r_i = \frac{p_i}{q_i}$ be rational functions in the variables $z_s$ for $s \in [\mathbf{a}^*]$, with $p_i$, $q_i$ polynomials in these variables. We say that $(r_1, \dots, r_d)$ is \emph{adapted to $(f; f_1, \dots, f_d)$} if there exist signs $\beta_1,\dots,\beta_d \in \{-1,1\}$ such that the following properties hold for $i = 1, \dots, d$:
\begin{itemize}\itemsep=0pt
\item the denominator $q_i$ of $r_i$ is of the form
\begin{gather*} \label{denom_form_of_ri}
q_i = g^{b_i} \prod_{j \in \{1, \dots, d\} \setminus \{ i \}} z_{[\mathbf{a} - \mathbf{1}_j]}^{b_{ij}},
\end{gather*}
where $b_i \in \Z_{\ge 0}$ and $b_{ij} \in \{0, 1\}$,
\item for all arrays $\mathbf{\tilde{x}} = (x_s) \in \C^{\Z^d \cup \fab}$ satisfying~\eqref{newvertexgen}, \eqref{squarecond}, and
\begin{gather}
\label{nonzeroqi} q_i(x_{v + s} \colon s \in [\mathbf{a}^*]) \not= 0 \qquad \text{for all $v \in \Z^d$},\\
\label{nonzeromaind} g(\mathbf{x}_{v + [\mathbf{a}]}) \not= 0 \qquad \text{for all $v \in \Z^d$},
\end{gather}
the following condition holds:
\begin{gather} \label{ada_expression_for_ri}
r_i(x_{v + s} \colon s \in [\mathbf{a}^*]) = \beta_i \frac{\left( \frac{\partial f}{\partial z_{\mathbf{a}}} \right)(\mathbf{x}_{v + a_i \mathbf{1}_i + [\mathbf{a} \odot (\mathbf{1} - 2 \mathbf{1}_i)]})}{\prod\limits_{j \in \{1, \dots, d\} - \{i\}} x_{v + [\mathbf{a} - \mathbf{1}_j]}}.
\end{gather}
\end{itemize}
Note that one can obtain a tuple $(r_1,\dots,r_d)$ adapted to $(f; f_1,\dots,f_d)$ by choosing the signs $\beta_1,\dots,\beta_d \in \{-1,1\}$ and using condition~\eqref{newvertexgen} to replace all instances of $x_{v + \mathbf{a}}$ in~\eqref{ada_expression_for_ri}.
\end{Definition}

In the following proposition, we show that with $(r_1,\dots,r_d)$ adapted to $(f ; f_1, \dots, f_d)$, the recurrence~\eqref{newvertexgen}--\eqref{newfacegen} ``propagates'' the condition~\eqref{squarecond}.

\begin{Proposition}Let $\mathbf{a} = (a_1, \dots, a_d) \in \Z^d_{\ge 1}$, and let $f \in \C[\mathbf{z}_{[\mathbf{a}]}]$ be a polynomial satisfying conditions $(1)$--$(2)$ from Proposition~{\rm \ref{A2B2propgen}}. Fix the polynomials $f_1, \dots, f_d$ from condition~$(2)$ of Proposition~{\rm \ref{A2B2propgen}}. Let $g$ be the coefficient of $z_{\mathbf{a}}^2$ in $f$, viewed as a polynomial in~$z_{\mathbf{a}}$. Let $(r_1,\dots,r_d)$ be a $d$-tuple of rational functions in the variables $z_s$ for $s \in [\mathbf{a}^*]$ adapted to $(f; f_1,\dots,f_d)$. Fix $v \in \Z^d$. Let $\mathbf{\tilde{x}} = (x_s) \in \C^{\Z^d \cup \fab}$ be an array satisfying conditions~\eqref{newvertexgen}--\eqref{newfacegen}, \eqref{nonzeroqi}--\eqref{nonzeromaind}, and
\begin{gather*}
x_{v + [\mathbf{a} - \mathbf{1}_i]}^2 = f_i(\mathbf{x}_{v + [\mathbf{a} - \mathbf{1}_i]}) \qquad \text{for $i = 1,\dots,d$}.
\end{gather*}
Then
\begin{gather*}
x_{v + \mathbf{1}_i + [\mathbf{a} - \mathbf{1}_i]}^2 = f_i(\mathbf{x}_{v + \mathbf{1}_i + [\mathbf{a} - \mathbf{1}_i]}) \qquad \text{for $i = 1,\dots,d$}.
\end{gather*}
\end{Proposition}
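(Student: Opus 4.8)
The plan is to reduce the claim to the defining property \eqref{ada_expression_for_ri} of an adapted tuple, combined with Proposition~\ref{A2B2propgen}. Fix $i \in \{1, \dots, d\}$ and set $w = v + \mathbf{1}_i$. By \eqref{newfacegen}, $x_{w + [\mathbf{a} - \mathbf{1}_i]} = x_{v + \mathbf{1}_i + [\mathbf{a} - \mathbf{1}_i]} = r_i(x_{v + s} \colon s \in [\mathbf{a}^*])$. Since $\mathbf{\tilde{x}}$ satisfies \eqref{nonzeroqi}--\eqref{nonzeromaind}, the adaptedness hypothesis gives
\begin{gather*}
x_{w + [\mathbf{a} - \mathbf{1}_i]} = \beta_i \frac{\left( \frac{\partial f}{\partial z_{\mathbf{a}}} \right)(\mathbf{x}_{v + a_i \mathbf{1}_i + [\mathbf{a} \odot (\mathbf{1} - 2 \mathbf{1}_i)]})}{\prod\limits_{j \neq i} x_{v + [\mathbf{a} - \mathbf{1}_j]}}.
\end{gather*}
Squaring, we get $x_{w + [\mathbf{a} - \mathbf{1}_i]}^2 = \left( \frac{\partial f}{\partial z_{\mathbf{a}}} \right)^2(\mathbf{x}_{v + a_i \mathbf{1}_i + [\mathbf{a} \odot (\mathbf{1} - 2 \mathbf{1}_i)]}) \big/ \prod_{j \neq i} x_{v + [\mathbf{a} - \mathbf{1}_j]}^2$.

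Next I would evaluate the numerator and denominator using that $\mathbf{x}$ (the restriction of $\mathbf{\tilde{x}}$ to $\Z^d$) satisfies $f$. Since $\mathbf{\tilde{x}}$ satisfies \eqref{newvertexgen} and \eqref{squarecond}, its restriction $\mathbf{x}$ satisfies $f$ by the quadratic formula (as remarked before Definition~\ref{def_of_adapted}); hence by \eqref{prod_fi_identity} applied at the appropriate box, $\left( \frac{\partial f}{\partial z_{\mathbf{a}}} \right)^2(\mathbf{x}_{v + a_i \mathbf{1}_i + [\mathbf{a} \odot (\mathbf{1} - 2 \mathbf{1}_i)]}) = (f_1 \cdots f_d)(\mathbf{x}_{v + a_i \mathbf{1}_i + [\mathbf{a} \odot (\mathbf{1} - 2 \mathbf{1}_i)]})$. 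The key bookkeeping point is that the box $v + a_i \mathbf{1}_i + [\mathbf{a} \odot (\mathbf{1} - 2 \mathbf{1}_i)]$ is exactly the $a_1 \times \cdots \times a_d$ box whose $z_{\mathbf{a}}$-corner is located at $w + [\mathbf{a} - \mathbf{1}_i]$ in its $i$th direction and which agrees with $v + [\mathbf{a}]$ in all other directions. Because each $f_j$ is invariant under the relevant coordinate flips $\pi_{\mathbf{a} - \mathbf{1}_j, k}$ (condition (2) of Proposition~\ref{A2B2propgen}), reindexing gives, for $j \neq i$, $f_j(\mathbf{x}_{v + a_i \mathbf{1}_i + [\mathbf{a} \odot (\mathbf{1} - 2 \mathbf{1}_i)]}) = f_j(\mathbf{x}_{v + [\mathbf{a} - \mathbf{1}_j]}) = x_{v + [\mathbf{a} - \mathbf{1}_j]}^2$, using \eqref{squarecond} at $v$ for the last equality; and $f_i$ evaluated on that box equals $f_i(\mathbf{x}_{w + [\mathbf{a} - \mathbf{1}_i]})$, again by the $\pi_{\mathbf{a} - \mathbf{1}_i, k}$-invariance since the box, viewed with its $i$th coordinate read off from $w$, projects to $w + [\mathbf{a} - \mathbf{1}_i]$.

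Substituting these back, the factors $\prod_{j \neq i} x_{v + [\mathbf{a} - \mathbf{1}_j]}^2$ in the numerator cancel against the denominator (using \eqref{squarecond} at $v$ and \eqref{nonzeroqi} to know these are nonzero, so the division is legitimate), leaving $x_{w + [\mathbf{a} - \mathbf{1}_i]}^2 = f_i(\mathbf{x}_{w + [\mathbf{a} - \mathbf{1}_i]})$, which is the desired identity $x_{v + \mathbf{1}_i + [\mathbf{a} - \mathbf{1}_i]}^2 = f_i(\mathbf{x}_{v + \mathbf{1}_i + [\mathbf{a} - \mathbf{1}_i]})$. Running over $i = 1, \dots, d$ completes the proof.

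The main obstacle I anticipate is purely notational: carefully matching the box $v + a_i\mathbf{1}_i + [\mathbf{a} \odot (\mathbf{1} - 2\mathbf{1}_i)]$ appearing in the adaptedness condition with the boxes $v + [\mathbf{a}-\mathbf{1}_j]$ (for $j \ne i$) and $w + [\mathbf{a}-\mathbf{1}_i]$, and checking that the flip-invariance of each $f_j$ is exactly what is needed to identify the evaluations. This is a finite, mechanical coordinate-chasing argument with no conceptual difficulty, but it is where care is required; the rest follows formally from \eqref{prod_fi_identity}, \eqref{squarecond}, and the nonvanishing hypotheses \eqref{nonzeroqi}--\eqref{nonzeromaind}.
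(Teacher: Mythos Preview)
Your proposal is correct and follows essentially the same argument as the paper's proof: square the adaptedness expression \eqref{ada_expression_for_ri}, use the discriminant identity \eqref{prod_fi_identity} (together with $f=0$ at the box $v+[\mathbf{a}]$, which follows from \eqref{newvertexgen} and \eqref{squarecond} at $v$) to rewrite the numerator as $\prod_j f_j$, reindex via the flip-invariance of the $f_j$, and cancel the $j\neq i$ factors against the denominator using \eqref{squarecond} at $v$. The paper presents this computation slightly more tersely but with the same logical structure.
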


\begin{proof}By identity~\eqref{prod_fi_identity},
\begin{gather*}
\left( \frac{\partial f}{\partial z_{\mathbf{a}}} \right)^2(\mathbf{x}_{v + a_i \mathbf{1}_i + [\mathbf{a} \odot (\mathbf{1} - 2 \mathbf{1}_i)]}) = (f_1 \cdots f_d)(\mathbf{x}_{v + a_i \mathbf{1}_i + [\mathbf{a} \odot (\mathbf{1} - 2 \mathbf{1}_i)]})\\
\hphantom{\left( \frac{\partial f}{\partial z_{\mathbf{a}}} \right)^2(\mathbf{x}_{v + a_i \mathbf{1}_i + [\mathbf{a} \odot (\mathbf{1} - 2 \mathbf{1}_i)]})}{}
= f_i(\mathbf{x}_{v + \mathbf{1}_i + [\mathbf{a} - \mathbf{1}_i]}) \prod_{j \in \{1, \dots, d\} - \{i\}} f_j(\mathbf{x}_{v + [\mathbf{a} - \mathbf{1}_j]}).
\end{gather*}
Hence,
\begin{gather*}
x_{v + \mathbf{1}_i + [\mathbf{a} - \mathbf{1}_i]}^2 = \beta_i^2 \frac{\left( \frac{\partial f}{\partial z_{\mathbf{a}}} \right)^2(\mathbf{x}_{v + a_i \mathbf{1}_i + [\mathbf{a} \odot (\mathbf{1} - 2 \mathbf{1}_i)]})}{\prod\limits_{j \in \{1, \dots, d\} - \{i\}} x_{v + [\mathbf{a} - \mathbf{1}_j]}^2}\\
\hphantom{x_{v + \mathbf{1}_i + [\mathbf{a} - \mathbf{1}_i]}^2}{}
= \frac{f_i(\mathbf{x}_{v + \mathbf{1}_i + [\mathbf{a} - \mathbf{1}_i]}) \prod\limits_{j \in \{1, \dots, d\} - \{i\}} f_j(\mathbf{x}_{v + [\mathbf{a} - \mathbf{1}_j]})}{\prod\limits_{j \in \{1, \dots, d\} - \{i\}} f_j(\mathbf{x}_{v + [\mathbf{a} - \mathbf{1}_j]})}= f_i(\mathbf{x}_{v + \mathbf{1}_i + [\mathbf{a} - \mathbf{1}_i]}),
\end{gather*}
as desired.
\end{proof}

We now describe $d$-tuples of rational functions adapted to $(f;f_1,\dots,f_d)$ for the four polynomials $f$ in the Examples~\ref{kash_sat_ex}--\ref{2d_sat_ex}.

\begin{Example} \label{kash_ada_ex}
Continuing with Example~\ref{kash_sat_ex}, let us write
\begin{gather*}
z_{i_1\left( i_2 + \half \right)\left( i_3 + \half \right)} = z_{(i_1,i_2,i_3) + [\mathbf{a} - \mathbf{1}_1]},\qquad
z_{\left( i_1 + \half \right) i_2 \left( i_3 + \half \right)} = z_{(i_1,i_2,i_3) + [\mathbf{a} - \mathbf{1}_2]},\\
z_{\left( i_1 + \half \right)\left( i_2 + \half \right) i_3} = z_{(i_1,i_2,i_3) + [\mathbf{a} - \mathbf{1}_3]}.
\end{gather*}
Set
\begin{gather*}
r_1(z_s \colon s \in [\mathbf{a}^*]) = \frac{4 \zhzh \zhhz + \zzhh \zozz}{\zzzz},\qquad
r_2(z_s \colon s \in [\mathbf{a}^*]) = \frac{\zzhh \zhhz + 4 \zhzh \zzoz}{4 \zzzz},\\
r_3(z_s \colon s \in [\mathbf{a}^*]) = \frac{\zzhh \zhzh + 4 \zhhz \zzzo}{4 \zzzz}.
\end{gather*}
It can be checked that $(r_1, r_2, r_3)$ is adapted to $(f;f_1,f_2,f_3)$ by following the construction at the end of Definition~\ref{def_of_adapted} with $\beta_1 = \beta_2 = \beta_3 = -1$ and using condition~\eqref{squarecond}. Note that~$r_1$,~$r_2$,~$r_3$ matches the right-hand-sides of~\eqref{likehex1d}--\eqref{likehex3d} and the K-hexahedron equations~\eqref{likehex1d}--\eqref{likehex4d} and~\eqref{khexspec} are the same as conditions~\eqref{newvertexgen}--\eqref{squarecond} if each $z_{v + [\mathbf{a} - \mathbf{1}_1]}$ for $v \in \Z^3$ is rescaled by a factor of $4$.
\end{Example}

\begin{Example} \label{sholo_ada_ex}
Continuing with Example~\ref{sholo_sat_ex}, let us write
\begin{gather*}
z_{i_1\left( i_2 + \half \right)} = z_{(i_1,i_2) + [\mathbf{a} - \mathbf{1}_1]},\qquad
z_{\left( i_1 + \half \right) i_2} = z_{(i_1,i_2) + [\mathbf{a} - \mathbf{1}_2]}.
\end{gather*}
Set
\begin{gather*}
r_1(z_s \colon s \in [\mathbf{a}^*]) = z_{0 \frac{1}{2}} + 8 z_{\frac{1}{2} 0},\qquad
r_2(z_s \colon s \in [\mathbf{a}^*]) = z_{\frac{1}{2} 0} + \frac{1}{4} z_{0 \frac{1}{2}}.
\end{gather*}
It can be checked that $(r_1, r_2)$ is adapted to $(f; f_1,f_2)$ by following the construction at the end of Definition~\ref{def_of_adapted} with $\beta_1 = \beta_2 = -1$ and using condition~\eqref{squarecond}. Note that $r_1$, $r_2$ matches the right-hand-sides of~\eqref{2dh2}--\eqref{2dh3} and the conditions~\eqref{2dh1}--\eqref{2dhs2} are the same as conditions~\eqref{newvertexgen}--\eqref{squarecond} if each $z_{v + [\mathbf{a} - \mathbf{1}_1]}$ for $v \in \Z^2$ is rescaled by a factor of $4 \sqrt{2}$.
\end{Example}

\begin{Example} \label{1d_ada_ex}Continuing with Example~\ref{1d_sat_ex}, let us write
\begin{gather*}
w_{i + 1} = z_{i + [\mathbf{a} - \mathbf{1}_1]}.
\end{gather*}
Set
\begin{gather*}
r_1(z_s \colon s \in [\mathbf{a}^*]) = \frac{\alpha_3^2 z_1^6 + \alpha_2 \alpha_3 z_0 z_1^4 z_2 + 2 \alpha_3 z_0^3 z_2^3 + w_1^2 + \big({-}2 \alpha_3 z_1^3 - \alpha_2 z_0 z_1 z_2\big) w_1}{2 z_0^3}.
\end{gather*}
It can be checked that $(r_1)$ is adapted to $(f; f_1)$ by following the construction at the end of Definition~\ref{def_of_adapted} with $\beta_1 = 1$ and using condition~\eqref{squarecond}. Note that $r_1$ matches the right-hand side of equation~\eqref{1d_f_hex_prop}, and conditions~\eqref{1d_v_hex_prop}--\eqref{1d_interval_spec} are the same as conditions~\eqref{newvertexgen}--\eqref{squarecond}.
\end{Example}

\begin{Example} \label{2d_ada_ex}Continuing with Example~\ref{2d_sat_ex}, let us write
\begin{gather*}
w_{i_1(i_2 +1)} = z_{(i_1, i_2) + [\mathbf{a} - \mathbf{1}_1]},\qquad
w_{\left( i_1 + \half \right) \left( i_2 +\half \right)} = z_{(i_1, i_2) + [\mathbf{a} - \mathbf{1}_2]}.
\end{gather*}
Set
\begin{gather*}
r_1(z_s \colon s \in [\mathbf{a}^*]) = \frac{z_{10} w_{01} + w_{\half \half}}{z_{00}},\\
r_2(z_s \colon s \in [\mathbf{a}^*]) = \frac{z_{01}(\alpha_1 z_{01} z_{10} + \alpha_2 z_{00} z_{11}) w_{01} + \big(\alpha_1 z_{01}^2 + 2 z_{00} z_{02}\big) w_{\half \half}}{2 z_{00}^2}.
\end{gather*}
It can be checked that $(r_1,r_2)$ is adapted to $(f; f_1,f_2)$ by following the construction at the end of Definition~\ref{def_of_adapted} with $\beta_1 = \beta_2 = -1$ and using condition~\eqref{squarecond}. Note that $r_1$, $r_2$ matches the right-hand side of equation~\eqref{2d_1f_hex_prop}--\eqref{2d_2f_hex_prop}, and conditions~\eqref{2d_v_hex_prop}--\eqref{2d_square_face_spec} are the same as conditions~\eqref{newvertexgen}--\eqref{squarecond}.
\end{Example}

\begin{Lemma} \label{runinarbdirection}Let $\mathbf{a} = (a_1, \dots, a_d) \in \Z^d_{\ge 1}$. Let $f \in \C[\mathbf{z}_{[\mathbf{a}]}]$ be a polynomial that is irreducible over $\C$ and satisfies conditions $(1)$--$(2)$ from Proposition~{\rm \ref{A2B2propgen}}. Fix the polynomials $f_1, \dots, f_d$ from condition~$(2)$ of Proposition~{\rm \ref{A2B2propgen}}. Let $(r_1, \dots, r_d)$ be a tuple of rational functions in the variables $z_s$ for $s \in [\mathbf{a}^*]$ that is adapted to $(f; f_1,\dots,f_d)$. Then for all $\bm{\alpha} \in \{-1, 1\}^d$, there exists a unique sign $\gamma_{\bm{\alpha}} \in \{-1,1\}$ such the following condition holds for all arrays $\mathbf{\tilde{x}} = (x_s) \in \C^{\Z^d \cup \fab}$ satisfying~\eqref{newvertexgen}--\eqref{squarecond} and~\eqref{nonzeroqi}--\eqref{nonzeromaind}:
\begin{gather} \label{arbdirectionderivative}
\frac{\partial f}{\partial z_{\mathbf{a}}}(\mathbf{x}_{v + [\bm{\alpha} \odot \mathbf{a}]}) = \gamma_{\bm{\alpha}} \prod_{i = 1}^d x_{v + [\bm{\alpha} \odot (\mathbf{a} - \mathbf{1}_i)]} \qquad \text{for all $v \in \Z^d$}.
\end{gather}
\end{Lemma}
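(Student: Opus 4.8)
The plan is to reduce the statement to a single algebraic identity in the polynomial ring $\C[\mathbf{z}_{[\mathbf{a}]} \cup \{z_{[\mathbf{a}-\mathbf{1}_i]}\colon i=1,\dots,d\}]$, much as Lemma~\ref{cornersofcubelemma} was handled via the ideal-membership computation in its proof. First I would fix $\bm{\alpha}\in\{-1,1\}^d$ and apply the flip $z_{\mathbf{i}}\mapsto z_{\bm{\alpha}\odot\mathbf{i}}$ (a composition of the involutions $\pi_{\mathbf{a},i}$ for those $i$ with $\alpha_i=-1$), so that the case of general $\bm{\alpha}$ is reduced to the case $\bm{\alpha}=\mathbf{1}$. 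Here conditions~(1)--(2) are exactly what makes this reduction legitimate: $f$ and each $f_i$ are invariant under the flips, and by the definition of ``adapted'' the reflected recurrence is again of the form~\eqref{newvertexgen}--\eqref{squarecond} with a reflected tuple $(r_i)$ that is adapted to $(f;f_1,\dots,f_d)$ (this is where I would invoke Proposition~\ref{reversed_K_hex}-style reasoning in the abstract setting, which should already be packaged into Definition~\ref{def_of_adapted}). So it remains to produce the sign $\gamma_{\mathbf{1}}$ and prove~\eqref{arbdirectionderivative} for $\bm{\alpha}=\mathbf{1}$.

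For $\bm{\alpha}=\mathbf{1}$, I would argue as follows. By Definition~\ref{def_of_adapted}, property~\eqref{ada_expression_for_ri} (applied with $v$ shifted so that the box $v+a_i\mathbf{1}_i+[\mathbf{a}\odot(\mathbf{1}-2\mathbf{1}_i)]$ becomes $v+[\mathbf{a}]$) together with the recurrences~\eqref{newvertexgen}--\eqref{newfacegen} gives, for each $i$,
\[
x_{v+[\mathbf{a}-\mathbf{1}_i]} \prod_{j\ne i} x_{v+[\mathbf{a}-\mathbf{1}_j]} = \beta_i\,\frac{\partial f}{\partial z_{\mathbf{a}}}(\mathbf{x}_{v+[\mathbf{a}]})\cdot\frac{\prod_{j\ne i} x_{v+[\mathbf{a}-\mathbf{1}_j]}}{\prod_{j\ne i} x_{v+[\mathbf{a}-\mathbf{1}_j]}},
\]
so that $\prod_{i=1}^d x_{v+[\mathbf{a}-\mathbf{1}_i]}$ equals $\beta_i\,\partial f/\partial z_{\mathbf{a}}(\mathbf{x}_{v+[\mathbf{a}]})$ for every $i$; in particular $\beta_1=\dots=\beta_d$, and setting $\gamma_{\mathbf{1}}:=\beta_1$ yields~\eqref{arbdirectionderivative} for $\bm{\alpha}=\mathbf{1}$. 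Cleaner: I would directly manipulate the adapted identity~\eqref{ada_expression_for_ri} for a single index (say $i=1$), which after clearing the common factor $\prod_{j\ne 1} x_{v+[\mathbf{a}-\mathbf{1}_j]}$ (nonzero by~\eqref{squarecond}, using that~\eqref{nonzeroqi}--\eqref{nonzeromaind} propagate the squares condition forward) gives exactly $\prod_{i=1}^d x_{v+[\mathbf{a}-\mathbf{1}_i]} = \beta_1\,\partial f/\partial z_{\mathbf{a}}(\mathbf{x}_{v+[\mathbf{a}]})$. Uniqueness of $\gamma_{\bm{\alpha}}$ is immediate once one exhibits a single array $\mathbf{\tilde{x}}$ satisfying~\eqref{newvertexgen}--\eqref{squarecond} and~\eqref{nonzeroqi}--\eqref{nonzeromaind} for which $\partial f/\partial z_{\mathbf{a}}(\mathbf{x}_{v+[\bm{\alpha}\odot\mathbf{a}]})$ and $\prod_i x_{v+[\bm{\alpha}\odot(\mathbf{a}-\mathbf{1}_i)]}$ are both nonzero; genericity plus the irreducibility of $f$ (which guarantees $\partial f/\partial z_{\mathbf{a}}$ does not vanish identically on the variety $f=0$, so $D=f_1\cdots f_d\not\equiv 0$ there) supplies such an array.

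The one subtlety, and the step I expect to take the most care, is verifying that the reflection in the first paragraph genuinely lands one back in the ``adapted'' framework with a possibly different but still valid sign $\gamma_{\bm{\alpha}}$ — i.e., that applying $\pi_{\mathbf{a},i}$ to the whole recurrence~\eqref{newvertexgen}--\eqref{squarecond} produces a recurrence of the same shape. Condition~(2) handles $f$, $f_i$, and (since $g$ is the leading coefficient of the $\pi_{\mathbf{a},i}$-invariant $f$ in $z_{\mathbf{a}}$, and $z_{\mathbf{a}}\mapsto z_{\mathbf{a}-a_i\mathbf{1}_i}$... ) one must check that $g$ transforms into the leading coefficient of $f$ in the new ``top'' variable; this is a direct consequence of condition~(1) applied to the reflected polynomial, and I would spell it out for one reflection $\pi_{\mathbf{a},i}$ and then compose. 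Once this is in place, the sign $\gamma_{\bm{\alpha}}$ is defined to be the $\gamma_{\mathbf{1}}$-sign of the reflected data, and~\eqref{arbdirectionderivative} follows by unwinding the reflection. I would then remark that in all of Examples~\ref{kash_ada_ex}--\ref{2d_ada_ex} one has $\gamma_{\bm{\alpha}}=\prod_i\beta_i^{?}$ made explicit, recovering Lemma~\ref{cornersofcubelemma} (where $\gamma_{\bm{\alpha}}=+1$ for $\bm{\alpha}=\pm\mathbf{1}$ and $-1$ otherwise) as the special case $\mathbf{a}=(1,1,1)$.
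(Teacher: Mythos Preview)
Your reflection strategy has a real circularity problem. The ``adapted'' condition~\eqref{ada_expression_for_ri} encodes exactly one reflection: it tells you that $\partial f/\partial z_{\mathbf{a}}$ evaluated at the box \emph{flipped in direction $i$} equals $\beta_i$ times the product of face variables. (In your second paragraph you misidentify this as the $\bm{\alpha}=\mathbf{1}$ case; in fact it is the $\bm{\alpha}=\mathbf{1}-2\mathbf{1}_i$ case, and the genuine $\bm{\alpha}=\mathbf{1}$ case comes directly from~\eqref{newvertexgen}, giving $\gamma_{\mathbf{1}}=1$, not $\beta_1$.) To push to two or more reflections you want the once-reflected system to again be ``adapted'', i.e., to satisfy an analogue of~\eqref{ada_expression_for_ri} for a \emph{further} reflection. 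But that is precisely the two-reflection instance of~\eqref{arbdirectionderivative} you are trying to prove. Nothing in Definition~\ref{def_of_adapted} packages this; contrary to your parenthetical, there is no abstract Proposition~\ref{reversed_K_hex} here (indeed Example~\ref{sholo_sign_ex} has $\gamma_{-\mathbf{1}}=-1$, so the recurrence is \emph{not} invariant under the full reflection).

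The paper breaks this circularity with an auxiliary algebraic lemma (Lemma~\ref{derivativeproductsign}): for any $j\neq k$ there is a sign $\alpha_{jk}$ with
\[
\frac{\partial f}{\partial z_{\mathbf{a}}}\,\frac{\partial f}{\partial z_{\mathbf{a}\odot(\mathbf{1}-\mathbf{1}_j-\mathbf{1}_k)}}
\;\equiv\;
\alpha_{jk}\,\frac{\partial f}{\partial z_{\mathbf{a}\odot(\mathbf{1}-\mathbf{1}_j)}}\,\frac{\partial f}{\partial z_{\mathbf{a}\odot(\mathbf{1}-\mathbf{1}_k)}}
\pmod{f},
\]
proved by observing that the squares of both sides are congruent mod $f$ (via the discriminant factorization $D=f_1\cdots f_d$ from condition~(2)), and then using irreducibility of $f$ to deduce that one of the two sign choices holds. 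This identity lets one express the derivative at a doubly-reflected corner as a ratio of derivatives at singly- and zero-reflected corners, on the locus $f=0$. The proof then inducts on the number of $-1$'s in $\bm{\alpha}$: the base cases $\gamma_{\mathbf{1}}=1$ and $\gamma_{\mathbf{1}-2\mathbf{1}_i}=\beta_i$ come from~\eqref{newvertexgen} and~\eqref{ada_expression_for_ri} respectively, and the inductive step sets $\gamma_{\bm{\alpha}}=\alpha_{jk}\,\gamma_{\bm{\alpha}+2(\mathbf{1}_j+\mathbf{1}_k)}\,\gamma_{\bm{\alpha}+2\mathbf{1}_j}\,\gamma_{\bm{\alpha}+2\mathbf{1}_k}$. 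This is the missing idea in your outline; without it the reduction to $\bm{\alpha}=\mathbf{1}$ cannot be carried out.
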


\begin{Definition} \label{definition_of_prop_signs}For $f$ and $(r_1,\dots,r_d)$ as in Lemma~\ref{runinarbdirection}, we call the signs $(\gamma_{\bm{\alpha}})_{\bm{\alpha} \in \{-1,1\}^d}$ given in Lemma~\ref{runinarbdirection} the \emph{propagation signs} corresponding to $(f; f_1,\dots,f_d;r_1,\dots,r_d)$.
\end{Definition}

The proof of Lemma~\ref{runinarbdirection} relies on the following lemma.

\begin{Lemma} \label{derivativeproductsign}Let $\mathbf{a} = (a_1, \dots, a_d) \in \Z^d_{\ge 1}$. Let $f \in \C[\mathbf{z}_{[\mathbf{a}]}]$ be a polynomial that is irreducible over $\C$ and satisfies conditions $(1)$--$(2)$ from Proposition~{\rm \ref{A2B2propgen}}. Let $j \not= k \in \{1, \dots, d\}$. Then there exists $\alpha_{jk} \in \{-1,1\}$ such that for all $\bm{\alpha} \in \{0, 1\}^d$,
\begin{gather*}
\frac{\partial f}{\partial z_{\mathbf{a} \odot \bm{\alpha}}} \frac{\partial f}{\partial z_{\mathbf{a} \odot \bm{\alpha} + (\mathbf{a} - 2 \mathbf{a} \odot \bm{\alpha}) \odot (\mathbf{1}_j + \mathbf{1}_k)}} - \alpha_{jk} \frac{\partial f}{\partial z_{\mathbf{a} \odot \bm{\alpha} + (\mathbf{a} - 2 \mathbf{a} \odot \bm{\alpha}) \odot \mathbf{1}_j}} \frac{\partial f}{\partial z_{\mathbf{a} \odot \bm{\alpha} + (\mathbf{a} - 2 \mathbf{a} \odot \bm{\alpha}) \odot \mathbf{1}_k}}
\end{gather*}
is a multiple of $f$.
\end{Lemma}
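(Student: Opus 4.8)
\textbf{Proof proposal for Lemma~\ref{derivativeproductsign}.}
The plan is to work with the polynomial ring $R = \C[\mathbf{z}_{[\mathbf{a}]}]$ and exploit the fact that $f$ is irreducible, so that the quotient $R/(f)$ is a domain with fraction field $\mathcal{F}$. Fix $j \neq k$. For each $\bm{\alpha} \in \{0,1\}^d$, condition~(2) of Proposition~\ref{A2B2propgen} tells us that $\pi_{\mathbf{a}, i}(f)$ differs from $f$ by at most an overall scalar (in fact $f$ is $\pi_{\mathbf{a},i}$-invariant), so the partial derivative $\partial f/\partial z_{\mathbf{a} \odot \bm{\alpha}}$ (the derivative with respect to whichever ``corner'' variable of the box is specified) is, up to sign, the image under a composition of the involutions $\pi_{\mathbf{a},i}$ of $\partial f/\partial z_{\mathbf{a}}$. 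The key identity to use is~\eqref{prod_fi_identity}: $(\partial f/\partial z_{\mathbf{a}})^2 = D + 4fg = f_1\cdots f_d + 4fg$. Applying the appropriate involutions, each of the four derivatives appearing in the displayed expression squares, modulo $f$, to a product of the $f_i$'s (the $f_i$ being invariant under all $\pi_{\mathbf{a}-\mathbf{1}_i, \ell}$).

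First I would set $\bm{\alpha} = \mathbf{0}$ as the base case; the general $\bm{\alpha}$ follows by applying the product of involutions $\prod_{\ell \colon \alpha_\ell = 1}\pi_{\mathbf{a},\ell}$, which permutes the four derivative terms among themselves (carrying the corner $\mathbf{a}\odot\bm{\alpha}$ to $\mathbf{a}$) and is a ring automorphism fixing the ideal $(f)$, hence the sign $\alpha_{jk}$ is the same for all $\bm{\alpha}$. For $\bm{\alpha} = \mathbf{0}$ the four terms involve the variables $z_{\mathbf{0}}$, $z_{a_j\mathbf{1}_j}$, $z_{a_k\mathbf{1}_k}$, and $z_{a_j\mathbf{1}_j + a_k\mathbf{1}_k}$ as the ``distinguished'' corner, i.e.\ derivatives with respect to the four corners of the $2$-dimensional face spanned by directions $j$ and $k$. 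Writing $P = (\partial f/\partial z_{\mathbf{0}})(\partial f/\partial z_{a_j\mathbf{1}_j + a_k\mathbf{1}_k})$ and $Q = (\partial f/\partial z_{a_j\mathbf{1}_j})(\partial f/\partial z_{a_k\mathbf{1}_k})$, I would compute $P^2$ and $Q^2$ modulo $f$ using~\eqref{prod_fi_identity} and the invariance of the $f_i$. Both $P^2$ and $Q^2$ reduce modulo $f$ to the \emph{same} product $f_1\cdots f_d \cdot \pi(f_1)\cdots$, since applying $\pi_{\mathbf{a},j}$ or $\pi_{\mathbf{a},k}$ to an $f_i$ either fixes it (when $i\neq j$, $i\neq k$, where the flip is in a direction already ``forgotten'' by $f_i$... more precisely $f_i\in\C[\mathbf{z}_{[\mathbf{a}-\mathbf{1}_i]}]$ and is $\pi_{\mathbf{a}-\mathbf{1}_i,\ell}$-invariant for all $\ell$) or permutes the relevant factors. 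Hence $P^2 \equiv Q^2 \pmod f$, so $(P-Q)(P+Q) \equiv 0$ in the domain $R/(f)$, forcing $P \equiv Q$ or $P \equiv -Q$ modulo $f$; set $\alpha_{jk}$ to be $+1$ in the first case and $-1$ in the second.

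The main obstacle is the careful bookkeeping in the claim $P^2 \equiv Q^2 \pmod f$: one must check that applying the relevant compositions of flips to the factorization $D = f_1\cdots f_d$ produces matching products, which requires tracking precisely how $\pi_{\mathbf{a},j}$ and $\pi_{\mathbf{a},k}$ act on each $f_i$ and on $g$. A clean way to organize this is to observe that, since $f$ itself is $\pi_{\mathbf{a},i}$-invariant for every $i$, the discriminant $D$ of $f$ with respect to $z_{\mathbf{a}}$ and the discriminant of $f$ with respect to $z_{\mathbf{a}\odot\bm{\alpha}}$ are related by the corresponding involution; the factorization being into the specific $f_i$'s is where condition~(2) is used, and the $\pi_{\mathbf{a}-\mathbf{1}_i,\ell}$-invariance of $f_i$ guarantees that flips in directions $j,k$ send the tuple $(f_1,\dots,f_d)$ to a tuple of scalar multiples of a permutation of itself. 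Since we only care about equality modulo $f$ up to a sign, these scalar ambiguities are harmless. Once $P \equiv \pm Q$ is established with a fixed sign independent of $\bm{\alpha}$, the lemma follows immediately.
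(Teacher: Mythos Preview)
Your approach is essentially identical to the paper's: compute $P^2$ and $Q^2$ modulo $f$ via the identity $(\partial f/\partial z_{\text{corner}})^2 \equiv f_1\cdots f_d \pmod f$ and its images under the flips, show they agree, and then use irreducibility of $f$ to get $P\equiv \pm Q$; the general $\bm{\alpha}$ then follows from the $\pi_{\mathbf{a},i}$-invariance of $f$. Regarding your flagged bookkeeping worry, no permutations or scalar ambiguities arise: since $f_i \in \C[\mathbf{z}_{[\mathbf{a}-\mathbf{1}_i]}]$ is $\pi_{\mathbf{a}-\mathbf{1}_i,\ell}$-invariant for all $\ell$, one has $\pi_{\mathbf{a},\ell}(f_i)=f_i$ exactly when $\ell\neq i$, so both $P^2$ and $Q^2$ reduce modulo $f$ to the same expression $f_j\,\pi_{\mathbf{a},j}(f_j)\cdot f_k\,\pi_{\mathbf{a},k}(f_k)\cdot\prod_{i\neq j,k} f_i^2$.
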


\begin{proof}Because $f$ satisfies conditions~(1)--(2) from Proposition~\ref{A2B2propgen},
\begin{gather*}
\left( \frac{\partial f}{\partial z_{\mathbf{a}}} \right)^2 \left( \frac{\partial f}{\partial z_{\mathbf{a} \odot (\mathbf{1} - \mathbf{1}_j - \mathbf{1}_k)}} \right)^2 - \left( \frac{\partial f}{\partial z_{\mathbf{a} \odot (\mathbf{1} - \mathbf{1}_j)}} \right)^2 \left( \frac{\partial f}{\partial z_{\mathbf{a} \odot (\mathbf{1} - \mathbf{1}_k)}} \right)^2\\
\qquad{}\equiv (f_1 \cdots f_d) (\pi_{\mathbf{a}, j} \pi_{\mathbf{a}, k} (f_1 \cdots f_d)) - (\pi_{\mathbf{a}, j} (f_1 \cdots f_d)) (\pi_{\mathbf{a}, k} (f_1 \cdots f_d))\\
\qquad{} = (1 - 1) f_j f_k (\pi_{\mathbf{a}, j}(f_j)) (\pi_{\mathbf{a}, k}(f_k)) \prod_{i \in \{1, \dots, d\} \setminus \{j,k\}} f_j^2 = 0
\end{gather*}
mod $f$. Hence, by the irreducibility of $f$, there exists $\alpha_{jk} \in \{-1,1\}$ such that
\begin{gather*}
\frac{\partial f}{\partial z_{\mathbf{a}}} \frac{\partial f}{\partial z_{\mathbf{a} \odot (\mathbf{1} - \mathbf{1}_j - \mathbf{1}_k)}} - \alpha_{jk} \frac{\partial f}{\partial z_{\mathbf{a} \odot (\mathbf{1} - \mathbf{1}_j)}} \frac{\partial f}{\partial z_{\mathbf{a} \odot (\mathbf{1} - \mathbf{1}_k)}}
\end{gather*}
is a multiple of $f$. The full lemma follows from condition~(1) of Proposition~\ref{A2B2propgen}.
\end{proof}

\begin{proof}[Proof of Lemma~\ref{runinarbdirection}]We proceed by induction on the number of $-1$s in $\bm{\alpha}$. When $\bm{\alpha} = \mathbf{1}$, then $\gamma_{\bm{\alpha}} = 1$ by condition~\eqref{newvertexgen}. If $\bm{\alpha}$ contains one $-1$, say $\bm{\alpha} = \mathbf{1} - \mathbf{1}_i$, then $\gamma_{\bm{\alpha}} = \beta_i$, where $\beta_i$ is the sign from Definition~\ref{def_of_adapted}.

Suppose $\ell \ge 2$, and $\bm{\alpha}$ has $\ell$ $-1$s, including $-1$s at positions $j$ and $k$. Then by Lemma~\ref{derivativeproductsign} and our inductive hypothesis,
\begin{gather*}
\frac{\partial f}{\partial z_{\mathbf{a}}}(\mathbf{x}_{v + [\bm{\alpha} \odot \mathbf{a}]}) = \frac{\alpha_{jk} \frac{\partial f}{\partial z_{\mathbf{a} \odot (\mathbf{1} - \mathbf{1}_j)}}(\mathbf{x}_{v + [\bm{\alpha} \odot \mathbf{a}]}) \frac{\partial f}{\partial z_{\mathbf{a} \odot (\mathbf{1} - \mathbf{1}_k)}}(\mathbf{x}_{v + [\bm{\alpha} \odot \mathbf{a}]})}{\frac{\partial f}{\partial z_{\mathbf{a} \odot (\mathbf{1} - \mathbf{1}_j - \mathbf{1}_k)}}(\mathbf{x}_{v + [\bm{\alpha} \odot \mathbf{a}]})}\\
\hphantom{\frac{\partial f}{\partial z_{\mathbf{a}}}(\mathbf{x}_{v + [\bm{\alpha} \odot \mathbf{a}]})}{}
= \alpha_{jk} \gamma_{\bm{\alpha} + 2(\mathbf{1}_j + \mathbf{1}_k)} \gamma_{\bm{\alpha} + 2 \mathbf{1}_j} \gamma_{\bm{\alpha} + 2 \mathbf{1}_k} \prod_{i = 1}^d x_{v + [\bm{\alpha} \odot (\mathbf{a} - \mathbf{1}_i)]},
\end{gather*}
so setting $\gamma_{\bm{\alpha}} = \alpha_{jk} \gamma_{\bm{\alpha} + 2(\mathbf{1}_j + \mathbf{1}_k)} \gamma_{\bm{\alpha} + 2 \mathbf{1}_j} \gamma_{\bm{\alpha} + 2 \mathbf{1}_k} \in \{-1,1\}$, we obtain the desired result.
\end{proof}

\begin{Example} \label{kash_sign_ex}Let us continue with Examples~\ref{kash_sat_ex} and~\ref{kash_ada_ex}. Following the argument in the proof of Lemma~\ref{runinarbdirection}, it can be shown that $\gamma_{\bm{\alpha}} = 1$ if $\bm{\alpha} = \pm \mathbf{1}$, and $\gamma_{\bm{\alpha}} = -1$ otherwise. Note that this fact is equivalent to Lemma~\ref{cornersofcubelemma}.
\end{Example}

\begin{Example} \label{sholo_sign_ex}Let us continue with Examples~\ref{sholo_sat_ex} and~\ref{sholo_ada_ex}. Following the argument in the proof of Lemma~\ref{runinarbdirection}, it can be shown that $\gamma_{\bm{\alpha}} = 1$ if $\bm{\alpha} = \mathbf{1}$, and $\gamma_{\bm{\alpha}} = -1$ otherwise. In particular, $\gamma_{-\mathbf{1}} = 1$. Hence, if $\mathbf{\tilde{x}} = (x_s) \in (\C^*)^{\Z^2 \cup F_{\mathbf{a}}}$ satisfies conditions~\eqref{2dh1}--\eqref{2dhs2}, it follows that $(x_{-s})_{s \in \Z^2 \cup F_{\mathbf{a}}}$ cannot satisfy conditions~\eqref{2dh1}--\eqref{2dhs2}.
\end{Example}

\begin{Example} \label{1d_sign_ex}Let us continue with Examples~\ref{1d_sat_ex} and~\ref{1d_ada_ex}. It is straightforward to show that $\gamma_{(1)} = \gamma_{(-1)} = 1$.
\end{Example}

\begin{Example} \label{2d_sign_ex}Let us continue with Examples~\ref{2d_sat_ex} and~\ref{2d_ada_ex}. Following the argument in the proof of Lemma~\ref{runinarbdirection}, it can be shown that $\gamma_{\bm{\alpha}} = 1$ if $\bm{\alpha} = \pm \mathbf{1}$, and $\gamma_{\bm{\alpha}} = -1$ otherwise.
\end{Example}

We now state the main theorem of this section.

\begin{Theorem} \label{maintheoremgen}Let $\mathbf{a} = (a_1, \dots, a_d) \in \Z^d_{\ge 1}$. Let $f \in \C[\mathbf{z}_{[\mathbf{a}]}]$ be a polynomial that is irreducible over $\C$ and satisfies conditions $(1)$--$(2)$ from Proposition~{\rm \ref{A2B2propgen}}. Fix the polynomials $f_1, \dots, f_d$ from condition~$(2)$ of Proposition~{\rm \ref{A2B2propgen}}. Let $\gtf$ and $\hof$ be the coefficients of $z_{\mathbf{a}}^2$ and $z_{\mathbf{a}}$ in $f$, viewed as a~polynomial in $z_{\mathbf{a}}$. Let $(r_1, \dots, r_d)$ be a tuple of rational functions in the variables~$z_s$ for $s \in [\mathbf{a}^*]$ that is adapted to $(f; f_1,\dots,f_d)$. Let $(\gamma_{\bm{\alpha}})_{\bm{\alpha} \in \{-1,1\}^d}$ be the propagation signs corresponding to $(f; f_1,\dots,f_d; r_1,\dots,r_d)$.
\begin{enumerate}\itemsep=0pt
\item[$(a)$] Let $\mathbf{x} = (x_s)_{s \in \Z^d}$ be an array such that
\begin{gather}
\mathbf{x} \text{ satisfies } f,\\
\label{nonzeropartial} \frac{\partial f}{\partial z_{\mathbf{a}}}(\mathbf{x}_{v + [\mathbf{a}]}) \not= 0 \qquad \text{for all $v \in \Z^d$ if $d > 1$},\\
\label{nonzerodenominator} \gtf(\mathbf{x}_{v + [\mathbf{a}]}) \not= 0 \qquad \text{for all $v \in \Z^d$},\\
\prod_{\bm{\alpha} \in \{-1,0\}^d} \frac{\partial f}{\partial z_{\mathbf{a}}} (\mathbf{x}_{v - (\mathbf{a} - \mathbf{1}) \odot \bm{\alpha} + [(\mathbf{1} + 2 \bm{\alpha}) \odot \mathbf{a}]})\label{coherencegen}\\
\qquad{} = \left( \prod_{\bm{\alpha} \in \{-1,1\}^d} \gamma_{\bm{\alpha}} \right) \prod_{i = 1}^d \prod_{\bm{\beta} = (\beta_1, \dots, \beta_d) \in \{-1,0\}^d \colon \beta_i = 0} f_i(\mathbf{x}_{v + \bm{\beta} + [\mathbf{a}]}) \qquad \text{for all $v \in \Z^d$}.\nonumber
\end{gather}
Then $\mathbf{x}$ can be extended to an array $\mathbf{\tilde{x}} = (x_s) \in \C^{\Z^d \cup \fab}$ satisfying~\eqref{newvertexgen}--\eqref{squarecond}.
\item[$(b)$] Conversely, if $\mathbf{\tilde{x}} = (x_s) \in \C^{\Z^d \cup \fab}$ satisfies conditions~\eqref{newvertexgen}--\eqref{squarecond} and~\eqref{nonzeroqi}--\eqref{nonzeromaind}, then the restriction of $\mathbf{\tilde{x}}$ to $\Z^d$ satisfies $f$ and the condition~\eqref{coherencegen}.
\end{enumerate}
\end{Theorem}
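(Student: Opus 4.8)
The plan is to mimic, essentially line for line, the proof of Theorem~\ref{galoisfromintro} from Section~\ref{mainproof}, with $\Z^3$ replaced by $\Z^d$, the lattice $\lat$ replaced by $\Z^d \cup \fab$, and the K-hexahedron equations replaced by the system \eqref{newvertexgen}--\eqref{squarecond}. The conceptual skeleton is identical: part~(b) is the ``easy'' direction obtained by a direct computation from the recurrence, while part~(a) is built by a sign-propagation argument, a kernel computation, an inductive ``level-by-level'' extension over a finite window, and finally a compactness (K\"onig's lemma) argument to pass from finite windows to all of $\Z^d$.

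\textbf{Part (b).} First I would run the recurrence \eqref{newvertexgen}--\eqref{squarecond} through the proposition following Definition~\ref{def_of_adapted} (the one whose proof uses identity \eqref{prod_fi_identity}) to confirm that the square condition \eqref{squarecond} propagates; combined with \eqref{newvertexgen} and the quadratic formula this gives that $\mathbf{x}:=(x_s)_{s\in\Z^d}$ satisfies $f$. For the coherence identity \eqref{coherencegen}, I would invoke Lemma~\ref{runinarbdirection}: for each $\bm{\alpha}\in\{-1,1\}^d$ and each $v$, equation \eqref{arbdirectionderivative} expresses $\frac{\partial f}{\partial z_{\mathbf a}}(\mathbf x_{v+[\bm\alpha\odot\mathbf a]})$ as $\gamma_{\bm\alpha}\prod_{i=1}^d x_{v+[\bm\alpha\odot(\mathbf a-\mathbf 1_i)]}$. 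Taking the product over the $2^d$ boxes of size $\mathbf a$ whose corner region is $v+[\mathbf a-\mathbf 1]$ — exactly the boxes indexed by $v-(\mathbf a-\mathbf 1)\odot\bm\alpha+[(\mathbf 1+2\bm\alpha)\odot\mathbf a]$ for $\bm\alpha\in\{-1,0\}^d$ — each face value $x_{v+\bm\beta+[\mathbf a-\mathbf 1_i]}$ with $\beta_i=0$ appears exactly twice, so by \eqref{squarecond} the product of these face values squared becomes $\prod_i\prod_{\bm\beta:\beta_i=0}f_i(\mathbf x_{v+\bm\beta+[\mathbf a]})$, and the accumulated sign is $\prod_{\bm\alpha\in\{-1,1\}^d}\gamma_{\bm\alpha}$. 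This is \eqref{coherencegen}. (This is the exact analogue of the proof of Theorem~\ref{galoisfromintro}(b) via Lemma~\ref{cornersofcubelemma}, with $\gamma_{\bm\alpha}$ playing the role of the signs $\pm1$ in that lemma.)

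\textbf{Part (a).} I would first reduce to the generic case, then remove genericity by a limiting/K\"onig argument exactly as at the end of the proof of Theorem~\ref{galoisfromintro}(a). In the generic case the argument is inductive on ``height'' $v_1+\cdots+v_d$: given an array on $\Z^d$-levels up to some height that already extends to $\Z^d\cup\fab$ with \eqref{newvertexgen}--\eqref{squarecond}, I adjust the face-signs to make the next level agree. For this I need the analogues of Lemma~\ref{sign_prop_khex_cube} (one bad sign on a box of shape $\mathbf a$ propagates, when the product of the three — here, the $d$ relevant — face-signs is $+1$, to the opposite faces), of the map $\psi$ (Definition~\ref{def_of_psi_z3}), of Proposition~\ref{kernel_of_psi} (description of $\ker\psi$ via parameters $\alpha,\beta,\dots$), and of Lemma~\ref{prod_over_cubes_lemma} (an array of signs on the boxes of shape $\mathbf a$ lies in the image of $\psi$ iff its product over boxes through each vertex equals $1$). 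With these in hand, the obstruction to matching the next level is precisely whether a prescribed sign pattern on the boxes lies in the image of $\psi$, and that holds by the analogue of Lemma~\ref{prod_over_cubes_lemma} because the coherence hypothesis \eqref{coherencegen} (via Lemma~\ref{equivalent_coherence_statement_lemma}, which equates \eqref{coherencegen} to the ``split'' even/odd form) is exactly the statement that the required products over boxes equal $1$. Lemma~\ref{levels_0_thru_5} and Corollary~\ref{main_with_generic} then have direct analogues: finitely many levels suffice because \eqref{coherencegen} determines the remaining values rationally (using \eqref{nonzeropartial} when $d>1$ so the relevant denominators are nonzero), and nonzeroness of $\gtf$ on the relevant boxes (\eqref{nonzerodenominator}) lets one apply the recurrence.

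\textbf{Main obstacle.} The genuinely new content relative to Section~\ref{mainproof} is the combinatorics of boxes of general shape $\mathbf a=(a_1,\dots,a_d)$ rather than unit cubes: I must make sure that ``opposite faces of a box,'' the equivalence relation $\sim$ on $\fab$, and the map $\psi$ (and its kernel and image) behave the way the unit-cube case does, and in particular that $\ker\psi$ still admits the clean parametrization and that the image is still cut out by the per-vertex product conditions. The case $d=1$ also needs separate handling since then there are no ``faces in the interior'' of a box in the relevant sense and \eqref{nonzeropartial} is vacuous — here the statement degenerates and should be checked directly (as in the examples, where $\gamma_{(1)}=\gamma_{(-1)}=1$). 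I expect the shape-$\mathbf a$ bookkeeping in the $\psi$/kernel lemmas to be the most delicate part, though it should be routine once the indexing conventions of Definition~\ref{gensectionmaindefs} and Definition~\ref{fa_set_def} are unwound carefully; everything else transfers verbatim from Section~\ref{mainproof}.
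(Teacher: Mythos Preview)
Your proposal is correct and follows essentially the same route as the paper: part~(b) is proved exactly as you describe via Lemma~\ref{runinarbdirection}, and part~(a) is proved by generalizing Lemmas~\ref{sign_prop_khex_cube}, \ref{change_one_value_lemma}, \ref{prod_over_cubes_lemma}, \ref{levels_0_thru_5} and Corollary~\ref{main_with_generic} to boxes of shape~$\mathbf{a}$, followed by the K\"onig limiting argument. One small correction: the coherence hypothesis~\eqref{coherencegen} is \emph{not} what guarantees the sign pattern lies in the image of~$\psi$ (that image condition is arranged by construction, extending~$\tilde{u}$ via~\eqref{prod_over_hcube}); rather, coherence enters only in the analogue of Corollary~\ref{main_with_generic}, where it forces uniqueness of the solution beyond the first $a+d$ levels so that the extension built from face values must agree with~$\mathbf{x}$ everywhere --- also, you need only the image of~$\psi$, not its kernel, and no separate $d=1$ argument is required.
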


\begin{Remark}The equation~\eqref{coherencegen} is a generalization of the coherence condition (equation~\eqref{coherencecondition}) for the Kashaev equation.
\end{Remark}

The following proposition states that the sign $\prod\limits_{\bm{\alpha} \in \{-1,1\}^d} \gamma_{\bm{\alpha}}$ in~\eqref{coherencegen} is independent of the choice of $(r_1,\dots,r_d)$ if $d \ge 2$.

\begin{Proposition} \label{gamma_quant_ind}
Let $\mathbf{a} = (a_1, \dots, a_d) \in \Z^d_{\ge 1}$ with $d \ge 2$. Let $f \in \C[\mathbf{z}_{[\mathbf{a}]}]$ be a polynomial that is irreducible over $\C$ and satisfies conditions $(1)$--$(2)$ from Proposition~{\rm \ref{A2B2propgen}}. Then there exists a~sign $\gamma \in \{-1, 1\}$ such that for any tuple of rational functions $(r_1, \dots, r_d)$ in the variables $z_s$ for $s \in [\mathbf{a}^*]$ that is adapted to $(f; f_1,\dots,f_d)$, we have
\begin{gather*}
\gamma = \prod_{\bm{\alpha} \in \{-1,1\}^d} \gamma_{\bm{\alpha}},
\end{gather*}
where $(\gamma_{\bm{\alpha}})_{\bm{\alpha} \in \{-1,1\}^d}$ are the propagation signs corresponding to~$(f; f_1, \dots, f_d; r_1, \dots, r_d)$.
\end{Proposition}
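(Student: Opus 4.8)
The plan is to show that the quantity $\prod_{\bm{\alpha} \in \{-1,1\}^d} \gamma_{\bm{\alpha}}$ only depends on $(f; f_1, \dots, f_d)$ and not on the choice of adapted tuple $(r_1, \dots, r_d)$. The key observation is that changing the tuple $(r_1, \dots, r_d)$ to another adapted tuple amounts to changing the signs $\beta_1, \dots, \beta_d \in \{-1,1\}$ appearing in Definition~\ref{def_of_adapted}, since by~\eqref{ada_expression_for_ri} an adapted tuple is determined (on the relevant locus) by these signs via the construction at the end of Definition~\ref{def_of_adapted}. So it suffices to track how $\prod_{\bm{\alpha}} \gamma_{\bm{\alpha}}$ changes when we flip one of the $\beta_i$, say $\beta_i \mapsto -\beta_i$.

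First I would recall the inductive computation of the $\gamma_{\bm{\alpha}}$ from the proof of Lemma~\ref{runinarbdirection}: $\gamma_{\mathbf{1}} = 1$; $\gamma_{\mathbf{1} - \mathbf{1}_i} = \beta_i$; and for $\bm{\alpha}$ with at least two $-1$'s, say at positions $j$ and $k$, one has $\gamma_{\bm{\alpha}} = \alpha_{jk} \gamma_{\bm{\alpha} + 2\mathbf{1}_j + 2\mathbf{1}_k} \gamma_{\bm{\alpha} + 2\mathbf{1}_j} \gamma_{\bm{\alpha} + 2\mathbf{1}_k}$, where the signs $\alpha_{jk}$ depend only on $f$ (Lemma~\ref{derivativeproductsign}). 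Solving this recursion, each $\gamma_{\bm{\alpha}}$ is a monomial in the $\beta_\ell$'s and the $\alpha_{jk}$'s. Specifically, by induction on the number of $-1$'s one checks that $\gamma_{\bm{\alpha}} = \bigl(\prod_{\ell : \alpha_\ell = -1} \beta_\ell\bigr) \cdot (\text{monomial in the } \alpha_{jk} \text{ depending only on } \bm{\alpha} \text{ and } f)$. Then $\prod_{\bm{\alpha} \in \{-1,1\}^d} \gamma_{\bm{\alpha}} = \bigl(\prod_{\ell=1}^d \beta_\ell^{\,m_\ell}\bigr)\cdot(\text{sign depending only on } f)$, where $m_\ell$ is the number of $\bm{\alpha} \in \{-1,1\}^d$ with $\alpha_\ell = -1$, i.e.\ $m_\ell = 2^{d-1}$ for every $\ell$. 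Since $d \ge 2$, each exponent $2^{d-1}$ is even, so $\beta_\ell^{\,m_\ell} = 1$, and $\prod_{\bm{\alpha}} \gamma_{\bm{\alpha}}$ reduces to the sign depending only on $f$ (through the $\alpha_{jk}$). This establishes the proposition with $\gamma$ equal to that $f$-dependent sign.

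To make the induction on $\gamma_{\bm{\alpha}}$ precise, I would set $s(\bm{\alpha}) = \prod_{\ell : \alpha_\ell = -1} \beta_\ell$ and verify that $\gamma_{\bm{\alpha}}/s(\bm{\alpha})$ satisfies the same recursion as $\gamma_{\bm{\alpha}}$ but with all $\beta$-contributions stripped: for $\bm{\alpha} = \mathbf{1}$ it is $1$; for $\bm{\alpha} = \mathbf{1} - \mathbf{1}_i$ it is $1$; and for $\bm{\alpha}$ with $-1$'s at $j,k$ we have $s(\bm{\alpha}) = s(\bm{\alpha} + 2\mathbf{1}_j + 2\mathbf{1}_k)\, s(\bm{\alpha} + 2\mathbf{1}_j)\, s(\bm{\alpha} + 2\mathbf{1}_k)$ (since $\beta_j \beta_k$ appears with total even multiplicity on the right, while all other $\beta_\ell$ with $\alpha_\ell = -1$ appear once each among the first three factors — here I need to double-check the parity bookkeeping, as $\bm{\alpha} + 2\mathbf{1}_j$ differs from $\bm{\alpha}$ only by flipping position $j$ from $-1$ to $1$). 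Hence $\gamma_{\bm{\alpha}}/s(\bm{\alpha})$ is independent of the $\beta_\ell$'s, completing the argument.

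The main obstacle I anticipate is the parity/combinatorial bookkeeping in the recursion solution: one must check carefully that the recursion $\gamma_{\bm{\alpha}} = \alpha_{jk}\,\gamma_{\bm{\alpha} + 2\mathbf{1}_j + 2\mathbf{1}_k}\,\gamma_{\bm{\alpha}+2\mathbf{1}_j}\,\gamma_{\bm{\alpha}+2\mathbf{1}_k}$ is consistent regardless of which pair $(j,k)$ of $-1$-positions one chooses (this consistency is implicitly guaranteed by Lemma~\ref{runinarbdirection}, which asserts the $\gamma_{\bm{\alpha}}$ are well-defined), and that factoring off $s(\bm{\alpha})$ leaves a purely $f$-dependent residue. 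A secondary point worth a sentence is why the case $d \ge 2$ is essential: for $d = 1$ there is only the single sign $\beta_1 = \gamma_{(-1)}$, and $\prod_{\bm{\alpha}} \gamma_{\bm{\alpha}} = \gamma_{(1)} \gamma_{(-1)} = \beta_1$ genuinely depends on the choice, consistent with the proposition being stated only for $d \ge 2$.
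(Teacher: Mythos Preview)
Your proposal is correct and follows essentially the same approach as the paper. The paper isolates the key step as Lemma~\ref{flip_sign_gamma_alpha_lemma}, which shows that flipping a single $\beta_j$ (equivalently, replacing $r_j$ by $-r_j$) flips $\gamma_{\bm{\alpha}}$ exactly when $\alpha_j=-1$; since $2^{d-1}$ of the $\bm{\alpha}$'s have $\alpha_j=-1$ and $(-1)^{2^{d-1}}=1$ for $d\ge 2$, the product is unchanged. Your factorization $\gamma_{\bm{\alpha}}=s(\bm{\alpha})\cdot(\text{$\beta$-free sign})$ with $s(\bm{\alpha})=\prod_{\ell:\alpha_\ell=-1}\beta_\ell$ is the integrated form of that same lemma (and your parity bookkeeping for $s$ under the recursion is correct), so the two arguments are equivalent repackagings of one another. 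One minor slip: where you write $\gamma_{\mathbf{1}-\mathbf{1}_i}=\beta_i$, the index should be $\mathbf{1}-2\mathbf{1}_i$ since $\bm{\alpha}\in\{-1,1\}^d$.
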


\begin{Remark}
By Lemma~\ref{equivalent_coherence_statement_lemma}, given $f \in \C[\mathbf{z}_{[\mathbf{a}]}]$ satisfying conditions~(1)--(2) from Proposition~\ref{A2B2propgen} and an array $\mathbf{x} = (x_s)_{s \in \Z^d}$ satisfying $f$, the following are equivalent:
\begin{itemize}\itemsep=0pt
\item $\mathbf{x}$ satisfies condition~\eqref{coherencegen},
\item $\mathbf{x}$ satisfies
\begin{gather}
\prod_{\substack{\bm{\alpha} = (\alpha_1, \dots, \alpha_d) \\ \alpha_1, \dots, \alpha_d \in \{-1, 0\} \\ \alpha_1 + \cdots + \alpha_d \text{ even}}} \frac{\partial f}{\partial z_{\mathbf{a}}} (\mathbf{x}_{v - (\mathbf{a} - \mathbf{1}) \odot \bm{\alpha} + [(\mathbf{1} + 2 \bm{\alpha}) \odot \mathbf{a}]})\nonumber\\
\qquad{} = \gamma \prod_{\substack{\bm{\alpha} = (\alpha_1, \dots, \alpha_d) \\ \alpha_1, \dots, \alpha_d \in \{-1, 0\} \\ \alpha_1 + \cdots + \alpha_d \text{ odd}}} \frac{\partial f}{\partial z_{\mathbf{a}}} (\mathbf{x}_{v - (\mathbf{a} - \mathbf{1}) \odot \bm{\alpha} + [(\mathbf{1} + 2 \bm{\alpha}) \odot \mathbf{a}]})\qquad \text{for all $v \in \Z^d$}. \label{alt_coherencegen}
\end{gather}
\end{itemize}
Hence, one can replace condition~\eqref{coherencegen} in Theorem~\ref{maintheoremgen} by condition~\eqref{alt_coherencegen}.
\end{Remark}

\begin{Example}Continuing with Examples~\ref{kash_sat_ex}, \ref{kash_ada_ex}, and~\ref{kash_sign_ex}, Theorem~\ref{galoisfromintro} is a special case of Theorem~\ref{maintheoremgen}. Theorem~\ref{ABtheorem} is a special case of Theorem~\ref{maintheoremgen}(b), where we require all values of $\mathbf{\tilde{x}}$, including the values indexed by $\fab$, to be positive.
\end{Example}

\begin{Example}Continuing with Examples~\ref{sholo_sat_ex}, \ref{sholo_ada_ex}, and~\ref{sholo_sign_ex}, Theorem~\ref{coh_for_QC_theorem} is a special case of Theorem~\ref{maintheoremgen}. Theorem~\ref{s_holo_pos_thm} is a special case of Theorem~\ref{maintheoremgen}(b), where we require $x_s > 0$ for $s \in \Z^2_{\{0,1,2,\dots\}}$, $s \in \Z^2_{\{0,1,2,\dots\}} + [\mathbf{a} - \mathbf{1}_1]$, and $s \in \Z^2_{\{0,1,2,\dots\}} + [\mathbf{a} - \mathbf{1}_2]$.
\end{Example}

\begin{Example}Continuing with Examples~\ref{1d_sat_ex}, \ref{1d_ada_ex}, and~\ref{1d_sign_ex}, Theorem~\ref{main_thm_1d_example} is a special case of Theorem~\ref{maintheoremgen}. Theorem~\ref{pos_thm_1d_example} is a special case of Theorem~\ref{maintheoremgen}(b), where we require all values of $\mathbf{\tilde{x}}$, including the values indexed by $\fab$, to be positive.%
\end{Example}

\begin{Example}Continuing with Examples~\ref{2d_sat_ex}, \ref{2d_ada_ex}, and~\ref{2d_sign_ex}, Theorem~\ref{main_thm_2d_example} is a special case of Theorem~\ref{maintheoremgen}. Theorem~\ref{pos_thm_2d_example} is a special case of Theorem~\ref{maintheoremgen}(b), where we require all values of $\mathbf{\tilde{x}}$, including the values indexed by $\fab$, to be positive.%
\end{Example}

Before we prove Theorem~\ref{maintheoremgen}, we first prove Proposition~\ref{gamma_quant_ind}. Proposition~\ref{gamma_quant_ind} follows from the lemma below.

\begin{Lemma} \label{flip_sign_gamma_alpha_lemma}Let $\mathbf{a} = (a_1, \dots, a_d) \in \Z^d_{\ge 1}$. Let $f \in \C[\mathbf{z}_{[\mathbf{a}]}]$ be a polynomial that is irreducible over $\C$ and satisfies conditions $(1)$--$(2)$ from Proposition~$\ref{A2B2propgen}$. Fix $j \in \{1, \dots, d\}$. Let $(r_1, \dots, r_d)$ and $(\tilde{r}_1, \dots, \tilde{r}_d)$ be tuples of rational functions in the variables $z_s$ for $s \in [\mathbf{a}^*]$ that are adapted to $(f; f_1,\dots,f_d)$, such that $\tilde{r}_j = -r_j$ and $\tilde{r}_i = r_i$ for $i \not= j$. Let $(\gamma_{\bm{\alpha}})_{\bm{\alpha} \in \{-1,1\}^d}$ and $(\tilde{\gamma}_{\bm{\alpha}} \in \{-1,1\})_{\bm{\alpha} \in \{-1,1\}^d}$ be the propagation signs corresponding to $(f; f_1,\dots,f_d;r_1,\dots,r_d)$ and $(f;f_1,\dots,f_d; \tilde{r}_1,\dots,\tilde{r}_d)$, respectively. Given $\bm{\alpha} = (\alpha_1, \dots, \alpha_d) \in \{-1,1\}^d$, the following are equivalent:
\begin{itemize}\itemsep=0pt
\item $\gamma_{\bm{\alpha}} = \tilde{\gamma}_{\bm{\alpha}}$,
\item $\alpha_j = 1$.
\end{itemize}
\end{Lemma}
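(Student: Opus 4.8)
The plan is to analyze how the recursive construction of the propagation signs $\gamma_{\bm\alpha}$ from Lemma~\ref{runinarbdirection} is affected by the global sign flip $r_j \mapsto -r_j$. Recall from the proof of Lemma~\ref{runinarbdirection} that the $\gamma_{\bm\alpha}$ are built up by induction on the number of $-1$'s in $\bm\alpha$: one has $\gamma_{\mathbf 1} = 1$, one has $\gamma_{\mathbf 1 - \mathbf 1_i} = \beta_i$ (the sign from Definition~\ref{def_of_adapted} attached to $r_i$), and for $\bm\alpha$ with at least two $-1$'s one has a recursion of the shape $\gamma_{\bm\alpha} = \alpha_{jk}\,\gamma_{\bm\alpha + 2(\mathbf 1_j + \mathbf 1_k)}\,\gamma_{\bm\alpha + 2\mathbf 1_j}\,\gamma_{\bm\alpha + 2\mathbf 1_k}$, where $\alpha_{jk}$ depends only on $f$ (via Lemma~\ref{derivativeproductsign}) and not on the tuple $(r_1,\dots,r_d)$. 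The only place the $r_i$ enter is through the base case $\beta_i$: replacing $r_j$ by $-r_j$ replaces $\beta_j$ by $-\beta_j$ and leaves all other $\beta_i$ unchanged, and leaves all the $\alpha_{jk}$ unchanged.

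So the first step is to make precise the claim that $\gamma_{\bm\alpha}$, as a function of the data $(\beta_1,\dots,\beta_d)$ and the fixed signs $(\alpha_{jk})$, is \emph{multiplicative} in the $\beta_i$: concretely, I will show by induction on the number of $-1$'s in $\bm\alpha$ that
\begin{gather*}
\gamma_{\bm\alpha} = \epsilon_{\bm\alpha} \prod_{i \colon \alpha_i = -1} \beta_i,
\end{gather*}
where $\epsilon_{\bm\alpha} \in \{-1,1\}$ is a sign depending only on $f$ (through the $\alpha_{jk}$) and not on $(r_1,\dots,r_d)$. The base cases $\bm\alpha = \mathbf 1$ (empty product, $\epsilon_{\mathbf 1}=1$) and $\bm\alpha = \mathbf 1 - \mathbf 1_i$ ($\gamma = \beta_i$, $\epsilon = 1$) are immediate from the construction. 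For the inductive step, given $\bm\alpha$ with $\ell \ge 2$ minus-ones at positions including $j$ and $k$, apply the recursion $\gamma_{\bm\alpha} = \alpha_{jk}\,\gamma_{\bm\alpha + 2(\mathbf 1_j+\mathbf 1_k)}\,\gamma_{\bm\alpha+2\mathbf 1_j}\,\gamma_{\bm\alpha+2\mathbf 1_k}$ and the inductive hypothesis for the three shorter vectors; the exponents of each $\beta_i$ across the three factors add up modulo $2$ to $1$ exactly when $\alpha_i = -1$ (a short bookkeeping: $\beta_j$ appears in $\gamma_{\bm\alpha+2\mathbf 1_k}$ and $\gamma_{\bm\alpha+2(\mathbf 1_j+\mathbf 1_k)}$ with opposite "active" status, etc.), so the product collapses to $\bigl(\prod_{i\colon\alpha_i=-1}\beta_i\bigr)$ times a sign built from $\alpha_{jk}$ and the $\epsilon$'s of the shorter vectors, which is independent of $(r_1,\dots,r_d)$. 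One must check the resulting $\epsilon_{\bm\alpha}$ is well-defined, i.e.\ independent of the choice of the pair $\{j,k\}$; but this is exactly the consistency already guaranteed by Lemma~\ref{runinarbdirection} (which produces a \emph{unique} $\gamma_{\bm\alpha}$), applied in the universal case, so no extra argument is needed.

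Granting the multiplicativity formula, the lemma is immediate: with $\tilde\beta_j = -\beta_j$ and $\tilde\beta_i = \beta_i$ for $i \ne j$, and with the same $\epsilon_{\bm\alpha}$ (depending only on $f$), we get
\begin{gather*}
\tilde\gamma_{\bm\alpha} = \epsilon_{\bm\alpha} \prod_{i\colon \alpha_i = -1}\tilde\beta_i = \epsilon_{\bm\alpha}\Bigl(\prod_{i\colon\alpha_i=-1}\beta_i\Bigr)\cdot
\begin{cases} 1 & \text{if } \alpha_j = 1,\\ -1 & \text{if } \alpha_j = -1,\end{cases}
\end{gather*}
which equals $\gamma_{\bm\alpha}$ precisely when $\alpha_j = 1$, as claimed. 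The main obstacle is the bookkeeping in the inductive step: one has to verify that in the product $\gamma_{\bm\alpha+2(\mathbf 1_j+\mathbf 1_k)}\,\gamma_{\bm\alpha+2\mathbf 1_j}\,\gamma_{\bm\alpha+2\mathbf 1_k}$ the parity of the exponent of each $\beta_i$ is correct, and — more delicately — that the definition of $\epsilon_{\bm\alpha}$ does not depend on which pair $\{j,k\}$ of $-1$-positions of $\bm\alpha$ one uses in the recursion. As noted, the latter is handled by appealing to the uniqueness clause of Lemma~\ref{runinarbdirection} in the "universal" setting where the $\beta_i$ are treated as formal signs; alternatively one can invoke Lemma~\ref{derivativeproductsign} directly to see all these choices agree. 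I expect the whole argument to be two or three short paragraphs once the multiplicativity lemma is isolated.
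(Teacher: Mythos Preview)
Your approach is correct and takes a genuinely different route from the paper's. You isolate a stronger intermediate statement, the multiplicative formula $\gamma_{\bm\alpha} = \epsilon_{\bm\alpha}\prod_{i:\alpha_i=-1}\beta_i$ with $\epsilon_{\bm\alpha}$ depending only on $f$, and then read off the lemma by flipping $\beta_j$. The paper instead inducts directly on the comparison $\tilde\gamma_{\bm\alpha}$ versus $\gamma_{\bm\alpha}$: in the inductive step it case-splits on whether $\alpha_j=1$ or $\alpha_j=-1$ and \emph{chooses the recursion pair accordingly} (if $\alpha_j=1$, pick two indices $k_1,k_2\ne j$ with $\alpha_{k_1}=\alpha_{k_2}=-1$ so all three recursed vectors keep $\alpha_j=1$; if $\alpha_j=-1$, use the pair $(j,k)$ so exactly one of the three recursed vectors still has a $-1$ in position $j$). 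This sidesteps any well-definedness question for an auxiliary $\epsilon_{\bm\alpha}$ and is a couple of lines shorter. Your route buys a cleaner structural formula (which in fact makes Proposition~\ref{gamma_quant_ind} a one-line corollary as well), at the cost of the small extra argument that $\epsilon_{\bm\alpha}$ is independent of the choice of recursion pair; your appeal to the uniqueness clause of Lemma~\ref{runinarbdirection} handles that adequately. Two minor clean-ups: write $\mathbf{1}-2\mathbf{1}_i$ rather than $\mathbf{1}-\mathbf{1}_i$ for the base case, and avoid reusing $j$ both for the fixed index of the lemma and as a running index in the recursion.
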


\begin{proof}
We proceed by induction on the number of $-1$s in $\bm{\alpha}$. Note that $\tilde{\gamma}_{\mathbf{1}} = \gamma_{\mathbf{1}} = 1$, $\tilde{\gamma}_{\mathbf{1} - 2 \mathbf{1}_j} = - \gamma_{\mathbf{1} - 2 \mathbf{1}_j}$, and $\tilde{\gamma}_{\mathbf{1} - 2 \mathbf{1}_i} = \gamma_{\mathbf{1} - 2 \mathbf{1}_i}$ for $i \not= j$. Suppose $\bm{\alpha}$ has at least two~$-1$s. As we showed in the proof to Lemma~\ref{runinarbdirection}, if $k_1$ and $k_2$ are distinct values such that $\alpha_{k_1} = \alpha_{k_2} = 1$, then $\gamma_{\bm{\alpha}} = \alpha_{k_1 k_2} \gamma_{\bm{\alpha} + 2 (\mathbf{1}_{k_1} + \mathbf{1}_{k_2})} \gamma_{\bm{\alpha} + 2 \mathbf{1}_{k_1}} \gamma_{\bm{\alpha} + 2 \mathbf{1}_{k_2}}$ and $\tilde{\gamma}_{\bm{\alpha}} = \alpha_{k_1 k_2} \tilde{\gamma}_{\bm{\alpha} + 2 (\mathbf{1}_{k_1} + \mathbf{1}_{k_2})} \tilde{\gamma}_{\bm{\alpha} + 2 \mathbf{1}_{k_1}} \tilde{\gamma}_{\bm{\alpha} + 2 \mathbf{1}_{k_2}}$. If $\alpha_j = 1$, let $k_1$, $k_2$ be distinct values such that $\alpha_{k_1} = \alpha_{k_2} = -1$, so
\begin{gather*}
\tilde{\gamma}_{\bm{\alpha}} = \alpha_{k_1 k_2} \tilde{\gamma}_{\bm{\alpha} + 2 (\mathbf{1}_{k_1} + \mathbf{1}_{k_2})} \tilde{\gamma}_{\bm{\alpha} + 2 \mathbf{1}_{k_1}} \tilde{\gamma}_{\bm{\alpha} + 2 \mathbf{1}_{k_2}} = \alpha_{k_1 k_2} \gamma_{\bm{\alpha} + 2 (\mathbf{1}_{k_1} + \mathbf{1}_{k_2})} \gamma_{\bm{\alpha} + 2 \mathbf{1}_{k_1}} \gamma_{\bm{\alpha} + 2 \mathbf{1}_{k_2}} = \gamma_{\bm{\alpha}}.
\end{gather*}
If $\alpha_j = -1$, let $k \not= j$ be a value such that $\alpha_k = -1$, so
\begin{gather*}
\tilde{\gamma}_{\bm{\alpha}} = \alpha_{jk} \tilde{\gamma}_{\bm{\alpha} + 2 (\mathbf{1}_{j} + \mathbf{1}_{k})} \tilde{\gamma}_{\bm{\alpha} + 2 \mathbf{1}_{j}} \tilde{\gamma}_{\bm{\alpha} + 2 \mathbf{1}_{k}} = -\alpha_{jk} \gamma_{\bm{\alpha} + 2 (\mathbf{1}_{j} + \mathbf{1}_{k})} \gamma_{\bm{\alpha} + 2 \mathbf{1}_{j}} \gamma_{\bm{\alpha} + 2 \mathbf{1}_{k}} = -\gamma_{\bm{\alpha}}.\tag*{\qed}
\end{gather*}
\renewcommand{\qed}{}
\end{proof}

\begin{proof}[Proof of Proposition~\ref{gamma_quant_ind}]It suffices to prove the proposition for tuples of rational functions $(r_1, \dots, r_d)$ and $(\tilde{r}_1, \dots, \tilde{r}_d)$ satisfying the conditions of Lemma~\ref{flip_sign_gamma_alpha_lemma}. Let $(\gamma_{\bm{\alpha}})_{\bm{\alpha} \in \{-1,1\}^d}$ and $(\tilde{\gamma}_{\bm{\alpha}} \in \{-1,1\})_{\bm{\alpha} \in \{-1,1\}^d}$ be the propagation signs corresponding to $(f; f_1,\dots,f_d; \allowbreak r_1,\dots,r_d)$ and $(f; f_1,\dots,f_d; \tilde{r}_1,\dots,\tilde{r}_d)$, respectively. Then by Lemma~\ref{flip_sign_gamma_alpha_lemma},
\begin{gather*}
\prod_{\bm{\alpha} \in \{-1,1\}^d} \tilde{\gamma}_{\bm{\alpha}} = (-1)^{2^{d - 1}} \prod_{\bm{\alpha} \in \{-1,1\}^d} \gamma_{\bm{\alpha}} = \prod_{\bm{\alpha} \in \{-1,1\}^d} \gamma_{\bm{\alpha}},
\end{gather*}
because $d \ge 2$.
\end{proof}

The remainder of this section is dedicated to the proof of Theorem~\ref{maintheoremgen}. For the rest of this section, we fix all quantities given in Theorem~\ref{maintheoremgen}, and set
\begin{gather*}
a = a_1 + \cdots + a_d.
\end{gather*}

\begin{proof}[Proof of Theorem~\ref{maintheoremgen}(b).]Let $\mathbf{x}$ be the restriction of $\mathbf{\tilde{x}}$ to $\Z^d$. It is clear that $\mathbf{x}$ must satisfy $f$. By Lemma~\ref{runinarbdirection},
\begin{gather*}
\frac{\partial f}{\partial z_{\mathbf{a}}} (\mathbf{x}_{v - (\mathbf{a} - \mathbf{1}) \odot \bm{\alpha} + [(\mathbf{1} + 2 \bm{\alpha}) \odot \mathbf{a}]}) = \gamma_{\mathbf{1} + 2 \bm{\alpha}} \prod_{i = 1}^d x_{v - (\mathbf{a} - \mathbf{1}) \odot \bm{\alpha} + [(1 + 2 \bm{\alpha}) \odot(\mathbf{a} - \mathbf{1}_i)]}\\
\hphantom{\frac{\partial f}{\partial z_{\mathbf{a}}} (\mathbf{x}_{v - (\mathbf{a} - \mathbf{1}) \odot \bm{\alpha} + [(\mathbf{1} + 2 \bm{\alpha}) \odot \mathbf{a}]})}{} = \gamma_{\mathbf{1} + 2 \bm{\alpha}} \prod_{i = 1}^d x_{v + \bm{\alpha} \odot (\mathbf{1} - \mathbf{1}_i) + [\mathbf{a} - \mathbf{1}_i]}.
\end{gather*}
Taking the product over $\bm{\alpha} \in \{-1,0\}^d$, we get
\begin{gather*}
 \prod_{\bm{\alpha} \in \{-1,0\}^d} \frac{\partial f}{\partial z_{\mathbf{a}}} (\mathbf{x}_{v - (\mathbf{a} - \mathbf{1}) \odot \bm{\alpha} + [(\mathbf{1} + 2 \bm{\alpha}) \odot \mathbf{a}]})\\
 \qquad {} = \left( \prod_{\bm{\alpha} \in \{-1, 1\}^d} \gamma_{\bm{\alpha}} \right) \prod_{i = 1}^d \prod_{\bm{\beta} = (\beta_1, \dots, \beta_d) \in \{-1,0\}^d \colon \beta_i = 0} x_{v + \bm{\beta} + [\mathbf{a} - \mathbf{1}_i]}^2\\
\qquad {}= \left( \prod_{\bm{\alpha} \in \{-1, 1\}^d} \gamma_{\bm{\alpha}} \right) \prod_{i = 1}^d \prod_{\bm{\beta} = (\beta_1, \dots, \beta_d) \in \{-1,0\}^d \colon \beta_i = 0} f_i(\mathbf{x}_{v + \bm{\beta} + [\mathbf{a}]}).\tag*{\qed}
\end{gather*}\renewcommand{\qed}{}
\end{proof}

The proof of Theorem~\ref{maintheoremgen}(a) below is nearly identical to the proof of Theorem~\ref{galoisfromintro}(a).

\begin{Definition}For $U \subseteq \Z$, let $\Z^d_U$ denote the set
\begin{gather*}
\Z^d_U = \big\{(i_1, \dots, i_d) \in \Z^d \colon i_1 + \cdots i_d \in U\big\}.
\end{gather*}
For $U \subseteq \Z$, we will also use the notation
\begin{gather*}
F_{\mathbf{a}, U} = \{v + [\mathbf{a} - \mathbf{1}_i] \colon v \in \Z_U, i \in \{1, \dots, d\}\}.
\end{gather*}
In particular, we will be interested in $\zadi = \Z^d_{\{0, \dots, a- 1\}}$ and $\fati = F_{\mathbf{a}, \{0\}}$.
\end{Definition}

\begin{Definition}We say that an array $\xtin$ indexed by $\zadi \cup \fati$ satisfying condition~\eqref{squarecond} is \emph{generic} if there exists an extension of $\xtin$ to an array $\mathbf{\tilde{x}}$ indexed by $\Z^d \cup \fab$ satisfying equations~\eqref{newvertexgen}--\eqref{squarecond} where the restriction of $\mathbf{\tilde{x}}$ to $\Z^d$ satisfies conditions~\eqref{nonzeropartial}--\eqref{nonzerodenominator}. Similarly, we say that an array $\xin$ indexed by $\zadi$ is generic if every extension of $\xin$ to an array $\xtin$ indexed by $\zadi \cup \fati$ satisfying condition~\eqref{squarecond} is generic.
\end{Definition}

\begin{Definition}Let $\xtin$ be a generic array indexed by $\zadi \cup \fati$ satisfying condition~\eqref{squarecond}. We denote by $(\xtin)^{\uparrow \Z^d\! {\cup} \fab}$ the unique extension of $\xtin$ to $\Z^d\! {\cup} \fab$ where $(\xtin)^{\uparrow \Z^d\! {\cup} \fab}$ satisfies equations~\eqref{newvertexgen}--\eqref{squarecond}.
\end{Definition}

The next lemma generalizes Lemma~\ref{sign_prop_khex_cube}.

\begin{Lemma} \label{sign_prop_gen}Let $S = [\mathbf{a}] \cup \{ b \mathbf{1}_i + [\mathbf{a} - \mathbf{1}_i]\colon b \in \{0,1\}, i \in \{1,\dots, d\}\}$, i.e., $S$ is the set of vertices of $[\mathbf{a}] \subset \Z^d$ and boxes of $\fab$ completely contained in~$[\mathbf{a}]$. Fix values $t_i \in \{-1,1\}$ for $i = 1, \dots, d$. Suppose $\mathbf{\tilde{x}} = (x_s)_{s \in S}$ and $\mathbf{\tilde{y}} = (y_s)_{s \in S}$ are arrays of complex numbers such that
\begin{itemize}\itemsep=0pt
\item $\mathbf{\tilde{x}}$ and $\mathbf{\tilde{y}}$ both satisfy equations~\eqref{newvertexgen}--\eqref{squarecond}, with the denominators in equations~\eqref{newvertexgen}--\eqref{newfacegen} non-vanishing,
\item $y_{s} = x_{s}$ for $s \in [\mathbf{a}] - \{\mathbf{a}\}$,
\item $y_{[\mathbf{a} - \mathbf{1}_i]} = t_i x_{[\mathbf{a} - \mathbf{1}_i]}$ for $i = 1, \dots, d$, and
\item $\prod\limits_{i = 1}^d t_i = 1$.
\end{itemize}
Then the following equations hold:
\begin{gather*}
y_{\mathbf{1}_i + [\mathbf{a} - \mathbf{1}_i]} = t_i x_{\mathbf{1}_i + [\mathbf{a} - \mathbf{1}_i]} \qquad \text{for $i = 1, \dots, d$},\\ y_{\mathbf{a}} = x_{\mathbf{a}}.
\end{gather*}
\end{Lemma}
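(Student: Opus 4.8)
The plan is to mimic the proof of Lemma~\ref{sign_prop_khex_cube}, using the structural facts established about $(r_1,\dots,r_d)$ being adapted to $(f; f_1,\dots,f_d)$. First I would unpack what the hypotheses say: $\mathbf{\tilde x}$ and $\mathbf{\tilde y}$ agree at all points of $[\mathbf a]$ except at $\mathbf a$ itself and at the interior boxes $[\mathbf a - \mathbf 1_i]$, and the discrepancy at the box $[\mathbf a - \mathbf 1_i]$ is the sign $t_i$. The goal is to show that this sign ``propagates'' to the opposite box $\mathbf 1_i + [\mathbf a - \mathbf 1_i]$ and that, because $\prod_i t_i = 1$, the top vertex $\mathbf a$ is unchanged.

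For the box-propagation statement, I would apply equation~\eqref{newfacegen}: $y_{\mathbf 1_i + [\mathbf a - \mathbf 1_i]} = r_i(y_{s}\colon s \in [\mathbf a^*])$ and likewise for $x$. Since $r_i$ is a rational function in the variables $z_s$ for $s \in [\mathbf a^*]$, and $[\mathbf a^*]$ consists of $[\mathbf a]\setminus\{\mathbf a\}$ together with the $d$ interior boxes $[\mathbf a - \mathbf 1_j]$, the only inputs on which $\mathbf{\tilde x}$ and $\mathbf{\tilde y}$ differ are the $d$ box-variables, scaled by $t_1,\dots,t_d$. The key is the homogeneity structure of $r_i$ visible from the denominator form in Definition~\ref{def_of_adapted} and the expression~\eqref{ada_expression_for_ri}: using \eqref{newvertexgen} to eliminate $x_{v+\mathbf a}$, one sees $r_i$ has the form $\beta_i \cdot (\partial f/\partial z_{\mathbf a})(\cdots)/\prod_{j\neq i} z_{[\mathbf a - \mathbf 1_j]}$, where the numerator $(\partial f/\partial z_{\mathbf a})$ evaluated at the reflected box does not involve the box-variables at all — it only involves vertex-variables — once $x_{v+\mathbf a}$ has been substituted out (or, if it does appear, it appears through $g, h$ which are functions of vertices). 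Thus scaling all box-variables $z_{[\mathbf a - \mathbf 1_j]}$ by $t_j$ scales $r_i$ by $\prod_{j\neq i} t_j^{-1} = \prod_{j\neq i} t_j = t_i$ (using $\prod_j t_j = 1$ and $t_j = t_j^{-1}$). This gives $y_{\mathbf 1_i + [\mathbf a - \mathbf 1_i]} = t_i x_{\mathbf 1_i + [\mathbf a - \mathbf 1_i]}$, exactly as in the computation~\eqref{1hh_sign_prop} of Lemma~\ref{sign_prop_khex_cube}.

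For the vertex statement, I would use equation~\eqref{newvertexgen}: $y_{\mathbf a} = (-h(\mathbf y_{[\mathbf a]}) + \prod_{i=1}^d y_{[\mathbf a - \mathbf 1_i]})/(2g(\mathbf y_{[\mathbf a]}))$. Since $h$ and $g$ are the coefficients of $z_{\mathbf a}$ and $z_{\mathbf a}^2$ in $f$, viewed as a polynomial in $z_{\mathbf a}$, they do not involve $z_{\mathbf a}$ nor any box-variables — they are polynomials in the vertex-variables $z_s$, $s \in [\mathbf a]\setminus\{\mathbf a\}$ — and $\mathbf{\tilde x}, \mathbf{\tilde y}$ agree on those. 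Hence $h(\mathbf y_{[\mathbf a]}) = h(\mathbf x_{[\mathbf a]})$ and $g(\mathbf y_{[\mathbf a]}) = g(\mathbf x_{[\mathbf a]})$. Moreover $\prod_{i=1}^d y_{[\mathbf a - \mathbf 1_i]} = \prod_{i=1}^d (t_i x_{[\mathbf a - \mathbf 1_i]}) = \left(\prod_{i=1}^d t_i\right)\prod_{i=1}^d x_{[\mathbf a - \mathbf 1_i]} = \prod_{i=1}^d x_{[\mathbf a - \mathbf 1_i]}$ since $\prod_i t_i = 1$. Therefore $y_{\mathbf a} = x_{\mathbf a}$, as claimed.

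The main obstacle I anticipate is justifying cleanly the claim that $r_i$, after substituting out $x_{v+\mathbf a}$ via \eqref{newvertexgen}, scales by exactly $t_i$ under scaling of the box-variables by the $t_j$. This requires being careful about how $x_{v+\mathbf a}$ itself depends on the box-variables (it contributes a factor $\prod_{j} t_j = 1$ through $\prod_j x_{[\mathbf a - \mathbf 1_j]}$ in its numerator, so it is in fact invariant), and then tracking the $\prod_{j \neq i} z_{[\mathbf a - \mathbf 1_j]}$ in the denominator of \eqref{ada_expression_for_ri} and the fact that $(\partial f/\partial z_{\mathbf a})$ at a box reflected away from the $i$-th direction involves only vertex-variables. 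Once the homogeneity bookkeeping is set up — essentially observing that every quantity involved is a Laurent monomial in the $t_j$'s with the exponent vector forced by \eqref{ada_expression_for_ri} and \eqref{newvertexgen} — the two displayed conclusions follow exactly as in Lemma~\ref{sign_prop_khex_cube}. The rest is a routine verification that I would state briefly and leave to the reader, in parallel with the phrasing ``Equations~\eqref{sp2}--\eqref{sp3} hold by the same argument'' used in the $d=3$ case.
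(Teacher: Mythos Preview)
Your proposal is correct and follows essentially the same approach as the paper. The paper proves $y_{\mathbf a}=x_{\mathbf a}$ first via~\eqref{newvertexgen} (exactly your second paragraph), and then for the box-propagation invokes~\eqref{arbdirectionderivative} with $\bm{\alpha}=\mathbf{1}-2\mathbf{1}_i$, which is precisely the formula~\eqref{ada_expression_for_ri} you use; the only cosmetic difference is that the paper keeps $x_{\mathbf a}$ in the numerator (legitimate since $y_{\mathbf a}=x_{\mathbf a}$ is already established) and compares directly, whereas you substitute it out and track the scaling by $\prod_j t_j=1$, which is an equivalent bookkeeping.
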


\begin{proof}Note that
\begin{gather*}
y_{\mathbf{a}} - x_{\mathbf{a}} = \left( \prod_{i = 1}^d t_i - 1 \right) \frac{\prod\limits_{i = 1}^d x_{[\mathbf{a} - \mathbf{1}_i]}}{2 \gtf(\mathbf{x}_{[\mathbf{a} - \mathbf{1}_i]})} = 0,
\end{gather*}
so $y_{\mathbf{a}} = x_{\mathbf{a}}$. By~\eqref{arbdirectionderivative},
\begin{gather*}\begin{split}&
\frac{\partial f}{\partial z_{\mathbf{a}}}(\mathbf{x}_{a_i \mathbf{1}_i + [(\mathbf{1} - 2 \mathbf{1}_i) \odot \mathbf{a}]}) = \gamma_{(\mathbf{1} - 2 \mathbf{1}_i)} \prod_{j = 1}^d x_{a_i \mathbf{1}_i + [((\mathbf{1} - 2 \mathbf{1}_i)) \odot (\mathbf{a} - \mathbf{1}_j)]}\\
& \hphantom{\frac{\partial f}{\partial z_{\mathbf{a}}}(\mathbf{x}_{a_i \mathbf{1}_i + [(\mathbf{1} - 2 \mathbf{1}_i) \odot \mathbf{a}]})}{}
= \gamma_{(\mathbf{1} - 2 \mathbf{1}_i)} x_{\mathbf{1}_i + [\mathbf{a} - \mathbf{1}_i]} \prod_{j \in \{1, \dots, d\} \setminus \{i\}} x_{[\mathbf{a} - \mathbf{1}_j]}
\end{split}
\end{gather*}
and
\begin{gather*}
\frac{\partial f}{\partial z_{\mathbf{a}}}(\mathbf{y}_{a_i \mathbf{1}_i + [(\mathbf{1} - 2 \mathbf{1}_i) \odot \mathbf{a}]}) = \gamma_{(\mathbf{1} - 2 \mathbf{1}_i)} \prod_{j = 1}^d y_{a_i \mathbf{1}_i + [((\mathbf{1} - 2 \mathbf{1}_i)) \odot (\mathbf{a} - \mathbf{1}_j)]}\\
\hphantom{\frac{\partial f}{\partial z_{\mathbf{a}}}(\mathbf{y}_{a_i \mathbf{1}_i + [(\mathbf{1} - 2 \mathbf{1}_i) \odot \mathbf{a}]})}{}
= \gamma_{(\mathbf{1} - 2 \mathbf{1}_i)} y_{\mathbf{1}_i + [\mathbf{a} - \mathbf{1}_i]} \prod_{j \in \{1, \dots, d\} \setminus \{i\}} y_{[\mathbf{a} - \mathbf{1}_j]}\\
\hphantom{\frac{\partial f}{\partial z_{\mathbf{a}}}(\mathbf{y}_{a_i \mathbf{1}_i + [(\mathbf{1} - 2 \mathbf{1}_i) \odot \mathbf{a}]})}{}
= \gamma_{(\mathbf{1} - 2 \mathbf{1}_i)} y_{\mathbf{1}_i + [\mathbf{a} - \mathbf{1}_i]} \prod_{j \in \{1, \dots, d\} \setminus \{i\}} t_j x_{[\mathbf{a} - \mathbf{1}_j]}\\
\hphantom{\frac{\partial f}{\partial z_{\mathbf{a}}}(\mathbf{y}_{a_i \mathbf{1}_i + [(\mathbf{1} - 2 \mathbf{1}_i) \odot \mathbf{a}]})}{}
= \gamma_{(\mathbf{1} - 2 \mathbf{1}_i)} t_i y_{\mathbf{1}_i + [\mathbf{a} - \mathbf{1}_i]} \prod_{j \in \{1, \dots, d\} \setminus \{i\}}x_{[\mathbf{a} - \mathbf{1}_j]}.
\end{gather*}
Because
\begin{gather*}
\frac{\partial f}{\partial z_{\mathbf{a}}}(\mathbf{x}_{a_i \mathbf{1}_i + [(\mathbf{1} - 2 \mathbf{1}_i) \odot \mathbf{a}]}) = \frac{\partial f}{\partial z_{\mathbf{a}}}(\mathbf{y}_{a_i \mathbf{1}_i + [(\mathbf{1} - 2 \mathbf{1}_i) \odot \mathbf{a}]}),
\end{gather*}
it follows that $x_{\mathbf{1}_i + [\mathbf{a} - \mathbf{1}_i]} = t_i y_{\mathbf{1}_i + [\mathbf{a} - \mathbf{1}_i]}$.
\end{proof}

\begin{Definition}
Define an equivalence relation on $\fab$ by setting $s_1 \sim s_2$ if and only if $s_1 = v + [\mathbf{a} - \mathbf{1}_i]$ and $s_2 = v + \beta \mathbf{1}_i + [\mathbf{a} - \mathbf{1}_i]$ for some $1 \le i \le d$, $v \in \Z^d$, and~$\beta \in \Z$. Let $\faeq$ denote the set of equivalence classes under this equivalence relation. Denote by $[s] \in \faeq$ the equivalence class of $s \in \fab$.
\end{Definition}

\begin{Definition}Define an action of $\{-1,1\}^{\faeq}$ on arrays indexed by $\zadi \cup \fati$ as follows: given $\mathbf{t} = (t_s)_{s \in \faeq} \in \{-1, 1\}^{\faeq}$ and $\xtin = (x_s)_{s \in \zadi \cup \fati}$, define $\mathbf{t} \cdot \xtin = (\tilde{x}_s)_{\zadi \cup \fati}$, where
\begin{gather*}
\tilde{x}_s = \begin{cases} x_s & \text{if $s \in \zadi$},\\ t_{[s]} x_s & \text{if $s \in \fati$}. \end{cases}
\end{gather*}
\end{Definition}

\begin{Definition} \label{gen_def_of_psi}For $\mathbf{t} = (t_s) \in \{-1, 1\}^{\faeq}$, define $\psi(\mathbf{t}) = (u_s) \in \{-1, 1\}^{\Z^d + \mathbf{a}/2}$ by
\begin{gather*}
u_{s + \mathbf{a}/2} = \prod_{i = 1}^d t_{[s + [\mathbf{a} - \mathbf{1}_i]]}
\end{gather*}
for $s \in \Z^d$.
\end{Definition}

The next lemma generalizes Lemma~\ref{change_one_value_lemma}.

\begin{Lemma} \label{use_of_psi_gen_lemma}Let $\xtin$ be a generic array indexed by $\zadi \cup \fati$ satisfying condition~\eqref{squarecond}. Let $\mathbf{t} \in \{-1,1\}^{\faeq}$, and $\mathbf{u} = (u_s)_{s \in \Z^d + \mathbf{a}/2} = \psi(\mathbf{t})$. Let $(\xtin)^{\uparrow \Z^d \cup \fab} = (x_s)_{s \in \Z^d \cup \fab}$, and \mbox{$(\mathbf{t} \cdot \xtin)^{\uparrow \Z^d \cup \fab} = (y_s)_{s \in \Z^d \cup \fab}$}. Suppose $v \in \Z^d_{\{a, a+1, \dots\}}$ satisfies the condition that $u_{w - \mathbf{a}/2} = 1$ for all $w \in \Z^d_{\{a, a+1, \dots \}}$ with $w \le v$. Then:
\begin{enumerate}\itemsep=0pt
\item[$(a)$] $y_v = x_v$,
\item[$(b)$] $y_{v - \mathbf{a} + \mathbf{1}_i + [\mathbf{a} - \mathbf{1}_i]} = t_{[v - \mathbf{a} + \mathbf{1}_i + [\mathbf{a} - \mathbf{1}_i]]} x_{v - \mathbf{a} + \mathbf{1}_i + [\mathbf{a} - \mathbf{1}_i]}$ for $i = 1, \dots, d$.
\end{enumerate}
\end{Lemma}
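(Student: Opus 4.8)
The plan is to imitate the proof of Lemma~\ref{change_one_value_lemma} essentially verbatim, replacing each cube of $\Z^3$ by the appropriate $a_1 \times \cdots \times a_d$ box of $\Z^d$ and replacing the three face-classes through a vertex by the $d$ box-classes in $\faeq$ through a vertex. Concretely, I would prove statements (a) and (b) together, by induction on $v$ along the heights $\Z^d_{\{a, a+1, \dots\}}$ (using the partial order $\le$ on $\Z^d$, and noting that only finitely many $w \le v$ lie in any given height). The base of the induction is handled by construction: for $w \in \zadi$ we have $x_w = y_w$ since $\xtin$ and $\mathbf{t} \cdot \xtin$ agree on $\zadi$, and statement~(b) holds trivially for $w$ at height $a - 1$ because there $v - \mathbf{a} + \mathbf{1}_i + [\mathbf{a} - \mathbf{1}_i]$ is one of the initial boxes in $\fati$, on which $\mathbf{t} \cdot \xtin$ differs from $\xtin$ exactly by the sign $t_{[\,\cdot\,]}$.

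For the inductive step, fix $v$ at height $\ge a$ with $u_{w - \mathbf{a}/2} = 1$ for all $w \le v$ in that height range, and assume (a), (b) hold for all $w < v$ in $\Z^d_{\{a, a+1, \dots\}}$. Consider the box $B = v - \mathbf{a} + [\mathbf{a}]$, whose top vertex is $v$. By the inductive hypothesis (and the base case for lower heights), at every vertex $v - \mathbf{a} + \mathbf{i}$ of $B$ with $\mathbf{i} \ne \mathbf{a}$ we have $y = x$, and at the $d$ boxes $v - \mathbf{a} + [\mathbf{a} - \mathbf{1}_i] \in \fab$ contained in $B$ we have $y = t_{[\,\cdot\,]}x$; these are the $d$ classes of $\fab$ meeting the top vertex $v$. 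The hypothesis $u_{v - \mathbf{a}/2} = 1$ says precisely (by Definition~\ref{gen_def_of_psi}) that the product of those $d$ signs equals $1$. Now apply Lemma~\ref{sign_prop_gen} to the pair of arrays obtained by restricting $(\xtin)^{\uparrow \Z^d \cup \fab}$ and $(\mathbf{t} \cdot \xtin)^{\uparrow \Z^d \cup \fab}$ to the set $S$ of vertices and boxes contained in $B$ (translated so that $B$ becomes $[\mathbf{a}]$): both restrictions satisfy equations~\eqref{newvertexgen}--\eqref{squarecond}, the denominators are non-vanishing by genericity of $\xtin$ and the conditions~\eqref{nonzeroqi}--\eqref{nonzeromaind} implicit in $(\cdot)^{\uparrow}$, the arrays agree at all vertices other than the top one, they differ by the signs $t_i$ at the relevant boxes, and those signs multiply to $1$. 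Lemma~\ref{sign_prop_gen} then yields $y_v = x_v$ and $y_{v - \mathbf{a} + \mathbf{1}_i + [\mathbf{a} - \mathbf{1}_i]} = t_i \, x_{v - \mathbf{a} + \mathbf{1}_i + [\mathbf{a} - \mathbf{1}_i]}$ for each $i$, which are exactly statements~(a) and~(b) for $v$ (here $t_i = t_{[v - \mathbf{a} + \mathbf{1}_i + [\mathbf{a} - \mathbf{1}_i]]} = t_{[v - \mathbf{a} + [\mathbf{a} - \mathbf{1}_i]]}$ since the two boxes are $\sim$-equivalent).

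The only genuine point needing care — and thus the main obstacle — is bookkeeping: verifying that the $d$ boxes of $\fab$ contained in the box $B$ with top vertex $v$ are representatives of exactly the $d$ equivalence classes appearing in the formula for $u_{v - \mathbf{a}/2}$, so that the hypothesis $u_{v-\mathbf{a}/2}=1$ translates into the sign condition $\prod_i t_i = 1$ required by Lemma~\ref{sign_prop_gen}; and checking that the inductive hypothesis covers all of the data that Lemma~\ref{sign_prop_gen} needs about $B$ (i.e., that every vertex $\ne v$ of $B$ and every sub-box of $\fab$ in $B$ has already been treated, which holds because those vertices lie at strictly smaller heights or are in $\zadi$, and the sub-boxes are either initial ones in $\fati$ or obtained from height-$<v$ boxes by part~(b) of the inductive hypothesis). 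Once this indexing is pinned down, the proof is a direct transcription of the $d = 3$, $\mathbf{a} = (1,1,1)$ argument.
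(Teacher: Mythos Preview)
Your proposal is correct and follows essentially the same approach as the paper's proof: induction on $v$ in the partial order, with the base case handled by construction and the inductive step by a direct application of Lemma~\ref{sign_prop_gen} to the box $v - \mathbf{a} + [\mathbf{a}]$, using $u_{v-\mathbf{a}/2}=1$ to guarantee the sign condition $\prod_i t_i = 1$. Your write-up is in fact more explicit about the bookkeeping than the paper's terse version, but the argument is the same.
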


\begin{proof}We prove parts~(a) and~(b) together by induction. Assume that we have proved parts~(a) and~(b) for all $w \in \Z^d_{\{a, a+1, \dots\}}$ with $w < v$. By construction, $x_w = y_w$ for all $w \in \zadi$ and statement~(b) holds for all $w \in \Z^d_{\{a-1\}}$. Hence, $y_{v-s'} = x_{v-s'}$ for $s \in [\mathbf{a}] - \{\mathbf{0}\}$, and $y_{v - \mathbf{a} + [\mathbf{a} - \mathbf{1}_i]} = t_{[v - \mathbf{a} + [\mathbf{a} - \mathbf{1}_i]]} x_{v - \mathbf{a} + [\mathbf{a} - \mathbf{1}_i]}$ for $i = 1, \dots, d$. Because~$u_{v - \mathbf{a}/2} = 1$, statements~(a) and~(b) follow from Lemma~\ref{sign_prop_gen}.
\end{proof}

The next lemma generalizes Lemma~\ref{prod_over_cubes_lemma}.

\begin{Lemma} \label{gen_image_of_psi_lemma}An array $\mathbf{u} = (u_s) \in \{-1,1\}^{\Z^d + \mathbf{a}/2}$ is in the image of $\psi$ $($see Definition~{\rm \ref{gen_def_of_psi})} if and only if for every $v \in \Z^3$,
\begin{gather} \label{prod_over_hcube}
\prod_{\bm{\alpha} \in \{0,1\}^d} u_{v + \mathbf{a}/2 + \bm{\alpha}} = 1.
\end{gather}
\end{Lemma}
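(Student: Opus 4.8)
The plan is to mimic the proof of Lemma~\ref{prod_over_cubes_lemma} almost verbatim, replacing unit cubes in $\Z^3$ by $\mathbf{a}$-boxes in $\Z^d$. For the forward direction, suppose $\mathbf{u} = \psi(\mathbf{t})$ with $\mathbf{t} = (t_s) \in \{-1,1\}^{\faeq}$. Fix $v \in \Z^d$, and consider the box $v + [\mathbf{a}]$ together with its $2^d$ "small" $\mathbf{a}$-subboxes, whose shifted centers are exactly the points $v + \mathbf{a}/2 + \bm{\alpha}$ for $\bm{\alpha} \in \{0,1\}^d$. By Definition~\ref{gen_def_of_psi}, each $u_{v + \mathbf{a}/2 + \bm{\alpha}}$ is the product of the $d$ components of $\mathbf{t}$ indexed by the equivalence classes $[s + [\mathbf{a} - \mathbf{1}_i]]$ for $i = 1, \dots, d$, where $s$ is the "bottom" vertex of the corresponding small box. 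The key combinatorial observation is that, when we take the product over all $\bm{\alpha} \in \{0,1\}^d$, each equivalence class in $\faeq$ that is ``$i$-aligned'' and meets the big box $v + [\mathbf{a}]$ gets counted an even number of times: fixing the $i$th coordinate of the small box forces a binary choice in the remaining $d-1$ coordinates, so each such class appears exactly $2^{d-1}$ times, hence with multiplicity $\equiv 0 \pmod 2$ since $d \ge 1$. Therefore the product telescopes to $\prod_s t_s^2 = 1$, establishing \eqref{prod_over_hcube}.

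For the converse, suppose $\mathbf{u} = (u_s) \in \{-1,1\}^{\Z^d + \mathbf{a}/2}$ satisfies \eqref{prod_over_hcube} for every $v \in \Z^d$. First I would observe that $\mathbf{u}$ is uniquely determined by its restriction to the set $S_0 = \{ s + \mathbf{a}/2 \in \Z^d + \mathbf{a}/2 \colon s \in \Z^d,\ s_1 \cdots s_d = 0 \}$ of centers with some zero coordinate, together with the constraints \eqref{prod_over_hcube}; this is because \eqref{prod_over_hcube} lets one solve for $u_{v + \mathbf{a}/2 + \mathbf{1}}$ in terms of the other $2^d - 1$ values once those are known, and an induction on $\|v\|_1$ propagates from $S_0$ outward in the direction of increasing coordinates. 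Then, in analogy with the explicit formula in the proof of Lemma~\ref{prod_over_cubes_lemma}, I would write down a section $\mathbf{t} = (t_s) \in \{-1,1\}^{\faeq}$: for a class $[s]$ with $s = (s_1, \dots, s_d) + [\mathbf{a} - \mathbf{1}_i]$, define $t_{[s]}$ to be a product of values $u_{(\cdots, \tfrac12, \cdots)}$ over the ``coordinate hyperplanes'' below $s$, set to $1$ whenever the relevant earlier coordinates are already $\tfrac12$, exactly as in the five-case formula there generalized to $d$ coordinates. One checks directly (by the same bookkeeping as in the $d=3$ case) that $\psi(\mathbf{t})$ agrees with $\mathbf{u}$ on $S_0$. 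Since $\psi(\mathbf{t})$ automatically satisfies \eqref{prod_over_hcube} by the forward direction already proved, and since $\mathbf{u}$ satisfies \eqref{prod_over_hcube} by hypothesis, the uniqueness statement forces $\psi(\mathbf{t}) = \mathbf{u}$ everywhere.

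The main obstacle — really the only nontrivial point — is writing down the explicit section $\mathbf{t}$ in general dimension $d$ and verifying it reproduces $\mathbf{u}$ on $S_0$. In dimension $3$ this was the five-line case analysis in Lemma~\ref{prod_over_cubes_lemma}; for general $d$ one needs a clean inductive description. I would organize it as follows: order the coordinates $1, \dots, d$; for an $i$-aligned class $[s]$ with representative at lattice point $(s_1, \dots, s_d)$ (with $i$th coordinate irrelevant), set $t_{[s]}$ to be the product of $u_c$ over the centers $c$ obtained by, for each $j \ne i$ in increasing order, ``rounding $s_j$ down to $\tfrac12$'' provided all coordinates with index less than $j$ (other than $i$) have already been rounded — and declare $t_{[s]} = 1$ if rounding is impossible because some earlier coordinate equals $\tfrac12$. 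A telescoping computation, using \eqref{prod_over_hcube} to collapse consecutive terms, then shows $\psi(\mathbf{t})_c = u_c$ for $c \in S_0$. All of this is routine once the indexing is fixed, so the proof can be kept short by asserting these verifications are ``straightforward to check'' in the same spirit as the cited lemma.

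\begin{proof}[Proof of Lemma~\ref{gen_image_of_psi_lemma}]
First, suppose $\mathbf{u} = \psi(\mathbf{t})$, where $\mathbf{t} = (t_s) \in \{-1,1\}^{\faeq}$. Fix $v \in \Z^d$. The centers $v + \mathbf{a}/2 + \bm{\alpha}$, for $\bm{\alpha} \in \{0,1\}^d$, correspond to the $2^d$ translated $\mathbf{a}$-boxes contained in $v + [\mathbf{a}]$. For each $\bm{\alpha}$, $u_{v + \mathbf{a}/2 + \bm{\alpha}} = \prod_{i=1}^d t_{[v + \bm{\alpha} + [\mathbf{a} - \mathbf{1}_i]]}$. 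When we expand $\prod_{\bm{\alpha} \in \{0,1\}^d} u_{v + \mathbf{a}/2 + \bm{\alpha}}$ in terms of components of $\mathbf{t}$, each equivalence class occurring is $i$-aligned for some $i$, and for fixed $i$ the class $[v + \bm{\alpha} + [\mathbf{a} - \mathbf{1}_i]]$ depends only on the coordinates $\alpha_j$ with $j \ne i$. Hence each such class occurs exactly $2^{d-1}$ times, an even number since $d \ge 1$, so
\begin{gather*}
\prod_{\bm{\alpha} \in \{0,1\}^d} u_{v + \mathbf{a}/2 + \bm{\alpha}} = \prod_s t_s^2 = 1,
\end{gather*}
the product over the finitely many classes $s \in \faeq$ that are $i$-aligned for some $i$ and meet $v + [\mathbf{a}]$.

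Conversely, suppose $\mathbf{u} = (u_s) \in \{-1,1\}^{\Z^d + \mathbf{a}/2}$ satisfies \eqref{prod_over_hcube} for all $v \in \Z^d$. Condition \eqref{prod_over_hcube} expresses $u_{v + \mathbf{a}/2 + \mathbf{1}}$ as the product of the remaining $2^d - 1$ values $u_{v + \mathbf{a}/2 + \bm{\alpha}}$, $\bm{\alpha} \ne \mathbf{1}$; by induction on $\|v\|_1$ in the direction of increasing coordinates, $\mathbf{u}$ is uniquely determined by its restriction to the set $S_0 = \{ s + \mathbf{a}/2 \colon s \in \Z^d,\ s_1 \cdots s_d = 0 \}$ together with the constraints \eqref{prod_over_hcube}. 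For an $i$-aligned class $[s] \in \faeq$ with representative lattice point $(s_1, \dots, s_d)$, define $t_{[s]}$ to be the product of the values $u_c$ over the centers $c$ obtained by rounding down to $\tfrac12$ each coordinate $s_j$ with $j \ne i$ in increasing order of $j$, provided all coordinates with smaller index (other than $i$) have already been rounded; set $t_{[s]} = 1$ if this procedure is vacuous because some such earlier coordinate already equals $\tfrac12$. Setting $\mathbf{t} = (t_s) \in \{-1,1\}^{\faeq}$, it is straightforward to check, using \eqref{prod_over_hcube} to telescope consecutive factors, that $\psi(\mathbf{t})$ agrees with $\mathbf{u}$ on $S_0$. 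By the forward direction already proved, $\psi(\mathbf{t})$ satisfies \eqref{prod_over_hcube}; since $\mathbf{u}$ also satisfies \eqref{prod_over_hcube}, the uniqueness noted above gives $\mathbf{u} = \psi(\mathbf{t})$.
\end{proof}
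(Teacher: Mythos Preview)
Your approach is essentially the same as the paper's, but there is a counting slip in the forward direction. You claim that each $i$-aligned class $[v + \bm{\alpha} + [\mathbf{a} - \mathbf{1}_i]]$ occurs exactly $2^{d-1}$ times in the expansion; in fact it occurs exactly $2$ times (once for each value of $\alpha_i \in \{0,1\}$, since the class is insensitive to $\alpha_i$ but determined by the other $\alpha_j$). Your version would give multiplicity $1$ when $d=1$, making the parity argument fail there. The paper writes this step as
\[
\prod_{\bm{\alpha} \in \{0,1\}^d} \prod_{i=1}^d t_{[v+\bm{\alpha}+[\mathbf{a}-\mathbf{1}_i]]}
= \prod_{i=1}^d \prod_{\substack{\bm{\alpha}\in\{0,1\}^d\\ \alpha_i = 0}} t_{[v+\bm{\alpha}+[\mathbf{a}-\mathbf{1}_i]]}^2 = 1,
\]
which makes the multiplicity $2$ explicit. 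Once you correct the count, your forward direction is identical to the paper's.

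For the converse, the paper gives a more concrete section than your ``rounding'' recipe: choosing representatives $v + [\mathbf{a} - \mathbf{1}_i]$ with $v \in \Z^d_{\{0\}}$, it sets
\[
t_{[v + [\mathbf{a}-\mathbf{1}_i]]} = \begin{cases}
\displaystyle\prod_{j=1}^{i} u_{(v_1,\dots,v_{j-1},0,v_{j+1},\dots,v_d) + \mathbf{a}/2} & \text{if } v_1,\dots,v_{i-1}\neq 0,\\[1ex]
1 & \text{otherwise},
\end{cases}
\]
and then, like you, defers the verification that $\psi(\mathbf{t})$ matches $\mathbf{u}$ on $S_0$ to ``straightforward to check''. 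Your description gestures at the same formula but is vaguer; since both arguments hand-wave at precisely the same point, there is no substantive difference in strategy, only in how explicitly the section is written down.
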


\begin{proof}First, suppose $\mathbf{u} = \psi(\mathbf{t})$, where $\mathbf{t} = (t_s) \in \{-1, 1\}^{\faeq}$. Then for any $v \in \Z^d$,
\begin{gather*}
\prod_{\bm{\alpha} \in \{0,1\}^d} u_{v + \mathbf{a}/2 + \bm{\alpha}} = \prod_{\bm{\alpha} \in \{0,1\}^d} \prod_{i = 1}^d t_{[v + [\mathbf{a} - \mathbf{1}_i]]} = \prod_{i = 1}^d \prod_{\substack{\bm{\alpha} = (\alpha_1, \dots, \alpha_d) \in \{0,1\}^d \\ \alpha_i = 0}} t_{[v + \bm{\alpha} + [\mathbf{a} - \mathbf{1}_i]]}^2 = 1.
\end{gather*}

Next, suppose that condition~\eqref{prod_over_hcube} holds. It is clear that $\mathbf{u}$ is uniquely determined by its components at $S = \left\{(v_1, \dots, v_d) + \mathbf{a}/2\colon v_1 \cdots v_d = 0\right\}$ and condition~\eqref{prod_over_hcube}. For $v = (v_1, \dots, v_d) \in \Z^d_{\{0\}}$ and $i \in \{1, \dots, d\}$, set
\begin{gather*}
t_{[v + [\mathbf{a} - \mathbf{1}_i]]} = \begin{cases} \prod\limits_{j = 1}^i u_{(v_1, \dots, v_{j - 1}, 0, v_{j + 1}, \dots, v_d) + \mathbf{a}/2} & \text{if } v_1, \dots, v_{i - 1} \not= 0, \\ 1 & \text{otherwise}. \end{cases}
\end{gather*}
Set $\mathbf{t} = (t_s) \in \{-1,1\}^{\faeq}$. It is straightforward to check that $\psi(\mathbf{t})$ agrees with $\mathbf{u}$ at~$S$. Hence, because $\psi(\mathbf{t})$ and $\mathbf{u}$ both satisfy condition~\eqref{prod_over_hcube}, it follows that~$\mathbf{u}=\psi(\mathbf{t})$.
\end{proof}

The next lemma generalizes Lemma~\ref{levels_0_thru_5}.

\begin{Lemma} \label{extended_from_levels_a+d-1}Let $\mathbf{\hat{x}}$ be an array indexed by $\Z^d_{\{0, 1, \dots, a + d - 1\}}$. Assume that $\mathbf{\hat{x}}$ satisfying~$f$, and, moreover, its restriction to $\zadi$ is generic. Then there exists an array $\mathbf{\tilde{x}}$ indexed by $\Z^d \cup \fab$ satisfying equations~\eqref{newvertexgen}--\eqref{squarecond} and extending $\mathbf{\hat{x}}$.
\end{Lemma}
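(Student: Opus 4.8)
The plan is to mimic the proof of Lemma~\ref{levels_0_thru_5} from Section~\ref{mainproof}, replacing each ingredient by its general-dimensional counterpart established earlier in this section. The heights $0,1,\dots,a+d-1$ play the role that heights $0,\dots,5$ played for the Kashaev equation (where $a=3$ and $d=3$, so $a+d-1 = 5$); more precisely, the recurrence~\eqref{newvertexgen}--\eqref{newfacegen} advances the top vertex of a box by one unit of height, and a box $v+[\mathbf a - \mathbf 1_i]$ sits at heights ranging over an interval of length $a - a_i \le a - 1$, so the levels $0,\dots,a+d-1$ contain enough data to pin down an initial array $\xin$ indexed by $\zadi$ together with the face values of $\fati$ after one application of~\eqref{squarecond} and, crucially, enough of the overlap to run the sign-correction argument.

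First I would set up the induction exactly as in Lemma~\ref{levels_0_thru_5}: for $i = a, a+1, \dots, a+d-1$, show by induction on $i$ that there is an array $\xtin$ indexed by $\zadi \cup \fati$ satisfying condition~\eqref{squarecond} such that $(\xtin)^{\uparrow \Z^d \cup \fab}$ agrees with $\mathbf{\hat x}$ on $\Z^d_{\{0,1,\dots,i\}}$. The base case $i = a$ is obtained by taking an arbitrary extension of $\xin$ to $\zadi \cup \fati$ satisfying~\eqref{squarecond} (this exists because $\xin$ is generic, hence each $f_i(\mathbf{x}_{v+[\mathbf a - \mathbf 1_i]})$ is nonzero and admits a square root), and noting that $(\xtin)^{\uparrow \Z^d \cup \fab}$ automatically agrees with $\mathbf{\hat x}$ on the bottom $a+1$ levels since there the recurrence~\eqref{newvertexgen} applied with $z_{\mathbf a}$ at height $\le a$ uses only values at heights $\le a-1$, which are the given ones, and the two roots of $f$ at height $a$ coincide with the two choices already determined by genericity --- here I would invoke that $\mathbf{\hat x}$ satisfies $f$ so the value at height $a$ is one of the two roots, matching $(\xtin)^{\uparrow \Z^d \cup \fab}$ up to the sign ambiguity that the inductive step then corrects.

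For the inductive step, given $\xtin'$ with $(\xtin')^{\uparrow \Z^d \cup \fab} = (y_s)$ agreeing with $\mathbf{\hat x} = (x_s)$ on $\Z^d_{\{0,\dots,i-1\}}$, I would choose $\mathbf{\tilde u} = (u_s) \in \{-1,1\}^{\Z^d_{\{a,a+1,\dots,i\}} - \mathbf a/2}$ by setting $u_{s - \mathbf a/2} = 1$ if $s \in \Z^d_{\{i\}}$ and $y_s = x_s$, setting $u_{s-\mathbf a/2} = -1$ if $s \in \Z^d_{\{i\}}$ and $y_s \ne x_s$, and setting $u_{s-\mathbf a/2} = 1$ for $s$ at heights strictly between $a$ and $i$. (Here one uses that $\mathbf{\hat x}$ and $(\xtin')^{\uparrow}$ both satisfy $f$ at height $i$, so they can differ at a vertex only by a swap of the two roots, which is exactly the situation Lemma~\ref{sign_prop_gen} is designed to reverse.) Extend $\mathbf{\tilde u}$ to $\mathbf u \in \{-1,1\}^{\Z^d + \mathbf a/2}$ via condition~\eqref{prod_over_hcube}; by Lemma~\ref{gen_image_of_psi_lemma} there is $\mathbf t \in \{-1,1\}^{\faeq}$ with $\psi(\mathbf t) = \mathbf u$. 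Set $\xtin = \mathbf t \cdot \xtin'$. Then Lemma~\ref{use_of_psi_gen_lemma}, applied successively to all $v \in \Z^d_{\{a,\dots,i\}}$ in order of increasing coordinates, shows $(\xtin)^{\uparrow \Z^d \cup \fab}$ agrees with $\mathbf{\hat x}$ on $\Z^d_{\{0,\dots,i\}}$, completing the induction. Taking $i = a+d-1$ and extending to all of $\Z^d \cup \fab$ via the recurrence gives $\mathbf{\tilde x}$.

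The main obstacle, and the point that deserves care, is verifying that levels $0$ through $a+d-1$ really constitute a "complete" slab --- i.e., that an array satisfying equations~\eqref{newvertexgen}--\eqref{squarecond} and extending given values on these levels is uniquely determined above level $a+d-1$ with all relevant denominators nonzero, and that conversely the sign-correction $\mathbf t \cdot \xtin'$ does not disturb the already-matched lower levels. This is where the bound $a+d-1$ enters: a box $v + [\mathbf a - \mathbf 1_i]$ occupies heights in an interval of length $a - a_i$, and the largest such length over $i$ together with the $d$-fold product structure of~\eqref{squarecond} forces exactly $d-1$ extra levels beyond the $a+1$ vertex-levels to capture the face data entering the first "new" computation; I would spell this out by tracking which indices $v + \mathbf i \odot \boldsymbol\alpha$ and $v + [\mathbf a - \mathbf 1_i]$ appear in~\eqref{newvertexgen}--\eqref{newfacegen} when $z_{\mathbf a}$ sits at height $a+d-1$, exactly paralleling the bookkeeping in Lemma~\ref{levels_0_thru_5} but with the single cube replaced by the box $[\mathbf a]$. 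Once that bookkeeping is in place, every remaining step is a verbatim translation of the $\Z^3$ argument.
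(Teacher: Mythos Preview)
Your proposal is correct and follows essentially the same route as the paper: induction on $i$ from $a$ to $a+d-1$, choosing sign data $\mathbf{\tilde u}$ on the new level, extending via condition~\eqref{prod_over_hcube}, pulling back through $\psi$ using Lemma~\ref{gen_image_of_psi_lemma}, and then invoking Lemma~\ref{use_of_psi_gen_lemma}. The only slip is in your base-case wording: an arbitrary extension $\xtin'$ agrees with $\mathbf{\hat x}$ on levels $0,\dots,a-1$ (not $a+1$ levels), and level $a$ is then fixed by the same sign-correction step as every later level---the paper simply folds the case $i=a$ into the inductive step rather than treating it separately.
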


\begin{proof}For $i = a, \dots, a + d - 1$, we will show by induction on $i$ that there exists an array $\xtin$ indexed by $\zadi \cup \fati$ satisfying~\eqref{squarecond} such that $(\xtin)^{\uparrow \Z^d \cup \fab}$ agrees with $\mathbf{\hat{x}} = (x_s)_{s \in \Z^d_{\{0, \dots, a +d - 1\}}}$ on $\Z^d_{\{0, \dots, i\}}$. Let $\xtin'$ be an array indexed by $\zadi \cup \fati$ satisfying~\eqref{squarecond} such that $(\xtin')^{\uparrow \Z^d \cup \fab} = (y_s)_{s \in \Z^d \cup \fab}$ agrees with $\mathbf{\hat{x}}$ on $\Z^d_{\{0,\dots,i-1\}}$. (For $i = a$, we can obtain~$\xtin'$ be taking an arbitrary extension of $\xin$ to $\zadi \cup \fati$ satisfying condition~\eqref{squarecond}. For $i > a$, we have shown that $\xtin'$ exists by induction.) Choose $\mathbf{\tilde{u}} = (u_s) \in \{-1,1\}^{\Z^d_{\{a, \dots, a + d - 1\}} - \mathbf{a}/2}$ so that
\begin{itemize}\itemsep=0pt
\item $u_{s - \mathbf{a}/2} = 1$ if $s \in \Z^d_{\{i\}}$ and $x_s = y_s$,
\item $u_{s - \mathbf{a}/2} = -1$ if $s \in \Z^d_{\{i\}}$ and $x_s = y_s$,
\item $u_{s - \mathbf{a}/2} = 1$ if $s \in \Z^d_{\{j\}}$ for $0 \le j < i$.
\end{itemize}
Extend $\mathbf{\tilde{u}}$ to $\mathbf{u} = (u_s)_{s \in \Z^d + \mathbf{a}/2}$ by condition~\eqref{squarecond}. By Lemma~\ref{gen_image_of_psi_lemma}, there exists $\mathbf{t} \in \{-1, 1\}^{\faeq}$ such that $\mathbf{u} = \psi(\mathbf{t})$. Set $\xtin = \mathbf{t} \cdot \xtin'$. Then by Lemma~\ref{use_of_psi_gen_lemma}, $(\xtin)^{\uparrow \Z^d \cup \fab}$ agrees with $\mathbf{\hat{x}}$ on~$\Z^d_{\{0, \dots, i\}}$, as desired.
\end{proof}

We can now prove a weaker version of Theorem~\ref{maintheoremgen}(a), under the additional constraint of genericity.

\begin{Corollary} \label{main_theorem_gen_generic}Let $\mathbf{x} = (x_s)_{s \in \Z^d}$ be an array that satisfies $f$ and condition~\eqref{coherencegen}, and whose restriction to $\zadi$ is generic. Then $\mathbf{x}$ can be extended to an array $\mathbf{\tilde{x}}$ indexed by $\Z^d \cup \fab$ satisfying equations~\eqref{newvertexgen}--\eqref{squarecond}.
\end{Corollary}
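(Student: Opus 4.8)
The plan is to follow the exact same strategy as the proof of Corollary~\ref{main_with_generic} in Section~\ref{mainproof}, now using the generalized machinery developed in this section. First I would invoke Lemma~\ref{extended_from_levels_a+d-1}: since $\mathbf{x}$ satisfies $f$ and its restriction to $\zadi$ is generic, the restriction $\mathbf{\hat{x}}$ of $\mathbf{x}$ to $\Z^d_{\{0,1,\dots,a+d-1\}}$ satisfies $f$ and is generic on $\zadi$, so there exists an array $\mathbf{\tilde{x}}$ indexed by $\Z^d \cup \fab$ satisfying equations~\eqref{newvertexgen}--\eqref{squarecond} that agrees with $\mathbf{x}$ on $\Z^d_{\{0,1,\dots,a+d-1\}}$. (Here one needs that $a + d - 1$ levels of initial data suffice — this is exactly the content of Lemma~\ref{extended_from_levels_a+d-1}, and genericity of $\mathbf{x}$ on $\zadi$ guarantees the denominators in~\eqref{newvertexgen}--\eqref{newfacegen} do not vanish.)

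Next I would let $\mathbf{x}'$ be the restriction of $\mathbf{\tilde{x}}$ to $\Z^d$. By Theorem~\ref{maintheoremgen}(b), $\mathbf{x}'$ satisfies $f$ and the coherence condition~\eqref{coherencegen}. The remaining task is to show $\mathbf{x}' = \mathbf{x}$. Both arrays satisfy $f$ and condition~\eqref{coherencegen}, and they agree on the bottom $a + d - 1$ levels $\Z^d_{\{0,1,\dots,a+d-1\}}$. The key observation is that a coherent solution of $f$ is uniquely determined by its values on these bottom levels: condition~\eqref{coherencegen} (or equivalently~\eqref{alt_coherencegen}), after clearing denominators, expresses the value at each new vertex $v + \mathbf{a}$ as a rational function of the previously determined values, and by the genericity of $\mathbf{x}$ the relevant denominators — which involve the factors $\frac{\partial f}{\partial z_{\mathbf a}}$ and the $f_i$ evaluated at subarrays — are nonzero (this is where conditions~\eqref{nonzeropartial} and~\eqref{nonzerodenominator} are used, together with the fact that genericity forces $\mathbf{x}$ to satisfy these nonvanishing hypotheses). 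Hence $\mathbf{x}' = \mathbf{x}$, so $\mathbf{\tilde{x}}$ is the desired extension of $\mathbf{x}$.

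I expect the main obstacle to be the uniqueness argument — specifically, verifying that condition~\eqref{coherencegen} genuinely determines the value at $v + \mathbf{a}$ from the lower-height data, and that the denominators appearing are controlled by the genericity assumption. In the $\Z^3$ case (Remark~\ref{rationalin26others}) this was the statement that each new value is a rational expression in the $26$ surrounding values with nonvanishing denominator $K^C_v(\mathbf{x})$; here the analogue is that the left-hand product in~\eqref{coherencegen} contains the factor $\frac{\partial f}{\partial z_{\mathbf a}}(\mathbf{x}_{v + [\mathbf{a}]})$, which by~\eqref{nonzeropartial} is nonzero (for $d > 1$; for $d = 1$ one uses~\eqref{nonzerodenominator} and the structure of the $f_i$ instead), so solving for $x_{v+\mathbf{a}}$ is legitimate. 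Once this is in place, the induction on height proceeds exactly as before, and the corollary follows. The proof is essentially a transcription of the $\Z^3$ argument, with Lemma~\ref{extended_from_levels_a+d-1} and Theorem~\ref{maintheoremgen}(b) playing the roles of Lemma~\ref{levels_0_thru_5} and Theorem~\ref{galoisfromintro}(b).
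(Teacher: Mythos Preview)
Your proposal is correct and follows essentially the same approach as the paper: invoke Lemma~\ref{extended_from_levels_a+d-1} to get an extension $\mathbf{\tilde{x}}$ agreeing with $\mathbf{x}$ on $\Z^d_{\{0,\dots,a+d-1\}}$, apply Theorem~\ref{maintheoremgen}(b) to its restriction $\mathbf{x}'$, and then argue uniqueness via the rational determination afforded by~\eqref{coherencegen} together with the nonvanishing conditions~\eqref{nonzeropartial}--\eqref{nonzerodenominator} guaranteed by genericity. (Minor quibble: the set $\{0,1,\dots,a+d-1\}$ has $a+d$ elements, not $a+d-1$, but you state the set itself correctly and this does not affect the argument.)
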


\begin{proof} Let $\mathbf{x} = (x_s)_{s \in \Z^d}$ be an array that satisfies $f$ and condition~\eqref{coherencegen}, and whose restriction to $\zadi$ is generic. By Lemma~\ref{extended_from_levels_a+d-1}, there exists an array $\mathbf{\tilde{x}}$ indexed by $\Z^d \cup \fab$ satisfying equations~\eqref{newvertexgen}--\eqref{squarecond} that agrees with $\mathbf{x}$ on~$\Z^d_{\{0, \dots, a + d - 1\}}$. Let $\mathbf{x}'$ be the restriction of~$\mathbf{\tilde{x}}$ to~$\Z^d$. By Theorem~\ref{maintheoremgen}(b), $\mathbf{x}'$ satisfies $f$ and~\eqref{coherencegen}. There is a unique solution of $f$ satisfying condition~\eqref{coherencegen} agreeing with $\mathbf{x}$ at $\Z^d_{\{0,\dots, a+d-1\}}$, as condition~\eqref{coherencegen} gives the remaining values as rational expressions in the values at $\Z^d_{\{0,\dots,a+d-1\}}$, where the denominators do not vanish because conditions~\eqref{nonzeropartial}--\eqref{nonzerodenominator} hold for $\mathbf{x}$ (as $\mathbf{x}$ is generic). Hence, $\mathbf{x}' = \mathbf{x}$, as desired.
\end{proof}

\begin{proof}[Proof of Theorem~\ref{maintheoremgen}(a).] We need to loosen the genericity condition in Corollary~\ref{main_theorem_gen_generic} to the condition that $\mathbf{x}$ satisfies~\eqref{nonzeropartial}--\eqref{nonzerodenominator}.

Let $\mathbf{x}$ satisfy $f$ along with conditions~\eqref{nonzeropartial}--\eqref{nonzerodenominator} and condition~\eqref{coherencegen}. Let~$A_j = [-j, j]^d \cap \Z^d$, and let $B_j = \left\{ s \in \fab \colon s \subseteq [-j,j]^d\right\}$. We claim that if there exist $\mathbf{\tilde{x}}_j \in \C^{A_j \cup B_j}$ sa\-tisfying equations~\eqref{newvertexgen}--\eqref{squarecond} that agree with $\mathbf{x}$ on $A_j$ for all $j$, then there exists $\mathbf{\tilde{x}} \in \C^{\Z^d \cup \fab}$ satisfying equations~\eqref{newvertexgen}--\eqref{squarecond} that agrees with $\mathbf{x}$ on $\Z^d$. Construct an infinite tree $T$ as follows:
\begin{itemize}\itemsep=0pt
\item The vertices of $T$ are arrays indexed by $A_j \cup B_j$ satisfying equations~\eqref{newvertexgen}--\eqref{squarecond} that agree with $\mathbf{x}$ on $A_j$ (over $j \in \Z_{\ge 0}$).
\item Add an edge between $\mathbf{\tilde{x}}_j \in \C^{A_j \cup B_j}$ and $\mathbf{\tilde{x}}_{j + 1} \in \C^{A_{j + 1} \cup B_{j + 1}}$ if $\mathbf{\tilde{x}}_{j + 1}$ restricts to $\mathbf{\tilde{x}}_j$.
\end{itemize}
Thus, $T$ is an infinite tree in which every vertex has finite degree. By K\"onig's infinity lemma, there exists an infinite path $\mathbf{\tilde{x}}_0, \mathbf{\tilde{x}}_1, \dots$ in $T$ with $\mathbf{\tilde{x}}_j \in \C^{A_j \cup B_j}$. Thus, there exists $\mathbf{\tilde{x}} \in \C^{\Z^d \cup \fab}$ restricting to $\mathbf{\tilde{x}}_j$ for all $j \in \Z_{\ge 0}$, so $\mathbf{\tilde{x}}$ satisfies equations~\eqref{newvertexgen}--\eqref{squarecond} and agrees with $\mathbf{x}$ on $\Z^d$.

Given $j \in \Z_{\ge 0}$, we claim that there exists $\mathbf{\tilde{x}} \in \C^{A_j \cup B_j}$ satisfying equations~\eqref{newvertexgen}--\eqref{squarecond} that agrees with $\mathbf{x}$ on $A_j$. Because $\mathbf{x}$ satisfies conditions~\eqref{nonzeropartial}--\eqref{nonzerodenominator}, there exists a sequence $\mathbf{x}_1, \mathbf{x}_2, \dots$ of arrays satisfying $f$ along with conditions~\eqref{nonzeropartial}--\eqref{nonzerodenominator} and condition~\eqref{coherencegen}, whose restrictions to $\zadi$ are generic. By Corollary~\ref{main_theorem_gen_generic}, there exist $\mathbf{\tilde{x}}_1, \mathbf{\tilde{x}}_2, \ldots \in \C^{\Z^d \cup \fab}$ satisfying equations~\eqref{newvertexgen}--\eqref{squarecond} such that $\mathbf{\tilde{x}}_i$ restricts to $\mathbf{x}_i$. However, the sequence $\mathbf{\tilde{x}}_1, \mathbf{\tilde{x}}_2, \dots$ does not necessarily converge. Let $\mathbf{\tilde{x}}_1', \mathbf{\tilde{x}}_2', \ldots \in \C^{A_j \cup B_j}$ be the restrictions of $\mathbf{\tilde{x}}_1, \mathbf{\tilde{x}}_2, \dots$ to $A_j \cup B_j$. There exists a subsequence of $\mathbf{\tilde{x}}_1', \mathbf{\tilde{x}}_2', \dots$ that converges to some $\mathbf{\tilde{x}} \in \C^{A_j \cup B_j}$. (For each~$s \in B_j$, we can partition the sequence $\mathbf{\tilde{x}}_1', \mathbf{\tilde{x}}_2', \dots$ into two sequences, each of which converges at $s$. Because $B_j$ is finite, the claim follows.) The array $\mathbf{\tilde{x}}$ must satisfy equations~\eqref{newvertexgen}--\eqref{squarecond} and agree with $\mathbf{x}$ on $A_j$, so we are done.
\end{proof}

\subsection*{Acknowledgements}

I would like to thank my Ph.D.\ advisor, Sergey Fomin, for his invaluable mathematical insights and the countless hours he dedicated to our meetings while I was writing this paper. I~am also grateful to Dmitry Chelkak for pointing out the connection with s-holomorphicity, and to Thomas Lam and John Stembridge for helpful discussions and editorial suggestions. Finally, I~would like to thank the anonymous referees for their thorough reading of this manuscript, and for their suggestions that improved the quality of this paper.

\pdfbookmark[1]{References}{ref}
\LastPageEnding

\end{document}